\newcommand{\kibitz}[2]{\ifnum\Comments=1\textcolor{#1}{#2}\fi}
\newtheoremstyle{normal}
{2ex}               
{3ex}               
{}                  
{}                  
{\bfseries} 
{}                  
{2pt}   
{\thmname{#1}\thmnumber{ #2.} \thmnote{(#3)}}
\newtheoremstyle{italic}
{2ex}
{3ex}
{\itshape}
{}
{\bfseries} 
{}
{2pt}
{\thmname{#1}\thmnumber{ #2.} \thmnote{(#3)}}
\theoremstyle{normal}
\newtheorem{definition}{Definition}[section]
\newtheorem{remark}[definition]{Remark}
\newtheorem{example}[definition]{Example}
\newtheorem{assumption}[definition]{Assumption}
\theoremstyle{italic}
\newtheorem{theorem}[definition]{Theorem}
\newtheorem{lemma}[definition]{Lemma}
\newtheorem{proposition}[definition]{Proposition}
\newtheorem{corollary}[definition]{Corollary}
\newcommand\N{\mathbb{N}}
\newcommand\Q{\mathbb{Q}}
\newcommand\R{\mathbb{R}}
\newcommand\eps{\varepsilon}
\newcommand\Prob{\mathbb{P}}    
\newcommand\ind{\mathds{1}}     
\newcommand\Bc{\mathcal{B}}
\newcommand\Dc{\mathcal{D}}
\newcommand\Fc{\mathcal{F}}
\newcommand\Gc{\mathcal{G}}
\newcommand\Lc{\mathcal{L}}
\newcommand\Mc{\mathcal{M}}
\newcommand\Oc{\mathcal{O}}
\newcommand\Pc{\mathcal{P}}
\newcommand\Rc{\mathcal{R}}
\newcommand\Bb{\mathbb{B}}
\newcommand\Cb{\mathbb{C}}
\newcommand\Db{\mathbb{D}}
\newcommand\Eb{\mathbb{E}}
\newcommand\Gb{\mathbb{G}}
\newcommand\Hb{\mathbb{H}}
\newcommand\Kb{\mathbb{K}}
\newcommand\Tb{\mathbb{T}}
\newcommand\Vb{\mathbb{V}}
\newcommand\Yb{\mathbb{Y}}
\newcommand\pf{\mathfrak p}
\newcommand\qf{\mathfrak q}
\newcommand\ovw{\overline{w}}
\newcommand\ovr{\overline{r}}
\newcommand\ovv{\overline{v}}
\newcommand\ovp{\overline{p}}
\newcommand\netir{[0,1]\times\R}
\newcommand\kseqi{\zeta}
\newcommand\gseqi{\theta}
\newcommand\taili{z}
\newcommand\dfi{t}
\newcommand\mingi{\theta_1 \wedge \theta_2}
\newcommand\mindfi{t_1 \wedge t_2}
\newcommand\maxdfi{t_1 \vee t_2}
\newcommand\linne{\ell^{\infty}([0,1])}
\newcommand\pto{\stackrel{\scriptscriptstyle \mathbb P^*}{\rightarrow}}
\newcommand\probto{\stackrel{\scriptscriptstyle \mathbb P}{\rightarrow}}
\newcommand\defeq{:=}
\newcommand\weak{\rightsquigarrow}
\newcommand\weakP{{\, {\weak_\xi}\ }}
\newcommand{\ip}[1]{\lfloor #1 \rfloor}
\newcommand\Deli{\Delta_i^n}
\newcommand\Delj{\Delta_j^n}
\newcommand\Delk{\Delta_k^n}
\newcommand\Indit{\ind_{(- \infty, t ]}}
\newcommand\Truniv{\ind_{\lbrace | \Delta_i^n X^{\scriptscriptstyle (n)} | > v_n \rbrace}}
\newcommand\Trunx{\ind_{\lbrace |\taili| > v_n \rbrace}}
\newcommand\linner{\ell^{\infty}([0,1]\times\R)}
\newcommand\ctir{C \times \R}
\newcommand\linctr{\ell^{\infty}(C \times \R)}
\newcommand\tibigj{\tilde X^{\prime \prime n}}
\newcommand\tibigjal{\tilde X^{\prime \prime}(\alpha)^n}
\newcommand\tibigjeial{\tilde X^{\prime \prime}(8 \alpha)^n}
\newcommand\hatbigjal{\hat X^{\prime \prime}(\alpha)^n}
\newcommand\smooi{\varpi}
\newcommand\thrle{\varkappa}
\newcommand\predfest{\hat \theta_n^*}
\newcommand\entcp{\theta_0}
\newcommand\haFthrleei{\hat \thrle_{n,B}^{(\alpha,\rho)}(1)}
\newcommand\haFthrle{\hat \thrle_{n,B}^{(\alpha,\rho)}(r)}
\newcommand\haFcothrle{\hat \thrle_{\scriptscriptstyle n,B_n}^{(\alpha_n,\rho)}(r)}
\newcommand\haFcothrleor{\hat \thrle_{\scriptscriptstyle n,B_n}^{(\alpha_n,\rho)}(r)}
\newcommand\empFrbodif{F_{n,B}^{(\rho,r)}}
\newcommand\empFrboindif{F_{n,B}^{(\rho,r)-}}
\newcommand\Rep{\text{Re}}
\begin{document}
\title{On detecting changes in the jumps of arbitrary size of a time-continuous stochastic process}

\author{Michael Hoffmann\footnotemark[1], ~ Holger Dette \footnotemark[1]\  \bigskip \\
	{ Ruhr-Universit\"at Bochum }
}

\footnotetext[1]{Ruhr-Universit\"at Bochum,
	Fakult\"at f\"ur Mathematik, 44780 Bochum, Germany.
	{E-mail:} michael.hoffmann@rub.de}

\maketitle

\begin{abstract}
	
	This paper introduces test and estimation procedures for abrupt and gradual changes in the entire jump behaviour of a discretely observed It\=o semimartingale. In contrast to existing work we analyse jumps of arbitrary size which are not restricted to a minimum height. Our methods are based on weak convergence of a truncated sequential empirical distribution function of the jump characteristic of the underlying It\=o semimartingale. Critical values for the new tests are obtained by a multiplier bootstrap approach and we investigate the performance of the tests also under local alternatives. An extensive simulation study shows the finite-sample properties of the new procedures.
	
\end{abstract}


\noindent \textit{Keywords and Phrases:} L\'evy measure; jump compensator; transition kernel; empirical processes; weak convergence; multiplier bootstrap; change points; gradual changes
\smallskip

\noindent \textit{AMS Subject Classification:} 60F17, 60G51, 62G10, 62M99.
	
\section{Introduction}
\def\theequation{1.\arabic{equation}}
\setcounter{equation}{0}

Stochastic processes are widely used in science nowadays, as they allow for a flexible modelling of time-dependent phenomena. For example, in physics stochastic processes are used to explain the behaviour of quantum systems (see \citealp{vKa07}), but stochastic processes are also suitable for financial modelling. The seminal paper by \cite{DelSch94} suggests to use the special class of It\=o semimartingales in continuous time. Financial models based on It\=o semimartingales satisfy a certain condition on the absence of arbitrage and moreover they are still rich enough to accommodate stylized facts such as volatility clustering, leverage effects and jumps. As a consequence, in recent years a lot of research was focused on the development of statistical procedures for characteristics of It\=o semimartingales based on discrete observations. In particular, the importance of the jump component has been enforced by recent research (see \citealp{AitJac09b} and \citealp{AitJac09a}) and common methods in this field are gathered in the recent monographs by \cite{JacPro12} and \cite{AitJac14}. 

A fundamental topic in statistics for stochastic processes is the analysis of structural breaks. Corresponding test procedures, commonly referred to as change point tests, have their origin in quality control (see \citealp{page1954, Pag55}) and nowadays, these techniques are widely used in many fields of science such as economics (\citealp{Per06}), finance (\citealp{AndGhy09}), climatology (\citealp{Ree07}) and engineering (\citealp{Sto00}). The contributions of the present paper to this field of research are new statistical procedures for the detection of changes in the jump behaviour of an It\=o semimartingale.
In contrast to the existing works \cite{BueHofVetDet15} and \cite{HofVetDet17} this paper introduces methods of inference on the jump behaviour of the underlying process in general, while in the previously mentioned references the authors restrict the analysis to jumps which exceed a minimum size $\eps >0$.

Throughout this work we assume that we have high-frequency data $X_{i\Delta_n}$ $(i=0,1,\ldots,n)$ with $\Delta_n \to 0$, where the process $(X_t)_{t\in \R_+}$ is an It\=o semimartingale with the following decomposition
\begin{multline*} 
X_t = X_0 +  \int_0^t b_s \, ds + \int_0^t \sigma_s\, dW_s + \int_0^t \int_{\R} u 1_{\{|u| \leq 1\}} (\mu-\bar \mu)(ds,du) +  \int_0^t \int_{\R} u 1_{\{|u|>1\}} \mu(du,dz).
\end{multline*}
Here $W$ is a standard Brownian motion and $\mu$ is a Poisson random measure on $\R^+ \times \R$ with predictable compensator $\bar \mu$ satisfying $\bar \mu(ds,du) = ds \: \nu_s(du)$. Our approach is completely non-parametric, that is we only impose structural assumptions on the characteristic triplet $(b_s,\sigma_s,\nu_s)$ of $(X_t)_{t\in \R_+}$. The crucial quantity here is the transition kernel $\nu_s$ which controls the number and the size of the jumps around time $s \in \R_+$. Our aim is to test the null hypothesis 
\begin{align*}
{\bf H}_0: \nu_s(d\taili) = \nu_0(d\taili)
\end{align*}
against various alternatives involving the non-constancy of $\nu_s$. In particular, the detection of abrupt changes in a stochastic feature has been discussed extensively in the literature (see \citealp{auehor2013}  and \citealp{jandhyala:2013} for an overview in a time series context). The first part of this paper belongs to this area of research and introduces tests for $\textbf{H}_0$ versus alternatives of an abrupt change of the form
\begin{align*}
{\bf H}_1^{(ab)}: \nu_s^{(n)}(d\taili) = \ind_{\{s < \ip{n\gseqi_0} \Delta_n\}}\nu_1(d\taili) + \ind_{\{s \geq \ip{n\gseqi_0} \Delta_n\}}\nu_2(d\taili),
\end{align*}
for some unknown $\gseqi_0 \in (0,1)$ and two distinct L\'evy measures $\nu_1 \neq \nu_2$. Similar to the classical setup of detecting changes in the mean of a time series it is only possible to define the change point relative to the length of the data set which in our case is the time horizon $n\Delta_n$. However, for inference on the jump behaviour the time horizon has to tend to infinity ($n\Delta_n \to \infty$) since there are only finitely many jumps of a certain size on every compact interval. Furthermore, we also discuss how to estimate the unknown change point $\gseqi_0$, if the alternative ${\bf H}_1^{\scriptscriptstyle (ab)}$ is true.

A more difficult problem is the detection of gradual (smooth, continuous) changes in a stochastic feature. As a consequence, the setup in most papers on this topic is restricted to non-parametric  location or parametric models  with independently distributed  observations (see e.g. \citealp{bissel1984a}, \citealp{gan1991},  \citealp{siezha1994}, \citealp{huskova1999}, \citealp{husste2002}  and \citealp{Mallik2013}). Gradual changes in a time series context are for instance discussed in \cite{aueste2002} and \cite{VogDet15}. In the second part of this paper we contribute to this development by introducing new procedures for gradual changes in the kernel $\nu_s$, where we basically test ${\bf H}_0$ against the general alternative
\begin{align*}
{\bf H}_1^{(gra)}: \nu_s(d\taili) \text{ is not Lebesgue-almost everywhere constant in } s \in [0,n\Delta_n].
\end{align*}
Moreover, we introduce an estimator for the first time point where the jump behaviour deviates from the null hypothesis.

The remaining paper is organized as follows: In Section \ref{sec:Asss} we give the basic assumptions on the characteristics of the underlying process and the observation scheme. Section \ref{sec:Infabchdf} introduces test and estimation procedures for abrupt changes in the jump behaviour in general by using CUSUM processes. In Section \ref{sec:gradchadf} we discuss how to detect and estimate gradual changes in the entire jump behaviour. Section \ref{sec:fisaper} contains an extensive simulation study investigating the finite-sample performance of the new procedures. Finally, all proofs are relegated to Section \ref{sec:weConv} and the technical appendices \ref{ssecProTh61}  - \ref{appF}.

\section{The basic assumptions}
\label{sec:Asss}
\def\theequation{2.\arabic{equation}}
\setcounter{equation}{0}

In order to accommodate both abrupt and gradual changes in our approach we follow \cite{HofVetDet17} and assume that there is a driving law behind the evolution of the jump behaviour in time which is common for all $n\in\N$. That is we assume that at step $n\in \N$ we observe an It\=o semimartingale $X^{\scriptscriptstyle (n)}$ with characteristics $(b_s^{\scriptscriptstyle (n)},\sigma_s^{\scriptscriptstyle (n)},\nu_s^{\scriptscriptstyle(n)})$ at the equidistant time points $i\Delta_n$ with $i=0,1,\ldots,n$ which satisfies the following rescaling assumption
\begin{align}
\label{ComResAss}
\nu^{(n)}_{s}(dz) = g\Big(\frac{s}{n\Delta_n}, dz\Big)
\end{align}
for a transition kernel $g(y,d\taili)$ from $([0,1],\Bb([0,1]))$ into $(\R,\Bb)$, where here and below $\Bb(A)$ denotes the trace $\sigma$-algebra on $A\subset\R$ of the Borel $\sigma$-algebra $\Bb$ of $\R$. In order to detect changes in the jump behaviour of the underlying It\=o semimartingale in general, we have to draw inference on the kernel $g(y,B)$ for sets $B \in \Bb$ containing the origin. However, $g$ has locally the properties of a L\'evy measure. Thus, if we deviate from the (simple) case of finite activity jumps the total mass of $g$ on every neighbourhood of the origin is infinite and we cannot estimate $g(y,\cdot)$ on sets containing $0$ directly. We address this problem by weighting the kernel $g$ according to an auxiliary function, precisely for change point detection we consider
\begin{equation}
\label{NrhoDef}
N_{\rho}(g;\gseqi,\dfi) := \int_0^\gseqi \int_{-\infty}^\dfi \rho(\taili) g(y,d\taili) dy,
\end{equation}
for $(\gseqi,\dfi) \in \netir$, where $\rho$ is chosen appropriately such that the integral is always defined. Under weak conditions on $\rho$, this so-called L\'evy distribution function $N_\rho$ determines the entire kernel $g$ and therefore the evolution of the jump behaviour in time. The natural approach to draw inference on $N_\rho$ is the following sequential generalization of an estimator in \cite{Nic15}
\[
\tilde N_\rho^{(n)}(\gseqi,t) = \frac{1}{n \Delta_n} \sum \limits_{i = 1}^{\ip{n\gseqi}} \rho(\Deli X^{(n)}) \Indit(\Deli X^{(n)}),
\]
for $(\gseqi,\dfi) \in \netir$, where $\Deli X^{\scriptscriptstyle (n)} = X^{\scriptscriptstyle (n)}_{i\Delta_n} - X^{\scriptscriptstyle (n)}_{(i-1)\Delta_n}$. Using a spectral approach similar to \cite{NicRei12} these authors prove weak convergence of $\sqrt{n \Delta_n} \big(\tilde N_\rho^{\scriptscriptstyle (n)}(1,t) - N_\rho(g;1,t)\big)$ in $\ell^\infty(\R)$ to a tight Gaussian process, but only for L\'evy processes without a diffusion component, i.e. in particular for constant $g(y,\cdot) \equiv \nu(\cdot)$. The main difficulty in generalizing this result is the superposition of small jumps with the roughly fluctuating Brownian component of the process. We solve this problem by using a truncation approach which has originally been used by \cite{mancini} to cut off jumps in order to draw inference on integrated volatility. More precisely, we follow \cite{HofVet15} and identify jumps by inverting the truncation technique of \cite{mancini}, i.e. all test statistics and estimators investigated below are functionals of the sequential truncated empirical L\'evy distribution function
\begin{equation}
\label{NrhonDef}
N_{\rho}^{(n)}(\gseqi,\dfi) = \frac{1}{n \Delta_n} \sum \limits_{i=1}^{\ip{n\gseqi}} \rho(\Deli X^{(n)}) \Indit(\Deli X^{(n)}) \Truniv, \quad  (\gseqi,\dfi) \in \netir,
\end{equation}
for some suitable null sequence $v_n \to 0$. 

As a further improvement to previous studies we analyse the asymptotic behaviour of our tests under local alternatives. That is, in the rescaling assumption \eqref{ComResAss} we let $g = g^{\scriptscriptstyle (n)}$ depend on $n \in \N$, where there exist transition kernels $g_0,g_1,g_2$ satisfying some additional regularity assumptions such that for each $y \in [0,1]$
\begin{equation}
\label{gon012Ass2}
g^{(n)}(y,d\taili) = g_0(y,d\taili) + \frac 1{\sqrt{n\Delta_n}} g_1(y,d\taili) + \Rc_n(y,d\taili)
\end{equation}
and for each $y \in [0,1]$, $B\in\Bb$ and $n\in\N$ the remainder kernel $\Rc_n$ satisfies
\[
\Rc_n(y,B) \leq a_n g_2(y,B)
\]
for a sequence $a_n = o((n\Delta_n)^{-1/2})$ of non-negative real numbers. For constant $g_0(y,\cdot) \equiv \nu_0(\cdot)$ assumption \eqref{gon012Ass2} is exactly the local alternative where the jump behaviour converges to the null hypothesis $g_0(y,\cdot) \equiv \nu_0(\cdot)$ from the direction defined by $g_1$ at rate $(n\Delta_n)^{-1/2}$. In this sense, Theorem \ref{ConvThm}, in which we prove weak convergence of the stochastic process
\[
G_{\rho}^{(n)}(\gseqi,\dfi) = \sqrt{n \Delta_n}\big(N_{\rho}^{(n)}(\gseqi,\dfi) - N_{\rho}(g^{(n)};\gseqi,\dfi)\big), \quad (\gseqi,\dfi) \in \netir
\]
to a tight Gaussian process in $\linner$, is a generalization of the results in \cite{HofVet15} to sequential processes for time dependent variable jump behaviour as in \eqref{gon012Ass2}. 

Critical values for the test procedures introduced below and the optimal choice of a regularization parameter of the new estimator for gradual change points are obtained by a multiplier bootstrap approach. Precisely, Theorem \ref{CondConvThm}, in which we prove conditional weak convergence in a suitable sense  of the bootstrapped version 
\begin{align*}
\hat G_\rho^{(n)}(\gseqi,\dfi) = \frac 1{\sqrt{n\Delta_n}} \sum\limits_{i=1}^{\ip{n\gseqi}} \xi_i \rho\big(\Deli X^{(n)}\big) \ind_{(-\infty,\dfi]}\big(\Deli X^{(n)}\big) \ind_{\{|\Deli X^{(n)} | > v_n\}}, \quad (\gseqi, \dfi) \in \netir
\end{align*}
of $G_\rho^{\scriptscriptstyle (n)}$ to a Gaussian process, where $(\xi_i)_{i\in\N}$ is a sequence of i.i.d.\ multipliers with mean $0$ and variance $1$, complements the paper \cite{HofVet15}.

For the rescaling assumptions \eqref{ComResAss} and \eqref{gon012Ass2} we consider transition kernels $g_i(y,d\taili)$ of the set $\Gc(\beta,p)$ depending on parameters $\beta \in (0,2), p>0$. In order to define this set we denote by $\lambda$ the one-dimensional Lebesgue measure defined on the Lebesgue $\sigma$-algebra $\Lc_1$ of $\R$ and we denote by $\lambda_1$ the restriction of $\lambda$ to the trace $\sigma$-algebra $[0,1] \cap \Lc_1$.

\begin{definition}
	\label{rhoandgass2}
	For $\beta \in (0,2)$ and $p > 0$ the set $\Gc(\beta,p)$ consists of all transition kernels $g(y,d\taili)$ from $([0,1],\Bb([0,1]))$ into $(\R,\Bb)$, such that for each $y \in [0,1]$ the measure $g(y,d\taili)$ has a Lebesgue density $h_y(\taili)$ and there exist $\eta,M >0$ as well as a Lebesgue null set $L \in [0,1] \cap \Lc_1$ such that the following items are satisfied:
	\begin{enumerate}[(1)]
		\item $h_y(\taili) \leq K|\taili|^{-(1+ \beta)}$ holds for all $\taili \in (-\eta,\eta)$, $y \in [0,1] \setminus L$ and for some $K >0$. \label{BGn0Ass}
		\item For $n \in \N$ let $C_n \defeq \lbrace \taili \in \R \mid \frac{1}{n} \leq |\taili| \leq n \rbrace$. Then for each $n \in \N$ there exists a $K_n >0$ with $h_y(\taili) \leq K_n$ for each $\taili \in C_n$ and all $y \in [0,1] \setminus L$.
		\label{DiCondmiddle}
		\item $h_y(\taili) \leq K |\taili|^{-(2p\vee 2) - \epsilon}$ whenever $|\taili| \geq M$ and $y \in [0,1] \setminus L$, for some $K > 0$ and some $\epsilon >0$.
		\label{DiCondinfty}
	\end{enumerate}
\end{definition}
The items above basically say that the densities $h_y$ are bounded by a continuous L\'evy density of a L\'evy measure which behaves near zero like the one of a $\beta$-stable process, whereas this density has to decay sufficiently fast at infinity. Such conditions are well-known in the literature and often used in similar works on high-frequency statistics; see e.g.\ \cite{AitJac09b} or \cite{AitJac10}. From Assumption \ref{Cond1} and Proposition \ref{easier} in Section \ref{sec:weConv} it can be seen that it is even possible to work with a wider class of transition kernels $g(y,d\taili)$ which does not require Lebesgue densities. Nevertheless, we stick to the set $\Gc(\beta,p)$ defined above which is much simpler to interpret. The following example shows that alternatives of abrupt changes in the jump behaviour can be described by transition kernels in the set $\Gc(\beta,p)$.

\begin{example} ({\it abrupt changes})
	\label{Ex:Sitabcha}
	 In Section \ref{sec:Infabchdf} we introduce statistical procedures for inference of abrupt changes in the jump behaviour. In this case the kernel $g_0$ is typically of the form as discussed below. For $\beta \in (0,2)$ and $p > 0$ let $\Mc(\beta,p)$ be the set of all L\'evy measures $\nu$ such that the constant transition kernel $g(y,d\taili) = \nu(d\taili)$ belongs to $\Gc(\beta,p)$.
	
	Let $\gseqi_0 \in (0,1]$ and let $\nu_1, \nu_2 \in \Mc(\beta,p)$ be two L\'evy measures. Then the transition kernel $g_0$ given by
	\begin{align}
	\label{gabDef}
	g_0(y,d\taili) = \begin{cases}
	\nu_1(d \taili), \quad &\text{ for } y \in [0,\gseqi_0] \\
	\nu_2(d \taili), \quad &\text{ for } y \in (\gseqi_0,1].
	\end{cases}
	\end{align}
	is an element of $\Gc(\beta,p)$. In the context of change-point tests $\gseqi_0 = 1$ corresponds to the null hypothesis of no change in the jump behaviour, whereas \eqref{gabDef} describes an abrupt change for $\gseqi_0 \in(0,1)$ and $\nu_1\neq\nu_2$.
\end{example}

The variance gamma process is a common model for the log stock price in finance (see for instance \cite{Mad98}). Moreover, the L\'evy measure of a variance gamma process has the form $\nu(d\taili) = (a_1 \taili^{-1} e^{-b_1\taili} - a_2 \taili^{-1} e^{-b_2\taili})~d\taili$ for $a_1,a_2,b_1,b_2 >0$. Thus, the transition kernel $g_0(y,d\taili)$ belongs to $\Gc(\beta,p)$ for all $\beta\in(0,2)$ and $p>0$, if similar as in \eqref{gabDef} $g_0$ is piecewise constant in $y\in[0,1]$ and on the domains of constancy it is equal to the L\'evy measure of a variance gamma process.

For the asymptotic statements in this paper  we require the  following assumptions.
Our results are also  correct under   less restrictive but more technical conditions. For the sake of a transparent presentation 
these are not presented  here but deferred to Section \ref{altass}.

\begin{assumption}
	\label{EasierCond}
	Let $0< \beta < 2$ and $0<  \tau < (1/5 \wedge \frac{2-\beta}{2+5\beta})$. Furthermore, let $p > \beta+((\frac 12 + \frac 32 \beta) \vee  \frac{2}{1+5\tau})$. At step $n\in\N$ we observe an It\=o semimartingale $X^{\scriptscriptstyle (n)}$ adapted to the filtration of some filtered probability space $(\Omega,\Fc,(\Fc_t)_{t\in\R_+},\Prob)$ with characteristics  $(b_s^{\scriptscriptstyle (n)},\sigma_s^{\scriptscriptstyle (n)},\nu_s^{\scriptscriptstyle(n)})$ at the equidistant time points $\{i\Delta_n \mid i=0,1,\ldots,n\}$ such that the following items are satisfied:
	\begin{compactenum}[(a)]
		\item \textit{Assumptions on the jump characteristic and the function $\rho$:} 
		\label{rhoandgass}
		\begin{enumerate}[(1)]
			\item For each $n\in\N$ and $s \in [0,n\Delta_n]$ we have
			\begin{align}
			\label{RescAss2}
			\nu^{(n)}_{s}(dz) = g^{(n)}\Big(\frac{s}{n\Delta_n}, dz\Big),
			\end{align}
			where there exist transition kernels $g_0,g_1,g_2 \in \Gc(\beta,p)$ such that for each $y \in [0,1]$
			\begin{equation}
			\label{gon012Ass}
			g^{(n)}(y,d\taili) = g_0(y,d\taili) + \frac 1{\sqrt{n\Delta_n}} g_1(y,d\taili) + \Rc_n(y,d\taili)
			\end{equation}
			and for each $y \in [0,1]$, $B\in\Bb$ and $n\in\N$ the kernel $\Rc_n$ satisfies
			$
			\Rc_n(y,B) \leq a_n g_2(y,B)
			$
			for a sequence $a_n = o((n\Delta_n)^{-1/2})$ of non-negative real numbers.
			\item \label{EasrhoCond} $\rho \colon \R \rightarrow \R$ is a bounded $\mathcal C^1$-function with $\rho(0)=0$ and its derivative satisfies $|\rho^{\prime}(\taili)| \leq K |\taili|^{p-1}$ for all $\taili \in \R$ and some constant $K>0$.
			\item \label{rhoneq0} $\rho(\taili) \neq 0$ for each $\taili \neq 0$.
			\item \label{jbidenass} For every $\dfi \in \R$ there exists a finite set $M_{(\dfi)} \subset [0,1]$, such that the function 
			\[ y \mapsto \int_{-\infty}^\dfi \rho(\taili) g_0(y,d\taili) \]  is continuous on $[0,1] \setminus M_{(\dfi)}$. 
		\end{enumerate}
		\item \label{speed}\textit{Assumptions on the truncation sequence $v_n$ and the observation scheme:} \\
		The truncation sequence $v_n$ satisfies
		$
		v_n \defeq \gamma \Delta_n^{\ovw},
		$
		with $\ovw = (1+5\tau)/4$ and some $\gamma >0$. Define further $ t_1 \defeq  ( 1+ \tau )^{-1} $   and  $ t_2 \defeq  ( (7\tau +1)/2 )^{-1} \wedge 1$
		(note that $0 < t_1 < t_2 \leq 1$) and we suppose that the observation scheme satisfies for some $\delta >0$
		\[
		\Delta_n = o(n^{-t_1}) \quad \text{ and } \quad n^{-t_2+\delta} = o(\Delta_n).
		\]
		\item \label{DriDiffMomCond} \textit{Assumptions on the drift and the diffusion coefficient:} \\
		For
		$ m_b = \frac{6+10\tau}{3-5\tau} \leq 4  $ and  $ m_\sigma = \frac{6+10 \tau}{1-5\tau}
		$
		we have
		\[
		\sup\limits_{n\in\N}\sup \limits_{s \in \R_+} \Big\{ \Eb \big|b^{(n)}_s\big|^{m_b} \vee \Eb \big|\sigma^{(n)}_s\big|^{m_\sigma} \Big\} < \infty.
		\]
	\end{compactenum}
\end{assumption}

\begin{remark} Suppose we have complete knowledge of the distribution function $N_{\rho}(g_0;\gseqi,\dfi)$. Obviously, the measure with density $M(dy,d\taili) \defeq \rho(\taili) g_0(y,d\taili)dy$ is completely determined from knowledge of the entire function $N_{\rho}(g_0;\cdot,\cdot)$ and does not charge $[0,1]\times \{0\}$. Therefore, due to Assumption \ref{EasierCond}\eqref{rhoneq0} $1/\rho(\taili) M(dy,d\taili) = g_0(y,d\taili)dy$ and consequently the  jump behaviour corresponding to $g_0$ is known as well. Furthermore, Assumption \ref{EasierCond}\eqref{jbidenass} ensures that a characteristic quantity for a gradual change, which we introduce in Section \ref{sec:gradchadf} is zero if and only if the jump behaviour corresponding to $g_0$ is constant in time. All convergence results in this paper also hold without Assumption \ref{EasierCond}\eqref{rhoneq0} and \eqref{jbidenass}. 
	Moreover, the function 
	\[
	\tilde \rho(x) = \begin{cases}
	0, \quad &\text{if } x=0, \\
	e^{-1/|x|}, \quad &\text{if } |x| >0, \\
	\end{cases}
	\]
	is suitable for any choice of the constants $\beta$ and $\tau$. In practice, however, one would like to work with a polynomial decay at zero, in which case the condition on $p$ comes into play. Here, the smaller the parameter $\beta$, the smaller $p$ can be chosen. For example, for $\beta < 3/5$ and $\tau > 3/35$ even a choice $p<2$ is possible.
	
	Furthermore, it is also important to choose the observation scheme suitably. Obviously, we have  $\Delta_n \rightarrow 0$ and $n \Delta_n \rightarrow \infty$ because of $0< t_1 < t_2 \leq 1$, and a typical choice is $\Delta_n = O(n^{-y})$ and $n^{-y} = O(\Delta_n)$ for some $0< t_1 < y < t_2 \leq 1$.
	Finally, Assumption \ref{EasierCond}\eqref{DriDiffMomCond} requires only a bound on the moments of the remaining characteristics and is therefore extremely mild.
\end{remark}

In the remaining part of this section we illustrate an example of a kernel $g_0 \in \Gc(\beta,p)$ for some suitable $\beta,p$ and a function $\rho$ satisfying Assumption \ref{EasierCond}\eqref{EasrhoCond} and \eqref{rhoneq0}.

\begin{example}({\it gradual changes}) \label{Ex:SitgraCh} In Section \ref{sec:gradchadf}, which is dedicated to inference of gradual changes, we basically test against the general alternative that the jump behaviour is non-constant. In the following we introduce an example of a kernel $g_0$ which can be used to describe a gradual change in the jump behaviour and a corresponding function $\rho$ satisfying Assumption \ref{EasierCond}\eqref{EasrhoCond} and \eqref{rhoneq0}.
	To this end, for $L >0$, $p >1$ let 
	\begin{equation}
	\label{Eq:rhoLpdef}
	\rho_{L,p}(\taili) := L \times \begin{cases}
	2|\taili|^p, \quad &\text{ for } |\taili| \leq 1 \\
	4p|\taili| - p \taili^2 + 2 - 3p, \quad &\text{ for } 1 \leq |\taili| \leq 2 \\
	2+p, \quad &\text{ for } |\taili| \geq 2
	\end{cases}
	\end{equation}
	and for $0 < \beta < 2$, $p> 1$ consider the L\'evy density 
	\[
	h_{\beta,p}(\taili) := |\taili|^{-(1+\beta)} \ind_{\{ 0 < |\taili| <1 \}} + \ind_{\{1 \leq |\taili| \leq 2\}} + |\taili|^{-p} \ind_{\{|\taili| > 2\}}.
	\]
	Furthermore, for $0< \hat \beta < 2$ and $\hat p > 1 \vee \hat \beta$ let $A: [0,1] \to (0, \infty)$, $\beta: [0,1] \to (0,\hat \beta]$ and $p: [0,1] \to [ 2 \hat p  + \eps,\infty)$ for some $\eps > 0$ be Borel measurable functions such that $A$ is bounded. Then, the kernel
	\begin{equation}
	\label{gformgrach}
	g_0(y,d\taili) = A(y) h_{\beta(y),p(y)}(\taili) d\taili, \quad y \in [0,1]
	\end{equation}
	belongs to $\Gc(\hat \beta,\hat p)$ and for arbitrary $L>0$ the function $\rho_{L,\hat p}$ satisfies Assumption \ref{EasierCond}\eqref{EasrhoCond} and \eqref{rhoneq0}.
\end{example}

\section{Statistical inference for abrupt changes}
\label{sec:Infabchdf}
\def\theequation{3.\arabic{equation}}
\setcounter{equation}{0}

In this section we deduce test and estimation procedures for abrupt changes in the jump behaviour of the underlying process, that is we investigate the situation of Example \ref{Ex:Sitabcha}. To this end, we  test the null hypothesis of no change in the jump behaviour

\begin{enumerate}
	\item[${\bf H}_0$:] Assumption~\ref{EasierCond} is satisfied for $g_1=g_2=0$ and there exists a L\'evy measure $\nu_0$ such that $g_0(y,d\taili) = \nu_0(d\taili)$ for Lebesgue almost every $y \in [0,1]$.
\end{enumerate}

against the alternative that the jump behaviour is constant on two intervals

\begin{enumerate}
	\item[${\bf H}_1$:] Assumption~\ref{EasierCond} is satisfied for $g_1=g_2=0$ and there exists some $\theta_0 \in (0,1)$ and two L\'evy measures $\nu_1 \neq \nu_2$ such that $g_0$ has the form \eqref{gabDef}.
\end{enumerate}

The corresponding alternative for fixed $\dfi_0 \in \R$ is given by:
\begin{enumerate}
	\item[${\bf H}_1^{(\rho,\dfi_0)}$:] We have the situation from ${\bf H}_1$, but with $N_\rho(\nu_1;\dfi_0) \neq N_\rho(\nu_2;\dfi_0)$, where 
	\begin{align}
	\label{NrhonuDef}
	N_\rho(\nu;\dfi) = \int_{-\infty}^{\dfi} \rho(\taili) \nu(d\taili)
	\end{align}
	for a L\'evy measure $\nu$. 
\end{enumerate}

Moreover, we investigate the behaviour of the tests introduced in this section under local alternatives which tend to the null hypothesis as $n\to\infty$:

\begin{enumerate}
	\item[${\bf H}^{(loc)}_1$:] Assumption~\ref{EasierCond} is satisfied with $g_0(y,d\taili) = \nu_0(d\taili)$ for Lebesgue-a.e. $y \in [0,1]$ for some L\'evy measure $\nu_0$ and with some transition kernels $g_1,g_2 \in \Gc(\beta,p)$.
\end{enumerate}

\subsection{Weak convergence of test statistics}

Following \cite{Ino01} a suitable approach to introduce tests for the hypotheses above is to investigate the convergence behaviour of the CUSUM process

\begin{align}
\label{Tbrhondef}
\Tb_\rho^{(n)} (\gseqi,\dfi) = \sqrt{n \Delta_n} \Big( N_\rho^{(n)}(\gseqi,\dfi) - \frac{\ip{n\gseqi}}n N_\rho^{(n)}(1,\dfi) \Big),
\end{align}
with $N_\rho^{(n)}(\gseqi,\dfi)$ defined in \eqref{NrhonDef}. The corresponding test rejects the null hypothesis ${\bf H}_0$ for large values of the Kolmogorov-Smirnov-type statistic
\begin{align*}
T_\rho^{(n)} = \sup\limits_{(\gseqi,\dfi) \in \netir} \big|\Tb_\rho^{(n)} (\gseqi,\dfi) \big|.
\end{align*}
The theorem below establishes functional weak convergence of $\Tb_\rho^{\scriptscriptstyle (n)}$ in the general case of local alternatives.

\begin{theorem}
	\label{CUSUMTrhowc}
	Under ${\bf H}_1^{(loc)}$ the process $\Tb_\rho^{(n)}$ converges weakly in $\linner$ to the process $\Tb_\rho + \Tb_{\rho,g_1}$, where the tight mean zero Gaussian process $\Tb_\rho$ has the covariance structure
	\begin{align}
	\label{TrhoCovFkt}
	\Eb\{\Tb_\rho(\gseqi_1,\dfi_1)\Tb_\rho(\gseqi_2,\dfi_2)\} = \{ (\gseqi_1 \wedge \gseqi_2) - \gseqi_1 \gseqi_2 \}  \int_{-\infty}^{\dfi_1 \wedge \dfi_2} \rho^2(\taili) \nu_0(d\taili)
	\end{align}
	and the deterministic function $\Tb_{\rho,g_1} \in \linner$ is given by
	\begin{equation}
	\label{shiftdef}
	\Tb_{\rho,g_1} (\gseqi,\dfi) = N_\rho(g_1;\gseqi,\dfi) - \gseqi N_\rho(g_1;1,\dfi),
	\end{equation}
	where $N_\rho(g_1;\cdot,\cdot)$ is defined in \eqref{NrhoDef}.
\end{theorem}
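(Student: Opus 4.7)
The plan is to reduce the theorem to the weak convergence of $G_\rho^{(n)}$ stated in Theorem \ref{ConvThm} by splitting $\Tb_\rho^{(n)}$ into a stochastic piece plus a deterministic drift coming from the local alternative. Concretely, adding and subtracting $N_\rho(g^{(n)};\gseqi,\dfi)$ and $\tfrac{\ip{n\gseqi}}{n}N_\rho(g^{(n)};1,\dfi)$ inside \eqref{Tbrhondef} yields
\begin{equation*}
\Tb_\rho^{(n)}(\gseqi,\dfi)
= G_\rho^{(n)}(\gseqi,\dfi) - \tfrac{\ip{n\gseqi}}{n}\, G_\rho^{(n)}(1,\dfi) \;+\; D^{(n)}(\gseqi,\dfi),
\end{equation*}
with the purely deterministic remainder
\begin{equation*}
D^{(n)}(\gseqi,\dfi) = \sqrt{n\Delta_n}\Big( N_\rho(g^{(n)};\gseqi,\dfi) - \tfrac{\ip{n\gseqi}}{n} N_\rho(g^{(n)};1,\dfi)\Big).
\end{equation*}

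First I would show that $D^{(n)}\to\Tb_{\rho,g_1}$ uniformly on $[0,1]\times\R$. Plugging \eqref{gon012Ass} into $N_\rho(g^{(n)};\gseqi,\dfi)$ and using that $g_0(y,\cdot)\equiv\nu_0(\cdot)$ for Lebesgue-a.e.\ $y$ under ${\bf H}_1^{(loc)}$ gives
\begin{equation*}
N_\rho(g^{(n)};\gseqi,\dfi) = \gseqi\, N_\rho(\nu_0;\dfi) + \tfrac{1}{\sqrt{n\Delta_n}}\, N_\rho(g_1;\gseqi,\dfi) + r_n(\gseqi,\dfi),
\end{equation*}
with $|r_n(\gseqi,\dfi)|\le a_n \int_0^1\!\!\int|\rho|\,g_2(y,d\taili)\,dy$. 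The membership $g_i\in\Gc(\beta,p)$ together with the growth assumption on $\rho$ guarantees that $\sup_\dfi |N_\rho(\nu_0;\dfi)|$ and $\sup_{\gseqi,\dfi}|N_\rho(g_1;\gseqi,\dfi)|$ are finite (the bound $|\rho(\taili)|\le K|\taili|^p$ near $0$ pairs with the $|\taili|^{-(1+\beta)}$ density bound, and the tail condition in Definition \ref{rhoandgass2}(3) with $\rho$ bounded). Using $|\gseqi-\tfrac{\ip{n\gseqi}}{n}|\le 1/n$ and $a_n = o((n\Delta_n)^{-1/2})$, the contribution from the $\nu_0$ term is $O(\sqrt{\Delta_n/n})$ and the remainder contribution is $o(1)$, both uniformly. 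This leaves
\begin{equation*}
D^{(n)}(\gseqi,\dfi) = N_\rho(g_1;\gseqi,\dfi) - \tfrac{\ip{n\gseqi}}{n}\, N_\rho(g_1;1,\dfi) + o(1)
\end{equation*}
uniformly, and since $\sup_\dfi|N_\rho(g_1;1,\dfi)|<\infty$, $|\tfrac{\ip{n\gseqi}}{n}-\gseqi|\le 1/n$ produces another $o(1)$, so $D^{(n)}\to\Tb_{\rho,g_1}$ in $\linner$.

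For the stochastic part I would invoke Theorem \ref{ConvThm}, which under Assumption \ref{EasierCond} yields $G_\rho^{(n)}\weak G_\rho$ in $\linner$, where $G_\rho$ is a tight centered Gaussian process with covariance $\Eb\{G_\rho(\gseqi_1,\dfi_1)G_\rho(\gseqi_2,\dfi_2)\} = (\gseqi_1\wedge\gseqi_2)\int_{-\infty}^{\dfi_1\wedge\dfi_2}\rho^2\,d\nu_0$ (in the local-alternative regime the limiting covariance is determined by $g_0=\nu_0$). The map $\Phi:\linner\to\linner$ defined by $\Phi(H)(\gseqi,\dfi) = H(\gseqi,\dfi) - \gseqi H(1,\dfi)$ is Lipschitz in the sup norm, and the sequence of maps $\Phi_n(H)(\gseqi,\dfi) = H(\gseqi,\dfi)-\tfrac{\ip{n\gseqi}}{n}H(1,\dfi)$ satisfies $\|\Phi_n(H)-\Phi(H)\|_\infty\le \|H\|_\infty/n$. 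By the extended continuous mapping theorem, $\Phi_n(G_\rho^{(n)})\weak \Phi(G_\rho) =: \Tb_\rho$, and Slutsky's lemma combined with the deterministic convergence $D^{(n)}\to\Tb_{\rho,g_1}$ delivers $\Tb_\rho^{(n)}\weak \Tb_\rho + \Tb_{\rho,g_1}$ in $\linner$. A direct computation, using $\gseqi_i\le 1$, yields
\begin{equation*}
\Cov(\Tb_\rho(\gseqi_1,\dfi_1),\Tb_\rho(\gseqi_2,\dfi_2)) = \{(\gseqi_1\wedge\gseqi_2)-\gseqi_1\gseqi_2\}\!\int_{-\infty}^{\dfi_1\wedge\dfi_2}\!\rho^2(\taili)\,\nu_0(d\taili),
\end{equation*}
matching \eqref{TrhoCovFkt}.

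The only genuinely delicate point is the uniformity in $\dfi\in\R$: the Kolmogorov-Smirnov functional lives on an unbounded index set, so one must check that $\Phi$ actually acts on $\linner$ (i.e.\ that $\sup_\dfi|G_\rho^{(n)}(1,\dfi)|$ is controlled, which is where the boundedness of the tight Gaussian limit in Theorem \ref{ConvThm} is essential) and that the drift $\Tb_{\rho,g_1}$ is itself bounded. Both facts hinge on the moment integrability of $\rho$ against $\nu_0$ and $g_1$, which, as noted above, is ensured by the interplay between Assumption \ref{EasierCond}\eqref{EasrhoCond} and the tail/singularity conditions in $\Gc(\beta,p)$. Once these uniformity issues are handled, the argument is a standard decomposition followed by continuous mapping.
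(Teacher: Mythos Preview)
Your proof is correct and follows essentially the same approach as the paper: both decompose $\Tb_\rho^{(n)}$ into the stochastic part $h_n(G_\rho^{(n)})$ (your $\Phi_n(G_\rho^{(n)})$) plus a deterministic drift, show the drift converges uniformly to $\Tb_{\rho,g_1}$ by expanding $g^{(n)}$ via \eqref{gon012Ass}, and then invoke Theorem~\ref{ConvThm} together with continuous mapping and Slutsky's lemma. Your treatment is somewhat more explicit about the uniformity in $\dfi$ and the covariance verification, but the argument is the same.
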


As an immediate consequence of the previous result and the continuous mapping theorem we obtain weak convergence of the statistic $T_\rho^{\scriptscriptstyle (n)}$.

\begin{corollary}
	\label{KolSmiConvRes}
	Suppose ${\bf H}_1^{(loc)}$ is true, then we have
	\begin{align}
	\label{Trhsugpdef}
	T_\rho^{(n)} \weak T_{\rho,g_1} := \sup \limits_{(\gseqi,\dfi) \in \netir} \big|\Tb_\rho(\gseqi,\dfi) +  \Tb_{\rho,g_1} (\gseqi,\dfi) \big|,
	\end{align}
	in $(\R,\Bb)$ with $\Tb_\rho + \Tb_{\rho,g_1}$ the limit process in Theorem \ref{CUSUMTrhowc}.
\end{corollary}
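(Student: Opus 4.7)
The plan is to deduce the corollary directly from Theorem \ref{CUSUMTrhowc} via the continuous mapping theorem. Theorem \ref{CUSUMTrhowc} provides weak convergence of $\Tb_\rho^{(n)}$ in $\linner = \ell^{\infty}([0,1]\times\R)$ to the tight Borel-measurable limit $\Tb_\rho + \Tb_{\rho,g_1}$. I would introduce the functional
\[
\Phi \colon \linner \to \R, \qquad \Phi(f) := \sup_{(\gseqi,\dfi) \in \netir} |f(\gseqi,\dfi)|,
\]
which coincides with the supremum norm on $\linner$. The reverse triangle inequality gives $|\Phi(f) - \Phi(g)| \leq \|f - g\|_\infty$, so $\Phi$ is $1$-Lipschitz and therefore continuous at every point of $\linner$.

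Since $T_\rho^{(n)} = \Phi(\Tb_\rho^{(n)})$ by definition \eqref{Tbrhondef}, the continuous mapping theorem in the form suited to weak convergence in $\ell^{\infty}$-type spaces (where it suffices that the map be continuous on a set containing the support of the tight limit) will immediately yield
\[
T_\rho^{(n)} \weak \Phi(\Tb_\rho + \Tb_{\rho,g_1}) = \sup_{(\gseqi,\dfi) \in \netir} \big|\Tb_\rho(\gseqi,\dfi) + \Tb_{\rho,g_1}(\gseqi,\dfi)\big|
\]
in $(\R,\Bb)$, which is exactly the asserted statement. I do not anticipate any genuine obstacle: the global Lipschitz continuity of $\Phi$ circumvents the usual continuity-almost-surely caveats that accompany continuous mapping arguments, and the measurability issues native to weak convergence in $\ell^{\infty}$-spaces have already been handled in the statement of Theorem \ref{CUSUMTrhowc}. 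The substance of the result lies entirely in the functional convergence established there; this corollary is a one-step consequence.
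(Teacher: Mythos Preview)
Your proposal is correct and matches the paper's own argument exactly: the paper states the corollary as ``an immediate consequence of the previous result and the continuous mapping theorem,'' and your choice of the sup-norm functional (which is globally Lipschitz) is precisely the map one applies. There is nothing to add.
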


In applications the L\'evy measure $\nu_0$ which describes the limiting jump behaviour of the underlying process is usually unknown. If one is only interested in the detection of changes in the distribution function $N_\rho(\nu_0;\dfi_0)$ for a fixed $\dfi_0 \in \R$, the processes
\begin{align*}
\Vb_{\rho,\dfi_0}^{(n)}(\gseqi) := \frac{\Tb_\rho^{(n)}(\gseqi,\dfi_0)}{\sqrt{N^{\scriptscriptstyle (n)}_{\rho^2}(1,\dfi_0)}} \ind_{\{N^{\scriptscriptstyle (n)}_{\rho^2}(1,\dfi_0) >0\}}, \quad \gseqi \in [0,1]
\end{align*}
converge weakly to a shifted version of a pivotal limit process.

\begin{proposition}
	\label{poiwconKSdi}
	Under ${\bf H}_1^{(loc)}$ for each fixed $\dfi_0 \in \R$ with $N_{\rho^2}(\nu_0;\dfi_0) >0$ we have $\Vb_{\rho,\dfi_0}^{\scriptscriptstyle (n)} \weak \Kb + \bar \Vb_{\rho,\dfi_0}^{\scriptscriptstyle (g_1)}$ in $\linne$, where $\Kb$ denotes a standard Brownian bridge and with the deterministic function
	\[
	\bar \Vb_{\rho,\dfi_0}^{(g_1)}(\gseqi) := \frac{\Tb_{\rho,g_1}(\gseqi,\dfi_0)}{\sqrt{N_{\rho^2}(\nu_0;\dfi_0)}} \in \linne,
	\] 
	where $N_{\rho^2}(\nu_0;\cdot)$ is defined in \eqref{NrhonuDef}. In particular, 
	\begin{align}
	\label{Vrhosupc}
	V_{\rho,\dfi_0}^{(n)} := \sup\limits_{\gseqi \in [0,1]} \big|\Vb_{\rho,\dfi_0}^{(n)}(\gseqi) \big| \weak \bar V_{\rho,\dfi_0}^{(g_1)} := \sup\limits_{\gseqi \in [0,1]} \big| \Kb(\gseqi) + \bar \Vb_{\rho,\dfi_0}^{(g_1)}(\gseqi) \big|.
	\end{align}
\end{proposition}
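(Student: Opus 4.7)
\textbf{Proof plan for Proposition \ref{poiwconKSdi}.}

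The plan is to write $\Vb_{\rho,\dfi_0}^{(n)}$ as a ratio of two processes — the numerator $\Tb_\rho^{(n)}(\gseqi,\dfi_0)$ (viewed as an element of $\linne$) and the scalar denominator $\sqrt{N_{\rho^2}^{(n)}(1,\dfi_0)}$ — and then combine weak convergence of the first with consistency of the second via a Slutsky-type argument. First I would invoke Theorem \ref{CUSUMTrhowc}: since the slice map $z \mapsto z(\cdot,\dfi_0)$ from $\linner$ to $\linne$ is continuous, the continuous mapping theorem yields $\Tb_\rho^{(n)}(\cdot,\dfi_0) \weak \Tb_\rho(\cdot,\dfi_0) + \Tb_{\rho,g_1}(\cdot,\dfi_0)$ in $\linne$.

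The second ingredient is $N_{\rho^2}^{(n)}(1,\dfi_0) \probto N_{\rho^2}(\nu_0;\dfi_0)$. This does not fall immediately under Theorem \ref{ConvThm} applied to $\rho^2$, because $\rho^2$ has derivative growing like $|\taili|^{2p-1}$ rather than $|\taili|^{p-1}$, so one cannot just relabel. However, only the scalar consistency at the single point $(1,\dfi_0)$ is needed. I would argue directly: by the compensation/truncation arguments already developed in the proof of Theorem \ref{ConvThm} (and their $\rho^2$-variants, which require only the one-point analogue of the moment bounds there), one gets
\[
\Eb\big[N_{\rho^2}^{(n)}(1,\dfi_0)\big] \longrightarrow N_{\rho^2}(g_0;1,\dfi_0) \;=\; N_{\rho^2}(\nu_0;\dfi_0),
\]
where the last equality uses ${\bf H}_1^{(loc)}$'s assumption that $g_0(y,\cdot)\equiv \nu_0$ for $\lambda_1$-a.e.\ $y$, together with negligibility of the $(n\Delta_n)^{-1/2} g_1$ and $\Rc_n$ contributions in the limit. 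A variance bound of order $o(1)$ follows similarly from the fact that the truncated summands have second moment $O((n\Delta_n)^{-1})$ times the integrand, summed over $\ip{n}$ terms. Hence $N_{\rho^2}^{(n)}(1,\dfi_0) \probto N_{\rho^2}(\nu_0;\dfi_0) > 0$, and in particular $\ind_{\{N_{\rho^2}^{(n)}(1,\dfi_0) >0\}}\probto 1$.

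Combining the two by a standard joint-convergence argument (Slutsky applied in $\linne\times\R$ using that the scalar factor is deterministic in the limit), I obtain
\[
\Vb_{\rho,\dfi_0}^{(n)} \;\weak\; \frac{\Tb_\rho(\cdot,\dfi_0) + \Tb_{\rho,g_1}(\cdot,\dfi_0)}{\sqrt{N_{\rho^2}(\nu_0;\dfi_0)}} \quad \text{in } \linne.
\]
Next I identify the limit: the rescaled Gaussian part $\gseqi \mapsto \Tb_\rho(\gseqi,\dfi_0)/\sqrt{N_{\rho^2}(\nu_0;\dfi_0)}$ is a tight mean-zero Gaussian process in $\linne$ whose covariance, computed from \eqref{TrhoCovFkt}, equals $(\gseqi_1\wedge \gseqi_2) - \gseqi_1 \gseqi_2$. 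This is precisely the covariance of a standard Brownian bridge $\Kb$, and tightness together with the Gaussian nature pins down the law, so the Gaussian component is equal in distribution to $\Kb$. The deterministic remainder is exactly $\bar\Vb_{\rho,\dfi_0}^{(g_1)}$ by \eqref{shiftdef} and the definition of $\bar\Vb_{\rho,\dfi_0}^{(g_1)}$. Finally, \eqref{Vrhosupc} follows from the already-established weak convergence in $\linne$ together with the continuous mapping theorem applied to $z \mapsto \sup_{\gseqi\in[0,1]} |z(\gseqi)|$.

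The main obstacle I anticipate is justifying the consistency $N_{\rho^2}^{(n)}(1,\dfi_0) \probto N_{\rho^2}(\nu_0;\dfi_0)$, since $\rho^2$ violates the growth hypothesis imposed on $\rho$ in Assumption \ref{EasierCond}\eqref{EasrhoCond}. One either has to track carefully where this hypothesis is actually used in the proof of Theorem \ref{ConvThm} — in practice it enters only through moment bounds of the form $\int \rho(\taili)^2 g_0(y,d\taili)<\infty$ and truncation-error estimates — and note that these remain valid for $\rho^2$ under the parameter assumptions on $p$, or one must furnish a short direct argument for this scalar statement. Everything else (joint convergence, identification of the Brownian bridge, continuous mapping) is routine given Theorem \ref{CUSUMTrhowc}.
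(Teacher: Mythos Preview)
Your overall strategy coincides with the paper's: slice Theorem \ref{CUSUMTrhowc} at $\dfi_0$ for the numerator, establish consistency of $N_{\rho^2}^{(n)}(1,\dfi_0)$ for the denominator, combine via Slutsky in $\linne$, identify the rescaled Gaussian slice as a Brownian bridge from \eqref{TrhoCovFkt}, and finish \eqref{Vrhosupc} by continuous mapping.

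The one divergence is in the denominator step, and it stems from a misdiagnosis. The obstacle you anticipate---that $\rho^2$ fails Assumption \ref{EasierCond}\eqref{EasrhoCond}---is not real: since $\rho$ is \emph{bounded}, $|(\rho^2)'| = 2|\rho||\rho'| \leq 2\|\rho\|_\infty K |\taili|^{p-1}$, so $\rho^2$ satisfies the very same derivative bound with the same exponent $p$. (Your remark that $(\rho^2)'$ ``grows like $|\taili|^{2p-1}$'' conflates the sharper decay near $0$ with growth at infinity; boundedness of $\rho$ kills the latter.) The paper dispatches this step by observing---via \eqref{1mipgezEq} in the proof of Proposition \ref{easier}---that the moment condition in Assumption \ref{Cond1}\eqref{LevMeaMomCond} in fact holds with $2p$ in place of $p$, so Theorem \ref{ConvThm} applies verbatim to $\rho^2$ and gives $N_{\rho^2}^{(n)}(1,\dfi_0) - N_{\rho^2}(g^{(n)};1,\dfi_0) = o_\Prob(1)$ immediately. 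Your direct mean--variance argument would also work, but it is unnecessary extra labor once you recognize that $\rho^2$ already falls under the standing hypotheses.
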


Quantiles of functionals of the limit process $\Tb_\rho + \Tb_{\rho,g_1}$ in Theorem \ref{CUSUMTrhowc} are not easily accessible since the distribution of such functionals usually depends in a complicated way on the unknown quantities $\nu_0$ and $g_1$ in the jump characteristic of the underlying process. In order to obtain reasonable approximations for these quantiles we use a multiplier bootstrap approach. That is, in the following we consider bootstrapped processes, $\hat Y_n = \hat Y_n(X_1, \ldots, X_n, \xi_1, \ldots, \xi_n)$, which depend on random variables $X_1, \ldots, X_n$ defined on a probability space $(\Omega_X, \mathcal F_X, \mathbb P_X)$ and on random weights $\xi_1, \ldots, \xi_n$ which are defined on a distinct probability space $(\Omega_{\xi}, \mathcal F_{\xi}, \mathbb P_{\xi})$. Thus, the processes $\hat Y_n$ live on the product space
$(\Omega, \mathcal A,\mathbb P) \defeq (\Omega_X, \mathcal A_X, \mathbb P_X) \otimes (\Omega_{\xi}, \mathcal A_{\xi}, \mathbb P_{\xi})$.
Below we use the notion of weak convergence conditional on the sequence $(X_i)_{i\in\N}$ in probability. It can be found in \cite{Kos08} on pp.\ 19--20.

\begin{definition}
	\label{ConvcondDataDef}
	Let $\hat Y_n = \hat Y_n (X_1, \ldots ,X_n; \xi_1, \ldots , \xi_n) \colon (\Omega, \mathcal A,\mathbb P) \rightarrow \mathbb D$ be a random element taking values in some metric space $\mathbb D$ depending on some random variables $X_1, \ldots, X_n$ and some random weights $\xi_1, \ldots, \xi_n$. Moreover, let $Y$ be a tight, Borel measurable random variable into $\mathbb D$. Then $\hat Y_n$ converges weakly to $Y$ conditional on the data $X_1, X_2, \ldots$ in probability, if and only if
	\begin{enumerate}[(a)]
		\item \label{ConConvDia} $\sup \limits_{f \in \text{BL}_1(\mathbb D)} |\mathbb E_{\xi} f(\hat Y_n) - \mathbb E f(Y) | \pto 0,$
		\item \label{ConConvDib} $\mathbb E_{\xi} f( \hat Y_n)^{\ast} - \mathbb E_{\xi} f( \hat Y_n)_{\ast} \probto 0$ for all
		$f \in \text{BL}_1(\mathbb D).$
	\end{enumerate}
	Here, $\mathbb E_{\xi}$ denotes the conditional expectation over the weights $\xi$ given the data $X_1, \ldots, X_n$, whereas $\text{BL}_1(\mathbb D)$ is the space of all real-valued Lipschitz continuous functions $f$ on $\mathbb D$ with sup-norm $\| f \|_{\Db} \leq 1$ and Lipschitz constant $1$. Here and below we denote the sup-norm of a real valued function $f$ on a set $M$ by $\|f\|_M$. Furthermore, in item \eqref{ConConvDib} $f( \hat Y_n)^{\ast}$ and $f( \hat Y_n)_{\ast}$ denote a minimal measurable majorant and a maximal measurable minorant with respect to the joint probability space $(\Omega, \mathcal A,\mathbb P)$. The type of convergence defined above is denoted by $\hat Y_n \weakP Y$.
\end{definition}

\begin{remark}
	\label{rem:condweak}
	$~~$
	\begin{compactenum}[(i)]
		\item Throughout this work  all expressions $f(\hat Y_n)$, with a bootstrapped statistic $\hat Y_n$ and a Lip\-schitz continuous function $f$, are measurable functions of the random weights. To this end we do not use a measurable majorant or minorant in item \eqref{ConConvDia} in the definition above.
		\item The implication ``(ii) $\Rightarrow$ (i)'' in the proof of Theorem~2.9.6 in \cite{VanWel96} shows that conditional weak convergence $\weakP$ implies unconditional weak convergence $\weak$ with respect to the product measure $\mathbb P$.
	\end{compactenum}
\end{remark}

For the results on conditional weak convergence of the bootstrapped processes below we require a rather mild additional assumption on the sequence of multipliers, which is satisfied for many common distributions such as for instance the Gaussian, the Poisson or the Binomial distribution.

\begin{assumption}
	\label{MultiplAss}
	The sequence $(\xi_i)_{i \in \N}$ is defined on a distinct probability space than the one generating the data $\{X_{i\Delta_n}^{\scriptscriptstyle (n)} \mid i=0,1,\ldots,n\}$ as described above, is i.i.d.\ with mean zero, variance one and there exists an $M >0$ such that for each integer $m \geq 2$ we have
	\begin{align*}
	\Eb |\xi_1|^m \leq m! M^m.
	\end{align*}
\end{assumption}

Reasonable bootstrap counterparts $\hat \Tb_\rho^{\scriptscriptstyle (n)}$ of the processes $\Tb_\rho^{\scriptscriptstyle (n)}$ are given by 

\begin{align}
\label{TbrhohaDef}
&\hat \Tb_\rho^{(n)} (\gseqi,\dfi) := \hat \Tb_\rho^{(n)} (X^{(n)}_{\Delta_n}, \ldots, X^{(n)}_{n\Delta_n}; \xi_1,\ldots,\xi_n;\gseqi,\dfi) :=  \nonumber \\
&\hspace{6mm}= \sqrt{n\Delta_n} \frac{\ip{n\gseqi}}n \frac{n-\ip{n\gseqi}}n \Big[ \frac 1{\ip{n\gseqi} \Delta_n} \sum\limits_{j=1}^{\ip{n\gseqi}} \xi_j \rho(\Delj X^{(n)}) \Indit(\Delj X^{(n)}) \ind_{\{|\Delj X^{(n)}| > v_n\}}  \nonumber \\
&\hspace{31mm}- \frac 1{(n-\ip{n\gseqi})\Delta_n} \sum\limits_{j=\ip{n \gseqi}+1}^n \xi_j \rho(\Delj X^{(n)}) \Indit(\Delj X^{(n)}) \ind_{\{|\Delj X^{(n)}| > v_n\}}  \Big].
\end{align}

In the following theorem we establish conditional weak convergence of $\hat \Tb_\rho^{\scriptscriptstyle (n)}$ under the general assumptions of Section \ref{sec:Asss}.

\begin{theorem}
	\label{BootTrhoThm}
	Let Assumption \ref{EasierCond} be valid and let the multipliers $(\xi_j)_{j\in\N}$ satisfy Assumption \ref{MultiplAss}. Then we have
	\begin{align*}
	\hat \Tb_\rho^{(n)} \weakP \Tb_\rho
	\end{align*}
	in $\linner$, where $\Tb_\rho$ is a tight mean zero Gaussian process in $\linner$ with covariance function 
	\begin{align}
	\label{Tbrbcov}
	\Eb\{\Tb_\rho(\gseqi_1,\dfi_1) \Tb_\rho(\gseqi_2,\dfi_2)\} &= \int_0^{\gseqi_1\wedge \gseqi_2} \int_{-\infty}^{\dfi_1 \wedge \dfi_2} \rho^2(\taili) g_0(y,d\taili) dy - \gseqi_1 \int_0^{\gseqi_2} \int_{-\infty}^{\dfi_1 \wedge \dfi_2} \rho^2(\taili) g_0(y,d\taili) dy  \nonumber \\ &\hspace{7mm}- \gseqi_2 \int_0^{\gseqi_1} \int_{-\infty}^{\dfi_1 \wedge \dfi_2} \rho^2(\taili) g_0(y,d\taili) dy  + \gseqi_1 \gseqi_2 \int_0^1 \int_{-\infty}^{\dfi_1 \wedge \dfi_2} \rho^2(\taili) g_0(y,d\taili) dy.
	\end{align}
\end{theorem}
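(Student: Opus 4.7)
The plan is to deduce the theorem from the conditional weak convergence of the partial-sum process $\hat G_\rho^{(n)}$ provided by Theorem~\ref{CondConvThm}. A direct manipulation of the two weighted sums appearing in the definition of $\hat \Tb_\rho^{(n)}$, in which the prefactor $\sqrt{n\Delta_n}\,\lfloor n\gseqi\rfloor(n-\lfloor n\gseqi\rfloor)/n^2$ is carried into the brackets, collapses the bootstrap CUSUM to the telescoping form
\begin{align*}
\hat \Tb_\rho^{(n)}(\gseqi,\dfi) = \hat G_\rho^{(n)}(\gseqi,\dfi) - \frac{\lfloor n\gseqi \rfloor}{n}\,\hat G_\rho^{(n)}(1,\dfi),
\end{align*}
i.e.\ exactly the CUSUM map applied to the multiplier empirical process introduced before the statement of the theorem.

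Next I would pass to the limit by introducing the bounded linear operator $\phi\colon \linner \to \linner$ with $\phi(h)(\gseqi,\dfi) = h(\gseqi,\dfi) - \gseqi\,h(1,\dfi)$ and writing
\begin{align*}
\hat \Tb_\rho^{(n)}(\gseqi,\dfi) = \phi(\hat G_\rho^{(n)})(\gseqi,\dfi) + \Big(\gseqi - \tfrac{\lfloor n\gseqi\rfloor}{n}\Big)\hat G_\rho^{(n)}(1,\dfi).
\end{align*}
The remainder term is uniformly bounded by $n^{-1}\|\hat G_\rho^{(n)}(1,\cdot)\|_{\R}$, and hence converges to zero in outer probability thanks to the conditional asymptotic tightness of $\hat G_\rho^{(n)}(1,\cdot)$ in $\linfr$ contained in Theorem~\ref{CondConvThm}. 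Since $\phi$ is continuous on $\linner$, a conditional continuous mapping argument (which follows from the definition of $\weakP$ in combination with the reasoning on pp.~19--20 of \cite{Kos08}) together with a conditional Slutsky-type step yields
\begin{align*}
\hat \Tb_\rho^{(n)} \weakP \phi(G_\rho) \defeq \Tb_\rho,
\end{align*}
where $G_\rho$ is the tight mean-zero Gaussian limit of $\hat G_\rho^{(n)}$ whose covariance is $\int_0^{\gseqi_1\wedge \gseqi_2}\int_{-\infty}^{\dfi_1\wedge \dfi_2}\rho^2(\taili)g_0(y,d\taili)dy$.

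The final step is the covariance identification: expanding $\Eb[\Tb_\rho(\gseqi_1,\dfi_1)\Tb_\rho(\gseqi_2,\dfi_2)]$ bilinearly produces four cross-terms that reproduce the four summands of \eqref{Tbrbcov} term-by-term, once the covariance formula for $G_\rho$ is substituted. I expect the main obstacle not to be the algebra but the rigorous transfer of the continuous mapping theorem into the conditional weak convergence framework together with the handling of the floor-function discrepancy uniformly in $(\gseqi,\dfi)\in\netir$; continuity of $\phi$ alone treats only the leading term, so one must exploit the conditional asymptotic tightness already furnished by Theorem~\ref{CondConvThm} to ensure that the $O(n^{-1})$ prefactor multiplying $\hat G_\rho^{(n)}(1,\cdot)$ indeed generates a conditionally negligible error in sup-norm.
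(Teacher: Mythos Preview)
Your proposal is correct and follows essentially the same route as the paper: the paper writes $\hat \Tb_\rho^{(n)} = h_n(\hat G_\rho^{(n)})$ with $h_n(f)(\gseqi,\dfi)=f(\gseqi,\dfi)-\tfrac{\ip{n\gseqi}}{n}f(1,\dfi)$ and $h_0=\phi$ in your notation, then applies Proposition~10.7 in \cite{Kos08} (the conditional continuous mapping theorem you allude to) together with Theorem~\ref{CondConvThm} to obtain $h_0(\hat G_\rho^{(n)})\weakP h_0(\Gb_\rho)$, handles the floor discrepancy exactly as you do via $|\gseqi-\ip{n\gseqi}/n|\,\|\hat G_\rho^{(n)}(1,\cdot)\|_\R=o_\Prob(1)$, and finishes with Lemma~\ref{oP1glzcwecole} (the conditional Slutsky-type lemma you invoke). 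The covariance computation is identical.
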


\begin{remark} The aim of our bootstrap procedure is to mimic the convergence behaviour of $\Tb_\rho^{\scriptscriptstyle (n)}$. The covariance function of the limiting process in Theorem \ref{BootTrhoThm} differs from \eqref{TrhoCovFkt}, because Theorem \ref{BootTrhoThm} holds under the general conditions introduced in Assumption \ref{EasierCond}, i.e.\ for an arbitrary kernel $g_0 \in \Gc(\beta,p)$. Under the null hypothesis ${\bf H}_0$, where we have $g_0(\cdot,dz) = \nu_0(dz)$, the covariance function \eqref{Tbrbcov} coincides with \eqref{TrhoCovFkt}.
\end{remark}

The limit distribution of the Kolmogorov-Smirnov-type test statistic $T_\rho^{\scriptscriptstyle (n)}$ in Corollary \ref{KolSmiConvRes} can be approximated under $\textbf{H}_0$ by the bootstrap statistics in the following corollary, which is an immediate consequence of Proposition 10.7 in \cite{Kos08}.

\begin{corollary}
	\label{haTroCons}
	If Assumption \ref{EasierCond} and Assumption \ref{MultiplAss} are satisfied, we have
	\begin{align*}
	\hat T_\rho^{(n)} := \sup \limits_{(\gseqi,\dfi) \in \netir} | \hat \Tb_\rho^{(n)}(\gseqi,\dfi)| \weakP T_\rho := \sup \limits_{(\gseqi,\dfi) \in \netir} \big|\Tb_\rho(\gseqi,\dfi)  \big|,
	\end{align*}
	with $\Tb_\rho$ the limit process in Theorem \ref{BootTrhoThm}.
\end{corollary}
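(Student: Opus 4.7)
The claim is an immediate application of the continuous mapping theorem for conditional weak convergence in probability to Theorem~\ref{BootTrhoThm}; the plan is to make the reduction explicit.

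First I would introduce the sup-norm functional $\phi \colon \linner \to \R$ defined by $\phi(f) := \|f\|_{\netir}$. Since
\[
|\phi(f) - \phi(g)| \leq \|f-g\|_{\netir}
\]
for all $f,g \in \linner$, this map is Lipschitz with constant one, hence continuous. By construction $\phi(\hat \Tb_\rho^{(n)}) = \hat T_\rho^{(n)}$ and $\phi(\Tb_\rho) = T_\rho$, so the assertion is equivalent to $\phi(\hat \Tb_\rho^{(n)}) \weakP \phi(\Tb_\rho)$.

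Second, Theorem~\ref{BootTrhoThm} provides $\hat \Tb_\rho^{(n)} \weakP \Tb_\rho$ in $\linner$, with a tight, Borel measurable limit process. Proposition~10.7 in \cite{Kos08}, which is exactly the continuous mapping theorem for weak convergence in probability conditional on the data, applied to the continuous functional $\phi$ then yields the desired conditional weak convergence. To make the transfer of the two defining conditions of $\weakP$ transparent, one observes that for every $f \in \text{BL}_1(\R)$ the composition $f \circ \phi$ lies in $\text{BL}_1(\linner)$ (with Lipschitz constant and sup-norm bounded by one), so both items of Definition~\ref{ConvcondDataDef} pass from $\hat \Tb_\rho^{(n)}$ to $\hat T_\rho^{(n)}$ by taking suprema over the smaller class $\{f \circ \phi : f \in \text{BL}_1(\R)\} \subset \text{BL}_1(\linner)$.

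The only delicate point, and the step that would be the main obstacle if Proposition~10.7 were not available off the shelf, is the joint measurability of $\hat T_\rho^{(n)}$ on the product space carrying both the data and the multipliers. This is unproblematic here because $(\gseqi,\dfi) \mapsto \hat \Tb_\rho^{(n)}(\gseqi,\dfi)$ is piecewise constant in $\gseqi$ with jumps confined to the grid $\{1/n,2/n,\ldots,1\}$ and right-continuous in $\dfi$, so the supremum over $\netir$ agrees with the supremum over a countable dense subset. Consequently $\hat T_\rho^{(n)}$ is a bona fide random variable on $(\Omega,\mathcal{A},\Prob)$, and the continuous mapping argument above applies verbatim.
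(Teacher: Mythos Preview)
Your proof is correct and follows exactly the approach indicated in the paper, which states that the corollary ``is an immediate consequence of Proposition 10.7 in \cite{Kos08}'' applied to Theorem~\ref{BootTrhoThm}. You have simply made explicit the Lipschitz continuity of the sup-norm functional and the measurability issue (the latter is handled in the paper by Remark~\ref{rem:condweak}(i)), so there is nothing to add.
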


\subsection{Test procedures for abrupt changes}
\label{sec:tfacha}

The weak convergence results of the previous section make it possible to define test procedures for abrupt changes in the jump behaviour of the underlying process based on L\'evy distribution functions of type \eqref{NrhoDef}. In the following let $B \in \N$ be some large number and let $(\xi^{\scriptscriptstyle (b)})_{b = 1, \ldots,B}$ be independent vectors of i.i.d.\ random variables $\xi^{\scriptscriptstyle (b)} = (\xi_j^{\scriptscriptstyle (b)})_{j=1,\ldots,n}$ with mean zero and variance one, which satisfy Assumption \ref{MultiplAss}. With $\hat \Tb_{\scriptscriptstyle \rho,\xi^{\scriptscriptstyle (b)}}^{\scriptscriptstyle (n)}$ and $\hat T_{\scriptscriptstyle \rho,\xi^{\scriptscriptstyle (b)}}^{\scriptscriptstyle (n)}$ we denote the corresponding bootstrapped quantity calculated with respect to the data and the $b$-th multiplier sequence $\xi^{\scriptscriptstyle (b)}$. For a given level $\alpha \in (0,1)$, we propose to reject ${\bf H}_0$ in favor of ${\bf H}_1$, if
\begin{align}
\label{testvfkglob}
T_\rho^{(n)} \geq \hat q_{1-\alpha}^{(B)} \Big( T_\rho^{(n)} \Big),
\end{align}
where $\hat q_{1-\alpha}^{\scriptscriptstyle (B)} ( T_\rho^{\scriptscriptstyle (n)} )$ denotes the $(1-\alpha)$-sample quantile of $\hat T_{\scriptscriptstyle \rho,\xi^{\scriptscriptstyle (1)}}^{\scriptscriptstyle (n)}, \ldots , \hat T_{\scriptscriptstyle \rho,\xi^{\scriptscriptstyle (B)}}^{\scriptscriptstyle (n)}$. Similarly, for $\dfi_0 \in \R$, ${\bf H}_0$ is rejected in favor of ${\bf H}_1^{\scriptscriptstyle (\rho,\dfi_0)}$, if 
\begin{align}
\label{testvfklok}
W_\rho^{(n,\dfi_0)} := \sup \limits_{\gseqi \in [0,1]} |\Tb_\rho^{(n)}(\gseqi,\dfi_0) | \geq \hat q_{1 - \alpha}^{(B)}\Big( W_\rho^{(n,\dfi_0)} \Big),
\end{align}
where $\hat q_{1 - \alpha}^{\scriptscriptstyle (B)}( W_\rho^{\scriptscriptstyle (n,\dfi_0)} )$ denotes the $(1-\alpha)$-sample quantile of $\hat W_{\scriptscriptstyle \rho, \xi^{\scriptscriptstyle (1)}}^{\scriptscriptstyle (n,\dfi_0)}, \ldots, \hat W_{\scriptscriptstyle \rho, \xi^{\scriptscriptstyle (B)}}^{\scriptscriptstyle (n,\dfi_0)}$, and where $\hat W_{\scriptscriptstyle \rho, \xi^{\scriptscriptstyle (b)}}^{\scriptscriptstyle (n,\dfi_0)} $ $:= \sup_{\gseqi \in [0,1]} |\hat \Tb_{\scriptscriptstyle \rho,\xi^{\scriptscriptstyle (b)}}^{\scriptscriptstyle (n)}(\gseqi, \dfi_0)|$ for $b= 1, \ldots, B$. Furthermore, according to Proposition \ref{poiwconKSdi} we define an exact test procedure, that is ${\bf H}_0$ is rejected in favor of the point-wise alternative ${\bf H}_1^{\scriptscriptstyle (\rho,\dfi_0)}$, if
\begin{align}
\label{testvfklokex}
V_{\rho,\dfi_0}^{(n)} \geq q_{1-\alpha}^K,
\end{align}
where $q_{1-\alpha}^{\scriptscriptstyle K}$ is the $(1-\alpha)$-quantile of the Kolmogorov-Smirnov-distribution, that is the distribution of the supremum of a standard Brownian bridge $K= \sup_{\gseqi \in [0,1]} |\Kb(\gseqi)|$. \\
The following results show the behaviour of the previously introduced tests under the null hypothesis, local alternatives and the alternatives of an abrupt change. In particular, these tests are consistent asymptotic level $\alpha$ tests. First, recall the tight centered Gaussian process $\Tb_\rho$ in $\linner$  with covariance function \eqref{TrhoCovFkt}, let $L_\rho: (\R,\Bb) \to (\R,\Bb)$ be the distribution function of the supremum variable $\sup_{(\gseqi,\dfi) \in \netir} |\Tb_\rho(\gseqi,\dfi)|$ and let $L_\rho^{\scriptscriptstyle (\dfi_0)}$ be the distribution function of $\sup_{\gseqi \in [0,1]} |\Tb_\rho(\gseqi,\dfi_0)|$. Furthermore, recall the random variable
\[
T_{\rho,g_1} = \sup \limits_{(\gseqi,\dfi) \in \netir} \big|\Tb_\rho(\gseqi,\dfi) +  \Tb_{\rho,g_1} (\gseqi,\dfi) \big|,
\]
defined in \eqref{Trhsugpdef} with the deterministic function
\begin{equation*}
\Tb_{\rho,g_1} (\gseqi,\dfi) = N_\rho(g_1;\gseqi,\dfi) - \gseqi N_\rho(g_1;1,\dfi),
\end{equation*}
defined in \eqref{shiftdef} and let
\[
T_{\rho,g_1}^{(\dfi_0)} := \sup \limits_{\gseqi \in [0,1]} \big|\Tb_\rho(\gseqi,\dfi_0) + \Tb_{\rho,g_1} (\gseqi,\dfi_0) \big|.
\] 
Then the results on consistency of the tests are as follows.

\begin{proposition}
	\label{ConsuH1loc}
	Under ${\bf H}_1^{(loc)}$ with $\nu_0 \neq 0$ 
	\begin{multline}
	\label{Prop3101}
	\Prob\big(L_\rho(T_{\rho,g_1}) > 1-\alpha\big) \leq \liminf_{B \to \infty} \lim_{n\to\infty} \Prob\big( T_\rho^{(n)} \geq \hat q_{1-\alpha}^{(B)} \big( T_\rho^{(n)}\big) \big)  \\ \leq \limsup_{B \to \infty} \lim_{n\to\infty} \Prob\big( T_\rho^{(n)} \geq \hat q_{1-\alpha}^{(B)} \big( T_\rho^{(n)}\big) \big) \leq \Prob\big(L_\rho(T_{\rho,g_1}) \ge 1-\alpha\big)
	\end{multline}
	holds for each $\alpha \in (0,1)$ and additionally if $N_{\rho^2}(\nu_0,\dfi_0) >0$ then for all $\alpha \in (0,1)$ we have
	\begin{equation}
	\label{Prop3102}
	\Prob\big(\bar V_{\rho,\dfi_0}^{(g_1)} > q_{1-\alpha}^K \big) \le \liminf_{n\to\infty} \Prob\big(V_{\rho,\dfi_0}^{(n)} \geq q_{1-\alpha}^K\big) \leq \limsup_{n\to\infty} \Prob\big(V_{\rho,\dfi_0}^{(n)} \geq q_{1-\alpha}^K\big) \le \Prob\big(\bar V_{\rho,\dfi_0}^{(g_1)} \ge q_{1-\alpha}^K \big),
	\end{equation}
	with $V_{\rho,\dfi_0}^{(n)}$ and $\bar V_{\rho,\dfi_0}^{(g_1)}$ defined in \eqref{Vrhosupc}, as well as
	\begin{multline}
	\label{Prop3103}
	\Prob\big(L_\rho^{(\dfi_0)}\big(T_{\rho,g_1}^{(\dfi_0)}\big) > 1-\alpha\big) \leq \liminf_{B \to \infty} \lim_{n\to\infty} \Prob\big(W_\rho^{(n,\dfi_0)} \geq \hat q_{1 - \alpha}^{(B)}\big( W_\rho^{(n,\dfi_0)} \big) \big)  \\ \leq \limsup_{B \to \infty} \lim_{n\to\infty} \Prob\big(W_\rho^{(n,\dfi_0)} \geq \hat q_{1 - \alpha}^{(B)}\big( W_\rho^{(n,\dfi_0)} \big) \big) \leq \Prob\big(L^{(\dfi_0)}_\rho\big(T^{(\dfi_0)}_{\rho,g_1}\big) \ge 1-\alpha\big).
	\end{multline}
\end{proposition}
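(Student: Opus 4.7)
The plan is to reduce each of the three rejection statements to the corresponding weak-convergence result already available (Corollary~\ref{KolSmiConvRes}, Proposition~\ref{poiwconKSdi} and Corollary~\ref{haTroCons}) and combine it with standard consistency of empirical quantiles together with the portmanteau theorem. A crucial preliminary observation is that under ${\bf H}_1^{(loc)}$ the kernel $g_0(y,dz)=\nu_0(dz)$ is constant in $y$, so that the bootstrap covariance \eqref{Tbrbcov} reduces to the unshifted covariance \eqref{TrhoCovFkt}; the bootstrap therefore targets the correct null-limit distribution $\Tb_\rho$ even when the data are generated under the local alternative.

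\textbf{Proof of \eqref{Prop3101} and \eqref{Prop3103}.} Fix $B\in\N$. From Corollary~\ref{KolSmiConvRes} and Corollary~\ref{haTroCons}, together with the fact that the vectors $\xi^{(1)},\ldots,\xi^{(B)}$ are mutually independent and defined on a probability space independent of the data, I would derive the unconditional joint weak convergence
\[
\bigl(T_\rho^{(n)},\hat T_{\rho,\xi^{(1)}}^{(n)},\ldots,\hat T_{\rho,\xi^{(B)}}^{(n)}\bigr) \weak \bigl(T_{\rho,g_1},T_\rho^{(1)},\ldots,T_\rho^{(B)}\bigr),
\]
where $T_\rho^{(1)},\ldots,T_\rho^{(B)}$ are i.i.d.\ copies of $T_\rho$, mutually independent and independent of $T_{\rho,g_1}$. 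Because $T_\rho$ has a continuous law on $\R$, the empirical $(1-\alpha)$-quantile map is almost surely continuous at the limit configuration, so the continuous mapping theorem yields joint convergence of $(T_\rho^{(n)},\hat q^{(B)}_{1-\alpha}(T_\rho^{(n)}))$ to $(T_{\rho,g_1},\hat q^{(B)}_{1-\alpha})$, where $\hat q^{(B)}_{1-\alpha}$ denotes the empirical quantile of $T_\rho^{(1)},\ldots,T_\rho^{(B)}$. Applying the portmanteau theorem to the open set $\{(t,q):t>q\}$ and to the closed set $\{(t,q):t\geq q\}$ then gives, for each fixed $B$,
\[
\Prob\bigl(T_{\rho,g_1}>\hat q^{(B)}_{1-\alpha}\bigr) \;\leq\; \liminf_{n\to\infty}\Prob\bigl(T_\rho^{(n)}\geq \hat q^{(B)}_{1-\alpha}(T_\rho^{(n)})\bigr) \;\leq\; \limsup_{n\to\infty}\Prob\bigl(T_\rho^{(n)}\geq \hat q^{(B)}_{1-\alpha}(T_\rho^{(n)})\bigr) \;\leq\; \Prob\bigl(T_{\rho,g_1}\geq \hat q^{(B)}_{1-\alpha}\bigr).
\]
Standard strong consistency of sample quantiles gives $\hat q^{(B)}_{1-\alpha}\to c_\alpha:=\inf\{x:L_\rho(x)\geq 1-\alpha\}$ almost surely as $B\to\infty$. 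Since $T_{\rho,g_1}$ is independent of $(\hat q^{(B)}_{1-\alpha})_B$, a Fatou/bounded-convergence argument combined with the definition of $c_\alpha$ and right-continuity of $L_\rho$ delivers
\[
\liminf_{B\to\infty}\Prob\bigl(T_{\rho,g_1}>\hat q^{(B)}_{1-\alpha}\bigr)\geq \Prob\bigl(L_\rho(T_{\rho,g_1})>1-\alpha\bigr), \qquad \limsup_{B\to\infty}\Prob\bigl(T_{\rho,g_1}\geq \hat q^{(B)}_{1-\alpha}\bigr)\leq \Prob\bigl(L_\rho(T_{\rho,g_1})\geq 1-\alpha\bigr),
\]
which combined with the previous display yields \eqref{Prop3101}. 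The argument for \eqref{Prop3103} is verbatim the same after replacing the $(\gseqi,\dfi)$-indexed processes by their $\dfi_0$-sections $\gseqi\mapsto \Tb_\rho^{(n)}(\gseqi,\dfi_0)$ and $L_\rho$ by $L_\rho^{(\dfi_0)}$.

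\textbf{Proof of \eqref{Prop3102} and the main obstacle.} For the exact test the threshold $q_{1-\alpha}^K$ is deterministic, so no bootstrap argument is needed: Proposition~\ref{poiwconKSdi} supplies $V_{\rho,\dfi_0}^{(n)}\weak \bar V_{\rho,\dfi_0}^{(g_1)}$, and a single application of the portmanteau theorem to the half-lines $(q_{1-\alpha}^K,\infty)$ and $[q_{1-\alpha}^K,\infty)$ yields \eqref{Prop3102}. I expect the main technical obstacle to lie in the joint unconditional convergence invoked in the previous paragraph: Theorem~\ref{BootTrhoThm} provides only the \emph{conditional} statement $\hat T_\rho^{(n)}\weakP T_\rho$ in the sense of Definition~\ref{ConvcondDataDef}, and this has to be upgraded, for each fixed $B$, into unconditional joint convergence together with $T_\rho^{(n)}\weak T_{\rho,g_1}$, simultaneously producing asymptotic mutual independence of the $B$ bootstrap coordinates and of the CUSUM limit. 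The independence arises from the product-space construction in Definition~\ref{ConvcondDataDef} and from the fact that, under ${\bf H}_1^{(loc)}$, the conditional limit law of $\hat T_\rho^{(n)}$ is deterministic (it does not depend on the data); the formal derivation requires careful bookkeeping of minimal and maximal measurable envelopes in the spirit of the arguments of \cite{VanWel96}.
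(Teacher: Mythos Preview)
Your approach is correct and closely parallels the paper's; the joint unconditional convergence you flag as the ``main obstacle'' is exactly what the paper isolates as Proposition~\ref{JointConvProp} (proved by citing the analogous result in \cite{BueHofVetDet15}), so both proofs rest on the same key input. The one genuine methodological difference is the parametrization of the finite-$B$ step: you pass to the pair $(T_\rho^{(n)},\hat q_{1-\alpha}^{(B)})$ and apply the portmanteau theorem to the half-planes $\{t>q\}$ and $\{t\ge q\}$, whereas the paper works dually with the empirical distribution function $L_{n,B}(x)=B^{-1}\sum_{b=1}^B\ind\{\hat T_{\rho,\xi^{(b)}}^{(n)}\le x\}$ evaluated at the random point $T_\rho^{(n)}$ and shows $L_{n,B}(T_\rho^{(n)})\weak L_B(T_{\rho,g_1})$ via the almost-surely continuous map $\Psi_{(B)}(x_0,\ldots,x_B)=B^{-1}\sum_b\ind\{x_b\le x_0\}$.

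What the paper's parametrization buys is the \emph{existence} of the inner limit in $n$, which the statement of the proposition asserts: since $L_B$ takes values in the finite set $\{0,1/B,\ldots,1\}$, the limit law $L_B(T_{\rho,g_1})$ puts no mass at any irrational $1-\alpha$, so for $\alpha\in(0,1)\setminus\Q$ the portmanteau theorem gives the exact equality $\lim_{n\to\infty}\Prob(T_\rho^{(n)}\ge\hat q_{1-\alpha}^{(B)})=\Prob(L_B(T_{\rho,g_1})\ge 1-\alpha)$; the Glivenko--Cantelli step then runs exactly as you describe, and the extension to rational $\alpha$ is done afterwards by monotonicity in $\alpha$ of both the outer bounds and the inner quantity. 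Your half-plane portmanteau argument, by contrast, yields only $\liminf_n$ and $\limsup_n$ bounds for each fixed $B$, so as written it proves the weaker statement with $\liminf_B\liminf_n$ and $\limsup_B\limsup_n$ in place of $\liminf_B\lim_n$ and $\limsup_B\lim_n$. This is easily repaired---either by adopting the paper's empirical-cdf device, or by noting that continuity of the law of $T_\rho$ forces $\Prob(T_{\rho,g_1}=\hat q_{1-\alpha}^{(B)})=0$ so that your two portmanteau bounds coincide---but it is worth making explicit.
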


\begin{remark}
According to Corollary 1.3 and Remark 4.1 in \cite{GaeMolRos07} the distribution function $L_\rho$ is continuous on $\R$ and strictly increasing on $\R_+$. Thus, \eqref{Prop3101} basically states that under the local alternative for large $B,n \in \N$ the probability that the test \eqref{testvfkglob} rejects the null hypothesis is approximately equal to the probability that the supremum of the shifted version $T_{\rho,g_1}$ exceeds the $(1-\alpha)$-quantile of the non-shifted version $T_{\rho,0}$. An analysis of the latter probability, which is beyond the scope of this paper, then shows in which direction, i.e.\ for which $g_1$, it is harder to distinguish the null hypothesis from the alternative. The assertions \eqref{Prop3102} and \eqref{Prop3103} can be interpreted in the same way.
\end{remark}

\begin{corollary}
	\label{prop:asledf}
	Under ${\bf H}_0$ the tests \eqref{testvfkglob}, \eqref{testvfklok} and \eqref{testvfklokex} have asymptotic level $\alpha$, that is if $\nu_0 \neq 0$ we have for each $\alpha \in (0,1)$
	\begin{align}
	\label{Cor3111}
	\lim\limits_{B\to \infty} \lim \limits_{n \to \infty} \Prob\big( T_\rho^{(n)} \geq \hat q_{1-\alpha}^{(B)} ( T_\rho^{(n)}) \big) = \alpha
	\end{align}
	and furthermore
	\begin{align}
	\label{Cor3112}
	\lim\limits_{n \to \infty} \Prob\big(V_{\rho,\dfi_0}^{(n)} \geq q_{1-\alpha}^K\big) = \alpha, \quad \lim \limits_{B \to \infty} \lim \limits_{n \to \infty} \Prob\big(W_\rho^{(n,\dfi_0)} \geq \hat q_{1 - \alpha}^{(B)}( W_\rho^{(n,\dfi_0)} ) \big) = \alpha,
	\end{align}
	holds for all $\alpha \in (0,1)$, if $N_{\rho^2}(\nu_0;\dfi_0) >0$.
\end{corollary}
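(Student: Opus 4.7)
The plan is to derive all three assertions directly from Proposition \ref{ConsuH1loc} by observing that under $\mathbf{H}_0$ the local-alternative setup degenerates in a favorable way: since $g_1 = 0$, the deterministic shift $\Tb_{\rho,g_1}$ in \eqref{shiftdef} vanishes identically on $\netir$, and consequently $\bar\Vb_{\rho,\dfi_0}^{(g_1)} \equiv 0$ on $[0,1]$. This collapses the random variables appearing in \eqref{Prop3101}--\eqref{Prop3103} as
\[
T_{\rho,g_1} = \sup_{(\gseqi,\dfi)\in\netir}|\Tb_\rho(\gseqi,\dfi)| = T_\rho, \qquad T_{\rho,g_1}^{(\dfi_0)} = \sup_{\gseqi\in[0,1]}|\Tb_\rho(\gseqi,\dfi_0)|, \qquad \bar V_{\rho,\dfi_0}^{(g_1)} = 0,
\]
so the sandwich bounds in Proposition \ref{ConsuH1loc} reduce, for each of the three tests, to inequalities of the form $\Prob(F(Z) > 1-\alpha) \le \cdots \le \Prob(F(Z) \ge 1-\alpha)$ with $F$ the distribution function of the relevant supremum variable $Z$.

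The second step is to close the gap in each sandwich by proving continuity of the limiting distribution. For \eqref{Cor3111}, the remark following Proposition \ref{ConsuH1loc} records that $L_\rho$ is continuous on $\R$ and strictly increasing on $\R_+$ (citing \cite{GaeMolRos07}); since $\nu_0 \neq 0$ the covariance kernel \eqref{TrhoCovFkt} is not identically zero, hence $T_\rho$ is a.s.\ strictly positive, so $L_\rho(T_\rho)$ is uniformly distributed on $(0,1)$ via the probability integral transform. This forces both bounds in \eqref{Prop3101} to equal $\alpha$ and yields \eqref{Cor3111}. For the exact test \eqref{testvfklokex}, the first identity in \eqref{Cor3112} is immediate: the distribution function of $K = \sup_{\gseqi\in[0,1]}|\Kb(\gseqi)|$ is the classical continuous Kolmogorov--Smirnov distribution, so $\Prob(K = q_{1-\alpha}^K) = 0$, and the sandwich in \eqref{Prop3102} (after setting $\bar\Vb_{\rho,\dfi_0}^{(g_1)} \equiv 0$) becomes an equality.

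For the local bootstrap test \eqref{testvfklok}, the second identity in \eqref{Cor3112} again follows by setting $\Tb_{\rho,g_1}(\cdot,\dfi_0) \equiv 0$ in \eqref{Prop3103} and verifying that $L_\rho^{(\dfi_0)}$, the distribution function of $\sup_{\gseqi\in[0,1]}|\Tb_\rho(\gseqi,\dfi_0)|$, is continuous at its $(1-\alpha)$-quantile. Under $\mathbf{H}_0$ and the assumption $N_{\rho^2}(\nu_0;\dfi_0) > 0$, the Gaussian process $\gseqi \mapsto \Tb_\rho(\gseqi,\dfi_0)$ has the non-degenerate Brownian-bridge-type covariance coming from \eqref{TrhoCovFkt}; its sup norm is therefore a non-constant seminorm of a Gaussian vector, so by the general continuity result for distributions of suprema of Gaussian processes (the same Tsirelson-type statement underlying Corollary 1.3 and Remark 4.1 of \cite{GaeMolRos07} used in the preceding remark), $L_\rho^{(\dfi_0)}$ is continuous on $\R$. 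Hence the upper and lower probabilities in \eqref{Prop3103} agree and equal $\alpha$.

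The work in this proof is entirely bookkeeping once Proposition \ref{ConsuH1loc} is available; the only point of real substance is the continuity of the distribution functions of the Gaussian suprema, and this is already referenced in the preceding remark. The mildly delicate check is that $T_\rho > 0$ a.s.\ (respectively $\sup_\gseqi |\Tb_\rho(\gseqi,\dfi_0)| > 0$ a.s.) so that the probability integral transform argument applies; both follow from the non-degeneracy of the limiting covariance \eqref{TrhoCovFkt} under the assumptions $\nu_0 \neq 0$ and $N_{\rho^2}(\nu_0;\dfi_0) > 0$.
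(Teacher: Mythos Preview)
Your proof is correct and follows essentially the same approach as the paper: specialize Proposition \ref{ConsuH1loc} to $g_1=0$, note that the shifts $\Tb_{\rho,g_1}$ and $\bar\Vb_{\rho,\dfi_0}^{(g_1)}$ vanish, and close the sandwich via continuity of the distribution functions of the Gaussian suprema (using \cite{GaeMolRos07}). One minor remark: the ``mildly delicate check'' that $T_\rho>0$ a.s.\ is in fact unnecessary, since continuity of $L_\rho$ alone already suffices for the probability integral transform to yield $L_\rho(T_\rho)\sim U(0,1)$.
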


\begin{proposition}
	\label{prop:conuH1}
	The tests \eqref{testvfkglob}, \eqref{testvfklok} and \eqref{testvfklokex} are consistent in the following sense: Under ${\bf H}_1$, for all $\alpha \in (0,1)$ and all $B \in \N$, we have
	\begin{align*}
	\lim \limits_{n \to \infty} \Prob\big( T_\rho^{(n)} \geq \hat q_{1-\alpha}^{(B)} ( T_\rho^{(n)}) \big) =1.
	\end{align*}
	Under ${\bf H}_1^{\scriptscriptstyle (\rho,\dfi_0)}$,  we have  for all $\alpha \in (0,1)$ and all $B \in \N$,
	\begin{align*}
	\lim\limits_{n \to \infty} \Prob\big(V_{\rho,\dfi_0}^{(n)} \geq q_{1-\alpha}^K\big) = 1 \quad \text{ and } \quad  \lim \limits_{n \to \infty} \Prob\big(W_\rho^{(n,\dfi_0)} \geq \hat q_{1 - \alpha}^{(B)}( W_\rho^{(n,\dfi_0)} ) \big) = 1.
	\end{align*}
\end{proposition}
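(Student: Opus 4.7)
The plan is to show that under $\mathbf{H}_1$ (resp.\ $\mathbf{H}_1^{(\rho,t_0)}$) the relevant test statistic diverges at rate $\sqrt{n\Delta_n}$, while the associated critical value stays stochastically bounded. This reduces consistency to a standard Slutsky argument. I will carry this out in three steps: (i) identify a deterministic shift in the CUSUM process that does not vanish, (ii) use the weak convergence of $G_\rho^{(n)}$ from Theorem~\ref{ConvThm} to conclude divergence, and (iii) control the bootstrap quantile via Theorem~\ref{BootTrhoThm}.

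For steps (i) and (ii), note that under $\mathbf{H}_1$ we have $g^{(n)}\equiv g_0$ with $g_0$ of the form \eqref{gabDef}. Since $\nu_1\neq\nu_2$ and $\rho(z)\neq 0$ for all $z\neq 0$ by Assumption~\ref{EasierCond}(a)\eqref{rhoneq0}, the signed measures $\rho(z)\nu_1(dz)$ and $\rho(z)\nu_2(dz)$ are distinct, so there exists $t_0\in\R$ with $N_\rho(\nu_1;t_0)\neq N_\rho(\nu_2;t_0)$. Inserting \eqref{gabDef} into \eqref{NrhoDef} and evaluating at $(\theta_0,t_0)$ yields
\[
 N_\rho(g_0;\theta_0,t_0) - \theta_0 N_\rho(g_0;1,t_0) \;=\; \theta_0(1-\theta_0)\bigl(N_\rho(\nu_1;t_0)-N_\rho(\nu_2;t_0)\bigr) \;=:\; c \;\neq\; 0.
\]
Decomposing
\[
 \Tb_\rho^{(n)}(\theta_0,t_0) \;=\; G_\rho^{(n)}(\theta_0,t_0) \;-\; \tfrac{\ip{n\theta_0}}n G_\rho^{(n)}(1,t_0) \;+\; \sqrt{n\Delta_n}\Bigl(N_\rho(g_0;\theta_0,t_0) - \tfrac{\ip{n\theta_0}}n N_\rho(g_0;1,t_0)\Bigr),
\]
Theorem~\ref{ConvThm} makes the first two summands $O_{\Prob}(1)$, while the deterministic third summand equals $\sqrt{n\Delta_n}(c+o(1))\to\pm\infty$. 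Hence $T_\rho^{(n)}\ge|\Tb_\rho^{(n)}(\theta_0,t_0)|\to\infty$ in probability.

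For step (iii), I use that Theorem~\ref{BootTrhoThm} is formulated under Assumption~\ref{EasierCond} for an arbitrary $g_0\in\Gc(\beta,p)$, so it remains valid under $\mathbf{H}_1$. Conditional weak convergence combined with the independence of the $B$ multiplier sequences yields joint unconditional weak convergence of $(\hat T_{\rho,\xi^{(1)}}^{(n)},\ldots,\hat T_{\rho,\xi^{(B)}}^{(n)})$ to $B$ i.i.d.\ copies of $T_\rho=\sup|\Tb_\rho|$. Since $B$ is fixed, the $\lceil(1-\alpha)B\rceil$-th order statistic of these $B$ variables is tight, hence $\hat q_{1-\alpha}^{(B)}(T_\rho^{(n)})=O_{\Prob}(1)$. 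Together with step~(ii) this gives $\Prob(T_\rho^{(n)}\ge\hat q_{1-\alpha}^{(B)}(T_\rho^{(n)}))\to 1$.

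The pointwise tests are handled analogously. Under $\mathbf{H}_1^{(\rho,t_0)}$ the shift at $\theta_0$ is exactly the same $c\neq 0$, so $W_\rho^{(n,t_0)}\ge|\Tb_\rho^{(n)}(\theta_0,t_0)|\to\infty$ in probability; coupled with the one-parameter analogue of step~(iii) this yields consistency of \eqref{testvfklok}. For the exact test \eqref{testvfklokex} I further observe that $N_\rho(\nu_1;t_0)\neq N_\rho(\nu_2;t_0)$ combined with $\rho\neq 0$ off the origin forces $N_{\rho^2}(\nu_i;t_0)>0$ for at least one $i$, whence $N_{\rho^2}(g_0;1,t_0)>0$; the continuous mapping theorem then delivers $V_{\rho,t_0}^{(n)}\to\infty$ in probability, which eventually exceeds the deterministic quantile $q_{1-\alpha}^K$. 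The main technical obstacle is step~(iii): the bootstrap must mimic a \emph{tight} Gaussian limit even though the data are generated under the alternative, but Theorem~\ref{BootTrhoThm} is stated at precisely the right level of generality to supply this.
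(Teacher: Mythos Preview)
Your proof is correct and follows essentially the same route as the paper: show that the test statistic diverges at rate $\sqrt{n\Delta_n}$ under the alternative while the bootstrap quantile remains $O_{\Prob}(1)$. The paper packages the divergence step via Proposition~\ref{prop:tndfh1} (convergence of $(n\Delta_n)^{-1/2}\Tb_\rho^{(n)}$ to the deterministic limit $T_{(1)}^\rho$) and the proof of Proposition~\ref{poiwconKSdi}, whereas you recompute the shift directly at the single point $(\theta_0,t_0)$; for the quantile, the paper simply notes that each $\hat T_{\rho,\xi^{(b)}}^{(n)}$ is $O_{\Prob}(1)$ by Theorem~\ref{BootTrhoThm} and Remark~\ref{rem:condweak}(ii), which is lighter than your joint-convergence argument but leads to the same conclusion.
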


\subsection{Argmax-estimators}
\label{argdfmaxeSe}

If one of the aforementioned tests rejects the null hypothesis in favor of an abrupt alternative the natural question arises of how to estimate the unknown break point $\gseqi_0$. A typical approach in change-point analysis to this estimation problem is the so-called argmax-estimator, that is we basically take the argmax of the function $\gseqi \mapsto \sup_{\dfi\in\R}|\Tb_\rho^{\scriptscriptstyle (n)}(\gseqi,\dfi)|$ as an estimate for $\gseqi_0$. Consistency of our estimators follows with the argmax continuous mapping theorem of \cite{KimPol90} using the following auxiliary result.

\begin{proposition}
	\label{prop:tndfh1}
	Under ${\bf H}_1$, the random function $(\theta,\dfi) \mapsto (n\Delta_n)^{\scriptscriptstyle -1/2} \Tb_\rho^{\scriptscriptstyle (n)}(\theta,\dfi)$ converges in $\linner$ to the function
	\begin{align*}
	T_{(1)}^\rho(\theta,\dfi) \defeq \begin{cases}
	\theta (1-\theta_0) \{ N_\rho(\nu_1 ;\dfi) - N_\rho(\nu_2; \dfi) \}, \quad \text{ if } \theta \leq \theta_0 \\
	\theta_0 (1- \theta) \{ N_\rho(\nu_1;\dfi) - N_\rho(\nu_2; \dfi) \}, \quad \text{ if } \theta \geq \theta_0
	\end{cases}
	\end{align*}
	in outer probability, where $N_\rho(\nu; \cdot)$ is defined in \eqref{NrhonuDef}.
\end{proposition}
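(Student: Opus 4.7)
The plan is to split the rescaled CUSUM process into a stochastic part that vanishes at rate $(n\Delta_n)^{-1/2}$ and a deterministic part that already converges uniformly to $T_{(1)}^\rho$. Concretely, writing
\[
(n\Delta_n)^{-1/2}\Tb_\rho^{(n)}(\gseqi,\dfi) = N_\rho^{(n)}(\gseqi,\dfi) - \tfrac{\ip{n\gseqi}}{n}N_\rho^{(n)}(1,\dfi),
\]
I would add and subtract the population analogue $N_\rho(g_0;\cdot,\cdot)$ to obtain the decomposition
\[
(n\Delta_n)^{-1/2}\Tb_\rho^{(n)}(\gseqi,\dfi) = (n\Delta_n)^{-1/2}\Bigl[G_\rho^{(n)}(\gseqi,\dfi) - \tfrac{\ip{n\gseqi}}{n}G_\rho^{(n)}(1,\dfi)\Bigr] + \Bigl[N_\rho(g_0;\gseqi,\dfi) - \tfrac{\ip{n\gseqi}}{n}N_\rho(g_0;1,\dfi)\Bigr],
\]
where $G_\rho^{(n)}$ is the process from Theorem \ref{ConvThm}. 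Note that under $\mathbf{H}_1$ Assumption \ref{EasierCond} holds with $g_1 = g_2 = 0$, so $g^{(n)} = g_0$ and Theorem \ref{ConvThm} applies directly.

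For the first bracket, Theorem \ref{ConvThm} gives weak convergence of $G_\rho^{(n)}$ in $\linner$ to a tight Gaussian process, hence $\sup_{(\gseqi,\dfi)}|G_\rho^{(n)}(\gseqi,\dfi)| = O_{\Prob^*}(1)$. Since $\ip{n\gseqi}/n \leq 1$ and $n\Delta_n \to \infty$, dividing by $\sqrt{n\Delta_n}$ shows that this bracket vanishes uniformly in outer probability.

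For the deterministic bracket, I would compute $N_\rho(g_0;\gseqi,\dfi)$ explicitly under $\mathbf{H}_1$ using the piecewise form \eqref{gabDef}: for $\gseqi \leq \gseqi_0$ it equals $\gseqi N_\rho(\nu_1;\dfi)$ and for $\gseqi > \gseqi_0$ it equals $\gseqi_0 N_\rho(\nu_1;\dfi) + (\gseqi-\gseqi_0)N_\rho(\nu_2;\dfi)$. In particular $N_\rho(g_0;1,\dfi) = \gseqi_0 N_\rho(\nu_1;\dfi) + (1-\gseqi_0) N_\rho(\nu_2;\dfi)$, so subtracting $\gseqi$ times this and simplifying case by case gives exactly $T_{(1)}^\rho(\gseqi,\dfi)$. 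Replacing $\gseqi$ by $\ip{n\gseqi}/n$ introduces an error bounded by $n^{-1}\sup_\dfi|N_\rho(g_0;1,\dfi)|$, which is $O(n^{-1})$ uniformly in $\dfi$ because $|\rho(z)| \lesssim |z|^p$ near zero together with $g_0 \in \Gc(\beta,p)$ (where $p > \beta$) ensures $\int\!\!\int |\rho(z)|\,g_0(y,dz)\,dy < \infty$.

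Combining the two contributions yields the uniform convergence in outer probability stated in the proposition. I do not anticipate a substantial obstacle here: the proof is essentially a deterministic calculation of the CUSUM limit plus a reduction to the already-established uniform tightness of $G_\rho^{(n)}$ from Theorem \ref{ConvThm}. The only point requiring a little care is the uniform bound on $N_\rho(g_0;1,\cdot)$, which is verified directly from Assumption \ref{EasierCond}\eqref{rhoandgass} and Definition \ref{rhoandgass2}.
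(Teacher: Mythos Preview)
Your proposal is correct and follows essentially the same approach as the paper: both reduce to the uniform consistency $\sup_{(\gseqi,\dfi)}|N_\rho^{(n)}(\gseqi,\dfi)-N_\rho(g_0;\gseqi,\dfi)|=o_{\Prob}(1)$, which is exactly your statement that $(n\Delta_n)^{-1/2}G_\rho^{(n)}$ vanishes uniformly, and then evaluate the deterministic CUSUM of $N_\rho(g_0;\cdot,\cdot)$ under the piecewise form \eqref{gabDef}. The paper is simply terser about the explicit case-by-case computation and the $\ip{n\gseqi}/n$ versus $\gseqi$ remainder, both of which you handle correctly.
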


For the test problem ${\bf H}_0$ versus ${\bf H}_1$ we consider the estimator
\begin{equation}
\label{argmaxsup}
\tilde \theta_\rho^{(n)} \defeq \operatorname{arg\,max}_{\theta \in [0,1]} \sup_{\dfi \in \R} \big|\Tb_\rho^{(n)}(\gseqi, \dfi)\big|
\end{equation}
and in the setup ${\bf H}_0$ versus ${\bf H}_1^{(\rho,\dfi_0)}$ a suitable estimator for the change point is given by
\begin{equation*}
\tilde \theta^{(n)}_{\rho,\dfi_0} \defeq \operatorname{arg\,max}_{\gseqi \in [0,1]} \big|\Tb_\rho^{(n)}(\gseqi,\dfi_0)\big|.
\end{equation*}
The following proposition establishes consistency of these estimators.

\begin{proposition}
	\label{prop:argmax}
	Under ${\bf H}_1$ we have $\tilde \gseqi_\rho^{\scriptscriptstyle (n)} = \gseqi_0 + o_\Prob(1)$ for $n \to \infty$ and if the special case ${\bf H}_1^{\scriptscriptstyle (\rho,\dfi_0)}$ is true we obtain $\tilde \gseqi^{\scriptscriptstyle (n)}_{\rho,\dfi_0} = \gseqi_0 + o_\Prob(1)$.
\end{proposition}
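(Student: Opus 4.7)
My plan is to combine Proposition \ref{prop:tndfh1} with a standard argmax-type continuous mapping argument in the spirit of \cite{KimPol90}. The deterministic factor $\sqrt{n\Delta_n}$ does not affect the location of the maximum, so I would work throughout with the rescaled process $M_n(\gseqi,\dfi) \defeq (n\Delta_n)^{-1/2}\Tb_\rho^{(n)}(\gseqi,\dfi)$. The map $f \mapsto (\gseqi \mapsto \sup_{\dfi\in\R}|f(\gseqi,\dfi)|)$ is $1$-Lipschitz from $\linner$ into $\linne$, and composing it with the outer-probability convergence $M_n \pto T_{(1)}^\rho$ supplied by Proposition \ref{prop:tndfh1} produces
\[
\phi_n(\gseqi) := \sup_{\dfi\in\R}\bigl|M_n(\gseqi,\dfi)\bigr| \pto \phi(\gseqi) := D_\rho \cdot \min\bigl\{\gseqi(1-\gseqi_0),\,\gseqi_0(1-\gseqi)\bigr\}
\]
uniformly in $\gseqi \in [0,1]$, where $D_\rho := \sup_{\dfi\in\R}\bigl|N_\rho(\nu_1;\dfi) - N_\rho(\nu_2;\dfi)\bigr|$.

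The crucial deterministic fact is then that $\phi$ is a tent function whose unique maximizer on $[0,1]$ is $\gseqi_0$, provided $D_\rho > 0$. To justify $D_\rho > 0$ I would invoke Assumption \ref{EasierCond}\eqref{rhoneq0}: if $D_\rho = 0$ then the finite signed measure $\rho(\taili)(\nu_1-\nu_2)(d\taili)$ agrees with $0$ on the $\pi$-system $\{(-\infty,\dfi] : \dfi\in\R\}$ and hence vanishes on $\Bb$; since $\rho$ is nonzero off the origin and L\'evy measures assign no mass to $\{0\}$, this would force $\nu_1 = \nu_2$, contradicting ${\bf H}_1$. Under the pointwise alternative ${\bf H}_1^{(\rho,\dfi_0)}$ the analogous role is played by the scalar $|N_\rho(\nu_1;\dfi_0)-N_\rho(\nu_2;\dfi_0)|$, which is strictly positive by definition of that alternative.

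The consistency conclusion is then reached by the standard subsequence route: $\pto$-convergence of $\phi_n$ to $\phi$ on the compact interval $[0,1]$ implies that every subsequence has a further subsequence along which $\|\phi_{n_k}-\phi\|_{[0,1]} \to 0$ outer almost surely, and along such a subsequence any cluster point $\gseqi^*$ of $\tilde\gseqi_\rho^{(n_k)}$ satisfies $\phi(\gseqi^*) = \sup_\gseqi \phi(\gseqi)$ and therefore equals $\gseqi_0$ by uniqueness, which proves $\tilde\gseqi_\rho^{(n)} = \gseqi_0 + o_\Prob(1)$. The pointwise estimator $\tilde\gseqi^{(n)}_{\rho,\dfi_0}$ is handled by the same scheme with $\phi_n$ replaced by $|M_n(\cdot,\dfi_0)|$, whose uniform limit is $|N_\rho(\nu_1;\dfi_0)-N_\rho(\nu_2;\dfi_0)|$ times the same tent function. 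The main obstacle is not of a deep mathematical nature but concerns measurability bookkeeping between outer probability, outer-a.s.\ convergence along subsequences, and the random argmax itself; I would resolve it by fixing a measurable selection of $\tilde\gseqi_\rho^{(n)}$ (for instance the leftmost one) and appealing to the characterization of $\pto$-convergence through outer-a.s.\ convergent subsequences.
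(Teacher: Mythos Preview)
Your proposal is correct and follows essentially the same route as the paper: rescale by $(n\Delta_n)^{-1/2}$, invoke Proposition~\ref{prop:tndfh1} to obtain uniform convergence to the tent function $\phi$, use Assumption~\ref{EasierCond}\eqref{rhoneq0} to secure $D_\rho>0$ and hence a unique maximizer at $\gseqi_0$, and conclude via an argmax continuous mapping argument \`a la \cite{KimPol90}. The only difference is cosmetic: the paper cites Theorem~2.7 of \cite{KimPol90} directly, whereas you spell out the underlying subsequence/cluster-point argument and the justification of $D_\rho>0$ in more detail.
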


\begin{remark}
	For the sake of convenience we have focused on the case of one single break. The results on the tests in Section \ref{sec:tfacha} also hold for alternatives with finitely many abrupt changes. Moreover,  the estimation methods depicted above can easily be extended to detect multiple change points by a standard binary segmentation algorithm dating back to \cite{Vos81}.
\end{remark}

\section{Statistical inference for gradual changes}
\label{sec:gradchadf}
\def\theequation{4.\arabic{equation}}
\setcounter{equation}{0}

As a generalization of Proposition \ref{prop:tndfh1} one can show that $(n\Delta_n)^{\scriptscriptstyle -1/2} \Tb_\rho^{\scriptscriptstyle (n)}(\theta,\dfi)$ converges in $\linner$ in outer probability  to the function $\Tb_{\rho,g_0}$ defined in \eqref{shiftdef} whenever Assumption \ref{EasierCond} is satisfied.
Thus, under some minor regularity conditions, argmax$_{\gseqi \in [0,1]} |\Tb_\rho^{\scriptscriptstyle (n)}(\gseqi,\dfi)|$ is a consistent estimator of argmax$_{\gseqi \in [0,1]} |\Tb_{\rho,g_0}(\gseqi,\dfi)|$. However, if the jump behaviour changes gradually at $\gseqi_0$, the function $\gseqi \mapsto |\Tb_{\rho,g_0}(\gseqi,\dfi)|$ is usually maximal at a point $\gseqi_1 > \gseqi_0$.
As a consequence the argmax-estimators investigated in Section \ref{argdfmaxeSe} usually overestimate a change point, if the change is not abrupt. Therefore, in this section we introduce test and estimation procedures which are tailored for gradual changes in the entire jump behaviour.


\subsection{A measure of time variation for the entire jump behaviour}

If the jump behaviour is given by \eqref{ComResAss} for some suitable transition kernel $g = g_0$ from $([0,1],$ $ \Bb([0,1]))$ into $(\R,\Bb)$, we follow \cite{VogDet15} and base our analysis of gradual changes on the quantity
\begin{align}
\label{entmeaotivDef}
D_\rho^{(g_0)}(\kseqi,\gseqi,\dfi) \defeq N_\rho(g_0;\kseqi,\dfi) - \frac \kseqi\gseqi N_\rho(g_0;\gseqi,\dfi), \quad (\kseqi,\gseqi,\dfi) \in C \times \R
\end{align}
with 
\begin{equation}
\label{cdef}
C := \{(\kseqi,\gseqi) \in [0,1]^2 \mid \kseqi \leq \gseqi \}
\end{equation} 
and where $N_\rho(g_0; \cdot, \cdot)$ is defined in \eqref{NrhoDef}. Here and throughout this paper we use the convention $\frac 00 \defeq 1$. We will address $D_\rho^{\scriptscriptstyle (g_0)}$ as the measure of time variation (with respect to $\rho$) of the entire jump behaviour of the underlying process, because the following lemma shows that $D_\rho^{\scriptscriptstyle (g_0)}$ indicates whether there is a change in the jump behaviour.
\begin{lemma}
\label{Drhg0suit}
Let $\gseqi \in [0,1]$. Then $D^{\scriptscriptstyle (g_0)}_\rho(\kseqi,\gseqi,\dfi) =0$ for all $0 \leq \kseqi \leq \gseqi$ and $\dfi \in \R$ if and only if the kernel $g_0(\cdot,d\taili)$ is Lebesgue almost everywhere constant on $[0,\gseqi]$.
\end{lemma}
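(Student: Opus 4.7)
The ``$\Leftarrow$'' implication is a one-line Fubini computation: if $g_0(y,\cdot)=\nu(\cdot)$ for Lebesgue a.e.\ $y\in[0,\gseqi]$ with some measure $\nu$, then for $0\le\kseqi\le\gseqi$
\[
N_\rho(g_0;\kseqi,\dfi)=\kseqi\int_{-\infty}^{\dfi}\rho(\taili)\,\nu(d\taili),
\]
and the two terms making up $D_\rho^{(g_0)}(\kseqi,\gseqi,\dfi)$ cancel. The substance lies in ``$\Rightarrow$'', which the outline below addresses.

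Writing $\psi_\dfi(y):=\int_{-\infty}^{\dfi}\rho(\taili)\,g_0(y,d\taili)$, the assumption $D_\rho^{(g_0)}\equiv 0$ reads
\[
\int_0^\kseqi \psi_\dfi(y)\,dy \;=\; \frac{\kseqi}{\gseqi}\int_0^\gseqi \psi_\dfi(y)\,dy \qquad\text{for all }\kseqi\in[0,\gseqi],\ \dfi\in\R.
\]
The right-hand side is affine in $\kseqi$ with slope $\gseqi^{-1}N_\rho(g_0;\gseqi,\dfi)$, so Lebesgue's differentiation theorem yields, for each fixed $\dfi$, a Lebesgue null set $L_\dfi\subset[0,\gseqi]$ outside of which $\psi_\dfi(\kseqi)=\gseqi^{-1}N_\rho(g_0;\gseqi,\dfi)$. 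Choosing a countable dense $Q\subset\R$ and taking $L:=\bigcup_{\dfi\in Q}L_\dfi$ enlarged by the exceptional Lebesgue null set from Definition~\ref{rhoandgass2}, one obtains a single null set such that for every $\kseqi\in[0,\gseqi]\setminus L$ and every $\dfi\in Q$,
\[
\psi_\dfi(\kseqi)=\gseqi^{-1}N_\rho(g_0;\gseqi,\dfi).
\]

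The crucial step is promoting this identity from $\dfi\in Q$ to $\dfi\in\R$. The growth controls encoded in $g_0\in\Gc(\beta,p)$ combined with the bound $|\rho(\taili)|\le K|\taili|^p$ near $0$ (which follows from Assumption~\ref{EasierCond}\eqref{EasrhoCond} and $\rho(0)=0$) give $\int|\rho(\taili)|\,g_0(\kseqi,d\taili)<\infty$ for every $\kseqi\notin L$; dominated convergence then makes both $\dfi\mapsto\psi_\dfi(\kseqi)$ and $\dfi\mapsto\gseqi^{-1}N_\rho(g_0;\gseqi,\dfi)$ right-continuous on $\R$, so the identity extends to all $\dfi\in\R$. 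Hence, for each $\kseqi\in[0,\gseqi]\setminus L$, the finite signed measure $\rho(\taili)\,g_0(\kseqi,d\taili)$ has a distribution function independent of $\kseqi$, whence the signed measures themselves coincide (half-lines form a $\pi$-system generating $\Bb$, and finite signed measures agreeing on such a system and at $+\infty$ are equal). Invoking $\rho(\taili)\neq 0$ for $\taili\neq 0$ (Assumption~\ref{EasierCond}\eqref{rhoneq0}) and $g_0(\kseqi,\{0\})=0$ (because $g_0(\kseqi,\cdot)$ possesses a Lebesgue density), one may divide by $\rho$ on $\R\setminus\{0\}$ and recover
\[
g_0(\kseqi,B)=\int_{B\setminus\{0\}}\rho(\taili)^{-1}\,\bigl(\rho(\taili)\,g_0(\kseqi,d\taili)\bigr),\qquad B\in\Bb,
\]
which is manifestly independent of $\kseqi\in[0,\gseqi]\setminus L$. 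This is precisely the Lebesgue a.e.\ constancy of $g_0(\cdot,d\taili)$ on $[0,\gseqi]$. The only non-routine point is the extension from countable $Q$ to all of $\R$; once the integrability bounds built into $\Gc(\beta,p)$ supply the required right-continuity, both the measure-theoretic identification of $\rho\cdot g_0$ and the inversion by $\rho^{-1}$ are immediate.
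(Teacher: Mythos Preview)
Your argument is correct and follows the same overall architecture as the paper's proof (pointwise identification of $\psi_\dfi(\kseqi)$, countable-to-full extension via right-continuity, then measure identification and division by $\rho$), but it differs in one substantive place. To pass from $\int_0^\kseqi\psi_\dfi(y)\,dy=\kseqi\,A_\gseqi(\dfi)$ to $\psi_\dfi(\kseqi)=A_\gseqi(\dfi)$ for a.e.\ $\kseqi$, the paper invokes Assumption~\ref{EasierCond}\eqref{jbidenass}, namely that $y\mapsto\psi_\dfi(y)$ is continuous off a finite set $M_{(\dfi)}$, and then applies the fundamental theorem of calculus. You instead use Lebesgue's differentiation theorem, which needs only $\psi_\dfi\in L^1[0,\gseqi]$---a consequence of the integrability bounds in $\Gc(\beta,p)$ that you already exploit later. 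This buys you a cleaner argument that does not rely on Assumption~\ref{EasierCond}\eqref{jbidenass} at all; the paper's remark after Assumption~\ref{EasierCond} singles out precisely this condition as being needed for the present lemma, so your route shows that it can in fact be dispensed with here. The remaining steps (taking a countable dense $Q$, enlarging by the exceptional null set from Definition~\ref{rhoandgass2}, extending to all $\dfi$ via dominated convergence, and inverting by $\rho^{-1}$ on $\R\setminus\{0\}$) match the paper's proof essentially verbatim.
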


According to the preceding lemma there exists a (gradual) change in the jump behaviour given by $g_0$ if and only if
\begin{align*}
\sup_{\gseqi \in [0,1]} \tilde \Dc^{(g_0)}_\rho(\gseqi) >0,
\end{align*}
where
$
\tilde \Dc^{(g_0)}_\rho(\gseqi) \defeq \sup\limits_{\dfi \in \R} \sup_{0\leq\kseqi\leq\gseqi}\big|D^{(g_0)}_\rho(\kseqi,\gseqi,\dfi)\big|.
$
As a consequence, the first point of a change in the jump behaviour is given by
\begin{align}
\label{entchanpoi}
\entcp  \defeq \inf \left\{ \gseqi \in [0,1] \mid \tilde \Dc^{(g_0)}_\rho(\gseqi) >0 \right\},
\end{align}
where we set $\inf \varnothing \defeq 1$. We call $\entcp$
the  change point of the jump behaviour of the underlying process. Notice that by the discussion after \eqref{cdef} the definition in \eqref{entchanpoi} is independent of $\rho$.
In Section \ref{subsec:cth0E} we construct an estimator for $\gseqi_0$, where we only consider the quantity  
\begin{align}
\label{supenmotv}
\mathcal D^{(g_0)}_\rho(\gseqi) \defeq \sup \limits_{\dfi \in \R} \sup \limits_{0 \leq \kseqi \leq \gseqi' \leq \gseqi} \big|D^{(g_0)}_\rho( \kseqi, \gseqi', \dfi)\big|, 
\end{align}
instead of $\tilde \Dc^{\scriptscriptstyle (g_0)}_\rho$. On the one hand the monotonicity of $\mathcal D^{\scriptscriptstyle (g_0)}_\rho$ simplifies our entire presentation and on the other hand the first time point where $\mathcal D^{\scriptscriptstyle (g_0)}_\rho$ deviates from $0$ is also given by $\gseqi_0$, so it is equivalent to consider $\mathcal D^{\scriptscriptstyle (g_0)}_\rho$ instead.  Our analysis of gradual changes is based on a consistent estimator $\Db_\rho^{\scriptscriptstyle (n)}$ of $D^{\scriptscriptstyle (g_0)}_\rho$ which we construct in Section \ref{subsec:Drhoe}. Before that we illustrate the quantities introduced in \eqref{entchanpoi} and \eqref{supenmotv} in the situations of Example \ref{Ex:Sitabcha} and Example \ref{Ex:SitgraCh}.

\begin{example}
	\label{exdf1}
	Recall the situation of an abrupt change as in Example \ref{Ex:Sitabcha}. Precisely, let $\beta \in (0,2)$, $p > 0$ and $\nu_1, \nu_2 \in \Mc(\beta,p)$ with $\nu_1 \neq \nu_2$ such that for some $\gseqi_0 \in (0,1)$ the transition kernel $g_0$ has the form
	\begin{align}
	\label{exabtrker}
	g_0(y,d\taili) = \begin{cases}
	\nu_1(d \taili), \quad &\text{ for } y \in [0,\gseqi_0], \\
	\nu_2(d \taili), \quad &\text{ for } y \in (\gseqi_0,1].
	\end{cases}
	\end{align}
	Obviously, for some function $\rho: \R \to \R$ such that Assumption \ref{EasierCond}\eqref{EasrhoCond} and \eqref{rhoneq0} are satisfied we have $D^{\scriptscriptstyle (g_0)}_\rho(\kseqi,\gseqi',\dfi) =0$ for each $(\kseqi,\gseqi',\dfi) \in C \times \R$ with $\gseqi' \leq \gseqi_0$ and consequently $\Dc^{\scriptscriptstyle (g_0)}_\rho(\gseqi) =0$ for each $\gseqi \leq \gseqi_0$. On the other hand, if $\gseqi_0 < \gseqi' \leq 1$ and $\kseqi \leq \gseqi_0$ we have
	\[
	D^{(g_0)}_\rho(\kseqi,\gseqi',\dfi) = \kseqi N_\rho(\nu_1;\dfi) - \frac\kseqi{\gseqi'}(\gseqi_0 N_\rho(\nu_1;\dfi) + (\gseqi' - \gseqi_0) N_\rho(\nu_2;\dfi)) = \kseqi (N_\rho(\nu_2;\dfi) - N_\rho(\nu_1;\dfi)) \big( \frac{\gseqi_0}{\gseqi'} - 1 \big)
	\]
	with \( N_\rho(\nu;\dfi) \) defined in \eqref{NrhonuDef} and we obtain
	\[
	\sup\limits_{\dfi\in\R}\sup\limits_{\kseqi \leq \gseqi_0} |D^{(g_0)}_\rho(\kseqi,\gseqi',\dfi)| = V_0^{\rho} \gseqi_0\big( 1- \frac{\gseqi_0}{\gseqi'} \big),
	\]
	where \( V_0^{\rho} = \sup_{\dfi\in\R} |N_\rho(\nu_1;\dfi) - N_\rho(\nu_2;\dfi)| >0\), because of $\nu_1 \neq \nu_2$ and the assumptions on $\rho$. For $\gseqi_0 < \kseqi \leq \gseqi'$ a similar calculation yields 
	\[
	D^{(g_0)}_\rho (\kseqi,\gseqi',\dfi)= \gseqi_0(N_\rho(\nu_2;\dfi) - N_\rho(\nu_1;\dfi)) \big( \frac\kseqi{\gseqi'} - 1 \big)
	\]
	which gives
	\[
	\sup\limits_{\dfi \in \R} \sup\limits_{\gseqi_0 < \kseqi \leq \gseqi'} \big|D^{(g_0)}_\rho(\kseqi,\gseqi',\dfi)\big| = V_0^{\rho} \gseqi_0 \big(1- \frac{\gseqi_0}{\gseqi'} \big).
	\]
	Therefore, it follows that the quantity defined in \eqref{entchanpoi} is given by $\gseqi_0$, because for $\gseqi > \gseqi_0$ we have
	\begin{equation}
	\label{Drhoabentw}
	\Dc^{(g_0)}_\rho(\gseqi) = \sup_{\gseqi_0 < \gseqi' \leq \gseqi} \max \Big\{ \sup_{\dfi \in \R}\sup_{\kseqi \leq \gseqi_0} \big|D^{(g_0)}_\rho(\kseqi,\gseqi',\dfi)\big|,~ \sup_{\dfi \in \R}\sup_{\gseqi_0 < \kseqi \leq \gseqi'} \big|D^{(g_0)}_\rho(\kseqi,\gseqi',\dfi)\big| \Big\} = V_0^{\rho} \gseqi_0 \big( 1- \frac{\gseqi_0}{\gseqi} \big).
	\end{equation}
\end{example}

\begin{example}
	\label{exdf2}
	Recall the situation of Example \ref{Ex:SitgraCh}. Let the transition kernel $g_0$ be of the form \eqref{gformgrach} such that there exist $\gseqi_0 \in (0,1)$, \(A_0 \in (0,\infty)\), \(\beta_0 \in (0,\hat \beta]\) and \(p_0 \in [2 \hat p  + \eps,\infty)\) for some $\eps >0$ with
	\begin{align}
	\label{vorcpkonst}
	A(y)=A_0, \quad \beta(y) = \beta_0 \quad \text{ and } \quad p(y) = p_0
	\end{align}
	for each $y \in [0,\gseqi_0]$. Additionally, let $\gseqi_0$ be contained in an open interval $U$ with a real analytic function $\bar A:U \to (0,\infty)$ and affine linear functions $\bar \beta:U \to (0,\hat \beta]$, $\bar p:U \to [2 \hat p + \eps,\infty)$ such that at least one of the functions $\bar A$, $\bar \beta$ and $\bar p$ is non-constant and
	\begin{align}
	\label{atcpanalytic}
	A(y) = \bar A(y), \quad \beta(y) = \bar \beta(y), \quad \text{ as well as } \quad p(y) = \bar p(y)
	\end{align}
	for all $y \in [\gseqi_0,1) \cap U$. Then the quantity defined in \eqref{entchanpoi} is given by $\gseqi_0$.
\end{example}	

\subsection{The empirical measure of time variation and its convergence behaviour}
\label{subsec:Drhoe}

Suppose we have established that $N_\rho^{\scriptscriptstyle (n)}(\cdot,\cdot)$ is a consistent estimator for $N_\rho(g_0;\cdot,\cdot)$. Then with the set $C$ defined in \eqref{cdef} it is reasonable to consider
\begin{align}\label{enttivaryest}
\Db_\rho^{(n)}(\kseqi, \gseqi, \dfi) \defeq N_\rho^{(n)}(\kseqi, \dfi) - \frac \kseqi\gseqi N_\rho^{(n)}(\gseqi, \dfi),~~ (\kseqi, \gseqi, \dfi) \in C \times \R,
\end{align}
as an estimate for the measure of time variation of the entire jump behaviour $D_\rho^{\scriptscriptstyle (g_0)}$ defined in  \eqref{entmeaotivDef}.
In the following we want to establish consistency of the empirical measure of time variation $\Db_\rho^{\scriptscriptstyle (n)}$. To be precise, the following two theorems show that the process 
\begin{align}
\label{bbhrhonDef}
\Hb_\rho^{(n)}(\kseqi, \gseqi, \dfi) := \sqrt{n\Delta_n}\big(\Db_\rho^{(n)}(\kseqi, \gseqi, \dfi) - D^{(g_0)}_\rho(\kseqi, \gseqi, \dfi)\big).
\end{align}
and its bootstrapped counterpart converge weakly or weakly conditional on the data in probability, respectively, to a suitable tight mean zero
Gaussian process.

\begin{theorem}
	\label{SchwKentmotv}
	If Assumption  \ref{EasierCond} is satisfied, then  the process $\Hb_\rho^{\scriptscriptstyle (n)}$ defined in \eqref{bbhrhonDef} converges weakly, that is $\Hb_\rho^{\scriptscriptstyle (n)} \weak \Hb_\rho + D_\rho^{\scriptscriptstyle (g_1)}$ in $\linctr$, where $\Hb_\rho$ is a tight mean zero Gaussian process with covariance function
	\begin{align}
	\label{HbrhoProCov}
	\operatorname{Cov}\big(\Hb_\rho(&\kseqi_1, \gseqi_1, \dfi_1),\Hb_\rho(\kseqi_2, \gseqi_2, \dfi_2)\big) = \nonumber \\
	&= \int_0^{\kseqi_1 \wedge \kseqi_2} \int_{-\infty}^{\dfi_1 \wedge \dfi_2} \rho^2(\taili) g_0(y,d\taili) dy - \frac{\kseqi_1}{\gseqi_1} \int_0^{\kseqi_2 \wedge \gseqi_1} \int_{-\infty}^{\dfi_1 \wedge \dfi_2}
	\rho^2(\taili) g_0(y,d\taili) dy \nonumber \\
	&\hspace{10mm}- \frac{\kseqi_2}{\gseqi_2} \int_0^{\kseqi_1 \wedge \gseqi_2} \int_{-\infty}^{\dfi_1 \wedge \dfi_2} \rho^2(\taili) g_0(y,d\taili) dy + \frac{\kseqi_1 \kseqi_2}{\gseqi_1 \gseqi_2} \int_0^{\gseqi_1 \wedge \gseqi_2} \int_{-\infty}^{\dfi_1 \wedge \dfi_2} \rho^2(\taili) g_0(y,d\taili) dy.
	\end{align}
\end{theorem}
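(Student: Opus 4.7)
The plan is to deduce the theorem from Theorem \ref{ConvThm}, which supplies the weak convergence $G_\rho^{(n)} \weak G_\rho$ in $\linner$ to a tight centered Gaussian process $G_\rho$ with covariance $\operatorname{Cov}(G_\rho(\kseqi_1,\dfi_1),G_\rho(\kseqi_2,\dfi_2)) = \int_0^{\kseqi_1\wedge\kseqi_2}\int_{-\infty}^{\dfi_1\wedge\dfi_2}\rho^2(\taili)\,g_0(y,d\taili)\,dy$, via the continuous mapping theorem applied to the bounded linear projection $\Phi:\linner \to \linctr$ defined by
\[
\Phi(f)(\kseqi,\gseqi,\dfi) \defeq f(\kseqi,\dfi) - \frac{\kseqi}{\gseqi}\,f(\gseqi,\dfi).
\]
Under the convention $0/0 \defeq 1$ adopted in the paper, $\kseqi/\gseqi \in [0,1]$ on $C$, so $\|\Phi(f)\|_{\linctr} \leq 2\|f\|_{\linner}$ and $\Phi$ is continuous.

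The first step is to rewrite $\Hb_\rho^{(n)}$ in a form the continuous mapping theorem can process. By linearity of $N_\rho$ in its kernel argument and the local alternative expansion \eqref{gon012Ass}, for every $(\gseqi,\dfi) \in \netir$
\[
\sqrt{n\Delta_n}\big(N_\rho^{(n)}(\gseqi,\dfi) - N_\rho(g_0;\gseqi,\dfi)\big) = G_\rho^{(n)}(\gseqi,\dfi) + N_\rho(g_1;\gseqi,\dfi) + \sqrt{n\Delta_n}\,N_\rho(\Rc_n;\gseqi,\dfi).
\]
Inserting this into the definition \eqref{enttivaryest} of $\Db_\rho^{(n)}$ and recalling \eqref{entmeaotivDef} yields the decomposition
\[
\Hb_\rho^{(n)} = \Phi\big(G_\rho^{(n)}\big) + D_\rho^{(g_1)} + \Phi\big(\sqrt{n\Delta_n}\,N_\rho(\Rc_n;\cdot,\cdot)\big),
\]
where $D_\rho^{(g_1)}$ is defined in analogy with \eqref{entmeaotivDef} but with $g_1$ in place of $g_0$. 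A routine integrability estimate using $\Rc_n(y,\cdot) \leq a_n g_2(y,\cdot)$, the growth bound on $\rho$ from Assumption \ref{EasierCond}\eqref{EasrhoCond} combined with $\rho(0)=0$ (which gives $|\rho(\taili)| \leq K'|\taili|^p$ near the origin and boundedness globally), the tail bounds on the Lebesgue density of $g_2 \in \Gc(\beta,p)$ from Definition \ref{rhoandgass2}, the standing inequality $p > \beta$, and $a_n = o((n\Delta_n)^{-1/2})$ shows that $\|\sqrt{n\Delta_n}\,N_\rho(\Rc_n;\cdot,\cdot)\|_{\linner} = o(1)$; hence by continuity of $\Phi$ the remainder vanishes in $\linctr$. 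Combining this with Theorem \ref{ConvThm}, the continuous mapping theorem, and Slutsky's lemma yields $\Hb_\rho^{(n)} \weak \Phi(G_\rho) + D_\rho^{(g_1)}$, and $\Hb_\rho \defeq \Phi(G_\rho)$ is tight, mean zero and Gaussian because $\Phi$ is linear and continuous.

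The covariance formula \eqref{HbrhoProCov} is then obtained by bilinear expansion of $\operatorname{Cov}\big(\Phi(G_\rho)(\kseqi_1,\gseqi_1,\dfi_1),\Phi(G_\rho)(\kseqi_2,\gseqi_2,\dfi_2)\big)$ into four covariances of $G_\rho$ weighted by $1$, $-\kseqi_1/\gseqi_1$, $-\kseqi_2/\gseqi_2$ and $\kseqi_1\kseqi_2/(\gseqi_1\gseqi_2)$, and substituting the covariance of $G_\rho$ quoted above. All the analytic substance of the argument — the control of the interplay between the truncation $v_n$, the $\beta$-stable-like behaviour of the small jumps of $X^{(n)}$, and the diffusive component — is hidden inside Theorem \ref{ConvThm}, which is therefore where the only genuine obstacle lies; granted that input, the present theorem reduces to the soft continuous mapping argument sketched above.
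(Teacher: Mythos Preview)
Your proof is correct and follows essentially the same route as the paper: the paper defines the same Lipschitz map $\Lambda(f)(\kseqi,\gseqi,\dfi)=f(\kseqi,\dfi)-\frac{\kseqi}{\gseqi}f(\gseqi,\dfi)$, writes $\Hb_\rho^{(n)}=\Lambda(G_\rho^{(n)})+D_\rho^{(g_1)}+\sqrt{n\Delta_n}\,D_\rho^{(\Rc_n)}$ with the last term a deterministic $o(1)$, and concludes via Theorem~\ref{ConvThm}, the continuous mapping theorem and Slutsky's lemma, obtaining the covariance by the same bilinear expansion.
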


For the statistical change-point inference proposed  in the following sections we require quantiles of
functionals of  the limiting distribution in Theorem \ref{SchwKentmotv}. \eqref{HbrhoProCov} shows that this  distribution depends in a complicated way on the unknown underlying kernel $g_0$
and therefore corresponding quantiles are difficult to estimate. In order to solve this problem we want to use a multiplier bootstrap approach similar to Section \ref{sec:Infabchdf}. To this end, we define the following bootstrap  counterpart of the process $\Hb_\rho^{\scriptscriptstyle (n)}$
\begin{align}
\label{hatHbrhondeq}
\hat \Hb_\rho^{(n)} (\kseqi, \gseqi, \dfi)
&\defeq
\hat \Hb_\rho^{(n)} (X^{(n)}_{\Delta_n}, \ldots, X^{(n)}_{n \Delta_n}; \xi_1, \ldots, \xi_n; \kseqi, \gseqi, \dfi ) \nonumber \\
&:=
\frac{1}{\sqrt {n \Delta_n}} \bigg[ \sum \limits_{j=1}^{\lfloor n\kseqi \rfloor} \xi_j  \rho(\Delj X^{(n)}) \ind_{(-\infty,\dfi]}(\Delj X^{(n)}) \ind_{\{|\Delj X^{(n)} | > v_n\}} - \nonumber \\
&\hspace{3cm}
- \frac{\kseqi}{\gseqi} \sum \limits_{j = 1}^{\ip{n \gseqi}}\xi_j \rho(\Delj X^{(n)}) \ind_{(-\infty,\dfi]}(\Delj X^{(n)}) \ind_{\{|\Delj X^{(n)} | > v_n\}} \bigg ].
\end{align}
The  result below establishes consistency of $\hat \Hb_\rho^{\scriptscriptstyle (n)}$. 



\begin{theorem}
	\label{BootHnConvThm}
	Let Assumption \ref{EasierCond} be valid and let the multiplier sequence $(\xi_i)_{i \in \N}$ 
	satisfy Assumption \ref{MultiplAss}. Then we have $\hat \Hb_\rho^{\scriptscriptstyle (n)} \weakP \mathbb H_\rho$
	in $\linctr$, where    the  tight mean zero Gaussian process $\mathbb H_\rho$ has the covariance structure \eqref{HbrhoProCov}.
\end{theorem}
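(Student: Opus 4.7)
The strategy is to reduce this theorem to the already-established bootstrap invariance principle for the sequential cumulative process $\hat G_\rho^{(n)}$ (Theorem \ref{CondConvThm}) via a continuous mapping argument.

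First, direct comparison of the definition \eqref{hatHbrhondeq} of $\hat \Hb_\rho^{(n)}$ with the definition of $\hat G_\rho^{(n)}$ gives the pointwise identity
$$\hat \Hb_\rho^{(n)}(\kseqi, \gseqi, \dfi) = \hat G_\rho^{(n)}(\kseqi, \dfi) - \frac{\kseqi}{\gseqi}\, \hat G_\rho^{(n)}(\gseqi, \dfi) = \Phi\big(\hat G_\rho^{(n)}\big)(\kseqi, \gseqi, \dfi),$$
where the linear operator $\Phi \colon \linner \to \linctr$ is defined by
$$\Phi(f)(\kseqi, \gseqi, \dfi) := f(\kseqi, \dfi) - \frac{\kseqi}{\gseqi}\, f(\gseqi, \dfi), \qquad (\kseqi, \gseqi, \dfi) \in C \times \R,$$
with the convention $0/0 := 1$. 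Since $(\kseqi, \gseqi) \in C$ forces $0 \leq \kseqi/\gseqi \leq 1$, one obtains $\|\Phi(f)\|_{C \times \R} \leq 2 \|f\|_{[0,1] \times \R}$, so $\Phi$ is bounded and hence Lipschitz continuous on all of $\linner$.

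Next, by Theorem \ref{CondConvThm} we have $\hat G_\rho^{(n)} \weakP \Gb_\rho$ in $\linner$, where $\Gb_\rho$ is a tight mean-zero Gaussian process with covariance
$$\Cov\big(\Gb_\rho(\kseqi_1, \dfi_1),\, \Gb_\rho(\kseqi_2, \dfi_2)\big) = \int_0^{\kseqi_1 \wedge \kseqi_2} \int_{-\infty}^{\dfi_1 \wedge \dfi_2} \rho^2(\taili)\, g_0(y, d\taili)\,dy.$$
Applying the continuous mapping theorem for weak convergence conditional on the data in probability (Proposition 10.7 of \cite{Kos08}, used in the same way as in the proof of Corollary \ref{haTroCons}) to the map $\Phi$ gives $\hat \Hb_\rho^{(n)} = \Phi(\hat G_\rho^{(n)}) \weakP \Phi(\Gb_\rho) =: \Hb_\rho$ in $\linctr$. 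Linearity and boundedness of $\Phi$ ensure that $\Hb_\rho$ is itself a tight centered Gaussian process.

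Finally, the covariance of $\Hb_\rho$ is obtained by bilinearity: expanding $\Cov(\Hb_\rho(\kseqi_1, \gseqi_1, \dfi_1), \Hb_\rho(\kseqi_2, \gseqi_2, \dfi_2))$ into four covariance terms of $\Gb_\rho$ and substituting the formula above produces exactly the four integrals of \eqref{HbrhoProCov}. The only substantial difficulty thus lies in Theorem \ref{CondConvThm} itself, which handles the delicate interplay between the Brownian component, the small-jump truncation at $v_n$, and the i.i.d.\ multipliers; once it is in hand, the present result is essentially a one-step Lipschitz continuous-mapping argument.
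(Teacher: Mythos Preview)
Your proof is correct and essentially identical to the paper's own argument: the paper defines the same Lipschitz map (calling it $\Lambda$ instead of $\Phi$), writes $\hat\Hb_\rho^{(n)}=\Lambda(\hat G_\rho^{(n)})$ and $\Hb_\rho=\Lambda(\Gb_\rho)$, and then invokes Proposition~10.7 in \cite{Kos08} together with Theorem~\ref{CondConvThm}. You have merely spelled out a few more details (the Lipschitz bound $2$ and the covariance expansion) that the paper leaves implicit.
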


\subsection{Estimating the gradual change point}
\label{subsec:cth0E}

For the sake of a unique definition of the (gradual) change point $\gseqi_0$  in \eqref{entchanpoi} we suppose throughout this section that Assumption \ref{EasierCond} holds with $g_1=g_2=0$. Recall the definition 
\[
\Dc^{(g_0)}_\rho(\gseqi) = \sup \limits_{\dfi \in \R} \sup \limits_{0 \leq \kseqi \leq \gseqi' \leq \gseqi} \big|D^{(g_0)}_\rho( \kseqi, \gseqi', \dfi)\big| 
\]
in \eqref{supenmotv}, then by Theorem \ref{SchwKentmotv}  the process $\Db_\rho^{\scriptscriptstyle (n)}(\kseqi,\gseqi,\dfi) $  from \eqref{enttivaryest}
is a consistent estimator of $D^{\scriptscriptstyle (g_0)}_\rho(\kseqi, \gseqi, \dfi) $. Therefore, we set
\begin{align*}
\Db_{\rho,*}^{(n)}(\gseqi) \defeq \sup \limits_{\dfi \in \R} \sup \limits_{0 \leq \kseqi \leq \gseqi' \leq \gseqi} \big|\Db_\rho^{(n)}(\kseqi,\gseqi',\dfi)\big|,
\end{align*}
and an application of the continuous mapping theorem and  Theorem \ref{SchwKentmotv}  yields  the following result.

\begin{corollary} 
	If Assumption  \ref{EasierCond} is satisfied with $g_1=g_2=0$, then $(n\Delta_n)^{1/2} \Db_{\rho,*}^{(n)} \weak \Hb_{\rho,*}$ in $\ell^{\infty}\big([0,\gseqi_0]\big)$, where $\Hb_{\rho,*}$ is
	the tight process in $\linne$ defined by
	\begin{align*}
	\Hb_{\rho,*} (\gseqi) \defeq \sup \limits_{\dfi \in \R} \sup \limits_{0 \leq \kseqi \leq \gseqi' \leq \gseqi} |\Hb_\rho(\kseqi, \gseqi', \dfi)|,
	\end{align*}
	with the centered Gaussian process $\Hb_\rho$ defined in Theorem \ref{SchwKentmotv}.
\end{corollary}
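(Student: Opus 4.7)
The plan is to reduce the statement to Theorem \ref{SchwKentmotv} via the continuous mapping theorem after exploiting that the deterministic part of $\Db_\rho^{(n)}$ identically vanishes on the relevant parameter range. First, with $g_1=g_2=0$ the kernel $g^{(n)}$ reduces to $g_0$ and Theorem \ref{SchwKentmotv} reads $\Hb_\rho^{(n)}\weak \Hb_\rho$ in $\linctr$, since the shift function $D_\rho^{(g_1)}$ is identically zero (as $N_\rho(0;\cdot,\cdot)\equiv 0$).

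Next I would show that $D_\rho^{(g_0)}(\kseqi,\gseqi',\dfi)=0$ whenever $0\le\kseqi\le\gseqi'\le\gseqi_0$. Indeed, by the definition \eqref{entchanpoi} of $\gseqi_0$, we have $\tilde\Dc_\rho^{(g_0)}(\gseqi')=0$ for every $\gseqi'<\gseqi_0$, and the discussion following \eqref{supenmotv} (which identifies the change points associated with $\tilde\Dc_\rho^{(g_0)}$ and $\Dc_\rho^{(g_0)}$) yields the analogous vanishing for $\Dc_\rho^{(g_0)}$. The map $\gseqi'\mapsto N_\rho(g_0;\gseqi',\dfi)$ is Lipschitz in $\gseqi'$ (uniformly in $\dfi$, using boundedness of $\rho$ and the finite-mass bound on $[-\eta,\eta]^c$ implied by Definition \ref{rhoandgass2}), so $D_\rho^{(g_0)}$ is continuous in $\gseqi'$ and the vanishing extends to $\gseqi'=\gseqi_0$. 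Equivalently, Lemma \ref{Drhg0suit} says $g_0(y,\cdot)$ is Lebesgue-a.e.\ constant on $[0,\gseqi_0]$, which immediately gives $D_\rho^{(g_0)}(\kseqi,\gseqi',\dfi)=0$ for $0\le\kseqi\le\gseqi'\le\gseqi_0$. Consequently, from \eqref{bbhrhonDef},
\[
\sqrt{n\Delta_n}\,\Db_\rho^{(n)}(\kseqi,\gseqi',\dfi)=\Hb_\rho^{(n)}(\kseqi,\gseqi',\dfi)\quad\text{for all }(\kseqi,\gseqi',\dfi)\text{ with }\gseqi'\le\gseqi_0,
\]
so in particular
\[
\sqrt{n\Delta_n}\,\Db_{\rho,*}^{(n)}(\gseqi)=\sup_{\dfi\in\R}\sup_{0\le\kseqi\le\gseqi'\le\gseqi}\bigl|\Hb_\rho^{(n)}(\kseqi,\gseqi',\dfi)\bigr|,\quad \gseqi\in[0,\gseqi_0].
\]

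Finally I would apply the continuous mapping theorem to the functional $\Phi\colon\linctr\to\ell^\infty([0,\gseqi_0])$ defined by
\[
\Phi(f)(\gseqi)\defeq\sup_{\dfi\in\R}\sup_{0\le\kseqi\le\gseqi'\le\gseqi}|f(\kseqi,\gseqi',\dfi)|.
\]
The map $\Phi$ is $1$-Lipschitz: for $f,\tilde f\in\linctr$ and any $\gseqi\in[0,\gseqi_0]$, the reverse triangle inequality yields
\[
\bigl|\Phi(f)(\gseqi)-\Phi(\tilde f)(\gseqi)\bigr|\le\sup_{(\kseqi,\gseqi',\dfi)\in C\times\R}|f(\kseqi,\gseqi',\dfi)-\tilde f(\kseqi,\gseqi',\dfi)|=\|f-\tilde f\|_{C\times\R},
\]
and taking sup over $\gseqi$ gives $\|\Phi(f)-\Phi(\tilde f)\|_{[0,\gseqi_0]}\le\|f-\tilde f\|_{C\times\R}$. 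Thus $\Phi$ is continuous, and combining the displayed identity with $\Hb_\rho^{(n)}\weak\Hb_\rho$ gives
\[
\sqrt{n\Delta_n}\,\Db_{\rho,*}^{(n)}=\Phi(\Hb_\rho^{(n)})\weak\Phi(\Hb_\rho)=\Hb_{\rho,*}
\]
in $\ell^\infty([0,\gseqi_0])$; tightness of the limit is inherited from tightness of $\Hb_\rho$ via continuity of $\Phi$. The only mildly delicate step is the inclusion of the endpoint $\gseqi_0$ itself in the vanishing of $D_\rho^{(g_0)}$, for which the continuity argument (or direct appeal to Lemma \ref{Drhg0suit}) suffices; everything else is bookkeeping.
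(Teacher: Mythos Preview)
Your proof is correct and follows essentially the same approach as the paper, which simply states that the result is ``an application of the continuous mapping theorem and Theorem \ref{SchwKentmotv}.'' You have correctly identified and filled in the two nontrivial details the paper leaves implicit: that $D_\rho^{(g_0)}(\kseqi,\gseqi',\dfi)=0$ for all $\gseqi'\le\gseqi_0$ (including the endpoint, which you handle cleanly either via continuity or Lemma \ref{Drhg0suit}), and that the sup-functional $\Phi$ is $1$-Lipschitz.
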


Below we obtain that the rate of convergence of an estimator for $\gseqi_0$ depends on the smoothness of the curve $\theta \mapsto \Dc^{\scriptscriptstyle (g_0)}_\rho(\gseqi)$ at $\gseqi_0$. Thus, we impose a kind of Taylor expansion of the function $\Dc^{\scriptscriptstyle (g_0)}_\rho$. More precisely, we assume throughout this section  
that $\gseqi_0 <1$  and that there exist constants $\iota, \eta, \smooi, c  >0$ such that $\Dc^{\scriptscriptstyle (g_0)}_\rho$ admits an  expansion of  the form
\begin{equation} \label{additass}
\Dc^{(g_0)}_\rho(\gseqi) = c \big( \gseqi - \gseqi_0 \big)^{\smooi} + \aleph(\gseqi)
\end{equation}
for all  $\gseqi \in [\gseqi_0, \gseqi_0 + \iota]$, where the remainder term satisfies
$|\aleph(\gseqi)| \leq K\big(\gseqi - \gseqi_0 \big)^{\smooi + \eta}$ for some $K>0$. 
According to Theorem \ref{SchwKentmotv} we have  $(n\Delta_n)^{\scriptscriptstyle  1/2} \Db_{\rho,*}^{\scriptscriptstyle (n)}(\gseqi) \rightarrow \infty$ in probability  for any $\gseqi \in  (\gseqi_0,  1]$. Consequently, if the deterministic sequence  $\thrle_n \rightarrow \infty$ is chosen appropriately, the statistic
\[
r_\rho^{(n)}(\gseqi) \defeq \ind_{\lbrace (n\Delta_n)^{1/2} \Db_{\rho,*}^{(n)}(\gseqi) \leq \thrle_n \rbrace}, \]
should satisfy
\[r_\rho^{(n)}(\gseqi) \probto \begin{cases}
1, \quad &\text{ if } \gseqi \leq \gseqi_0, \\
0, \quad &\text{ if } \gseqi > \gseqi_0.
\end{cases}\]
Thus, we define the estimator for the change point by
\begin{align}
\label{grestdef}
\hat \theta_\rho^{(n)} = \hat \theta_\rho^{(n)}(\thrle_n) \defeq \int_0^1 r_\rho^{(n)}(\gseqi) d \gseqi.
\end{align}
The theorem below establishes consistency of the estimator $\hat \theta_\rho^{\scriptscriptstyle (n)}$ under  mild additional assumptions on the sequence $(\thrle_n)_{n\in\N}$.

\begin{theorem}
	\label{SchaezerKonvT}
	If  Assumption \ref{EasierCond} is satisfied with $g_1=g_2=0$, $\gseqi_0 <1$, and  \eqref{additass} holds
	for some $\smooi >0$, then
	\[
	\hat \theta_\rho^{(n)} - \gseqi_0 = O_\Prob \Big (  \Big ( \frac{\thrle_n}{\sqrt{n\Delta_n}}\Big)^{1/\smooi} \Big),
	\]
	for any  sequence $\thrle_n \rightarrow \infty$ with $\thrle_n / \sqrt{n\Delta_n} \rightarrow 0$.
\end{theorem}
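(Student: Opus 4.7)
The plan is to sandwich $\hat{\theta}_\rho^{(n)}$ between $\theta_0$ from below and $\theta_0 + \delta_n$ from above (with high probability), where $\delta_n \asymp (\varkappa_n/\sqrt{n\Delta_n})^{1/\varpi}$. The key observation driving this is monotonicity: since $\gseqi \mapsto \mathbb{D}_{\rho,*}^{(n)}(\gseqi)$ is a supremum over a nested family and therefore non-decreasing, the indicator $r_\rho^{(n)}(\gseqi)$ is non-increasing and takes only values in $\{0,1\}$. Consequently $\hat{\theta}_\rho^{(n)} = \int_0^1 r_\rho^{(n)}(\gseqi)\,d\gseqi$ equals the Lebesgue measure of the initial interval on which $r_\rho^{(n)} = 1$.

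The first step is a uniform linearisation: Theorem~\ref{SchwKentmotv} together with the continuous mapping theorem (applied to the map sending a function on $C\times\R$ to its restricted suprema) yields
\[
\sqrt{n\Delta_n}\,\sup_{\gseqi \in [0,1]} \big|\mathbb{D}_{\rho,*}^{(n)}(\gseqi) - \Dc^{(g_0)}_\rho(\gseqi)\big| = O_\Prob(1).
\]
For the lower bound, note that by Lemma~\ref{Drhg0suit} we have $\Dc^{(g_0)}_\rho(\gseqi) = 0$ for every $\gseqi \in [0,\gseqi_0]$, hence $\sqrt{n\Delta_n}\,\mathbb{D}_{\rho,*}^{(n)}(\gseqi_0) = O_\Prob(1)$ and, by monotonicity, the same bound is uniform over $[0,\gseqi_0]$. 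Since $\varkappa_n\to\infty$, the event $\{\sqrt{n\Delta_n}\,\mathbb{D}_{\rho,*}^{(n)}(\gseqi_0) \leq \varkappa_n\}$ has probability tending to $1$, so with probability tending to $1$ we have $r_\rho^{(n)}(\gseqi) = 1$ for all $\gseqi \in [0,\gseqi_0]$, giving $\hat{\theta}_\rho^{(n)} \geq \gseqi_0$.

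For the upper bound, fix a constant $M > 2/c$ and set $\delta_n \defeq (M\varkappa_n/(c\sqrt{n\Delta_n}))^{1/\varpi}$; note $\delta_n \to 0$ by the assumption $\varkappa_n/\sqrt{n\Delta_n}\to 0$, so eventually $\delta_n \leq \iota$ and $K\delta_n^\eta \leq c/2$. The Taylor-type expansion \eqref{additass} then yields
\[
\Dc^{(g_0)}_\rho(\gseqi_0 + \delta_n) \geq c\,\delta_n^\varpi - K\delta_n^{\varpi+\eta} \geq \tfrac{c}{2}\,\delta_n^\varpi = \tfrac{M\varkappa_n}{2\sqrt{n\Delta_n}}.
\]
Combining with the linearisation of Step~1,
\[
\sqrt{n\Delta_n}\,\mathbb{D}_{\rho,*}^{(n)}(\gseqi_0+\delta_n) \geq \sqrt{n\Delta_n}\,\Dc^{(g_0)}_\rho(\gseqi_0+\delta_n) - O_\Prob(1) \geq \tfrac{M}{2}\,\varkappa_n - O_\Prob(1),
\]
which exceeds $\varkappa_n$ with probability tending to $1$ because $M/2 > 1/c \cdot c = 1$ (take $M$ slightly larger if needed) and $\varkappa_n\to\infty$. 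Hence $r_\rho^{(n)}(\gseqi_0+\delta_n) = 0$ with probability tending to $1$, and by monotonicity $r_\rho^{(n)}(\gseqi) = 0$ for every $\gseqi \geq \gseqi_0 + \delta_n$, giving $\hat{\theta}_\rho^{(n)} \leq \gseqi_0 + \delta_n$. Combining both bounds, $\hat{\theta}_\rho^{(n)} - \gseqi_0 = O_\Prob(\delta_n)$, which is the claim.

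The main obstacle is the uniform $O_\Prob(1)$ control in Step~1; everything else is essentially algebra on the Taylor expansion and the monotone-indicator reformulation of the estimator. Fortunately, that uniform rate is exactly what Theorem~\ref{SchwKentmotv} (weak convergence in $\linctr$) delivers via a standard Slutsky/continuous-mapping argument, so no new probabilistic input beyond the theorems already established is required.
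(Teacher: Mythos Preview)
Your proof is correct and follows the natural sandwich argument that the paper (which defers to Theorem~4.2 in \cite{HofVetDet17}) has in mind: use the monotonicity of $\gseqi\mapsto\Db_{\rho,*}^{(n)}(\gseqi)$ so that $\hat\theta_\rho^{(n)}$ is the length of the sub-level set, bound the estimation error uniformly via $\Hb_{\rho,*}^{(n)}(1)=O_\Prob(1)$ from Theorem~\ref{SchwKentmotv}, and then read off the two one-sided bounds from $\Dc_\rho^{(g_0)}\equiv 0$ on $[0,\gseqi_0]$ and the expansion~\eqref{additass} just to the right of $\gseqi_0$.

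One small arithmetic slip: the constraint you actually need on $M$ is $M>2$ (so that $(M/2)\thrle_n-\thrle_n\to\infty$), not $M>2/c$; your parenthetical ``take $M$ slightly larger if needed'' already hints that you noticed this, and it does not affect the argument. Also, the vanishing $\Dc_\rho^{(g_0)}(\gseqi)=0$ on $[0,\gseqi_0]$ is really a consequence of the definition of $\gseqi_0$ in~\eqref{entchanpoi} together with the continuity remark preceding the test section, rather than Lemma~\ref{Drhg0suit} directly, but this is cosmetic.
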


Theorem \ref{SchaezerKonvT} describes how the curvature of $\Dc^{\scriptscriptstyle (g_0)}_\rho$ at $\gseqi_0$ determines the convergence behaviour of the estimator: A lower degree of smoothness of $\Dc^{\scriptscriptstyle (g_0)}_\rho$ in $\gseqi_0$  yields a  better rate of convergence. However, the estimator depends on the choice of the   threshold level $\thrle_n$ and we explain below how to choose this sequence with bootstrap methods in order to control the probability of over- and underestimation. But before that the following theorem investigates the mean squared error
\begin{align*}
\operatorname{MSE}(\thrle_n)= \Eb \Big[ \big( \hat \theta_\rho^{(n)}(\thrle_n) - \gseqi_0 \big)^2 \Big]
\end{align*}
of the estimator $\hat \theta_\rho^{\scriptscriptstyle (n)}$. Recall the definition of $\Hb_\rho^{\scriptscriptstyle (n)}$ in \eqref{bbhrhonDef} and define
\begin{equation*}
\Hb_{\rho,*}^{(n)}(\gseqi) \defeq \sup \limits_{\dfi \in \R} \sup \limits_{0 \leq \kseqi \leq \gseqi ' \leq \gseqi} |\Hb_\rho^{(n)}(\kseqi, \gseqi ', \dfi)|, \quad \gseqi \in [0,1], 
\end{equation*}
which is an upper bound for the distance between the estimator $\Db_{\rho,*}^{\scriptscriptstyle (n)}(\gseqi)$ and the true value $\Dc^{\scriptscriptstyle (g_0)}_\rho(\gseqi)$. For a sequence $\alpha_n \rightarrow \infty$ with $\alpha_n = o(\thrle_n)$ we decompose the MSE into 
\begin{align*}
\text{MSE}^{(\rho)}_1(\thrle_n,\alpha_n) &\defeq  \Eb \Big[ \big( \hat \theta_\rho^{(n)}(\thrle_n) - \gseqi_0 \big)^2 \ind_{\left \{ \Hb_{\rho,*}^{(n)}(1) \leq \alpha_n \right\}} \Big], \\
\text{MSE}^{(\rho)}_2(\thrle_n,\alpha_n) &\defeq   \Eb \Big[ \big( \hat \theta_\rho^{(n)}(\thrle_n) - \gseqi_0 \big)^2 \ind_{\left \{ \Hb_{\rho,*}^{(n)}(1) > \alpha_n \right\}} \Big] \leq \Prob \big( \Hb_{\rho,*}^{(n)}(1) > \alpha_n \big),
\end{align*}
which can be considered as the MSE due to small and large estimation error. 
\begin{theorem}
	\label{MSEdfzerlThm}
	Suppose that $\gseqi_0 <1$, \eqref{additass} and Assumption \ref{EasierCond} with $g_1=g_2=0$ are satisfied. Then for any
	sequence $\alpha_n \rightarrow \infty$ with $\alpha_n = o(\thrle_n)$ we have
	\begin{align*}
	K_1 \Big ( \frac{\thrle_n}{\sqrt{n\Delta_n}}\Big)^{2/\smooi} \leq &\operatorname{MSE}^{(\rho)}_1(\thrle_n,\alpha_n) \leq K_2 \Big ( \frac{\thrle_n}{\sqrt{n\Delta_n}}\Big)^{2/\smooi}  \\
	\nonumber
	&\operatorname{MSE}^{(\rho)}_2(\thrle_n,\alpha_n) \leq  \Prob \big( \Hb_{\rho,*}^{(n)}(1) > \alpha_n \big),
	\end{align*}
	for $n \in \N$ sufficiently large, where $	
	K_1 = \big( \frac{1-\varphi}{c} \big)^{2/\smooi} $ and  $K_2 = \big( \frac{1+ \varphi}{c} \big)^{2/\smooi}
	$
	for  some $\varphi \in (0,1)$.
\end{theorem}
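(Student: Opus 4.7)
The overall strategy is to split the MSE onto the favourable event $A_n := \{\Hb_{\rho,*}^{(n)}(1) \leq \alpha_n\}$ and its complement. On $A_n^c$ the bound $(\hat \theta_\rho^{(n)} - \gseqi_0)^2 \leq 1$ is immediate, so $\operatorname{MSE}^{(\rho)}_2(\thrle_n,\alpha_n) \leq \Prob(A_n^c) = \Prob(\Hb_{\rho,*}^{(n)}(1) > \alpha_n)$, which already yields the claim for MSE$_2$. The remainder of the argument is a pathwise sandwich for $\hat \theta_\rho^{(n)}$ on $A_n$.

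The starting point is the reverse triangle inequality for suprema combined with the monotonicity of $\gseqi \mapsto \Hb_{\rho,*}^{(n)}(\gseqi)$, which yields on $A_n$ the uniform bound $\sup_{\gseqi \in [0,1]} |\sqrt{n\Delta_n}\Db_{\rho,*}^{(n)}(\gseqi) - \sqrt{n\Delta_n}\Dc^{(g_0)}_\rho(\gseqi)| \leq \alpha_n$. Setting
\[
\gseqi_n^\pm := \inf\{\gseqi \in [0,1] \mid \sqrt{n\Delta_n}\Dc^{(g_0)}_\rho(\gseqi) > \thrle_n \pm \alpha_n\},
\]
with $\inf \varnothing := 1$, this forces $r_\rho^{(n)} \equiv 1$ on $[0,\gseqi_n^-)$ and $r_\rho^{(n)} \equiv 0$ on $(\gseqi_n^+,1]$, so that $\gseqi_n^- \leq \hat \theta_\rho^{(n)} \leq \gseqi_n^+$ on $A_n$. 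Since $\Dc^{(g_0)}_\rho$ vanishes on $[0,\gseqi_0]$ by Lemma \ref{Drhg0suit} and $\thrle_n > \alpha_n$ eventually, both $\gseqi_n^\pm \geq \gseqi_0$.

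Converting the sandwich into the stated rate uses the local expansion \eqref{additass}. For $n$ large the thresholds $\gseqi_n^\pm$ lie in $[\gseqi_0, \gseqi_0 + \iota]$, where $\Dc^{(g_0)}_\rho(\gseqi) = c(\gseqi - \gseqi_0)^\smooi \{1 + O((\gseqi - \gseqi_0)^\eta)\}$. Given $\varphi \in (0,1)$, fix a smaller slack $\varphi' \in (0,\varphi)$ and take $n$ large enough so that (i) $\alpha_n \leq \varphi' \thrle_n$, (ii) the relative remainder $|\aleph(\gseqi_n^\pm)|/\{c(\gseqi_n^\pm - \gseqi_0)^\smooi\} \leq \varphi'$, and (iii) $\Prob(A_n) \geq 1 - \varphi'$. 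Item (iii) is available because Theorem \ref{SchwKentmotv} together with the continuous mapping theorem gives $\Hb_{\rho,*}^{(n)}(1) \weak \Hb_{\rho,*}(1)$, so the sequence is uniformly tight and $\alpha_n \to \infty$. Exploiting the identity $\sqrt{n\Delta_n}\Dc^{(g_0)}_\rho(\gseqi_n^\pm) = \thrle_n \pm \alpha_n$ (up to a harmless boundary correction at possible jumps of $\Dc^{(g_0)}_\rho$, whose size is already $O((\gseqi_n^\pm - \gseqi_0)^{\smooi+\eta})$ and hence absorbable into $\varphi'$) then yields
\[
\bigg( \frac{(1-\varphi)\thrle_n}{c\sqrt{n\Delta_n}} \bigg)^{1/\smooi} \leq \gseqi_n^- - \gseqi_0 \leq \gseqi_n^+ - \gseqi_0 \leq \bigg( \frac{(1+\varphi)\thrle_n}{c\sqrt{n\Delta_n}} \bigg)^{1/\smooi}.
\]

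Squaring this sandwich and integrating against $\ind_{A_n}$ delivers the upper bound $\operatorname{MSE}^{(\rho)}_1(\thrle_n,\alpha_n) \leq K_2 (\thrle_n/\sqrt{n\Delta_n})^{2/\smooi} \Prob(A_n) \leq K_2 (\thrle_n/\sqrt{n\Delta_n})^{2/\smooi}$, while the lower bound follows from $\Prob(A_n) \geq 1-\varphi'$, the factor being absorbed by a further slight tightening of $\varphi'$. The main technical obstacle is the coherent handling of the three infinitesimal sources of slack---$\alpha_n/\thrle_n$, the remainder $\aleph$, and $1 - \Prob(A_n)$---into a single arbitrary $\varphi$: one must fix $\varphi$, then choose $\varphi'$ sufficiently smaller, and only afterwards send $n \to \infty$.
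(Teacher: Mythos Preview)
Your argument is correct and follows essentially the same route as the paper, which simply defers to the proof of Theorem 4.3 in \cite{HofVetDet17}: sandwich $\hat\theta_\rho^{(n)}$ between the deterministic thresholds $\theta_n^{\pm}$ on the event $\{\Hb_{\rho,*}^{(n)}(1)\le\alpha_n\}$ and invert the local expansion \eqref{additass}. One small remark: your parenthetical about ``possible jumps of $\Dc_\rho^{(g_0)}$'' is unnecessary, since the paper establishes (in the Remark following the hypotheses ${\bf H}_1^*$, ${\bf H}_1^*(\dfi_0)$) that $\Dc_\rho^{(g_0)}$ is continuous on $[0,1]$, so the identity $\sqrt{n\Delta_n}\,\Dc_\rho^{(g_0)}(\theta_n^\pm)=\thrle_n\pm\alpha_n$ holds exactly.
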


In the following we discuss the choice of the regularizing sequence $\thrle_n$ for the estimator $\hat \theta_\rho^{\scriptscriptstyle (n)}$ in order to  control the probability of over- and underestimation of the change point $\gseqi_0 \in (0,1)$.
Let $\predfest$ be a preliminary consistent estimate of  $\theta_0$. For example, if \eqref{additass} holds for some $\smooi >0$, one can take $\predfest = \hat \theta_\rho^{\scriptscriptstyle (n)}(\thrle_n)$ for a sequence $\thrle_n \rightarrow \infty$ satisfying the assumptions of Theorem \ref{SchaezerKonvT}. 
In the sequel, let $B\in\N$ be some large number and let $(\xi^{\scriptscriptstyle  (b)})_{b=1, \dots ,B}$ denote independent sequences of random variables, $\xi^{\scriptscriptstyle (b)} \defeq (\xi_j^{\scriptscriptstyle (b)})_{j\in\N}$, satisfying Assumption \ref{MultiplAss}.
We denote by $\hat \Hb_{\rho,*}^{\scriptscriptstyle  (n, b)}$ the particular bootstrap statistics calculated with respect to the data and the bootstrap multipliers $\xi^{\scriptscriptstyle (b)}_1, \ldots, \xi^{\scriptscriptstyle  (b)}_n$ from the $b$-th iteration, where
\begin{align}
\label{hatFnstardef}
\hat \Hb_{\rho,*}^{(n)}(\gseqi) \defeq \sup \limits_{\dfi \in \R} \sup \limits_{0 \leq \kseqi \leq \gseqi ' \leq \gseqi} \big|\hat \Hb_\rho^{(n)}(\kseqi, \gseqi ', \dfi)\big|
\end{align}
for $\gseqi \in [0,1]$. With these notations for $B,n \in \N$ and $0< r \leq 1$ we define the following empirical distribution function
\begin{align*}
\empFrbodif (x) &= \frac 1B \sum \limits_{i=1}^B \ind_{\big\{   \big(\hat \Hb_{\rho,*}^{\scriptscriptstyle (n,i)}(\predfest)\big)^r \leq x  \big\}},
\end{align*}
and we denote by  $
\empFrboindif(y) := \inf \big\{x \in \R ~ \big| ~ \empFrbodif(x) \geq y \big\}$
its pseudo-inverse. Then in the sense of the theorems below the optimal choice of the threshold is given by
\begin{align}
\label{thrledfdefeq}
\haFthrle :=
\empFrboindif(1-\alpha). 
\end{align}
for a confidence level $\alpha \in (0,1)$.

\begin{theorem}
	\label{OptthChodfThm}
	Let $0<\alpha<1$ and assume that  Assumption \ref{EasierCond} is satisfied with $g_1=g_2=0$ and with $0< \gseqi_0 <1$ for $\gseqi_0$ defined in \eqref{entchanpoi}. Suppose further that there exists some $\dfi_0 \in \R$ with $N_{\rho^2}(g_0;\gseqi_0,\dfi_0) > 0$.
	Then the limiting probability for underestimation of the change point $\gseqi_0$ is bounded by $\alpha$. Precisely,
	\begin{align*}
	\limsup \limits_{B \rightarrow \infty} \limsup \limits_{n \rightarrow \infty} \Prob\Big(\hat \theta_\rho^{(n)}\big(\haFthrleei\big) < \gseqi_0 \Big) \leq \alpha.
	\end{align*}
\end{theorem}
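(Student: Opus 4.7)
The plan is to reduce the underestimation event to a quantile comparison involving the statistic $\Hb_{\rho,*}^{(n)}(\theta_0)$ and then exploit the two convergence theorems of the previous subsection (Theorems~\ref{SchwKentmotv} and \ref{BootHnConvThm}) to conclude that the bootstrap quantile $\haFthrleei$ consistently approximates the $(1-\alpha)$-quantile of the limit of $\Hb_{\rho,*}^{(n)}(\theta_0)$.

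First I would observe that under the assumption $g_1=g_2=0$ and by the definition of $\theta_0$ in \eqref{entchanpoi}, the kernel $g_0(\cdot,d\taili)$ is Lebesgue a.e.\ constant on $[0,\theta_0]$, so Lemma~\ref{Drhg0suit} yields $D_\rho^{(g_0)}(\kseqi,\gseqi',\dfi)=0$ whenever $\kseqi\le\gseqi'\le\theta_0$. Consequently $(n\Delta_n)^{1/2}\Db_{\rho,*}^{(n)}(\theta)=\Hb_{\rho,*}^{(n)}(\theta)$ for every $\theta\le\theta_0$. Since $\Db_{\rho,*}^{(n)}$ is monotone non-decreasing in $\theta$, the estimator \eqref{grestdef} equals $\sup\{\theta\in[0,1]\colon (n\Delta_n)^{1/2}\Db_{\rho,*}^{(n)}(\theta)\le \haFthrleei\}$, and therefore
\[
\big\{\hat\theta_\rho^{(n)}(\haFthrleei)<\theta_0\big\}\subseteq \big\{\Hb_{\rho,*}^{(n)}(\theta_0)\ge \haFthrleei\big\}.
\]

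Next, by Theorem~\ref{SchwKentmotv} and the continuous mapping theorem applied to the supremum functional on $\ell^{\infty}(C\times\R)$, we have $\Hb_{\rho,*}^{(n)}(\theta_0)\weak \Hb_{\rho,*}(\theta_0)$, whose distribution function $F$ is continuous and strictly increasing at its $(1-\alpha)$-quantile $q_{1-\alpha}$. This last point uses the assumption $N_{\rho^2}(g_0;\theta_0,\dfi_0)>0$: it guarantees that the Gaussian process $\Hb_\rho$ of Theorem~\ref{SchwKentmotv}, restricted to the region $\{(\kseqi,\gseqi',\dfi)\colon \kseqi\le\gseqi'\le\theta_0\}$, has strictly positive variance at $(\kseqi,\gseqi',\dfi)=(\theta_0,\theta_0,\dfi_0)$, so its supremum has a continuous distribution (e.g.\ via a Tsirelson-type absolute-continuity argument, as already invoked for $L_\rho$ in the remark after Proposition~\ref{ConsuH1loc}).

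The main step is to show $\haFthrleei\probto q_{1-\alpha}$ as $n\to\infty$ and then $B\to\infty$. Using Theorem~\ref{BootHnConvThm} together with the continuous mapping theorem for conditional weak convergence (Proposition~10.7 in \cite{Kos08}), I would first argue that $\hat\Hb_{\rho,*}^{(n)}(\theta)\weakP \Hb_{\rho,*}(\theta)$ uniformly in $\theta$ on $[0,1]$. Combining this with the consistency $\predfest=\theta_0+o_{\Prob}(1)$ of the preliminary estimator and the almost-sure continuity of $\theta\mapsto \Hb_{\rho,*}(\theta)$ at $\theta_0$, a standard equicontinuity argument yields $\hat\Hb_{\rho,*}^{(n)}(\predfest)\weakP \Hb_{\rho,*}(\theta_0)$. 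Conditionally on the data, the bootstrap replicates $\hat\Hb_{\rho,*}^{(n,1)}(\predfest),\dots,\hat\Hb_{\rho,*}^{(n,B)}(\predfest)$ are i.i.d., so a conditional Glivenko--Cantelli argument delivers uniform convergence of the empirical CDF $\empFrbodif$ (with $r=1$) to $F$ in probability, and by the continuity and strict monotonicity of $F$ at $q_{1-\alpha}$ the empirical quantile $\haFthrleei$ converges in probability to $q_{1-\alpha}$ in the iterated $n\to\infty$, $B\to\infty$ limit.

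Combining the three steps via Slutsky yields
\[
\limsup_{B\to\infty}\limsup_{n\to\infty}\Prob\big(\hat\theta_\rho^{(n)}(\haFthrleei)<\theta_0\big)\le \limsup_{B\to\infty}\limsup_{n\to\infty}\Prob\big(\Hb_{\rho,*}^{(n)}(\theta_0)\ge \haFthrleei\big)=\Prob\big(\Hb_{\rho,*}(\theta_0)\ge q_{1-\alpha}\big)=\alpha.
\]
The principal obstacle is the joint control in the third paragraph: passing from the conditional weak convergence of the bootstrap process at a deterministic point to the same statement at the random argument $\predfest$, and then turning this process-level statement into the quantile statement with the two limits in $n$ and $B$ taken in the right order. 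This will require a careful application of the almost-sure representation together with the non-degeneracy afforded by the assumption on $N_{\rho^2}(g_0;\theta_0,\dfi_0)$; the rest of the argument is routine given Theorems~\ref{SchwKentmotv} and \ref{BootHnConvThm}.
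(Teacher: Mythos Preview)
Your approach is essentially the same one the paper follows (by reference to Theorem~4.4 in \cite{HofVetDet17}): reduce the underestimation event to $\{\Hb_{\rho,*}^{(n)}(\theta_0)>\haFthrleei\}$ via monotonicity of $\Db_{\rho,*}^{(n)}$ and the vanishing of $D_\rho^{(g_0)}$ on $[0,\theta_0]$, use Theorem~\ref{SchwKentmotv} for the weak limit of the statistic, and Theorem~\ref{BootHnConvThm} together with consistency of $\predfest$ and a Glivenko--Cantelli argument to identify the limit of the bootstrap quantile.

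There is one small technical slip in your non-degeneracy step: you claim that $\Hb_\rho$ has strictly positive variance at $(\kseqi,\gseqi',\dfi)=(\theta_0,\theta_0,\dfi_0)$, but by construction $\Hb_\rho=\Lambda(\Gb_\rho)$ with $\Lambda(f)(\kseqi,\gseqi,\dfi)=f(\kseqi,\dfi)-\tfrac{\kseqi}{\gseqi}f(\gseqi,\dfi)$, so $\Hb_\rho(\kseqi,\gseqi,\dfi)\equiv 0$ on the diagonal $\kseqi=\gseqi$. The correct observation (this is exactly how the paper handles the analogous point in the proof of Theorem~\ref{KorThrCdfThm}; see the choice \eqref{ChoiFbakseEq}) is that the hypothesis $N_{\rho^2}(g_0;\theta_0,\dfi_0)>0$ together with continuity of $\kseqi\mapsto N_{\rho^2}(g_0;\kseqi,\dfi_0)$ lets you pick $0<\bar\kseqi<\bar\gseqi<\theta_0$ with $\operatorname{Var}(\Hb_\rho(\bar\kseqi,\bar\gseqi,\dfi_0))=\bar\kseqi(1-\bar\kseqi/\bar\gseqi)\,N_{\rho^2}(\nu_0;\dfi_0)>0$ (here $g_0\equiv\nu_0$ on $[0,\theta_0]$). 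That is what feeds into the Tsirelson-type continuity result for the supremum distribution. With this correction, the rest of your argument goes through.
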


\begin{theorem}
	\label{KorThrCdfThm}
	Let Assumption \ref{EasierCond} be satisfied with $g_1=g_2=0$, let $0<r<1$ and for $\gseqi_0$ defined in \eqref{entchanpoi} let $0< \gseqi_0 <1$. Furthermore, suppose that \eqref{additass} holds for some $\smooi, c >0$ and that there exists a $\dfi_0 \in \R$ satisfying $N_{\rho^2}(g_0;\gseqi_0,\dfi_0) > 0$.
	Additionally, let the bootstrap multipliers be either bounded in absolute value or standard normal distributed. Then for each $K > \big(  1/c \big)^{\scriptscriptstyle 1/\smooi}$ and all sequences $(\alpha_n)_{n \in \N} \subset (0,1)$ with $\alpha_n \rightarrow 0$ and $(B_n)_{n \in \N} \subset \N$ with $B_n \rightarrow \infty$ such that
 $\alpha_n^2 B_n \rightarrow \infty,$  $(n \Delta_n)^{\frac{1-r}{2r}} \alpha_n \rightarrow \infty,$
		 $\alpha_n^{-1} n \Delta_n^{1+\tau} \to 0 $ (with $\tau >0$ from Assumption \ref{EasierCond}),
	we have
	\begin{align}
	\label{ovestdfabEq}
	\lim \limits_{n \rightarrow \infty} \Prob\Big(\hat\gseqi_\rho^{(n)}\big(\haFcothrleor\big) > \gseqi_0 + K \varphi^*_n \Big) =0,
	\end{align}
	where $\varphi^*_{n} = \big(\haFcothrleor/\sqrt{n \Delta_n}\big)^{1/\smooi} \probto 0$, while $\haFcothrleor \probto \infty$.
\end{theorem}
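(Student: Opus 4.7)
The strategy is to convert the event $\{\hat\gseqi_\rho^{(n)}(\haFcothrleor) > \gseqi_0 + K\varphi^*_n\}$ into a pointwise deviation bound at $\gseqi_0 + K\varphi^*_n$ and then combine tightness of $\Hb_{\rho,*}^{(n)}$ with divergence of $\haFcothrleor$. Since $\gseqi \mapsto \Db_{\rho,*}^{(n)}(\gseqi)$ is non-decreasing by construction, $r_\rho^{(n)}$ is non-increasing and hence $\hat\gseqi_\rho^{(n)}(\haFcothrleor) = \sup\{\gseqi\in[0,1] : (n\Delta_n)^{1/2}\Db_{\rho,*}^{(n)}(\gseqi) \leq \haFcothrleor\}$; on the event of interest we therefore must have
\[
(n\Delta_n)^{1/2}\Db_{\rho,*}^{(n)}(\gseqi_0 + K\varphi^*_n) \leq \haFcothrleor.
\]

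Next I would analyse the bootstrap threshold. By Theorem \ref{BootHnConvThm}, consistency of $\predfest$, and the continuous mapping theorem, $\hat\Hb_{\rho,*}^{(n)}(\predfest)^r \weakP \Hb_{\rho,*}(\gseqi_0)^r$, and the assumption $N_{\rho^2}(g_0;\gseqi_0,\dfi_0) > 0$ ensures the limit is non-degenerate. As $\alpha_n \to 0$, the $(1-\alpha_n)$-quantile of this limit diverges. Conditions (1)-(3) are calibrated so that the empirical bootstrap quantile $\haFcothrleor$ inherits the same behaviour in probability, namely $\haFcothrleor \probto \infty$ while $\varphi^*_n = (\haFcothrleor/\sqrt{n\Delta_n})^{1/\smooi} \probto 0$. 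With $\varphi^*_n$ small, the expansion \eqref{additass} at $\gseqi_0 + K\varphi^*_n$, together with the definition of $\varphi^*_n$, gives
\[
(n\Delta_n)^{1/2} \Dc^{(g_0)}_\rho(\gseqi_0 + K\varphi^*_n) = cK^{\smooi} \haFcothrleor \big(1 + o_\Prob(1)\big),
\]
the error being $o_\Prob(\haFcothrleor)$ because the remainder in \eqref{additass} contributes $(\varphi^*_n)^\eta$ relative to the leading term. Combining with the sup-reverse-triangle estimate $|\Db_{\rho,*}^{(n)}(\gseqi) - \Dc^{(g_0)}_\rho(\gseqi)| \leq \Hb_{\rho,*}^{(n)}(\gseqi)/\sqrt{n\Delta_n}$, valid uniformly in $\gseqi$, converts the previous display into
\[
\Hb_{\rho,*}^{(n)}(1) \geq \big(cK^{\smooi} - 1 + o_\Prob(1)\big)\haFcothrleor.
\]
Since $K > (1/c)^{1/\smooi}$ makes $cK^\smooi - 1 > 0$, and $\Hb_{\rho,*}^{(n)}(1)$ is tight by Theorem \ref{SchwKentmotv}, the right-hand side diverges in probability while the left-hand side stays bounded, so the event has vanishing probability, yielding \eqref{ovestdfabEq}.

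The hard part will be the quantile analysis in the middle paragraph. The three conditions on $(\alpha_n, B_n)$ target three distinct error sources: (1) $\alpha_n^2 B_n \to \infty$ controls the Monte-Carlo error of the empirical bootstrap quantile at tail level $\alpha_n$; (2) $(n\Delta_n)^{(1-r)/(2r)}\alpha_n \to \infty$ controls the gap between the bootstrapped process and its Gaussian limit at an $\alpha_n$-tail level, after the $r$-th power has been taken; and (3) $\alpha_n^{-1} n\Delta_n^{1+\tau} \to 0$ absorbs the truncation bias of $N_\rho^{(n)}$ relative to the ideal L\'evy distribution function. The boundedness or Gaussianity of the multipliers enters precisely here, supplying the exponential concentration bounds needed for the supremum of $\hat\Hb_\rho^{(n)}$ on tail levels that shrink with $n$; everything else in the argument is a deterministic chain once these probabilistic controls are in place.
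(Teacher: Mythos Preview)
Your overall architecture is right and the final third of the argument matches the paper almost verbatim: monotonicity of $\Db_{\rho,*}^{(n)}$ reduces the overestimation event to a pointwise inequality, the reverse-triangle estimate $|\Db_{\rho,*}^{(n)}-\Dc_\rho^{(g_0)}|\le \Hb_{\rho,*}^{(n)}(1)/\sqrt{n\Delta_n}$ together with the expansion \eqref{additass} produces $\Hb_{\rho,*}^{(n)}(1)\ge (cK^\smooi-1+o_\Prob(1))\haFcothrleor$, and tightness of $\Hb_{\rho,*}^{(n)}(1)$ plus $\haFcothrleor\probto\infty$ closes the loop. The paper does exactly this, only phrasing the last step as a split $P_n^{(1)}+P_n^{(2)}$ with cutoff $b_n=\sqrt{\haFcothrleor}$.

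The gap is in what you flag as the hard part, and it is substantive. First, your allocation of the three rate conditions is off. In the paper, conditions (2) and (3) are \emph{both} used for $\varphi^*_n\probto 0$ and not for $\haFcothrleor\probto\infty$: after a Chebyshev step for the empirical bootstrap cdf (this is where (1) enters), one needs $\alpha_n^{-1}\Prob\big(\hat\Hb_{\rho,*}^{(n)}(\predfest)>(\sqrt{n\Delta_n}\,x)^{1/r}\big)\to 0$, which is split into $\alpha_n^{-1}\Prob(Q_n^C)$ (controlled by (3) via $\Prob(Q_n^C)\le Kn\Delta_n^{1+\tau}$, Lemma~\ref{QnConvOrdn}) and a Markov bound on $\sup|\hat G_\rho^{(n)}|\ind_{Q_n}$ whose first moment is $O(\sqrt{n\Delta_n})$, giving the factor $((n\Delta_n)^{(1-r)/(2r)}\alpha_n)^{-1}$ of condition (2). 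There is no ``gap to the Gaussian limit'' here; it is a direct first-moment tail bound.

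Second, and more importantly, $\haFcothrleor\probto\infty$ cannot be obtained from conditional weak convergence alone, because $\alpha_n\to 0$ takes you into the tail where weak convergence gives no uniform control. The paper's device is \emph{not} an exponential concentration bound for the supremum; it is a pointwise Berry--Esseen bound. One fixes $0<\bar\kseqi<\bar\gseqi<\gseqi_0$ with $N_{\rho^2}(g_0;\bar\kseqi,\dfi_0)>0$, lower-bounds $\hat\Hb_{\rho,*}^{(n)}(\predfest)$ by the single coordinate $\hat\Hb_\rho^{(n)}(\bar\kseqi,\bar\gseqi,\dfi_0)$ (on $\{\predfest>\bar\gseqi\}$), and applies the non-uniform Berry--Esseen theorem of Chen--Shao to the weighted sum $\sum_j\hat B_j^n\xi_j$. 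The assumption $N_{\rho^2}(g_0;\gseqi_0,\dfi_0)>0$ is what makes the conditional variance $\hat W_n^2$ satisfy $1/\hat W_n^2=O_\Prob(1)$, so the normal approximation at the fixed level $x^{1/r}$ stays bounded away from $1$. Boundedness (resp.\ Gaussianity) of the multipliers is used precisely to show the Lindeberg-type term $\sum_i\Eb_\xi\hat U_{i,n}^2\ind_{\{|\hat U_{i,n}|>1\}}$ vanishes (identically for large $n$, resp.\ via an explicit Gaussian tail bound), while the third-moment term is handled by another moment estimate on $Q_n$. Your sketch would need to be rewritten around this Berry--Esseen step to go through.
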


Theorem \ref{KorThrCdfThm} is meaningless without the statement  $\varphi^*_{n} \probto 0$. With the additional parameter $r \in (0,1)$ this assertion can be proved by using the assumptions $(n \Delta_n)^{\frac{1-r}{2r}} \alpha_n \rightarrow \infty$ and $\alpha_n^{-1} n \Delta_n^{1+\tau} \to 0$ only. However, it seems that for $r=1$ the statement  $\varphi^*_{n} \probto 0$ can only be verified under very restrictive conditions on the underlying process.


We conclude this section with an example which shows that the expansion \eqref{additass} and the additional assumption $N_{\rho^2}(g_0;\gseqi_0,\dfi_0) > 0$ of the preceding theorems are satisfied  in the situations of Example \ref{Ex:Sitabcha} and Example \ref{Ex:SitgraCh}. A proof for this example can be found in Section \ref{prresec4}.

\begin{example} 
	\label{exdf3} ~ \vspace{-2mm}
	\begin{enumerate}[(1)]
		\item Recall the situation of an abrupt change considered in Example \ref{exdf1}. In this case it follows from \eqref{Drhoabentw} that
		\begin{equation*}
		\Dc^{(g_0)}_\rho(\gseqi) = V_0^\rho \gseqi_0 \big( 1- \frac{\gseqi_0}\gseqi \big) = V_0^\rho (\gseqi - \gseqi_0) - \frac{V_0^\rho}\gseqi(\gseqi - \gseqi_0)^2 >0,
		\end{equation*}
		whenever $\gseqi_0 < \gseqi \leq 1$. Consequently, \eqref{additass} is satisfied with $\smooi =1$ and $\aleph(\gseqi) = - \frac{V_0^\rho}\gseqi (\gseqi - \gseqi_0)^2 = O((\gseqi - \gseqi_0)^2)$ for $\gseqi \to \gseqi_0$. Moreover, if $\nu_1 \neq 0$ and the function $\rho$ meets Assumption \ref{EasierCond}\eqref{rhoneq0}, the transition kernel given by \eqref{exabtrker} satisfies the additional assumption $N_{\rho^2}(g_0;\gseqi_0,\dfi_0) >0$ in Theorem \ref{OptthChodfThm} and Theorem \ref{KorThrCdfThm} for some $\dfi_0 \in \R$.
		\item \label{exdf3Nr2} In the situation discussed in Example \ref{exdf2} let
		\begin{equation*}
		\bar N(y,\dfi) = \bar A(y) \int_{-\infty}^\dfi \rho_{L,\hat p}(\taili) h_{\bar \beta(y), \bar p(y)}(\taili) d\taili
		\end{equation*}
		for $y \in U$ and $\dfi \in \R$. Then we have
	$
		k_0 := \min \{ k \in \N ~ \big| ~ \exists t \in \R \colon N_k(\dfi) \neq 0 \} < \infty,
	$	
		where for $k \in \N_0$ and $\dfi \in \R$
		\begin{equation*}
		N_k(\dfi) := \Big( \frac{\partial^k \bar N}{\partial y^k} \Big) \Big |_{(\gseqi_0,\dfi)}
		\end{equation*}
		denotes the $k$-th partial derivative of $\bar N$ with respect to $y$ at $(\gseqi_0,\dfi)$, which is a bounded function on $\R$. Furthermore, there exists a $\iota >0$ such that
		\begin{equation}
		\label{DcrhoLpexpan}
		\Dc_{\rho_{L,\hat p}}^{(g_0)}(\gseqi) = \Big( \frac 1{(k_0 +1)!} \sup_{t \in \R} |N_{k_0}(\dfi)| \Big) (\gseqi - \gseqi_0)^{k_0 +1} + \aleph(\gseqi)
		\end{equation}
		on $[\gseqi_0,\gseqi_0 + \iota]$ with $|\aleph(\gseqi)| \leq K(\gseqi - \gseqi_0)^{k_0 +2}$ for some $K>0$. Obviously, $N_{\rho_{L,\hat p}^2}(g_0;\gseqi_0,\dfi_0) >0$ holds for some $\dfi_0 \in \R$.
	\end{enumerate}
\end{example}

\subsection{Testing for a gradual change} 

In Section \ref{sec:Infabchdf} we introduced change point tests for the situation of an abrupt change as in Example \ref{Ex:Sitabcha}, where the jump behaviour is assumed to be constant before and after the change point. 
In this section we illustrate a reasonable way to derive test procedures for the existence of a gradual change in the data. In order to formulate suitable hypotheses for a gradual change point   recall
the definition of the measure of time variation for the entire jump behaviour $D_\rho^{\scriptscriptstyle (g_0)}$ in
\eqref{entmeaotivDef} and define for $\dfi_0 \in \R$ and  $\gseqi \in [0,1]$ the quantities
\begin{align*}
\mathcal D_\rho^{(g_0)}(\gseqi) &\defeq \sup \limits_{\dfi \in \R} \sup \limits_{0 \leq \kseqi \leq \gseqi' \leq \gseqi} \big|D^{(g_0)}_\rho( \kseqi, \gseqi', \dfi)\big| \\
\mathcal D_{\rho,\dfi_0}^{(g_0)}(\gseqi) &\defeq \sup \limits_{0 \leq \kseqi \leq  \gseqi' \leq \gseqi} \big|D^{(g_0)}_\rho(\kseqi, \gseqi', \dfi_0)\big|.
\end{align*}
We test the null hypothesis

\begin{enumerate}
	\item[${\bf H}_0$:] Assumption~\ref{EasierCond} is satisfied with $g_1=g_2=0$ and there exists a L\'evy measure $\nu_0$ such that $g_0(y,d\taili) = \nu_0(d\taili)$ holds for Lebesgue almost every $y \in [0,1]$.
\end{enumerate}

versus the general alternative of non-constant jump behaviour

\begin{enumerate}
	\item[${\bf H}_1^*$:] Assumption~\ref{EasierCond} holds with $g_1=g_2=0$ and we have $\Dc_\rho^{(g_0)}(1) >0$.
\end{enumerate}


If one is interested in gradual changes in $N_\rho(\nu_s^{\scriptscriptstyle (n)};\dfi_0)$ for a fixed $\dfi_0 \in \R$, one can consider
the corresponding alternative

\begin{enumerate}
	\item[${\bf H}_1^*(\dfi_0)$:] Assumption~\ref{EasierCond} is satisfied with $g_1=g_2=0$ and we have $\Dc_{\rho,\dfi_0}^{(g_0)}(1) >0$.
\end{enumerate}

Furthermore, we investigate the behaviour of the tests introduced below under local alternatives of the form

\begin{enumerate}
	\item[${\bf H}^{(loc)}_1$:] Assumption~\ref{EasierCond} holds with $g_0(y,d\taili) = \nu_0(d\taili)$ for Lebesgue-a.e. $y \in [0,1]$ for some L\'evy measure $\nu_0$ and some transition kernels $g_1,g_2 \in \Gc(\beta,p)$.
\end{enumerate}


\begin{remark}
	Note that  the function $D^{\scriptscriptstyle (g_0)}_\rho$ in \eqref{entmeaotivDef} is uniformly continuous in $(\kseqi,\gseqi) \in C$ uniformly in $\dfi \in \R$, that is for any $\eta >0$ there exists a $\delta >0$ such that
	\[\big|D^{(g_0)}_\rho(\kseqi_1,\gseqi_1,\dfi) - D^{(g_0)}_\rho(\kseqi_2, \gseqi_2, \dfi)\big| < \eta\]
	holds for each $\dfi \in \R$ and all pairs $(\kseqi_1,\gseqi_1), (\kseqi_2, \gseqi_2) \in C = \{(\kseqi, \gseqi) \in [0,1]^2 \mid \kseqi \leq \gseqi \}$ with maximum distance $\|(\kseqi_1,\gseqi_1)- (\kseqi_2, \gseqi_2)\|_\infty < \delta$.
	Therefore, the function $D^*_\rho(g_0;\kseqi, \gseqi) = \sup_{\dfi \in \R} |D^{\scriptscriptstyle (g_0)}_\rho(\kseqi, \gseqi, \dfi)|$ is uniformly continuous on $C$ and as a consequence $\Dc^{\scriptscriptstyle (g_0)}_\rho$ is continuous on $[0,1]$. Thus, $\Dc^{\scriptscriptstyle (g_0)}_\rho(1) >0$ holds if and only if the point $\theta_0$ defined in \eqref{entchanpoi} satisfies $\theta_0 < 1$.
\end{remark}

The idea of the following tests is to reject the null hypothesis ${\bf H}_0$ for large values  of the corresponding estimators $\Db_{\rho,*}^{\scriptscriptstyle (n)}(1)$ and $\sup_{(\kseqi, \gseqi) \in C} |\Db_\rho^{\scriptscriptstyle (n)}(\kseqi, \gseqi,\dfi_0)|$ for $\mathcal D^{\scriptscriptstyle (g_0)}_\rho(1) $
and  $\mathcal D_{\rho,\dfi_0}^{\scriptscriptstyle (g_0)}(1)$, respectively. In order to obtain critical values we use the multiplier bootstrap approach introduced in Section \ref{subsec:Drhoe}.
For this purpose we denote by  $(\xi^{\scriptscriptstyle (b)})_{b=1,\ldots,B}$ for some large $B\in\N$ independent sequences $\xi^{\scriptscriptstyle (b)}=(\xi^{\scriptscriptstyle (b)}_j)_{j\in\N}$ of multipliers satisfying Assumption \ref{MultiplAss}.
We denote by $\hat \Hb_{\rho}^{\scriptscriptstyle  (n, b)}$
the processes defined in \eqref{hatHbrhondeq} calculated from $\{ X^{\scriptscriptstyle (n)}_{i \Delta_n} \mid i=0,\ldots,n\}$  and the $b$-th bootstrap multipliers $\xi^{\scriptscriptstyle (b)}_1, \ldots, \xi^{\scriptscriptstyle  (b)}_n$.
For a given level $\alpha \in (0,1)$, we
propose to reject  ${\bf H}_0$ in favor of ${\bf H}_1^*$, if
\begin{equation} \label{testdfglobal}
(n\Delta_n)^{1/2} \Db_{\rho,*}^{(n)}(1) \geq \hat q^{(B)}_{1 - \alpha} \Big( {\Hb}^{(n)}_{\rho,*}(1) \Big),
\end{equation}
where $\hat q^{(B)}_{1 - \alpha} \big({\Hb}^{(n)}_{\rho,*}(1) \big)$ denotes the $(1- \alpha)$-quantile of the sample $\hat{\Hb}_{\rho,*}^{\scriptscriptstyle (n,1)}(1), \ldots, \hat{\Hb}_{\rho,*}^{\scriptscriptstyle (n,B)}(1)$ with $\hat \Hb_{\rho,*}^{\scriptscriptstyle (n,b)}$ defined in \eqref{hatFnstardef}.
Similarly, for $\dfi_0 \in \R$, the null hypothesis
${\bf H}_0$ is rejected in favor of ${\bf H}^*_1(\dfi_0)$ if
\begin{equation} \label{testdflokal}
R_{\rho,\dfi_0}^{(n)} \defeq (n \Delta_n)^{1/2} \sup \limits_{(\kseqi, \gseqi) \in C} \big|\Db_\rho^{(n)}(\kseqi, \gseqi,\dfi_0)\big| \geq \hat q^{(B)}_{1 - \alpha}\big(R_{\rho,\dfi_0}^{(n)} \big),
\end{equation}
where $\hat q^{(B)}_{1 - \alpha}\big(R_{\rho,\dfi_0}^{(n)} \big)$ denotes the $(1- \alpha)$-quantile of the sample $\hat R_{\rho,\dfi_0}^{\scriptscriptstyle (n,1)}, \ldots, \hat R_{\rho,\dfi_0}^{\scriptscriptstyle (n,B)}$, and
\[
\hat R_{\rho,\dfi_0}^{(n,b)} \defeq \sup_{(\kseqi, \gseqi) \in C} \big| \hat \Hb^{(n,b)}_{\rho}(\kseqi, \gseqi,\dfi_0) \big|.
\]

In the following we show the behaviour of the aforementioned tests under ${\bf H}_0$, ${\bf H}_1^{(loc)}$ and the alternatves ${\bf H}_1^*$, ${\bf H}_1^*(\dfi_0)$. To this end, recall the limit process $\Hb_{\rho,g_1} \defeq \Hb_{\rho} + D_\rho^{\scriptscriptstyle (g_1)}$ in Theorem \ref{SchwKentmotv}, where $D_\rho^{\scriptscriptstyle (g_1)}$ is defined in \eqref{entmeaotivDef} and where the tight mean zero Gaussian process $\Hb_\rho$ in $\linctr$ has the covariance function \eqref{HbrhoProCov}. Under the general Assumption \ref{EasierCond} let $K_\rho : (\R,\Bb) \to (\R,\Bb)$ be the c.d.f.\ of $\sup_{(\kseqi,\gseqi,\dfi)\in C\times \R} |\Hb_\rho(\kseqi,\gseqi,\dfi)|$ and let $K_\rho^{\scriptscriptstyle (\dfi_0)} : (\R,\Bb) \to (\R,\Bb)$ be the c.d.f.\ of $\sup_{(\kseqi,\gseqi)\in C} |\Hb_\rho(\kseqi,\gseqi,\dfi_0)|$. Furthermore, let 
\begin{align*}
H_{\rho,g_1} &\defeq \sup_{(\kseqi,\gseqi,\dfi)\in C\times \R} |\Hb_\rho(\kseqi,\gseqi,\dfi) + D_\rho^{(g_1)}(\kseqi,\gseqi,\dfi)|, \\
H_{\rho,g_1}^{(\dfi_0)} &\defeq \sup_{(\kseqi,\gseqi)\in C} |\Hb_\rho(\kseqi,\gseqi,\dfi_0) + D_\rho^{(g_1)}(\kseqi,\gseqi,\dfi_0)|.
\end{align*}

The proposition below shows the performance of the new tests under the local alternative ${\bf H}_1^{(loc)}$.

\begin{proposition}
	\label{localtgrapro}
	Under ${\bf H}_1^{(loc)}$ we have for each $\alpha \in (0,1)$
	\begin{multline*}
	\Prob\big(K_\rho(H_{\rho,g_1}) > 1-\alpha \big) \leq \liminf_{B \to \infty} \lim_{n\to \infty} \Prob \Big(  (n\Delta_n)^{1/2} \Db_{\rho,*}^{(n)}(1) \geq \hat q^{(B)}_{1 - \alpha} \big( {\Hb}^{(n)}_{\rho,*}(1) \big)  \Big) \\
	\leq \limsup_{B \to \infty} \lim_{n\to \infty} \Prob \Big(  (n\Delta_n)^{1/2} \Db_{\rho,*}^{(n)}(1) \geq \hat q^{(B)}_{1 - \alpha} \big( {\Hb}^{(n)}_{\rho,*}(1) \big) \Big) \leq \Prob\big(K_\rho(H_{\rho,g_1}) \geq 1-\alpha \big),
	\end{multline*}
	if there exist $\bar \dfi \in \R$, $\bar \kseqi \in (0,1)$ with $N_{\rho^2}(g_0;\bar \kseqi, \bar \dfi) > 0$, and furthermore
	\begin{multline*}
	\Prob\big(K_\rho^{(\dfi_0)}(H^{(\dfi_0)}_{\rho,g_1}) > 1-\alpha \big) \leq \liminf_{B \to \infty} \lim_{n\to \infty} \Prob \Big( R_{\rho,\dfi_0}^{(n)} \geq \hat q^{(B)}_{1 - \alpha}\big(R_{\rho,\dfi_0}^{(n)} \big) \Big) \\
	\leq  \limsup_{B \to \infty} \lim_{n\to \infty} \Prob \Big( R_{\rho,\dfi_0}^{(n)} \geq \hat q^{(B)}_{1 - \alpha}\big(R_{\rho,\dfi_0}^{(n)} \big) \Big) \leq \Prob\big(K_\rho^{(\dfi_0)}(H^{(\dfi_0)}_{\rho,g_1}) \ge 1-\alpha \big)
	\end{multline*}
	holds for each $\alpha \in (0,1)$, if there exists a $\bar \kseqi \in (0,1)$ with $N_{\rho^2}(g_0;\bar \kseqi, \dfi_0) > 0$.
\end{proposition}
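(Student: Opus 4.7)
The plan is to combine Theorem~\ref{SchwKentmotv} with Theorem~\ref{BootHnConvThm} to obtain joint weak convergence of the test statistic and the bootstrap replicates, and then to handle the empirical $(1-\alpha)$-quantile via the Portmanteau theorem and the Glivenko--Cantelli theorem. I focus on the first chain of inequalities involving $(n\Delta_n)^{1/2}\Db_{\rho,*}^{(n)}(1)$; the second chain for $R_{\rho,\dfi_0}^{(n)}$ follows by the same argument after fixing $\dfi = \dfi_0$ and replacing $\sup_{C\times\R}$ by $\sup_{C}$.

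First, since under $\mathbf{H}_1^{(loc)}$ the kernel $g_0$ is Lebesgue-a.e.\ equal to the constant $\nu_0$, one has $D_\rho^{(g_0)} \equiv 0$, so $\Hb_\rho^{(n)} = (n\Delta_n)^{1/2}\Db_\rho^{(n)}$. Theorem~\ref{SchwKentmotv} then yields $(n\Delta_n)^{1/2}\Db_\rho^{(n)} \weak \Hb_\rho + D_\rho^{(g_1)}$ in $\linctr$, and the continuous mapping theorem gives $(n\Delta_n)^{1/2}\Db_{\rho,*}^{(n)}(1) \weak H_{\rho,g_1}$. Since the multiplier vectors $\xi^{(1)},\ldots,\xi^{(B)}$ are mutually independent and independent of the data, the conditional weak convergence of $\hat\Hb_\rho^{(n)}$ in Theorem~\ref{BootHnConvThm} lifts, by the standard bootstrap-consistency framework of \cite{Kos08}, to the joint unconditional weak convergence
\[
\Big((n\Delta_n)^{1/2}\Db_\rho^{(n)},\, \hat\Hb_\rho^{(n,1)},\, \ldots,\, \hat\Hb_\rho^{(n,B)}\Big) \weak \Big(\Hb_\rho + D_\rho^{(g_1)},\, \Hb_\rho^{(1)},\, \ldots,\, \Hb_\rho^{(B)}\Big),
\]
where $\Hb_\rho^{(1)},\ldots,\Hb_\rho^{(B)}$ are i.i.d.\ copies of $\Hb_\rho$ independent of the first component. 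A second continuous-mapping step produces the joint limit
\[
\Big((n\Delta_n)^{1/2}\Db_{\rho,*}^{(n)}(1),\, \hat\Hb_{\rho,*}^{(n,1)}(1),\, \ldots,\, \hat\Hb_{\rho,*}^{(n,B)}(1)\Big) \weak \big(H_{\rho,g_1},\, H^{(1)},\, \ldots,\, H^{(B)}\big),
\]
with $H^{(b)} := \sup_{C\times\R}|\Hb_\rho^{(b)}|$ i.i.d.\ with c.d.f.\ $K_\rho$.

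Let $Q^{(B)}$ denote the $(1-\alpha)$-empirical quantile of $H^{(1)},\ldots,H^{(B)}$. Portmanteau applied to the open set $\{(t,q):t>q\}$ and the closed set $\{(t,q):t\geq q\}$ in $\R^2$ then yields, for every fixed $B$,
\[
\Prob\big(H_{\rho,g_1} > Q^{(B)}\big) \leq \liminf_{n\to\infty} \Prob\big(T^{(n)} \geq \hat q^{(B)}\big) \leq \limsup_{n\to\infty} \Prob\big(T^{(n)} \geq \hat q^{(B)}\big) \leq \Prob\big(H_{\rho,g_1} \geq Q^{(B)}\big),
\]
where $T^{(n)} := (n\Delta_n)^{1/2}\Db_{\rho,*}^{(n)}(1)$ and $\hat q^{(B)} := \hat q^{(B)}_{1-\alpha}\big(\Hb_{\rho,*}^{(n)}(1)\big)$. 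To pass $B\to\infty$, I appeal to Tsirelson's theorem to conclude that the supremum of the nondegenerate centered Gaussian process $\Hb_\rho$ has a continuous distribution $K_\rho$ on $(0,\infty)$; nondegeneracy follows from $N_{\rho^2}(g_0;\bar\kseqi,\bar\dfi)>0$ via \eqref{HbrhoProCov}, which gives a strictly positive variance of $\Hb_\rho$ at $(\bar\kseqi,1,\bar\dfi)$. Glivenko--Cantelli and continuity of $K_\rho$ imply $Q^{(B)} \to K_\rho^{-1}(1-\alpha)$ almost surely, and bounded convergence then produces the claimed bounds $\Prob(K_\rho(H_{\rho,g_1})>1-\alpha)$ and $\Prob(K_\rho(H_{\rho,g_1})\geq 1-\alpha)$. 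The main obstacle I foresee is the unconditional joint convergence of the data-based statistic with $B$ independent bootstrap replicates, starting only from a single conditional statement; the remaining steps are routine applications of continuous mapping, Portmanteau and Glivenko--Cantelli.
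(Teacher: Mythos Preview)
Your approach is essentially the one the paper uses (it refers back to the proof of Proposition~\ref{ConsuH1loc}, which is the analogous argument for the CUSUM statistic), with two points worth noting.

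First, the joint unconditional convergence you flag as the ``main obstacle'' is exactly Proposition~\ref{JointConvProp} in the paper, so you may simply cite that. The paper then works with the empirical \emph{c.d.f.} $\Psi_{(B)}(x_0,x_1,\ldots,x_B)=B^{-1}\sum_b\ind\{x_b\le x_0\}$ rather than the empirical quantile: it rewrites the rejection event as $\{L_{n,B}(T^{(n)})\ge 1-\alpha\}$, applies the continuous mapping theorem to $\Psi_{(B)}$ (a.s.\ continuous under the limit law by continuity of $K_\rho$), and then Portmanteau. Your quantile route is equivalent once one notes that the sample $(1-\alpha)$-quantile is an order statistic and hence a continuous function of $(H^{(1)},\ldots,H^{(B)})$.

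Second, there is a small gap in your argument as written: the statement has $\lim_{n\to\infty}$ inside, not $\liminf_n/\limsup_n$, so you must argue that your two Portmanteau bounds for fixed $B$ coincide. This follows because $Q^{(B)}$, being an order statistic of i.i.d.\ draws from the continuous law $K_\rho$, has a continuous distribution and is independent of $H_{\rho,g_1}$, whence $\Prob(H_{\rho,g_1}=Q^{(B)})=0$. The paper handles this point differently: by passing to the empirical c.d.f., the limit variable $L_B(H_{\rho,g_1})$ takes values only in $\{0,1/B,\ldots,1\}$, so for irrational $\alpha$ the $n$-limit exists trivially, and the extension to rational $\alpha$ is done at the end via monotonicity and one-sided continuity in $\alpha$.
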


With the result above and an inspection of the limiting probability $\Prob\big(K_\rho(H_{\rho,g_1}) \geq 1-\alpha \big)$, which is beyond the scope of this paper, one can show for which direction $g_1$ it is more difficult to distinguish the null hypothesis from the alternative. An immediate consequence of Proposition \ref{localtgrapro} is that the tests \eqref{testdfglobal} and \eqref{testdflokal} hold the level $\alpha$ asymptotically.

\begin{corollary}
	\label{H0graprop}
	The tests \eqref{testdfglobal} and \eqref{testdflokal} are asymptotic level $\alpha$ tests in the following sense: Under ${\bf H}_0$ with $\nu_0\neq 0$ we have for each $\alpha \in(0,1)$
	\[\lim_{B \to \infty} \lim_{n\to \infty} \Prob \Big(  (n\Delta_n)^{1/2} \Db_{\rho,*}^{(n)}(1) \geq \hat q^{(B)}_{1 - \alpha} \big( {\Hb}^{(n)}_{\rho,*}(1) \big)  \Big) = \alpha \]
	and moreover
	\[\lim_{B \to \infty} \lim_{n\to \infty} \Prob \Big( R_{\rho,\dfi_0}^{(n)} \geq \hat q^{(B)}_{1 - \alpha}\big(R_{\rho,\dfi_0}^{(n)} \big) \Big)=\alpha,\]
	holds for all $\alpha\in (0,1)$, if $N_{\rho^2}(\nu_0;\dfi_0) >0$.
\end{corollary}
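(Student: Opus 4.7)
The plan is to recognize that the null hypothesis $\mathbf{H}_0$ is a special case of the local alternative $\mathbf{H}_1^{(loc)}$ with $g_1 = g_2 = 0$, so that Proposition \ref{localtgrapro} applies directly, and then to upgrade the two-sided sandwich there to an equality by exploiting the continuity of the distribution of a Gaussian supremum.

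First I would verify the non-degeneracy hypothesis of Proposition \ref{localtgrapro}. Under $\mathbf{H}_0$ we have $g_0(y,d\taili) = \nu_0(d\taili)$ for Lebesgue-a.e.\ $y \in [0,1]$ with $\nu_0 \neq 0$. Since $\rho(\taili) \neq 0$ for $\taili \neq 0$ by Assumption \ref{EasierCond}\eqref{rhoneq0} and $\rho$ is continuous, the non-zero L\'evy measure $\nu_0$ must satisfy $N_{\rho^2}(\nu_0;\bar\dfi) = \int_{-\infty}^{\bar\dfi} \rho^2(\taili)\nu_0(d\taili) > 0$ for some $\bar \dfi \in \R$. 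Consequently, for any $\bar \kseqi \in (0,1)$,
\[
N_{\rho^2}(g_0;\bar\kseqi,\bar\dfi) = \bar\kseqi\, N_{\rho^2}(\nu_0;\bar\dfi) > 0,
\]
which yields the required assumption for the global test, and the same argument (for $\bar \dfi = \dfi_0$) provides it for the pointwise test whenever $N_{\rho^2}(\nu_0;\dfi_0) > 0$.

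Next I would plug $g_1 = 0$ into the definitions preceding Proposition \ref{localtgrapro}. Then $D_\rho^{(g_1)}(\kseqi,\gseqi,\dfi) = 0$ identically, so $H_{\rho,g_1}$ and $H_{\rho,g_1}^{(\dfi_0)}$ reduce to $H_{\rho,0} = \sup_{(\kseqi,\gseqi,\dfi)\in C \times \R} |\Hb_\rho(\kseqi,\gseqi,\dfi)|$ and $H_{\rho,0}^{(\dfi_0)} = \sup_{(\kseqi,\gseqi)\in C} |\Hb_\rho(\kseqi,\gseqi,\dfi_0)|$, respectively. By definition $K_\rho$ and $K_\rho^{(\dfi_0)}$ are precisely the distribution functions of these two suprema. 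Applying Proposition \ref{localtgrapro} gives
\[
\Prob\bigl(K_\rho(H_{\rho,0}) > 1-\alpha\bigr) \leq \liminf_{B\to\infty} \lim_{n\to\infty}\Prob(\cdots) \leq \limsup_{B\to\infty}\lim_{n\to\infty}\Prob(\cdots) \leq \Prob\bigl(K_\rho(H_{\rho,0}) \geq 1-\alpha\bigr),
\]
and analogously for the pointwise test.

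The main obstacle, and the step that sharpens the sandwich to an equality, is to show that $K_\rho$ and $K_\rho^{(\dfi_0)}$ are continuous and strictly increasing in a neighbourhood of $1-\alpha$. Once this is established, $K_\rho(H_{\rho,0})$ is uniformly distributed on $[0,1]$ by the probability integral transform, so
\[
\Prob\bigl(K_\rho(H_{\rho,0}) > 1-\alpha\bigr) = \Prob\bigl(K_\rho(H_{\rho,0}) \geq 1-\alpha\bigr) = \alpha,
\]
which forces both sides of the sandwich to equal $\alpha$. For the continuity, I would appeal to the same general result on distributions of Gaussian suprema that the paper has already invoked (Corollary 1.3 and Remark 4.1 in \cite{GaeMolRos07}): because the Gaussian process $\Hb_\rho$ is tight with continuous covariance kernel given by \eqref{HbrhoProCov} and is non-degenerate (the diagonal of the covariance is strictly positive at $(\bar\kseqi,\bar\kseqi,\bar\dfi,\bar\dfi)$ thanks to the non-degeneracy step above), the distribution of the supremum of $|\Hb_\rho|$ over $C \times \R$ (respectively over $C \times \{\dfi_0\}$) is continuous on $\R$ and strictly increasing on $(0,\infty)$. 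Combining these three ingredients yields both limits of Corollary \ref{H0graprop}.
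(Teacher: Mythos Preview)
Your proof is correct and follows essentially the same route as the paper, which simply refers to the analogous argument for Corollary~\ref{prop:asledf}: specialize Proposition~\ref{localtgrapro} to $g_1=0$, invoke continuity of the Gaussian supremum via \cite{GaeMolRos07}, and use the probability integral transform to collapse the sandwich to $\alpha$. Your explicit verification of the non-degeneracy hypothesis $N_{\rho^2}(g_0;\bar\kseqi,\bar\dfi)>0$ from $\nu_0\neq 0$ and Assumption~\ref{EasierCond}\eqref{rhoneq0} is a welcome addition that the paper leaves implicit.
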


The tests \eqref{testdfglobal} and \eqref{testdflokal} are also consistent under the fixed alternatives ${\bf H}_1^*$, ${\bf H}_1^*(\dfi_0)$ in the sense of the following proposition.

\begin{proposition} \label{CorConGradf}
	Under ${\bf H}_1^*$, we have for all $B \in \mathbb N$ 
	\begin{equation*}
	\lim \limits_{n \rightarrow \infty} \mathbb P \Big( (n\Delta_n)^{1/2} \Db_{\rho,*}^{(n)}(1) \geq \hat q^{(B)}_{1 - \alpha} \big( {\Hb}^{(n)}_{\rho,*}(1) \big) \Big)  = 1.
	\end{equation*}
	Under ${\bf H}_1^*(\dfi_0)$, we have for all $B \in \mathbb N$
	\[
	\lim \limits_{n \rightarrow \infty} \mathbb P \Big( R_{\rho,\dfi_0}^{(n)} \geq \hat q^{(B)}_{1 - \alpha}\big( R_{\rho,\dfi_0}^{(n)} \big) \Big) =1.
	\]
\end{proposition}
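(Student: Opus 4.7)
The plan is to show that under the fixed alternatives the test statistics on the left-hand sides diverge in probability to $\infty$, while the bootstrap critical values stay bounded in probability for each fixed number of replicates $B$. Once these two facts are in place, the limiting rejection probability is trivially one.

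First consider ${\bf H}_1^*$. Under this alternative Assumption~\ref{EasierCond} holds with $g_1=g_2=0$, so Theorem~\ref{SchwKentmotv} yields $\Hb_\rho^{(n)} \weak \Hb_\rho$ in $\linctr$, and in particular
\[
\bigl\|\Db_\rho^{(n)} - D_\rho^{(g_0)}\bigr\|_{\linctr} = O_{\Prob}\bigl((n\Delta_n)^{-1/2}\bigr) = o_{\Prob}(1).
\]
Applying the continuous mapping theorem to the bounded Lipschitz functional $f \mapsto \sup_{0\le \kseqi\le \gseqi'\le 1,\,\dfi\in\R}|f(\kseqi,\gseqi',\dfi)|$ I obtain $\Db_{\rho,*}^{(n)}(1) \probto \Dc_\rho^{(g_0)}(1)$. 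Since the alternative states $\Dc_\rho^{(g_0)}(1) > 0$ and $n\Delta_n \to \infty$, this gives the divergence
\[
(n\Delta_n)^{1/2}\, \Db_{\rho,*}^{(n)}(1) \probto \infty.
\]

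For the critical value, I use Theorem~\ref{BootHnConvThm}: the bootstrapped process $\hat \Hb_\rho^{(n)}$ converges weakly conditional on the data in probability to the tight Gaussian process $\Hb_\rho$, and therefore also unconditionally (see Remark~\ref{rem:condweak}(ii)). Applying the same sup-functional, each individual bootstrap replicate $\hat \Hb_{\rho,*}^{(n,b)}(1)$ is tight, i.e.\ $O_{\Prob}(1)$. For fixed $B\in\N$ one has
\[
\hat q^{(B)}_{1-\alpha}\bigl({\Hb}^{(n)}_{\rho,*}(1)\bigr) \;\leq\; \max_{1\le b\le B} \hat \Hb_{\rho,*}^{(n,b)}(1),
\]
and since a finite maximum of tight random variables is tight, the right-hand side is $O_{\Prob}(1)$. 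Combining both facts,
\[
\Prob\Bigl((n\Delta_n)^{1/2} \Db_{\rho,*}^{(n)}(1) \geq \hat q^{(B)}_{1-\alpha}\bigl({\Hb}^{(n)}_{\rho,*}(1)\bigr)\Bigr) \longrightarrow 1.
\]

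The proof for the pointwise test under ${\bf H}_1^*(\dfi_0)$ is identical in structure: the uniform consistency from Theorem~\ref{SchwKentmotv} gives $(n\Delta_n)^{-1/2} R_{\rho,\dfi_0}^{(n)} \probto \Dc_{\rho,\dfi_0}^{(g_0)}(1) > 0$, hence $R_{\rho,\dfi_0}^{(n)} \probto \infty$; the same tightness argument applied to the sections $\hat R_{\rho,\dfi_0}^{(n,b)} = \sup_{(\kseqi,\gseqi)\in C}|\hat \Hb_\rho^{(n,b)}(\kseqi,\gseqi,\dfi_0)|$ shows that $\hat q^{(B)}_{1-\alpha}(R_{\rho,\dfi_0}^{(n)})$ is $O_{\Prob}(1)$ for each fixed $B$.

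I do not anticipate a serious obstacle: the only step that requires a moment of care is the passage from conditional weak convergence (Theorem~\ref{BootHnConvThm}) to unconditional tightness of the bootstrap replicates, but this is covered by Remark~\ref{rem:condweak}(ii) and the continuous mapping theorem applied to the bounded Lipschitz sup-functional. Everything else reduces to combining uniform consistency of $\Db_\rho^{(n)}$ with the tightness of the bootstrap quantiles.
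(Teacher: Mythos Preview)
Your proposal is correct and follows essentially the same approach as the paper, which refers the reader to the analogous proof of Proposition~\ref{prop:conuH1}: there too one shows that each bootstrap replicate is $O_{\Prob}(1)$ via Remark~\ref{rem:condweak}(ii) and the continuous mapping theorem, while the test statistic diverges because $\Db_\rho^{(n)}$ is uniformly consistent for $D_\rho^{(g_0)}$ and the limiting sup is strictly positive under the alternative. One small terminological nit: the sup-functional is Lipschitz but not bounded, so just invoke the continuous mapping theorem directly rather than calling it a bounded Lipschitz functional.
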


\section{Finite sample properties}
\def\theequation{5.\arabic{equation}}
\setcounter{equation}{0}
\label{sec:fisaper}

In this section we present the results of a simulation study assessing the finite sample properties of the new statistical procedures. We divide this study into two parts: In Section \ref{MCSim} we investigate the performance of the new tests and estimators by means of a simulation study. Finally, we apply the new methods to high-frequency stock exchange prices in Section \ref{RDApp}.

\subsection{Monte Carlo experiments}
\label{MCSim}

This section is dedicated to a Monte Carlo simulation study. The design of this study is as follows:

\smallskip
\noindent
(i)  We apply our estimators and test statistics to $n$ data points $\{X_{\Delta_n}, \ldots, X_{n\Delta_n}\}$ as realizations of an It\=o semimartingale $(X_t)_{t \in \R_+}$ with characteristics $(b,\sigma,\nu_s)$. For the sample size we choose either $n=10000$ or $n=22500$, where for the effective sample size we consider the choices $k_n := n\Delta_n = 50,100,200$ in the case $n=10000$ resulting in frequencies $\Delta_n^{-1} = 200,100,50$ and in the case $n=22500$ we consider $k_n= n\Delta_n = 50,75,100,150,250$ resulting in $\Delta_n^{-1} = 450,300,225,150,90$.

\smallskip
\noindent
(ii)  Corresponding to our basic rescaling assumption \eqref{ComResAss} the jump characteristic satisfies
	\[
	\nu_s(d\taili) = g\Big(\frac{s}{n\Delta_n},d\taili \Big),
	\]
	where the transition kernel $g(y,d\taili)$ is given by 
	\begin{equation}
	\label{SimMod}
	g(y,[z,\infty)) = \begin{cases}
	\Big(\frac{\eta(y)}{\pi \taili}\Big)^{1/2} - \Big(\frac{1}{\pi 10^6}\Big)^{1/2}, \quad &\text{ if } 0 < \taili \leq \eta(y) 10^6, \\
	0, &\text{ otherwise, }
	\end{cases}
	\end{equation}
	and $g(y,(-\infty,z]) = 0$ for all $z <0$.

\smallskip
\noindent
(iii)  In order to simulate data points $\{X_{\Delta_n}, \ldots, X_{n\Delta_n}\}$ including an abrupt change we choose
	\begin{equation}
	\label{etaabch}
	\eta(y) = \begin{cases}
	1, \quad &\text{ if } y \leq \gseqi_0, \\
	\psi, \quad &\text{ if } y > \gseqi_0,
	\end{cases} \quad \quad\quad (y \in [0,1])
	\end{equation}
	for $\gseqi_0 \in (0,1)$, $\psi \ge 1$ and we use a modification of Algorithm 6.13 in \cite{ConTan04} to simulate pure jump It\=o semimartingales under ${\bf H}_0$, i.e. for $\psi=1$. Under the alternative of an abrupt change, i.e. for $\psi >1$, we merge two paths of independent semimartingales together.

\smallskip
\noindent
(iv)  A gradual change in the jump characteristic is realized by choosing
	\begin{equation}
	\label{etagrch}
	\eta(y) = \begin{cases}
	1, \quad & \text{ if } y \le \gseqi_0, \\
	(A(y-\gseqi_0)^w +1)^2, \quad & \text{ if } y \ge \gseqi_0,
	\end{cases} \quad \quad\quad (y \in [0,1])
	\end{equation}
	in \eqref{SimMod} for some $\gseqi_0 \in [0,1]$, $A>0$ and $w>0$. In order to obtain pure jump It\=o semimartingale data according to this model we sample $15$ times more frequently, i.e. for $j \in \{1,\ldots,15n\}$ we use a modification of Algorithm 6.13 in \cite{ConTan04} to simulate an increment $Z_j = \tilde X_{j\Delta_n /15}^{(j)} - \tilde X_{(j-1)\Delta_n /15}^{(j)}$ of a $1/2$-stable pure jump L\'evy subordinator with characteristic exponent
	\[
	\Phi^{(j)}(u) = \int (e^{iuz} - 1) \nu^{(j)}(dz),
	\]
	where $\nu^{(j)}(dz) = g(j/(15n),dz)$. For the resulting data vector $\{X_{\Delta_n}, \ldots, X_{n\Delta_n}\}$ we use
	\[
	X_{k\Delta_n} = \sum_{j=1}^{15k} Z_j, \quad (k=1,\ldots,n).
	\]
	
\smallskip
	\noindent
(v)  In order to investigate the performance of our truncation method we either use the plain pure jump data vector $\{X_{\Delta_n}, \ldots, X_{n\Delta_n}\}$ as described above, resulting in the characteristics $b=\sigma=0$ for the continuous part, or we use $\{X_{\Delta_n} + S_{\Delta_n}, \ldots, X_{n\Delta_n} + S_{n\Delta_n}\}$, where $S_t = W_t +t$ with a Brownian motion $(W_t)_{t \in \R_+}$ resulting in $b=\sigma=1$. In the graphics depicted below the results for pure jump data are presented on the left-hand side, while the results including a continuous component are always placed on the right-hand side.

\smallskip
\noindent
(vi)  For the truncation sequence $v_n= \gamma \Delta_n^{\ovw}$ we choose $\gamma =1$ and $\ovw = 3/4$ in each run resulting in the parameter $\tau = 2/15$ in Assumption \ref{EasierCond}.

\smallskip
\noindent
(vii)  Due to computational reasons we approximate the supremum in $\dfi \in \R$ by taking the maximum either over  the finite grid $T_1 := \{0.1\cdot j \mid j=1, \ldots, 30\}$ or the finite grid $T_2 := \{0.1 + j \cdot 0.3 \mid j=0,1,\ldots,9\}$.

\smallskip
\noindent
(viii)  For the function $\rho$ we use $\rho_{L,p}$ from \eqref{Eq:rhoLpdef} in Example \ref{Ex:SitgraCh} with parameters $L=1$ and $p=2$.

\smallskip
	\noindent
(ix) Each combination of parameters we present below is run $500$ times and if the statistical procedure includes a bootstrap method we always use $B=200$ bootstrap replications. In order to illustrate the power of our test procedures we display simulated rejection probabilities, i.e. the mean of the $500$ test results. Furthermore, we measure the performance of our estimators by mean absolute deviation, i.e. if $\Theta = \{\hat \gseqi_1, \ldots, \hat \gseqi_{500} \}$ is the set of obtained estimation results we depict
	\[
	\ell^1(\Theta,\gseqi_0) = \frac 1{500} \sum_{j=1}^{500} | \hat \gseqi_j - \gseqi_0 |,
	\]
	where $\gseqi_0$ is the location of the change point.

\begin{table}[b!]
	\begin{center}
		\footnotesize{
			\begin{tabular}{ c ||c||c|c|c|c|c|c|c| }
				\hline
				\multicolumn{1}{|c||}{$k_n$} & \multicolumn{1}{c||}{Test \eqref{testvfkglob}} & \multicolumn{1}{c|}{Pointwise Tests} & 
				\multicolumn{1}{c|}{$\dfi_0 = 0.5$} & \multicolumn{1}{c|}{$\dfi_0 = 1$} & \multicolumn{1}{c|}{$\dfi_0 = 1.5$} & 
				\multicolumn{1}{c|}{$\dfi_0 = 2$} & \multicolumn{1}{c|}{$\dfi_0 = 2.5$} & \multicolumn{1}{c|}{$\dfi_0 = 3$} \\
				\hline
				\multicolumn{1}{|c||}{$50$} & \multicolumn{1}{c||}{0.026} & \multicolumn{1}{c|}{\eqref{testvfklok}} & 
				\multicolumn{1}{c|}{$0.062$} & \multicolumn{1}{c|}{$0.036$} & \multicolumn{1}{c|}{$0.024$} & 
				\multicolumn{1}{c|}{$0.036$} & \multicolumn{1}{c|}{$0.026$} & \multicolumn{1}{c|}{$0.036$} \\
				\multicolumn{1}{|c||}{ } & \multicolumn{1}{c||}{ } & \multicolumn{1}{c|}{\eqref{testvfklokex}} & 
				\multicolumn{1}{c|}{$0.060$} & \multicolumn{1}{c|}{$0.042$} & \multicolumn{1}{c|}{$0.030$} & 
				\multicolumn{1}{c|}{$0.030$} & \multicolumn{1}{c|}{$0.016$} & \multicolumn{1}{c|}{$0.020$} \\
				\hline
				\multicolumn{1}{|c||}{75} & \multicolumn{1}{c||}{0.052} & \multicolumn{1}{c|}{\eqref{testvfklok}} & 
				\multicolumn{1}{c|}{$0.058$} & \multicolumn{1}{c|}{$0.048$} & \multicolumn{1}{c|}{$0.046$} & 
				\multicolumn{1}{c|}{$0.040$} & \multicolumn{1}{c|}{$0.046$} & \multicolumn{1}{c|}{$0.050$} \\
				\multicolumn{1}{|c||}{ } & \multicolumn{1}{c||}{ } & \multicolumn{1}{c|}{\eqref{testvfklokex}} & 
				\multicolumn{1}{c|}{$0.040$} & \multicolumn{1}{c|}{$0.046$} & \multicolumn{1}{c|}{$0.032$} & 
				\multicolumn{1}{c|}{$0.036$} & \multicolumn{1}{c|}{$0.028$} & \multicolumn{1}{c|}{$0.030$} \\
				\hline
				\multicolumn{1}{|c||}{100} & \multicolumn{1}{c||}{0.050} & \multicolumn{1}{c|}{\eqref{testvfklok}} & 
				\multicolumn{1}{c|}{$0.046$} & \multicolumn{1}{c|}{$0.054$} & \multicolumn{1}{c|}{$0.042$} & 
				\multicolumn{1}{c|}{$0.046$} & \multicolumn{1}{c|}{$0.038$} & \multicolumn{1}{c|}{$0.042$} \\
				\multicolumn{1}{|c||}{ } & \multicolumn{1}{c||}{ } & \multicolumn{1}{c|}{\eqref{testvfklokex}} & 
				\multicolumn{1}{c|}{$0.038$} & \multicolumn{1}{c|}{$0.038$} & \multicolumn{1}{c|}{$0.036$} & 
				\multicolumn{1}{c|}{$0.040$} & \multicolumn{1}{c|}{$0.028$} & \multicolumn{1}{c|}{$0.032$} \\
				\hline
				\multicolumn{1}{|c||}{150} & \multicolumn{1}{c||}{0.068} & \multicolumn{1}{c|}{\eqref{testvfklok}} & 
				\multicolumn{1}{c|}{$0.038$} & \multicolumn{1}{c|}{$0.054$} & \multicolumn{1}{c|}{$0.054$} & 
				\multicolumn{1}{c|}{$0.054$} & \multicolumn{1}{c|}{$0.058$} & \multicolumn{1}{c|}{$0.066$} \\
				\multicolumn{1}{|c||}{ } & \multicolumn{1}{c||}{ } & \multicolumn{1}{c|}{\eqref{testvfklokex}} & 
				\multicolumn{1}{c|}{$0.036$} & \multicolumn{1}{c|}{$0.036$} & \multicolumn{1}{c|}{$0.050$} & 
				\multicolumn{1}{c|}{$0.042$} & \multicolumn{1}{c|}{$0.052$} & \multicolumn{1}{c|}{$0.044$} \\
				\hline
				\multicolumn{1}{|c||}{250} & \multicolumn{1}{c||}{0.060} & \multicolumn{1}{c|}{\eqref{testvfklok}} & 
				\multicolumn{1}{c|}{$0.068$} & \multicolumn{1}{c|}{$0.056$} & \multicolumn{1}{c|}{$0.056$} & 
				\multicolumn{1}{c|}{$0.058$} & \multicolumn{1}{c|}{$0.064$} & \multicolumn{1}{c|}{$0.060$} \\
				\multicolumn{1}{|c||}{ } & \multicolumn{1}{c||}{ } & \multicolumn{1}{c|}{\eqref{testvfklokex}} & 
				\multicolumn{1}{c|}{$0.046$} & \multicolumn{1}{c|}{$0.034$} & \multicolumn{1}{c|}{$0.034$} & 
				\multicolumn{1}{c|}{$0.032$} & \multicolumn{1}{c|}{$0.044$} & \multicolumn{1}{c|}{$0.052$} \\
				\hline
			\hline
				\hline
				\multicolumn{1}{|c||}{$50$} & \multicolumn{1}{c||}{0.040} & \multicolumn{1}{c|}{\eqref{testvfklok}} & 
				\multicolumn{1}{c|}{$0.038$} & \multicolumn{1}{c|}{$0.042$} & \multicolumn{1}{c|}{$0.036$} & 
				\multicolumn{1}{c|}{$0.054$} & \multicolumn{1}{c|}{$0.034$} & \multicolumn{1}{c|}{$0.036$} \\
				\multicolumn{1}{|c||}{ } & \multicolumn{1}{c||}{ } & \multicolumn{1}{c|}{\eqref{testvfklokex}} & 
				\multicolumn{1}{c|}{$0.036$} & \multicolumn{1}{c|}{$0.030$} & \multicolumn{1}{c|}{$0.028$} & 
				\multicolumn{1}{c|}{$0.042$} & \multicolumn{1}{c|}{$0.026$} & \multicolumn{1}{c|}{$0.028$} \\
				\hline
				\multicolumn{1}{|c||}{75} & \multicolumn{1}{c||}{0.058} & \multicolumn{1}{c|}{\eqref{testvfklok}} & 
				\multicolumn{1}{c|}{$0.024$} & \multicolumn{1}{c|}{$0.050$} & \multicolumn{1}{c|}{$0.030$} & 
				\multicolumn{1}{c|}{$0.048$} & \multicolumn{1}{c|}{$0.058$} & \multicolumn{1}{c|}{$0.050$} \\
				\multicolumn{1}{|c||}{ } & \multicolumn{1}{c||}{ } & \multicolumn{1}{c|}{\eqref{testvfklokex}} & 
				\multicolumn{1}{c|}{$0.030$} & \multicolumn{1}{c|}{$0.032$} & \multicolumn{1}{c|}{$0.020$} & 
				\multicolumn{1}{c|}{$0.042$} & \multicolumn{1}{c|}{$0.046$} & \multicolumn{1}{c|}{$0.036$} \\
				\hline
				\multicolumn{1}{|c||}{100} & \multicolumn{1}{c||}{0.050} & \multicolumn{1}{c|}{\eqref{testvfklok}} & 
				\multicolumn{1}{c|}{$0.044$} & \multicolumn{1}{c|}{$0.050$} & \multicolumn{1}{c|}{$0.040$} & 
				\multicolumn{1}{c|}{$0.046$} & \multicolumn{1}{c|}{$0.048$} & \multicolumn{1}{c|}{$0.052$} \\
				\multicolumn{1}{|c||}{ } & \multicolumn{1}{c||}{ } & \multicolumn{1}{c|}{\eqref{testvfklokex}} & 
				\multicolumn{1}{c|}{$0.034$} & \multicolumn{1}{c|}{$0.040$} & \multicolumn{1}{c|}{$0.026$} & 
				\multicolumn{1}{c|}{$0.046$} & \multicolumn{1}{c|}{$0.040$} & \multicolumn{1}{c|}{$0.048$} \\
				\hline
				\multicolumn{1}{|c||}{150} & \multicolumn{1}{c||}{0.054} & \multicolumn{1}{c|}{\eqref{testvfklok}} & 
				\multicolumn{1}{c|}{$0.040$} & \multicolumn{1}{c|}{$0.050$} & \multicolumn{1}{c|}{$0.048$} & 
				\multicolumn{1}{c|}{$0.056$} & \multicolumn{1}{c|}{$0.048$} & \multicolumn{1}{c|}{$0.060$} \\
				\multicolumn{1}{|c||}{ } & \multicolumn{1}{c||}{ } & \multicolumn{1}{c|}{\eqref{testvfklokex}} & 
				\multicolumn{1}{c|}{$0.040$} & \multicolumn{1}{c|}{$0.032$} & \multicolumn{1}{c|}{$0.038$} & 
				\multicolumn{1}{c|}{$0.038$} & \multicolumn{1}{c|}{$0.030$} & \multicolumn{1}{c|}{$0.038$} \\
				\hline
				\multicolumn{1}{|c||}{250} & \multicolumn{1}{c||}{0.060} & \multicolumn{1}{c|}{\eqref{testvfklok}} & 
				\multicolumn{1}{c|}{$0.046$} & \multicolumn{1}{c|}{$0.058$} & \multicolumn{1}{c|}{$0.036$} & 
				\multicolumn{1}{c|}{$0.056$} & \multicolumn{1}{c|}{$0.062$} & \multicolumn{1}{c|}{$0.058$} \\
				\multicolumn{1}{|c||}{ } & \multicolumn{1}{c||}{ } & \multicolumn{1}{c|}{\eqref{testvfklokex}} & 
				\multicolumn{1}{c|}{$0.036$} & \multicolumn{1}{c|}{$0.050$} & \multicolumn{1}{c|}{$0.030$} & 
				\multicolumn{1}{c|}{$0.044$} & \multicolumn{1}{c|}{$0.054$} & \multicolumn{1}{c|}{$0.046$} \\
				\hline
			\end{tabular}
		}
		\caption{	\it Simulated rejection probabilities of the tests \eqref{testvfkglob}, \eqref{testvfklok} and  \eqref{testvfklokex} under the null hypothesis. Upper part: pure jump subordinator data. Lower part:  jump subordinator data plus  a Brownian motion with drift. }
		\label{tab:h011}
	\end{center}
	\vspace{-.1cm}
\end{table}

\subsubsection{Statistical inference for abrupt changes}
\label{fsperCUSUMpr}

To  illustrate the  finite sample performance of the procedures introduced in Section \ref{sec:Infabchdf} we choose the sample size $n=22500$ and the grid $T_1 = \{0.1\cdot j \mid j=1, \ldots, 30\}$ to approximate the supremum in $\dfi \in \R$. The confidence level of the test procedures is $\alpha = 5\%$ in each run.

In Table \ref{tab:h011}  we display the rejection probabilities of the tests \eqref{testvfkglob}, \eqref{testvfklok} and \eqref{testvfklokex} under the null hypothesis. We observe  a reasonable approximation of the nominal level $\alpha = 0.05$. The test \eqref{testvfklokex} appears to be slightly more conservative than the test \eqref{testvfklok}. Note that the  process investigated  in the lower part of
Table \ref{tab:h011}  includes a continuous component with $b=\sigma=1$.

\begin{figure}[t!]
	\centering
	\includegraphics[width=0.33\textwidth]{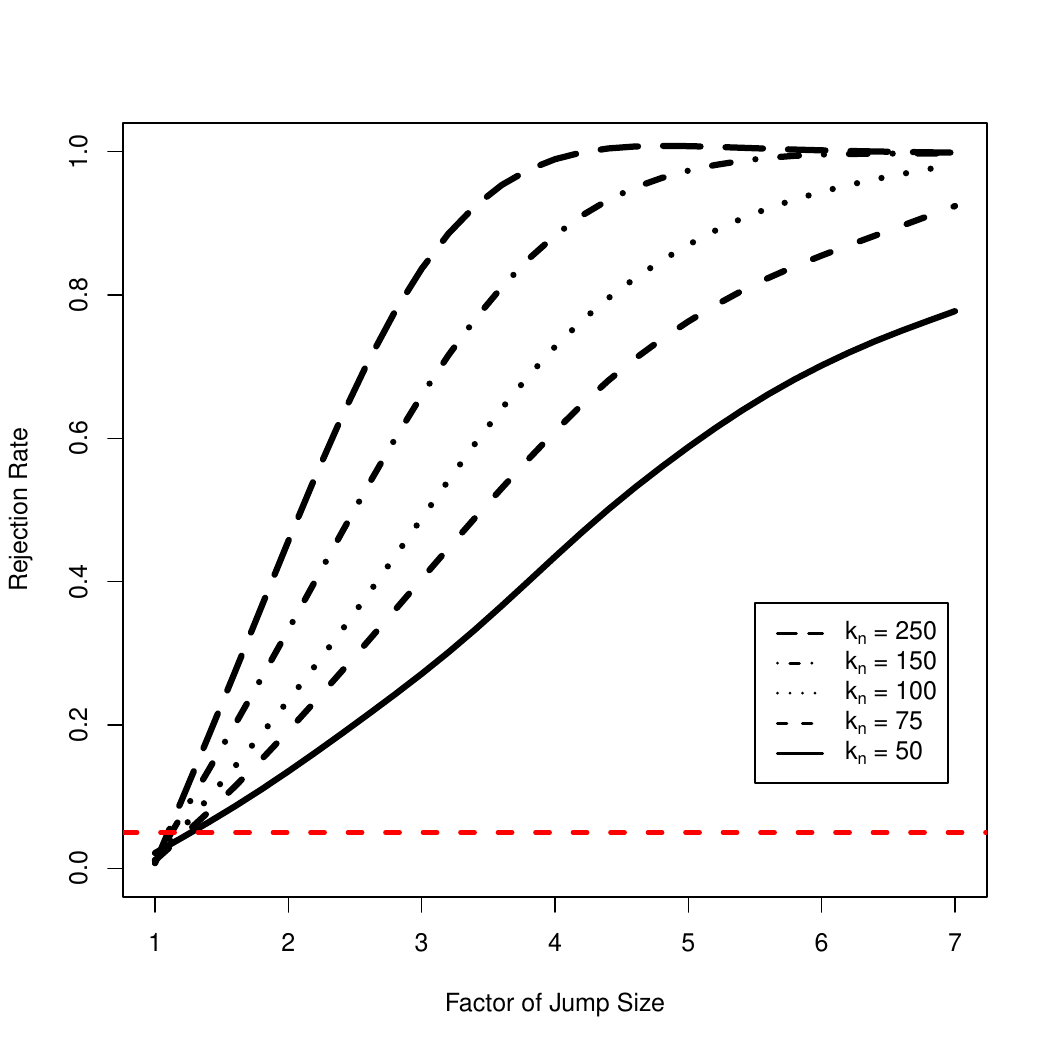}~~~ \includegraphics[width=0.33\textwidth]{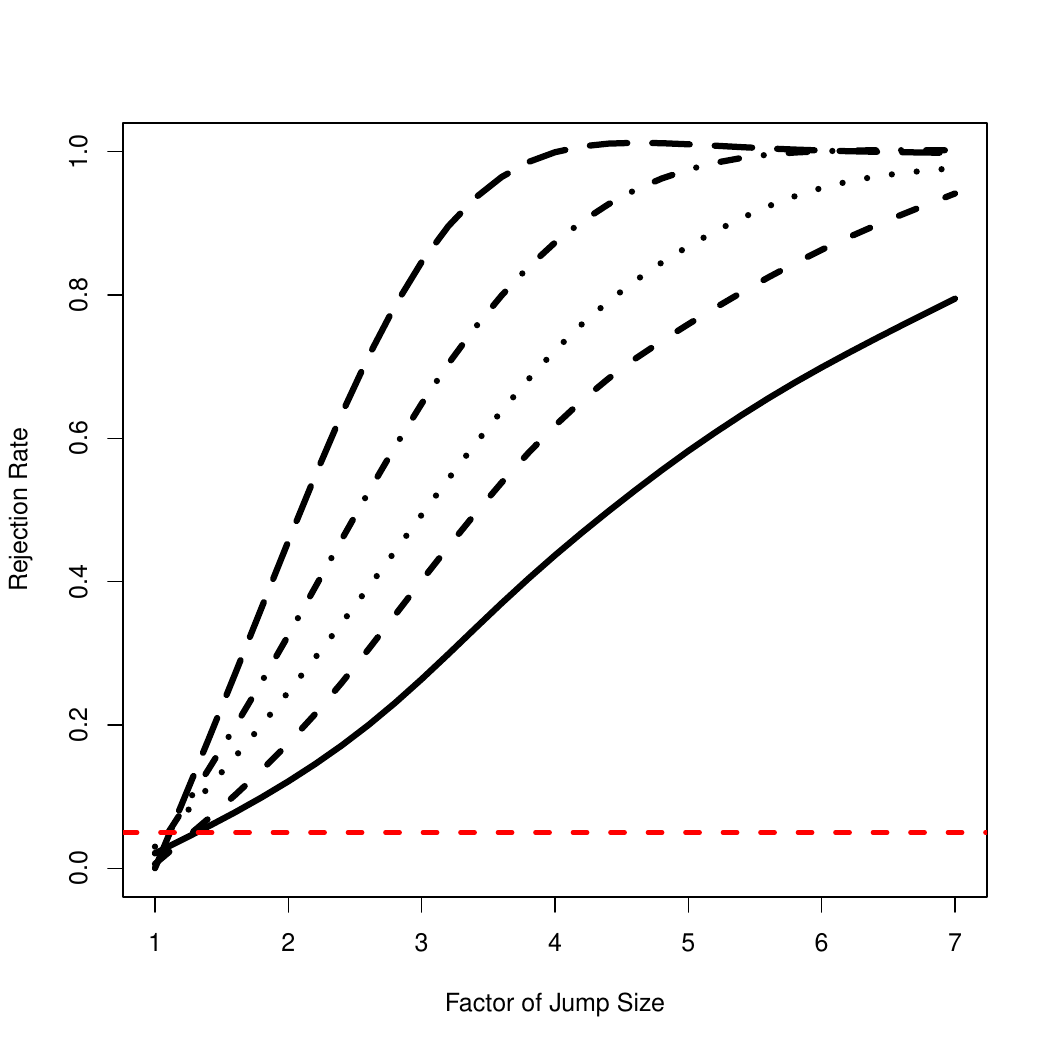}
	\includegraphics[width=0.33\textwidth]{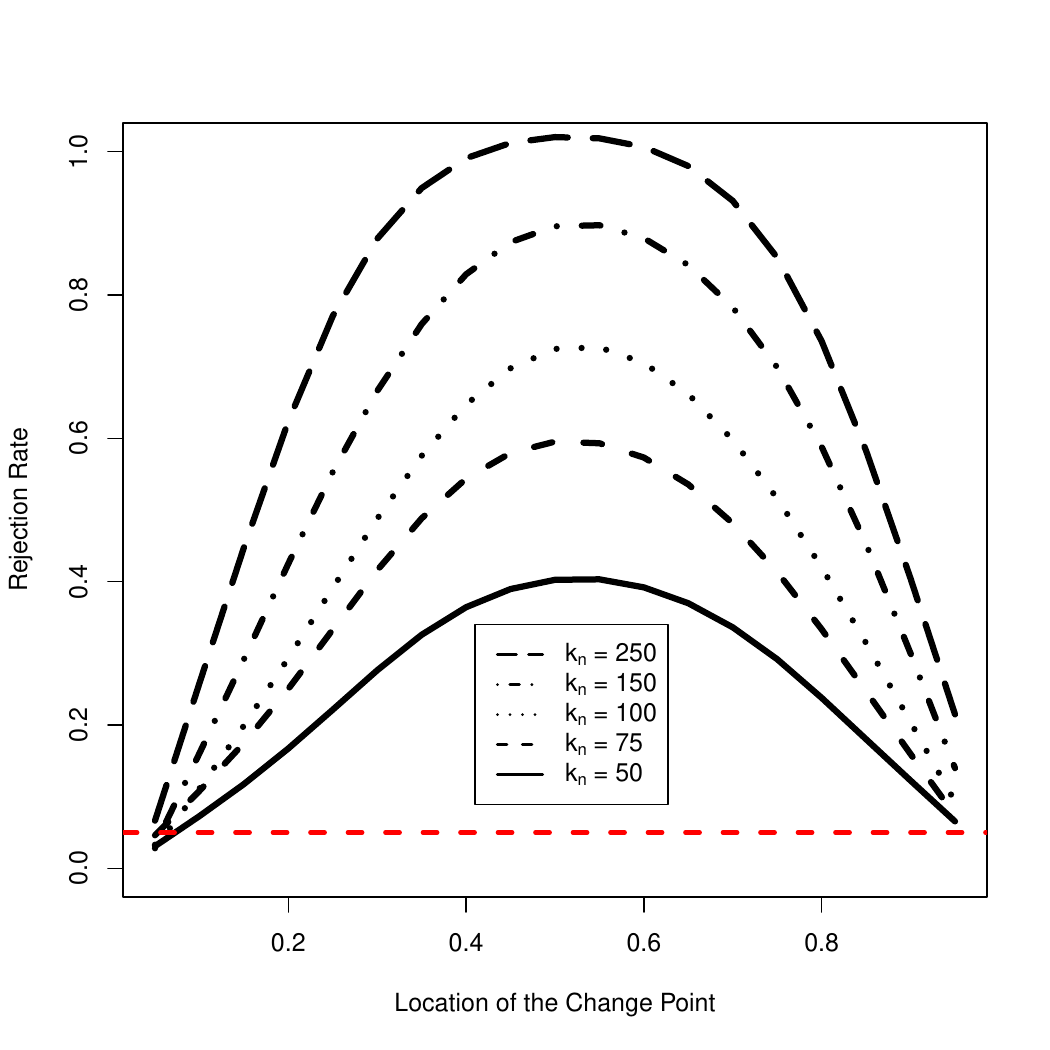}~~~ \includegraphics[width=0.33\textwidth]{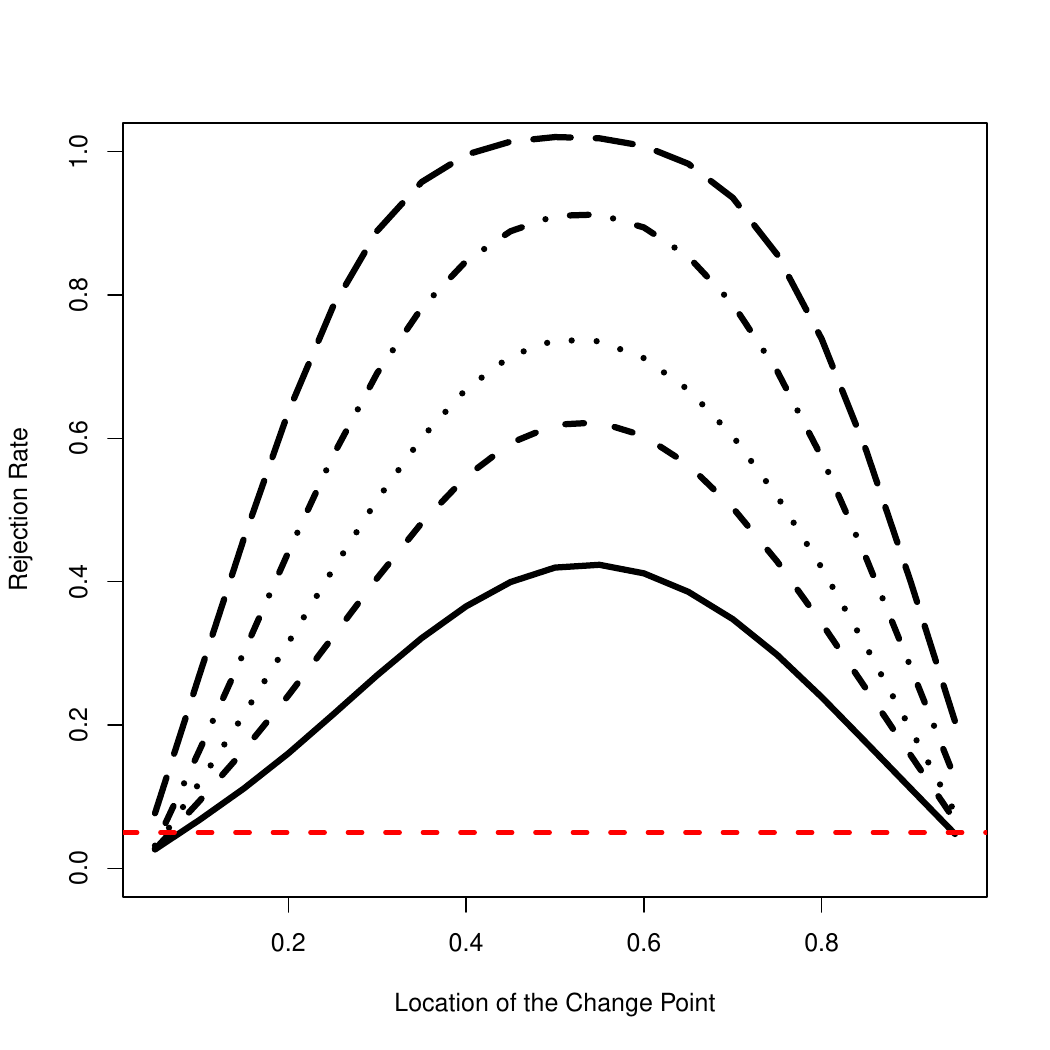}
	\includegraphics[width=0.33\textwidth]{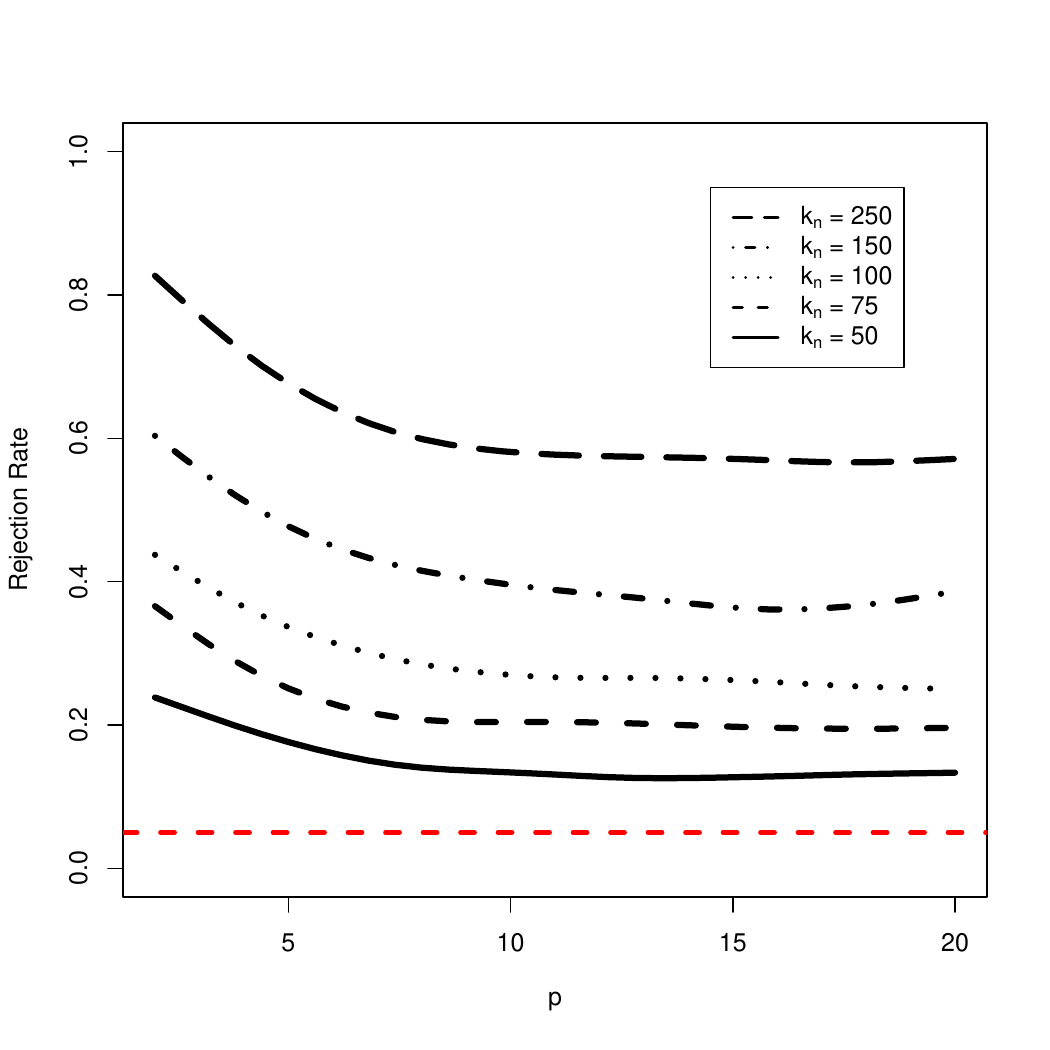}~~~ \includegraphics[width=0.33\textwidth]{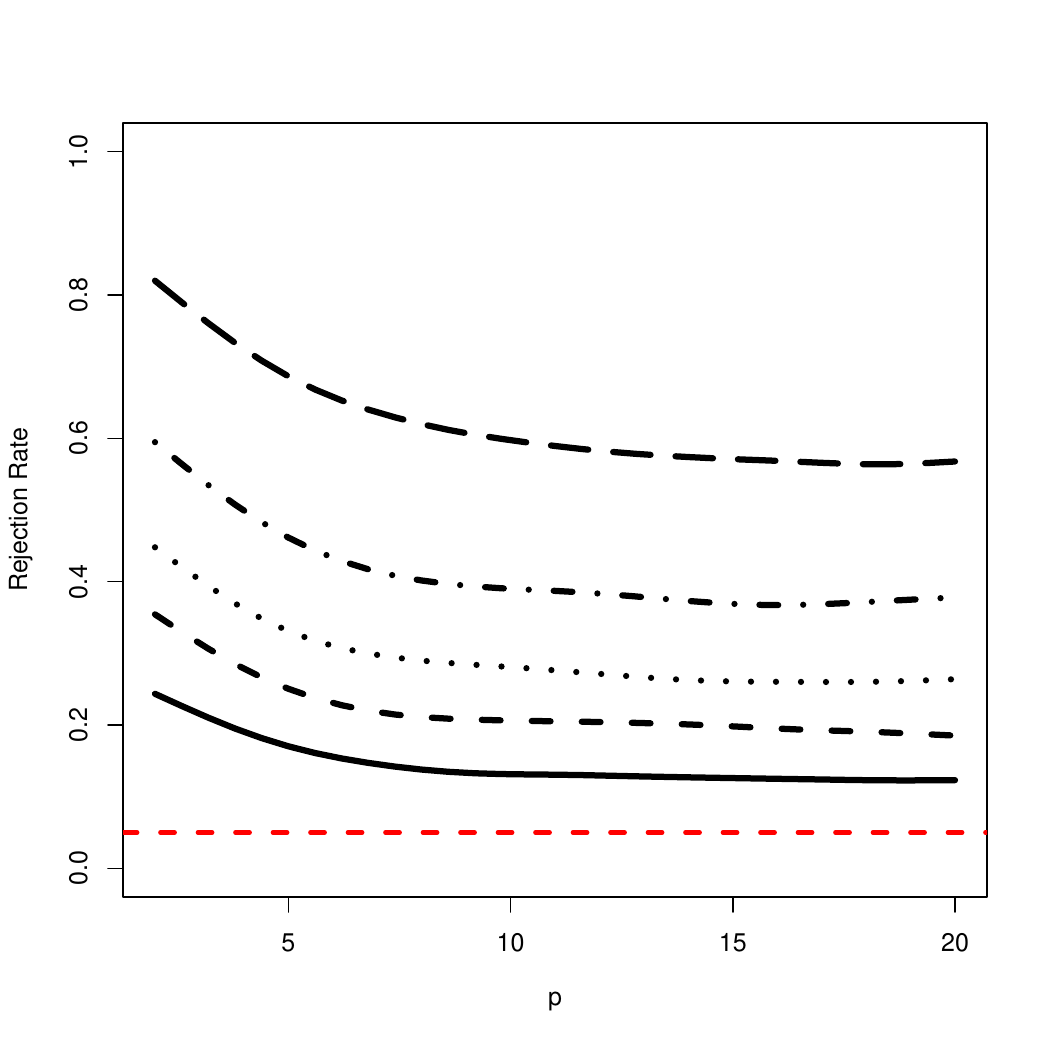}
	\vspace{-.5cm}
	\caption{\label{fig:CUSbeDu} 
		\it Simulated rejection probabilities of the test \eqref{testvfkglob}.
		Upper part:  different factors of jump size $\psi$ in \eqref{etaabch} (location of change point fixed at $\gseqi_0 = 0.5$). 
		Middle part: different locations of the change point $\gseqi_0$ ($\psi =4$ fixed). Lower part: 
		 different values of the parameter $p \ge 2$ in the function $\rho_{1,p}$  ($\gseqi_0 = 0.5$, $\psi = 3$ fixed). Left panels:
		Pure jump data. Right panels: pure jump data plus  a Brownian motion with drift. The dashed red line indicates $\alpha = 5\%$.}
\end{figure}

In the upper part of Figure \ref{fig:CUSbeDu} we depict the  rejection probabilities of the test \eqref{testvfkglob} for different effective sample sizes $k_n = n \Delta_n$. The factor of jump size corresponds to $\psi$ in \eqref{etaabch} and the dashed red line indicates the nominal level $\alpha = 5\%$. The change point is located at $\gseqi_0 = 0.5$.   Large differences of the jump size before and after the change yield higher rejection probabilities. 
Moreover, 
the  rejection probabilities  increase with  $k_n = n \Delta_n$. Notice also that the results for pure jump It\=o semimartingales and for data including a continuous component are very similar. 
This fact indicates a reasonable performance of the proposed truncation technique for an ordinary sample size $n=22500$. The middle part of Figure \ref{fig:CUSbeDu}  shows the rejection probabilities for varying locations of the change point 
$\gseqi_0$, where $\psi =4$ in \eqref{etaabch}. Our results illustrate that an abrupt change can be detected best, if it is located close
to $\gseqi_0 \approx 0.5$. Furthermore, in this case the power of the test is increasing  with  $k_n = n \Delta_n$  and the performance for data including a continuous component is nearly the same. In  the lower part of Figure \ref{fig:CUSbeDu}  we display the  rejection probabilities for different values of the parameter $p \in [2,20]$ of the function $\rho_{1,p}$  in \eqref{Eq:rhoLpdef}, which  is used to calculate the process $\Tb_{\rho_{1,p}}^{\scriptscriptstyle (n)}(\gseqi,\dfi)$. Here the change point is located at $\gseqi_0 = 0.5$ and we choose $\psi = 3$ in \eqref{etaabch}. The results suggest to use the lowest possible value of the parameter $p$ in order to obtain the maximum power of the test. Again, the rejection probabilities of the test are nearly unaffected by the presence of a Brownian component.

\begin{figure}[t!]
	\centering
	\includegraphics[width=0.33\textwidth]{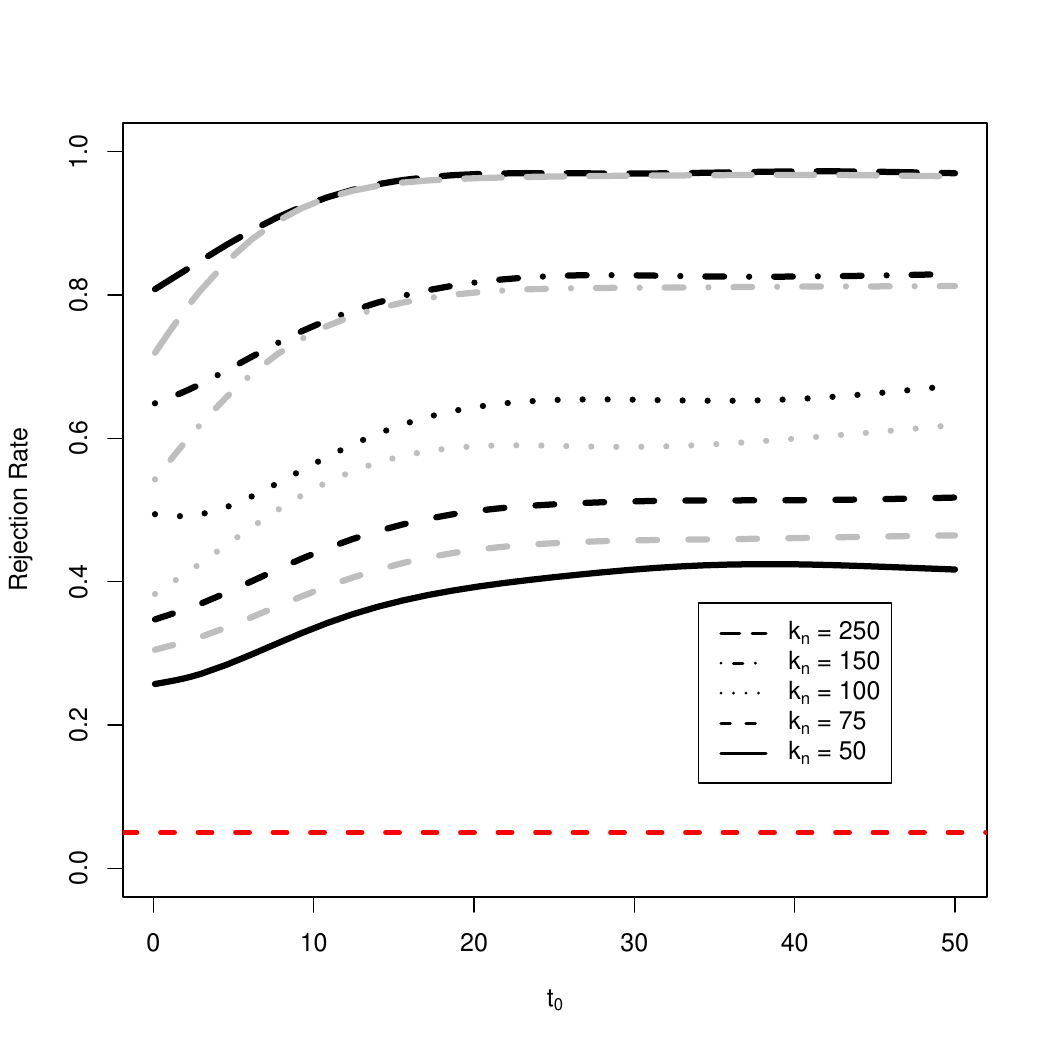}~~~ \includegraphics[width=0.33\textwidth]{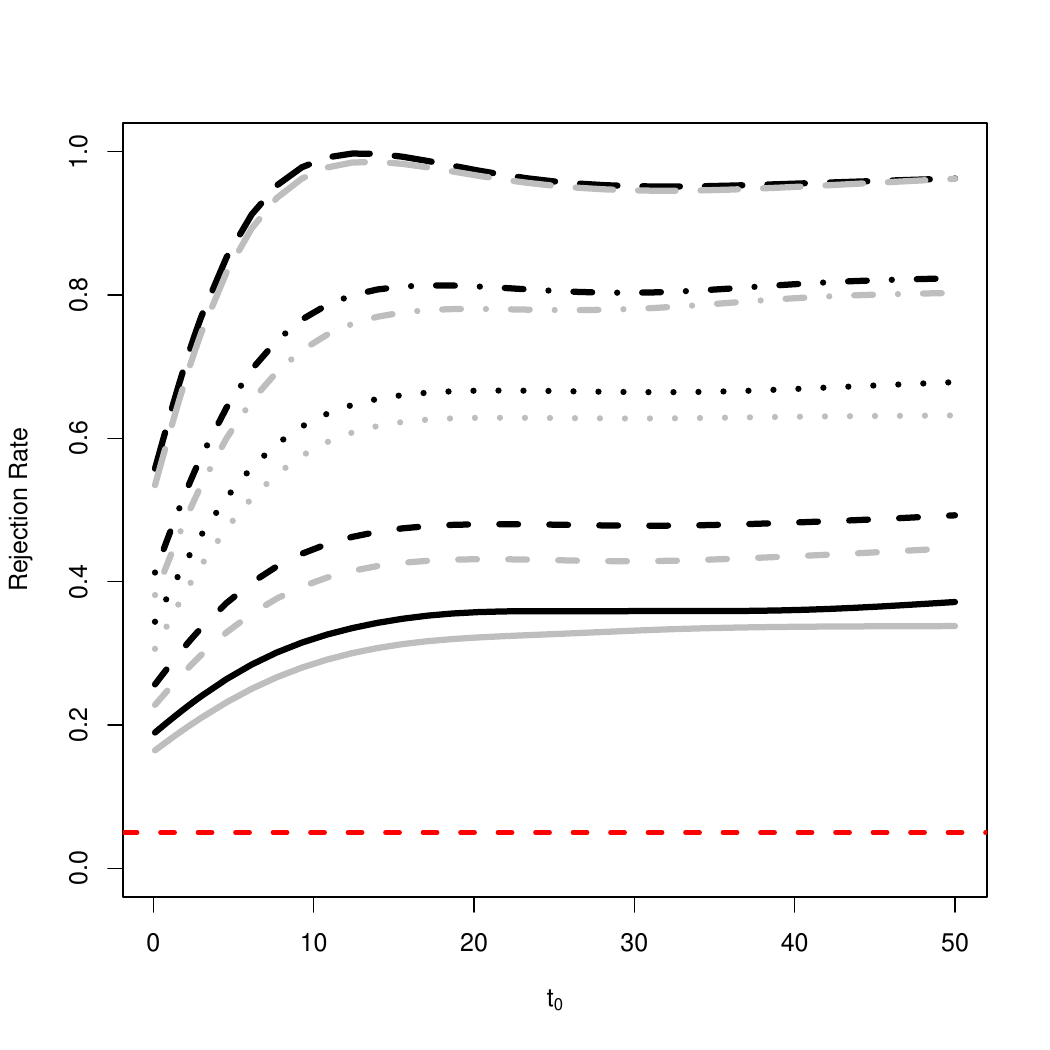}
	\vspace{-.5cm}
	\caption{\label{fig:CUStDu} 
		\it Simulated rejection probabilities of the test \eqref{testvfklok} (black lines) and the test \eqref{testvfklokex} (grey lines) for different values $\dfi_0$ for pure jump data (left-hand side) and with an additional Brownian motion with drift (right-hand side). The dashed red line indicates the nominal level $\alpha = 5\%$.
	}
\end{figure}

In Figure \ref{fig:CUStDu} we depict rejection probabilities of the tests \eqref{testvfklok} and \eqref{testvfklokex} for different values of $\dfi_0 \in [0.1,50]$. In the underlying model \eqref{SimMod} we use $\eta(y)$ defined in \eqref{etaabch} with $\gseqi_0 = 0.5$ and $\psi = 3$. We observe that the  test \eqref{testvfklok} has slightly more power than the  test \eqref{testvfklokex} and the power of both tests is increasing for small values of $\dfi_0$. The latter can be explained by the fact that less increments of the underlying It\=o semimartingale which take values in the interval $(v_n,\dfi_0]$ are used to calculate the test statistics. The effect is even more significant when a Brownian component is present (right panel). In this case it is more difficult to detect a change, because of the superposition of small increments with an i.i.d.\ sequence of random variables following a normal distribution with variance $\Delta_n$ (see also Figure 3 in \cite{BueHofVetDet15}). Furthermore, one can show (see, for instance, Lemma 6.3 in \cite{HofVetDet17}) that in the case of a pure jump It\=o semimartingale the probability of the event that $m$ increments exceed the value $\dfi_0$ is bounded by $K \dfi_0^{\scriptscriptstyle -m/2}$. As a consequence, for large $\dfi_0$ the power of both tests reaches a saturation, because only a negligible proportion of increments exceed $\dfi_0$.

\begin{figure}[t!]
	\centering
	\includegraphics[width=0.33\textwidth]{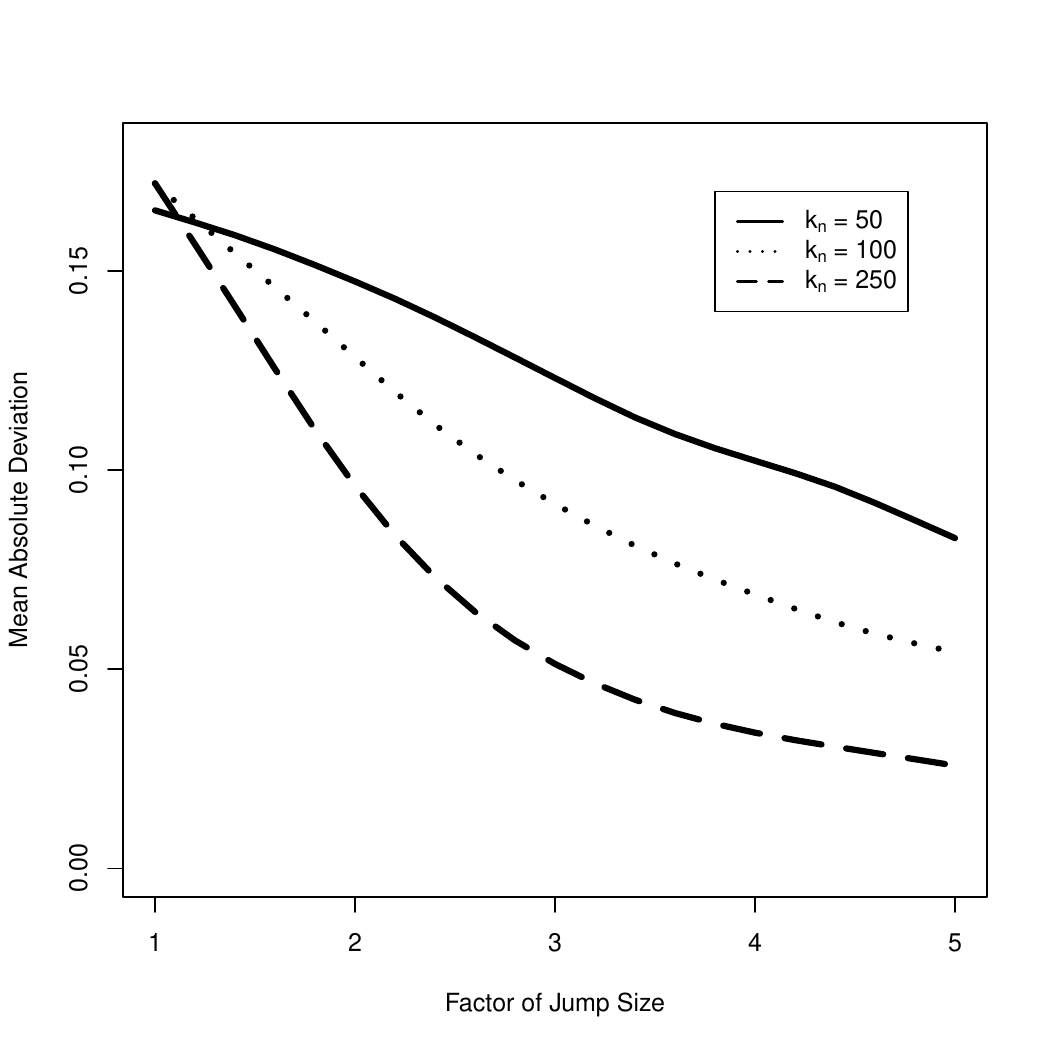}~~~ \includegraphics[width=0.33\textwidth]{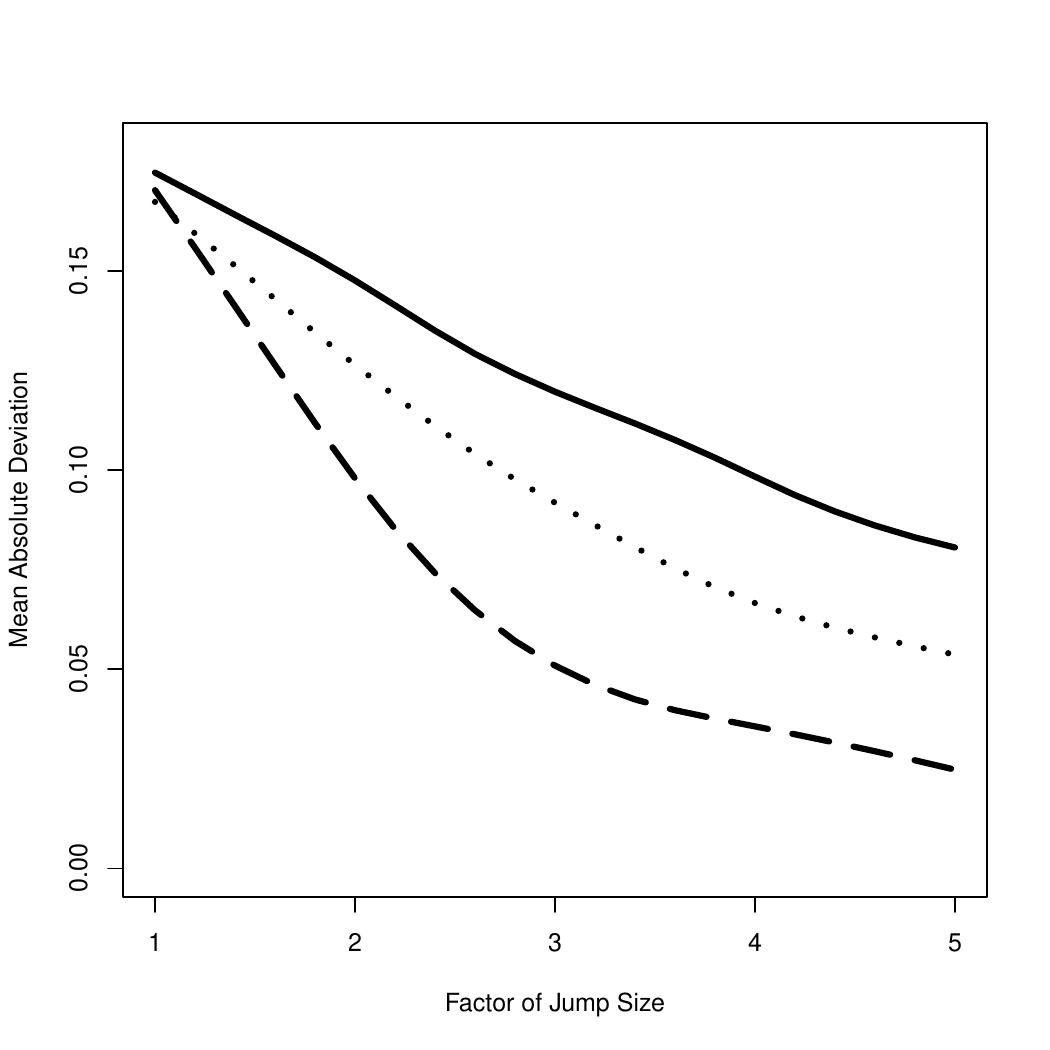}
	\includegraphics[width=0.33\textwidth]{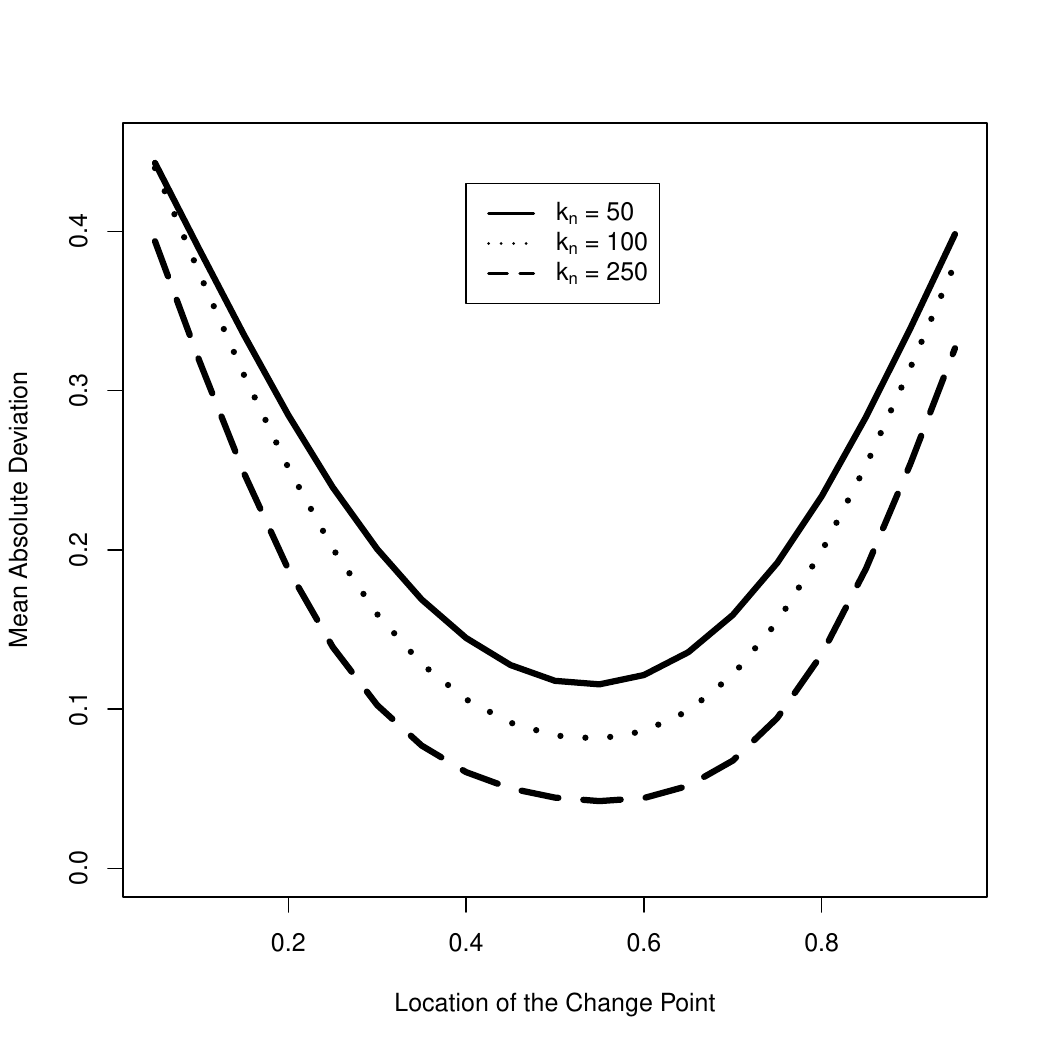}~~~ \includegraphics[width=0.33\textwidth]{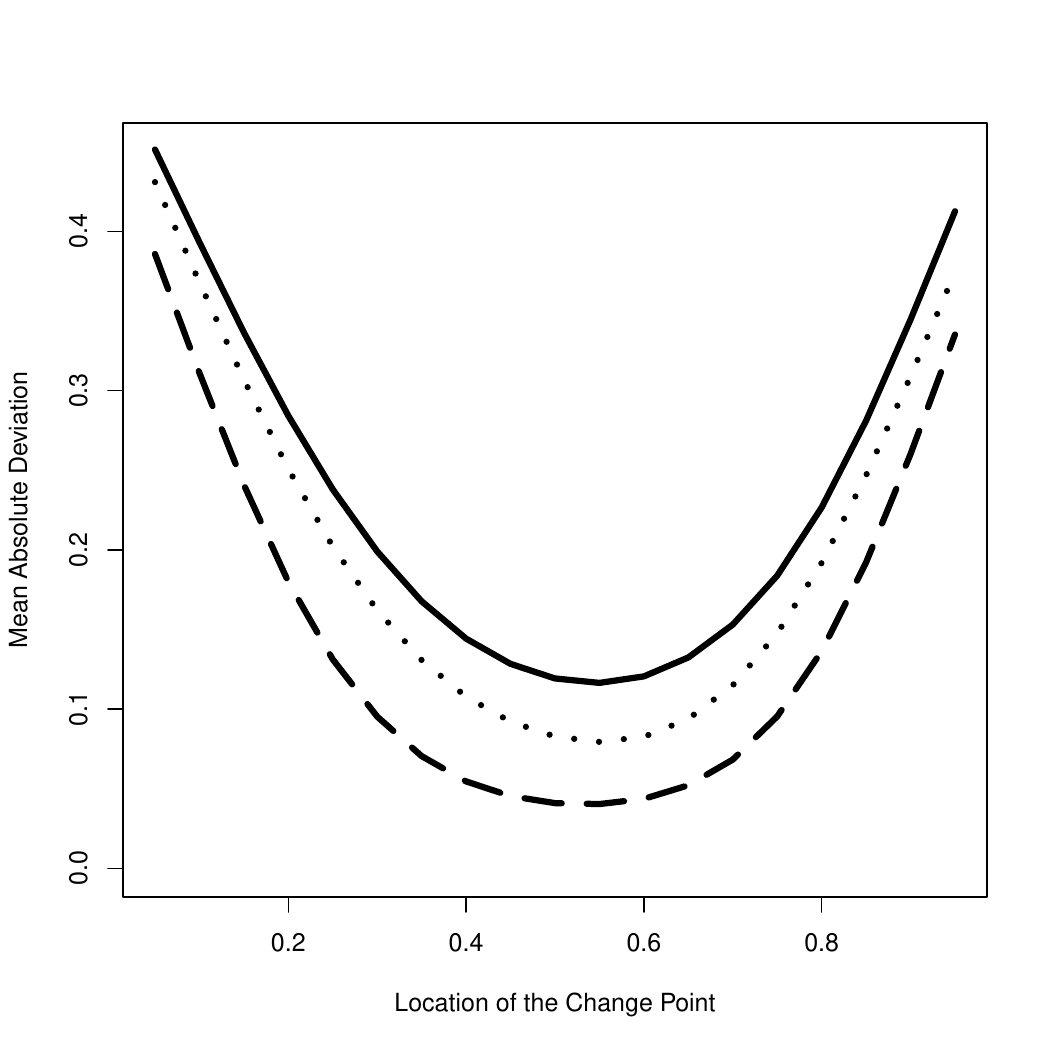}
	\vspace{-.5cm}
	\caption{\label{fig:argmbeDu} 
		\it Mean absolute deviation of the  estimator \eqref{argmaxsup}. 
		Upper part: different values of   $\psi $ in \eqref{etaabch}, $\gseqi_0 = 0.5$  fixed. Lower part:
		different locations of $\gseqi_0 \in (0,1)$, $\psi =3$ fixed. Left panels: pure jump data. Right panels: pure jump data 
		  plus an additional Brownian motion with drift.}
\end{figure}

We conclude this section with a brief investigation of the argmax-estimator \eqref{argmaxsup}.
In the upper part of Figure \ref{fig:argmbeDu}   
we display mean absolute deviations of  the estimator \eqref{argmaxsup} for different values $\psi \in [1,5]$ in \eqref{etaabch}
 ($\gseqi_0 = 0.5$  fixed), and   in the lower part we consider different locations of the change point $\gseqi_0 \in (0,1)$ ($\psi =3$ fixed). 
 The results in the upper part  correspond to Figure \ref{fig:CUSbeDu} in the sense that large values of $\psi$ yield a better performance of the statistical procedure. Additionally, we also observe that due to the truncation approach the mean absolute deviation is nearly unaffected by the presence of a Brownian component. Similar to the middle part of
 Figure \ref{fig:CUSbeDu} the results in the lower part
 suggest that a change point can be detected best if it is located at  $\gseqi_0 \approx 0.5$. Note also that the estimation error is decreasing with
  the effective sample size $k_n$.

\subsubsection{Statistical inference for gradual changes}
\label{sec:simgracha}

In this section we investigate the finite sample performance of the statistical procedures introduced in Section \ref{sec:gradchadf}.

In 
Table \ref{tab:h0gr}  we show the simulated rejection probabilities of  the tests \eqref{testdfglobal} and  \eqref{testdflokal} under the null hypothesis, i.e. for $\psi =1$ in \eqref{etaabch}. The sample size is $n=22500$ and for the test \eqref{testdfglobal} we approximate the supremum in $\dfi \in \R$ by taking the maximum over the finite grid $T_1 = \{0.1\cdot j \mid j=1, \ldots, 30\}$. In both cases for pure jump It\=o semimartingales ($b=\sigma =0$) and for It\=o semimartingales including a Brownian component ($b=\sigma =1$) we observe a reasonable approximation of the nominal level $\alpha = 5\%$.

\begin{table}[t!]
	\begin{center}
		\begin{tabular}{ c ||c||c|c|c|c|c|c| }
			\hline
			\multicolumn{1}{|c||}{ } & \multicolumn{1}{c||}{Test \eqref{testdfglobal}} & \multicolumn{6}{c|}{Test \eqref{testdflokal}} \\
			\hline
			\multicolumn{1}{|c||}{$k_n$} & \multicolumn{1}{c||}{$T_1$} & 
			\multicolumn{1}{c|}{$\dfi_0 = 0.5$} & \multicolumn{1}{c|}{$\dfi_0 = 1$} & \multicolumn{1}{c|}{$\dfi_0 = 1.5$} & 
			\multicolumn{1}{c|}{$\dfi_0 = 2$} & \multicolumn{1}{c|}{$\dfi_0 = 2.5$} & \multicolumn{1}{c|}{$\dfi_0 = 3$} \\
			\hline
			\multicolumn{1}{|c||}{$50$} & \multicolumn{1}{c||}{$0.050$} &  
			\multicolumn{1}{c|}{$0.028$} & \multicolumn{1}{c|}{$0.020$} & \multicolumn{1}{c|}{$0.030$} & 
			\multicolumn{1}{c|}{$0.040$} & \multicolumn{1}{c|}{$0.056$} & \multicolumn{1}{c|}{$0.046$} \\
			\hline
			\multicolumn{1}{|c||}{$75$} & \multicolumn{1}{c||}{$0.048$} &  
			\multicolumn{1}{c|}{$0.058$} & \multicolumn{1}{c|}{$0.058$} & \multicolumn{1}{c|}{$0.048$} & 
			\multicolumn{1}{c|}{$0.048$} & \multicolumn{1}{c|}{$0.048$} & \multicolumn{1}{c|}{$0.044$} \\
			\hline
			\multicolumn{1}{|c||}{$100$} & \multicolumn{1}{c||}{$0.056$} & 
			\multicolumn{1}{c|}{$0.062$} & \multicolumn{1}{c|}{$0.038$} & \multicolumn{1}{c|}{$0.046$} & 
			\multicolumn{1}{c|}{$0.038$} & \multicolumn{1}{c|}{$0.046$} & \multicolumn{1}{c|}{$0.060$} \\
			\hline
			\multicolumn{1}{|c||}{$150$} & \multicolumn{1}{c||}{$0.076$} & 
			\multicolumn{1}{c|}{$0.056$} & \multicolumn{1}{c|}{$0.062$} & \multicolumn{1}{c|}{$0.066$} & 
			\multicolumn{1}{c|}{$0.054$} & \multicolumn{1}{c|}{$0.062$} & \multicolumn{1}{c|}{$0.078$} \\
			\hline
			\multicolumn{1}{|c||}{$250$} & \multicolumn{1}{c||}{$0.062$} &
			\multicolumn{1}{c|}{$0.070$} & \multicolumn{1}{c|}{$0.070$} & \multicolumn{1}{c|}{$0.058$} & 
			\multicolumn{1}{c|}{$0.056$} & \multicolumn{1}{c|}{$0.054$} & \multicolumn{1}{c|}{$0.066$} \\
			\hline
						\hline
			\hline
			\multicolumn{1}{|c||}{$50$} & \multicolumn{1}{c||}{$0.044$} &  
			\multicolumn{1}{c|}{$0.036$} & \multicolumn{1}{c|}{$0.026$} & \multicolumn{1}{c|}{$0.028$} & 
			\multicolumn{1}{c|}{$0.044$} & \multicolumn{1}{c|}{$0.040$} & \multicolumn{1}{c|}{$0.040$} \\
			\hline
			\multicolumn{1}{|c||}{$75$} & \multicolumn{1}{c||}{$0.042$} &  
			\multicolumn{1}{c|}{$0.050$} & \multicolumn{1}{c|}{$0.054$} & \multicolumn{1}{c|}{$0.042$} & 
			\multicolumn{1}{c|}{$0.044$} & \multicolumn{1}{c|}{$0.038$} & \multicolumn{1}{c|}{$0.044$} \\
			\hline
			\multicolumn{1}{|c||}{$100$} & \multicolumn{1}{c||}{$0.074$} & 
			\multicolumn{1}{c|}{$0.040$} & \multicolumn{1}{c|}{$0.038$} & \multicolumn{1}{c|}{$0.036$} & 
			\multicolumn{1}{c|}{$0.046$} & \multicolumn{1}{c|}{$0.062$} & \multicolumn{1}{c|}{$0.068$} \\
			\hline
			\multicolumn{1}{|c||}{$150$} & \multicolumn{1}{c||}{$0.044$} & 
			\multicolumn{1}{c|}{$0.036$} & \multicolumn{1}{c|}{$0.056$} & \multicolumn{1}{c|}{$0.058$} & 
			\multicolumn{1}{c|}{$0.052$} & \multicolumn{1}{c|}{$0.042$} & \multicolumn{1}{c|}{$0.044$} \\
			\hline
			\multicolumn{1}{|c||}{$250$} & \multicolumn{1}{c||}{$0.050$} &
			\multicolumn{1}{c|}{$0.034$} & \multicolumn{1}{c|}{$0.042$} & \multicolumn{1}{c|}{$0.056$} & 
			\multicolumn{1}{c|}{$0.062$} & \multicolumn{1}{c|}{$0.062$} & \multicolumn{1}{c|}{$0.058$} \\
			\hline
		\end{tabular}
		\caption{\it Simulated rejection probabilities of the tests \eqref{testdfglobal} and  \eqref{testdflokal}
		 under the null hypothesis.
		Upper part:  pure jump It\=o semimartingale data. Lower part:
		pure jump It\=o semimartingale data  plus a Brownian motion with drift.}
		\label{tab:h0gr}
	\end{center}
	\vspace{-.1cm}
\end{table}

To save  computational time  the rejection probabilities of the  tests \eqref{testdfglobal} and \eqref{testdflokal} 
under the alternative 
 are obtained for the  sample size $n=10000$ and effective sample size $k_n \in \{50,100,200\}$. 
The upper part of 
Figure \ref{fig:grtwDu} shows the simulated rejection probabilities of the test \eqref{testdfglobal} for different degrees of smoothness  of the change $w$ in \eqref{etagrch}. The change is located at $\gseqi_0 = 0.4$ and $A$ is chosen such that the characteristic quantity for a gradual change satisfies $\Dc_\rho^{\scriptscriptstyle (g)}(1) = 3$ in each scenario. As expected, it is more difficult to distinguish a very smooth change from the null hypothesis and therefore the rejection probability is decreasing in $w$. Similar to the CUSUM test investigated in Section \ref{fsperCUSUMpr} the power of the test is increasing with  $k_n = n \Delta_n$. 
In the lower part of Figure \ref{fig:grtwDu}   we depict the  rejection rates of the test \eqref{testdfglobal} for different locations of the change point $\gseqi_0 \in (0,1)$. We simulate a linear change, i.e. we have $w=1$ in \eqref{etagrch}, and $A$ is chosen such that $\Dc_\rho^{\scriptscriptstyle (g)}(1)= 0.3$ holds in each run. As before, the power of the test is increasing in the effective sample size $k_n = n\Delta_n$ and moreover it is decreasing in $\gseqi_0$. The latter
observation  can be explained by the shape of model \eqref{etagrch}, because  for  larger values of  $\gseqi_0$ the jump characteristic is ``closer'' to the null hypothesis.
\\
Note also that all results  are very similar for pure jump processes and processes including a Brownian component. This indicates that our truncation approach also works in this setup.
\begin{figure}[t!]
	\centering
	\includegraphics[width=0.33\textwidth]{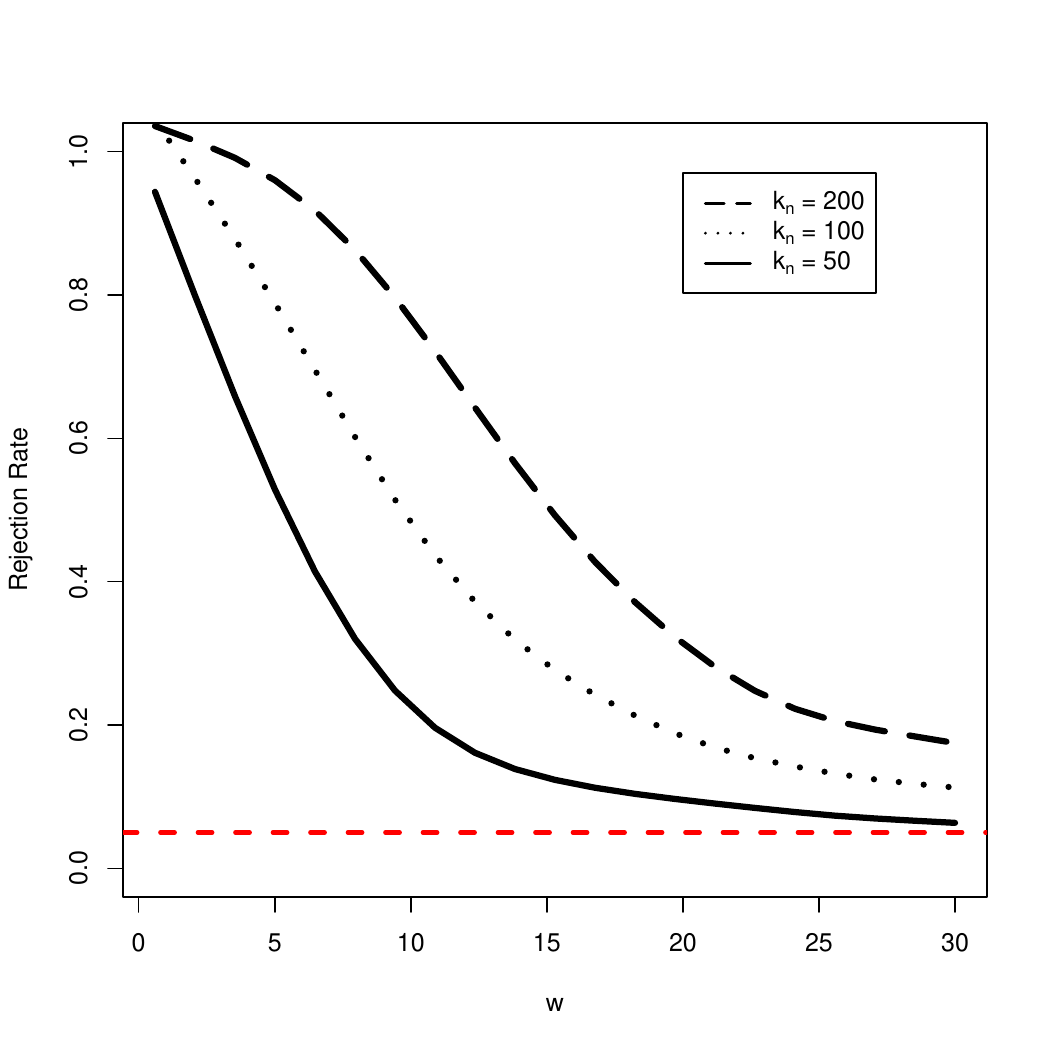}~~~ \includegraphics[width=0.33\textwidth]{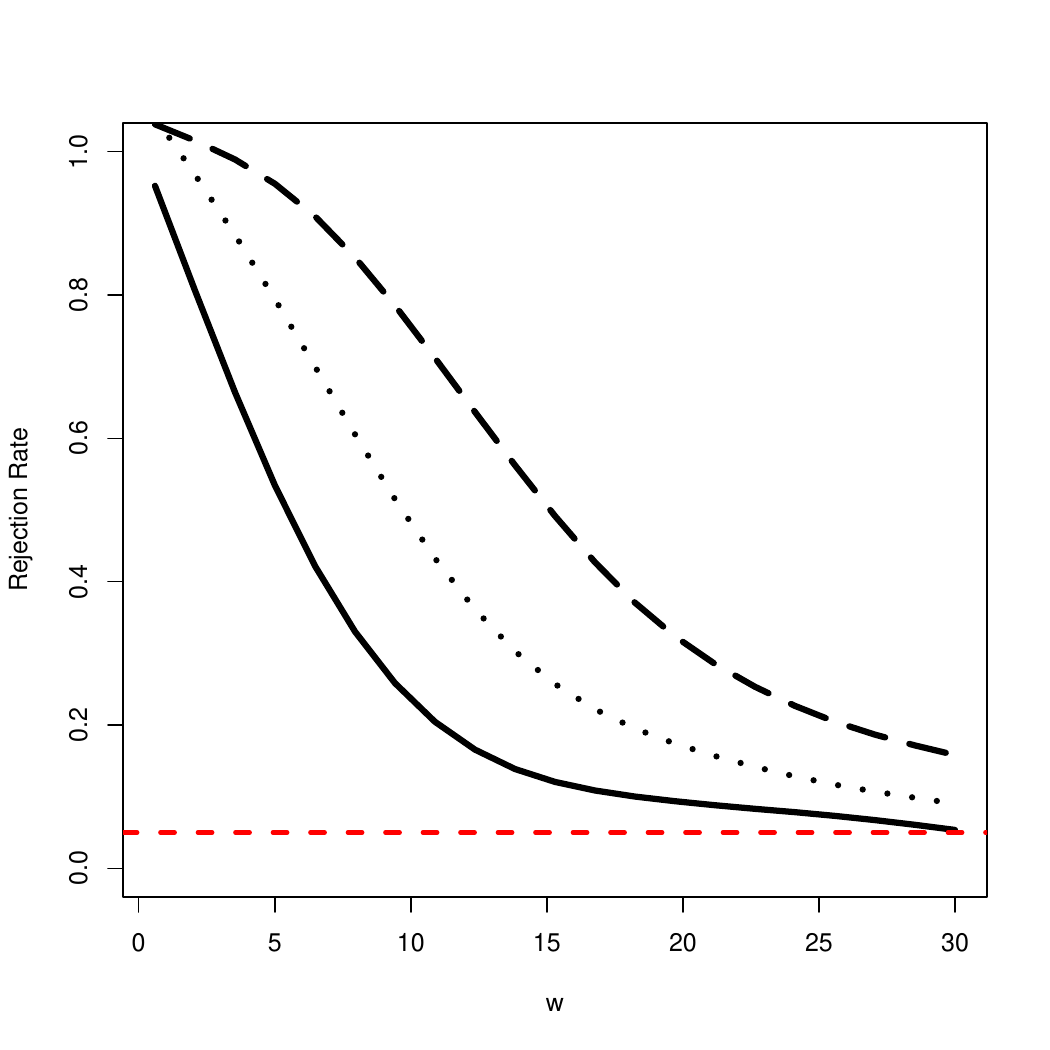}
	\includegraphics[width=0.33\textwidth]{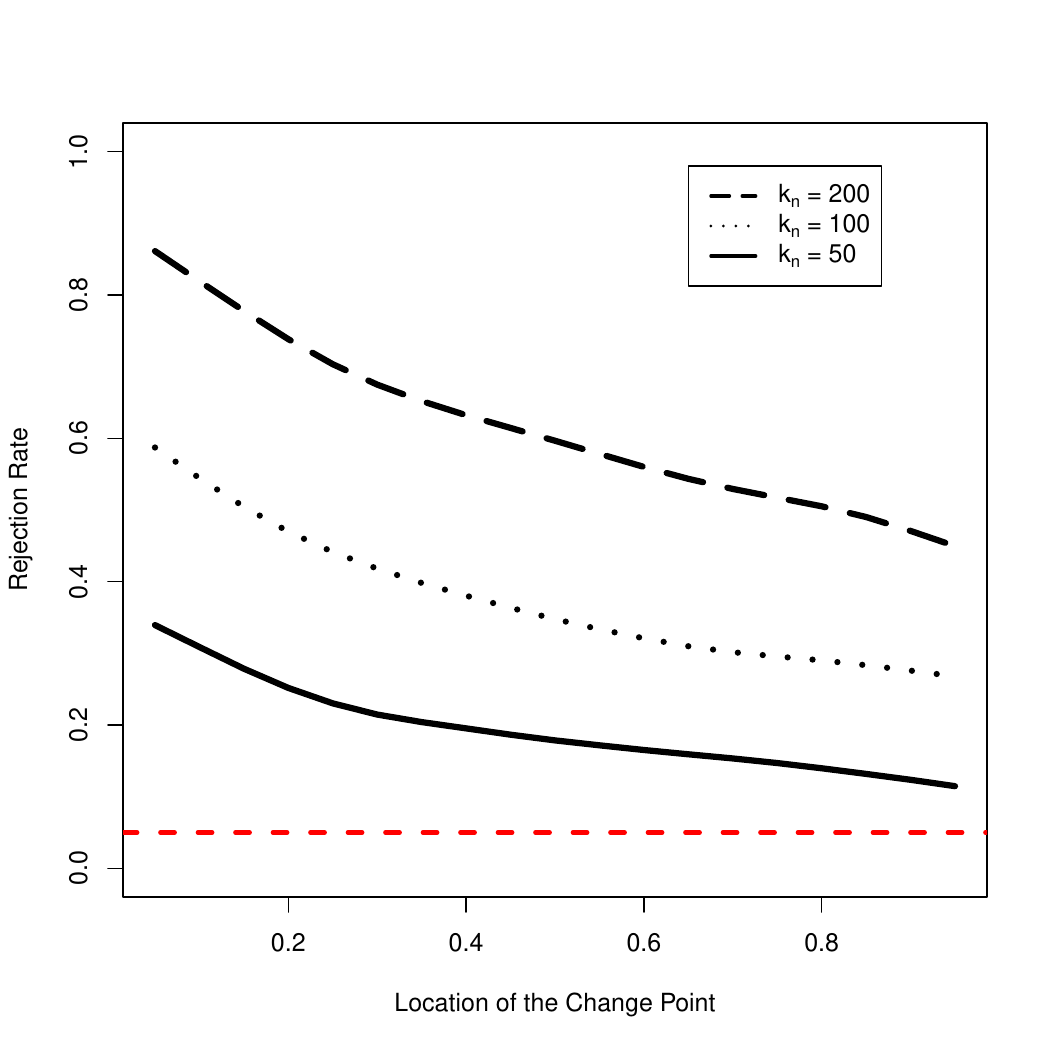}~~~ \includegraphics[width=0.33\textwidth]{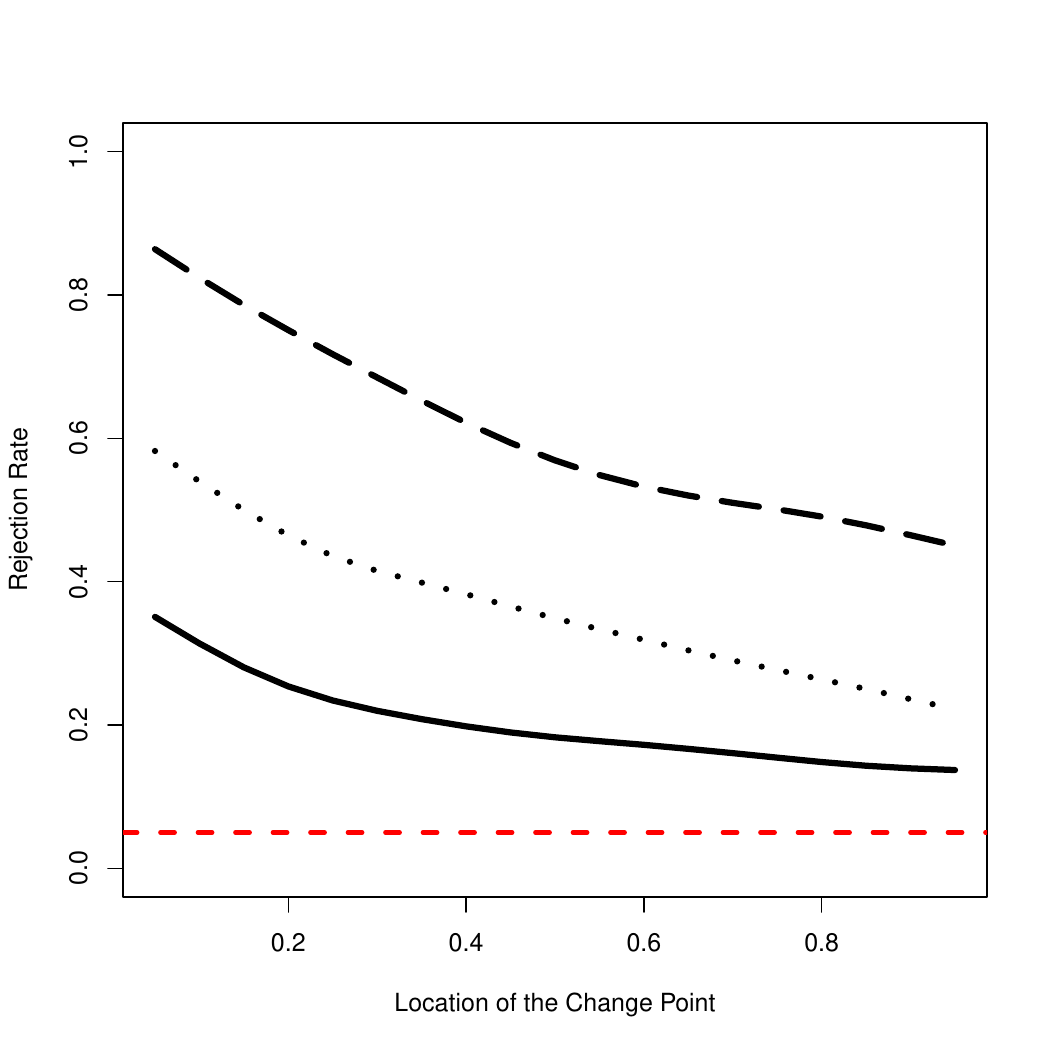}
	\vspace{-.5cm}
	\caption{\label{fig:grtwDu} 
		\it Simulated rejection probabilities of the test \eqref{testdfglobal} under the alternative. Upper part: different values $w \in [0.6,30]$ in \eqref{etagrch},  $\theta_{0}=0.4$ fixed. Lower part:
		different locations of the change point $\gseqi_0 \in (0,1)$, $w=1$ fixed. Left panels:
		pure jump It\=o semimartingales; Right panels:  pure jump It\=o semimartingales plus a Brownian motion with drift. 
		The dashed red line indicates the nominal level $\alpha = 5\%$.
	}
\end{figure}

We conclude this section with a  study of   the change point estimator $\hat \gseqi_\rho^{\scriptscriptstyle (n)}$  in \eqref{grestdef}.
Following \cite{HofVetDet17} we implement the estimator $\hat \gseqi_\rho^{\scriptscriptstyle (n)}$ in five steps as follows:

\smallskip
\smallskip
\noindent
\textit{Step 1.} Choose a preliminary estimate \(\hat \gseqi^{\scriptscriptstyle (pr)} \in (0,1)\), a probability level \(\alpha \in (0,1) \) and a parameter \\
$~~~~~~~~~~$ \(r \in (0,1]\).

\smallskip
\noindent
\textit{Step 2.} Initial choice of the tuning parameter \(\thrle_n\): 
	Evaluate \eqref{thrledfdefeq} for $\hat \gseqi^{\scriptscriptstyle (pr)}, \alpha$ and $r$ (with \( B=200\) \\
$~~~~~~~~~~$ as described above and where the supremum in $\dfi \in \R$ is approximated by the maximum \\
$~~~~~~~~~~$ over $\dfi \in T_2 = \{0.1 + j \cdot 0.3 \mid j=0,1,\ldots,9\}$) and obtain \( \hat\thrle^{\scriptscriptstyle (in)}\).

	\smallskip
\noindent
\textit{Step 3.} Intermediate estimate of the change point. 
	Evaluate \eqref{grestdef} for \(\hat\thrle^{\scriptscriptstyle (in)}\) and obtain \(\hat \gseqi^{\scriptscriptstyle (in)}\).

	\smallskip
\noindent
\textit{Step 4.} Final choice of the tuning parameter \(\thrle_n\): 
	Evaluate \eqref{thrledfdefeq} for $\hat \gseqi^{\scriptscriptstyle (in)}, \alpha, r$ and obtain \( \hat\thrle^{\scriptscriptstyle (fi)}\).

\smallskip
\noindent
\textit{Step 5.} Estimate $\gseqi_0$. 
	Evaluate \eqref{grestdef} for \(\hat\thrle^{\scriptscriptstyle (fi)}\) and obtain the final estimate \(\hat \gseqi\) of the change point.

\medskip

From the theoretical point of view as discussed in Section \ref{subsec:cth0E} we have to ensure that the preliminary estimate \(\hat \gseqi^{\scriptscriptstyle (pr)}\) in Step 1 is consistent in order to guarantee consistency of the final estimate \(\hat \gseqi\). If not mentioned otherwise, we always make the ``arbitrary'' choice \(\hat \gseqi^{\scriptscriptstyle (pr)} = 0.1\)  for two reasons: First, a simulation study which is not included in this paper, where the estimation procedure is started in Step 2 with the choice \( \hat\thrle^{\scriptscriptstyle (in)} = \sqrt[3]{n\Delta_n}\) (which yields consistency according to Theorem \ref{SchaezerKonvT}) has shown similar results as the ones depicted below. Secondly, with the small choice of \(\hat \gseqi^{\scriptscriptstyle (pr)} = 0.1\) in Step 1 we obtain smaller values of the thresholds \( \hat\thrle^{\scriptscriptstyle (in)}\), \( \hat\thrle^{\scriptscriptstyle (fi)}\) and this reduces the calculation time. Furthermore, in the following simulation study we choose 
for the sample size $n=22500$ and vary the effective sample size $k_n = n \Delta_n$ in $ \{50,100,250\}$. For the evaluation of \eqref{thrledfdefeq} we always use $\alpha = 10\%$ and for computational reasons suprema in $\dfi \in \R$ are approximated by maxima over $\dfi \in T_2 = \{0.1 + j \cdot 0.3 \mid j=0,1,\ldots,9\}$. If not mentioned otherwise, we simulate a linear change, i.e. $w =1$ in \eqref{etagrch}, which is located at $\gseqi_0 = 0.4$. $A$ is always chosen such that the characteristic quantity for a gradual change satisfies $\Dc_\rho^{\scriptscriptstyle (g)}(1) = 3$ in all scenarios.

\begin{figure}[t!]
	\centering
	\includegraphics[width=0.33\textwidth]{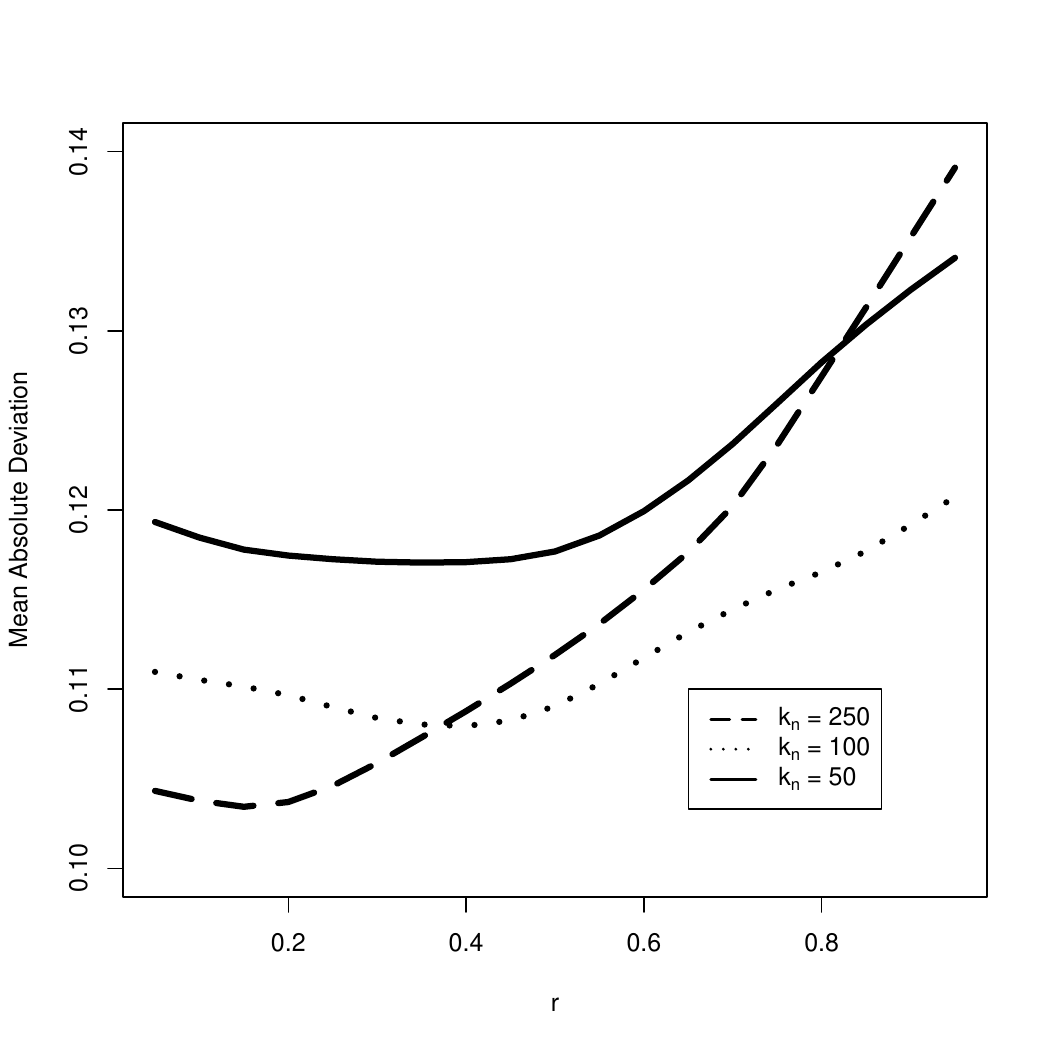}~~~ \includegraphics[width=0.33\textwidth]{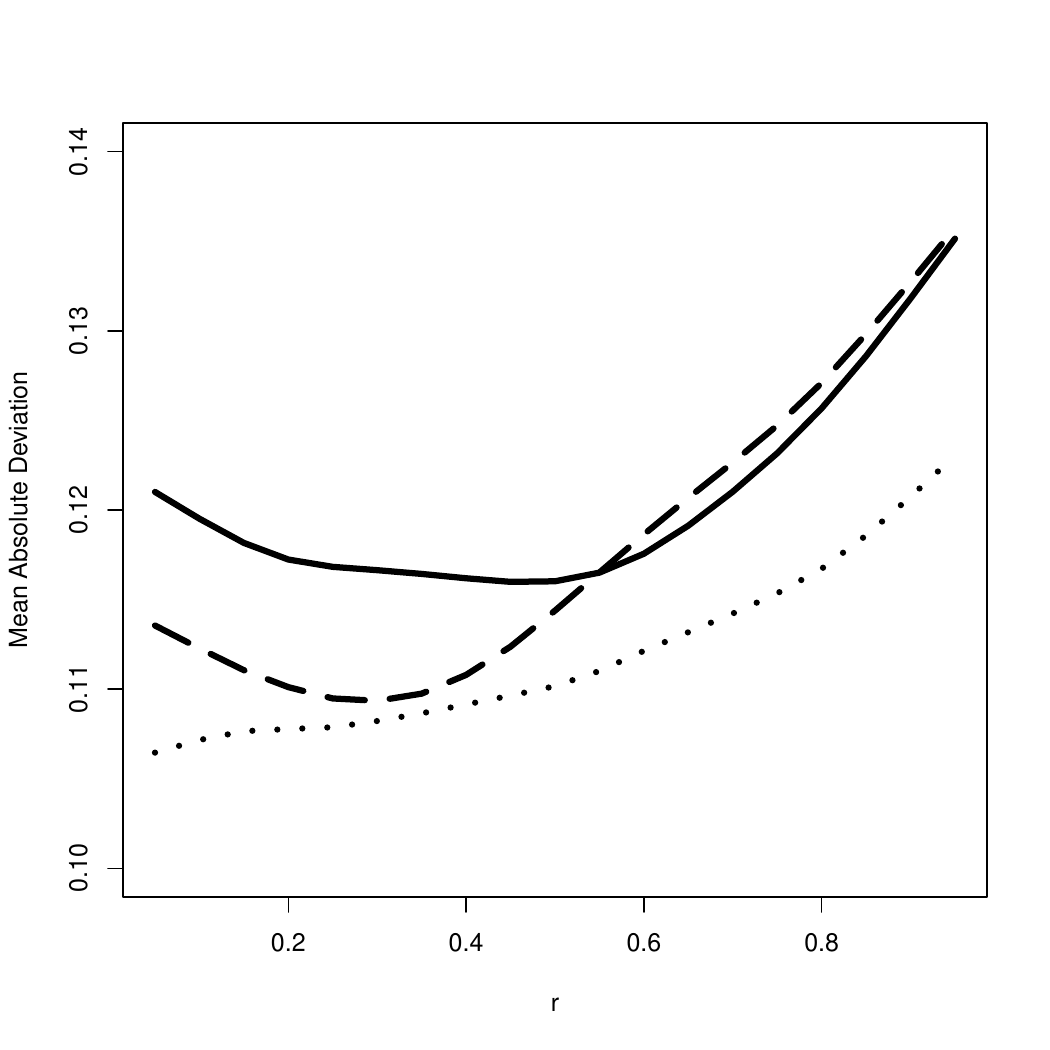}
	\includegraphics[width=0.33\textwidth]{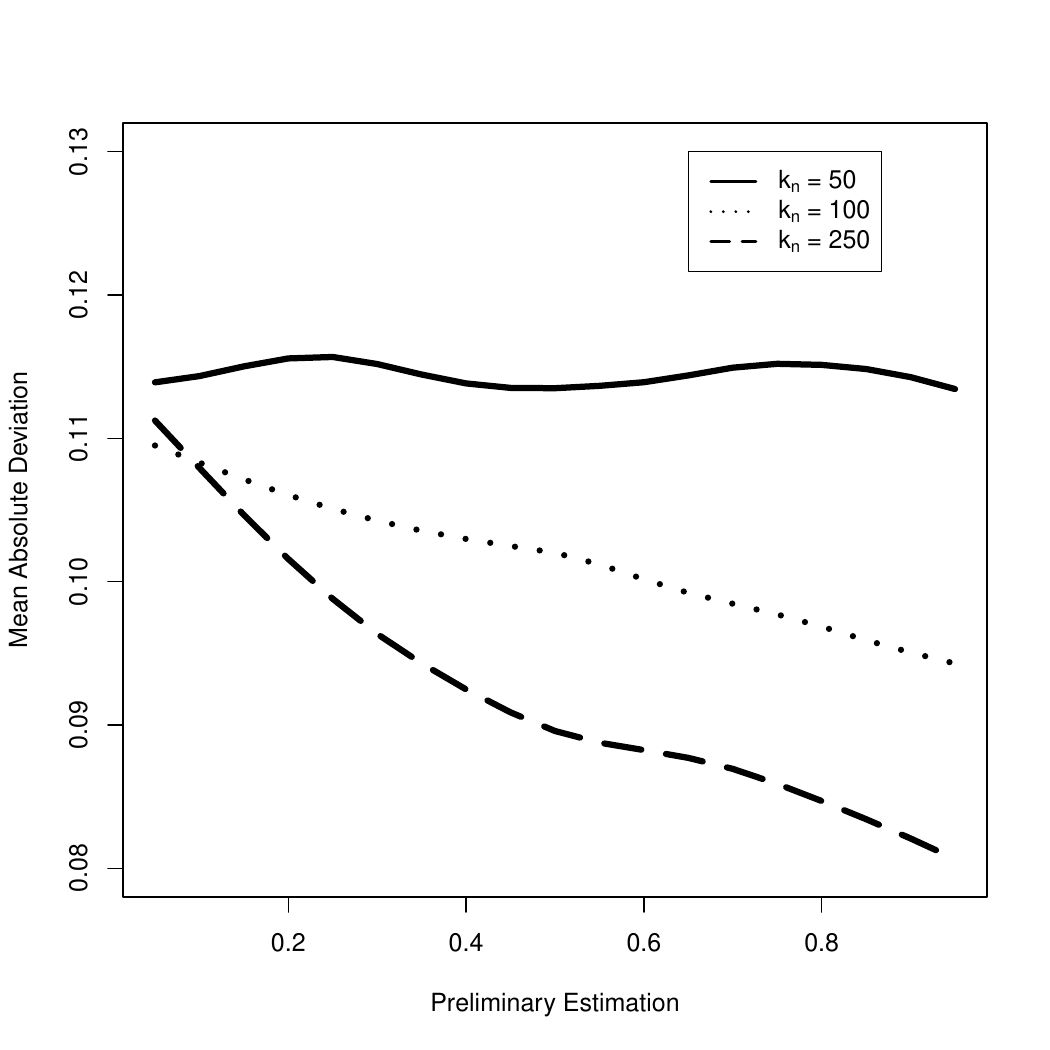}~~~ \includegraphics[width=0.33\textwidth]{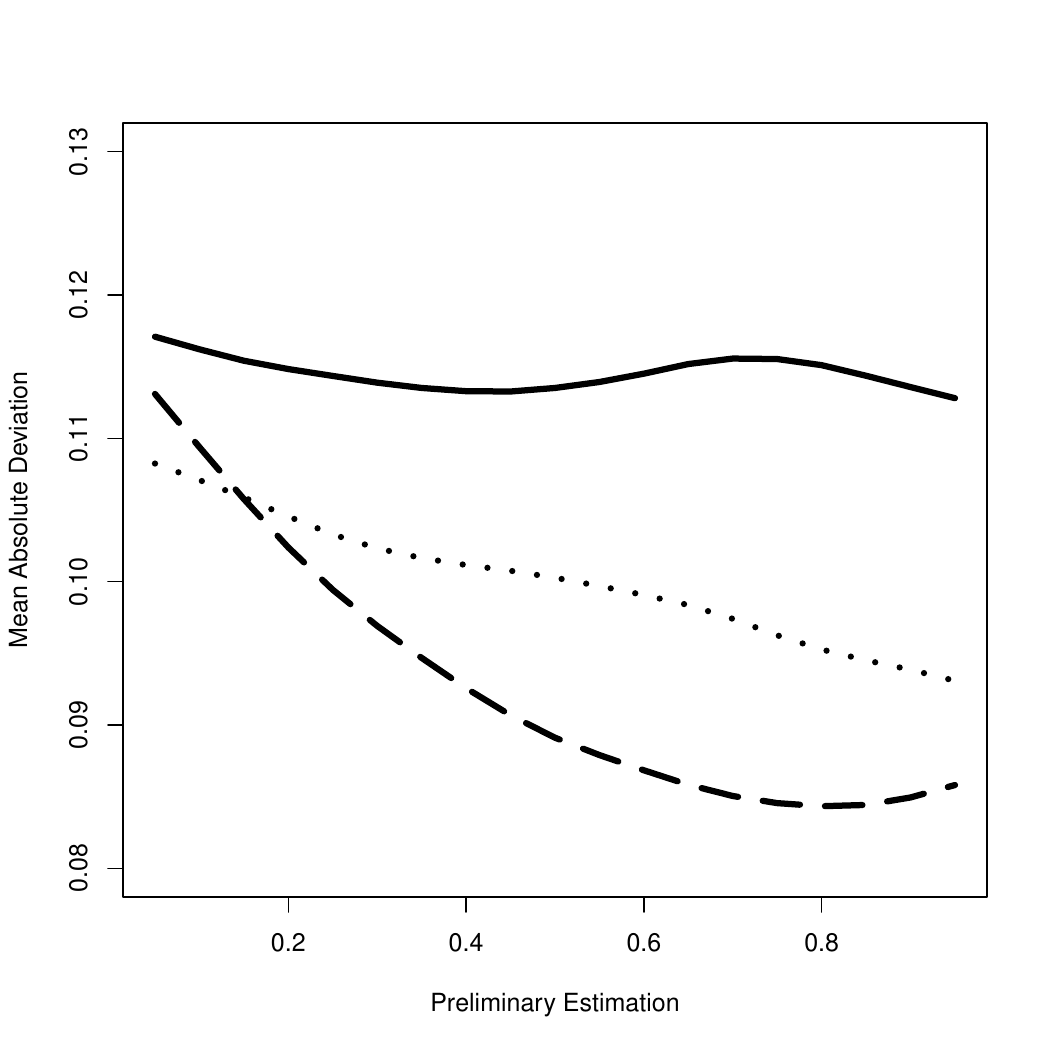}
	\vspace{-.5cm}
	\caption{\label{fig:grschrDu} 
		\it Mean absolute deviation of  the estimator \eqref{grestdef}. Upper part:   different choices of $r \in (0,1]$ in Step 1.
		Lower part:  different choices of the preliminary estimate \(\hat \gseqi^{\scriptscriptstyle (pr)} \in (0,1)\) in Step 1. Left panels:
		pure jump It\=o semimartingales. Right panels: pure jump It\=o semimartingales plus an additional Brownian component.	 
	}
\end{figure}

The upper part of 
Figure \ref{fig:grschrDu} shows  the mean absolute deviation  of the estimator \eqref{grestdef} 
 for different choices of $r \in (0,1]$ in Step 1. 
We observe  that in all cases the mean absolute deviation for $r=0.3$ is close to its overall minimum. Thus, we choose $r=0.3$ in Step 1 in all following investigations. In the lower part  we display the  mean absolute deviation for different choices  of the preliminary estimate \(\hat \gseqi^{\scriptscriptstyle (pr)} \in (0,1)\) in Step 1. The smallest error is obtained, if the preliminary estimate is chosen close to $1$. These findings were confirmed by  a further simulation study which is not
presented here and demonstrates that the  procedure \eqref{grestdef} tends to underestimate the change point. As a consequence, \(\hat \gseqi^{\scriptscriptstyle (pr)}\) close to $1$ induces larger values of the quantities \( \hat\thrle^{\scriptscriptstyle (in)}, \hat \gseqi^{\scriptscriptstyle (in)}, \hat\thrle^{\scriptscriptstyle (fi)}\) in Steps 2-4 and prevents the underestimation error.

\begin{figure}[t!]
	\centering
	\includegraphics[width=0.33\textwidth]{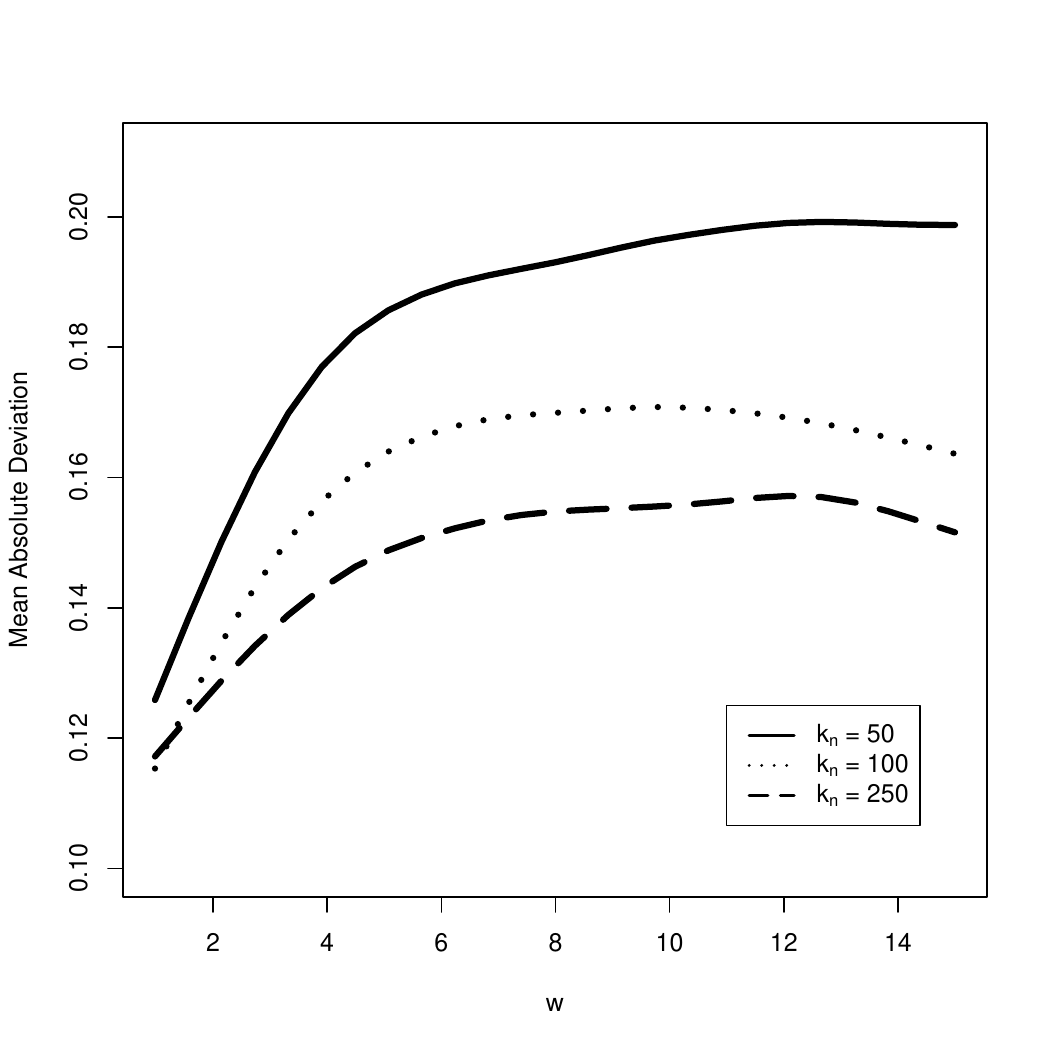}~~~ \includegraphics[width=0.33\textwidth]{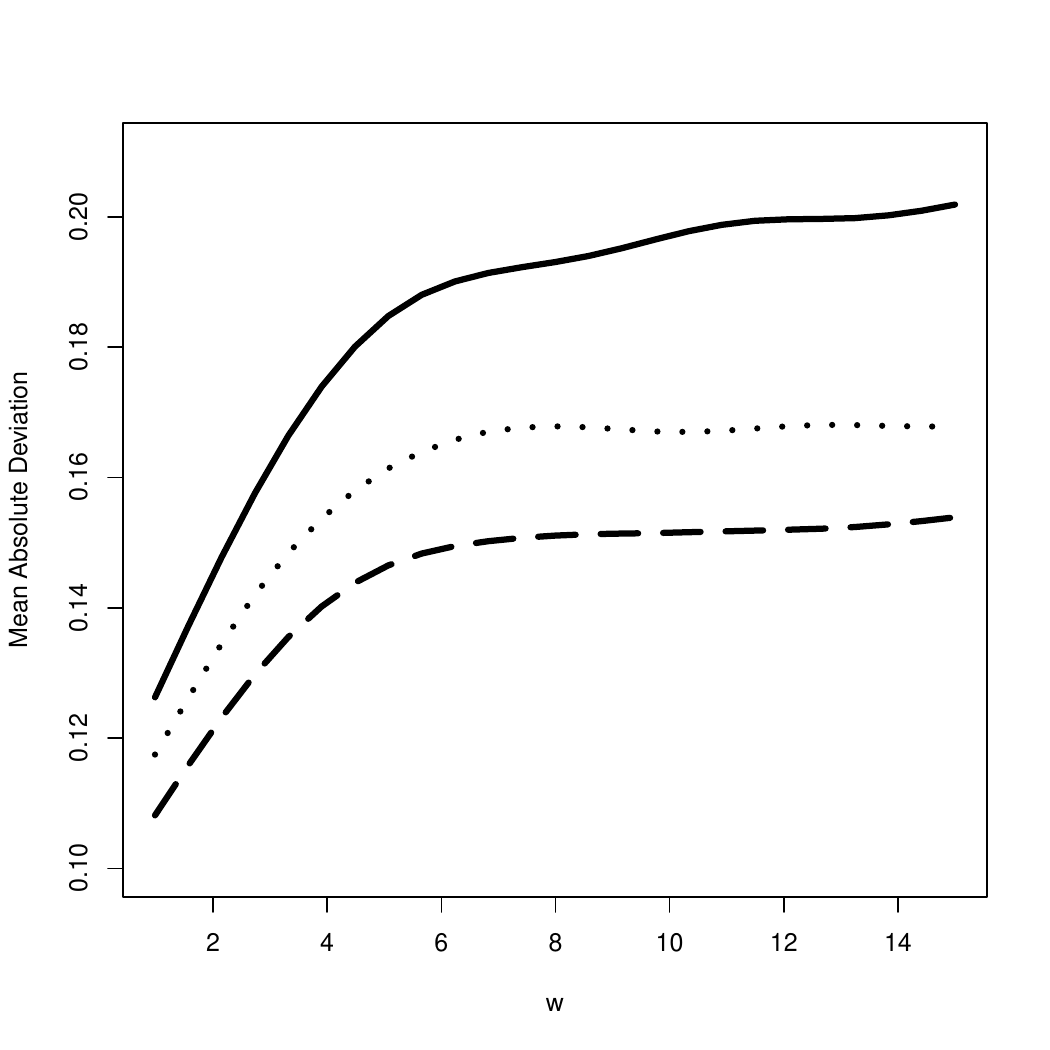}
	\includegraphics[width=0.33\textwidth]{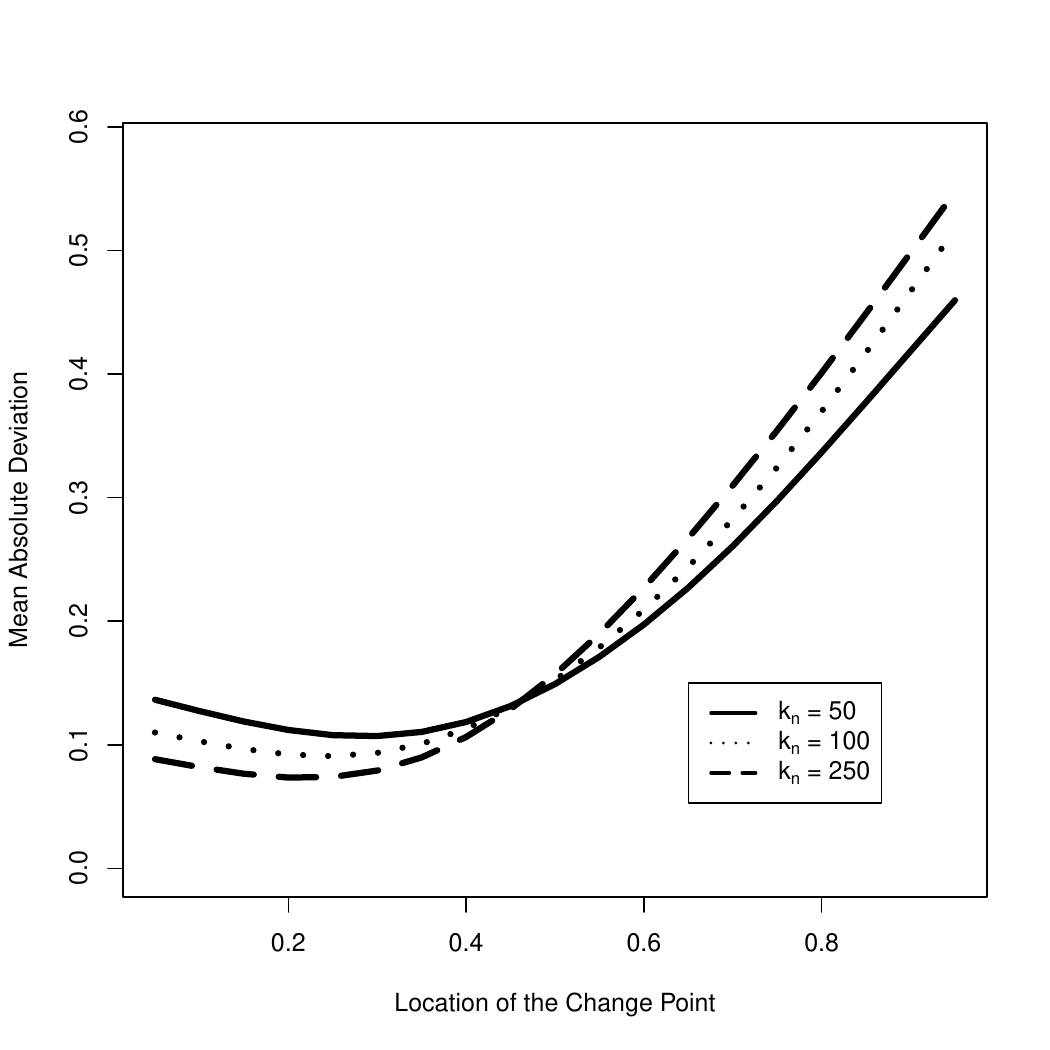}~~~ \includegraphics[width=0.33\textwidth]{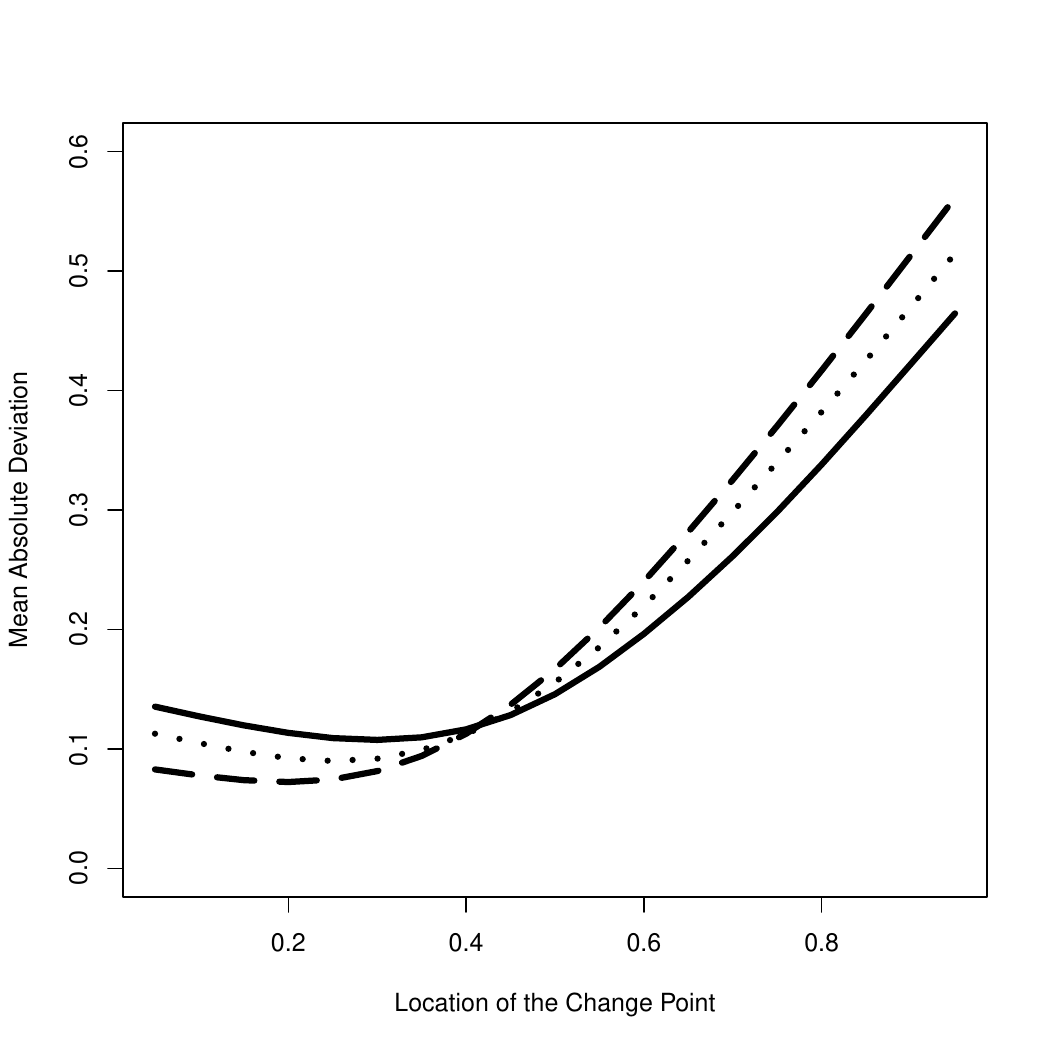}
	\vspace{-.5cm}
	\caption{\label{fig:grschsDu} 
		\it Mean absolute deviation of the estimator \eqref{grestdef}. Upper part:  different degrees of smoothness of the change $w$ in \eqref{etagrch}.
		Lower part: different locations of the change point. Left panels:
		  pure jump processes. Right panels:  pure jump processes plus an additional additional Brownian motion with drift 
	}
\end{figure}

The upper part of 
Figure \ref{fig:grschsDu} shows the simulated mean absolute deviation of the estimator \eqref{grestdef} for different degrees of smoothness of the change $w$ in \eqref{etagrch}. The results correspond to the upper part of  Figure \ref{fig:grtwDu} and confirm the intuitive idea that a smooth change is more difficult to detect. Moreover, larger effective sample sizes $k_n = n \Delta_n$ reduce the estimation error.  In the lower part 
we display the simulated mean absolute deviation of the estimator $\hat \gseqi_\rho^{\scriptscriptstyle (n)}$ for different locations of the change point $\gseqi_0 \in (0,1)$ in \eqref{etagrch}. The results correspond to lower  part of  Figure \ref{fig:grtwDu} and show that for small values of $\gseqi_0$ the change point can be detected best. This is a consequence of model \eqref{etagrch}, where for larger values of   $\gseqi_0 \in (0,1)$ the jump behaviour is nearly constant.

\subsection{Real data application}
\label{RDApp}

\begin{figure}[t!]
	\centering
	\includegraphics[width=0.49\textwidth]{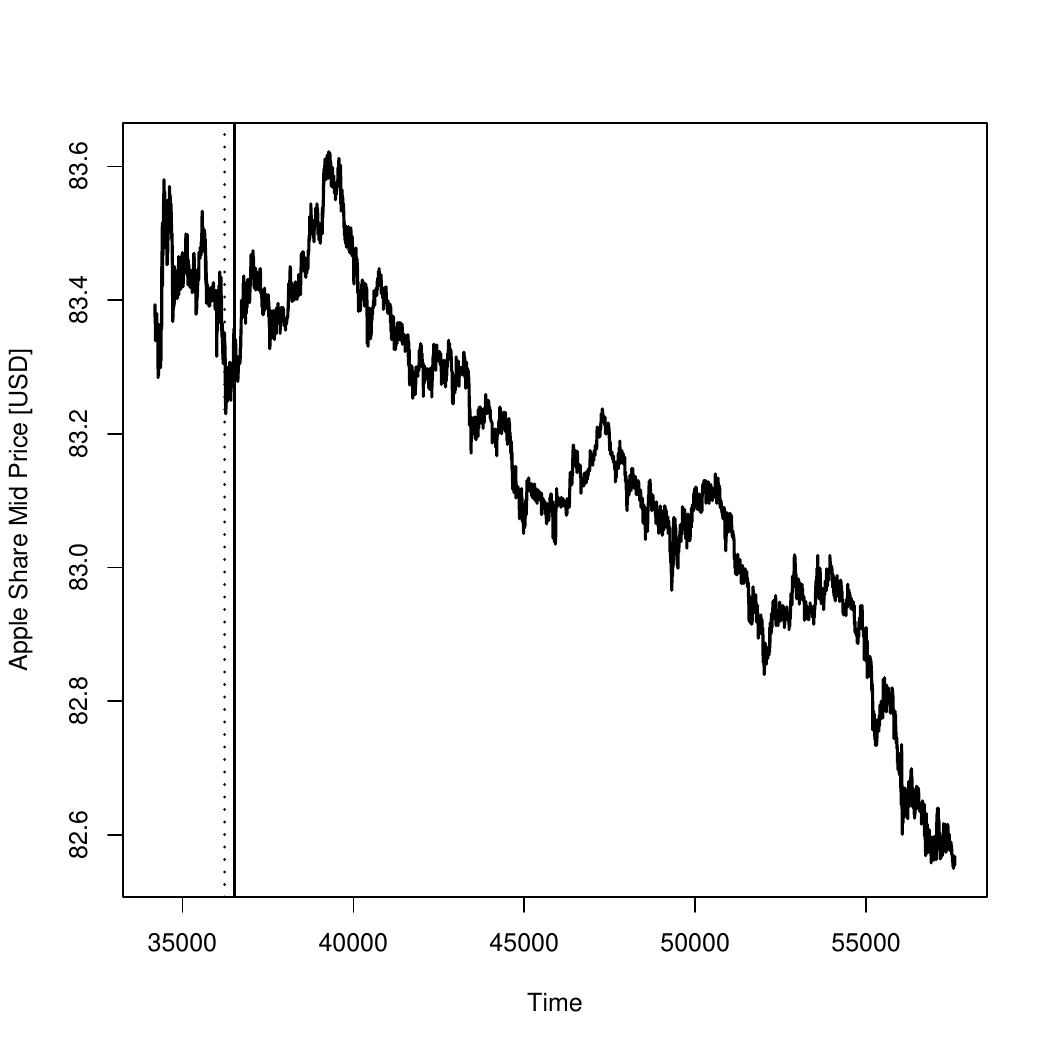}~~~ 
	\vspace{-.5cm}
	\caption{\label{apamappl} 
		\it Mid prices of Apple  shares in US dollar between 09:30 and 16:00 on 21-06-2012. The time is measured on the x-axis by seconds after midnight. The solid vertical line shows the result of the argmax-estimator \eqref{argmaxsup} (abrupt change), while the dashed vertical line indicates the result of an application of estimator \eqref{grestdef} (gradual change).
		}
\end{figure}

In this section we show the results of an application of the new methods to mid price data (in US dollar) of Apple shares between 09:30 and 16:00 on 21-06-2012, which is depicted in  Figure \ref{apamappl} and consists of $n=106626$ data points. 
We choose $k_n = n\Delta_n = 23400$, which corresponds to  the number of seconds between 09:30 and 16:00. Furthermore, we use again the function $\rho_{L,p}$ from \eqref{Eq:rhoLpdef} in Example \ref{Ex:SitgraCh} with parameters $L=1$ and $p=2$. For the truncation sequence we choose $v_n=\gamma (k_n/n)^{3/4}$, where we use $\gamma = 0.005$
 to address the fact that the increments $\Deli X$  in the data are very small, due to the extremely high frequency of sampling.
For the same reason we approximate the supremum in $\dfi \in \R$ in the methods from Section \ref{sec:Infabchdf} by the maximum over the finite grid $\{j\cdot 0.0005 \mid j \in \{1,\ldots,80\}\}$, while the supremum in $\dfi \in \R$ in the methods from Section \ref{sec:gradchadf} is approximated by the maximum over the finite grid $\{j\cdot 0.004 \mid j \in \{1,\ldots,10\}\}$. As in Section \ref{MCSim} we use $200$ bootstrap replications whenever a procedure requires resampling. 

Test \eqref{testvfkglob} and test \eqref{testdfglobal} reject the null hypothesis of no change in the jump behaviour with confidence level $\alpha = 5\%$. In order to locate the abrupt change point an application of the argmax-estimator \eqref{argmaxsup} results in the solid vertical lines in  Figure \ref{apamappl}. The dashed vertical line in Figure \ref{apamappl} is obtained by the same 5-step-procedure for estimator \eqref{grestdef} introduced in Section \ref{sec:simgracha} where we also choose \(\hat \gseqi^{\scriptscriptstyle (pr)} = 0.1\) and $\alpha = 0.1$ in step 1. Due to the huge sample size we use $r=0.8$ in order to reduce the calculation time.

\section{Proofs}
\label{sec:weConv}
\def\theequation{6.\arabic{equation}}
\setcounter{equation}{0}

The proofs of the results in this paper are technically very  demanding  and  we decompose the arguments in several parts. The main steps are given in this section.
We begin stating  general assumptions in Section \ref{altass}
 which are sufficient for all  results presented in  this paper and implied by the more readable  assumptions made in Section \ref{sec:Asss}.  In Section \ref{sec62} we state results regarding the
weak convergence of two empirical processes, which are used in the definition of the statistics considered in 
 Section \ref{sec:Infabchdf} and  \ref{sec:gradchadf}. Proofs for the results in these sections can be found in Section 
  \ref{sec63}    and    \ref{sec64}. All arguments presented  here rely on several technical  auxiliary results, which can be found  in Appendix \ref{ssecProTh61} - \ref{appF}
  of the supplement. 
  
\subsection{Alternative Assumptions}  \label{altass}

All results in this paper also hold under the weaker assumptions given below. Here and throughout
this section $K$ or $K(\delta)$ denote generic constants depending in some cases on a  quantity $\delta$
 and
 may change from place to place.

\begin{assumption}
	\label{Cond1} 
	At step $n\in\N$ we observe an It\=o semimartingale $X^{\scriptscriptstyle (n)}$ adapted to the filtration of some filtered probability space $(\Omega,\Fc,(\Fc_t)_{t\in\R_+},\Prob)$ with characteristics  $(b_s^{\scriptscriptstyle (n)},\sigma_s^{\scriptscriptstyle (n)},\nu_s^{\scriptscriptstyle(n)})$ at the equidistant time points $\{i\Delta_n \mid i=0,1,\ldots,n\}$. Furthermore, the following assumptions are satisfied:
	\begin{compactenum}[(a)]
		\item \textit{Assumptions on the jump characteristic and the function $\rho$:} \\
		For each $n\in\N$ and $s \in [0,n\Delta_n]$ we have
		\begin{align}
		\label{RescAss3}
		\nu^{(n)}_{s}(dz) = g^{(n)}\Big(\frac{s}{n\Delta_n}, dz\Big),
		\end{align}
		where there exist transition kernels $g_0,g_1,g_2$ from $([0,1],\Bb([0,1]))$ into $(\R,\Bb)$ such that for each $y \in [0,1]$
		\begin{equation}
		\label{gon012Ass3}
		g^{(n)}(y,d\taili) = g_0(y,d\taili) + \frac 1{\sqrt{n\Delta_n}} g_1(y,d\taili) + \Rc_n(y,d\taili)
		\end{equation}
		and for each $y \in [0,1]$, $B\in\Bb$ and $n\in\N$ the kernel $\Rc_n$ satisfies
		$
		\Rc_n(y,B) \leq a_n g_2(y,B)
		$
		for a sequence $a_n = o((n\Delta_n)^{-1/2})$ of non-negative real numbers. Furthermore, we have
		\begin{enumerate}[(1)]
			\item \label{BlGetCond} There exists $\beta \in [0,2]$ with 
			\[
			\max_{i=0,1,2}\Big(\lambda_1 - \mathrm{ess~sup}_{y \in [0,1]} \Big( \int \big ( 1 \wedge |\taili|^{(\beta+\delta)\wedge 2} \big ) g_i(y,d\taili)\Big) \Big) \leq K(\delta) < \infty
			\]
			for each $\delta >0$.
			\item \label{RhoCond}
			$\rho \colon \R \rightarrow \R$ is a bounded $\mathcal C^1$-function with $\rho(0)=0$. Furthermore, there exists some $p > \beta+(\beta \vee 1)$
			such that the derivative satisfies $| \rho^{\prime}(\taili) | \leq K |\taili|^{p-1}$ for all $\taili \in \R$ and some $K>0$. 
			\item For $\ovp = (p-1) \vee 1$ with $p$ from \eqref{RhoCond} we have
			\[ \max_{i=0,1,2}\Big( \lambda_1 - \mathrm{ess~sup}_{y \in [0,1]}\Big(\int |\taili|^{\ovp} \ind_{\lbrace |\taili| \geq 1 \rbrace} g_i(y,d\taili)\Big)\Big)  < \infty. \]
			\label{LevMeaMomCond}
			\item \begin{compactenum}[(I)]
				\item There exist $\ovr > \ovv >0$, $\alpha_0 >0$, $q >0$ and $K>0$ such that for every choice $m_1,m_2 \in \{g_0,g_1,g_2\}$
				\label{FiLevyDistCond}
				\begin{multline*}
				\lambda_2 - \mathrm{ess~sup}_{y_1,y_2 \in [0,1]}\Big(\int \int \ind_{\lbrace |x-\taili| \leq \Delta_n^{\ovr} \rbrace} \ind_{\lbrace \Delta_n^{\ovv}/2 < |x| \leq 
					\alpha_0 \rbrace } \times \\ \times \ind_{\lbrace \Delta_n^{\ovv}/2 < |\taili| \leq \alpha_0 \rbrace} m_1(y_1,dx) m_2(y_2,d\taili)\Big)                \leq K \Delta_n^q,
				\end{multline*}
				holds for $n \in \mathbb N$ sufficiently large, where $\lambda_2$ denotes the restriction of the two-dimensional Lebesgue measure to the measure space $([0,1]^2,[0,1]^2 \cap \Lc_2)$ with the two-dimensional Lebesgue $\sigma$-algebra $\Lc_2$ on $\R^2$.
				\item For each $\alpha >0$ there is a $K(\alpha)>0$ such that for every choice $m_1,m_2 \in \{g_0,g_1,g_2\}$ we have
				\begin{multline*}
				\lambda_2 - \mathrm{ess~sup}_{y_1,y_2 \in[0,1]} \Big(\int \int \ind_{\lbrace |x-\taili| \leq \Delta_n^{\ovr} \rbrace} \ind_{\lbrace |x| > \alpha \rbrace} \times \\ \times
				\ind_{\lbrace |\taili| > \alpha \rbrace} m_1(y_1,dx) m_2(y_2,d\taili) \Big) \leq K(\alpha) \Delta_n^q,
				\end{multline*}
				for $n \in \N$ large enough with the constants from \eqref{FiLevyDistCond}.
				\label{SeLevyDistCond}
			\end{compactenum}
		\end{enumerate}
		\item \textit{Assumptions on the truncation sequence $v_n$ and the observation scheme:} 
		\label{ObsSchCondit}
		We have $v_n = \gamma \Delta_n^{\ovw}$ for some $\gamma >0$ and $\ovw$ satisfying
		$ \frac{1}{2(p-\beta)} < \ovw < \frac{1}{2} \wedge \frac{1}{2\beta}. $
		Furthermore, the observation scheme satisfies with the constants from the previous assumptions: 
		\begin{enumerate}[(1)]
			\item $\Delta_n \rightarrow 0,$
			
			\item $n \Delta_n \rightarrow \infty,$
			
			\item \label{ObsSchCond3} $n \Delta_n^{1+ q/2} \rightarrow 0,$
			\item \label{ObsSchCond4} $n \Delta_n^{1+2 \ovw} \rightarrow 0,$
			\item \label{ObsSchCond5b} $n \Delta_n^{1+2\ovv(p-\beta-\delta)} \rightarrow 0$ for some $\delta >0$,
			\item \label{ObsSchCond6} $n \Delta_n^{2(1-\beta \ovw (1+ \epsilon))} \rightarrow 0$ for some $\epsilon >0$,
			\item \label{ObsSchCond7} $n \Delta_n^{((1+ 2(\ovr - \ovw)) \vee 1) + \delta} \rightarrow \infty$ for some $\delta >0$.
		\end{enumerate}
		\item \label{DrianDiffCond} \textit{Assumptions on the drift and the diffusion coefficient:} 
		For
		$ m_b = \frac{1+2\ovw}{1-\ovw} \leq 4  $ and $  m_\sigma = \frac{1+ 2 \ovw}{1/2 - \ovw}, $ 
		we have
		\[\sup\limits_{n\in\N}\sup \limits_{s \in \R_+} \Big\{ \Eb \big|b^{(n)}_s\big|^{m_b} \vee \Eb \big|\sigma^{(n)}_s\big|^{m_\sigma} \Big\} < \infty.\]
	\end{compactenum}
\end{assumption}

Throughout the following proofs we will work with Assumption \ref{Cond1} without further mention. This is due to the following result which proves that Assumption \ref{EasierCond} implies the set of conditions above. 

\begin{proposition} \label{easier}
	Assumption \ref{EasierCond} is sufficient for Assumption \ref{Cond1}.
\end{proposition}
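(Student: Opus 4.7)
The proof consists of verifying each item of Assumption \ref{Cond1} from the corresponding items of Assumption \ref{EasierCond}, using the explicit choice $\ovw = (1+5\tau)/4$ together with the bounds on $\beta,\tau,p$ and the density structure imposed by $\Gc(\beta,p)$. The inheritance of \eqref{RescAss3}, \eqref{gon012Ass3}, the conditions on $\rho$, and the moment conditions \eqref{DrianDiffCond} is immediate. In particular, \eqref{RhoCond}'s requirement $p > \beta + (\beta\vee 1)$ follows from the stricter lower bound on $p$ in Assumption \ref{EasierCond}, since $\tfrac{2}{1+5\tau} > 1$ and $\tfrac12+\tfrac32\beta > \beta$ for $\beta>0$, so that $p > \beta + ((\tfrac12+\tfrac32\beta) \vee \tfrac{2}{1+5\tau}) > \beta + (\beta\vee 1)$.

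For \eqref{BlGetCond} and \eqref{LevMeaMomCond} I would partition $\R$ according to Definition \ref{rhoandgass2}: on $(-\eta,\eta)$ the bound $h_y(\taili) \leq K|\taili|^{-(1+\beta)}$ gives $\int|\taili|^{\beta+\delta}h_y(\taili)\,d\taili < \infty$ by direct cancellation; on the annuli $C_n$ the density is uniformly bounded; on $\{|\taili|\geq M\}$ the rapid decay $h_y(\taili) \leq K|\taili|^{-(2p\vee 2)-\epsilon}$ controls both the truncated integral and the $\ovp$-moment, noting that $\ovp = (p-1)\vee 1 < (2p\vee 2) - 1$ in all cases. These bounds are uniform in $y$ outside a Lebesgue null set, yielding the essential-supremum statement.

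The technical core is verifying the local Lévy estimates \eqref{FiLevyDistCond}--\eqref{SeLevyDistCond} and compatibility with the observation-scheme conditions. For \eqref{FiLevyDistCond}, taking $m_1,m_2\in\{g_0,g_1,g_2\}$ with densities $h^{(1)}_{y_1}, h^{(2)}_{y_2}$ and assuming $\ovr>\ovv$ (so the window $\{|x-\taili|\leq \Delta_n^{\ovr}\}$ keeps $|x|$ comparable to $|\taili|$ once $|\taili| > \Delta_n^{\ovv}/2$), the inner integral over $x$ is $O(\Delta_n^{\ovr}|\taili|^{-(1+\beta)})$, and integrating over $\taili \in (\Delta_n^{\ovv}/2,\alpha_0)$ yields at most $K\Delta_n^{\ovr - (1+2\beta)\ovv}$. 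Choosing $\ovv$ sufficiently small relative to $\ovr$ produces any desired exponent $q>0$; estimate \eqref{SeLevyDistCond} is analogous and easier because both variables are bounded away from zero, so the densities are uniformly bounded on the integration domain.

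Finally, the bounds on $\ovw$ in \eqref{ObsSchCondit} translate via $\ovw = (1+5\tau)/4$ into $\beta < \tfrac{2}{1+5\tau}$ (which is implied by $\tau < \tfrac{2-\beta}{2+5\beta}$) and $p-\beta > \tfrac{2}{1+5\tau}$ (which is part of the hypothesis on $p$). Each of the seven limit conditions \eqref{ObsSchCond3}--\eqref{ObsSchCond7} then reduces to a comparison between an exponent $\gamma_i$ and the interval $(t_1^{-1},t_2^{-1})^{-1}$, where $t_1 = (1+\tau)^{-1}$ and $t_2 = ((7\tau+1)/2)^{-1}\wedge 1$: a ``$\to 0$'' statement becomes $\gamma_i > 1/t_1^{\,-1} = 1+\tau$ equivalently $\gamma_i t_1 > 1$, while the ``$\to \infty$'' statement becomes $\gamma_i < t_2$. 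The main obstacle will be the simultaneous compatibility of these inequalities under one joint choice of the free parameters $\ovv, \ovr, q, \epsilon, \delta$; with $\ovv$ taken arbitrarily small, $\ovr$ just below $\ovw$, $\epsilon, \delta$ small, and $q$ close to $\ovr - (1+2\beta)\ovv$, the slack provided by the strict inequalities defining the admissible $\tau$- and $p$-ranges is exactly enough to close every condition. It is precisely to make this matching possible that the bounds $\tau < 1/5 \wedge \tfrac{2-\beta}{2+5\beta}$ and $p > \beta + ((\tfrac12+\tfrac32\beta)\vee\tfrac{2}{1+5\tau})$ were chosen.
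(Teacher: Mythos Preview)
Your overall structure is right—verify each item, with the crux being \eqref{FiLevyDistCond}--\eqref{SeLevyDistCond} and the compatibility of the observation-scheme exponents—but the specific parameter prescription you give does not close the argument. The choice ``$\ovv$ arbitrarily small'' is incompatible with condition~\eqref{ObsSchCond5b}, which forces $1+2\ovv(p-\beta-\delta) \ge 1+\tau$, i.e.\ $\ovv \gtrsim \tau/(2(p-\beta))$. Likewise, ``$\ovr$ just below $\ovw$'' is too restrictive when $\tau > 1/7$: then $t_2^{-1} = (7\tau+1)/2 > 1$, and condition~\eqref{ObsSchCond7} actually permits $\ovr$ up to $3\tau > \ovw$; condition~\eqref{ObsSchCond3} (which needs $q \ge 2\tau$) then requires you to use that extra room. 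Concretely, take $\beta = 0.1$, $\tau = 0.19$ and $p$ just above its lower bound $\beta + 2/(1+5\tau)$: the interval of admissible $\ovv$ values becomes empty if you insist on $\ovr < \ovw$, whereas the choice $\ovr = 3\tau$ works.

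The paper resolves this by fixing explicit values $\ovr = 3\tau$, $\ovv = \tau/(1+3\beta)$, $q = \ovr - (1+3\beta)\ovv = 2\tau$, and $\epsilon = \frac{2-2\tau-\beta(1+5\tau)}{\beta(1+5\tau)}$, and then verifies the single chain
\[
(1+2(\ovr-\ovw))\vee 1 \;=\; t_2^{-1} \;<\; 1+\tau \;=\; t_1^{-1} \;=\; 1+\tfrac{q}{2} \;=\; 2\bigl(1-\beta\ovw(1+\epsilon)\bigr) \;<\; \bigl(1+2\ovv(p-\beta)\bigr) \wedge (1+2\ovw),
\]
which handles all of \eqref{ObsSchCond3}--\eqref{ObsSchCond7} at once. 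The point is that the hypothesis $p-\beta > (1+3\beta)/2$ is \emph{exactly} what makes $1+2\ovv(p-\beta) > 1+\tau$ for this $\ovv$, and the hypothesis $\tau < (2-\beta)/(2+5\beta)$ is exactly what makes the displayed $\epsilon$ positive. Your sharper bound $q = \ovr - (1+2\beta)\ovv$ for \eqref{FiLevyDistCond} is fine (the paper's Taylor argument gives the slightly weaker exponent $(1+3\beta)$), but improving $q$ does not remove the lower bound on $\ovv$ coming from \eqref{ObsSchCond5b}. A minor slip: your ``$\gamma_i < t_2$'' for the $\to\infty$ condition should read $\gamma_i \le t_2^{-1}$.
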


\begin{proof}
	Let $0< \beta < 2$, $0<  \tau < (1/5 \wedge \frac{2-\beta}{2+5\beta})$ and $p > \beta+((\frac 12 + \frac 32 \beta) \vee  \frac{2}{1+5\tau})$ and suppose that Assumption \ref{EasierCond} is satisfied for these constants. In order to verify Assumption \ref{Cond1} define the following quantities:
	\begin{align}
	\label{KonstDefEq}
	\ovr := 3 \tau, \quad \ovv := \frac{\tau}{1+3 \beta}, \quad q := \ovr - (1+ 3 \beta) \ovv = 2 \tau,
	\end{align}
	and recall that $\ovw = (1+5\tau)/4$.
	
	$\rho$ is suitable for Assumption \ref{Cond1}\eqref{RhoCond}, as in particular $p > \beta + (\beta \vee 1)$ is satisfied due to $(1+3\beta)/2 >\beta$ and $2/(1+5\tau) > 1$. Assumption \ref{Cond1}\eqref{ObsSchCondit} is established, since $1/(2(p-\beta)) < \ovw = (1+5\tau)/4$ is equivalent to $p> \beta+(2/(1+5 \tau))$ and $\ovw = (1+5\tau)/4 < 1/2 \wedge 1/(2\beta)$ holds due to $\tau < (1/5 \wedge \frac{2-\beta}{2+5\beta})$. Furthermore, simple calculations show
	\begin{align}
	\label{OrdnVergl}
	(1 + 2 \ovr - 2 \ovw) \vee 1  &= t_2^{-1} < 1+ \tau = t_1^{-1} = (1+ \frac{q}{2}) \nonumber \\
	&= 2(1-  \beta \ovw (1 + \epsilon)) < (1+ 2 \ovv(p-\beta)) \wedge (1+2\ovw)
	\end{align}
	with $\epsilon = \frac{2-2\tau-\beta(1+5\tau)}{\beta(1+5\tau)}>0$, since $\tau <  \frac{2-\beta}{2+5\beta}$ and $(p-\beta)>(1+3\beta)/2$. Therefore, all conditions on the observation scheme are satisfied. 
	
	Additionally, if $\eta,M >0$ and a Lebesgue null set $L \in [0,1] \cap \Lc_1$ are chosen such that the requirements of Definition \ref{rhoandgass2} hold, we have 
	$ h_y^{(i)}(\taili) |\taili|^{(\beta+\delta)\wedge 2} \leq K|\taili|^{(-1 + \delta) \wedge (1-\beta)} $
	for each $\delta >0$ and all $y \in [0,1]\setminus L$, $\taili \in (-\eta,\eta)$, $i \in \{0,1,2\}$, where $h_y^{\scriptscriptstyle (i)}$ denotes a density for the kernel $g_i$. Therefore, and due to Definition \ref{rhoandgass2}\eqref{DiCondmiddle} and \eqref{DiCondinfty}, we obtain $\lambda_1 - \text{ess sup} \big(\int \big ( 1 \wedge |\taili|^{(\beta+ \delta)\wedge 2} \big ) g_i(y,d\taili) \big) \leq K(\delta) < \infty$ for every $\delta >0$ and all $i \in\{0,1,2\}$. Moreover, due to Definition \ref{rhoandgass2}\eqref{DiCondinfty} we have
	\begin{equation}
	\label{1mipgezEq}
	h_y^{(i)}(\taili) |\taili|^{\ovp} \leq K|\taili|^{-1-\epsilon},
	\end{equation}
	for all $|\taili| \ge M$, $y\in[0,1]\setminus L$, $i \in \{0,1,2\}$ and some $K>0$. So together with Definition \ref{rhoandgass2}\eqref{DiCondmiddle} we obtain $\lambda_1 - \text{ess sup}_{y \in [0,1]} \big( \int |\taili|^{\ovp} \ind_{\lbrace |\taili| \geq 1 \rbrace} g_i(y,d\taili) \big) < \infty$ for each $i \in \{0,1,2\}$ which is Assumption \ref{Cond1}\eqref{LevMeaMomCond}. 
	
	Furthermore it follows that 
	$ \frac{1+2\ovw}{1-\ovw} = \frac{6+10\tau}{3-5\tau} $ and $\frac{1+2\ovw}{1/2-\ovw}= \frac{6+10\tau}{1-5\tau},$
		and as consequence  Assumption \ref{EasierCond}\eqref{DriDiffMomCond} implies  Assumption \ref{Cond1}\eqref{DrianDiffCond}. 
	
	We are thus left with proving Assumption \ref{Cond1}\eqref{FiLevyDistCond} and \eqref{SeLevyDistCond}. Obviously, $0 < \ovv  < \ovr$ holds with the choice in \eqref{KonstDefEq}. First, we verify Assumption \ref{Cond1}\eqref{FiLevyDistCond}. To this end, we choose $\eta>0$ and a Lebesgue null set $L \in [0,1] \cap \Lc_1$ such that 
	$ h_y^{(i)}(\taili) \leq K |\taili|^{-(1+\beta)} $
	holds for all $\taili \in (- \eta, \eta) \setminus \lbrace 0 \rbrace$, $y \in [0,1] \setminus L$, $i\in\{0,1,2\}$ according to Definition \ref{rhoandgass2}\eqref{BGn0Ass} and we set $\alpha_0 \defeq \eta/2$. Then for any choice $m_1,m_2 \in \{g_0,g_1,g_2\}$ we get 
	\begin{align*}
	\int &\int \ind_{\lbrace |x-z| \leq \Delta_n^{\ovr} \rbrace} \ind_{\lbrace \Delta_n^{\ovv} /2 < |x| \leq \alpha_0 \rbrace } 
	\ind_{\lbrace \Delta_n^{\ovv} /2 < |z| \leq \alpha_0 \rbrace} m_1(y_1,dx) m_2(y_2,dz)  \\
	&\leq K \int \int \ind_{\lbrace |x-z| \leq \Delta_n^{\ovr} \rbrace} \ind_{\lbrace \Delta_n^{\ovv} /2 < |x| \leq \alpha_0 \rbrace } 
	\ind_{\lbrace \Delta_n^{\ovv} /2 < |z| \leq \alpha_0 \rbrace} |x|^{-(1+\beta)} |z|^{-(1+\beta)} dx dz  \\
	&\leq 2 K\int \limits_0^{\infty} \int_0^{\infty} \ind_{\lbrace |x-z| \leq \Delta_n^{\ovr} \rbrace} 
	\ind_{\lbrace \Delta_n^{\ovv}/2 < x \leq \alpha_0 \rbrace } \ind_{\lbrace \Delta_n^{\ovv}/2 < z \leq \alpha_0 
		\rbrace} x^{-(1+\beta)} z^{-(1+\beta)} dx dz. 
	\end{align*}
	for all $(y_1,y_2) \in ([0,1]\setminus L) \times ([0,1]\setminus L)$ and $n \in \N$ large enough. For the second inequality we have used symmetry of the integrand as well as $\Delta_n^{\ovr} < \Delta_n^{\ovv}/2$. In the following, we ignore the extra condition on $x$. Evaluation of the integral with respect to $x$ plus a Taylor expansion give the further upper bounds
	\begin{align*}
	&K \int_0^{\infty} \frac{|(z- \Delta_n^{\ovr})^{\beta} - (z+ \Delta_n^{\ovr})^{\beta}|}
	{|z^2 - \Delta_n^{2 \ovr} |^{\beta}} 
	z^{-(1+ \beta)} \ind_{\lbrace \Delta_n^{\ovv}/2 < z \leq \alpha_0 \rbrace} dz \nonumber \\
	&\leq K \Delta_n^{\ovr} \int_0^{\infty} \frac{\xi(z)^{\beta-1}}{|z^2 - \Delta_n^{2 \ovr}|^{\beta}} 
	z^{-(1+ \beta)} \ind_{\lbrace \Delta_n^{\ovv}/2 < z \leq \alpha_0 \rbrace} dz
	\end{align*}
	for some $\xi(z) \in [z- \Delta_n^{\ovr}, z + \Delta_n^{\ovr}]$. Finally, we distinguish the cases $\beta < 1$ and $\beta \geq 1$ for which the numerator has to be treated differently, depending on whether it is bounded or not. The denominator is always smallest if we plug in $\Delta_n^{\ovv}/2$ for $z$. Overall,
	\begin{align*}
	\int &\int \ind_{\lbrace |x-z| \leq \Delta_n^{\ovr} \rbrace} \ind_{\lbrace \Delta_n^{\ovv} /2 < |x| \leq \alpha_0 \rbrace } 
	\ind_{\lbrace \Delta_n^{\ovv} /2 < |z| \leq \alpha_0 \rbrace} m_1(y_1,dx) m_2(y_2,dz) \\
	&\leq \begin{cases}
	K \Delta_n^{\ovr} \Delta_n^{-(1+\beta) \ovv} \int_{\Delta_n^{\ovv}/2}^{\alpha_0} 
	z^{-(1+ \beta)} dz, \quad &\text{ if } \beta <1 \\
	K \Delta_n^{\ovr} \Delta_n^{-2\beta \ovv} \int_{\Delta_n^{\ovv}/2}^{\alpha_0} 
	z^{-(1+ \beta)} dz, \quad &\text{ if } \beta \geq 1
	\end{cases} \\
	&\leq K \Delta_n^{\ovr - (1+ 3 \beta)\ovv} = K \Delta_n^q \nonumber
	\end{align*}
	for all $m_1,m_2 \in \{0,1,2\}$ and $(y_1,y_2) \in [0,1]^2 \setminus L^2$. Finally, we consider Assumption \ref{Cond1}\eqref{SeLevyDistCond}, for which we proceed similarly with $n \in \N$ large enough, $\alpha >0$ and $(y_1,y_2)\in [0,1]^2\setminus L^2$, as well as $m_1,m_2 \in \{g_0,g_1,g_2\}$ arbitrary:
	\begin{align*}
	\int &\int \ind_{\lbrace |x-z| \leq \Delta_n^{\ovr} \rbrace} \ind_{\lbrace |x| > \alpha \rbrace } 
	\ind_{\lbrace  |z| > \alpha \rbrace} m_1(y_1,dx) m_2(y_2,dz)    \\
	&\leq O(\Delta_n^{\ovr}) + 2 K \int_{M^{\prime}}^{\infty} \int_{M^{\prime}}^{\infty} 
	\ind_{\lbrace |x-z| \leq \Delta_n^{\ovr} 
		\rbrace} \ind_{\lbrace x > \alpha \rbrace } \ind_{\lbrace z > \alpha \rbrace} 
	x^{-2} z^{-2} dx dz.
	\end{align*}
	This inequality holds with a suitable $M^{\prime} >0$ due to Definition \ref{rhoandgass2} \eqref{DiCondmiddle} and \eqref{DiCondinfty}, as we have $h_y^{\scriptscriptstyle (i)}(\taili) \leq K|\taili|^{-2}$ for $y \in [0,1] \setminus L$, $i \in \{0,1,2\}$ and large $|\taili|$. Therefore,
	\begin{align}
	\label{IntalphaAbsch}
	\int &\int \ind_{\lbrace |x-z| \leq \Delta_n^{\ovr} \rbrace} \ind_{\lbrace |x| > \alpha \rbrace } 
	\ind_{\lbrace  |z| > \alpha \rbrace} m_1(y_1,dx) m_2(y_2,dz)  \nonumber \\
	&\leq O(\Delta_n^{\ovr}) + K \Delta_n^{\ovr} \int_{M^{\prime}}^{\infty} \frac{1} {|z^2 - \Delta_n^{2 \ovr} |} z^{-2} 
	\ind_{\lbrace z > \alpha \rbrace} dz = o(\Delta_n^q) 
	\end{align}
	for $(y_1,y_2) \in [0,1]^2 \setminus L^2$ and any choice $m_1,m_2 \in \{g_0,g_1,g_2\}$.	The final bound in \eqref{IntalphaAbsch} holds since the last integral is finite.
\end{proof}

\subsection{Weak convergence of the empirical truncated L\'evy distribution function} \label{sec62}

The proofs of the  statements   in  Section \ref{sec:Infabchdf} and Section \ref{sec:gradchadf} rely on two deep  results about the weak convergence 
of empirical processes which are the basic blocks in the statistics considered there. 
We begin with  a central limit theorem for   the process
\[
G_{\rho}^{(n)}(\gseqi,\dfi) = \sqrt{n \Delta_n}\big(N_{\rho}^{(n)}(\gseqi,\dfi) - N_{\rho}(g^{(n)};\gseqi,\dfi)\big),
\]
where $N_\rho(\cdot,\cdot)$ and $N_\rho(g;\cdot,\cdot)$ are defined in \eqref{NrhoDef} and \eqref{NrhonDef}, respectively.
The following result is a generalization of  Theorem 3.1 in \cite{HofVet15} which   can be obtained
by the choice  $g_0(y,d\taili)=\nu(d\taili)$ for a L\'evy measure $\nu$ and $g_1=g_2=0$. 
The proof is given in Section \ref{ssecProTh61} of the supplement. 

\begin{theorem}
	\label{ConvThm}
	Let Assumption \ref{EasierCond} be satisfied. Then we have weak convergence
	$G_{\rho}^{(n)} \weak \Gb_{\rho}$ 
	in $\linner$, where $\Gb_{\rho}$ is a tight mean zero Gaussian process in $\linner$ with covariance function
	\begin{align}
	\label{Hrhodef}
	H_{\rho}((\gseqi_1,\dfi_1);(\gseqi_2,\dfi_2)) \defeq \int_0^{\mingi} \int_{- \infty}^{\mindfi} \rho^2(\taili) g_0(y,d\taili)dy.
	\end{align}
	Additionally, the sample paths of $\Gb_{\rho}$ are almost surely uniformly continuous with respect to the semimetric
	\begin{align}
	\label{drhodef}
	d_{\rho}((\gseqi_1,\dfi_1);(\gseqi_2,\dfi_2)) = \Big\{ \int_0^{\gseqi_1} \int_{\mindfi}^{\maxdfi} \rho^2(\taili) g_0(y,d\taili) dy + \int_{\gseqi_1}^{\gseqi_2} \int_{-\infty}^{\dfi_2} \rho^2(\taili) g_0(y,d\taili) dy \Big\}^{1/2}
	\end{align}
	for $\gseqi_1 \leq \gseqi_2$.
\end{theorem}

We also need a result regarding the weak convergence  of a   bootstrapped version of $G_\rho^{\scriptscriptstyle (n)}$. The corresponding process is defined
 by
\begin{align}
\label{BootGrnDefEq}
\hat G_\rho^{(n)}(\gseqi,\dfi) = \frac 1{\sqrt{n\Delta_n}} \sum\limits_{i=1}^{\ip{n\gseqi}} \xi_i \rho\big(\Deli X^{(n)}\big) \ind_{(-\infty,\dfi]}\big(\Deli X^{(n)}\big) \ind_{\{|\Deli X^{(n)} | > v_n\}},
\end{align}
for $(\gseqi,\dfi) \in \netir$, where the sequence of multipliers $(\xi_i)_{i \in \N}$ satisfies Assumption \ref{MultiplAss}. 
The proof is given  in Section \ref{ssecProTh68} of the supplement. 

\begin{theorem}
	\label{CondConvThm}
	If  Assumption \ref{EasierCond} holds  and  the multipliers $(\xi_i)_{i \in \N}$ satisfy Assumption \ref{MultiplAss},  we have
$	
	\hat G_\rho^{(n)} \weakP \Gb_\rho
$		in $\linner$, where  the process $\Gb_\rho$ is  defined in Theorem \ref{ConvThm}.
\end{theorem}

\subsection{Proofs of the results in Section \ref{sec:Infabchdf}} \label{sec63}  

\textbf{Proof of Theorem \ref{CUSUMTrhowc}.}
For each $(\gseqi,\dfi) \in \netir$, $n\in\N$ we have under ${\bf H}_1^{(loc)}$
\begin{align*}
\Tb_\rho^{(n)}(\gseqi,\dfi) &= h_n\big(G_\rho^{(n)}\big)(\gseqi,\dfi) + \sqrt{n\Delta_n} \big(N_\rho(g^{(n)};\gseqi,\dfi) - \frac{\ip{n\gseqi}}n N_\rho(g^{(n)};1,\dfi)\big) \\
&= h_n\big(G_\rho^{(n)}\big)(\gseqi,\dfi) + \sqrt{n\Delta_n} \Big(\gseqi - \frac{\ip{n\gseqi}}n \Big) \int_{-\infty}^{\dfi} \rho(\taili) \nu_0(d\taili) +    \\
&\hspace{5mm} + \big(N_\rho(g_1;\gseqi,\dfi) - \frac{\ip{n\gseqi}}n N_\rho(g_1;1,\dfi)\big) + \sqrt{n\Delta_n}  \big(N_\rho(\Rc_n;\gseqi,\dfi) - \frac{\ip{n\gseqi}}n N_\rho(\Rc_n;1,\dfi)\big),
\end{align*}
with the mappings $h_n : \linner \to \linner$ defined by
\begin{equation}
\label{hnFctD}
h_n(f)(\gseqi,\dfi) = f(\gseqi,\dfi) - \frac{\ip{n\gseqi}}n f(1,\dfi), \quad (n\in\N), \quad h_0(f)(\gseqi,\dfi) = f(\gseqi,\dfi) - \gseqi f(1,\dfi).
\end{equation}
Thus, by Assumption \ref{EasierCond}\eqref{rhoandgass} we obtain
$
\Tb_\rho^{(n)}(\gseqi,\dfi) = h_n\big(G_\rho^{(n)}\big)(\gseqi,\dfi) + \Tb_{\rho,g_1}(\gseqi,\dfi) + o(1),
$
where the $o$-term is deterministic. By the same reasoning as in the proof of Theorem 2.6 in \cite{BueHofVetDet15} it can be seen that $h_n\big(G_\rho^{\scriptscriptstyle (n)}\big) \weak h_0\big(\Gb_\rho\big) = \Tb_\rho$ in $\linner$. As a consequence, Slutsky's lemma (Example 1.4.7 in \cite{VanWel96}) yields the assertion, since the tight process $\Tb_\rho$ is separable (see Lemma 1.3.2 in the previously mentioned reference).
\qed

\medskip

\noindent 
\textbf{Proof of Proposition \ref{poiwconKSdi}.}
\eqref{1mipgezEq} in the proof of Proposition \ref{easier} shows that Assumption \ref{Cond1}\eqref{LevMeaMomCond} is also valid for $2p$ instead of $p$. Thus, Theorem \ref{ConvThm} also holds with the function $\rho$ replaced by $\rho^2$. As a consequence, we have $N_{\rho^2}^{\scriptscriptstyle (n)}(1,\dfi_0) - N_{\rho^2}(g^{\scriptscriptstyle (n)};1,\dfi_0) = o_\Prob(1)$. By \eqref{gon012Ass} we obtain
\begin{multline*}
N_{\rho^2}\big(g^{(n)};1,\dfi_0\big) = \int_{-\infty}^{\dfi_0} \rho^2(\taili) \nu_0(d\taili) + \frac 1{\sqrt{n\Delta_n}} \int_0^1 \int_{-\infty}^{\dfi_0} \rho^2(\taili) g_1(y,d\taili) dy +\\
+ \int_0^1 \int_{-\infty}^{\dfi_0} \rho^2(\taili) \Rc_n(y,d\taili) dy = \int_{-\infty}^{\dfi_0} \rho^2(\taili) \nu_0(d\taili) +o(1).
\end{multline*}
Finally, $(N_{\rho^2}^{\scriptscriptstyle (n)}(1,\dfi_0))^{-1/2} \ind_{\{ N_{\rho^2}^{\scriptscriptstyle (n)}(1,\dfi_0) >0 \}} = (\int_{-\infty}^{\dfi_0} \rho^2(\taili) \nu_0(d\taili))^{-1/2} + o_\Prob(1)$ follows due to $\int_{-\infty}^{\dfi_0} \rho^2(\taili)$ $ \nu_0(d\taili) >0$. Thus, Theorem \ref{CUSUMTrhowc}, the continuous mapping theorem and Slutsky's lemma (Example 1.4.7 in \cite{VanWel96}) yield
\[\Vb_{\rho,\dfi_0}^{(n)}(\gseqi) \weak \Big(\int_{-\infty}^{\dfi_0} \rho^2(\taili) \nu_0(d\taili)\Big)^{-1/2}\big( \Tb_\rho(\gseqi,\dfi_0) + \Tb_{\rho,g_1}(\gseqi,\dfi_0 \big) = \Kb(\gseqi) + \bar\Vb_{\rho,\dfi_0}^{(g_1)}(\gseqi), \]
in $\linne$, because the process $(\int_{-\infty}^{\dfi_0} \rho^2(\taili) \nu_0(d\taili))^{-1/2} \Tb_\rho(\cdot,\dfi_0)$ is a tight mean zero Gaussian process with covariance function $K(\gseqi_1,\gseqi_2) = (\gseqi_1 \wedge \gseqi_2) - \gseqi_1 \gseqi_2$.
\qed

\medskip

\noindent 
\textbf{Proof of Theorem \ref{BootTrhoThm}.}
Recall the Lipschitz continuous functions $h_n: \linner \to \linner$, $(n\in\N_0)$ defined in \eqref{hnFctD}. Then we have $\hat \Tb_\rho^{\scriptscriptstyle (n)} = h_n(\hat G_\rho^{\scriptscriptstyle (n)})$ and Proposition 10.7 in \cite{Kos08} together with Theorem \ref{CondConvThm} give $h_0(\hat G_\rho^{\scriptscriptstyle (n)}) \weakP h_0(\Gb_\rho)$ in $\linner$. Moreover, we have
\begin{multline*}
\sup_{(\gseqi,\dfi) \in \netir} \big| h_n(\hat G_\rho^{(n)})(\gseqi,\dfi) - h_0(\hat G_\rho^{(n)})(\gseqi,\dfi) \big| = \\ = \sup_{(\gseqi,\dfi) \in \netir} \big| \big(\gseqi - \frac{\ip{n\gseqi}}n \big) \hat G_\rho^{(n)}(1,\dfi) \big| = o(1) \times O_\Prob(1) = o_\Prob(1) 
\end{multline*}
and thus Lemma \ref{oP1glzcwecole} yields $\hat\Tb_\rho^{(n)} \weakP h_0(\Gb_\rho)$. The covariance structure \eqref{Tbrbcov} of $h_0(\Gb_\rho) = \Tb_\rho$ can be obtained using \eqref{Hrhodef}.
\qed

\medskip

\noindent 
\textbf{Proof of Proposition \ref{ConsuH1loc}.}
First, we show \eqref{Prop3101} with a reasoning which is similar to the proof of Proposition F.1 in the supplement to \cite{BueKoj14}. \\
Fix $\alpha \in (0,1) \setminus \Q$. According to Proposition \ref{JointConvProp} and the continuous mapping theorem we have $(T_\rho^{\scriptscriptstyle (n)},\hat T_{\scriptscriptstyle \rho,\xi^{\scriptscriptstyle (1)}}^{\scriptscriptstyle (n)}, \ldots , \hat T_{\scriptscriptstyle \rho,\xi^{\scriptscriptstyle (B)}}^{\scriptscriptstyle (n)}) \weak (T_{\rho,g_1}, T_{\rho,(1)},\ldots,T_{\rho,(B)})$ in $(\R^{B+1},\Bb^{B+1})$ for fixed $B\in\N$, where $T_{\rho,(1)},\ldots,T_{\rho,(B)}$ are independent copies of the limit $T_\rho$ in Corollary \ref{haTroCons}. Furthermore, let $L_{n,B}$ be the empirical c.d.f.\ based on the observations $\hat T_{\scriptscriptstyle \rho,\xi^{\scriptscriptstyle (1)}}^{\scriptscriptstyle (n)}, \ldots , \hat T_{\scriptscriptstyle \rho,\xi^{\scriptscriptstyle (B)}}^{\scriptscriptstyle (n)}$ and let $L_B$ be the empirical c.d.f.\ calculated from $T_{\rho,(1)},\ldots,T_{\rho,(B)}$. Due to the right continuity of $L_{n,B}$ we have
\[
\Prob\big( T_\rho^{(n)} \geq \hat q_{1-\alpha}^{(B)} \big( T_\rho^{(n)}\big) \big) = \Prob\big(L_{n,B}(T_\rho^{(n)}) \ge 1 - \alpha \big).
\]
Moreover, using Corollary 1.3 and Remark 4.1 in \cite{GaeMolRos07} as well as Assumption \ref{EasierCond}\eqref{rhoneq0} and the covariance structure \eqref{TrhoCovFkt} of the Gaussian process $\Tb_\rho$ in Theorem \ref{CUSUMTrhowc} it follows that $T_\rho$ has a continuous c.d.f.\ Thus, the function $\Psi_{(B)} : \R^{B+1} \to \R$ given by $\Psi_{(B)}(x_0,x_1,\ldots,x_B) = B^{-1} \sum_{i=1}^{\scriptscriptstyle B} \ind(x_i \le x_0)$ is almost surely continuous with respect to the image measure $(T_{\rho,g_1}, T_{\rho,(1)},$ $\ldots, T_{\rho,(B)})(\Prob)$. As a consequence, we have $L_{n,B}(T_\rho^{\scriptscriptstyle (n)}) \weak L_B(T_{\rho,g_1})$ as $n \to \infty$ and with the Portmanteau theorem we obtain
\[
\lim_{n\to\infty} \Prob\big( T_\rho^{(n)} \geq \hat q_{1-\alpha}^{(B)} \big( T_\rho^{(n)}\big) \big) = \Prob \big( L_B(T_{\rho,g_1}) \ge 1 - \alpha \big),
\]
because $1-\alpha \notin \{0, \frac 1B,\ldots,\frac{B-1}B,1\}$. By the Glivenko-Cantelli theorem for every $\eps \in (0,1-\alpha)$ we can choose $B_0(\eps) \in \N$ such that 
\begin{equation}
\label{GlivCanth}
\Prob\big( \sup_{x \in \R} | L_B(x) - L_\rho(x) | \ge \eps \big) \le \eps,
\end{equation}
for all $B \ge B_0(\eps)$, since $T_{\rho,(1)},\ldots,T_{\rho,(B)}$ are i.i.d.\ with distribution function $L_\rho$. Thus, for every such $B \in\N$ we have
\begin{align*}
\Prob\big( L_B(T_{\rho,g_1}) \ge 1-\alpha \big) &= \Prob\big( L_B(T_{\rho,g_1}) - L_\rho(T_{\rho,g_1}) + L_\rho(T_{\rho,g_1}) \ge 1-\alpha \big) \\
&\le \Prob\big( L_B(T_{\rho,g_1}) - L_\rho(T_{\rho,g_1}) \ge \eps \big) + \Prob\big( L_\rho(T_{\rho,g_1}) \ge 1-\alpha - \eps \big) \\
&\le \eps + \Prob\big( L_\rho(T_{\rho,g_1}) \ge 1-\alpha - \eps \big) \stackrel{\eps \downarrow 0}{\rightarrow} \Prob\big( L_\rho(T_{\rho,g_1}) \ge 1-\alpha \big)
\end{align*}
and we obtain
\begin{equation}
\label{limsupeq}
\limsup_{B \to \infty} \Prob\big( L_B(T_{\rho,g_1}) \ge 1-\alpha \big) \le \Prob\big( L_\rho(T_{\rho,g_1}) \ge 1-\alpha \big).
\end{equation}
The terms on both sides of inequality \eqref{limsupeq} are increasing in $\alpha$ and the right-hand side is right continuous in $\alpha$. As a consequence, \eqref{limsupeq} is also valid for each $\alpha \in (0,1) \cap \Q$. Furthermore, we have
\begin{equation}
\label{liminfeq}
\liminf_{B \to \infty} \Prob\big( L_B(T_{\rho,g_1}) \ge 1-\alpha \big) \ge \Prob\big( L_\rho(T_{\rho,g_1}) > 1-\alpha \big),
\end{equation}
because according to \eqref{GlivCanth} 
\begin{align*}
\Prob\big( L_B(T_{\rho,g_1}) \ge 1-\alpha \big) &= \Prob\big( L_B(T_{\rho,g_1}) - L_\rho(T_{\rho,g_1}) + L_\rho(T_{\rho,g_1}) \ge 1-\alpha \big) \\
&\ge \Prob\big( L_\rho(T_{\rho,g_1}) \ge 1-\alpha + \eps \big) - \eps \stackrel{\eps \downarrow 0}{\rightarrow} \Prob\big( L_\rho(T_{\rho,g_1}) > 1-\alpha \big)
\end{align*}
holds. Both sides of \eqref{liminfeq} are increasing in $\alpha$ and the right-hand side is left continuous in $\alpha$. Thus, \eqref{liminfeq} is also true for $\alpha \in (0,1) \cap \Q$. Finally, \eqref{Prop3103} can be shown by exactly the same steps as above and $\eqref{Prop3102}$ is an immediate consequence of the
Portmanteau theorem.
\qed

\medskip

\noindent 
\textbf{Proof of Corollary \ref{prop:asledf}.} Under ${\bf H}_0$ we have $\Tb_{\rho,g_1} =0$ and $T_{\rho,g_1} = T_\rho$ is distributed acccording to $L_\rho$. Due to $\nu_0 \neq 0$, Assumption \ref{EasierCond}\eqref{rhoneq0} and the covariance structure \eqref{TrhoCovFkt} of $\Tb_\rho$ the c.d.f.\ $L_\rho$ is continuous in virtue of Corollary 1.3 and Remark 4.1 in \cite{GaeMolRos07}. As a consequence, $L_\rho(T_{\rho,g_1}) = L_\rho(T_\rho)$ is uniformly distributed on $(0,1)$ and we have $\Prob\big(L_\rho(T_\rho) > 1-\alpha \big) = \Prob\big(L_\rho(T_\rho) \ge 1-\alpha \big) = \alpha$ for all $\alpha \in (0,1)$. Hence, \eqref{Cor3111} follows from \eqref{Prop3101} and the claim \eqref{Cor3112} can be obtained by a similar reasoning using \eqref{Prop3102} as well as \eqref{Prop3103}.
\qed

\medskip

\noindent 
\textbf{Proof of Proposition \ref{prop:tndfh1}.} 
As in the proof of Proposition \ref{poiwconKSdi} we obtain 
\[
\sup_{(\gseqi,\dfi) \in \netir} \big| N_\rho^{(n)} (\gseqi,\dfi) - N_\rho(g_0;\gseqi,\dfi) \big| = o_\Prob(1).
\]
$(n\Delta_n)^{\scriptscriptstyle -1/2}\Tb_\rho^{\scriptscriptstyle (n)}(\gseqi,\dfi)$ is given by $N_\rho^{\scriptscriptstyle (n)}(\gseqi,\dfi) - \frac{\ip{n\gseqi}}n N_\rho^{\scriptscriptstyle (n)}(1,\dfi)$ according to \eqref{Tbrhondef}. Consequently, a simple calculation shows
\[
(n\Delta_n)^{-1/2}\Tb_\rho^{(n)}(\gseqi,\dfi) = N_\rho(g_0;\gseqi,\dfi) - \gseqi N_\rho(g_0;1,\dfi) + o_\Prob(1) = T_{(1)}^\rho (\gseqi,\dfi) + o_\Prob(1)
\]
under ${\bf H}_1$, where the $o$-term is uniform in $(\gseqi,\dfi) \in \netir$.
\qed 

\medskip

\noindent 
\textbf{Proof of Proposition \ref{prop:conuH1}.} By the continuous mapping theorem, Theorem \ref{BootTrhoThm} and Remark \ref{rem:condweak}(ii) we have $\hat T_{\scriptscriptstyle \rho,\xi^{\scriptscriptstyle (b)}}^{\scriptscriptstyle (n)} = O_\Prob(1)$ and $\hat W_{\scriptscriptstyle \rho,\xi^{\scriptscriptstyle (b)}}^{\scriptscriptstyle (n,\dfi_0)} = O_\Prob(1)$ for all $b \in \{1,\ldots,B \}$. Therefore, it suffices to show $\Prob(V_{\scriptscriptstyle \rho,\dfi_0}^{\scriptscriptstyle (n)} \ge K) \to 1$ and $\Prob(W_{\scriptscriptstyle \rho}^{\scriptscriptstyle (n,\dfi_0)} \ge K) \to 1$ for every $K>0$ under ${\bf H}_1^{\scriptscriptstyle (\rho,\dfi_0)}$ and $\Prob(T_{\scriptscriptstyle \rho}^{\scriptscriptstyle (n)} \ge K) \to 1$ for each $K>0$ under ${\bf H}_1$. \\
According to the proof of Proposition \ref{poiwconKSdi} and Proposition \ref{prop:tndfh1} the quantities $(n\Delta_n)^{\scriptscriptstyle -1/2} V_{\scriptscriptstyle \rho,\dfi_0}^{\scriptscriptstyle (n)}$ and $(n\Delta_n)^{\scriptscriptstyle -1/2} W_{\scriptscriptstyle \rho}^{\scriptscriptstyle (n,\dfi_0)}$ converge to a constant in $(0,\infty)$ in outer probability under ${\bf H}_1^{\scriptscriptstyle (\rho,\dfi_0)}$, because $|T_{\scriptscriptstyle (1)}^{\scriptscriptstyle \rho} (\gseqi_0,\dfi_0)| >0$ in this case. Furthermore, due to Assumption \ref{EasierCond}\eqref{rhoneq0} we have $\sup_{(\gseqi,\dfi) \in \netir}$ $ |T_{\scriptscriptstyle (1)}^{\scriptscriptstyle \rho} (\gseqi,\dfi)| >0$ under ${\bf H}_1$ and $(n\Delta_n)^{\scriptscriptstyle -1/2} T_{\scriptscriptstyle \rho}^{\scriptscriptstyle (n)} = \sup_{(\gseqi,\dfi) \in \netir} |T_{\scriptscriptstyle (1)}^{\scriptscriptstyle \rho} (\gseqi,\dfi)| + o_\Prob(1)$, because of Proposition \ref{prop:tndfh1}. Thus, the assertion follows from $n\Delta_n \to \infty$.
\qed

\medskip

\noindent 
\textbf{Proof of Proposition \ref{prop:argmax}.} According to Proposition \ref{prop:tndfh1} the random functions $\gseqi \mapsto \sup_{\dfi \in \R}$ $ |(n\Delta_n)^{\scriptscriptstyle -1/2} \Tb_\rho^{\scriptscriptstyle (n)}(\gseqi,\dfi)|$ converges weakly in $\linne$ to the continuous function $\gseqi \mapsto \sup_{\dfi \in \R}$ $ |T_{\scriptscriptstyle (1)}^{\scriptscriptstyle \rho} (\gseqi,$ $\dfi)|$, which due to Assumption \ref{EasierCond}\eqref{rhoneq0} attains a unique maximum at $\gseqi_0$ under ${\bf H}_1$. Therefore, the claim for ${\bf H}_1$ follows from the argmax-continuous mapping theorem (Theorem 2.7 in \cite{KimPol90}). The assertion regarding ${\bf H}_1^{\scriptscriptstyle (\rho,\dfi_0)}$ follows with a similar reasoning.
\qed

\subsection{Proofs of the results in Section \ref{sec:gradchadf}} \label{sec64}
\label{prresec4}

\textbf{Proof of Lemma \ref{Drhg0suit}.} 
If the kernel $g_0(\cdot,d\taili)$ is Lebesgue almost everywhere constant on $[0,\gseqi]$, we have $D^{\scriptscriptstyle (g_0)}_\rho(\kseqi,\gseqi,\dfi) =0$ for all $0 \leq \kseqi \leq \gseqi$ and $\dfi \in \R$, since $\kseqi^{-1} \int_0^\kseqi \int_{- \infty}^\dfi \rho(\taili) g_0(y,d\taili) dy$ is constant on $(0,\gseqi]$.

If on the other hand $D^{\scriptscriptstyle (g_0)}_\rho(\kseqi,\gseqi,\dfi) =0$ for all $\kseqi \in [0,\gseqi]$ and $\dfi \in \R$ we have
\begin{align*}
\int \limits_0^\kseqi \int_{- \infty}^\dfi \rho(\taili) g_0(y,d\taili) dy = \kseqi \Big( \frac 1\gseqi \int_0^\gseqi \int_{-\infty}^\dfi\rho(\taili) g_0(y, d\taili)dy \Big) =: \kseqi A_\gseqi (\dfi)
\end{align*}
for each $\kseqi \in [0,\gseqi]$ and $\dfi \in \R$. Therefore, $\int_{-\infty}^\dfi \rho(\taili)g_0(y,d\taili) = A_\gseqi(\dfi)$ holds  for each fixed $\dfi \in \R$ and every $y \in [0,\gseqi]\setminus M_{(\dfi)}$ by Assumption \ref{EasierCond}\eqref{jbidenass} and the fundamental theorem of calculus. Consequently,
\begin{align}
\label{IntKernGlEq}
\int \limits_{-\infty}^\dfi \rho(\taili)g_0(y,d\taili) = A_\gseqi(\dfi)
\end{align}
holds for every $\dfi \in \mathbb Q$ and each $y \in [0, \gseqi]$ outside the Lebesgue null set $\bigcup_{\dfi \in \mathbb Q} M_{(\dfi)}$. According to Assumption \ref{EasierCond} the function $y \mapsto \int (1 \wedge |\taili|^p) g_0(y,d\taili)$ is bounded on $[0,1]$. Hence, by Lebesgue's dominated convergence theorem and the assumptions on $\rho$ the quantities on both sides of \eqref{IntKernGlEq} are right-continuous in $\dfi \in \R$. As a consequence, \eqref{IntKernGlEq} holds for every $\dfi \in \R$ and each $y \in [0,\gseqi]$ outside the Lebesgue null set $\bigcup_{\dfi \in \mathbb Q} M_{(\dfi)}$. Thus, by the uniqueness theorem for measures the kernel $\rho(\taili) g_0(y,d\taili)$ is Lebesgue almost everywhere on $[0,\gseqi]$ equal to the finite signed measure $\eta_\gseqi$ with measure generating function $\dfi \mapsto A_\gseqi(\dfi)$ of bounded variation. Now, recall that $g_0(y,d\taili)$ does not charge $\{ 0 \}$, so by Assumption \ref{EasierCond}\eqref{rhoneq0} the kernel $g_0(y,d\taili)$ is Lebesgue almost everywhere on $[0,\gseqi]$ equal to the measure with density $(1/\rho(\taili)) \ind_{\{\rho(\taili) \neq 0\}} \eta_\gseqi(d\taili)$. 
\qed

\medskip

\noindent 
\textbf{Proof of Theorem \ref{SchwKentmotv}.} We consider the functional  $\Lambda \colon \linner \rightarrow \linctr$ defined by
\begin{equation} \label{Lambdadef}
\Lambda(f)(\kseqi, \gseqi, \dfi) \defeq f(\kseqi, \dfi) - \frac\kseqi\gseqi f(\gseqi, \dfi).
\end{equation}
As $\| \Lambda(f_1) - \Lambda(f_2) \|_{\ctir} \leq 2 \| f_1 - f_2 \|_{\netir}$   the mapping $\Lambda$ is Lipschitz continuous. Thus, 
by  Theorem \ref{ConvThm} and  the  continuous mapping  theorem $\Lambda(G_\rho^{\scriptscriptstyle (n)})$ converges weakly in $\linctr$ to the tight mean zero Gaussian process $\Hb_\rho \defeq \Lambda(\Gb_\rho)$ with covariance structure \eqref{HbrhoProCov}. Furthermore, we have
$
\Hb_\rho^{(n)} = \Lambda(G_\rho^{(n)}) + D_\rho^{(g_1)} + \sqrt{n\Delta_n} D_\rho^{(\Rc_n)} = \Lambda(G_\rho^{(n)}) + D_\rho^{(g_1)} + o(1),
$
where the $o$-term is deterministic and uniform in $(\kseqi,\gseqi,\dfi) \in C \times \R$ by Assumption \ref{EasierCond}. Finally, the desired weak convergence follows using Slutsky's lemma (Example 1.4.7 in \cite{VanWel96}) and the fact that $\Hb_\rho$ is separable as it is tight (see Lemma 1.3.2 in the previously mentioned reference).
\qed

\medskip

\noindent 
\textbf{Proof of Theorem \ref{BootHnConvThm}.} We have $\hat \Hb_\rho^{\scriptscriptstyle (n)} = \Lambda(\hat G_\rho^{\scriptscriptstyle (n)})$ and $\Hb_\rho = \Lambda(\Gb_\rho)$ with the Lipschitz continuous mapping $\Lambda$ defined in \eqref{Lambdadef}. Thus, the assertion follows from Proposition 10.7 in \cite{Kos08}.
\qed

\medskip

\noindent 
\textbf{Proof of Theorem \ref{SchaezerKonvT}.  \ref{MSEdfzerlThm} and   \ref{OptthChodfThm}.} The assertions follow by a similar reasoning as  given in the proof of Theorem 4.2, 4.3
and 4.4  in \cite{HofVetDet17}, respectively.
\qed

\medskip

\noindent 
\textbf{Proof of Theorem \ref{KorThrCdfThm}.} We start with a proof of
$\varphi_{n}^* \probto 0$
which is equivalent to $\haFcothrle/$ $\sqrt{n\Delta_n} \probto 0$. Therefore, we have to show
\begin{align}
\label{empFgrlevaleq}
\Prob\big(\haFcothrle/\sqrt{n \Delta_n} \leq x\big) = \Prob \Big( \frac{1}{B_n} \sum \limits_{i=1}^{B_n} \ind_{\{\hat \Hb_{\rho,*}^{\scriptscriptstyle (n,i)}(\predfest) \leq (\sqrt{n \Delta_n} x)^{1/r} \}} \geq 1- \alpha_n \Big) \rightarrow 1,
\end{align}
for arbitrary $x>0$, by the definition of $\haFcothrle$ in \eqref{thrledfdefeq}. Since the
\begin{align*}
\ind_{\{\hat \Hb_{\rho,*}^{(n,i)}(\predfest)  \leq (\sqrt{n \Delta_n} x)^{1/r} \}} - \Prob_\xi\Big( \hat \Hb_{\rho,*}^{(n)}(\predfest) \leq (\sqrt{n \Delta_n} x)^{1/r} \Big), \quad i=1, \ldots, B_n,
\end{align*}
are pairwise uncorrelated with mean zero and bounded by $1$, we have
$$
\Prob \Big( \Big| \frac 1{B_n} \sum \limits_{i=1}^{B_n} \ind_{\{\hat \Hb_{\rho,*}^{\scriptscriptstyle (n,i)}(\predfest) \leq (\sqrt{n \Delta_n} x)^{1/r} \}} - \Prob_\xi\Big( \hat \Hb_{\rho,*}^{(n)}(\predfest) \leq (\sqrt{n \Delta_n} x)^{1/r} \Big) \Big| > \alpha_n/2 \Big) 
\leq 4 \alpha_n^{-2} B_n^{-1} \rightarrow 0.
$$
Therefore, in order to prove \eqref{empFgrlevaleq}, it suffices to verify
\begin{multline}
\label{secFProbabeq}
\Prob \Big( \Prob_\xi\Big( \hat \Hb_{\rho,*}^{(n)}(\predfest)  \leq (\sqrt{n \Delta_n} x)^{1/r} \Big)  < 1- \alpha_n/2 \Big) 
\leq \frac 2{\alpha_n} \Prob \Big( \hat \Hb_{\rho,*}^{(n)}(\predfest)  > (\sqrt{n \Delta_n} x)^{1/r} \Big) \\ \leq 2 \alpha_n^{-1} \Prob(Q_n^C) + 2 \alpha_n^{-1} \Prob \Big( \Big\{ 2 \sup \limits_{\dfi \in \R} \sup \limits_{\gseqi \in [0,1]} |\hat G_\rho^{(n)} (\gseqi, \dfi)|  > (\sqrt{n \Delta_n} x)^{1/r} \Big\} \cap Q_n  \Big)
\rightarrow 0,
\end{multline}
with $Q_n$ the set defined in \eqref{BnsetsDef}. The first inequality in the above display follows with the Markov inequality and the last inequality in \eqref{secFProbabeq} is a consequence of the fact that $\hat \Hb_{\rho,*}^{\scriptscriptstyle (n)}(\predfest) \le \hat \Hb_{\rho,*}^{\scriptscriptstyle (n)}(1) \leq 2 \sup_{\dfi \in \R} \sup_{\gseqi \in [0,1]} |\hat G_\rho^{\scriptscriptstyle (n)} (\gseqi, \dfi)|$. Due to Lemma \ref{QnConvOrdn} in Appendix \ref{appD} 
we obtain $\Prob\big(Q_n^C\big) \leq K n \Delta_n^{1+\tau}$ and consequently $\alpha_n^{-1} \Prob(Q_n^C) \to 0$. For the second summand on the right-hand side of \eqref{secFProbabeq} the definition of $\hat G_\rho^{\scriptscriptstyle (n)}$ in \eqref{BootGrnDefEq} gives
\begin{equation*}
\Eb \Big\{\sup \limits_{\dfi \in \R} \sup \limits_{\gseqi \in [0,1]} |\hat G_\rho^{(n)} (\gseqi, \dfi)| \ind_{Q_n} \Big\}\leq \frac 1{\sqrt{n \Delta_n}} \sum \limits_{i=1}^n  \Eb \big( |\xi_i| |\rho(\Deli X^{(n)})| \ind_{\{|\Deli X^{(n)}| > v_n\}}  \ind_{Q_n} \big) \leq K \sqrt{n \Delta_n}.
\end{equation*}
The final estimate above follows using Lemma \ref{MomklAbLem}  in Appendix \ref{appD}, $\Eb| \xi_i| \leq 1$ for every $i=1, \ldots, n$ and independence of the multipliers and the other involved quantities. 
Therefore,
with the Markov inequality we obtain
\begin{align*}
\alpha_n^{-1} \Prob \Big( \Big\{ 2 \sup \limits_{\dfi \in \R} \sup \limits_{\gseqi \in [0,1]} |\hat G_\rho^{(n)} (\gseqi, \dfi)|  > (\sqrt{n \Delta_n} x)^{1/r} \Big\} \cap Q_n  \Big) &\leq 
 K \Big( (n \Delta_n)^{\frac{1-r}{2r}} \alpha_n \Big)^{-1} \rightarrow 0,
\end{align*}
by the assumptions on the involved sequences. Thus, we conclude $\beta^*_n \probto 0$. 

Next we show $\haFcothrleor \probto \infty$, which is equivalent to
\begin{align*}
\Prob\big(\haFcothrle \leq x\big) = \Prob \Big( \frac{1}{B_n} \sum \limits_{i=1}^{B_n} \ind_{\{\hat \Hb_{\rho,*}^{\scriptscriptstyle (n,i)}(\predfest)  \leq x^{1/r} \}} \geq 1- \alpha_n \Big) \rightarrow 0,
\end{align*}
for each $x >0$.  By the same considerations as in the previous paragraph  it is sufficient to show
\[
\Prob \Big( \Prob_\xi\Big( \hat \Hb_{\rho,*}^{(n)}(\predfest)  > x^{1/r} \Big)  \leq 2 \alpha_n \Big) \rightarrow 0.
\]
Let $\dfi_0 \in \R$ with $N_{\rho^2}(\gseqi_0,\dfi_0) >0$. By continuity of the function $\kseqi \mapsto N_{\rho^2}(\kseqi,\dfi_0)$ we can find $0< \bar \kseqi < \bar \gseqi < \gseqi_0$ with
\begin{align}
\label{ChoiFbakseEq}
N_{\rho^2}(\bar \kseqi,\dfi_0) >0 . 
\end{align}
As $
\hat \Hb^{(n)}_\rho(\bar \kseqi, \bar \gseqi, \dfi_0) \leq \hat \Hb_{\rho,*}^{(n)}(\predfest) \Longrightarrow \Prob_\xi\big( \hat \Hb^{(n)}_\rho(\bar \kseqi, \bar \gseqi, \dfi_0) > x^{1/r} \big) \leq \Prob_\xi\big( \hat \Hb_{\rho,*}^{(n)}(\predfest) > x^{1/r} \big)
$
on the set $\{\bar \gseqi < \predfest\}$ and consistency of the preliminary estimate it further suffices to prove
\begin{align}
\label{GenFzzUngl}
\Prob \Big( \Prob_\xi\Big( \hat \Hb_{\rho,*}^{(n)}(\predfest) > x^{1/r} \Big)  \leq 2 \alpha_n ,~ \bar \gseqi < \predfest \Big) \leq \Prob \Big( \Prob_\xi\Big( \hat \Hb^{(n)}_\rho(\bar \kseqi, \bar \gseqi, \dfi_0) > x^{1/r} \Big)  \leq 2 \alpha_n \Big) \rightarrow 0.
\end{align}
For a proof \eqref{GenFzzUngl} we use a Berry-Esseen type result. Recall  the notation $
\hat \Hb^{(n)}_\rho (\bar \kseqi, \bar \gseqi, \dfi_0) = \frac 1{\sqrt{n \Delta_n}} \sum \limits_{j=1}^n \hat B_j^n \xi_j
$
from \eqref{hatHbrhondeq} with
$
\hat B_j^n =(\ind_{\{j \leq \ip{n \bar\kseqi} \}} - \frac{\bar\kseqi}{\bar\gseqi} \ind_{\{j \leq \ip{n \bar\gseqi}\}}) \hat A_j^n,
$
where
\begin{align*}
\hat A_j^n = \rho(\Delj X^{(n)}) \ind_{(-\infty, \dfi_0]}(\Delj X^{(n)}) \ind_{\{|\Delj X^{(n)}| > v_n \}}, \quad j= 1, \ldots,n.
\end{align*}
It is easy  to see that
$
\hat W_n^2 := \Eb_\xi (\hat \Hb^{(n)}_\rho (\bar \kseqi, \bar \gseqi, \dfi_0))^2 = \frac 1{n\Delta_n} \sum \limits_{j=1}^n (\hat B_j^n)^2.
$
Thus, Theorem 2.1 in \cite{CheSha01} yields
\begin{multline}
\label{FBerEssBound}
\sup \limits_{x \in \R} \Big| \Prob_\xi\Big( \hat \Hb^{(n)}_\rho(\bar \kseqi, \bar \gseqi,\dfi_0) > x \Big) - (1- \Phi(x/\hat W_n)) \Big|  \\ \leq K \Big\{ \sum \limits_{i=1}^n \Eb_\xi \hat U_{i,n}^2 \ind_{\{|\hat U_{i,n}| >1\}} + \sum \limits_{i=1}^n \Eb_\xi |\hat U_{i,n}|^3 \ind_{\{|\hat U_{i,n}| \leq 1\}} \Big\},
\end{multline}
if $\hat W_n >0$ with
$
\hat U_{i,n} = \frac{\hat B_i^n \xi_i}{\sqrt{n \Delta_n} \hat W_n}
$
and where $\Phi$ denotes the standard normal distribution function. Before we proceed further in the proof of \eqref{GenFzzUngl}, we first show
\begin{align}
\label{FOpeinszzEq}
\frac 1{\hat W_n^2} = \frac{n \Delta_n}{\sum \limits_{j=1}^n (\hat B_j^n)^2} = O_\Prob (1),
\end{align}
that is
$
\lim \limits_{M \rightarrow \infty} \limsup \limits_{n \rightarrow \infty} \Prob \big( n \Delta_n > M \sum \limits_{j=1}^n (\hat B_j^n)^2 \big) =0.
$
Let $M>0$. Then a straightforward calculation gives
\begin{align*}
\Prob \Big( n \Delta_n > M \sum \limits_{j=1}^n (\hat B_j^n)^2 \Big) &\leq 
\Prob \Big( n \Delta_n > M' \sum \limits_{j=1}^{\ip{n \bar \kseqi}} (\hat A^n_j)^2 \Big)  
= \Prob \Big( 1/M' > N_{\rho^2}^{(n)}( \bar \kseqi, \dfi_0)  \Big),
\end{align*}
with $M' = M(1- \bar \kseqi /\bar \gseqi)^2$. 
Consequently, with \eqref{ChoiFbakseEq} we obtain \eqref{FOpeinszzEq} due to
\[ 
N_{\rho^2}^{(n)}( \bar \kseqi, \dfi_0) = N_{\rho^2}(g^{(n)};\bar \kseqi,\dfi_0) + o_\Prob(1) = N_{\rho^2}(g_0;\bar \kseqi,\dfi_0) + o_\Prob(1),
\]
because Theorem \ref{ConvThm} also holds for $\rho^2$ since Assumption \ref{Cond1} is also valid for $2p$ instead of $p$ (cf. \eqref{1mipgezEq} in the proof of Proposition \ref{easier}).
Recall that our main objective is to show \eqref{GenFzzUngl} and thus we consider the Berry-Esseen bound on the right-hand side of \eqref{FBerEssBound}. For the first summand we distinguish two cases according to the assumptions on the multiplier sequence.

Let us discuss the case of bounded multipliers first. For $M>0$ we have
$
|\hat U_{i,n}| \leq \frac {\sqrt{M} K}{\sqrt{n\Delta_n}}
$
for all $i= 1, \ldots,n$ on the set $\{1/\hat W_n^2 \leq M\}$, since $|\hat B_i^n|$ is bounded. As a consequence,
\begin{align}
\label{FvanishEq}
\sum \limits_{i=1}^n \Eb_\xi \hat U_{i,n}^2 \ind_{\{|\hat U_{i,n}| >1\}} =0
\end{align}
for large $n \in \N$ on the set $\{1/\hat W_n^2 \leq M\}$. 

In the situation of normal multipliers, recall that there exist constants $K_1, K_2 >0$ such that for $\xi \sim \mathcal N(0,1)$ and $y >0$ large enough we have
\begin{align}
\label{FGaussmomAb}
\Eb_\xi \xi^2 \ind_{\{|\xi| >y\}} = \frac{2}{\sqrt{2 \pi}} \int_y^\infty z^2 e^{-z^2/2} dz \le K \Prob(\mathcal N(0,2) >y) \leq K_1 \exp(-K_2 y^2).
\end{align}
Thus, we can calculate for $n \in \N$ large enough on the set $\{1/\hat W_n^2 \leq M\}$
\begin{align*}
\sum \limits_{i=1}^n \Eb_\xi \hat U_{i,n}^2 \ind_{\{|\hat U_{i,n}| >1\}} &= \sum \limits_{i=1}^n \Big( \sum \limits_{j=1}^n (\hat B_j^n)^2 \Big)^{-1} (\hat B_i^n)^2 \Eb_\xi \xi_i^2 \ind_{\{|\xi_i| > (\sum \limits_{j=1}^n (\hat B_j^n)^2)^{1/2}/|\hat B_i^n| \}} \\
&\leq K \sum \limits_{i=1}^n \Big( \sum \limits_{j=1}^n (\hat B_j^n)^2 \Big)^{-1} \Eb_\xi \xi_i^2 \ind_{\{|\xi_i| > (\sum \limits_{j=1}^n (\hat B_j^n)^2)^{1/2} /K\}} \\
&\leq \frac{KM}{n \Delta_n} \sum \limits_{i=1}^n \Eb_\xi \xi_i^2 \ind_{\{|\xi_i| > (n\Delta_n /M)^{1/2}/K\}} \leq \frac{K_1}{\Delta_n} \exp(-K_2 n \Delta_n),
\end{align*}
where $K_1$ and $K_2$ depend on $M$. The first inequality in the display above uses boundedness of $|\hat B_i^n|$ again and the last one follows with \eqref{FGaussmomAb}. Now, according to Assumption \ref{EasierCond}\eqref{speed} let $0 < t_2 \leq 1$ and $\delta >0$ with $n^{-t_2 + \delta} = o(\Delta_n)$. Furthermore, define $\bar \delta >0$ via $1+\bar \delta = 1/(t_2 - \delta)$ and $\bar q := 1/\bar \delta$. Then we have $n \Delta_n^{1+\bar \delta} \rightarrow \infty$ and for $n \geq N(M) \in \N$ on the set $\{1/\hat W_n^2 \leq M\}$, using $\exp(-K_2 n \Delta_n) \leq (n \Delta_n)^{-\bar q}$, we conclude
\begin{align}
\label{FirstFTerkln}
\sum \limits_{i=1}^n \Eb_\xi \hat U_{i,n}^2 \ind_{\{|\hat U_{i,n}| >1\}} \leq K_1 \Delta_n^{-1} (n \Delta_n)^{-\bar q} = K_1 \big( n \Delta_n^{1+ \bar \delta} \big)^{-\bar q}.
\end{align}
We now consider the second term on the right-hand side of \eqref{FBerEssBound}, for which 
\begin{align*}
\sum \limits_{i=1}^n \Eb_\xi |\hat U_{i,n}|^3 \ind_{\{|\hat U_{i,n}| \leq 1\}} &\leq \sum \limits_{i=1}^n \Big(\sum \limits_{j=1}^n (\hat B_j^n)^2\Big)^{-3/2} |\hat B_i^n|^3 \Eb_\xi |\xi_i|^3 \leq \frac K{(n \Delta_n)^{3/2}} \sum \limits_{i=1}^n |\hat B_i^n|
\end{align*}
holds on $\{1/\hat W_n^2 \leq M\}$, using boundedness of $|\hat B_i^n|$ again. With Lemma \ref{MomklAbLem} we see that
\begin{align*}
\Eb\Big( \sum \limits_{i=1}^n |\hat B_i^n| \ind_{Q_n} \Big) \leq 2 \Eb \Big(\sum \limits_{i=1}^n |\hat A_i^n| \ind_{Q_n} \Big) \leq K n \Delta_n.
\end{align*}
Consequently,
\begin{align}
\label{SecFProbNuf}
\Prob\Big( \Big\{ 1/\hat W_n^2 \leq M  \text{ and } &K \sum \limits_{i=1}^n \Eb_\xi |\hat U_{i,n}|^3 \ind_{\{|\hat U_{i,n}| \leq 1\}} > (n\Delta_n)^{-1/4} \Big\} \cap Q_n \Big) \nonumber \\
&\leq \Prob\Big( \Big\{ \frac K{(n \Delta_n)^{3/2}} \sum \limits_{i=1}^n |\hat B_i^n|> (n\Delta_n)^{-1/4} \Big \} \cap Q_n \Big) \leq K (n\Delta_n)^{-1/4}
\end{align}
follows. Thus, from \eqref{FvanishEq}, \eqref{FirstFTerkln} and \eqref{SecFProbNuf} we see that with $K>0$ from \eqref{FBerEssBound} for each $M>0$ there exists a $K_3>0$ such that
\begin{multline}
\label{FBoundnuf}
\Prob\Big( 1/\hat W_n^2 \leq M \text{ and } K \Big\{ \sum \limits_{i=1}^n \Eb_\xi \hat U_{i,n}^2 \ind_{\{|\hat U_{i,n}| >1\}} + \sum \limits_{i=1}^n \Eb_\xi |\hat U_{i,n}|^3 \ind_{\{|\hat U_{i,n}| \leq 1\}} \Big\} \\
> K_3((n\Delta_n)^{-1/4} + (n \Delta_n^{1+ \bar \delta})^{-\bar q}) \Big) \rightarrow 0.
\end{multline}

Now we can show \eqref{GenFzzUngl}. Let $\eta >0$ and according to \eqref{FOpeinszzEq} choose an $M >0$ with
$\Prob(1/\hat W_n^2 >M) < \eta/2$
for all $n \in \N$. For this $M>0$ choose a $K_3 >0$ such that the probability in \eqref{FBoundnuf} is smaller than $\eta/2$ for large $n$. Then for $n \in \N$ large enough we have
\begin{multline*}
\Prob \Big( \Prob_\xi\Big( \hat \Hb^{(n)}_\rho(\bar \kseqi, \bar \gseqi, \dfi_0)) > x^{1/r} \Big)  \leq 2 \alpha_n \Big) < \\
\Prob\Big( (1- \Phi(x^{1/r}/\hat W_n)) \leq 2 \alpha_n + K_3((n\Delta_n)^{-1/4} + (n \Delta_n^{1+ \bar \delta})^{-\bar q}) \text{ and } 1/\hat W_n^2 \leq M \Big) + \eta = \eta,
\end{multline*}
using \eqref{FBerEssBound} and the fact, that if $1/\hat W_n^2 \leq M$ there exists a $c'>0$ with $ (1- \Phi(x^{1/r}/\hat W_n)) >c'$. 

Thus, we have shown $\haFcothrleor \probto \infty$ and it remains to prove  \eqref{ovestdfabEq}. Let 
$
K = (  (1+\eps)/c )^{1/\smooi} > (  1/c )^{1/\smooi}
$
for some $\eps >0$. Then 
\begin{align*}  
\Prob \Big(\hat \gseqi_\rho^{(n)}(\haFcothrleor) &> \gseqi_0 + K \varphi^*_n \Big) \leq \Prob \Big( \sqrt{n \Delta_n} \Db_{\rho,*}^{(n)}(\gseqi) \leq \haFcothrleor \text{ for some } \gseqi > \gseqi_0 + K \varphi^*_n \Big) \\
&\leq \Prob\Big( \sqrt{n \Delta_n} \Dc_\rho(\gseqi) - \Hb_{\rho,*}^{(n)}(1) \leq \haFcothrleor \text{ for some } \gseqi > \gseqi_0 + K \varphi^*_n \Big).
\end{align*}
By \eqref{additass} there exists a $y_0 >0$ with
\[
\inf \limits_{\gseqi \in [\gseqi_0 + K y_1,1]} \Dc_\rho(\gseqi) = \Dc_\rho(\gseqi_0 + K y_1) \geq (c/(1+\eps/2)) (K y_1)^\smooi
\]
for all $0 \leq y_1 \leq y_0$. Distinguishing the cases $\{\varphi_n^* > y_0\}$ and $\{\varphi_n^* \le y_0\}$ we get due to $\varphi_n^* \probto 0$
\begin{align*}
\Prob\Big(\hat \gseqi_\rho^{(n)}(\haFcothrleor) &> \gseqi_0 + K \varphi^*_n \Big) \\
&\leq \Prob\Big( \sqrt{n\Delta_n}  (c/(1+\eps/2)) (K \eps_n^*)^\smooi - \Hb_{\rho,*}^{(n)}(1) \leq \haFcothrleor \Big) + o(1) \\
&\leq P^{(1)}_n + P^{(2)}_n + o(1)
\end{align*}
with 
\begin{align*}
P^{(1)}_n &= \Prob\Big( \sqrt{n\Delta_n}  (c/(1+\eps/2)) (K \varphi^*_n)^\smooi - \Hb_{\rho,*}^{(n)}(1) \leq \haFcothrleor \text{ and } \Hb_{\rho,*}^{(n)}(1) \leq b_n \Big), \\
P^{(2)}_n &= \Prob\Big(\Hb_{\rho,*}^{(n)}(1) > b_n \Big),
\end{align*}
where $b_n := \sqrt{\haFcothrleor}$. Due to the choice $K = \Big(  (1+\eps)/c \Big)^{1/\smooi}$ and the definition of $\varphi^*_n$ it is clear that $P^{\scriptscriptstyle (1)}_n = o(1)$, because $\haFcothrleor \probto \infty$. 

Concerning $P^{\scriptscriptstyle (2)}_n$ let $F_n$ be the distribution function of $\Hb_{\rho,*}^{\scriptscriptstyle (n)}(1)$ and let $F$ be the distribution function of $\Hb_{\rho,*}(1)$. Then according to  Corollary 1.3 and Remark 4.1 in \cite{GaeMolRos07} the function $F$ is continuous, because $N_{\rho^2}(\gseqi_0,\dfi_0) >0$ for some $\dfi_0 \in \R$. As a consequence, by Theorem \ref{SchwKentmotv} and the continuous mapping theorem $F_n$ converges pointwise to $F$. Thus, for $\eta >0$ choose an $x>0$ with $1-F(x) < \eta/2$ and conclude
\begin{align*}
P^{(2)}_n \leq \Prob(b_n \leq x) + 1- F_n(x) \leq \Prob(b_n \leq x) + 1- F(x) + | F_n(x) - F(x) | < \eta,
\end{align*}
for $n \in \N$ large enough, because of $\haFcothrleor \probto \infty$.
\qed

\medskip

\noindent 
\textbf{Proof of Proposition \ref{localtgrapro}, Corollary \ref{H0graprop} and Proposition \ref{CorConGradf}.}
The assertions can be obtained by a similar reasoning as in the proofs of Proposition \ref{ConsuH1loc}, Corollary \ref{prop:asledf} and Proposition \ref{prop:conuH1} and we omit the details.
\qed

\medskip

\noindent 
\textbf{Proof of the results in Example \ref{Ex:SitgraCh}, Example \ref{exdf2} and Example \ref{exdf3}(\ref{exdf3Nr2}).}

\vspace{-2mm}

\begin{enumerate}[(1)]
	\item First we show that a transition kernel of the form \eqref{gformgrach} belongs to $\Gc(\hat \beta,\hat p)$ and the function $\rho_{L,\hat p}$ satisfies Assumption \ref{EasierCond}\eqref{EasrhoCond} and \eqref{rhoneq0} for $p = \hat p$. Let $\hat A$ denote a bound for $A: [0,1] \to (0, \infty)$, then for $\taili \in (-1,1) \setminus \{0\}$ we obtain
	\[
	\sup_{y \in [0,1]} A(y) h_{\beta(y),p(y)}(\taili) \leq \hat A \sup_{y \in [0,1]} |\taili|^{-(1 + \beta(y))} \leq \hat A |\taili|^{-(1 + \hat \beta)},
	\]
	so Definition \ref{rhoandgass2}\eqref{BGn0Ass} is satisfied. Furthermore, for $n \in \N$ we have
	\[
	\sup_{\taili \in C_n} \sup_{y \in [0,1]}  A(y) h_{\beta(y),p(y)}(\taili) \leq \hat A \sup_{y \in [0,1]} n^{1 + \beta(y)} \leq \hat A n^{1 + \hat \beta},
	\]
	which yields Definition \ref{rhoandgass2}\eqref{DiCondmiddle}. Definition \ref{rhoandgass2}\eqref{DiCondinfty} also holds, because for $|\taili| >2$ we obtain
	\[
	\sup_{y \in [0,1]} A(y) h_{\beta(y),p(y)}(\taili) \leq \hat A \sup_{y \in [0,1]} |\taili|^{-p(y)} \leq \hat A |\taili|^{-(2 \hat p \vee 2) - \eps},
	\]
	since $\hat p > 1$. Obviously, $\rho_{L,\hat p}: \R \to \R$ is a bounded function with $\rho_{L,\hat p}(0)=0$ and with the continuous derivative
	\[
	\rho_{L,\hat p}'(\taili) = L\text{ sign}(\taili) \times \begin{cases}
	2 \hat p   |\taili|^{\hat p -1}, \quad &\text{ for } 0 \leq |\taili| \leq 1, \\
	2 \hat p  (2 - |\taili|), \quad &\text{ for } 1 \leq |\taili| \leq 2, \\
	0, \quad &\text{ for } |\taili| > 2.
	\end{cases}
	\]
	Consequently, there exists a $K>0$ such that $|\rho_{L,\hat p}'(\taili)| \leq K |\taili|^{\hat p -1}$ holds for each $\taili \in \R$ and Assumption \ref{EasierCond}\eqref{EasrhoCond} is satisfied. Moreover, Assumption \ref{EasierCond}\eqref{rhoneq0} is valid as well, since $\rho_{L,\hat p}(1) >0$ and $\rho_{L,\hat p}'(\taili) \ge 0$ on $[1,2]$. 
	\item Now we show that if additionally \eqref{vorcpkonst} and \eqref{atcpanalytic} are satisfied, $k_0 < \infty$ holds and $N_k(\dfi)$ is a bounded function on $\R$ for each $k \in \N_0$ as stated in Example \ref{exdf3}\eqref{exdf3Nr2}. To this end, elementary calculations show that the function $\bar N$ is given by $\bar N(y,\dfi) = \Upsilon_{L,\hat p}(\bar A(y),\bar \beta(y), \bar p(y), \dfi)$ with
	\begin{align}
	\label{upsilondef}
	\Upsilon_{L, \hat p}(a,\beta,p,\dfi) = La \times \begin{cases}
	\frac{2 + \hat p}{p - 1} |\dfi|^{1-p}, \quad &\text{ for } \dfi \leq -2 \\
	\frac{2 + \hat p}{p - 1} 2^{1-p} +4 - \frac{2\hat p}3 - 2\hat p \dfi^2 - \frac{\hat p}3 \dfi^3 + (2-3\hat p)\dfi, \quad &\text{ for } - 2 \leq \dfi \leq -1 \\
	\frac{2 + \hat p}{p - 1} 2^{1-p} + 2 + \frac{2 \hat p}3 + \frac 2{\hat p - \beta}(1 + \text{ sign}(\dfi) |\dfi|^{\hat p - \beta} ), \quad &\text{ for } - 1 \leq \dfi \leq 1 \\
	\frac{2 + \hat p}{p - 1} 2^{1-p} + 2 \hat p + \frac 4{\hat p - \beta} + 2 \hat p \dfi^2 - \frac{\hat p}3 \dfi^3 + 2 \dfi - 3 \hat p \dfi, \quad &\text{ for }  1 \leq \dfi \leq 2 \\
	\frac{4 + 2\hat p}{p - 1} 2^{1-p} + \frac{4}{\hat p - \beta} + \frac{4 \hat p}3 +4+ \frac{2+\hat p}{1-p} t^{1-p}, \quad &\text{ for }  \dfi \geq 2.
	\end{cases}
	\end{align}
	Furthermore, it is well known from complex analysis that there is a domain $U \subset U^* \subset \Cb$ with holomorphic functions $A^*: U^* \to \Cb$, $\beta^*:U^* \to \Cb^{\hat p -} := \{u \in \Cb \mid \Rep(u) < \hat p \}$ and $p^*: U^* \to \Cb^{1+} := \{ u \in \Cb \mid \Rep(u) >1 \}$ such that $\bar A$, $\bar \beta$ and $\bar p$ are the restrictions of $A^*$, $\beta^*$ and $p^*$ to $U$. Moreover, it can be seen from \eqref{upsilondef} that for fixed $\dfi \in \R$ the mapping $(a,\beta,p) \mapsto \Upsilon_{L,\hat p}(a,\beta,p,\dfi)$ is partially holomorphic on $\Cb \times \Cb^{\hat p -} \times \Cb^{1+}$, that is it is holomorphic in each of the variables $a$, $\beta$ and $p$ when the remaining variables are fixed. By a deep result of complex analysis in several variables which dates back to \cite{Har1906} this implies that $(a,\beta,p) \mapsto \Upsilon_{L,\hat p}(a,\beta,p,\dfi)$ is holomorphic on $\Cb \times \Cb^{\hat p -} \times \Cb^{1+}$ for fixed $\dfi \in \R$ (see also Remark 1.2.28 in \cite{Sch05}). Additionally, by Proposition 1.2.2(5) in \cite{Sch05} the function $\Xi: U^* \to \Cb \times \Cb^{\hat p -} \times \Cb^{1+}$ with $\Xi(y) := (A^*(y),\beta^*(y),p^*(y))$ is holomorphic and thus for each fixed $\dfi \in \R$ the mapping $y \mapsto \bar N(y,\dfi)$ is real analytic, because it is the restriction of the holomorphic function $y \mapsto \Upsilon_{L, \hat p}(\Xi(y),\dfi)$ to $U$. Consequently, by shrinking the set $U$ if necessary, we have the power series expansion
	\begin{equation}
	\label{barNExpan}
	\bar N(y,\dfi) = \sum_{k = 0}^\infty \frac{N_k(\dfi)}{k!} (y - \gseqi_0)^k,
	\end{equation}
	for every $y \in U$ and $\dfi \in \R$. If $k_0 = \infty$, then for any $k \in \N$ and $\dfi \in \R$ we have $N_k(\dfi) =0$. Thus, we obtain for some constant $K>0$ 
	\begin{equation}
	\label{constinyifkin}
	\Psi_{L,\hat p}(y) + K \frac{\bar A(y)}{1-\bar p(y)} \dfi^{1- \bar p(y)} = N_0(\dfi)
	\end{equation}
	for each $\dfi \geq 2$ and $y \in U$, where
	\begin{equation}
	\label{PsiLhatpdef}
	\Psi_{L,\hat p}(y) = L \bar A(y) \Big(\frac{4 + 2\hat p}{\bar p(y) - 1} 2^{1-\bar p(y)} + \frac{4}{\hat p - \bar \beta(y)} + \frac{4 \hat p}3 +4\Big).
	\end{equation}
	Taking the derivative with respect to $y \in U$ on both sides of \eqref{constinyifkin} yields
	\begin{equation}
	\label{DerivaCons}
	\Psi_{L,\hat p}'(y) +  K\frac{\bar A'(y) (1- \bar p(y)) + \bar A(y) \bar p'(y)}{(1-\bar p(y))^2} \dfi^{1-\bar p(y)} -  \bar p'(y) \frac{K \bar A(y)}{1- \bar p(y)} \log(\dfi) \dfi^{1-\bar p (y)} =0,
	\end{equation}
	for each $y \in U$ and $\dfi \geq 2$. Hence, $\bar p'(y)$ is equal to zero for each $y \in U$, because otherwise the display above is not valid for each $\dfi \geq 2$. This fact together with \eqref{DerivaCons} gives
	\[
	\Psi_{L,\hat p}'(y) + K \frac{\bar A'(y)}{1-\bar p(y)} \dfi^{1-\bar p (y)} =0,
	\]
	for all $y \in U$ and $\dfi \geq 2$. Consequently, $\bar A'(y) = 0$ holds for every $y \in U$ and with \eqref{PsiLhatpdef} we obtain
	\[
	\Psi_{L,\hat p}'(y) = 4L\bar A(\gseqi_0) \bar \beta'(y) (\hat p - \bar \beta(y))^{-2} = 0, \quad (y \in U)
	\]
	which implies $\bar \beta'(y) = 0$ for all $y \in U$. Thus, $k_0 = \infty$ contradicts the assumption that at least one of the functions $\bar A$, $\bar \beta$ and $\bar p$ is non-constant. \\
	The following consideration will be helpful in order to show that $N_k(\dfi)$ is bounded in $\dfi \in \R$ for each $k \in \N_0$. Let $f_1,f_2: U \times \R \to \R$ be functions, which are arbitrarily often differentiable with respect to $y \in U$ for fixed $\dfi \in \R$ such that for each $\ell \in \N_0$ the $\ell$-th derivatives with respect to $y$ satisfy
	\[
	\sup_{\dfi \in \R} \{|f_1^{(\ell)}(\gseqi_0,\dfi)| \vee |f_2^{(\ell)}(\gseqi_0,\dfi)| \} \leq K(K\ell)^{\ell}
	\]
	for some constant $K>0$ which does not depend on $\ell$. (Here we set $0^0 := 1$.) Then by the product formula for higher derivatives we obtain for the $\ell$-th derivative with respect to $y$ of the product of $f_1$ and $f_2$
	$$
	\sup_{\dfi \in \R} |(f_1 f_2)^{(\ell)}(\gseqi_0,\dfi)| = \sup_{\dfi \in \R} \Big | \sum_{j=0}^\ell \binom{\ell}{j} f_1^{(j)}(\gseqi_0,\dfi) f_2^{(\ell - j)}(\gseqi_0,\dfi) \Big|
	 \leq K^2 (K \ell)^\ell \sum_{j=0}^\ell \binom{\ell}{j} \leq K(K \ell)^\ell. 
$$
 Observing \eqref{upsilondef}  now yields a  constant $K>0$ such that 
	\begin{align}
	\label{Nkbound}
	\sup_{\dfi \in \R} |N_\ell(\dfi)| \leq K(K\ell)^\ell
	\end{align}
	for each $\ell \in \N_0$ as soon as we can show that there exists a $K>0$ such that for every $\ell \in \N_0$ the following bounds for the derivatives hold
	\begin{align}
	|\bar A^{(\ell)}(\gseqi_0) | &\leq K(K\ell)^\ell, \label{ADerBound} \\
	\Big| \Big( \frac 1{\bar p(y) -1} \Big)^{(\ell)} (\gseqi_0) \Big| &\leq K(K\ell)^\ell, \label{analyfct1bou} \\
	\Big| \Big( \frac 1{\hat p - \bar \beta(y)} \Big)^{(\ell)} (\gseqi_0) \Big| &\leq K(K\ell)^\ell, \label{analyfct2bou} \\
	\sup_{t \geq 2} \Big| \Big( t^{1-\bar p(y)} \Big)^{(\ell)} (\gseqi_0) \Big|  &\leq K(K\ell)^\ell, \label{erstsuptab}\\
	\sup_{t \in [0,1]} \Big| \Big( t^{\hat p-\bar \beta(y)} \Big)^{(\ell)} (\gseqi_0) \Big| &\leq K(K\ell)^\ell. \label{zweisuptab}
	\end{align}
	Let $\bar A(y) = \sum_{\ell=0}^\infty A_\ell (y - \gseqi_0)^\ell$ be the power series expansion of the real analytic function $\bar A$ around $\gseqi_0$. By the definition of real analytic functions this power series has a positive radius of convergence and due to the Cauchy-Hadamard formula this is equivalent to the existence of a constant $K>0$ with $|A_\ell| \leq K^{\ell+1}$ for each $\ell \in \N_0$. Thus, because of $\bar A^{(\ell)}(\gseqi_0) = \ell! A_\ell$ for each $\ell \in \N_0$, \eqref{ADerBound} follows. By assumption in Example \ref{exdf2} we have $\bar \beta(y) \leq \hat \beta \le 1 \vee \hat \beta < \hat p < \bar p(y)$ for each $y \in U$. As a consequence, the functions 
$
	y \mapsto \frac 1{\bar p(y) -1}$ and $ y \mapsto \frac 1{\hat p - \bar \beta(y)}
$
	are real analytic on $U$ as compositions of real analytic functions. So the same reasoning as above yields \eqref{analyfct1bou} and \eqref{analyfct2bou}. Let the affine linear functions $\bar \beta$ and $\bar p$ be given by $\bar \beta(y) = \beta_0 + \beta_1(y-\gseqi_0)$ and $\bar p(y) = p_0 + p_1(y-\gseqi_0)$. Then for $\ell \in \N_0$, $\dfi >0$ we have 
	\begin{equation*}
	\Big( t^{1-\bar p(y)} \Big)^{(\ell)}(\gseqi_0) = t^{1-p_0}(-p_1 \log(\dfi))^\ell.
	\end{equation*}
	and for $\ell \in \N_0$ let $h_\ell^{\scriptscriptstyle (1)}:(0,\infty) \to \R$  be defined by $h_\ell^{\scriptscriptstyle (1)}(t) = t^{1-p_0} (\log(\dfi))^\ell$. $h_0^{\scriptscriptstyle (1)}$ is clearly bounded in $\dfi \geq 2$ due to $p_0 >1$ and for $\ell \in \N$ the only possible roots of the derivative of $h_\ell^{\scriptscriptstyle (1)}$ in $t \in (0,\infty)$ are $t=1$ and $t = \exp\{\ell/(p_0 -1)\}$. Thus, we obtain for the supremum in \eqref{erstsuptab}
	\[
	\sup_{t \geq 2} \Big| \Big( t^{1-p(y)} \Big)^{(\ell)} (\gseqi_0) \Big| \leq  |p_1|^\ell \max\Big\{2^{1-p_0} \log(2)^\ell, \Big(\frac{\ell}{p_0-1} \Big)^\ell e^{-\ell} \Big\} \leq K(K\ell)^\ell
	\]
	for each $\ell \in \N_0$, because $\lim_{t \to \infty} h_\ell^{\scriptscriptstyle (1)}(t) =0$. Similarly, we have for $\ell \in \N_0$, $\dfi >0$
	\[
	\Big( t^{\hat p- \bar \beta(y)} \Big)^{(\ell)}(\gseqi_0) = t^{\hat p - \beta_0}(-\beta_1 \log(\dfi))^\ell
	\]
	and for $\ell \in \N_0$ let $h_\ell^{\scriptscriptstyle (2)}:(0,1] \to \R$ be defined by $h_\ell^{\scriptscriptstyle (2)}(t) = t^{\hat p - \beta_0} (\log(\dfi))^\ell$. For $\ell \in \N$ the only possible roots in $(0,1]$ of the derivative of $h_\ell^{\scriptscriptstyle (2)}$ are $t=1$ and $t=\exp\{-\ell/(\hat p - \beta_0)\}$. As a consequence, we obtain for each $\ell \in \N_0$ for the supremum in \eqref{zweisuptab}
	\[
	\sup_{t \in [0,1]} \Big| \Big( t^{\hat p-\bar \beta(y)} \Big)^{(\ell)} (\gseqi_0) \Big| \leq |\beta_1|^\ell\Big( \frac{\ell}{\hat p -\beta_0} \Big)^\ell e^{-\ell} \leq K(K\ell)^\ell,
	\]
	because $\lim_{t \to 0} h_\ell^{\scriptscriptstyle (2)}(t) =0$. Notice that for $t=0$ the function $y \mapsto t^{\hat p - \bar \beta(y)}$ is zero constant and for $\ell =0$ the function $t \mapsto t^{\hat p - \beta_0}$ is bounded by $1$ on $[0,1]$ due to $\hat p > \beta_0$.
	\item The expansion \eqref{DcrhoLpexpan} can be deduced along the same lines as in step (3) of the proof of the results in Example 2.3 and Example 4.6(2) in \cite{HofVetDet17} by using \eqref{barNExpan} and \eqref{Nkbound} instead of their equations (6.58) and (6.61). Furthermore, due to expansion \eqref{DcrhoLpexpan} the quantity defined in \eqref{entchanpoi} is clearly given by $\gseqi_0$. \qed
\end{enumerate}

\vspace{.5cm}

{\bf Acknowledgements}
This work has been supported by the
Collaborative Research Center ``Statistical modeling of nonlinear
dynamic processes" (SFB 823, Project A1) and the Research Training Group ``High-dimensional phenomena in probability -- fluctuations and discontinuity" (RTG 2131) of the German Research Foundation (DFG).

\bibliographystyle{apalike} 
{\setlength{\bibsep}{0.1pt}
\begin{small}
\addcontentsline{toc}{chapter}{Bibliography}
\bibliography{biblio}
\end{small}
}

\newpage
\appendix{
\noindent
\LARGE \bf Supplement: Proofs and technical details}

\section{Proof of Theorem \ref{ConvThm}}
\label{ssecProTh61}
\def\theequation{A.\arabic{equation}}
\setcounter{equation}{0}

\subsection{Main steps in the proof} 
In order to prove Theorem \ref{ConvThm} we divide the process $G_\rho^{\scriptscriptstyle (n)}$ into two parts which correspond to large and small jumps of the underlying process $X^{\scriptscriptstyle (n)}$, respectively. To this end we choose an auxiliary function $\Psi \colon \R_+ \rightarrow \R$ which is $\mathcal C^{\infty}$ and satisfies $\ind_{[1, \infty)}(\taili) \leq \Psi(\taili) \leq \ind_{[1/2, \infty)}(\taili)$ for all $\taili \in \R_+$. For $\alpha >0$ define $\Psi_{\alpha} \colon \R \rightarrow \R$ via $\Psi_{\alpha}(\taili) = \Psi(|\taili|/\alpha)$ and let $\Psi_\alpha^\circ \colon \R \rightarrow \R$ be the function $\Psi^\circ_\alpha(\taili) = 1 - \Psi_{\alpha}(\taili)$. 

For the function $\rho$ we define $\rho_{\alpha}(\taili) = \rho(\taili) \Psi_{\alpha}(\taili)$ and $\rho_{\alpha}^{\circ}(\taili) = \rho(\taili) \Psi^{\circ}_{\alpha}(\taili)$. Furthermore, let 
\begin{align}
\label{ImpQuantDef}
\chi_\dfi^{(\alpha)}(\taili) = \rho(\taili) \Psi_{\alpha}(\taili) \Indit(\taili) \quad \text{ and } \quad \chi_\dfi^{\circ (\alpha)} (\taili) = 
\rho(\taili) \Psi^{\circ}_{\alpha}(\taili) \Indit(\taili),
\end{align}
for $\dfi,\taili \in \R$ and define the following empirical processes:
\begin{align*}
G_{\rho,n}^{(\alpha)}(\gseqi,\dfi) &= \sqrt{n \Delta_n} \Big\{ \frac{1}{n \Delta_n} \sum \limits_{i=1}^{\ip{n\gseqi}} \chi_\dfi^{(\alpha)}(\Deli X^{(n)}) \Truniv 
- N_{\rho_{\alpha}}(g^{(n)};\gseqi,\dfi) \Big\}, \\ 
G_{\rho,n}^{\circ (\alpha)}(\gseqi,\dfi) &= \sqrt{n \Delta_n} \Big\{ \frac{1}{n \Delta_n} \sum \limits_{i=1}^{\ip{n\gseqi}} \chi_\dfi^{\circ (\alpha)}(\Deli X^{(n)}) \Truniv - N_{\rho^{\circ}_{\alpha}}(g^{(n)};\gseqi,\dfi) \Big\}. \nonumber
\end{align*}
Then, of course, we have $G_{\rho}^{(n)}(\gseqi,\dfi) = G_{\rho,n}^{(\alpha)}(\gseqi,\dfi) + G_{\rho,n}^{\circ (\alpha)}(\gseqi,\dfi)$.
We provide several auxiliary results about the asymptotic properties of the processes 
$G_{\rho,n}^{(\alpha)}$ and $  G_{\rho,n}^{\circ (\alpha)}$
     which will be proved in Section  \ref{tedet1}.
      The first one is concerned with the behaviour of the large jumps, i.e.\ it holds for $G_{\rho,n}^{(\alpha)}$ and a fixed $\alpha > 0$.

\begin{lemma} \label{step1}
	If Assumption \ref{EasierCond} is satisfied, we have weak convergence
	\[G_{\rho,n}^{(\alpha)} \weak \Gb_{\rho_{\alpha}}\]
	in $\linner$ for each fixed $\alpha >0$, where $\Gb_{\rho_{\alpha}}$ denotes a tight centered Gaussian process with covariance function 
	\[H_{\rho_{\alpha}}((\gseqi_1,\dfi_1);(\gseqi_2,\dfi_2)) = \int_0^{\mingi} \int_{- \infty}^{\mindfi} \rho_{\alpha}^2(\taili) g_0(y,d\taili)dy.\]
	The sample paths of $\Gb_{\rho_{\alpha}}$ are almost surely uniformly continuous with respect to the semimetric
	\[d_{\rho_{\alpha}}((\gseqi_1,\dfi_1);(\gseqi_2,\dfi_2)) = \Big\{ \int_0^{\gseqi_1} \int_{\mindfi}^{\maxdfi}\rho_{\alpha}^2(\taili) g_0(y,d\taili) dy + \int_{\gseqi_1}^{\gseqi_2} \int_{-\infty}^{\dfi_2} \rho_\alpha^2(\taili) g_0(y,d\taili) dy \Big\}^{1/2}\]
	for $\gseqi_1 \leq \gseqi_2$.
\end{lemma}

The general idea behind the proof of Lemma \ref{step1} is to replace the increments of the underlying process $X^{\scriptscriptstyle (n)}$ by increments of pure jump It\=o semimartingales.  Precisely, let $\mu^{\scriptscriptstyle (n)}$ be the Poisson random measure associated with the jumps of $X^{\scriptscriptstyle (n)}$. Then we consider
\begin{align}
\label{LnDefEq}
L^{(n)} = \big(\taili \Trunx\big) \star \mu^{(n)}
\end{align}
with the truncation $v_n = \gamma \Delta_n^{\ovw}$ as above. The main advantage of the processes $L^{\scriptscriptstyle (n)}$ is that they have deterministic characteristics. Therefore, their increments are independent (see Theorem II.4.15 in \cite{JacShi02}) and we can use a central limit theorem for triangular arrays of independent stochastic processes from \cite{Kos08} to prove weak convergence of
\begin{multline}
\label{EmpPrYDefEqn}
Y_f^{(n)}(\gseqi,\dfi) =   \frac{1}{\sqrt{n \Delta_n} } \sum \limits_{i=1}^{\ip{n\gseqi}} \big \{
f(\Deli L^{(n)}) \Indit (\Deli L^{(n)})  
- \Eb (f(\Deli L^{(n)}) \Indit (\Deli L^{(n)}))\big \}
\end{multline}
to $\Gb_{\rho_{\alpha}}$, where  $(\gseqi,\dfi) \in \netir$ and where $f \colon \R \rightarrow \R$ is a bounded continuous function, for which we plug in $\rho_{\alpha}$ and $\rho_\alpha^\circ$ later. In order to prove Lemma \ref{step1} we need to ensure that the distance between $Y_{\rho_{\alpha}}^{\scriptscriptstyle (n)}$ and $G_{\rho,n}^{\scriptscriptstyle (\alpha)}$ is small. To this end, our next claim shows that the bias due to estimating $(n\Delta_n)^{-1} \sum_{i=1}^{\ip{n\gseqi}} \Eb (\rho_\alpha(\Deli L^{(n)}) \Indit ($ $\Deli L^{(n)}))$ instead of $N_{\rho_\alpha}(g^{\scriptscriptstyle (n)}; \gseqi,\dfi)$ is small compared to the rate of convergence. In order to state the result recall that for a real-valued non-negative function $f: \Xi \to \R_+$ on a measure space $(\Xi,\Bc,\vartheta)$ the essential supremum with respect to $\vartheta$ is given by
$
\vartheta-\text{ess sup}_{x \in \Xi}(f) = \inf_{B \in \Bc, \vartheta(B)=0} \sup_{x \in \Xi\setminus B} f(x).
$
Moreover, recall that $\lambda_1$ denotes the restriction of the one-dimensional Lebesgue measure to $[0,1]$.

\begin{proposition}
	\label{BiasAbschProp}
	Let $(\mu^{\scriptscriptstyle (n)})_{n\in\N}$ be a sequence of Poisson random measures with predictable compensators $\bar \mu^{\scriptscriptstyle (n)}(ds,d\taili) = \nu_s^{\scriptscriptstyle (n)}(d\taili)ds$ such that \eqref{RescAss2} is satisfied for each $n \in \N$ with a null sequence $\Delta_n >0$ and a sequence of transition kernels $g^{\scriptscriptstyle (n)}$ from $([0,1], $ $\Bb([0,1]))$ into $(\R, \Bb)$ with
	\[\lambda_1-\mathrm{ess~sup}_{y\in[0,1]} \Big(\int (1 \wedge |\taili|^{\beta})g^{(n)}(y,d\taili) \Big) \leq K\] 
	for each $n\in\N$ and some $\beta \in [0,2]$, $K>0$. Furthermore, let $f \colon \R \rightarrow \R$ be a bounded Borel measurable function satisfying $|f(\taili)| \leq K|\taili|^p$ on a neighbourhood of $0$ for some $K>0$,  $p\ge \beta$. Then if $v_n>0$ is a null sequence and $L^{\scriptscriptstyle (n)}$ is defined as in \eqref{LnDefEq} we have
	\begin{multline} \label{BiasAbschEqn}
	\sup_{i=1,\ldots,n}\sup \limits_{\dfi \in \overline \R} \left| \Eb \left\{ f\big(\Deli L^{(n)}\big) \Indit \big(\Deli L^{(n)}\big) \right\}- n\Delta_n \big( N_f(g^{(n)};i/n,\dfi) - N_f(g^{(n)};(i-1)/n,\dfi)\big) \right| \\= O(\Delta_n^2 v_n^{-2\beta} + \Delta_n v_n^{p-\beta}),
	\end{multline}
	with $\overline \R = \R \cup \lbrace - \infty, + \infty \rbrace$. 
\end{proposition}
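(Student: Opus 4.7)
I would start by observing that, up to a change of variables $s = n\Delta_n y$, the rescaling assumption \eqref{RescAss2} gives
\[
 n\Delta_n\bigl( N_f(g^{(n)};i/n,t) - N_f(g^{(n)};(i-1)/n,t) \bigr) = \int_{(i-1)\Delta_n}^{i\Delta_n} \int_{-\infty}^t f(z)\,\nu_s^{(n)}(dz)\,ds,
\]
so the target bound only involves the compensator of $\mu^{(n)}$ and the first moment of $f(\Delta_i^n L^{(n)})\ind_{(-\infty,t]}$. Next I would exploit that $L^{(n)}$ has only jumps of size $>v_n$: setting $N_i^{(n)} := \mu^{(n)}\bigl(((i-1)\Delta_n,i\Delta_n]\times\{|z|>v_n\}\bigr)$, this is Poisson with parameter $\lambda_i := \int_{(i-1)\Delta_n}^{i\Delta_n} \nu_s^{(n)}(\{|z|>v_n\})\,ds$. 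Using that $1/(1\wedge|z|^\beta) \leq v_n^{-\beta}$ on $\{|z|>v_n\}$ together with the essential supremum hypothesis on $g^{(n)}$, I would obtain the key uniform bound
\[
 \lambda_i \;\leq\; K\,\Delta_n\,v_n^{-\beta}\qquad (i=1,\dots,n).
\]

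Then I would split the expectation according to $N_i^{(n)}\in\{0\},\{1\},\{\ge 2\}$. Since $|f(z)|\le K|z|^p$ near $0$ and $p\ge\beta\ge 0$, in particular $f(0)=0$, so the $\{N_i^{(n)}=0\}$ contribution vanishes. On $\{N_i^{(n)}=1\}$, a direct calculation with the law of a Poisson random measure (conditional on having one point, its location is $\bar\mu^{(n)}|_{A_i}/\lambda_i$) yields
\[
 \Eb\bigl[f(\Delta_i^n L^{(n)})\ind_{(-\infty,t]}(\Delta_i^n L^{(n)})\ind_{\{N_i^{(n)}=1\}}\bigr] = e^{-\lambda_i}\!\int_{(i-1)\Delta_n}^{i\Delta_n}\!\int_{\{|z|>v_n,\,z\le t\}} f(z)\,\nu_s^{(n)}(dz)\,ds,
\]
and on $\{N_i^{(n)}\ge 2\}$ I would simply bound by $\|f\|_\infty\,\Prob(N_i^{(n)}\ge 2) = O(\lambda_i^2) = O(\Delta_n^2 v_n^{-2\beta})$.

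Comparing with the target integral, which I would split as $\int_{|z|>v_n,\,z\le t}+\int_{|z|\le v_n,\,z\le t}$, produces three error terms. The factor $(e^{-\lambda_i}-1)$ in front of the $|z|>v_n$ piece satisfies $|e^{-\lambda_i}-1|\le\lambda_i$, and the piece itself is bounded by $\|f\|_\infty\lambda_i$, giving $O(\Delta_n^2 v_n^{-2\beta})$. The $\{N_i^{(n)}\ge 2\}$ expectation is also $O(\Delta_n^2 v_n^{-2\beta})$. For the $|z|\le v_n$ piece, using $|f(z)|\le K|z|^p$ on a neighbourhood of the origin together with $|z|^p = |z|^{p-\beta}|z|^\beta \le v_n^{p-\beta}(1\wedge|z|^\beta)$ for $|z|\le v_n$ (and $f$ bounded elsewhere), the essential supremum hypothesis yields
\[
 \int_{(i-1)\Delta_n}^{i\Delta_n}\!\int_{|z|\le v_n} |f(z)|\,\nu_s^{(n)}(dz)\,ds \;\leq\; K\,\Delta_n\,v_n^{p-\beta}.
\]
Summing the three contributions gives \eqref{BiasAbschEqn}, uniformly in $i$ and $t\in\overline\R$.

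All estimates are uniform because the essential supremum condition transfers to $s$-almost everywhere control on $\nu_s^{(n)}$ via the rescaling. The only mildly delicate step is the Poisson random measure conditioning used to derive the closed form for the $\{N_i^{(n)}=1\}$ term; the rest of the argument is routine once the cut-off at $v_n$ is used to extract the bound $\lambda_i = O(\Delta_n v_n^{-\beta})$.
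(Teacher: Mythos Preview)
Your proposal is correct and follows essentially the same strategy as the paper: decompose according to the number $N_i^{(n)}$ of large jumps in $((i-1)\Delta_n,i\Delta_n]$, use $\lambda_i = O(\Delta_n v_n^{-\beta})$ to control the $\{N_i^{(n)}\ge 2\}$ probability by $O(\Delta_n^2 v_n^{-2\beta})$, replace the single-jump expectation by an integral against the compensator, and bound the $\{|z|\le v_n\}$ piece by $K\Delta_n v_n^{p-\beta}$ via $|f(z)|\le K|z|^p$.

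The only difference is cosmetic. For the $\{N_i^{(n)}=1\}$ contribution, the paper writes it as a stochastic integral $\int f(z)\ind_{\{z\le t,|z|>v_n\}}\ind_{\{N=1\}}\,\mu^{(n)}(ds,dz)$, then adds and subtracts the $\{N\ge 2\}$ part (the term $\delta_n^{(i,t)}$) so that the integrand becomes $\mathcal P'$-measurable and the compensator relation (Theorem~II.1.8 in \cite{JacShi02}) applies. You instead invoke the Poisson conditioning formula $\Eb[\,\cdot\,\ind_{\{N=1\}}]=e^{-\lambda_i}\int\cdots\bar\mu^{(n)}$ directly, which produces the explicit factor $e^{-\lambda_i}$ and spares the detour through $\delta_n^{(i,t)}$; the resulting error $(e^{-\lambda_i}-1)\int\cdots$ is then absorbed into the $O(\lambda_i^2)$ budget. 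Both routes arrive at the same three error terms with identical bounds.
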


\noindent
The following proposition establishes the desired weak convergence of the process $Y_f^{(n)}$.

\begin{proposition}
	\label{LevyCLTProp}
	Suppose Assumption \ref{EasierCond} is satisfied and let $f \colon \R \rightarrow \R$ be a continuous function with $|f(\taili)| \leq K(1 \wedge |\taili|^p)$ for all $\taili \in \R$ and some $K>0$. Then the processes $Y_f^{\scriptscriptstyle (n)}$ from \eqref{EmpPrYDefEqn} converge weakly in $\linner$ to the tight mean zero Gaussian process $\Gb_f$ from Lemma \ref{step1}, that is
$Y^{(n)}_f \weak \Gb_f.$
\end{proposition}

\noindent
In order to obtain the result from Theorem \ref{ConvThm} the following lemma ensures that the limiting process $\Gb_{\rho_{\alpha}}$ converges in a suitable sense as $\alpha \rightarrow 0$.

\begin{lemma} \label{step2}
	Under Assumption \ref{EasierCond} the weak convergence
	$\Gb_{\rho_{\alpha}} \weak \Gb_{\rho}$
	holds in $\linner$ as $\alpha \rightarrow 0$.
\end{lemma}

\noindent
Its proof is a direct consequence of the following result. 

\begin{proposition}
	\label{GalKonvLem}
	Suppose Assumption \ref{EasierCond} is satisfied and let $f_n \colon \R \rightarrow \R$ ($n \in \N_0$) be Borel measurable functions with 
	$|f_n(\taili)| \leq K(1 \wedge |\taili|^p)$ for a constant $K>0$ and all $n \in \N_0$, $\taili \in \R$. Assume further that $f_n(\taili) \rightarrow f_0(\taili)$ converges for all $\taili$ outside a set $B\in\Bb$ such that $[0,1] \times B$ is a $g_0(y,d\taili)dy$-null set. Then we have weak convergence
	$\Gb_{f_n} \weak \Gb_{f_0} $
	in $\linner$ for $n \rightarrow \infty$.
\end{proposition}

\noindent
Our final lemma shows that the contribution due to small jumps are uniformly small as $\alpha$ tends to zero.

\begin{lemma} \label{step3}
	Suppose Assumption \ref{EasierCond} is satisfied. Then for each $\eta >0$ we have:
	\[
	\lim \limits_{\alpha \to 0} \limsup \limits_{n \to \infty} \Prob \Big(\sup \limits_{(\gseqi,\dfi) \in \netir} 
	\big|G_{\rho,n}^{\circ (\alpha)}(\gseqi,\dfi)\big| > \eta \Big) = 0.
	\]
\end{lemma}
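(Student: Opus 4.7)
The plan is to bound the supremum by a second moment argument, reducing to the pure-jump process $L^{(n)}$ from \eqref{LnDefEq} and exploiting that the small-jump envelope $|\rho_{\alpha}^{\circ}(\taili)|\leq K|\taili|^p \ind_{\{|\taili|\leq \alpha\}}$ is small together with the $\beta$-stable-type bound on $g_0$. First, I would replace $\Deli X^{(n)}$ by $\Deli L^{(n)}$ in the definition of $G_{\rho,n}^{\circ(\alpha)}$. On the event where $|\Deli X^{(n)} - \Deli L^{(n)}|\leq v_n/2$ for all $i$, the increments that pass the Mancini truncation $\Truniv$ essentially coincide with those of $\Deli L^{(n)}$ surviving a comparable threshold, and the $\mathcal C^1$ regularity of $\rho$ together with the envelope bound make the resulting discrepancy uniformly negligible. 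The remaining continuous part of $X^{(n)}$ satisfies $|\Deli X^{(n)} - \Deli L^{(n)}|\leq K\Delta_n + \sup_s |\int_{(i-1)\Delta_n}^{i\Delta_n}\sigma_u^{(n)}dW_u|+\ldots$, which is $O_\Prob(\Delta_n^{1/2})$ by Assumption \ref{EasierCond}\eqref{DriDiffMomCond}. Since $\ovw<1/2$, we have $v_n/\Delta_n^{1/2}\to\infty$, so Markov's inequality on high moments makes the exceptional event negligible. The deterministic centering $N_{\rho_\alpha^\circ}(g^{(n)};\gseqi,\dfi)$ is then compared to the sum of conditional expectations $\Eb[\chi_\dfi^{\circ(\alpha)}(\Deli L^{(n)})\Trunjv|_L]$ by invoking Proposition \ref{BiasAbschProp}, whose error rate $\sqrt{n\Delta_n}(\Delta_n^2 v_n^{-2\beta}+\Delta_n v_n^{p-\beta})$ tends to zero by Assumption \ref{EasierCond}\eqref{speed}.

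After this reduction, the approximating process $\tilde G_{\rho,n}^{\circ(\alpha)}$ is a sum of independent centered random variables, since $L^{(n)}$ has independent increments (Theorem II.4.15 of \cite{JacShi02}). Split $\rho = \rho^+ - \rho^-$ so that for each sign,
\[
Z_i(\dfi) := \rho^{\pm}(\Deli L^{(n)})\Psi_\alpha^\circ(\Deli L^{(n)})\Indit(\Deli L^{(n)})\ind_{\{|\Deli L^{(n)}|>v_n\}}
\]
is non-negative and non-decreasing in $\dfi$ for each sample path. The partial sums $M_k(\dfi)=\sum_{i=1}^k[Z_i(\dfi)-\Eb Z_i(\dfi)]$ form a martingale in $k$ for each fixed $\dfi$, so Doob's $L^2$ maximal inequality reduces the supremum over $\gseqi\in[0,1]$ to the $L^2$ norm at $k=n$. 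For the supremum over $\dfi\in\R$, the class $\{\chi_\dfi^{\circ(\alpha)}:\dfi\in\R\}$ is a monotone VC class with envelope $F(\taili)=K|\taili|^p\ind_{\{|\taili|\leq\alpha\}}$; a bracketing maximal inequality (e.g.\ Theorem~2.14.2 in \cite{VanWel96}) applied conditionally in each increment yields
\[
\Eb\Big[\sup_{(\gseqi,\dfi)\in\netir}|\tilde G_{\rho,n}^{\circ(\alpha)}(\gseqi,\dfi)|^2\Big]\leq \frac{C}{n\Delta_n}\sum_{i=1}^n \Eb[Z_i(\infty)^2].
\]

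To finish, I would bound the second moment on the right. Since $\Deli L^{(n)}$ is a pure-jump integral with compensator $\nu_s^{(n)}(d\taili)ds$ and jumps are truncated at $v_n$, standard jump-moment estimates (see e.g.\ the arguments in \cite{HofVet15}) give
\[
\Eb[Z_i(\infty)^2]\leq K\Delta_n\int |\rho(\taili)|^2\Psi_\alpha^\circ(\taili)^2\ind_{\{|\taili|>v_n\}}g^{(n)}(y_i,d\taili)+O(\Delta_n^2).
\]
Using $|\rho\Psi_\alpha^\circ|^2\leq K^2|\taili|^{2p}\ind_{\{|\taili|\leq\alpha\}}$ together with $h_y(\taili)\leq K|\taili|^{-(1+\beta)}$ from Definition \ref{rhoandgass2}\eqref{BGn0Ass} (for $n$ large enough that $\alpha<\eta$), the integral is at most $K\int_0^\alpha\taili^{2p-1-\beta}d\taili = O(\alpha^{2p-\beta})$. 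Since $p>\beta$ by Assumption \ref{EasierCond}\eqref{rhoandgass}, we have $2p-\beta>0$, so summing over $i$ and normalising yields
\[
\Eb\Big[\sup_{(\gseqi,\dfi)\in\netir}|\tilde G_{\rho,n}^{\circ(\alpha)}(\gseqi,\dfi)|^2\Big]\leq C\alpha^{2p-\beta}+o(1),
\]
uniformly in $n$. Markov's inequality then yields the assertion by first letting $n\to\infty$ and subsequently $\alpha\to 0$. The main obstacle I anticipate lies in the first paragraph: controlling the replacement of $\Deli X^{(n)}$ by $\Deli L^{(n)}$ uniformly in $(\gseqi,\dfi)$ while carrying along the deterministic centering, since the truncation indicator $\Truniv$ mixes the continuous and jump parts in a non-smooth way. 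This step is where the careful calibration of $v_n$, $\Delta_n$ and the moment conditions in Assumption \ref{EasierCond}\eqref{DriDiffMomCond} are essential.
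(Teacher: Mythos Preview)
Your overall strategy---reduce to the pure-jump process $L^{(n)}$, then show the reduced process is uniformly small---matches the paper's. For the second step the paper takes a detour through weak convergence: Proposition~\ref{LevyCLTProp} gives $\tilde Y_{\rho_\alpha^\circ}^{(n)}\weak\Gb_{\rho_\alpha^\circ}$, Proposition~\ref{GalKonvLem} gives $\Gb_{\rho_\alpha^\circ}\weak 0$ as $\alpha\to 0$, and two applications of Portmanteau finish. Your direct $L^2$ bound via the envelope $K|z|^p\ind_{\{|z|\leq\alpha\}}$ and $\int_0^\alpha z^{2p-1-\beta}\,dz=O(\alpha^{2p-\beta})$ is a legitimate shortcut, though the Doob-then-bracketing combination needs care (Doob controls $\sup_\gseqi$ for fixed $t$, not jointly); it is cleaner to note that the two-parameter class is manageable with the stated envelope, exactly as in the proof of Proposition~\ref{LevyCLTProp}, and apply a single maximal inequality.

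The genuine gap is in the replacement step, and you have misdiagnosed its source. The problematic indicator is not $\Truniv$ but $\Indit(\cdot)$ inside $\chi_t^{\circ(\alpha)}$. After the smooth $\rho_\alpha^\circ$-part is dealt with by the $\mathcal C^1$ estimate (this is the paper's $D_n^\circ(\alpha)$ term), what remains is essentially
\[
\frac{1}{\sqrt{n\Delta_n}}\sup_{t\in\R}\sum_{i}\big|\Indit(\Deli X^{(n)})-\Indit(\Deli L^{(n)})\big|\cdot|\rho_\alpha^\circ(\Deli L^{(n)})|.
\]
Only indices with $|\Deli L^{(n)}-t|\leq v_n/2$ contribute, but the supremum runs over all $t$; pulling it inside the sum gives the crude bound $(n\Delta_n)^{-1/2}\sum_i|\rho_\alpha^\circ(\Deli L^{(n)})|$, whose expectation is of order $\sqrt{n\Delta_n}\int_0^\alpha z^{p-1-\beta}\,dz\asymp\sqrt{n\Delta_n}\,\alpha^{p-\beta}\to\infty$. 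So $\mathcal C^1$ regularity plus the envelope is not enough. The paper's remedy is a \emph{separation argument}: on the high-probability set $J_n^{(2)}(\alpha)\cap Q_n$ (Lemma~\ref{BiJovrovvAb}), distinct jump increments exceeding $\Delta_n^{\ovv}$ differ pairwise by at least $\Delta_n^{\ovr}$, so at most $c_n=\lceil v_n/\Delta_n^{\ovr}+1\rceil$ of them fall in any window of width $v_n$ (Lemma~\ref{Indikunglab}), yielding $C_n^\circ(\alpha)\leq K/\sqrt{n\Delta_n^{(1+2(\ovr-\ovw))\vee 1}}\to 0$ by Assumption~\ref{Cond1}\eqref{ObsSchCond7}. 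This hinges on the anti-concentration condition Assumption~\ref{Cond1}\eqref{FiLevyDistCond}, derived from the density bounds of Definition~\ref{rhoandgass2} in Proposition~\ref{easier}, and is orthogonal to the drift/diffusion moment conditions you point to. The complementary regime $|\Deli\tibigjeial|\leq\Delta_n^{\ovv}$ (the paper's $E_n^\circ(\alpha)$) is exactly where your envelope idea does suffice, and is closed by Assumption~\ref{Cond1}\eqref{ObsSchCond5b}.
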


 \noindent{\bf Proof of Theorem \ref{ConvThm}} 
 In order to establish weak convergence we use Theorem 1.12.2 in \cite{VanWel96}. It is sufficient to prove
\[
\Eb^{\ast} h(G_{\rho}^{(n)}) \rightarrow \Eb h(\Gb_{\rho})
\]
for each bounded Lipschitz function $h \in \text{BL}_1(\linner)$, where $\text{BL}_1(\Db)$ for a metric space $(\Db,d)$ was introduced in Definition \ref{ConvcondDataDef}. Here, we use that the tight process $\Gb_{\rho}$ is also separable (see Lemma 1.3.2 in \cite{VanWel96}). 

Thus, let $h \in \text{BL}_1(\linner)$ and $\delta >0$. Using  Lemma \ref{step2} and Lemma \ref{step3}
we choose $\alpha >0$ such that
\begin{eqnarray}
\label{ProbAbschEq}
\limsup \limits_{n \rightarrow \infty} \Prob (\sup \limits_{(\gseqi,\dfi) \in \netir} |G_{\rho,n}^{\circ (\alpha)}(\gseqi,\dfi)| > \delta/6) < \delta/12
\\
\label{ErwwHPrAbschEq}
\left| \Eb h(\Gb_{\rho_{\alpha}}) - \Eb h(\Gb_{\rho}) \right| \leq \delta/3.
\end{eqnarray}
\eqref{ProbAbschEq} is possible using Lemma \ref{step3}, and Lemma \ref{step2} allows \eqref{ErwwHPrAbschEq}. For this $\alpha >0$ choose an $N \in \N$ with 
\begin{eqnarray*}
	\big| \Eb^{\ast} h(G_{\rho,n}^{(\alpha)}) - \Eb h(\Gb_{\rho_{\alpha}}) \big| \leq \delta/3,
\end{eqnarray*}
for $n \geq N$. This is possible due to Lemma \ref{step1}. Now, because of the previous inequalities and the Lipschitz property of $h$, we have for $n \in \N$ large enough:
\begin{multline*}
\big| \Eb^{\ast} h(G_{\rho}^{(n)}) - \Eb h(\Gb_{\rho}) \big| \leq \\
\leq \Eb^{\ast} \big| h(G_{\rho}^{(n)}) - h(G_{\rho,n}^{(\alpha)}) \big| + \big| \Eb^{\ast} h(G_{\rho,n}^{(\alpha)}) - \Eb h(\Gb_{\rho_{\alpha}}) \big| + \big| \Eb h(\Gb_{\rho_{\alpha}}) - \Eb h(\Gb_{\rho}) \big| < \delta.
\end{multline*}
\qed 
 \subsection{Proof of auxiliary results } \label{tedet1}

\textbf{Proof of Proposition \ref{GalKonvLem}.} In order to show weak convergence we want to use Theorem 1.5.4 and Theorem 1.5.7 in \cite{VanWel96}. To this end, we prove asymptotical uniform $d$-equicontinuity in probability of $\Gb_{f_n}$ for some suitable semimetric $d$  on $\netir$ with Theorem 2.2.4 in this reference. 

First, recall that $\Gb_{f_n}$ are tight centered Gaussian processes in $\linner$ with covariance function
\[ H_{f_n}((\gseqi_1,\dfi_1);(\gseqi_2,\dfi_2)) = \int_0^{\mingi} \int_{- \infty}^{\mindfi} f_n^2(\taili) g_0(y,d\taili)dy, \]
and their sample paths are almost surely uniformly continuous with respect to the semimetric
\[ d_{f_n}((\gseqi_1,\dfi_1);(\gseqi_2,\dfi_2)) = \Big\{ \int_0^{\gseqi_1} \int_{\mindfi}^{\maxdfi}f_n^2(\taili) g_0(y,d\taili) dy + \int_{\gseqi_1}^{\gseqi_2} \int_{-\infty}^{\dfi_2} f_n^2(\taili) g_0(y,d\taili) dy \Big\}^{1/2}, \]
for $\gseqi_1 \leq \gseqi_2$. Due to Lemma \ref{8MomCalLem} in Appendix \ref{appE} we obtain for the $L^8$-norm
\begin{align}
\label{8NormCal}
\|\Gb_{f_n}(\gseqi_1,\dfi_1) - \Gb_{f_n}(\gseqi_2,\dfi_2) \|_8 = 105^{\frac 18} d_{f_n}((\gseqi_1,\dfi_1);(\gseqi_2,\dfi_2)),
\end{align}
for each $n \in \N$ and $(\gseqi_1,\dfi_1),(\gseqi_2,\dfi_2) \in \netir$. Additionally, the convex, non-decreasing, non-zero function $\varphi(x) = x^8$ clearly satisfies $\varphi(0)=0$ and $\limsup_{x,y \rightarrow \infty} \varphi(x)\varphi(y)/\varphi(cxy) < \infty$ for some constant $c>0$. Furthermore, by Lemma \ref{Gfseparabel} in Appendix \ref{appE} the process $\Gb_{f_n}$ is separable for each $n \in \N_0$ in the sense of Theorem 2.2.4 in \cite{VanWel96}. Thus, this theorem can be applied and due to \eqref{8NormCal} it yields a constant $K>0$, which does not depend on $n \in \N_0$, such that for all $\zeta,\delta >0$
\begin{equation}
\label{ChainingAbsch}
\big \| \sup \limits_{d_{f_n}((\gseqi_1,\dfi_1);(\gseqi_2,\dfi_2)) \leq \delta} \big| \Gb_{f_n}(\gseqi_1,\dfi_1) - \Gb_{f_n}(\gseqi_2,\dfi_2) \big| \big\|_8  \leq K \Big\{ \int_0^\zeta \big(D(\eps,d_{f_n})\big)^{\frac 18} d\eps + \delta \big(D(\zeta,d_{f_n})\big)^{\frac 14} \Big\},
\end{equation}
where $D(\eps,d_{f_n})$ denotes the packing number of $\netir$ with respect to the semimetric $d_{f_n}$ at distance $\eps$. According to Lemma \ref{PackNumAb} 
in Appendix \ref{appE} we have $D(\eps, d_{f_n}) \leq K/ \eps^4$ for every $n \in \N_0$, where $K>0$ does not depend on $n\in\N_0$. Therefore, with \eqref{ChainingAbsch} we conclude that there exists a $K>0$ which is independent of $n$ such that 
\begin{align}
\label{ChainaIntEq}
\big \| \sup \limits_{d_{f_n}((\gseqi_1,\dfi_1);(\gseqi_2,\dfi_2)) \leq \delta} \big| \Gb_{f_n}(\gseqi_1,\dfi_1) - \Gb_{f_n}(\gseqi_2,\dfi_2) \big| \big\|_8 &\leq K \Big\{ \int_0^\zeta \eps^{-1/2} d\eps + \delta/\zeta \Big\} 
\leq K \big(\zeta^{1/2} + \delta/\zeta\big),
\end{align}
for each $\zeta, \delta>0$ and $n \in \N_0$. Now, for arbitrary $\eps, \eta >0$ and $K>0$ from \eqref{ChainaIntEq} choose a $\zeta >0$ with $2^8 K^8 \zeta^4/\eps^8 < \eta/2$ and for this $\zeta$ choose a $\delta>0$ with $(2^8 K^8 \delta^8)/(\zeta^8 \eps^8) < \eta /2$. Then, due to \eqref{ChainaIntEq} we obtain for each $n \in \N$ with the Markov inequality
\begin{align*}
\Prob \Big( \sup \limits_{d_{f_n}((\gseqi_1,\dfi_1);(\gseqi_2,\dfi_2)) < \delta} &\big| \Gb_{f_n}(\gseqi_1,\dfi_1) - \Gb_{f_n}(\gseqi_2,\dfi_2) \big| > \eps \Big) \leq 
\frac{K^8(\zeta^{1/2} + \delta/\zeta)^8}{\eps^8} 
\leq \frac{2^8 K^8}{\eps^8}  \big (  \zeta^4 + \frac{ \delta^8}{\zeta^8 } \big ) < \eta.
\end{align*}
Furthermore, $d_{f_n}$ converges uniformly to $d_{f_0}$ by Lebesgue's dominated convergence theorem. Thus, $\Gb_{f_n}$ is asymptotically uniformly $d_{f_0}$-equicontinuous in probability, because for each $\eps,\eta >0$ we have 
\begin{align*}
\limsup \limits_{n \rightarrow \infty} \Prob \Big( \sup \limits_{d_{f_0}((\gseqi_1,\dfi_1);(\gseqi_2,\dfi_2)) < \delta/2} &\big| \Gb_{f_n}(\gseqi_1,\dfi_1) - \Gb_{f_n}(\gseqi_2,\dfi_2) \big| > \eps \Big) < \eta.
\end{align*}
 Moreover, Lemma \ref{PackNumAb} in Appendix \ref{appE}  
also shows that $(\netir, d_{f_0})$ is totally bounded. Trivially, the marginals of $\Gb_{f_n}$ converge to the corresponding marginals of $\Gb_{f_0}$, because these are centered multivariate normal distributions and their covariance functions converge again by Lebesgue's dominated convergence theorem. Therefore, the desired result holds due to Theorem 1.5.4 and Theorem 1.5.7 in \cite{VanWel96}.
\qed

\medskip

\noindent 
\textbf{Proof of Proposition \ref{BiasAbschProp}.}
Let $\overline F_n = \lbrace \taili \colon |\taili| > v_n \rbrace$, $\tilde N^{(n)} = \ind_{\overline F_n}(\taili) \star \mu^{(n)}$ and let $i \in \{1,\ldots,n\}$ be fixed in the entire proof. According to Proposition II.1.14 in \cite{JacShi02} for each $n \in \N$ there exist a thin random set $D_n$ with an exhausting sequence of stopping times $(T^{\scriptscriptstyle (n)}_m)_{m \in \N}$ and an $\R$-valued optional process $\xi^{\scriptscriptstyle (n)}$ such that
\begin{equation}
\label{munthinrset}
\mu^{(n)}(\omega; ds,d\taili) = \sum_{m \in \N} \epsilon_{\big(T^{\scriptscriptstyle (n)}_m(\omega),\xi^{\scriptscriptstyle (n)}_{T^{\scriptscriptstyle (n)}_m(\omega)}(\omega)\big)}(ds,d\taili),
\end{equation}
where $\epsilon_{(s,x)}$ denotes the Dirac measure with mass in $(s,x)\in \R_+\times \R$. Furthermore, due to Lemma \ref{PRMProsevjb} $\tilde N_{t_2}^{(n)} - \tilde N_{t_1}^{(n)}$ follows a Poisson distribution for $0 \leq t_1 \leq t_2$ and the sets $\tilde A_n^{(i)} \defeq \big\{\tilde N_{i\Delta_n}^{(n)} - \tilde N_{(i-1)\Delta_n}^{(n)} \leq 1 \big\}$
satisfy
\begin{equation}
\label{ProbtiAO}
\Prob\big((\tilde A_n^{(i)})^C\big) = O(\Delta_n^2 v_n^{-2\beta}),
\end{equation}
where $M^C$ denotes the complement of a set $M$. Thus, we calculate for $n \in \N$ large enough
\begin{align*} 
\gamma^{(i,\dfi)}_n :=&\Big| \Eb \left\{ f(\Deli L^{(n)}) \Indit (\Deli L^{(n)}) \right\}- n\Delta_n \big( N_f(g^{(n)};i/n,\dfi) - N_f(g^{(n)};(i-1)/n,\dfi)\big) \Big| \\
= &\Big| \int_{\tilde A_n^{(i)}} f(\Deli L^{(n)}) \Indit (\Deli L^{(n)}) d\Prob + \int_{(\tilde A_n^{(i)})^C} f(\Deli L^{(n)}) \Indit (\Deli L^{(n)}) d\Prob -\\
&\hspace{75mm}- n\Delta_n \int_{(i-1)/n}^{i/n} \int_{-\infty}^\dfi f(\taili) g^{(n)}(y,d\taili) dy \Big| \\
\leq &K\Delta_n^2 v_n^{-2\beta} + \Big| \int_{\{\tilde N_{i\Delta_n}^{(n)} - \tilde N_{(i-1)\Delta_n}^{(n)} =1\}} f(\Deli L^{(n)}) \Indit (\Deli L^{(n)}) d\Prob -\\
&\hspace{75mm}- n\Delta_n \int_{(i-1)/n}^{i/n} \int_{-\infty}^\dfi f(\taili) g^{(n)}(y,d\taili) dy \Big|,
\end{align*}
where the inequality above follows because of two reasons, first \eqref{ProbtiAO} as well as the fact that $f$ is bounded lead to the term $K\Delta_n^2 v_n^{-2\beta}$ and secondly for each $\omega \in \big\{\tilde N_{\scriptscriptstyle i\Delta_n}^{\scriptscriptstyle (n)} - \tilde N_{\scriptscriptstyle (i-1)\Delta_n}^{\scriptscriptstyle (n)} =0 \big\}$ and $m \in \N$ we have $\big(T^{\scriptscriptstyle (n)}_m(\omega),\xi^{\scriptscriptstyle (n)}_{\scriptscriptstyle T^{\scriptscriptstyle (n)}_m(\omega)}(\omega)\big) \notin ((i-1)\Delta_n,i\Delta_n] \times \overline F_n$
such that $\Deli L^{\scriptscriptstyle (n)}(\omega) = 0$ and thus $f(\Deli L^{\scriptscriptstyle (n)}(\omega))=0$ by the assumptions on $f$. However, $\big(T^{\scriptscriptstyle (n)}_m(\omega),\xi^{\scriptscriptstyle (n)}_{\scriptscriptstyle T^{\scriptscriptstyle (n)}_m(\omega)}(\omega)\big) \in ((i-1)\Delta_n,i\Delta_n] \times \overline F_n$ holds for exactly one $m \in \N$ if $\omega \in \big\{\tilde N_{\scriptscriptstyle i\Delta_n}^{\scriptscriptstyle (n)} - \tilde N_{\scriptscriptstyle (i-1)\Delta_n}^{\scriptscriptstyle (n)} =1 \big\}$.
This observation yields the following bound
\begin{align}
\label{secgammaabeq}
&\gamma_n^{(i,\dfi)} \leq \Big| \int_{\{\tilde N_{i\Delta_n}^{(n)} - \tilde N_{(i-1)\Delta_n}^{(n)} =1\}} \int_{(i-1)\Delta_n}^{i\Delta_n} \int_{-\infty}^\dfi f(\taili)  \Trunx \mu^{(n)}(\omega;ds,d\taili) \Prob(d\omega) - \nonumber \\
&\hspace{6cm}- n\Delta_n \int_{(i-1)/n}^{i/n} \int_{-\infty}^\dfi f(\taili) g^{(n)}(y,d\taili) dy \Big| +K\Delta_n^2 v_n^{-2\beta}  \nonumber\\
&\hspace{5mm}\leq \Big| \int_\Omega \int_{(i-1)\Delta_n}^{i\Delta_n} \int_{-\infty}^\dfi f(\taili) \Trunx \mu^{(n)}(\omega;ds,d\taili) \Prob(d\omega) -\nonumber\\
&\hspace{47mm}- n\Delta_n \int_{(i-1)/n}^{i/n} \int_{-\infty}^\dfi f(\taili) g^{(n)}(y,d\taili) dy \Big| + K\Delta_n^2 v_n^{-2\beta} + \delta^{(i,\dfi)}_n,
\end{align}
with 
\begin{align}
\label{deltaitdef}
\delta^{(i,\dfi)}_n = \Big| \int_{\{\tilde N_{i\Delta_n}^{(n)} - \tilde N_{(i-1)\Delta_n}^{(n)} \geq 2\}} \int_{(i-1)\Delta_n}^{i\Delta_n} \int_{-\infty}^\dfi f(\taili) \Trunx \mu^{(n)}(\omega;ds,d\taili) \Prob(d\omega) \Big|.
\end{align}
We apply the defining relation of the predictable compensator of an optional $\Pc'$-$\sigma$-finite random measure. But notice that it cannot be guaranteed that the integrand in the stochastic integral with respect to $\mu^{\scriptscriptstyle (n)}$ in the first line of \eqref{secgammaabeq} is $\Pc'$-measurable. Therefore, we treat the leading term after the last inequality sign in \eqref{secgammaabeq} and $\delta^{\scriptscriptstyle (i,\dfi)}_n$ separately. However, the integrand $f(\taili) \Indit(\taili) \Trunx \ind_{((i-1)\Delta_n,i\Delta_n]}(s)$ on the right-hand side of \eqref{secgammaabeq} is $\Pc'$-measurable. Thus, Theorem II.1.8 in \cite{JacShi02} yields 
\begin{align*}
\gamma_n^{(i,\dfi)} &\leq K \Delta_n^2 v_n^{-2\beta} + \delta_n^{(i,\dfi)} + \Big| \int_{(i-1)\Delta_n}^{i\Delta_n} \int_{-\infty}^\dfi f(\taili)  \Trunx \nu_s^{(n)}(d\taili) ds -\\ 
&\hspace{7cm}- n\Delta_n \int_{(i-1)/n}^{i/n} \int_{-\infty}^\dfi f(\taili) g^{(n)}(y,d\taili) dy \Big| \\
&= K \Delta_n^2 v_n^{-2\beta} + \delta_n^{(i,\dfi)} + \Big| n\Delta_n \int_{(i-1)/n}^{i/n} \int_{-\infty}^\dfi f(\taili)  
\ind_{\{|\taili| \leq v_n\}} g^{(n)}(y,d\taili) dy \Big|.
\end{align*} 
Now, because of $|f(\taili)| \leq K|\taili|^p$ on a neighbourhood of $0$, the above display yields for $n \in \N$ large enough
\begin{align}
\label{gammanitAbeq}
\gamma_n^{(i,\dfi)} &\leq K \Delta_n^2 v_n^{-2\beta} + \delta_n^{(i,\dfi)} + K\Delta_n v_n^{p-\beta} n \int_{(i-1)/n}^{i/n} \int (1 \wedge |\taili|^\beta) g^{(n)}(y,d\taili)dy \nonumber \\
&\leq K \Delta_n^2 v_n^{-2\beta} + \delta_n^{(i,\dfi)} + K \Delta_n v_n^{p-\beta}.
\end{align} 
Finally, \eqref{munthinrset} and the assumption that $f$ is bounded by some constant $K>0$ gives an estimate for $\delta_n^{(i,\dfi)}$ from \eqref{deltaitdef}
\begin{align*}
\delta_n^{(i,\dfi)} &\leq  \sum \limits_{\ell=2}^\infty \int_{\big\{\tilde N_{i\Delta_n}^{(n)} - \tilde N_{(i-1)\Delta_n}^{(n)} = \ell\big\}} \sum \limits_{m \in \N} \big| f\big(\xi^{(n)}_{T^{(n)}_m}\big) \big| \Indit\big(\xi^{(n)}_{T^{(n)}_m}\big) \ind_{((i-1)\Delta_n,i\Delta_n]}\big(T^{(n)}_m\big) \ind_{\overline F_n}\big(\xi^{(n)}_{T^{(n)}_m}\big) d \Prob  \\
&\leq \sum\limits_{\ell=2}^\infty K\ell \times \Prob\Big(\tilde N_{i\Delta_n}^{(n)} - \tilde N_{(i-1)\Delta_n}^{(n)} = \ell\Big),
\end{align*}
because for $\omega \in \big\{\tilde N_{i\Delta_n}^{(n)} - \tilde N_{(i-1)\Delta_n}^{(n)} = \ell\big\}$ we have $\#\big\{m \in \N \mid \big(T^{\scriptscriptstyle (n)}_m(\omega),\xi^{\scriptscriptstyle (n)}_{\scriptscriptstyle T^{\scriptscriptstyle (n)}_m(\omega)}(\omega)\big) \in ((i-1)\Delta_n,i\Delta_n] \times \overline F_n \big\} = \ell$, where $\# M$ denotes the cardinality of a set $M$. With Lemma \ref{PRMProsevjb} 
in in Appendix \ref{appD}  and the previous inequality we obtain
\begin{align*}
\delta_n^{(i,\dfi)} \leq  \exp\big\{-\zeta_i^{(n)}\big\} \sum \limits_{\ell=2}^\infty K\ell \times \frac{\big(\zeta_i^{(n)}\big)^\ell}{\ell!} 
\leq K \big(\zeta_i^{(n)}\big)^2,
\end{align*}
with
\begin{align*}
\zeta_i^{(n)} = n \Delta_n \int_{(i-1)/n}^{i/n} \int_{\overline F_n} g^{(n)}(y,d\taili) dy 
\leq n \Delta_n v_n^{-\beta} \int_{(i-1)/n}^{i/n} \int (1 \wedge |\taili|^\beta) g^{(n)}(y,d\taili) dy \leq K \Delta_n v_n^{-\beta},
\end{align*}
for $n \in \N$ large enough. Thus, $\delta_n^{(i,\dfi)} \leq K \Delta_n^2 v_n^{-2\beta}$ holds and \eqref{gammanitAbeq} yields \eqref{BiasAbschEqn}, because neither of the bounds for $\gamma_n^{(i,\dfi)}$ or $\delta_n^{(i,\dfi)}$ depends on $i$ or $\dfi$.
\qed

\medskip

\noindent 
\textbf{Proof of Proposition \ref{LevyCLTProp}.} The processes $Y^{(n)}_f$ have the form
\[Y^{(n)}_f (\omega;(\gseqi,\dfi)) = \sum \limits_{i=1}^{m_n} \left\{ g_{ni}(\omega;(\gseqi,\dfi)) - \Eb (g_{ni}(\cdot;(\gseqi,\dfi))) \right\},\]
with $m_n = n$ and the triangular array $\lbrace g_{ni}(\omega;(\gseqi,\dfi)) \mid n \in \N; i = 1, \ldots, n ; (\gseqi,\dfi) \in \netir \rbrace$ of processes
\[g_{ni}(\omega ; (\gseqi,\dfi)) = \frac{1}{\sqrt{n \Delta_n}} f(\Deli L^{(n)}(\omega)) \Indit( \Deli L^{(n)}(\omega)) \ind_{\{i \leq \ip{n\gseqi}\}},\]
which is independent within rows, because $L^{(n)}$ has independent increments as it has deterministic characteristics (see Theorem II.4.15 in \cite{JacShi02}). Thus, by Theorem 11.16 in \cite{Kos08}, the proof is complete once we can show the following six conditions of the triangular array $\lbrace g_{ni} \rbrace$ (see for instance \cite{Kos08} for the notions of AMS and manageability):
\begin{enumerate}[(A)]
	\item $\lbrace g_{ni} \rbrace$ is almost measurable Suslin (AMS);
	\label{Liste1}
	\item $\lbrace g_{ni} \rbrace$ is manageable with envelopes $\lbrace G_{ni} \mid n \in \N; i = 1, \ldots,n \rbrace$ which are also independent within rows. Here, we set 
	$G_{ni} = \frac{K}{\sqrt{n \Delta_n}} (1 \wedge \left| \Deli L^{(n)} \right|^{p})$ with $K >0$ such that  
	$|f(x)| \leq K(1 \wedge |\taili|^{p})$;
	\label{Liste2}
	\item $H_f((\gseqi_1,\dfi_1);(\gseqi_2,\dfi_2)) = \lim \limits_{n \rightarrow \infty} \Eb \big \{ Y^{(n)}_f(\gseqi_1,\dfi_1) Y^{(n)}_f(\gseqi_2,\dfi_2) \big \}$
	for all $(\gseqi_1,\dfi_1),(\gseqi_2,\dfi_2) \in \netir$, with $H_f$ defined in \eqref{Hrhodef}; 
	\label{Liste3} 
	\item $\limsup \limits_{n \rightarrow \infty} \sum \limits_{i=1}^n \Eb G_{ni}^2 < \infty$; \label{Liste4}
	\item $\lim \limits_{n \rightarrow \infty} \sum \limits_{i=1}^n \Eb G_{ni}^2 \ind_{\lbrace G_{ni} > \epsilon \rbrace} =0$ for each $\epsilon >0$;
	\label{Liste5}
	\item For $(\gseqi_1,\dfi_1),(\gseqi_2,\dfi_2) \in \netir$  define
\[d_f^{(n)}((\gseqi_1,\dfi_1);(\gseqi_2,\dfi_2)) = \Big \{ \sum \limits_{i=1}^n \Eb \left| g_{ni}(\cdot; (\gseqi_1,\dfi_1)) - g_{ni}( \cdot;(\gseqi_2,\dfi_2)) \right|^2 \Big \}^{1/2},\]
then the limit 
	\[d_f((\gseqi_1,\dfi_1);(\gseqi_2,\dfi_2)) = \lim \limits_{n \rightarrow \infty} d_f^{(n)}((\gseqi_1,\dfi_1);(\gseqi_2,\dfi_2))\] 
	exists, where $d_f$ is defined in 
 in \eqref{drhodef}. Moreover, for all sequences $((\gseqi^{\scriptscriptstyle (1)}_n,\dfi^{\scriptscriptstyle (1)}_n))_{n \in \N},((\gseqi^{\scriptscriptstyle (2)}_n,\dfi^{\scriptscriptstyle (2)}_n))_{n \in \N} \subset \netir$ with $d_f((\gseqi^{\scriptscriptstyle (1)}_n,\dfi^{\scriptscriptstyle (1)}_n);(\gseqi^{\scriptscriptstyle (2)}_n,\dfi^{\scriptscriptstyle (2)}_n)) \rightarrow 0$ we also have $d_f^{\scriptscriptstyle (n)}((\gseqi^{\scriptscriptstyle (1)}_n,\dfi^{\scriptscriptstyle (1)}_n);(\gseqi^{\scriptscriptstyle (2)}_n,$ $\dfi^{\scriptscriptstyle (2)}_n)) \rightarrow 0$.
	\label{Liste6}
\end{enumerate}

\smallskip

\noindent 
\textit{Proof of  \eqref{Liste1}.} Using Lemma 11.15 in \cite{Kos08} the triangular array $\lbrace g_{ni} \rbrace$ is AMS if it is separable, that is for each 
$n \in \N$ there exists a countable subset $S_n \subset \netir$ such that
\[\Prob^{\ast} \bigg( \sup \limits_{(\gseqi_1,\dfi_1) \in \netir} \inf \limits_{(\gseqi_2, \dfi_2) \in S_n} \sum \limits_{i=1}^n (g_{ni}(\omega; (\gseqi_1,\dfi_1)) - 
g_{ni}(\omega;(\gseqi_2,\dfi_2)))^2 >0 \bigg) = 0.\]
But if we choose $S_n = (\netir) \cap \Q^2$ for all $n \in \N$, we obtain
\[\sup \limits_{(\gseqi_1,\dfi_1) \in \R} \inf \limits_{(\gseqi_2,\dfi_2) \in S_n} \sum \limits_{i=1}^n (g_{ni}(\omega; (\gseqi_1,\dfi_1)) - 
g_{ni}(\omega;(\gseqi_2,\dfi_2)))^2 = 0\]
for each $\omega \in \Omega$ and $n \in \N$.

\smallskip

\noindent 
\textit{Proof of  \eqref{Liste2}.}
$G_{ni}$ are independent within rows since the $L^{(n)}$ have deterministic characteristics. Thus, according to Theorem 11.17 in \cite{Kos08}, it suffices to show that the triangular arrays
\begin{align*}
\{\tilde g_{ni}(\omega;\dfi) \defeq \frac{1}{\sqrt{n \Delta_n}} f(\Deli L^{(n)}(\omega)) \Indit( \Deli L^{(n)}(\omega)) \mid n\in\N; i=1,\ldots,n;\dfi \in \R\},
\end{align*}
and
$
\{\tilde h_{ni}(\omega;\gseqi) \defeq \ind_{\{i \leq \ip{n\gseqi}\}} \mid n\in\N;i=1,\ldots,n; \gseqi\in[0,1]\}
$
are manageable with envelopes $\lbrace G_{ni} \mid n \in \N; i = 1, \ldots,n \rbrace$ and $\{\tilde H_{ni}(\omega) :\equiv 1\mid n\in\N; i=1,\ldots,n\}$, respectively. 
Concerning the first triangular array $\{\tilde g_{ni} \}$ define for $n \in \N$ and $\omega \in \Omega$ the set
\begin{multline*}
\mathcal G_{n \omega} = \bigg\{ \bigg( \frac{1}{\sqrt{n \Delta_n}} f(\Delta_1^n L^{(n)}(\omega)) \Indit(\Delta_1^n L^{(n)}(\omega)), \ldots \\ 
\ldots, \frac{1}{\sqrt{n \Delta_n}} f(\Delta_n^n L^{(n)}(\omega)) \Indit(\Delta_n^n L^{(n)}(\omega)) \bigg) \bigg{|} t \in \R \bigg\} \subset \R^n.
\end{multline*}
These sets are bounded with envelope vector
$G_n(\omega) = (G_{n1}(\omega) , \ldots, G_{nn}(\omega)) \in \R^n.$

For $i_1, i_2 \in \lbrace 1, \ldots, n \rbrace$ the projection 
\begin{multline*}
p_{i_1, i_2}(\mathcal G_{n \omega}) = \bigg \{ \bigg( \frac{1}{\sqrt{n \Delta_n}} f(\Delta_{i_1}^n L^{(n)}(\omega)) \Indit(\Delta_{i_1}^n L^{(n)}(\omega)), \\
\frac{1}{\sqrt{n \Delta_n}} f(\Delta_{i_2}^n L^{(n)}(\omega)) \Indit(\Delta_{i_2}^n L^{(n)}(\omega)) \bigg) \mid t \in \R \bigg \} \subset \R^2
\end{multline*}
onto the $i_1$-th and the $i_2$-th coordinate is an element of the set
\begin{align*}
\bigg\{ \lbrace (0,0) \rbrace, &\lbrace (0,0), (s_{i_1,n}(\omega),0) \rbrace, \lbrace (0,0), (0,s_{i_2,n}(\omega)) \rbrace, 
\lbrace (0,0), (s_{i_1,n}(\omega), s_{i_2,n}(\omega)) \rbrace, \\
\lbrace (0,0), &(s_{i_1,n}(\omega), 0), (s_{i_1,n}(\omega), s_{i_2,n}(\omega)) \rbrace,
\lbrace (0,0), (0, s_{i_2,n}(\omega)), (s_{i_1,n}(\omega), s_{i_2,n}(\omega)) \rbrace \bigg\}.
\end{align*}
with $s_{i_1,n}(\omega) = \frac{1}{\sqrt{n \Delta_n}} f(\Delta_{i_1}^n L^{(n)}(\omega))$ and 
$s_{i_2,n}(\omega) = \frac{1}{\sqrt{n \Delta_n}} f(\Delta_{i_2}^n L^{(n)}(\omega))$. Consequently, in the sense of Definition 4.2 in \cite{Pol90}, for every $s \in \R^2$ no proper coordinate projection of $\mathcal G_{n \omega}$ can surround $s$ and therefore $\mathcal G_{n \omega}$ has a pseudo dimension of at most $1$ (Definition 4.3 in \cite{Pol90}). Thus, by Corollary 4.10 in the same reference, there exist constants $A$ and $W$ which depend only on the pseudodimension such that
\[D_2\big(x \| \alpha \odot G_n(\omega) \|_2, \alpha \odot \mathcal G_{n \omega}\big) \leq A x^{-W} =: \zeta(x),\]
for all $0< x \leq 1$, $n \in \N$, $\omega \in \Omega$ and each rescaling vector $\alpha \in \R^n$ with non-negative entries, where 
$\| \cdot \|_2$ denotes the Euclidean distance on $\R^n$, $D_2$ denotes the packing number with respect to the Euclidean distance and 
$\odot$ denotes coordinate-wise multiplication. Obviously, we have 
$
\int _0^1 \sqrt{\log \zeta(x)} dx < \infty,
$
and therefore the triangular array $\lbrace \tilde g_{ni} \rbrace$ is indeed manageable with envelopes $\lbrace G_{ni} \rbrace$. 

Concerning the triangular array $\{\tilde h_{ni}\}$, we proceed similarly and consider the set
\begin{multline*}
\mathcal H_{n \omega}
\defeq
\lbrace (\tilde h_{n1}(\omega;\theta), \ldots , \tilde h_{nn}(\omega;\theta)) \mid \theta \in [0,1] \rbrace \\
=
\lbrace (0, \ldots , 0) , (1,0, \ldots ,0),(1,1,0, \ldots,0), \ldots, (1, \ldots ,1) \rbrace.
\end{multline*}
Then, for any $i_1,i_2 \in \lbrace 1, \ldots,n \rbrace$, the projection $p_{i_1,i_2}(\mathcal H_{n \omega})$ of $\mathcal H_{n \omega}$ onto the
$i_1$-th and the $i_2$-th coordinate is either $\lbrace (0,0), (1,0) , (1,1) \rbrace$ or $\lbrace (0,0), (0,1) , (1,1) \rbrace$. Therefore, the same reasoning as above shows that $\mathcal H_{n \omega}$ is a set of pseudodimension at most one, whence the triangular
array $\lbrace \tilde h_{ni} \rbrace$ is manageable with envelopes $\lbrace \tilde H_{ni} \rbrace$.

\smallskip

\noindent 
\textit{Proof of  \eqref{Liste3}.} Using independence within rows of the triangular array $\lbrace g_{ni} \rbrace$ we calculate for $(\gseqi_1,\dfi_1),(\gseqi_2,\dfi_2) \in \netir$ as follows:
\begin{multline*}
\Eb \left\{ Y^{(n)}_f(\gseqi_1,\dfi_1) Y^{(n)}_f(\gseqi_2,\dfi_2) \right\} = \frac{1}{n \Delta_n} \sum \limits_{i=1}^{\ip{n(\gseqi_1 \wedge \gseqi_2)}} \bigg\{\Eb \big [ f^2(\Deli L^{(n)}) 
\ind_{(- \infty, \dfi_1 \wedge \dfi_2]}( \Deli L^{(n)}) \big ] - \\
\bigg ( \Eb \big [ f(\Deli L^{(n)}) 
\ind_{(- \infty, \dfi_1]}(\Deli L^{(n)}) \big ] \Eb \big [ f(\Deli L^{(n)}) \ind_{(- \infty, \dfi_2]}(\Deli L^{(n)}) \big ] \bigg ) \bigg\}.
\end{multline*}
Due to Lemma \ref{MomentfgO} we have $\Eb \big [ f(\Deli L^{(n)}) \ind_{(- \infty, \dfi]}(\Deli$ $ L^{(n)}) \big ] = O(\Delta_n)$ for all $\dfi \in\R$ and $i=1,\ldots,n$. Thus, an application of Proposition \ref{BiasAbschProp} yields for some small $\delta >0$ and $n\in\N$ large enough 
\begin{align*}
\Eb \big\{ &Y^{(n)}_f(\gseqi_1,\dfi_1) Y^{(n)}_f(\gseqi_2,\dfi_2) \big\} = \int_0^{\frac{\ip{n(\gseqi_1 \wedge \gseqi_2)}}n} \int_{-\infty}^{\dfi_1 \wedge \dfi_2} f^2(\taili) g^{(n)}(y,d\taili) dy +\\
&\hspace{62mm}+ O\big(\Delta_n v_n^{-2((\beta+\delta)\wedge 2)}+ v_n^{2p-((\beta+\delta) \wedge 2)}\big) + O(\Delta_n) \\
&= \int_0^{\frac{\ip{n(\gseqi_1 \wedge \gseqi_2)}}n} \int_{-\infty}^{\dfi_1 \wedge \dfi_2} f^2(\taili) g_0(y,d\taili) dy +
\frac 1{\sqrt{n\Delta_n}} \int_0^{\frac{\ip{n(\gseqi_1 \wedge \gseqi_2)}}n} \int_{-\infty}^{\dfi_1 \wedge \dfi_2} f^2(\taili) g_1(y,d\taili) dy \\
&\hspace{72mm}+ \int_0^{\frac{\ip{n(\gseqi_1 \wedge \gseqi_2)}}n} \int_{-\infty}^{\dfi_1 \wedge \dfi_2} f^2(\taili) \Rc_n(y,d\taili) dy +o(1),       
\end{align*}
where the final equality above follows using \eqref{gon012Ass3}, as well as $p > \beta$ and $\ovw < 1/(2\beta)$. Furthermore, due to  Assumption \ref{Cond1}\eqref{BlGetCond} and $p>\beta$ the mapping $\big(y \mapsto \int_{-\infty}^{\dfi_1 \wedge \dfi_2} f^2(\taili)g_i(y,d\taili)\big)$ is Lebesgue-almost surely bounded on $[0,1]$ for each $i\in\{0,1,2\}$. Thus, we have
\begin{align*}
\Eb \big\{ Y^{(n)}_f(\gseqi_1,\dfi_1) Y^{(n)}_f(\gseqi_2,\dfi_2) \big\} &=\int \limits_0^{\frac{\ip{n(\gseqi_1 \wedge \gseqi_2)}}n} \int_{-\infty}^{\dfi_1 \wedge \dfi_2} f^2(\taili) g_0(y,d\taili) dy + O\big((n\Delta_n)^{-1/2}\big) + o(1) \\
&= H_f((\gseqi_1,\dfi_1);(\gseqi_2,\dfi_2)) + o(1).
\end{align*}

\smallskip

\noindent 
\textit{Proof of  \eqref{Liste4}.}
Using Proposition \ref{BiasAbschProp} we obtain for some small $\delta >0$
\begin{align*}
\limsup \limits_{n \rightarrow \infty} &\sum \limits_{i=1}^n \Eb G_{ni}^2 = 
\limsup \limits_{n \rightarrow \infty} \frac{K}{n\Delta_n} \sum\limits_{i=1}^n \Eb \Big\{ 1 \wedge \big| \Deli L^{(n)} \big|^{2 p} \Big\} \\
&\leq \limsup \limits_{n \rightarrow \infty} \Big\{ K \int_0^1 \int \big(1 \wedge |\taili|^{2 p}\big) g^{(n)}(y,d\taili) dy +  K\big(\Delta_n v_n^{-2((\beta+\delta)\wedge 2)} + v_n^{2p-((\beta+\delta)\wedge 2)}\big) \Big\} \\
&= K \int_0^1 \int \big(1 \wedge |\taili|^{2 p}\big) g_0(y,d\taili) dy < \infty,
\end{align*}
where the final equality above is a consequence of Assumption \ref{Cond1}\eqref{BlGetCond}, $p>\beta$ and $\ovw < 1/(2\beta)$.

\smallskip

\noindent 
\textit{Proof of  \eqref{Liste5}.} We have $n \Delta_n \rightarrow \infty$. Thus, for $\epsilon >0$, we can choose 
\[N_{\epsilon} = \min \Big\lbrace m \in \N \Big | \frac{K}{\sqrt{n \Delta_n}} \leq \epsilon \text{ for all } n \geq m \Big\rbrace < \infty.\]
So for $n \geq N_{\epsilon}$ the integrand satisfies $G_{ni}^2 \ind_{\lbrace G_{ni} > \epsilon \rbrace} =0$ for all $1 \leq i \leq n$ and this yields the assertion.

\smallskip
\noindent 
\textit{Proof of  \eqref{Liste6}.}
Due to symmetry of the semimetrics let $\gseqi_1 \leq \gseqi_2$ without loss of generality. Then an application of Proposition \ref{BiasAbschProp} gives, for $(\gseqi_1,\dfi_1),(\gseqi_2,\dfi_2) \in \netir$,
\begin{align*}
\big (d_f^{(n)}((\gseqi_1,\dfi_1)&;(\gseqi_2,\dfi_2)) \big)^2 = \sum \limits_{i=1}^n \Eb \left| g_{ni}((\gseqi_1,\dfi_1)) - g_{ni}((\gseqi_2,\dfi_2)) \right|^2 \nonumber \\
&=\frac 1{n\Delta_n} \sum\limits_{i=1}^{\ip{n\gseqi_1}} \Eb f^2(\Deli L^{(n)}) \ind_{(\dfi_1 \wedge \dfi_2, \dfi_1 \vee \dfi_2]}(\Deli L^{(n)}) +\\
&\hspace{35mm}+ \frac 1{n\Delta_n} \sum\limits_{i=\ip{n\gseqi_1}+1}^{\ip{n\gseqi_2}} \Eb f^2(\Deli L^{(n)}) \ind_{(-\infty,\dfi_2]}(\Deli L^{(n)}) \\
&= \int_0^{\frac{\ip{n\gseqi_1}}n} \int_{\dfi_1 \wedge \dfi_2}^{\dfi_1 \vee \dfi_2} f^2(\taili) g^{(n)}(y,d\taili) dy + \int_{\frac{\ip{n\gseqi_1}}n}^{\frac{\ip{n\gseqi_2}}n} \int_{-\infty}^{\dfi_2} f^2(\taili) g^{(n)}(y,d\taili)dy + O(\Delta_n^\alpha) \\
&= \big (d_f((\gseqi_1,\dfi_1);(\gseqi_2,\dfi_2)) \big)^2 + O(\Delta_n^\alpha)
\end{align*}
for an appropriately small $\alpha >0$ 
such that $1/n = o(\Delta_n^\alpha)$ and $(n\Delta_n)^{-1/2} = O(\Delta_n^\alpha)$ according to Assumption \ref{Cond1}\eqref{ObsSchCond7}. Moreover, the $O$-term is uniform in $(\gseqi_1,\dfi_1),(\gseqi_2,\dfi_2) \in \netir$. As a consequence,
\[
\big| d_f^{(n)}((\gseqi_1,\dfi_1);(\gseqi_2,\dfi_2)) - d_f((\gseqi_1,\dfi_1);(\gseqi_2,\dfi_2)) \big| = O(\Delta_n^{\alpha /2})
\]
uniformly as well, because
$
\big| \sqrt a - \sqrt b \big| \leq \sqrt{ |a-b|}
$
holds for arbitrary $a,b \geq 0$. This uniform convergence implies immediately that for deterministic sequences $((\gseqi^{(1)}_n,\dfi^{(1)}_n))_{n \in \N},((\gseqi^{(2)}_n,\dfi^{(2)}_n))_{n \in \N} \subset \netir$ with $d_f((\gseqi^{(1)}_n,$ $\dfi^{(1)}_n);(\gseqi^{(2)}_n,\dfi^{(2)}_n)) \rightarrow 0$ we also have $d_f^{(n)}((\gseqi^{(1)}_n,\dfi^{(1)}_n);(\gseqi^{(2)}_n,\dfi^{(2)}_n)) \rightarrow 0$.

Finally, $d_f$ is in fact a semimetric: Define for $(\gseqi,\dfi) \in \netir$ the random vectors $g_n(\gseqi,\dfi) = (g_{n1}(\gseqi,\dfi), \ldots, g_{nn}(\gseqi,\dfi)) \in \R^n$ and apply first the triangle inequality in $\R^n$ and afterwards the Minkowski inequality to obtain
\begin{align*}
d_f^{(n)}((\gseqi_1,\dfi_1);(&\gseqi_2,\dfi_2)) = \left\{ \Eb \| g_n(\gseqi_1,\dfi_1) - g_n(\gseqi_2,\dfi_2) \|^2_2 \right \}^{1/2} \\
&\leq \left \{ \Eb \big ( \| g_n(\gseqi_1,\dfi_1) - g_n(\gseqi_3,\dfi_3) \|_2 + \| g_n(\gseqi_3,\dfi_3) - g_n(\gseqi_2,\dfi_2) \|_2 \big )^2 \right\}^{1/2} \\
&\leq \left \{ \Eb \|g_n(\gseqi_1,\dfi_1) - g_n(\gseqi_3,\dfi_3) \|^2_2 \right \}^{1/2} + \left \{ \Eb \|g_n(\gseqi_3,\dfi_3) - g_n(\gseqi_2,\dfi_2) \|^2_2 \right \}^{1/2} \\
&= d_f^{(n)}((\gseqi_1,\dfi_1);(\gseqi_3,\dfi_3)) + d_f^{(n)}((\gseqi_3,\dfi_3);(\gseqi_2,\dfi_2)),
\end{align*}
for $(\gseqi_1,\dfi_1),(\gseqi_2,\dfi_2),(\gseqi_3,\dfi_3) \in \netir$ and $n \in \N$, where $\| \cdot \|_2$ denotes the Euclidean norm in $\R^n$. The triangle inequality for $d_f$ follows immediately. 
\qed

\medskip

\noindent
The decomposition below is similar to Step 5 in the proof of Theorem 13.1.1 in \cite{JacPro12} and it will occur frequently in the sequel. 
With the constants from Assumption \ref{Cond1} let $\ell \in \R$ satisfy
\begin{equation*}
1 < \ell < \frac{1}{2\beta \ovw} \wedge (1+ \epsilon) \quad \text{ and also } \quad \ell < \frac{2(p-1) \ovw -1}{2(\beta-1) \ovw} \text{ if } \beta >1,
\end{equation*}
with an $\epsilon >0$ for which Assumption \ref{Cond1}\eqref{ObsSchCond6} holds. Then we set
\begin{equation*}
u_n = (v_n)^{\ell} \quad \text{ and } \quad F_n = \lbrace \taili \colon |\taili| > u_n \rbrace
\end{equation*}
as well as
\begin{align}
\label{SmandlajumpDef}
\tibigj &= (\taili \ind_{F_n}(\taili)) \star \mu^{(n)}, \nonumber \\
\tibigjal &= (\taili \ind_{F_n \cap \lbrace |\taili| \leq \alpha/4 \rbrace }(\taili)) \star \mu^{(n)}, \quad \text{ for } \alpha >0 \nonumber \\
\hatbigjal &= (\taili \ind_{\lbrace |\taili| > \alpha/4 \rbrace}) \star \mu^{(n)}, \quad \text{ for } \alpha >0 \nonumber \\
N_t^n &= (\ind_{F_n} \star \mu^{(n)})_t, \nonumber \\
\tilde X_t^{\prime n} &= X^{(n)}_t - \tibigj_t  \nonumber \\
&= X^{(n)}_0 + \int_0^t b^{(n)}_s ds + \int_0^t \sigma^{(n)}_s dW^{(n)}_s+ \nonumber \\
&\hspace{2cm}+ (\taili \ind_{F_n^C}(\taili)) \star 
(\mu^{(n)} - \bar \mu^{(n)})_t - (\taili \ind_{\lbrace |\taili| \leq 1 \rbrace \cap F_n}(\taili)) \star \bar \mu^{(n)}_t, \nonumber \\
A_i^n	&= \lbrace | \Deli \tilde X^{\prime n} | \leq v_n/2 \rbrace \cap \lbrace \Deli N^n \leq 1 \rbrace.
\end{align}

In the following proofs it is necessary to ensure that with high probability at most one large jump occurs and the increments of the remaining part, that is the quantities $\Deli \tilde X^{\prime n}$, are small. To this end, we show in Lemma \ref{QnConv} in Appendix \ref{appD} for the sets 
\begin{equation}
\label{BnsetsDef}
Q_n = \bigcap \limits_{i=1}^n A_i^n.
\end{equation}
 that $\Prob(Q_n) \rightarrow 1$  as $n \to \infty$ 

\medskip

\noindent 
\textbf{Proof of Lemma \ref{step1}.} Let $\alpha > 0$ be fixed and recall the definition of the processes 
$L^{(n)} = (\taili \Trunx) \star \mu^{(n)}$ in \eqref{LnDefEq}. Due to Proposition \ref{BiasAbschProp} and Proposition \ref{LevyCLTProp} the processes
\[
\tilde Y_{\rho_{\alpha}}^{(n)} (\gseqi,\dfi) = \sqrt{n \Delta_n} \Big\{ \frac{1}{n \Delta_n} \sum \limits_{i=1}^{\ip{n\gseqi}} \chi_\dfi^{(\alpha)}(\Deli L^{(n)}) - N_{\rho_{\alpha}}(g^{(n)};\gseqi,\dfi) \Big\}
\]
converge weakly to $\Gb_{\rho_{\alpha}}$ in $\linner$, because 
\begin{align}
\label{EmpProDiffAbEq}
\sup \limits_{(\gseqi,\dfi) \in [0,1] \times \R} &\Big | \tilde Y_{\rho_{\alpha}}^{(n)} (\gseqi,\dfi) -  Y_{\rho_{\alpha}}^{(n)} (\gseqi,\dfi) \Big | \nonumber \\
&\hspace{-5mm}\leq K \sqrt{n\Delta_n} \Big( \int_{\ip{n\gseqi}/n}^\gseqi \int_{-\infty}^\dfi \rho_\alpha(\taili)g^{(n)}(y,d\taili)dy + \frac{\ip{n\gseqi}}{n\Delta_n} \big(\Delta_n^2 v_n^{-2((\beta+\delta)\wedge 2)} + \Delta_n v_n^{p-((\beta+\delta)\wedge 2)} \big) \Big) \nonumber \\
&\hspace{-5mm}= O\Big(\sqrt{\Delta_n/n} +  \sqrt{n \Delta_n^{3-4\beta\ovw-\delta}} + \sqrt{n \Delta_n^{1+2\ovw(p-\beta) - \delta}} \Big) =o(1)
\end{align}
holds for some small $\delta >0$, by Assumption \ref{Cond1}\eqref{BlGetCond}. The final equality in the display above follows using $1-2\beta\ovw >0$, $p-\beta >1$, as well as  Assumption \ref{Cond1}\eqref{ObsSchCond4} and \eqref{ObsSchCond6}. As a consequence, it suffices to show
\begin{equation}
\label{DiffKonv}
V_\alpha^{(n)} := \frac{1}{\sqrt{n \Delta_n}} \sup \limits_{(\gseqi,\dfi) \in \netir} \Big| \sum \limits_{i=1}^{\ip{n\gseqi}} \Big\{ \chi_\dfi^{(\alpha)} (\Deli X^{(n)}) \Truniv - \chi_\dfi^{(\alpha)}(\Deli L^{(n)}) \Big\} \Big| \stackrel{\Prob}{\longrightarrow} 0.
\end{equation}
According to Lemma \ref{ValnBound} in Appenidx \ref{appD} we have for $n \in \N$ large enough such that $v_n \leq \alpha/4$
\[
V_\alpha^{(n)} \leq C_n(\alpha) + D_n(\alpha),
\]
on $Q_n$ with
\begin{multline*}
C_n(\alpha) = \frac{K}{\sqrt{n \Delta_n}} \sup \limits_{t \in \R} \sum \limits_{i=1}^n \big| \Indit(\Deli \tilde X^{\prime n} + 
\Deli \hatbigjal) - \Indit(\Deli \hatbigjal) \big|  \times \\
\times \ind_{\lbrace | \Deli \hatbigjal | > \alpha/4 \rbrace} 
\ind_{\lbrace | \Deli \tilde X^{\prime n} | \leq v_n/2 \rbrace},
\end{multline*}
\begin{multline}
\label{DnalDef3}
D_n(\alpha) = \frac{1}{\sqrt{n \Delta_n}} \sum \limits_{i=1}^n \big| \rho_{\alpha} (\Deli \tilde X^{\prime n} + \Deli \hatbigjal)
\ind_{\lbrace |  \Deli \tilde X^{\prime n} + \Deli \hatbigjal| > v_n \rbrace} - \\
- \rho_{\alpha}(\Deli \hatbigjal) \ind_{\lbrace | \Deli \hatbigjal| > v_n \rbrace} \big| 
\ind_{\lbrace | \Deli \tilde X^{\prime n} | \leq v_n/2 \rbrace},
\end{multline}
where the processes in the display above are defined in \eqref{SmandlajumpDef} and where $K>0$ denotes a bound for $\rho$. Therefore, due to $\Prob(Q_n) \rightarrow 1$ it is enough to show $C_n(\alpha) =o_\Prob(1)$ and $D_n(\alpha) =o_\Prob(1)$ in order to verify \eqref{DiffKonv} and to complete the proof of Lemma \ref{step1}. 

First, we consider $D_n(\alpha)$. For later reasons, we let $f$ be either $\rho_{\alpha}$ or $\rho_{\alpha}^{\circ}$. Then there exists a constant $K >0$ which depends only on $\alpha$, such that we have for $x,z \in \R$ and $v>0$:
\begin{equation}
\label{ralralprdiabs}
\big| f(x+z) \ind_{\lbrace | x+z | >v \rbrace} - f(x) \ind_{\lbrace |x| > v \rbrace} \big| 
\ind_{\lbrace |z| \leq v/2 \rbrace} 
\leq K \big(|x|^p \ind_{\lbrace |x| \leq 2v \rbrace} + |x|^{p-1} |z| \ind_{\lbrace |z| \leq v/2 \rbrace}\big).
\end{equation}
Note that for $|x+z| >v$ and $|x| >v$ we use the mean value theorem and $|z| \leq |x|$ as well as $| \frac{df}{dx}(x) | \leq K |x|^{p-1}$ for all $x \in \R$ by the assumptions on $\rho$ and because the derivatives of $\Psi_\alpha$ and $\Psi_\alpha^\circ$ have a compact support, which is bounded away from $0$. In all other cases in which the left hand side does not vanish we have $|z| \leq |x| \leq 2v$ as well as $\left| f(x) \right| \leq K |x|^p$ for all $x \in \R$ by another application of the mean value theorem and the assumptions on $\rho$. Consequently,
\begin{equation}
\label{Dnalkonvb}
\Eb D_n(\alpha) \leq a_n(\alpha) + b_n(\alpha)
\end{equation}
holds for
\begin{align*} 
a_n(\alpha) &= \frac{1}{\sqrt{n \Delta_n}} \sum \limits_{i=1}^n \Eb \Big\{ \big| \Deli \hatbigjal \big|^p 
\ind_{\lbrace | \Deli \hatbigjal| \leq 2 v_n \rbrace} \Big\}~,~~
b_n(\alpha) &= \frac{v_n}{2 \sqrt{n \Delta_n}} \sum \limits_{i=1}^n \Eb \big| \Deli \hatbigjal \big|^{p-1},
\end{align*}
and we conclude $D_n(\alpha) =o_\Prob(1)$ because of Lemma \ref{anabnakonv} in Appendix \ref{appD}.

Finally, we show $C_n(\alpha) =o_\Prob(1)$. To this end, we define for $1 \leq i,j \leq n$ with $i \neq j$ and the constant $\ovr$ in Assumption \ref{Cond1}
\begin{align}
\label{RijnalDefEq2}
R_{i,j}^{(n)} (\alpha) = \left\{ \big| \Deli \hatbigjal - \Delj \hatbigjal \big| \leq \Delta_n^{\ovr} \right\} \cap 
\left\{ \big| \Deli \hatbigjal \big| > \alpha/4 \right\} \cap Q_n,
\end{align}
as well as the sets $J^{\scriptscriptstyle (1)}_n (\alpha)$ by:
\begin{align}
\label{J1ijnalDefEq2}
J^{(1)}_n(\alpha)^C = \bigcup \limits_{\stackrel{i,j =1}{i \neq j}}^n R_{i,j}^{(n)}(\alpha).
\end{align}
Then according to Lemma \ref{PJn1alconv}  in Appendix \ref{appD}
we have $\Prob \big(J^{\scriptscriptstyle (1)}_n(\alpha)\big) \rightarrow 1$. Moreover, Lemma \ref{Indikunglab} shows that for all $n \in\N$, $\omega \in J_n^{\scriptscriptstyle (1)}(\alpha) \cap Q_n$ and $\dfi \in \R$ the random set
\begin{align*}
\tilde A_1(\omega;\alpha,n,\dfi) &= \big\{i \in \{1,\ldots,n\} \mid | \Deli \hatbigjal (\omega) | > \alpha/4 \text{ and } \\
&\hspace{26mm}\Indit(\Deli \tilde X^{\prime n}(\omega) + \Deli \hatbigjal(\omega)) \neq \Indit(\Deli \hatbigjal(\omega)) \big\}
\end{align*}
has at most $c_n := \lceil (v_n/ \Delta_n^{\ovr}) + 1\rceil$ elements. Consequently, on $J_n^{\scriptscriptstyle (1)}(\alpha) \cap Q_n$ for each $t \in \R$ at most $c_n$ summands in the sum of the definition of $C_n(\alpha)$ can be equal to $1$ and we conclude 
\[
C_n(\alpha) \leq K/ \sqrt{n \Delta_n^{(1+ 2(\ovr - \ovw)) \vee 1}},
\]
on $J_n^{\scriptscriptstyle (1)}(\alpha) \cap Q_n$. Thus, $C_n =o_\Prob(1)$ follows using Assumption \ref{Cond1}\eqref{ObsSchCond7} and  $\Prob\big(J_n^{\scriptscriptstyle (1)}(\alpha) \cap Q_n \big) \to 1$.
\qed

\medskip 

\noindent 
\textbf{Proof of Lemma \ref{step3}.} For $\alpha > 0$ define the following processes:
\[
\tilde Y_{\rho^{\circ}_{\alpha}}^{(n)} (\gseqi,\dfi) = \sqrt{n \Delta_n} \Big\{ \frac{1}{n \Delta_n} \sum \limits_{i=1}^{\ip{n\gseqi}} \chi_\dfi^{\circ(\alpha)}(\Deli L^{(n)}) - N_{\rho^{\circ}_{\alpha}}(g^{(n)};\gseqi,\dfi) \Big\}.
\]
Similar to \eqref{EmpProDiffAbEq} we obtain with Proposition \ref{BiasAbschProp} and Proposition \ref{LevyCLTProp} that for $n \rightarrow \infty$ the processes in the display above converge weakly in $\linner$, that is
$
\tilde Y_{\rho^{\circ}_{\alpha}}^{(n)} \weak \Gb_{\rho^{\circ}_{\alpha}}.
$
On the other hand, we have weak convergence $
\Gb_{\rho^{\circ}_{\alpha}} \weak 0 $
in $\linner$ as $\alpha \rightarrow 0$, by Proposition \ref{GalKonvLem}. Therefore, by using the Portmanteau theorem (Theorem 1.3.4 in \cite{VanWel96}) twice, we obtain for arbitrary $\eta >0$:
\begin{equation*}
\limsup \limits_{\alpha \rightarrow 0} \limsup \limits_{n \rightarrow \infty} \Prob \big ( \sup \limits_{(\gseqi,\dfi) \in \netir} \big| 
\tilde Y_{\rho^{\circ}_{\alpha}}^{(n)} (\gseqi,\dfi) \big| \geq \eta \big ) \leq  
\limsup \limits_{\alpha \rightarrow 0} \Prob \big ( \sup \limits_{(\gseqi,\dfi) \in \netir} \big| \Gb_{\rho^{\circ}_{\alpha}} (\gseqi,\dfi)
\big| \geq \eta \big ) =0.
\end{equation*}
Thus, it suffices to show  $V^{\circ(n)}_{\alpha} =o_\Prob(1)$ as $n\to\infty$ for each $\alpha >0$ in a neighbourhood of $0$, where
\begin{equation}
\label{DifferDefEq}
V^{\circ(n)}_{\alpha} = \frac{1}{\sqrt{n \Delta_n}} \sup_{(\gseqi,\dfi) \in \netir} \Big| \sum \limits_{i=1}^{\ip{n\gseqi}} \Big\{ \chi_\dfi^{\circ (\alpha)} (\Deli X^{(n)}) \Truniv - 
\chi_\dfi^{\circ (\alpha)}(\Deli L^{(n)}) \Big\} \Big|.
\end{equation}
Due to Lemma \ref{ValprnBou} we have for $\alpha >0$, $\omega \in Q_n$ and $n \in \N$ large enough such that $v_n \leq \alpha$ with the processes defined in \eqref{SmandlajumpDef}
\[
V^{\circ(n)}_{\alpha} \leq C^\circ_n(\alpha) + D^\circ_n(\alpha) + E^\circ_n(\alpha),
\]
where
\begin{multline*}
C^\circ_n(\alpha) = \frac{K}{\sqrt{n \Delta_n}} \sup \limits_{t \in \R} \sum \limits_{i=1}^n \big| \Indit(\varsigma_i^n(\alpha)) - \Indit(\Deli \tibigjeial) \big|  \times \\
\times \ind_{\lbrace | \Deli \tibigjeial | > \Delta_n^{\ovv} \rbrace} 
\ind_{\lbrace | \Deli \tilde X^{\prime n} | \leq v_n/2 \rbrace},
\end{multline*}
with $K >0$ a bound for $\rho$ and
\begin{multline}
\label{Dnpraldef}
D^\circ_n(\alpha) = \frac{1}{\sqrt{n \Delta_n}} \sum \limits_{i=1}^n \big| \rho^{\circ}_{\alpha} (\varsigma_i^n(\alpha)) \ind_{\lbrace | \varsigma_i^n(\alpha)| > v_n \rbrace} - \rho^{\circ}_{\alpha}(\Deli \tibigjeial) \ind_{\lbrace | \Deli \tibigjeial| > v_n \rbrace} \big| \times \\ \times
\ind_{\lbrace | \Deli \tibigjeial | > \Delta_n^{\ovv} \rbrace} 
\ind_{\lbrace | \Deli \tilde X^{\prime n} | \leq v_n/2 \rbrace},
\end{multline}
\begin{multline*}
E^\circ_n(\alpha) = \frac{1}{\sqrt{n \Delta_n}} \sup \limits_{\dfi \in \R}  \sum \limits_{i=1}^{n}  \big|
\rho^{\circ}_{\alpha}(\varsigma_i^n(\alpha)) \ind_{\lbrace | \varsigma_i^n(\alpha)| > v_n \rbrace} \Indit(\varsigma_i^n(\alpha)) - \rho^{\circ}_{\alpha}(\Deli \tibigjeial) 
\times \\  \times \ind_{\lbrace | \Deli \tibigjeial | > v_n \rbrace} 
\Indit(\Deli \tibigjeial) \big| 
\ind_{\lbrace | \Deli \tilde X^{\prime n} | \leq v_n/2 \rbrace}  
\ind_{\lbrace | \Deli \tibigjeial | \leq \Delta_n^{\ovv} \rbrace} \ind_{Q_n},
\end{multline*}
where $\varsigma_i^n(\alpha) = \Deli \tilde X^{\prime n} + \Deli \tibigjeial$ and $\ovv >0$ is the constant from Assumption \ref{Cond1}\eqref{FiLevyDistCond}. Thus, as a consequence of $\Prob(Q_n) \to 1$ it suffices to show for each $\eta >0$ and each $\alpha >0$ in a neighbourhood of zero: 
\begin{align}
\label{a} &\lim_{n \rightarrow \infty} \Prob\big( C^\circ_n(\alpha) > \eta\big) = 0,\\
\label{b} &\lim_{n \rightarrow \infty} \Prob\big( D^\circ_n(\alpha) > \eta\big) =0, \\
\label{c} &\lim_{n \rightarrow \infty} \Prob\big( E^\circ_n(\alpha) > \eta\big) = 0.
\end{align}
Concerning \eqref{a}, similar to \eqref{RijnalDefEq2} we define for $1 \leq i,j \leq n$ with $i \neq j$ and the constants $\ovv < \ovr$ in Assumption \ref{Cond1}:
\[
S_{i,j}^{(n)}(\alpha) = \left\{ \big| \Deli \tibigjeial - \Delj \tibigjeial \big| \leq \Delta_n^{\ovr} \right\} \cap
\left\{ \big| \Deli \tibigjeial \big| > \Delta_n^{\ovv} \right\} \cap Q_n.
\]
as well as $J_n^{\scriptscriptstyle (2)} (\alpha)$ by
\begin{equation}
\label{Jn2aldef31}
\big(J_n^{(2)} (\alpha)\big)^C = \bigcup \limits_{\stackrel{i,j =1}{i \neq j}}^n S_{i,j}^{(n)} (\alpha).
\end{equation}
Then Lemma \ref{BiJovrovvAb}  in Appendix \ref{appD} shows $\Prob \big(J_n^{\scriptscriptstyle (2)}(\alpha)\big) \rightarrow 1$ for all $\alpha \in (0,\alpha_0/2)$ 
($\alpha_0$ is defined in Assumption \ref{Cond1}) and according to Lemma \ref{Indikunglab} the random set
\begin{multline*}
\tilde A_2(\omega;\alpha,n,\dfi) = \big\{i \in \{1,\ldots,n\} \mid | \Deli \tibigjeial(\omega)| > \Delta_n^{\ovv} \text{ and } \\
\Indit(\Deli \tilde X^{\prime n}(\omega) + \Deli \tibigjeial(\omega)) \neq \Indit(\Deli \tibigjeial(\omega)) \big\}
\end{multline*}
has at most $c_n = \lceil (v_n/ \Delta_n^{\ovr}) + 1\rceil$ elements for all $\omega \in J_n^{\scriptscriptstyle (2)}(\alpha) \cap Q_n$, $n\in\N$, $\dfi\in\R$ and $\alpha >0$. So for each $t \in \R$ at most $c_n$ summands in $C^\circ_n(\alpha)$ can be equal to $1$, and we have 
$
C^\circ_n(\alpha) \leq K /\sqrt{n\Delta_n^{(1+2(\ovr-\ovw)) \vee 1}}
$
on $J^{\scriptscriptstyle (2)}_n(\alpha) \cap Q_n$. Consequently, \eqref{a} follows  by Assumption \ref{Cond1}\eqref{ObsSchCond7} for all $\alpha \in (0,\alpha_0/2)$. 

Furthermore, because of \eqref{ralralprdiabs} we have 
\begin{equation}
\label{Dnpralkon}
\Eb D^\circ_n(\alpha) \leq c_n(\alpha) + d_n(\alpha),
\end{equation}
for
\begin{align*}
c_n(\alpha) &= \frac{1}{\sqrt{n \Delta_n}} \sum \limits_{i=1}^n \Eb \Big\{ \big| \Deli \tibigjeial \big|^p 
\ind_{\lbrace | \Deli \tibigjeial | \leq 2 v_n \rbrace} \Big\}, \\
d_n(\alpha) &= \frac{v_n}{2 \sqrt{n \Delta_n}} \sum \limits_{i=1}^n \Eb \big| \Deli \tibigjeial \big|^{p-1}.
\end{align*}
Thus, Lemma \ref{cnadnakonv}  in Appendix \ref{appD}. yields \eqref{b}.

Concerning \eqref{c}, let $\alpha >0$ be fixed.
Because of the triangle inequality and $|\rho_\alpha^\circ(\taili)|\leq K|\taili|^p$ for all $\taili \in \R$, an upper bound for $E^\circ_n(\alpha)$ is clearly given by 
\begin{multline*}
\frac{K}{\sqrt{n\Delta_n}} \sum \limits_{i=1}^n \Big(|\varsigma_i^n(\alpha)|^p \ind_{\{| \varsigma_i^n(\alpha)|>v_n\}} + |\Deli \tibigjeial|^p \ind_{\{|\Deli \tibigjeial|>v_n\}} \Big) \times   \\
\times \ind_{\lbrace | \Deli \tilde X^{\prime n} | \leq v_n/2 \rbrace}  \ind_{\lbrace | \Deli \tibigjeial | \leq \Delta_n^{\ovv} \rbrace} \ind_{Q_n},
\end{multline*}
with $\varsigma_i^n(\alpha) = \Deli \tilde X^{\prime n} +\Deli \tibigjeial$. As a consequence, we have $\Eb E^\circ_n(\alpha) \leq K\big( y_n^{(\alpha)} + 2 z_n^{(\alpha)} \big)$ for
\begin{align*}
y_n^{(\alpha)} &= \frac{1}{\sqrt{n \Delta_n}} \sum \limits_{i=1}^n \Eb \Big\{ \big| \big| \varsigma_i^n(\alpha) \big|^p - \big| \Deli \tibigjeial \big|^p  \big| \times \\
&\hspace{43mm}\times \ind_{\{|\Deli \tibigjeial | \leq \Delta_n^{\ovv} \}}
\ind_{\{|\varsigma_i^n(\alpha) |  >v_n \}} \ind_{\lbrace | \Deli \tilde X^{\prime n} | \leq v_n/2 \rbrace}\ind_{Q_n} \Big\}, \\
z_n^{(\alpha)} &= \frac{1}{\sqrt{n \Delta_n}} \sum \limits_{i=1}^n \Eb \Big\{ \big| \Deli \tibigjeial \big|^p \ind_{\{|\Deli \tibigjeial | \leq \Delta_n^{\ovv} \}} \ind_{Q_n}  \Big\}.
\end{align*}
Therefore, we obtain \eqref{c} by Lemma \ref{ynaznakonv} in Appendix \ref{appD}.
\qed

\section{Proof of Theorem \ref{CondConvThm}}
\label{ssecProTh68}
\def\theequation{B.\arabic{equation}}
\setcounter{equation}{0}

\subsection{Main steps in the proof}

Similar to the proof of Theorem \ref{ConvThm} we show Theorem \ref{CondConvThm} by treating small and large increments of $X^{\scriptscriptstyle (n)}$ separately. Therefore, with the quantities defined prior to \eqref{ImpQuantDef} we consider the processes
\begin{align*}
\hat G_{\rho,n}^{(\alpha)}(\gseqi,\dfi) &= \frac 1{\sqrt{n\Delta_n}} \sum\limits_{i=1}^{\ip{n\gseqi}} \xi_i \rho_\alpha(\Deli X^{(n)}) \Indit(\Deli X^{(n)}) \Truniv  \\
\hat G_{\rho,n}^{\circ (\alpha)}(\gseqi,\dfi) &= \frac 1{\sqrt{n\Delta_n}} \sum\limits_{i=1}^{\ip{n\gseqi}} \xi_i \rho^\circ_\alpha(\Deli X^{(n)}) \Indit(\Deli X^{(n)}) \Truniv.
\end{align*}

\begin{lemma}
	\label{BiJCoWeCoThm}
	If Assumption \ref{EasierCond} and Assumption \ref{MultiplAss} are satisfied, we have
$	
	\hat G_{\rho,n}^{(\alpha)} \weakP \Gb_{\rho_\alpha}
$	
	in $\linner$ for each fixed $\alpha >0$.
\end{lemma}

\begin{lemma}
	\label{NoContrfsmalL}
	Suppose Assumption \ref{EasierCond} and Assumption \ref{MultiplAss} are valid. Then for each $\alpha >0$ in a neighbourhood of $0$ we have
$
	\hat G_{\rho,n}^{\circ (\alpha)} \weakP \Gb_{\rho^\circ_\alpha}
$	
	holds in $\linner$.
\end{lemma}
\noindent
The two lemmas are the main ingredients in the proof of Theorem \ref{CondConvThm} and will be verified by approximating the truncated increments of the underlying processes by the increments
of the pure jump It\=o semimartingales from \eqref{LnDefEq}
\begin{align*}
L^{(n)} = (\taili \ind_{\{|\taili| > v_n\}}) \star \mu^{(n)},
\end{align*}
with the usual truncation $v_n= \gamma \Delta_n^{\ovw}$. The main advantage of the processes $L^{(n)}$ is the fact, that they have deterministic characteristics and therefore independent increments. As a consequence, we can use a result from \cite{Kos08} for triangular arrays of processes which are independent within rows to prove weak convergence conditional on the data in probability of the bootstrapped analogs of $Y_f^{\scriptscriptstyle (n)}$ from \eqref{EmpPrYDefEqn} which are given by
\begin{align*}
\hat Y_f^{(n)} (\gseqi,\dfi) = \frac 1{\sqrt{n\Delta_n}} \sum\limits_{i=1}^{\ip{n\gseqi}} \xi_i f(\Deli L^{(n)}) \Indit(\Deli L^{(n)}),
\end{align*}
for $(\gseqi,\dfi) \in \netir$ and where $f\colon\R \to \R$ is a bounded continuous function. More precisely, the following proposition is the main tool in order to obtain Lemma \ref{BiJCoWeCoThm} and Lemma \ref{NoContrfsmalL}.

\begin{proposition}
	\label{LnCondWeConv}
	Suppose Assumption \ref{EasierCond} and Assumption \ref{MultiplAss} are satisfied. Then for a continuous function $f\colon\R\to\R$ satisfying $|f(\taili)| \leq K(1\wedge |\taili|^p)$ for all $\taili \in \R$ and some $K>0$ we have 
$
	\hat Y_f^{(n)} \weakP \Gb_f
$
	in $\linner$, where $\Gb_f$ is the tight mean zero Gaussian process defined in Theorem \ref{ConvThm}.
\end{proposition}

\noindent{\bf Proof of Theorem \ref{CondConvThm}}
According to Definition \ref{ConvcondDataDef} we have to show
\begin{align}
\label{BouLipMetSma}
\sup \limits_{h \in \text{BL}_1(\linner)} \big| \Eb_\xi h(\hat G^{(n)}_\rho) - \Eb h(\Gb_\rho) \big| & \pto 0,
\\ 
\label{memamemidi}
\Eb_\xi h(\hat G_\rho^{(n)})^\ast - \Eb_\xi h(\hat G_\rho^{(n)})_\ast & \probto 0 \text{ for all } h \in \text{BL}_1(\linner),
\end{align}
where $h(\hat G_\rho^{\scriptscriptstyle (n)})^\ast$ and $h(\hat G_\rho^{\scriptscriptstyle (n)})_\ast$ denote a minimal measurable majorant and a maximal measurable minorant with respect to the joint data, respectively. \\
In order to show \eqref{BouLipMetSma} observe that by the properties of bounded Lipschitz functions we have for each $h \in \text{BL}_1(\linner)$
\begin{multline*}
\big| \Eb_\xi h(\hat G_\rho^{(n)}) - \Eb h(\Gb_\rho) \big| \leq \big| \Eb_\xi h(\hat G_{\rho,n}^{(\alpha)}) - \Eb h(\Gb_{\rho_\alpha}) \big| + \\
+ \big| \Eb h(\Gb_{\rho_\alpha}) - \Eb h(\Gb_\rho) \big| + |\Eb_\xi Y_n^{(\alpha)} - \Eb \Yb^{(\alpha)}| + \Eb \Yb^{(\alpha)},
\end{multline*}
for every $\alpha >0$, where
\begin{align*}
Y_n^{(\alpha)} = \sup \limits_{(\gseqi, \dfi) \in \netir} | \hat G_{\rho,n}^{\circ (\alpha)}(\gseqi,\dfi)| \wedge 2 \quad \text{ and } \quad \Yb^{(\alpha)} = \sup \limits_{(\gseqi, \dfi) \in \netir} | \Gb_{\rho_\alpha^\circ}(\gseqi,\dfi)| \wedge 2.
\end{align*} 
Thus, due to Lemma 1.2.2(i) in \cite{VanWel96} we obtain 
\begin{equation}
\label{meamajabsch}
\Big( \sup \limits_{h \in \text{BL}_1(\linner)} \big| \Eb_\xi h(\hat G_\rho^{(n)}) - \Eb h(\Gb_\rho) \big| \Big)^\ast \leq q_n^{(\alpha)} + p(\alpha) + |\Eb_\xi Y_n^{(\alpha)} - \Eb \Yb^{(\alpha)}|  + \Eb \Yb^{(\alpha)},
\end{equation}
for each $\alpha >0$, where
\begin{align*}
q_n^{(\alpha)}&=\Big( \sup \limits_{h \in \text{BL}_1(\linner)} \big| \Eb_\xi h(\hat G_{\rho,n}^{(\alpha)}) - \Eb h(\Gb_{\rho_\alpha}) \big| \Big)^\ast \\
p(\alpha) &= \sup \limits_{h \in \text{BL}_1(\linner)} \big| \Eb h(\Gb_{\rho_\alpha}) - \Eb h(\Gb_\rho) \big|.
\end{align*}
Notice that the supremum in the definition of $Y_n^{\scriptscriptstyle (\alpha)}$ is measurable, because the process $\hat G_{\rho,n}^{\scriptscriptstyle \circ (\alpha)}$ depends only via $\ip{n\gseqi}$ on $\gseqi \in [0,1]$ and is right-continuous in $\dfi \in \R$. Let $\eps >0$ be arbitrary. Then due to Proposition \ref{Gbrhoapcopr} and monotonicity of the integral we obtain
\begin{equation}
\label{EYbalsmaEq}
\Eb \Yb^{(\alpha)} \leq \eps/4,
\end{equation}
for all $\alpha >0$ in a neighbourhood of $0$. Moreover, because of  Lemma \ref{step2} and Theorem 1.12.1 in \cite{VanWel96} we have 
\begin{equation}
\label{palsmaeq}
p(\alpha) \leq \eps/4,
\end{equation}
for $\alpha >0$ small enough. Thus, choose an $\alpha >0$ such that \eqref{EYbalsmaEq}, \eqref{palsmaeq} and Lemma \ref{NoContrfsmalL} hold. Then Lemma \ref{BiJCoWeCoThm} yields $q_n^{\scriptscriptstyle (\alpha)} \probto 0$ as $n \rightarrow \infty$ and due to Lemma \ref{NoContrfsmalL} we have $|\Eb_\xi Y_n^{\scriptscriptstyle (\alpha)} - \Eb \Yb^{\scriptscriptstyle (\alpha)}| \probto 0$ as $n \rightarrow \infty$, because $Y_n^{\scriptscriptstyle (\alpha)} = h_0(\hat G_{\rho,n}^{\scriptscriptstyle \circ (\alpha)})$ and $\Yb^{\scriptscriptstyle (\alpha)} = h_0(\Gb_{\rho_\alpha^\circ})$ for the bounded Lipschitz function $h_0: \linner \rightarrow \R$ given by $h_0(f) = \sup_{(\gseqi,\dfi) \in \netir}|f(\gseqi,\dfi)| \wedge 2$. As a consequence, we obtain \eqref{BouLipMetSma} with \eqref{meamajabsch}:
\begin{multline*}
\Prob \Big( \Big( \sup \limits_{h \in \text{BL}_1(\linner)} \big| \Eb_\xi h(\hat G_\rho^{(n)}) - \Eb h(\Gb_\rho) \big| \Big)^\ast > \eps \Big) \\ 
\leq \Prob(q_n^{(\alpha)} > \eps/4) + \Prob(|\Eb_\xi Y_n^{(\alpha)} - \Eb \Yb^{(\alpha)}| > \eps/4) \rightarrow 0.
\end{multline*}
In order to show \eqref{memamemidi} we have for each $h \in \text{BL}_1(\linner)$ and each $\alpha >0$
\begin{align*}
h(\hat G_{\rho,n}^{(\alpha)}) - Y_n^{(\alpha)} \leq h(\hat G_\rho^{(n)}) \leq h(\hat G_{\rho,n}^{(\alpha)}) + Y_n^{(\alpha)}.
\end{align*}
Therefore, applying Lemma 1.2.2(i) in \cite{VanWel96} and the relation $-Z_\ast = (-Z)^\ast$ between the minimal measurable majorant and the maximal measurable minorant of a random element $Z$ several times yields
\begin{align}
\label{maminabsch}
|\Eb_\xi h(\hat G_{\rho}^{(n)})^\ast &- \Eb_\xi h(\hat G_{\rho}^{(n)})_\ast| = \Eb_\xi h(\hat G_{\rho}^{(n)})^\ast - \Eb_\xi h(\hat G_{\rho}^{(n)})_\ast \nonumber \\
&\leq |\Eb_\xi h(\hat G_{\rho,n}^{(\alpha)})^\ast - \Eb_\xi h(\hat G_{\rho,n}^{(\alpha)})_\ast| + 2 \Eb_\xi Y_n^{(\alpha)} \nonumber \\
&\leq |\Eb_\xi h(\hat G_{\rho,n}^{(\alpha)})^\ast - \Eb_\xi h(\hat G_{\rho,n}^{(\alpha)})_\ast| + 2 |\Eb_\xi Y_n^{(\alpha)} - \Eb \Yb^{(\alpha)}| + 2 \Eb \Yb^{(\alpha)},
\end{align}
for every $h \in \text{BL}_1(\linner)$ and each $\alpha >0$. For arbitrary $\eps >0$ we choose $\alpha >0$ such that Lemma \ref{NoContrfsmalL} holds and $\Eb \Yb^{\scriptscriptstyle (\alpha)} \leq \eps/8$. Then as above we see $|\Eb_\xi Y_n^{\scriptscriptstyle (\alpha)} - \Eb \Yb^{\scriptscriptstyle (\alpha)}| \probto 0$ for $n \rightarrow \infty$ and furthermore we have $|\Eb_\xi h(\hat G_{\rho,n}^{\scriptscriptstyle (\alpha)})^\ast - \Eb_\xi h(\hat G_{\rho,n}^{\scriptscriptstyle (\alpha)})_\ast| \probto 0$ by Lemma \ref{BiJCoWeCoThm} as $n \rightarrow \infty$. These facts together with \eqref{maminabsch} give \eqref{memamemidi}:
\begin{multline*}
\Prob(|\Eb_\xi h(\hat G_{\rho}^{(n)})^\ast - \Eb_\xi h(\hat G_{\rho}^{(n)})_\ast| > \eps) \leq \\
\Prob(|\Eb_\xi h(\hat G_{\rho,n}^{(\alpha)})^\ast - \Eb_\xi h(\hat G_{\rho,n}^{(\alpha)})_\ast| > \eps/2) + \Prob(|\Eb_\xi Y_n^{(\alpha)} - \Eb \Yb^{(\alpha)}| > \eps/8) \rightarrow 0.
\end{multline*}
\qed

\subsection{Proof of auxiliary results}

\textbf{Proof of Proposition \ref{LnCondWeConv}.} 
Recall the triangular array $\{g_{ni}(\gseqi,\dfi) \mid n\in\N, i=1,\ldots,n; (\gseqi,\dfi) \in \netir\}$ in the proof of Proposition \ref{LevyCLTProp} given by
\begin{align*}
g_{ni}(\omega ; (\gseqi,\dfi)) = \frac{1}{\sqrt{n \Delta_n}} f(\Deli L^{(n)}(\omega)) \Indit( \Deli L^{(n)}(\omega)) \ind_{\{i \leq \ip{n\gseqi}\}},
\end{align*}
for $(\gseqi,\dfi) \in \netir$ and let $\mu_{ni}(\gseqi,\dfi) = \Eb \big( g_{ni}(\gseqi,\dfi) \big)$. Moreover, for $n \in \N$, $i=1,\ldots,n$ and $(\gseqi,\dfi) \in \netir$ let
\begin{align*}
\hat \mu_{ni}(\gseqi,\dfi) = \frac 1{\sqrt{n\Delta_n}} \ind_{\{i \leq \ip{n\gseqi}\}} \tilde \eta_f^{(n)}(\dfi),
\end{align*}
with
\[\tilde \eta_f^{(n)}(\dfi) = \frac 1n \sum_{j=1}^n f(\Delj L^{(n)}) \ind_{(-\infty,\dfi]}(\Delj L^{(n)}),\]
be an estimator for $\mu_{ni}(\gseqi,\dfi)$. Then we proceed in two steps:
\begin{enumerate}[(a)]
	\item \label{Zza}
$	\hat Y^{(n)}_{f,0} (\gseqi,\dfi) := \sum\limits_{i=1}^n \xi_i \big( g_{ni}(\gseqi,\dfi) - \hat \mu_{ni}(\gseqi,\dfi) \big) \weakP \Gb_f,$
	\item \label{Zzb}
$	\sup_{(\gseqi,\dfi) \in \netir} \big| \hat Y^{(n)}_{f} (\gseqi,\dfi) - \hat Y^{(n)}_{f,0} (\gseqi,\dfi)\big| = \sup_{(\gseqi,\dfi) \in \netir} \big| \sum_{i=1}^n \xi_i \hat \mu_{ni}(\gseqi,\dfi) \big| = o_\Prob(1),$
\end{enumerate}
then the claim follows using Lemma \ref{oP1glzcwecole}.

\medskip

\noindent 
{\it Proof of Step \eqref{Zza}.} The sequence $\{g_{ni}\}$ satisfies conditions \eqref{Liste1}-\eqref{Liste6} in the proof of Proposition \ref{LevyCLTProp}. Thus, the conditional weak convergence \eqref{Zza} holds by Theorem 11.18 in \cite{Kos08}, if we can show the following four conditions of the triangular array $\{\hat\mu_{ni}(\gseqi,\dfi)\mid n\in\N, i=1,\ldots,n; (\gseqi,\dfi) \in \netir\}$:
\begin{enumerate}[(A)]
	\setcounter{enumi}{6}
	\item $\{\hat\mu_{ni}\}$ is AMS; \label{ListeG}
	\item  $
		\sup\limits_{(\gseqi,\dfi) \in \netir} \sum\limits_{i=1}^n \big[ \hat\mu_{ni}(\omega;(\gseqi,\dfi)) - \mu_{ni}(\gseqi,\dfi) \big]^2 = o_{\Prob} (1);
	$
 \label{ListeH}
	\item The processes $\{\hat\mu_{ni}\}$ are manageable with envelopes $	\hat F_{ni} = \frac K{n\sqrt{n\Delta_n}} \sum\limits_{j=1}^n 1 \wedge \big|\Delj L^{(n)} \big|^p,
	$
	for $n\in\N$ and $i=1,\ldots,n$, with a $K>0$ such that $|f(\taili)| \leq K(1\wedge |\taili|^p)$ for all $\taili \in \R$;\label{ListeI}
	\item There exists an $M\in\R_+$ such that 
	$
	M \vee \sum_{i=1}^n [\hat F_{ni}]^2 \probto M.
	$ \label{ListeJ}
\end{enumerate}

\noindent 
\textit{Proof of  \eqref{ListeG}.} For each $n\in\N$ define the countable set $S_n = (\netir) \cap \Q^2$ to obtain
\[
\Prob^{\ast} \bigg( \sup \limits_{(\gseqi_1,\dfi_1) \in \netir} \inf \limits_{(\gseqi_2, \dfi_2) \in S_n} \sum \limits_{i=1}^n (\hat\mu_{ni}(\omega; (\gseqi_1,\dfi_1)) - 
\hat\mu_{ni}(\omega;(\gseqi_2,\dfi_2)))^2 >0 \bigg) = 0.
\]
As a consequence, the triangular array $\{\hat\mu_{ni}\}$ is separable and therefore AMS by Lemma 11.15 in \cite{Kos08}.

\medskip

\noindent 
\textit{Proof of  \eqref{ListeH}.} Simple calculations show
\begin{align}
\label{AnUmstEqn}
A_n :&= \sup\limits_{(\gseqi,\dfi) \in \netir} \sum\limits_{i=1}^n \big[ \hat\mu_{ni}(\omega;(\gseqi,\dfi)) - \mu_{ni}(\gseqi,\dfi) \big]^2 \nonumber \\
&= \frac 1{n^3 \Delta_n} \sup\limits_{\dfi \in \R} \sum\limits_{i=1}^n \sum\limits_{j=1}^n \sum\limits_{k=1}^n \Big( f(\Delj L^{(n)}) \Indit(\Delj L^{(n)}) \nonumber\\
&\hspace{7cm} -\Eb \big( f(\Deli L^{(n)}) \Indit(\Deli L^{(n)}) \big) \Big) \times \nonumber\\
&\hspace{22mm}\times \Big( f(\Delk L^{(n)}) \Indit(\Delk L^{(n)}) - \Eb \big( f(\Deli L^{(n)}) \Indit(\Deli L^{(n)}) \big) \Big) \nonumber\\
&= \sup\limits_{\dfi \in \R} \bigg\{ \frac 1{n^2 \Delta_n} \sum\limits_{j=1}^n \sum\limits_{k=1}^n f(\Delj L^{(n)}) \Indit(\Delj L^{(n)})f(\Delk L^{(n)}) \Indit(\Delk L^{(n)}) \nonumber \\
&\hspace{8mm} -\frac 1{n^2 \Delta_n} \sum\limits_{i=1}^n \sum\limits_{k=1}^n f(\Delk L^{(n)}) \Indit(\Delk L^{(n)}) \Eb\big( f(\Deli L^{(n)}) \Indit(\Deli L^{(n)})\big)  \nonumber\\
&\hspace{8mm}- \frac 1{n^2 \Delta_n} \sum\limits_{i=1}^n \sum\limits_{j=1}^n f(\Delj L^{(n)}) \Indit(\Delj L^{(n)}) \Eb\big( f(\Deli L^{(n)}) \Indit(\Deli L^{(n)})\big)  \nonumber\\
&\hspace{8mm}+ \frac 1{n\Delta_n} \sum\limits_{i=1}^n \big(\Eb\big(f(\Deli L^{(n)}) \Indit(\Deli L^{(n)})\big)\big)^2 \bigg\}.
\end{align}
Furthermore, by Lemma \ref{MomentfgO} and the assumptions on $f$ we obtain 
\begin{align}
\label{ErwLnODeln}
&\hspace{2cm}\sup_{i \in \{1,\ldots,n\}}\Eb \big(\sup\limits_{\dfi \in \R} \big| f(\Deli L^{(n)}) \Indit(\Deli L^{(n)})\big| \big) = O(\Delta_n), \\
&\Eb \sup\limits_{\dfi \in \R} \Big| \frac 1{n^2 \Delta_n} \sum\limits_{i=1}^n \sum\limits_{k=1}^n f(\Delk L^{(n)}) \Indit(\Delk L^{(n)}) \Eb\big( f(\Deli L^{(n)}) \Indit(\Deli L^{(n)})\big) \Big| \nonumber \\
&\hspace{8cm}\leq \frac Kn \sum\limits_{k=1}^n \Eb \big( 1\wedge |\Delk L^{(n)}|^p \big)  = O(\Delta_n). \label{AnSumoProb1}
\end{align}
Thus \eqref{AnUmstEqn}, \eqref{ErwLnODeln} and \eqref{AnSumoProb1} give
\begin{align*}
0 \leq A_n &\leq \sup\limits_{\dfi \in \R} \Big\{ \frac 1{n^2 \Delta_n} \sum\limits_{j=1}^n \sum\limits_{k=1}^n |f(\Delj L^{(n)})| \Indit(\Delj L^{(n)}) \times \\
&\hspace{6cm} \times |f(\Delk L^{(n)})| \Indit(\Delk L^{(n)}) \Big\} + o_\Prob(1) \\
&= \frac 1n \sup\limits_{\dfi \in \R} \big( Y_{|f|}^{(n)}(1,\dfi) \big)^2 + o_\Prob(1) = o_\Prob (1),
\end{align*}
because  $Y_{|f|}^{(n)}$ converges weakly to the tight process $\Gb_{|f|}$ in $\linner$ by Proposition \ref{LevyCLTProp}.

\medskip
\noindent 

\noindent 
\textit{Proof of  \eqref{ListeI}.} According to Theorem 11.17 in \cite{Kos08}, it suffices to verify that the triangular arrays
\begin{align*}
\{\tilde \mu_{ni}(\omega;\dfi) \defeq \frac{1}{\sqrt{n \Delta_n}} \tilde\eta_f^{(n)}(\dfi) \mid n\in\N; i=1,\ldots,n;\dfi \in \R\},
\end{align*}
and
\begin{align*}
\{\tilde h_{ni}(\omega;\gseqi) \defeq \ind_{\{i \leq \ip{n\gseqi}\}} \mid n\in\N;i=1,\ldots,n; \gseqi\in[0,1]\}
\end{align*}
are manageable with envelopes $\lbrace \hat F_{ni} \mid n \in \N; i = 1, \ldots,n \rbrace$ and $\{\tilde H_{ni}(\omega) :\equiv 1\mid n\in\N; i=1,\ldots,n\}$, respectively. 
The manageability of the triangular array $\{\tilde h_{ni}\}$ has already been shown in the proof of Proposition \ref{LevyCLTProp}. Concerning the triangular array $\{\tilde \mu_{ni}\}$ we consider for $n \in \N$ and $\omega \in \Omega$ the sets 
\begin{align*}
\Fc_{n\omega} = \big\{ \big( \tilde \mu_{n1}(\omega;\dfi), \ldots, \tilde \mu_{nn}(\omega;\dfi) \big) \mid \dfi \in \R \big\} \subset \R^n,
\end{align*}
which are bounded with envelope vector $(\hat F_{n1}(\omega), \ldots, \hat F_{nn}(\omega))$. But $\tilde \mu_{ni}$ does not depend on $i$, such that every coordinate projection of $\Fc_{n\omega}$ onto two coordinates $i_1,i_2 \in \{1,\ldots,n\}$ is a subset of the straight line $\{(x,y) \in \R^2 \mid x=y\}$. Consequently, in the sense of Definition 4.2 in \cite{Pol90}, for every $s \in \R^2$ no proper coordinate projection of $\mathcal F_{n \omega}$ can surround $s$ and therefore $\mathcal F_{n \omega}$ has a pseudo dimension of at most $1$ (Definition 4.3 in \cite{Pol90}). Now the managebility of the triangular array $\{\tilde \mu_{ni} \}$ follows with the same reasoning as in the verification of \eqref{Liste2} in the proof of Proposition \ref{LevyCLTProp}.

\medskip

\noindent 
\textit{Proof of  \eqref{ListeJ}.} The envelopes $\{\hat F_{ni}\}$ are independent of $i$ as well. Therefore, with Lemma \ref{MomentfgO}  in Appendix \ref{appD} we obtain
\begin{align*}
\Eb \Big\{ \sum_{i=1}^n [\hat F_{ni}]^2 \Big\} &= n \Eb[\hat F_{n1}]^2 = \frac 1{n^2\Delta_n} \Eb \Big\{\sum\limits_{i=1}^n \sum \limits_{j=1}^n \big( 1 \wedge \big| \Deli L^{(n)} \big|^p \big)\big( 1 \wedge \big| \Delj L^{(n)} \big|^p \big) \Big\} \\
&= O\big(\Delta_n + 1/n \big),
\end{align*}
because the processes $L^{(n)}$ have independent increments. As a consequence, we have in fact $\sum_{i=1}^n [\hat F_{ni}]^2 = o_\Prob(1)$, which proves the claim.

\medskip

\noindent 
{\it Proof of Step \eqref{Zzb}.} With the the notation $
U_n(\dfi) = \frac{1}{n\sqrt{\Delta_n}} \sum_{j=1}^n f(\Delj L^{(n)}) \ind_{(-\infty,\dfi]}(\Delj L^{(n)})
$
we have
$
\sum_{i=1}^n \xi_i \hat\mu_{ni}(\gseqi,\dfi) = U_n(\dfi) \frac 1{\sqrt n} \sum_{i=1}^{\ip{n\gseqi}} \xi_i.
$
As an immediate consequence of Lemma \ref{MomentfgO} we obtain $\sup_{\dfi \in \R} |U_n(\dfi)| = o_\Prob(1)$. Furthermore, the $(\xi_i)_{i\in\N}$ are i.i.d.\ with mean zero and variance one, so it is well known from empirical process theory (see for instance Theorem 2.5.2 and Theorem 2.12.1 in \cite{VanWel96}) that $1/\sqrt n \times \sum_{i=1}^{\scriptscriptstyle \ip{n\gseqi}} \xi_i$ converges weakly to a Brownian motion in $\linne$. The law of a Brownian motion is tight in $\linne$ (see for example Section 8 in \cite{Bil99}) and thus $U_n(\dfi) /\sqrt n \times \sum_{i=1}^{\scriptscriptstyle \ip{n\gseqi}} \xi_i$ converges to $0$ in $\linner$ in outer probability.
\qed

\medskip

\noindent 
\textbf{Proof of Lemma \ref{BiJCoWeCoThm}.}
By Proposition \ref{LnCondWeConv} we have $\hat Y_{\rho_\alpha}^{(n)} \weakP \Gb_{\rho_\alpha}$ for each fixed $\alpha >0$ and therefore due to Lemma \ref{oP1glzcwecole} it only remains to show that the term
\begin{equation*}
\hat G_{\rho,n}^{(\alpha)}(\gseqi,\dfi) - \hat Y_{\rho_\alpha}^{(n)}(\gseqi,\dfi) = \frac 1{\sqrt{n\Delta_n}} \sum\limits_{i=1}^{\ip{n\gseqi}} \xi_i \big( \chi_\dfi^{(\alpha)}(\Deli X^{(n)}) \Truniv - \chi_\dfi^{(\alpha)}(\Deli L^{(n)}) \big)
\end{equation*}
for $(\gseqi,\dfi) \in \netir$, 
converges to $0$ in $\linner$ in outer probability. 
Consequently, it suffices to show 
\begin{align*}
\hat V_\alpha^{(n)} \probto 0,
\end{align*}
for each fixed $\alpha >0$, where
\begin{align}
\label{hatValnDef}
\hat V_\alpha^{(n)} = \frac 1{\sqrt{n\Delta_n}} \sup \limits_{(\gseqi,\dfi) \in \netir} \Big| \sum \limits_{i=1}^{\ip{n\gseqi}} \xi_i \big( \chi_\dfi^{(\alpha)}(\Deli X^{(n)}) \Truniv - \chi_\dfi^{(\alpha)}(\Deli L^{(n)}) \big) \Big|.
\end{align}
Note that Lemma \ref{haValnbou} in Appendix \ref{appD} yields the estimate
\[
\hat V_\alpha^{(n)} \leq \hat D_n(\alpha) + \hat E_n(\alpha) + \hat F_n(\alpha),
\]
on $J_n^{\scriptscriptstyle (1)}(\alpha) \cap Q_n$ for $n \in \N$ large enough such that $v_n \leq \alpha/4$, where $Q_n$ is defined in \eqref{BnsetsDef}, $J_n^{\scriptscriptstyle (1)}(\alpha)$ is defined in \eqref{J1ijnalDefEq2} and with 
\begin{multline*}
\hat D_n(\alpha) = \frac{1}{\sqrt{n \Delta_n}} \sum \limits_{i=1}^n |\xi_i| \big| \rho_{\alpha} (\Deli \tilde X^{\prime n} + \Deli \hatbigjal)
\ind_{\lbrace |  \Deli \tilde X^{\prime n} + \Deli \hatbigjal| > v_n \rbrace} - \\
- \rho_{\alpha}(\Deli \hatbigjal) \ind_{\lbrace | \Deli \hatbigjal| > v_n \rbrace} \big| 
\ind_{\lbrace | \Deli \tilde X^{\prime n} | \leq v_n/2 \rbrace},
\end{multline*}
\begin{align*}
\hat E_n(\alpha) = \sup\limits_{A \in \mathfrak S_n} \Big| \sum \limits_{i \in A} \xi_i a_i^n (\alpha) \Big|,
~~ 
\hat F_n(\alpha) = \sup\limits_{A \in \mathfrak S_n} \Big| \sum \limits_{i \in A} \xi_i b_i^n (\alpha) \Big|,
\end{align*}
and 
$
a_i^n (\alpha) = \frac 1{\sqrt{n \Delta_n}} \rho_{\alpha}(\Deli \hatbigjal) \ind_{\lbrace | \Deli \hatbigjal | > 
	\alpha/4 \rbrace}, 
$
\[
b_i^n (\alpha) = \frac 1{\sqrt{n \Delta_n}} \rho_{\alpha} (\Deli \tilde X^{\prime n} + \Deli \hatbigjal) \ind_{\lbrace |  \Deli \tilde X^{\prime n} + \Deli \hatbigjal| > v_n \rbrace} \ind_{\lbrace | \Deli \hatbigjal | > 
	\alpha/4 \rbrace},
\]
where the quantities in the displays  are introduced in \eqref{SmandlajumpDef} and $\mathfrak S_n = \{ M \subset \{1, \ldots,n\} \mid \# M \leq c_n\}$ with $c_n = \lceil (v_n/ \Delta_n^{\ovr}) + 1\rceil$. Lemma \ref{QnConv} and Lemma \ref{PJn1alconv} in Appendix \ref{appD}
show $\Prob(J_n^{\scriptscriptstyle (1)}(\alpha) \cap Q_n) \to 1$ and thus it is further enough to verify
\begin{align}
\hat D_n(\alpha) &= o_\Prob(1), \label{d12} \\
\hat E_n(\alpha) &= o_\Prob(1), \label{e12} \\
\hat F_n(\alpha) &= o_\Prob(1), \label{f12} 
\end{align}
for each $\alpha >0$ as $n \to \infty$. \\
Recall the quantity $D_n(\alpha)$ introduced in \eqref{DnalDef3}. \eqref{Dnalkonvb} and Lemma \ref{anabnakonv} yield $\Eb D_n(\alpha) \rightarrow 0$. Moreover, the bootstrap multipliers have variance $1$ and satisfy therefore $\Eb | \xi_i | \leq 1$ for all $i \in \N$. Thus because of the independence of the multipliers and the other involved processes we obtain $0 \leq \Eb \hat D_n(\alpha) \leq \Eb D_n(\alpha) \rightarrow 0$, which proves \eqref{d12}. 
Concerning \eqref{e12} we note that Lemma \ref{ainalmombou} in Appendix \ref{appD}  implies 
(for $n\in\N$ large enough)
$
\Eb |a_i^n (\alpha)|^m \leq \big( \frac{K(\alpha)}{\sqrt{n \Delta_n}} \big)^m \Delta_n,
$
for some $K(\alpha)>0$, all $m \in \N$ and all $i=1,\ldots,n$. Thus, using Assumption \ref{MultiplAss} as well as independence of $\xi_i$ and $a_i^n(\alpha)$ we obtain for every integer $m \geq 2$ and $n\in\N$ large enough
\[
\Eb |Z_i^n(\alpha)|^m \leq m! \Big( \frac{C_1}{\sqrt{n\Delta_n}}\Big)^{m-2} \frac{C_2}n,
\]
where 
$
Z_i^n(\alpha) = \xi_i a_i^n(\alpha) .
$
Furthermore, due to the definition of $\hatbigjal$ in \eqref{SmandlajumpDef} the variables $(Z_i^n(\alpha))_{i=1,\ldots,n}$ are independent with mean zero. Consequently, Lemma \ref{supsnzinlem} shows
\[
\Eb \hat E_n(\alpha) = \Eb \Big\{ \sup_{A \in \mathfrak S_n} \Big| \sum_{i \in A} Z_i^n(\alpha) \Big| \Big\} = o(1),
\]
which proves \eqref{e12}. \\
In order to show \eqref{f12} observe first that for $n \in \N$ large enough
\begin{align*}
\ind_{\lbrace |  \Deli \tilde X^{\prime n} + \Deli \hatbigjal| > v_n \rbrace} \ind_{\lbrace | \Deli \hatbigjal | > 
	\alpha/4 \rbrace} =  \ind_{\lbrace | \Deli \hatbigjal | > 
	\alpha/4 \rbrace} 
\end{align*}
holds for each $i=1,\ldots,n$ on the set $Q_n$, because by \eqref{BnsetsDef} we have $| \Deli \tilde X^{\prime n} | \leq v_n/2$ on $Q_n$. Therefore, we obtain from the mean value theorem for large $n$ on the set $Q_n$
\begin{align*}
b_i^n(\alpha) = a_i^n(\alpha) + \frac 1{\sqrt{n \Delta_n}} \ind_{\lbrace | \Deli \hatbigjal | > 
	\alpha/4 \rbrace} \ind_{\lbrace | \Deli \tilde X^{\prime n} | \leq v_n/2 \rbrace} \Deli \tilde X^{\prime n} \Big(\frac{d \rho_\alpha}{dx}\Big)(\zeta_i^n(\alpha))
\end{align*}
for some $\zeta_i^n(\alpha)$ between $\Deli \hatbigjal$ and $\Deli \tilde X^{\prime n} + \Deli \hatbigjal$. Thus, the indicators and Assumption \ref{Cond1}\eqref{RhoCond} show for large $n \in\N$
\begin{align}
\label{EnalFnaldi}
\big| \hat E_n(\alpha) - \hat F_n(\alpha) \big| \ind_{Q_n} &\leq \sup \limits_{A \in \mathfrak S_n} \frac K{\sqrt{n \Delta_n}} \sum \limits_{i \in A} |\xi_i| |\Deli \hatbigjal|^{p-1} |\Deli \tilde X^{\prime n}| \times \nonumber \\
&\hspace{45mm} \times \ind_{\lbrace | \Deli \hatbigjal | > 
	\alpha/4 \rbrace} \ind_{\lbrace | \Deli \tilde X^{\prime n} | \leq v_n/2 \rbrace} \nonumber \\
&\leq \frac {Kv_n}{\sqrt{n \Delta_n}} \sum \limits_{i=1}^n |\xi_i||\Deli \hatbigjal|^{p-1}.
\end{align}
The bootstrap multipliers are defined on a distinct probability space and satisfy $\Eb |\xi_i| \leq 1$ for all $i\in\N$. As a consequence, \eqref{EnalFnaldi} gives
\[
\Eb \big| \hat E_n(\alpha) - \hat F_n(\alpha) \big| \ind_{Q_n} \leq K b_n(\alpha),
\]
with
\[
b_n(\alpha) = \frac{v_n}{2 \sqrt{n \Delta_n}} \sum \limits_{i=1}^n \Eb \big| \Deli \hatbigjal \big|^{p-1}.
\]
Therefore, \eqref{f12} follows from \eqref{e12}, Lemma \ref{QnConv} and Lemma \ref{anabnakonv}.
\qed

\medskip

\noindent 
\textbf{Proof of Lemma \ref{NoContrfsmalL}.}
Due to Proposition \ref{LnCondWeConv} we have $\hat Y_{\rho_\alpha^\circ}^{(n)} \weakP \Gb_{\rho_\alpha^\circ}$ in $\linner$ for every $\alpha >0$. Thus, according to Lemma \ref{oP1glzcwecole}  in Apendix \ref{appF} it suffices to show
\begin{align*}
\sup\limits_{(\gseqi,\dfi) \in \netir} \big|\hat G_{\rho,n}^{\circ (\alpha)} - \hat Y_{\rho_\alpha^\circ}^{(n)} \big| = o_\Prob(1)
\end{align*}
for each $\alpha >0$ in a neighbourhood of zero. Simple manipulations give
\begin{equation*}
\hat G_{\rho,n}^{\circ (\alpha)}(\gseqi,\dfi) - \hat Y_{\rho_\alpha^\circ}^{(n)}(\gseqi,\dfi) = \frac 1{\sqrt{n\Delta_n}} \sum\limits_{i=1}^{\ip{n\gseqi}} \xi_i \big( \chi_\dfi^{\circ (\alpha)}(\Deli X^{(n)}) \Truniv  - \chi_\dfi^{\circ (\alpha)}(\Deli L^{(n)}) \big) 
\end{equation*}
for $(\gseqi,\dfi) \in \netir$. 
As a consequence, it only remains to show that 
\begin{align}
\label{genhavnalprzz}
\hat V_\alpha^{\circ (n)} \probto 0,
\end{align}
for each $\alpha >0$ in a neighbourhood of $0$ with
\begin{align*}
\hat V_\alpha^{\circ (n)} = \frac 1{\sqrt{n\Delta_n}} \sup_{(\gseqi,\dfi) \in \netir} \Big| \sum\limits_{i=1}^{\ip{n\gseqi}} \xi_i \big( \chi_\dfi^{\circ (\alpha)}(\Deli X^{(n)}) \Truniv - \chi_\dfi^{\circ (\alpha)}(\Deli L^{(n)}) \big) \Big|.
\end{align*}
 Lemma \ref{haVpralbou}  in Appendix \ref{appD} implies for each $\alpha >0$ and  sufficiently large $n \in \N$ with  $v_n \leq \alpha$  the bound 
\[
\hat V_\alpha^{\circ (n)} \leq \hat C_n^\circ (\alpha) + \hat D_n^\circ (\alpha) + \hat E_n^\circ (\alpha) + \hat F_n^\circ (\alpha)
\]
on $J_n^{\scriptscriptstyle (2)}(\alpha) \cap Q_n$, where  $Q_n$ is defined in \eqref{BnsetsDef}, $J_n^{\scriptscriptstyle (2)}(\alpha)$ is defined in \eqref{Jn2aldef31} and with
\begin{align*}
&\hat C_n^\circ (\alpha) = \frac 1{\sqrt{n\Delta_n}} \sup\limits_{\dfi \in \R} \sum \limits_{i=1}^n |\xi_i| \big| \rho_\alpha^\circ(\varsigma_i^n(\alpha)) \ind_{\{|\varsigma_i^n(\alpha)| > v_n\}} \Indit(\varsigma_i^n(\alpha)) - \\
&\hspace{2mm} - \rho_\alpha^\circ(\Deli \tibigjeial) 
\ind_{\{|\Deli \tibigjeial| > v_n\}} \Indit(\Deli \tibigjeial) \big| \ind_{\{|\Deli \tibigjeial| \leq \Delta_n^{\ovv}\}} \ind_{\{|\Deli \tilde X^{\prime n}| \leq v_n/2\}},
\end{align*}
\begin{align*}
\hat D_n^\circ (\alpha) &= \frac 1{\sqrt{n\Delta_n}} \sum \limits_{i=1}^n |\xi_i| \big| \rho^{\circ}_{\alpha}(\varsigma_i^n(\alpha)) \ind_{\lbrace | \varsigma_i^n(\alpha)| > v_n \rbrace}  -\rho^{\circ}_{\alpha}(\Deli \tibigjeial) \ind_{\lbrace | \Deli \tibigjeial | > v_n \rbrace} \big| \times \\
&\hspace{75mm} \times \ind_{\{|\Deli \tibigjeial| > \Delta_n^{\ovv}\}} \ind_{\{|\Deli \tilde X^{\prime n}| \leq v_n/2\}}, 
\end{align*}
\begin{align*}
\hat E_n^\circ (\alpha) = \sup\limits_{A \in \mathfrak S_n} \Big| \sum\limits_{i \in A} \xi_i \bar a_i^n(\alpha) \Big|,
~~
\hat F_n^\circ (\alpha) = \sup\limits_{A \in \mathfrak S_n} \Big| \sum\limits_{i \in A} \xi_i \bar b_i^n(\alpha) \Big|,
\end{align*}
where the processes involved in the display above have been introduced in \eqref{SmandlajumpDef}, $\ovv >0$ is the constant from Assumption \ref{Cond1}\eqref{FiLevyDistCond}, $\varsigma_i^n(\alpha) = \Deli \tilde X^{\prime n} + \Deli \tibigjeial$, $\mathfrak S_n = \{ M \subset \{1, \ldots,n\} \mid \# M \leq c_n\}$ for $c_n = \lceil (v_n/ \Delta_n^{\ovr}) + 1\rceil$ and with
\begin{align*}
\bar a_i^n(\alpha) &= \frac 1{\sqrt{n\Delta_n}} \rho_\alpha^\circ(\Deli \tibigjeial) \ind_{\{| \Deli \tibigjeial| > v_n \vee \Delta_n^{\ovv}\}}, \\
\bar b_i^n(\alpha) &= \frac 1{\sqrt{n\Delta_n}} \rho_\alpha^\circ(\varsigma_i^n(\alpha)) 
\ind_{\{|\varsigma_i^n(\alpha)| > v_n\}} \ind_{\{ | \Deli \tibigjeial| > \Delta_n^{\ovv}\}}.
\end{align*}
Lemma \ref{QnConv} and Lemma \ref{BiJovrovvAb} show $\Prob(J_n^{\scriptscriptstyle (2)}(\alpha) \cap Q_n) \to 1$ for each $\alpha >0$ small enough and consequently it suffices to verify
\begin{align}
\hat C_n^\circ (\alpha) &= o_\Prob(1), \label{c24} \\
\hat D_n^\circ (\alpha) &= o_\Prob(1), \label{d24} \\
\hat E_n^\circ (\alpha) &= o_\Prob(1), \label{e24} \\
\hat F_n^\circ (\alpha) &= o_\Prob(1), \label{f24}
\end{align}
for all $\alpha >0$ as $n \to \infty$. \\
Concerning \eqref{c24}, we have due to the triangle inequality and $|\rho_\alpha^\circ(\taili)|\leq K|\taili|^p$ for all $\taili \in \R$
\begin{multline*}
\hat C_n^\circ (\alpha) \leq \frac{K}{\sqrt{n\Delta_n}} \sum \limits_{i=1}^n |\xi_i| \Big(|\varsigma_i^n(\alpha)|^p \ind_{\{| \varsigma_i^n(\alpha)|>v_n\}} + |\Deli \tibigjeial|^p \ind_{\{|\Deli \tibigjeial|>v_n\}} \Big) \times \\
\times \ind_{\lbrace | \Deli \tilde X^{\prime n} | \leq v_n/2 \rbrace}  \ind_{\lbrace | \Deli \tibigjeial | \leq \Delta_n^{\ovv} \rbrace} \ind_{Q_n}
\end{multline*}
for fixed $\alpha >0$ on the set $Q_n$ and where $\varsigma_i^n(\alpha) = \Deli \tilde X^{\prime n} + \Deli \tibigjeial$. Consequently, because of $\Eb|\xi_i| \leq 1$ for each $i\in\N$ and the fact that the multipliers are defined on a distinct probability space we obtain
\begin{align*}
\Eb \big( \hat C_n^\circ (\alpha) \ind_{Q_n} \big) \leq K \big( y_n^{(\alpha)} + 2 z_n^{(\alpha)} \big),
\end{align*}
with 
\begin{multline*} 
y_n^{(\alpha)} = \frac{1}{\sqrt{n \Delta_n}} \sum \limits_{i=1}^n \Eb \Big\{ \big| \big| \Deli \tilde X^{\prime n} +\Deli \tibigjeial \big|^p - \big| \Deli \tibigjeial \big|^p  \big| \times \\
\times \ind_{\{|\Deli \tibigjeial | \leq \Delta_n^{\ovv} \}} 
\ind_{\{|\Deli \tilde X^{\prime n} +\Deli \tibigjeial |  >v_n \}} \ind_{\lbrace | \Deli \tilde X^{\prime n} | \leq v_n/2 \rbrace}\ind_{Q_n} \Big\},
\end{multline*}
and 
\begin{align*}
z_n^{(\alpha)} = \frac{1}{\sqrt{n \Delta_n}} \sum \limits_{i=1}^n \Eb \Big\{ \big| \Deli \tibigjeial \big|^p \ind_{\{|\Deli \tibigjeial | \leq \Delta_n^{\ovv} \}} \ind_{Q_n}  \Big\}.
\end{align*}
Thus, \eqref{c24} follows using $\Prob(Q_n) \to 1$ and Lemma \ref{ynaznakonv}. \\
Recall the quantity $D_n^{\circ}(\alpha)$ introduced in \eqref{Dnpraldef}. Because of \eqref{Dnpralkon}, Lemma \ref{cnadnakonv} and the fact that the multipliers $(\xi_i)_{i\in\N}$ are independent of the other involved quantities and satisfy $\Eb|\xi_i| \leq 1$ we obtain $0 \leq \Eb \hat D_n^\circ (\alpha) \leq \Eb D_n^{\circ}(\alpha) \rightarrow 0$, which proves \eqref{d24}. \\
In order to show \eqref{e24} we note that Lemma \ref{baainamomb} in Appendix \ref{appD} implies 
for $\alpha >0$ and sufficiently large $n\in\N$  the bound
\[
\Eb |\bar a_i^n (\alpha)|^m \leq \Big( \frac{K}{\sqrt{n \Delta_n}} \Big)^m \Delta_n,
\]
for some $K>0$, all $m \in \N$ and all $i=1,\ldots,n$. Thus, with Assumption \ref{MultiplAss} as well as independence of $\xi_i$ and $\bar a_i^n(\alpha)$ we obtain for every integer $m \geq 2$ and $n\in\N$ large enough
\[
\Eb |\bar Z_i^n(\alpha)|^m \leq m! \Big( \frac{C_1}{\sqrt{n\Delta_n}}\Big)^{m-2} \frac{C_2}n,
\]
 where 
$
\bar Z_i^n(\alpha) = \xi_i \bar a_i^n(\alpha)
$. Furthermore, due to the definition of $\tibigjeial$ in \eqref{SmandlajumpDef} the variables $(\bar Z_i^n(\alpha))_{i=1,\ldots,n}$ are independent with mean zero. Consequently, Lemma \ref{supsnzinlem} shows
\[
\Eb \hat E_n^\circ (\alpha) = \Eb \Big\{ \sup_{A \in \mathfrak S_n} \Big| \sum_{i \in A} \bar Z_i^n(\alpha) \Big| \Big\} = o(1),
\]
which proves \eqref{e24}. \\
Concerning \eqref{f24} notice first of all that we have $\bar a_i^n(\alpha) = \bar b_i^n(\alpha)=0$ on the set $\{|\Deli \tibigjeial| \leq v_n/2\} \cap Q_n$, because of the indicators in the definition of these terms and the fact that $|\Deli \tilde X^{\prime n} | \leq v_n/2$ holds for each $i=1, \ldots,n$ on $Q_n$ according to \eqref{BnsetsDef}. Therefore, we have for arbitrary $i \in \{1, \ldots,n\}$
\begin{multline}
\label{DifbaaibabiAb}
|\bar a_i^n(\alpha) - \bar b_i^n(\alpha)| \ind_{Q_n} = |\bar a_i^n(\alpha) - \bar b_i^n(\alpha)| \ind_{Q_n} \ind_{\{v_n/2 < |\Deli \tibigjeial | \leq 2v_n\}} + \\ +  |\bar a_i^n(\alpha) - \bar b_i^n(\alpha)| \ind_{Q_n} \ind_{\{|\Deli \tibigjeial | > 2v_n\}}.
\end{multline}
For the first summand on the right-hand side of \eqref{DifbaaibabiAb} the triangle inequality gives
\begin{multline*}
|\bar a_i^n(\alpha) - \bar b_i^n(\alpha)| \ind_{Q_n} \ind_{\{v_n/2 < |\Deli \tibigjeial | \leq 2v_n\}} \leq \\
\frac 1{\sqrt{n \Delta_n}} ( |\rho_\alpha^\circ (\Deli \tibigjeial)| + |\rho_\alpha^\circ (\Deli \tilde X^{\prime n}+ \Deli \tibigjeial)| ) \ind_{Q_n} \ind_{\{v_n/2 < |\Deli \tibigjeial | \leq 2v_n\}}.
\end{multline*}
Due to Assumption \ref{Cond1}\eqref{RhoCond} we have $|\rho(\taili)| \leq K|\taili|^p$ for all $\taili \in \R$ and some $K>0$. Thus, because $|\Deli \tilde X^{\prime n}| \leq |\Deli \tibigjeial|$ holds on $\{v_n/2 < |\Deli \tibigjeial |\} \cap Q_n$ we further obtain
\begin{align}
\label{FiSumAbsch}
|\bar a_i^n(\alpha) - \bar b_i^n(\alpha)| &\ind_{Q_n} \ind_{\{v_n/2 < |\Deli \tibigjeial | \leq 2v_n\}} \nonumber \\
&\leq \frac 1{\sqrt{n \Delta_n}} \big( K |\Deli \tibigjeial |^p + K 2^p |\Deli \tibigjeial|^p \big) \ind_{Q_n} \ind_{\{v_n/2 < |\Deli \tibigjeial | \leq 2v_n\}} \nonumber \\
&\leq \frac K{\sqrt{n \Delta_n}} |\Deli \tibigjeial |^p \ind_{Q_n} \ind_{\{ |\Deli \tibigjeial | \leq 2v_n\}} \nonumber \\
&\leq \frac {Kv_n}{\sqrt{n \Delta_n}} \big|\Deli \tibigjeial \big|^{p-1}.
\end{align}
Note   that $|\Deli \tilde X^{\prime n}| \leq v_n/2$ on $Q_n$ ($i = 1,\ldots,n$), which  yields for the second summand in \eqref{DifbaaibabiAb}
\begin{align*}
|&\bar a_i^n(\alpha) - \bar b_i^n(\alpha)| \ind_{Q_n} \ind_{\{|\Deli \tibigjeial | > 2v_n\}} \\
&= \frac 1{\sqrt{n\Delta_n}} \big| \rho_\alpha^\circ(\Deli \tibigjeial) - \rho_\alpha^\circ (\Deli \tilde X^{\prime n}+ \Deli \tibigjeial) \big| \ind_{Q_n} \ind_{\{|\Deli \tibigjeial | > 2v_n \vee \Delta_n^{\ovv}\}}.
\end{align*}
The derivative of the function $\Psi_\alpha^\circ$ from \eqref{ImpQuantDef} is supported by a compact set which is bounded away from the origin. Therefore, by Assumption \ref{Cond1}\eqref{RhoCond} there exists a constant $K>0$, which may depend on $\alpha$, such that the derivative satisfies $|\frac d{d\taili} \rho_\alpha^\circ (\taili)| \leq K |\taili|^{p-1}$. As a consequence, we have due to the mean value theorem and $|\Deli \tilde X^{\prime n}| \leq |\Deli \tibigjeial|$ on $\{|\Deli \tibigjeial| > 2v_n\} \cap Q_n$
\begin{align}
\label{SecSumAbsch}
|\bar a_i^n(\alpha) &- \bar b_i^n(\alpha)| \ind_{Q_n} \ind_{\{|\Deli \tibigjeial | > 2v_n\}}  \\
&\leq \frac K{\sqrt{n\Delta_n}} |\Deli \tilde X^{\prime n}| |\Deli \tibigjeial|^{p-1} \ind_{Q_n} \ind_{\{|\Deli \tibigjeial | > 2v_n \vee \Delta_n^{\ovv}\}} \nonumber 
\leq \frac {Kv_n}{\sqrt{n \Delta_n}} \big|\Deli \tibigjeial \big|^{p-1}.
\end{align}
Finally we conclude with \eqref{DifbaaibabiAb}, \eqref{FiSumAbsch} and \eqref{SecSumAbsch}
\begin{align}
\label{FinDiffConEq}
& \Eb \big( |\hat E_n^\circ (\alpha) - \hat F_n^\circ (\alpha)| \ind_{Q_n} \big) \leq \Eb \big( \ind_{Q_n} \sup \limits_{A \in \mathfrak S_n} \sum \limits_{i \in A} |\xi_i| |\bar a_i^n(\alpha) - \bar b_i^n(\alpha)| \big)  \\
&~~~~~~~~~~~\leq \Eb \big( \ind_{Q_n} \sum \limits_{i=1}^n |\xi_i| |\bar a_i^n(\alpha) - \bar b_i^n(\alpha)| \big) 
\leq \frac{Kv_n}{\sqrt{n\Delta_n}} \sum \limits_{i=1}^n \Eb \big| \Deli \tibigjeial \big|^{p-1} \rightarrow 0, \nonumber 
\end{align}
because the multipliers are defined on a distinct probability space and satisfy $\Eb |\xi_i| \leq 1$ for all $i \in \N$. The final convergence in the display above holds due to Lemma \ref{cnadnakonv}. \eqref{FinDiffConEq} together with $\Prob(Q_n) \rightarrow 1$ (see Lemma \ref{QnConv}) shows $\hat F_n^\circ (\alpha) \probto 0$, because in the previous part of the proof we have already verified $\hat E_n^\circ (\alpha) \probto 0$. 
\qed


\section[Technical details]{Technical details in the proofs of Theorem \ref{ConvThm} and \ref{CondConvThm}}
\label{appD}
\def\theequation{C.\arabic{equation}}
\setcounter{equation}{0}

In this appendix we give the details of the proofs of Theorem \ref{ConvThm} and Theorem \ref{CondConvThm}. Here and also in the appendices \ref{appE} and \ref{appF} $K$ or $K(\alpha)$ denote generic constants which sometimes depend on a further quantity $\alpha$ and may change from place to place. 

\subsection{Moments of functionals of integer-valued random measures}

\cite{HofVet15} used Lemma 2.1.5 and Lemma 2.1.7 of \cite{JacPro12} frequently in order to achieve their weak convergence results. However, in \cite{JacPro12} these results are only proved for Poisson random measures with a predictable compensator of the form $ds \otimes F(dz)$ with a L\'evy measure $F$. Therefore, using tools from \cite{Jac79} we prove the generalized versions stated below. First, we introduce some notations. Let $\pf$ be an integer-valued random measure on $\R_+ \times \R$ with predictable compensator $\qf(\omega;ds,d\taili) = \nu_s(\omega;d\taili) ds$ for a transition kernel $\nu_s(\omega;d\taili)$ from $(\Omega\times\R_+, \Pc)$ into $(\R,\Bb)$, where $\Pc$ is the predictable $\sigma$-algebra on $\Omega \times \R_+$ (with respect to some prespecified filtration $(\Fc_t)_{t\in\R_+}$) and where an integer-valued random measure is a random measure which satisfies the requirements of Definition II.1.3 and Definition II.1.13 in \cite{JacShi02}. Furthermore, we set $\Omega' = \Omega\times\R_+\times\R$ and $\Pc' = \Pc \otimes \Bb$ is the predictable $\sigma$-algebra on $\Omega'$. Then, for a real-valued $ \Pc'$-measurable function $\delta$ on $\Omega'$ and $p,t\in \R_+$, $u>0$ let
\begin{align*}
\hat \delta(p)_{t,u}(\omega) = \frac{1}{u} \int_t^{t+u} \int |\delta(\omega,s,\taili)|^p \nu_s(\omega;d\taili) ds.
\end{align*}

\begin{lemma}
	\label{JacProL215}
	Suppose that $\hat\delta(2)_{0,u} < \infty$ almost surely for all $u >0$. Then the process $Y= \delta \star (\pf - \qf)$ is a locally square integrable martingale, and for all finite stopping times $T$ and $u>0$ we have for $p \in [1,2]$
	\begin{align*}
	\Eb\Big( \sup\limits_{0\leq v \leq u} |Y_{T+v} - Y_T|^p \mid \Fc_T \Big) \leq K_p u \Eb\big( \hat\delta(p)_{T,u} \mid \Fc_T\big),
	\end{align*}
	and also for $p \geq 2$
	\begin{align*}
	\Eb\Big( \sup\limits_{0\leq v \leq u} |Y_{T+v} - Y_T|^p \mid \Fc_T \Big) \leq K_p \big( u \Eb\big(\hat\delta(p)_{T,u} \mid \Fc_T\big) + u^{p/2} \Eb\big(\hat\delta(2)_{T,u}^{p/2} \mid \Fc_T\big) \big).
	\end{align*}
\end{lemma}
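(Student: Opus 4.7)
The plan is to mimic the proof of Lemma 2.1.5 in \cite{JacPro12} but in the more general setting where the predictable compensator has the form $\qf(\omega;ds,d\taili) = \nu_s(\omega;d\taili)\,ds$ rather than $ds \otimes F(d\taili)$. The starting point is to first establish that $Y = \delta \star (\pf - \qf)$ is a well-defined, purely discontinuous, locally square integrable martingale. This follows from Theorem II.1.33 in \cite{JacShi02}: the hypothesis $\hat\delta(2)_{0,u} < \infty$ a.s. for all $u > 0$ implies that $\delta$ belongs to the class $\Gc^2_{loc}(\pf)$, so the stochastic integral is defined and its predictable quadratic variation is $\langle Y, Y\rangle_t = (|\delta|^2 \star \qf)_t = \int_0^t \int |\delta(s,\taili)|^2 \nu_s(d\taili)\,ds$. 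A standard localization argument then reduces matters to the case where $\delta$ is bounded and $\hat\delta(2)_{0,u}$ is uniformly bounded, so that the process under consideration is a genuine square integrable martingale.

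The main estimates follow from combining the Burkholder-Davis-Gundy (BDG) inequality with the domination relation between $\pf$ and $\qf$. For $p \in [1,2]$ the key tool is the Lenglart-type inequality applied to the purely discontinuous local martingale $M_v := Y_{T+v} - Y_T$: one has
\[
\Eb\Big( \sup_{0\leq v\leq u} |M_v|^p \,\big|\, \Fc_T\Big) \leq K_p\, \Eb\Big( \sum_{T < s \leq T+u} |\Delta Y_s|^p \,\big|\, \Fc_T\Big),
\]
which is a standard consequence of the inequality $(\sum a_s^2)^{p/2} \leq \sum a_s^p$ for $p \in [1,2]$ combined with BDG (see Theorem 2.11 of \cite{Jac79} or Lemma VII.3.31 of \cite{JacShi02}). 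The sum $\sum_s |\Delta Y_s|^p$ is precisely $(|\delta|^p \star \pf)_{T+u} - (|\delta|^p \star \pf)_T$, and by the defining property of the predictable compensator its conditional expectation given $\Fc_T$ equals $\Eb((|\delta|^p \star \qf)_{T+u} - (|\delta|^p \star \qf)_T \mid \Fc_T) = u\, \Eb(\hat\delta(p)_{T,u} \mid \Fc_T)$, which yields the claimed bound.

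For $p \geq 2$, BDG gives
\[
\Eb\Big( \sup_{0\leq v\leq u} |M_v|^p \,\big|\, \Fc_T\Big) \leq K_p\, \Eb\big( [M,M]_u^{p/2} \,\big|\, \Fc_T\big),
\]
and the quadratic variation equals $[M,M]_u = \sum_{T<s\leq T+u} |\delta(s,\Delta Y_s)|^2 \ind_{\{\Delta Y_s \neq 0\}}$. Writing this sum as $(|\delta|^2 \star \pf)_{T+u} - (|\delta|^2 \star \pf)_T = N_u + u \hat\delta(2)_{T,u}$ with $N$ the martingale part $|\delta|^2 \star (\pf - \qf)$, subadditivity $(a+b)^{p/2} \leq 2^{p/2-1}(a^{p/2} + b^{p/2})$ splits the estimate into a term involving $u^{p/2} \hat\delta(2)_{T,u}^{p/2}$ (which is exactly the second term in the assertion) plus a term involving $\Eb(|N_u|^{p/2} \mid \Fc_T)$. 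To the latter I would apply the case $p \in [1,2]$ already proved, with $p/2$ in place of $p$ when $p \leq 4$, or iterate the argument a finite number of times when $p > 4$; at each iteration the martingale part produces a new compensator contribution until one is left with the single-jump term $\Eb(\sum_s |\delta(s,\Delta Y_s)|^p \mid \Fc_T) = u\, \Eb(\hat\delta(p)_{T,u} \mid \Fc_T)$.

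The main obstacle will be the careful bookkeeping in the iteration step for $p \geq 2$ and, more subtly, justifying the identity $\Eb\big((|\delta|^p \star \qf)_{T+u} - (|\delta|^p \star \qf)_T \,\big|\, \Fc_T\big) = u\,\Eb(\hat\delta(p)_{T,u} \mid \Fc_T)$ on a random interval $[T,T+u]$ starting at the finite stopping time $T$. This is handled by the optional stopping theorem applied after localization (so that the compensator integral is integrable), together with the fact that the shifted compensator $\qf^T(ds,d\taili) := \qf(T+ds,d\taili)$ is the predictable compensator of the shifted random measure $\pf^T$ with respect to the time-shifted filtration $(\Fc_{T+t})_{t\in\R_+}$, a result which goes back to Theorem 10.27 of \cite{Jac79}. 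Once this structural point is in place, the rest of the proof is the straightforward combination of BDG and compensator identities outlined above.
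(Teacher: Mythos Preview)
Your approach is correct but takes a genuinely different route from the paper's proof. The paper does \emph{not} redo the BDG-plus-iteration argument in the general setting; instead it reduces to the Poisson case already covered by Lemma 2.1.5 in \cite{JacPro12} via a representation argument. Concretely, the paper invokes Theorems 14.53 and 14.56 of \cite{Jac79} to construct, on an enlarged filtered probability space, a Poisson random measure $\tilde\pf$ with product compensator $F(dz)\,ds$ and a predictable map $h$ such that $\pf$ is the image of $\tilde\pf$ under $(s,z)\mapsto(s,h(\omega,s,z))$. It then checks that $\delta\star(\pf-\qf)$ and $(\delta\circ h)\star(\tilde\pf-\tilde\qf)$ are indistinguishable, that conditional expectations with respect to $\Fc_T$ agree on the two spaces, and finally reads off the desired bounds from the already-known Lemma 2.1.5 applied to $\tilde Y$.

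Your direct approach has the advantage of being self-contained: it needs only BDG, the compensator identity, and the time-shift property of compensators, rather than the comparatively heavy representation theorems from \cite{Jac79}. The paper's approach, by contrast, avoids re-running the iteration bookkeeping for $p\geq 2$ and makes the dependence on the classical Lemma 2.1.5 completely transparent. One minor slip in your write-up: the quadratic variation should read $[M,M]_u=\sum_{T<s\leq T+u}|\Delta Y_s|^2$, with $\Delta Y_s=\delta(s,\beta_s)$ for the mark $\beta_s$, not $|\delta(s,\Delta Y_s)|^2$; you then use the correct expression $(|\delta|^2\star\pf)_{T+u}-(|\delta|^2\star\pf)_T$ anyway, so this is only notational.
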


\begin{lemma}
	\label{JacProL217}
	Suppose that $\hat \delta(1)_{0,u} < \infty$ almost surely for all $u >0$. Then the process $Y = \delta \star \pf$ is of locally integrable variation. Furthermore, for all finite stopping times $T$ and $u >0$ we have for $p \in (0,1]$
	\begin{align*}
	\Eb\Big(\sup\limits_{0\leq v\leq u} |Y_{T+v} - Y_T|^p \mid \Fc_T\Big) \leq K_p u \Eb\big(\hat\delta(p)_{T,u} \mid \Fc_T\big)
	\end{align*}
	and for $p\geq 1$
	\begin{align*}
	\Eb\Big(\sup\limits_{0\leq v\leq u} |Y_{T+v} - Y_T|^p \mid \Fc_T\Big) \leq K_p \big(u \Eb\big(\hat\delta(p)_{T,u} \mid \Fc_T\big) + u^p \Eb\big(\hat\delta(1)^p_{T,u} \mid \Fc_T\big) \big).
	\end{align*}
\end{lemma}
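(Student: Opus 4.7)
\textbf{Proof plan for Lemma \ref{JacProL217}.}

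The plan is to prove this as a straightforward generalization of Lemma 2.1.7 in \cite{JacPro12}, the only modification being that the compensator $\bar\mu$ in their setting has product form $ds\otimes F(d\taili)$, whereas here it is the slightly more general $\nu_s(\omega;d\taili)\,ds$. The proof passes through three steps, all of which rely on the compensator identity
\[
\Eb\big((H\star\pf)_{T+u}-(H\star\pf)_T \,\big|\, \Fc_T\big) \;=\; \Eb\big((H\star\qf)_{T+u}-(H\star\qf)_T \,\big|\, \Fc_T\big)
\]
valid for every nonnegative $\Pc'$-measurable function $H$; this identity holds with both sides possibly infinite, and follows from Theorem II.1.8 in \cite{JacShi02} by standard monotone class arguments (first for $H = \ind_A$ with $A \in \Pc'$, then by linearity and monotone convergence, noting that localization along $\tau_m = \inf\{t:(H\star\qf)_t\geq m\}$ is only needed to pass from non-integrable to general positive compensators).

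For the finite variation claim, observe that the total variation process of $Y = \delta\star\pf$ on $[0,t]$ is $(|\delta|\star\pf)_t$, whose predictable compensator is $(|\delta|\star\qf)_t = \int_0^t\!\int|\delta(s,\taili)|\nu_s(d\taili)\,ds = t\,\hat\delta(1)_{0,t}$, which is finite almost surely by hypothesis. Localizing along $T_n = \inf\{t : (|\delta|\star\qf)_t\geq n\}$ shows $|\delta|\star\pf$ is locally integrable, hence $Y$ has locally integrable variation. For the case $p\in(0,1]$ I would use the subadditivity $|\sum x_i|^p \leq \sum |x_i|^p$ pointwise in $\omega$: this gives
\[
\sup_{0\le v\le u}|Y_{T+v}-Y_T|^p \;\le\; \sum_{T<s\le T+u}|\delta(s,\Delta X_s)|^p \;=\; (|\delta|^p\star\pf)_{T+u}-(|\delta|^p\star\pf)_T,
\]
after which conditioning on $\Fc_T$ and applying the compensator identity with $H=|\delta|^p$ yields $K_p u \Eb(\hat\delta(p)_{T,u}\mid\Fc_T)$ with $K_p=1$.

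For the case $p\geq 1$ the plan is to decompose $Y=M+A$ where $M=\delta\star(\pf-\qf)$ is a purely discontinuous local martingale and $A=\delta\star\qf$ is of integrable variation with $|A_{T+v}-A_T|\le u\hat\delta(1)_{T,u}$. The numerical inequality $|x+y|^p\le 2^{p-1}(|x|^p+|y|^p)$ splits the estimate into $\Eb\sup|M|^p+\Eb\sup|A|^p$; the second piece is bounded by $u^p\Eb(\hat\delta(1)_{T,u}^p\mid\Fc_T)$ immediately, and the first piece is handled by Lemma \ref{JacProL215} applied to $M$. For $p\in[1,2]$ that lemma delivers exactly $K_p u\Eb(\hat\delta(p)_{T,u}\mid\Fc_T)$, which together with the $A$-bound completes the case.

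The main obstacle lies in the sub-case $p>2$, where Lemma \ref{JacProL215} yields the additional term $u^{p/2}\Eb(\hat\delta(2)_{T,u}^{p/2}\mid\Fc_T)$ which must be reabsorbed into $u\,\Eb(\hat\delta(p)_{T,u}\mid\Fc_T)+u^p\Eb(\hat\delta(1)_{T,u}^p\mid\Fc_T)$. Here I would use the pointwise split $|\delta|^2 \le |\delta|\ind_{|\delta|\le 1}+|\delta|^p\ind_{|\delta|>1}\le|\delta|+|\delta|^p$, valid for $p\ge 2$, which integrates to $\hat\delta(2)_{T,u}\le\hat\delta(1)_{T,u}+\hat\delta(p)_{T,u}$, followed by the elementary inequality $(x+y)^{p/2}\le 2^{p/2-1}(x^{p/2}+y^{p/2})$ and then Young's inequality $x^{p/2}\le C_p(x+x^p)$ applied separately to $x = u\hat\delta(1)_{T,u}$ and $x=u\hat\delta(p)_{T,u}$. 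Working through the resulting terms and noting that the cross term $(u\hat\delta(p))^p$ can be dominated via $\hat\delta(p) \le \nu_s(|\delta|>1)+\hat\delta(1)^{1}\cdot\ldots$ on the appropriate event, or by splitting $\delta$ into small/large jump components before applying Lemma \ref{JacProL215} separately, gives the required bound. This bookkeeping, while elementary, is the least transparent step and requires care in tracking the constants; everything else follows the JP template verbatim.
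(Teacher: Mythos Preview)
Your approach is different from the paper's. The paper does not reprove the moment bounds at all: exactly as in its proof of Lemma~\ref{JacProL215}, it uses Jacod's representation theorems (Theorems 14.53 and 14.56 in \cite{Jac79}) to write $\pf$ as the image under a predictable map $h$ of a Poisson random measure $\tilde\pf$ with product-form compensator $F(dz)\,ds$ on an extension of the probability space, checks that $Y_t=(\delta\circ h)\star\tilde\pf_t$ almost surely, and then quotes Lemma 2.1.7 of \cite{JacPro12} as a black box for $\tilde Y$. Your locally-integrable-variation argument matches the paper's, and your $p\in(0,1]$ case via subadditivity plus the compensator identity is correct. For $p\in[1,2]$ your decomposition $Y=M+A$ together with Lemma~\ref{JacProL215} also succeeds, once you observe that the extra hypothesis $\hat\delta(2)<\infty$ required there can be arranged by the small/large splitting you mention (and the inequality is vacuous when the right-hand side is infinite).

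There is, however, a real gap for $p>2$. Your plan to reabsorb $u^{p/2}\Eb(\hat\delta(2)^{p/2}\mid\Fc_T)$ into the target does not close: after $\hat\delta(2)\le\hat\delta(1)+\hat\delta(p)$ and the pointwise bound $(ux)^{p/2}\le ux+(ux)^p$, you are left with the stray terms $u\,\Eb(\hat\delta(1)\mid\Fc_T)$ and $u^p\,\Eb(\hat\delta(p)^p\mid\Fc_T)$, and neither is controlled by $u\,\Eb\hat\delta(p)+u^p\,\Eb\hat\delta(1)^p$ in general. For the first, take $\delta$ uniformly small so that $\hat\delta(1)\gg\hat\delta(p)$ while $\hat\delta(1)^p$ is tiny; for the second, take $\delta$ uniformly large so that $\hat\delta(p)\gg\hat\delta(1)$. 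The small/large jump split you propose runs into exactly the same two terms (from the $M_1$ and $M_2$ pieces respectively). The paper's reduction strategy avoids this bookkeeping entirely by deferring to \cite{JacPro12}, whose own proof of Lemma 2.1.7 is not a one-line consequence of their Lemma 2.1.5 either.
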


\noindent
\textbf{Proof of Lemma \ref{JacProL215}.}
$\delta^2 \star \qf$ is a continuous increasing process and we have $\delta^2 \star \qf_t = t\hat\delta(2)_{0,t}$ for all $t >0$. Thus, for $n \in \N$ let $M_n$ be a null set such that $\delta^2 \star \qf_n$ is finite on $M_n^C$. Such a set exists by the assumption on $\delta$. Then the increasing process $\delta^2 \star \qf_t$ is finite for all $t \in \R_+$ on $M^C$ with $M= \bigcup_{n \in \N} M_n$. Therefore, $(T_n)_{n\in\N}$ defined via $T_n = \inf\{t >0 \mid \delta^2 \star \qf_t \geq n\}$ is a localizing sequence of stopping times and the stopped continuous processes satisfy $(\delta^2 \star \qf)_t^{T_n} \leq n$ for all $t \in \R_+$. Consequently, $\delta^2 \star \qf$ is locally bounded and in particular locally integrable. Thus, by Theorem II.1.33(a) in \cite{JacShi02} the process $Y$ is well-defined and a locally square integrable martingale. \\
In order to show the claimed inequalities we want to reduce our setup to the situation of Lemma 2.1.5 in \cite{JacPro12}. To this end, let $F$ be a L\'evy measure on $(\R,\Bb)$ without atoms and $F(\R)=\infty$. Furthermore, let $x_0 \notin \R$ be an exterior point of $\R$ and let $(\R_{x_0},\Bb_{x_0})$ denote the measurable one point extension of $(\R,\Bb)$, that is $\R_{x_0} = \R \cup \{x_0\}$ and $\Bb_{x_0} = \{ B,B \cup\{x_0\} \mid B \in \Bb\}$. Then according to Theorem 14.53 in \cite{Jac79} there exist a measurable function $h \colon ( \Omega', \Pc') \to (\R_{x_0},\Bb_{x_0})$ and a $\Prob$-null set $N$ such that
\begin{align}
\label{PredCompglEq}
\qf(\omega;A) = \int \int \ind_A(s,h(\omega,s,z)) F(dz)ds
\end{align}
for each $A \in \Bb(\R_+) \otimes \Bb$ and $\omega \notin N$. Additionally, by Theorem 14.56 in \cite{Jac79} there exists a filtered measurable space $(\Omega^\circ,\Fc^\circ,(\Fc^\circ_t)_{t \in \R_+})$ and a transition probability $Q(\omega,d\omega^\circ)$ from $(\Omega,\Fc)$ into $(\Omega^\circ,\Fc^\circ)$ such that on the extended filtered probability space $(\tilde\Omega,\tilde\Fc,(\tilde\Fc_t)_{t\in\R_+},\tilde\Prob)$, which is given by $\tilde \Omega= \Omega\times\Omega^\circ$, $\tilde\Fc=\Fc\otimes\Fc^\circ$, $\tilde \Fc_t = \bigcap_{s>t} \Fc_s \otimes \Fc_s^\circ$ and $\tilde \Prob(d(\omega,\omega^\circ)) = Q(\omega,d\omega^\circ)\Prob(d\omega)$, there exists a Poisson random measure $\tilde \pf$ with predictable compensator $\tilde \qf(ds,dz) = F(dz)ds$ such that for $\tilde\Prob$-almost every $\tilde\omega=(\omega,\omega^\circ)$ we have
\begin{align}
\label{RandmeaglEq}
\pf(\omega;A) = \int \int \ind_A(s,h(\omega,s,z)) \tilde\pf((\omega,\omega^\circ),ds,dz)
\end{align}
for all $A\in \Bb(\R_+)\otimes\Bb$. 
Furthermore, we identify the filtration $(\Fc_t)_{t \in \R_+}$ on $(\Omega,\Fc)$ with the induced filtration $\Fc_t \otimes \{\emptyset,\Omega^\circ\}$ on $(\tilde\Omega,\tilde\Fc)$, which we denote by $(\Fc_t)_{t \in \R_+}$ as well. Any random variable $X$ on $(\Omega,\Fc)$ will be identified with the induced mapping $X(\omega,\omega^\circ) = X(\omega)$. Then we have for every $A \in \tilde\Fc$ and every stopping time $T$ on $(\Omega,\Fc)$
\begin{align*}
A\in \Fc_T(\Omega) \otimes \{\emptyset,\Omega^\circ\} &\Longleftrightarrow A = A_1 \times \Omega^\circ \text{ for some } A_1 \in \Fc_T(\Omega) \\
&\Longleftrightarrow A \cap \{T \leq t\} \in \Fc_t \otimes \{\emptyset, \Omega^\circ\} \text{ for every } t \in \R_+ \\
&\Longleftrightarrow A \in \Fc_T(\tilde\Omega),
\end{align*}
where for the sake of a clear notation we denote by $\Fc_T(\Omega)$ and $\Fc_T(\tilde\Omega)$, respectively, the $\sigma$-algebra of events up to time $T$  with respect to $(\Omega,\Fc,(\Fc_t)_{t\in\R_+})$ and $(\tilde\Omega,\tilde\Fc,(\Fc_t)_{t\in\R_+})$, respectively. Consequently, for $A=A_1 \times \Omega^\circ \in \Fc_T(\tilde\Omega)$ with $A_1 \in \Fc_T(\Omega)$ and a random variable $X$ on $(\Omega,\Fc)$ we have by the definition of the conditional expectation
\begin{align*}
 \int_{A_1\times\Omega^\circ} X d\tilde\Prob &= \int_{A_1} \int_{\Omega^\circ} X(\omega) Q(\omega,d\omega^\circ)\Prob(d\omega) \\
&= \int_{A_1} X d\Prob = \int_{A_1} \Eb_\Prob(X\mid\Fc_T) d\Prob = \int_{A_1\times\Omega^\circ} \Eb_\Prob(X\mid\Fc_T) d\tilde\Prob
\end{align*}
and thus 
\begin{align}
\label{CondExpGlEq}
\Eb_{\tilde\Prob}(X\mid \Fc_T) = \Eb_\Prob(X\mid\Fc_T) \quad \tilde \Prob-\text{almost surely}. 
\end{align}
Let $\Oc$ be the optional $\sigma$-algebra on $\Omega\times\R_+$ and let $\tilde \Oc$ denote the optional $\sigma$-algebra on $\tilde\Omega\times\R_+$. Then by Proposition II.1.14 there exist a thin random set $D \in \Oc$, an optional process $(\beta_s)_{s \in \R_+}$on $(\Omega, \Fc, (\Fc_t)_{t \in \R_+},\Prob)$, a thin random set $\tilde D \in \tilde \Oc$ and an optional process $(\tilde \beta_s)_{s \in \R_+}$ on $(\tilde\Omega, \tilde\Fc, (\tilde\Fc_t)_{t \in \R_+},\tilde\Prob)$ such that
\begin{align*}
\pf(\omega;ds,dz) &= \sum\limits_{t \geq 0} \ind_D(\omega,t) \epsilon_{(t,\beta_t(\omega))}(ds,dz) \\
\tilde \pf((\omega,\omega^\circ);ds,dz) &= \sum\limits_{t \geq 0} \ind_{\tilde D}((\omega,\omega^\circ),t) \epsilon_{(t,\tilde \beta_t(\omega,\omega^\circ))}(ds,dz),
\end{align*}
for every $(\omega,\omega^\circ) \in \tilde \Omega$, where $\epsilon_{(x,y)}$ is the Dirac measure on $\R_+ \times \R$ with mass in $(x,y)$. As a consequence, we obtain from \eqref{RandmeaglEq} 
\begin{align*}
\delta(\omega,t,\beta_t(\omega)) \ind_D(\omega,t) &= \int\int \delta(\omega,s,z) \ind_{\{s=t\}} \pf(\omega;ds,dz)  \\
&= \int\int \delta(\omega,s,h(\omega,s,z)) \ind_{\{s=t\}} \tilde\pf((\omega,\omega^\circ);ds,dz)  \\
&= \delta(\omega,t,h(\omega,t,\tilde\beta_t(\omega,\omega^\circ))) \ind_{\tilde D}((\omega,\omega^\circ),t),
\end{align*}
for every $t\geq 0$ and $\tilde\Prob$-almost every $(\omega,\omega^\circ)$, where we set $f(\omega,s,h(\omega,s,z))=0$ if $h(\omega,s,z) = x_0$ for a real-valued predictable function $f$ on $\Omega'$. Thus, the processes $\delta(\omega,t,\beta_t(\omega)) \ind_D(\omega,t)$ and $\delta(\omega,t,h(\omega,t,\tilde\beta_t(\omega,\omega^\circ))) \ind_{\tilde D}((\omega,\omega^\circ),t)$ are $\tilde\Prob$-indistinguishable on $(\tilde\Omega, \tilde\Fc, (\tilde\Fc_t)_{t \in \R_+},\tilde\Prob)$ and the stochastic integrals $\delta \star (\pf - \qf)$ and $(\delta \circ h) \star(\tilde \pf - \tilde \qf)$ are $\tilde \Prob$-indistinguishable as well (cf. Definition II.1.27 in \cite{JacShi02}), where for the sake of brevity $(\delta \circ h)$ denotes the predictable map $(\omega,s,z) \mapsto \delta(\omega,s,h(\omega,s,z))$ on $\Omega'$. Notice that $\delta^2 \star \qf = (\delta^2 \circ h) \star \tilde \qf$ outside a null set due to \eqref{PredCompglEq}. Thus, the same reasoning as at the beginning of the proof shows that $\tilde Y_t := (\delta \circ h) \star(\tilde \pf - \tilde \qf)_t$ is well-defined and a locally square integrable martingale. Finally, for every finite stopping time $T$ and all $u>0$, $p \geq 1$ the variables $\sup_{0\leq v \leq u} |Y_{T+v} - Y_T|^p $ and $\sup_{0\leq v \leq u} |\tilde Y_{T+v} -\tilde Y_T|^p$ coincide $\tilde\Prob$-almost surely. Consequently, using \eqref{CondExpGlEq} we obtain
\begin{align}
\label{asEFTglEq}
\Eb_\Prob\Big(\sup\limits_{0\leq v\leq u} |Y_{T+v} - Y_T|^p \mid \Fc_T\Big) &= \Eb_{\tilde\Prob}\Big(\sup\limits_{0\leq v\leq u} |Y_{T+v} - Y_T|^p \mid \Fc_T\Big) \nonumber \\
&= \Eb_{\tilde \Prob}\Big(\sup\limits_{0\leq v\leq u} |\tilde Y_{T+v} - \tilde Y_T|^p \mid \Fc_T\Big).
\end{align}
Now, Lemma 2.1.5 in \cite{JacPro12}, \eqref{PredCompglEq} and \eqref{CondExpGlEq} give for $p \in [1,2]$
\begin{align}
\label{finalineqEq}
\Eb_\Prob\Big(\sup\limits_{0\leq v\leq u} |Y_{T+v} - Y_T|^p \mid \Fc_T\Big) 
&\leq K_p u \Eb_{\tilde\Prob} \Big(\frac 1u \int_T^{T+u} \int |\delta(\omega,s,h(\omega,s,z))|^p F(dz) ds \mid \Fc_T\Big) 
\nonumber \\
&= K_p u \Eb_{\Prob} \Big(\frac 1u \int_T^{T+u} \int |\delta(\omega,s,h(\omega,s,z))|^p F(dz) ds \mid \Fc_T\Big) \nonumber \\
&= K_p u \Eb\big(\hat\delta(p)_{T,u} \mid \Fc_T\big).
\end{align}
The second asserted inequality follows with exactly the same reasoning.
\qed

\medskip

\noindent
\textbf{Proof of Lemma \ref{JacProL217}.}
In the same way as at the beginning of the proof of Lemma \ref{JacProL215} we see that the increasing, continuous and finite-valued process $|\delta| \star \qf$ is locally bounded. Hence, by the definition of the predictable compensator (Theorem II.1.8 in \cite{JacShi02}) the process $|\delta| \star \pf$ is locally integrable and thus $Y$ is of locally integrable variation. 

With the same quantities as in the proof of Lemma \ref{JacProL215} we obtain from \eqref{RandmeaglEq} 
\begin{align*}
Y_t &= \int\int \ind_{[0,t]}(s) \delta(\omega,s,z) \pf(\omega;ds,dz) \\
&= \int\int \ind_{[0,t]}(s) \delta(\omega,s,h(\omega,s,z)) \tilde\pf((\omega,\omega^\circ);ds,dz) 
= (\delta \circ h) \star \tilde \pf_t =: \tilde Y_t,
\end{align*}
for all $t \in \R_+$ $\tilde\Prob(d(\omega,\omega^\circ))$-almost surely. Thus, we have
$
\sup\limits_{0\leq v\leq u} |Y_{T+v} - Y_T|^p = \sup\limits_{0\leq v\leq u} |\tilde Y_{T+v} - \tilde Y_T|^p
$
$\tilde \Prob$-almost surely for all finite stopping times $T$ and $p,u>0$. Now, the same reasoning as in \eqref{asEFTglEq} and \eqref{finalineqEq}, but using Lemma 2.1.7 of \cite{JacPro12} instead, yields the desired inequalities.
\qed

\begin{remark}
In the proofs in this paper integral processes of the form $Y_{(1)} =  \delta \star \mu^{\scriptscriptstyle (n)}$ and $Y_{(2)} = \delta \star (\mu^{\scriptscriptstyle (n)} - \bar \mu^{\scriptscriptstyle (n)})$ occur frequently, where $\mu^{\scriptscriptstyle (n)}$ is the random measure associated with the jumps of the underlying process, $ \bar \mu^{\scriptscriptstyle (n)}$ denotes its predictable compensator and $\delta$ is some suitable $\Pc'$-measurable function on $\Omega'$. When we want to apply Lemma \ref{JacProL215} and Lemma \ref{JacProL217} to these processes the question is whether the condition $\hat \delta(2)_{0,u} < \infty$ almost surely or $\hat \delta(1)_{0,u} < \infty$ almost surely is satisfied for all $u>0$, respectively. However, due to the observation scheme $\{X^{\scriptscriptstyle (n)}_{i\Delta_n}\mid i=0,1,\ldots,n\}$ only values of the processes $Y_{(1),t}$ $,Y_{(2),t}$ for $t \leq n\Delta_n$ are relevant and we can consider the stopped processes $Y_{(1)}^{\scriptscriptstyle T_n}$, $Y_{(2)}^{\scriptscriptstyle T_n}$ instead, where $T_n \equiv n\Delta_n$ is the constant stopping time. According to Definition II.1.27 and Proposition II.1.30 in \cite{JacShi02} we have $Y_{(1)}^{\scriptscriptstyle T_n} = \delta \star \eta^{\scriptscriptstyle (n)}$ and  $Y_{(2)}^{\scriptscriptstyle T_n} = \delta \star (\eta^{\scriptscriptstyle (n)} - \bar \eta^{\scriptscriptstyle (n)})$, where $\eta^{\scriptscriptstyle (n)}$ denotes the restriction of $\mu^{\scriptscriptstyle (n)}$ to the set $[0,n\Delta_n] \times \R$. Obviously, the predictable compensator $\bar \eta^{\scriptscriptstyle (n)}$ of $\eta^{\scriptscriptstyle (n)}$ is the restriction of $\bar \mu^{\scriptscriptstyle (n)}$ to $[0,n\Delta_n] \times \R$. As a consequence, we have $n\Delta_n \hat\delta(2)_{0,n\Delta_n} = n \Delta_n \hat \delta(2,\bar\eta)_{0,n\Delta_n} =  u \hat \delta(2,\bar\eta)_{0,u}$ and $n\Delta_n \hat\delta(1)_{0,n\Delta_n} = n \Delta_n \hat \delta(1,\bar\eta)_{0,n\Delta_n} =  u \hat \delta(1,\bar\eta)_{0,u}$ for all $u \ge n \Delta_n$, where 
	$\hat \delta(2,\bar\eta)$ and $\hat\delta(1,\bar\eta)$ denote the function $\hat\delta(2)$ and $\hat\delta(1)$, respectively, calculated with respect to $\bar\eta$. Thus, Lemma \ref{JacProL215} and Lemma \ref{JacProL217} can be applied, if $T+u \le n\Delta_n$ and $\hat\delta(2)_{0,n\Delta_n} < \infty$ almost surely or $\hat\delta(1)_{0,n\Delta_n} < \infty$ almost surely, respectively. This is always satisfied when we apply these lemmas.
\end{remark}

\subsection{Results on the crucial decomposition}

Recall the quantities defined in \eqref{SmandlajumpDef} which are used frequently in the proof of Theorem \ref{ConvThm} and Theorem \ref{CondConvThm}. With the constants from Assumption \ref{Cond1} let $\ell \in \R$ have the properties
\begin{equation}
\label{elleqn2}
1 < \ell < \frac{1}{2\beta \ovw} \wedge (1+ \epsilon) \quad \text{ and also } \quad \ell < \frac{2(p-1) \ovw -1}{2(\beta-1) \ovw} \text{ if } \beta >1,
\end{equation}
with an $\epsilon >0$ for which Assumption \ref{Cond1}\eqref{ObsSchCond6} holds. Then we have
\begin{equation}
\label{unseqDefEq3}
u_n = (v_n)^{\ell} \quad \text{ and } \quad F_n = \lbrace \taili \colon |\taili| > u_n \rbrace
\end{equation}
as well as
\begin{align}
\label{SmandlajumpD2}
\tibigj &= (\taili \ind_{F_n}(\taili)) \star \mu^{(n)}, \nonumber \\
\tibigjal &= (\taili \ind_{F_n \cap \lbrace |\taili| \leq \alpha/4 \rbrace }(\taili)) \star \mu^{(n)}, \quad \text{ for } \alpha >0\nonumber \\
\hatbigjal &= (\taili \ind_{\lbrace |\taili| > \alpha/4 \rbrace}) \star \mu^{(n)}, \quad \text{ for } \alpha >0 \nonumber \\
N_t^n &= (\ind_{F_n} \star \mu^{(n)})_t, \nonumber \\
\tilde X_t^{\prime n} &= X^{(n)}_t - \tibigj_t \nonumber \\
&= X^{(n)}_0 + \int_0^t b^{(n)}_s ds + \int_0^t \sigma^{(n)}_s dW^{(n)}_s+ \nonumber \\
&\hspace{2cm}+ (\taili \ind_{F_n^C}(\taili)) \star 
(\mu^{(n)} - \bar \mu^{(n)})_t - (\taili \ind_{\lbrace |\taili| \leq 1 \rbrace \cap F_n}(\taili)) \star \bar \mu^{(n)}_t, \nonumber \\
A_i^n	&= \lbrace | \Deli \tilde X^{\prime n} | \leq v_n/2 \rbrace \cap \lbrace \Deli N^n \leq 1 \rbrace.
\end{align}

The following lemma ensures that with high probability at most one large jump occurs and the remaining part is appropriately small.

\begin{lemma}
	\label{QnConv}
	Let Assumption \ref{Cond1} be satisfied, then $\lim_{n \to \infty}\Prob(Q_n) \to 1$, where 
	\begin{equation}
	\label{Qndef2}
	Q_n = \bigcap \limits_{i=1}^n A_i^n
	\end{equation}
\end{lemma}

\begin{proof}
	Choose some $m^\prime \in \R$ with 
$
	m^\prime >  \frac{2+ \beta \ell}{\ell -1} \vee \frac{1+ 2 \ovw}{1/2 - \ovw}.
$
	Then by Lemma \ref{JacProL215} and Assumption \ref{Cond1}\eqref{BlGetCond} we obtain for $1 \leq i \leq n$ and any sufficiently small $0 < \delta < 1$
	\begin{align*}
	\Eb & \big| \Deli \big ( \taili \ind_{F_n^C}(\taili) \big ) \star \big ( \mu^{(n)} - \bar \mu^{(n)} \big ) \big|^{m^\prime}  \\
	&\leq K \bigg ( \int_{(i-1)\Delta_n}^{i \Delta_n} \int_{\lbrace |\taili| \leq u_n \rbrace} |\taili|^{m^\prime} \nu^{(n)}_s(d\taili)ds + \bigg \{ \int_{(i-1)\Delta_n}^{i \Delta_n}
	\int \limits_{\lbrace |\taili| \leq u_n \rbrace} |\taili|^2 \nu_s^{(n)}(d\taili) ds \bigg \}^{{m^\prime}/2} \bigg ) \\
	&= K \bigg ( n \Delta_n \int_{(i-1)/n}^{i/n} \int_{\lbrace |\taili| \leq u_n \rbrace} |\taili|^{m^\prime} g^{(n)}(y,d\taili) dy +
	\bigg \{ n\Delta_n \int_{(i-1)/n}^{i/n}
	\int \limits_{\lbrace |\taili| \leq u_n \rbrace} |\taili|^2 g^{(n)}(y,d\taili) dy \bigg \}^{{m^\prime}/2} \bigg ) \\
	&\leq K(\delta) \big ( \Delta_n^{1+(m^\prime-\beta-\delta) \ell \ovw} + \Delta_n^{m^\prime/2} \big ).
	\end{align*}
	Note that $m^\prime > 2$ always so the lemma quoted above can be applied. Furthermore, $\bar \mu^{(n)} (ds,d\taili) = \nu^{(n)}_s(d\taili)ds$ yields for $1 \leq i \leq n$ and arbitrary $\delta >0$ small enough
	\begin{align*}
	\left| \Deli \big ( \taili \ind_{\lbrace |\taili| \leq 1 \rbrace \cap F_n}(\taili) \star \bar \mu^{(n)} \big ) \right| &= \bigg |
	\int \limits_{(i-1)\Delta_n}^{i \Delta_n} \int_{\lbrace u_n < |\taili| \leq 1 \rbrace} \taili \nu_s^{(n)}(d\taili) ds \bigg | \\
	&\leq u_n^{-(\beta+ \delta -1)_+} n \Delta_n \int_{(i-1)/n}^{i/n} \int_{\lbrace u_n < |\taili| \leq 1 \rbrace} |\taili|^{\beta+ \delta} g^{(n)}(y,d\taili) dy \\
	& \leq K(\delta) \Delta_n^{1 - \ell \ovw (\beta+ \delta -1)_+}.
	\end{align*}
	Let $m_b, m_\sigma \in \R$ be the constants in Assumption \ref{Cond1}\eqref{DrianDiffCond}. Because of $m_b >1$ and $m_\sigma >2$ we can apply H\"older's inequality and the Burkholder-Davis-Gundy inequalities (see page 39 in \cite{JacPro12}) to obtain due to Assumption \ref{Cond1}\eqref{DrianDiffCond} for $1 \leq i \leq n$:
	\begin{align*}
	\Eb \bigg | \int_{(i-1) \Delta_n}^{i \Delta_n} b_s^{(n)} ds \bigg |^{m_b} &\leq \Delta_n^{m_b} \Eb \bigg ( \frac{1}{\Delta_n} \int_{(i-1) \Delta_n}^{i \Delta_n} | b_s^{(n)} |^{m_b} ds \bigg ) \\
	&=\Delta_n^{m_b} \bigg ( \frac{1}{\Delta_n} \int_{(i-1) \Delta_n}^{i \Delta_n} \Eb | b^{(n)}_s |^{m_b} ds \bigg) \leq K \Delta_n^{m_b} 
	\end{align*}
	and
	\begin{align*}
	\Eb \bigg | \int_{(i-1) \Delta_n}^{i \Delta_n} \sigma^{(n)}_s dW^{(n)}_s \bigg |^{m_\sigma} &\leq K \Eb \bigg ( \int_{(i-1) \Delta_n}^{i \Delta_n} | \sigma^{(n)}_s |^2 ds \bigg )^{m_\sigma/2} \\
	&\leq K \Delta_n^{m_\sigma/2} \Eb \bigg ( \frac{1}{\Delta_n} \int_{(i-1) \Delta_n}^{i \Delta_n} | \sigma^{(n)}_s |^{m_\sigma} ds \bigg ) \\
	&= K \Delta_n^{m_\sigma/2} \bigg ( \frac{1}{\Delta_n} \int_{(i-1) \Delta_n}^{i \Delta_n} \Eb | \sigma^{(n)}_s |^{m_\sigma} ds \bigg )
	\leq K \Delta_n^{m_\sigma/2},
	\end{align*}
	where the equalities in the above displays hold according to Fubini's theorem. Additionally, according to Lemma \ref{PRMProsevjb} we have
	for any $1 \leq i \leq n$ and some $K(\delta) >0$
	\[
	\Prob(\Deli N^n \geq 2) \leq K(\delta) \Delta_n^{2-2(\beta + \delta) \ell \ovw},
	\]
	for $n \in \N$ large enough. Let us now choose $\delta > 0$ in such a way that $1- \ell \ovw(\beta+ \delta - 1)_+ > \ovw$. Then, for $n$ large enough we have $\Delta_n^{1- \ell \ovw(\beta+ \delta - 1)_+} \leq K v_n$, and the Markov inequality gives 
	\begin{multline}
	\label{ProbAbscheqn}
	\sum \limits_{i=1}^n \Prob \big((A_i^n)^C\big) \leq K(\delta) n \big \{ \Delta_n^{2 - 2(\beta+\delta) \ell \ovw} + \Delta_n^{1+(m^\prime-\beta- \delta) \ell \ovw - m^\prime \ovw} +\\
	+ \Delta_n^{m^\prime/2 - m^\prime \ovw} + \Delta_n^{m_\sigma/2 - m_\sigma \ovw} + \Delta_n^{m_b - m_b \ovw}\big \}.
	\end{multline}
	From the choice of the constants we further have
	\begin{align}
	\label{ErOkl}
	2-2(\beta+ \delta) \ell \ovw \geq 2-2\beta (1+ \epsilon) \ovw
	\end{align}
	and
	\begin{equation}
	\label{ZwOkl}
	\big (1+(m^\prime-\beta- \delta) \ell \ovw - m^\prime \ovw \big ) \wedge \big ( m^\prime/2 - m^\prime \ovw \big )  \wedge \big ( m_\sigma/2 - m_\sigma \ovw \big ) \wedge \big ( m_b - m_b \ovw \big ) \geq 1+ 2 \ovw,
	\end{equation}
	again for $\delta >0$ small enough. Thus, the right hand side of \eqref{ProbAbscheqn} converges to zero for this choice of $\delta$, using Assumption \ref{Cond1}\eqref{ObsSchCond4} and \eqref{ObsSchCond6}. 
\end{proof}

If moreover Assumption \ref{EasierCond} is valid, we can even give a rate for the convergence $\Prob(Q_n) \to 1$.

\begin{lemma}
	\label{QnConvOrdn}
	If Assumption \ref{EasierCond} is satisfied for some $0 < \beta < 2$, $0 < \tau < (1/5 \wedge \frac{2-\beta}{2+5\beta})$ and $p > \beta+((\frac 12 + \frac 32 \beta) \vee \frac 2{1+5\tau})$, we have
$
	\Prob\big(Q_n^C\big) \leq K n \Delta_n^{1+ \tau},
$
	for some $K>0$.
\end{lemma}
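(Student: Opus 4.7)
\textbf{Proof plan for Lemma \ref{QnConvOrdn}.} The strategy is to revisit the proof of Lemma \ref{QnConv} and extract a quantitative rate from the bound \eqref{ProbAbscheqn}, rather than merely observing that the right-hand side tends to zero. Specifically, I would apply the same decomposition $Q_n^C \subset \bigcup_{i=1}^n (A_i^n)^C$ and the union bound, and then reuse verbatim the moment estimates for the four contributions to $\Deli \tilde X^{\prime n}$ together with the Poisson tail estimate for $\Deli N^n$ from Lemma \ref{PRMProsevjb}. This produces
\[
\Prob(Q_n^C) \;\leq\; K(\delta)\, n \bigl\{ \Delta_n^{2-2(\beta+\delta)\ell\ovw} + \Delta_n^{1+(m'-\beta-\delta)\ell\ovw - m'\ovw} + \Delta_n^{m'/2 - m'\ovw} + \Delta_n^{m_\sigma/2 - m_\sigma\ovw} + \Delta_n^{m_b - m_b\ovw}\bigr\},
\]
exactly as in \eqref{ProbAbscheqn}, with the same freedom to pick $\ell$ satisfying \eqref{elleqn2} and $m' > (2+\beta\ell)/(\ell-1) \vee (1+2\ovw)/(1/2-\ovw)$ and $\delta>0$ small.

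The task then reduces to verifying that each of the five exponents above can be made $\geq 1+\tau$ under Assumption \ref{EasierCond}. For the last four exponents this is immediate from the chain \eqref{OrdnVergl} in the proof of Proposition \ref{easier}: those exponents are all $\geq 1+2\ovw = (3+5\tau)/2$, which is strictly larger than $1+\tau$. The key step, and the one place where a delicate choice is required, is the first exponent $2-2(\beta+\delta)\ell\ovw$. Here I would exploit the crucial identity behind \eqref{OrdnVergl}: with $\ovw=(1+5\tau)/4$ and $\epsilon = \bigl(2-2\tau-\beta(1+5\tau)\bigr)/\bigl(\beta(1+5\tau)\bigr)$ (which is positive by $\tau<(2-\beta)/(2+5\beta)$), a direct computation gives $2\beta(1+\epsilon)\ovw = (2-2\tau)/2 = 1-\tau$, hence $2-2\beta(1+\epsilon)\ovw = 1+\tau$ exactly. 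Choosing $\ell\in(1,1+\epsilon)$ first and then $\delta>0$ so small that $(\beta+\delta)\ell \leq \beta(1+\epsilon)$ yields $2-2(\beta+\delta)\ell\ovw \geq 1+\tau$, as required.

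Combining these five exponent estimates with $\Delta_n\to 0$ gives $\Prob(Q_n^C) \leq K n\Delta_n^{1+\tau}$, concluding the proof. The main obstacle, as indicated above, is to pin down the first exponent to $1+\tau$; this is what forces the rather specific form of the admissible range of $\tau$ in Assumption \ref{EasierCond}, and it is what singles out $\tau$ as the sharp rate. The remaining exponents are non-binding because $\ovw>\tau/2$, so they contribute higher-order terms that are absorbed in $K\Delta_n^{1+\tau}$.
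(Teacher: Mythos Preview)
Your proposal is correct and follows essentially the same approach as the paper: you reuse the bound \eqref{ProbAbscheqn} from Lemma \ref{QnConv}, invoke \eqref{ZwOkl} (equivalently the chain \eqref{OrdnVergl}) to see that the last four exponents exceed $1+2\ovw>1+\tau$, and handle the critical first exponent via the identity $2-2\beta(1+\epsilon)\ovw=1+\tau$ together with $(\beta+\delta)\ell\le\beta(1+\epsilon)$ for $\ell<1+\epsilon$ and $\delta$ small, which is exactly \eqref{ErOkl}. The paper's proof is just a terse pointer to these same three displays and to \eqref{OrdnVergl}; your write-up merely unpacks them.
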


\begin{proof}
	If Assumption \ref{EasierCond} holds, then according to \eqref{OrdnVergl} in the proof of Proposition \ref{easier} Assumption \ref{Cond1} is valid for constants satisfying
$
	1+ \tau = 2(1-  \beta \ovw (1 + \epsilon)) < (1+2\ovw).
$
	Comparing this fact with \eqref{ProbAbscheqn}, \eqref{ErOkl} and \eqref{ZwOkl} yields the assertion.
\end{proof}

\noindent
In the next auxiliary lemma we consider for $\alpha >0$, $1 \leq i,j \leq n$ with $i \neq j$ and the constant $\ovr$ in Assumption \ref{Cond1} the sets 
\begin{align}
\label{RijnalDefEq}
R_{i,j}^{(n)} (\alpha) = \left\{ \big| \Deli \hatbigjal - \Delj \hatbigjal \big| \leq \Delta_n^{\ovr} \right\} \cap 
\left\{ \big| \Deli \hatbigjal \big| > \alpha/4 \right\} \cap Q_n,
\end{align}
with the pure jump It\=o semimartingale $\hatbigjal$ from \eqref{SmandlajumpD2}. Furthermore, for $\alpha >0$ let the sets $J^{\scriptscriptstyle (1)}_n (\alpha)$ be defined by their complements:
\begin{align}
\label{J1ijnalDefEq}
J^{(1)}_n(\alpha)^C = \bigcup \limits_{\stackrel{i,j =1}{i \neq j}}^n R_{i,j}^{(n)}(\alpha).
\end{align}

\begin{lemma}
	\label{PJn1alconv}
	Grant Assumption \ref{Cond1}. Then for each $\alpha >0$ the sets $J^{(1)}_n(\alpha)$ defined in \eqref{J1ijnalDefEq} satisfy
$
	\lim_{n \to \infty} \Prob\big(J^{(1)}_n(\alpha) \big) =1.
$
\end{lemma}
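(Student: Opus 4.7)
The plan is to use a union bound, dominate each $\Prob(R_{i,j}^{(n)}(\alpha))$ by an iterated compensator integral, and then invoke the essential-supremum estimate from Assumption~\ref{Cond1}\eqref{SeLevyDistCond}.

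First I would establish a pathwise domination of $\ind_{R_{i,j}^{(n)}(\alpha)}$. For $n$ so large that $u_n<\alpha/4$ and $\Delta_n^{\ovr}<\alpha/8$, the constraint $\Delta_i^n N^n\leq 1$ on $Q_n$ implies that each interval of length $\Delta_n$ contains at most one jump of absolute size $>\alpha/4$. Combined with $|\Delta_i^n \hatbigjal|>\alpha/4$ and $|\Delta_i^n \hatbigjal-\Delta_j^n \hatbigjal|\leq\Delta_n^{\ovr}$, this forces the existence of a jump of size $x$ in $((i-1)\Delta_n,i\Delta_n]$ with $|x|>\alpha/4$ and of a jump of size $z$ in $((j-1)\Delta_n,j\Delta_n]$ with $|z|>\alpha/4$ and $|x-z|\leq\Delta_n^{\ovr}$. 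Hence
\begin{equation*}
\ind_{R_{i,j}^{(n)}(\alpha)}\leq\int_{(i-1)\Delta_n}^{i\Delta_n}\!\int\int_{(j-1)\Delta_n}^{j\Delta_n}\!\int H_n(x,z)\,\mu^{(n)}(dr,dz)\,\mu^{(n)}(ds,dx),
\end{equation*}
where $H_n(x,z)\defeq\ind_{\{|x|>\alpha/4,\,|z|>\alpha/4,\,|x-z|\leq\Delta_n^{\ovr}\}}$.

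Next, for $i<j$ (the case $i>j$ being handled symmetrically), I would take expectations and iterate the predictable-compensator identity. Conditioning on $\Fc_{(j-1)\Delta_n}$, the jump location $x$ in $((i-1)\Delta_n,i\Delta_n]$ is $\Fc_{(j-1)\Delta_n}$-measurable, so $H_n(x,\cdot)$ is a predictable integrand in $(r,z)$, and the inner $\mu^{(n)}$-integral can be replaced by the $\bar\mu^{(n)}$-integral. A second application to the outer integral, together with \eqref{RescAss3}, yields
\begin{equation*}
\Prob\bigl(R_{i,j}^{(n)}(\alpha)\bigr)\leq\int_{(i-1)\Delta_n}^{i\Delta_n}\!\int_{(j-1)\Delta_n}^{j\Delta_n}\!\int\!\int H_n(x,z)\,g^{(n)}\!\bigl(\tfrac{s}{n\Delta_n},dx\bigr)\,g^{(n)}\!\bigl(\tfrac{r}{n\Delta_n},dz\bigr)\,ds\,dr.
\end{equation*}

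Finally, since \eqref{gon012Ass3} gives $g^{(n)}\leq K(g_0+g_1+g_2)$ for $n$ large, expanding the product and applying Assumption~\ref{Cond1}\eqref{SeLevyDistCond} with threshold $\alpha/4$ to each of the nine pairs $(m_1,m_2)\in\{g_0,g_1,g_2\}^2$ bounds the inner double integral by $K(\alpha)\Delta_n^q$ uniformly in Lebesgue-a.e.\ $(y_1,y_2)\in[0,1]^2$. Integration in $(s,r)$ then gives $\Prob(R_{i,j}^{(n)}(\alpha))\leq K(\alpha)\Delta_n^{2+q}$ uniformly in $i\neq j$, and a union bound produces $\Prob(J_n^{(1)}(\alpha)^C)\leq K(\alpha)(n\Delta_n^{1+q/2})^2\to 0$ by Assumption~\ref{Cond1}\eqref{ObsSchCond3}. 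The main obstacle will be justifying the iterated compensator step: because the general integer-valued random measure $\mu^{(n)}$ is not assumed to have independent increments a priori, one must carefully verify predictability of the inner integrand, which is what makes the chronological separation $i\Delta_n\leq(j-1)\Delta_n$ essential.
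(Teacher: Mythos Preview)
Your proposal is correct and follows essentially the same route as the paper: a pathwise domination of $\ind_{R_{i,j}^{(n)}(\alpha)}$ by an iterated $\mu^{(n)}$-integral, replacement of both random measures by their compensators via the predictability argument (the paper also singles out the chronological separation, taking $j<i$ where you take $i<j$), and then the uniform bound $K(\alpha)\Delta_n^{2+q}$ from Assumption~\ref{Cond1}\eqref{SeLevyDistCond} combined with the union bound and Assumption~\ref{Cond1}\eqref{ObsSchCond3}. Your explicit expansion $g^{(n)}\le K(g_0+g_1+g_2)$ before applying \eqref{SeLevyDistCond} is a detail the paper leaves implicit, but otherwise the arguments coincide.
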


\begin{proof}
	Let $x$ be arbitrary and either $z = 0$ or $|z| > \alpha/4$. Then, for $n$ large enough we have
	\[
	\ind_{\lbrace |x - z| \leq \Delta_n^{\ovr} \rbrace} \ind_{\lbrace |x| > \alpha /4 \rbrace} \leq \ind_{\lbrace |x - z| \leq \Delta_n^{\ovr} \rbrace} \ind_{\lbrace |x| > \alpha /4 \rbrace} \ind_{\lbrace |z| > \alpha /4 \rbrace}.
	\]
	Furthermore, using the fact that for large $n \in \N$ on $Q_n$ there is at most one jump of $\hatbigjal$ on an interval $((k-1) \Delta_n, k \Delta_n]$ with $1 \leq k \leq n$, we thus obtain
	\begin{multline}
	\label{PMRVorAbsch}
	\Prob\big(R_{i,j}^{(n)}(\alpha)\big) \leq \int \int \int \int \int \ind_{\lbrace |x -z| \leq \Delta_n^{\ovr} \rbrace} 
	\ind_{((j-1) \Delta_n, j \Delta_n]}(t) \ind_{\lbrace |z| > \alpha/4 \rbrace}  
	\times \\
	\times \ind_{Q_n}(\omega) \mu^{(n)}(\omega; dt,dz) \ind_{\lbrace |x| > \alpha /4 \rbrace} 
	\ind_{((i-1) \Delta_n, i \Delta_n]} (s) \mu^{(n)}(\omega; ds,dx) \Prob(d \omega).
	\end{multline}
	Now, forget about the indicator involving $Q_n$ and  assume $j<i$. If $(\mathcal F_t)_{t \in \R_+}$ denotes the underlying filtration, the inner integral in \eqref{PMRVorAbsch} with respect to $\mu^{\scriptscriptstyle (n)}(\omega; dt,dz)$ is an $(\mathcal F_{j \Delta_n} \otimes \Bb)$-measurable function in $(\omega,x)$. Accordingly, the integrand in the integral with respect to $\mu^{\scriptscriptstyle (n)}(\omega; ds,dx)$ is in fact $\Pc'$-measurable. Therefore, Fubini's theorem and the definition of the predictable compensator of an optional ${\mathcal P}'$-$\sigma$-finite random measure
	(see Theorem II.1.8 in \cite{JacShi02}) yield for $n$ large enough:

	\begin{align}
	\label{PMengenRAbsch}
	\Prob\big(R_{i,j}^{(n)}(\alpha)\big) &\leq \int_{(i-1)\Delta_n}^{i\Delta_n} \int_{(j-1)\Delta_n}^{j\Delta_n} \int
	 \int \ind_{\lbrace |x-z| \leq \Delta_n^{\ovr} \rbrace} \ind_{\lbrace |x| > \alpha /4 \rbrace} \times  \\
	&\hspace{6cm}\times\ind_{\lbrace |z| > \alpha /4 \rbrace} \nu^{(n)}_{s_1}(dz) \nu^{(n)}_{s_2}(dx) ds_1 ds_2 \nonumber \\
	&\leq n^2 \Delta_n^2 \int_{(i-1)/n}^{i/n} \int_{(j-1)/n}^{j/n} \int \int \ind_{\lbrace |x-z| \leq \Delta_n^{\ovr} \rbrace} \ind_{\lbrace |x| > \alpha /4 \rbrace} \times \nonumber \\
	&\hspace{58mm} \times\ind_{\lbrace |z| > \alpha /4 \rbrace} g^{(n)}(y_1,dz) g^{(n)}(y_2,dx) dy_1 dy_2.  \nonumber
	\end{align}
	Thus, we have $\Prob \big(J^{(1)}_n(\alpha)\big) \rightarrow 1$, because \eqref{PMengenRAbsch}, Assumption \ref{Cond1}\eqref{SeLevyDistCond} and Assumption \ref{Cond1}\eqref{ObsSchCond3} show that there is a constant $K >0$ such that 
$
	\Prob\big(J^{(1)}_n(\alpha)^C\big) \leq K n^2 \Delta_n^{2+q} \rightarrow 0.
$
\end{proof}

Similar to \eqref{RijnalDefEq} for $\alpha >0$, $1 \leq i,j \leq n$ with $i \neq j$ and the constants $\ovv < \ovr$ in Assumption \ref{Cond1} let
\[
S_{i,j}^{(n)}(\alpha) = \left\{ \big| \Deli \tibigjeial - \Delj \tibigjeial \big| \leq \Delta_n^{\ovr} \right\} \cap
\left\{ \big| \Deli \tibigjeial \big| > \Delta_n^{\ovv} \right\} \cap Q_n.
\]
and define the sets $J_n^{(2)}(\alpha)$ by
\begin{equation}
\label{Jn2aldef}
J_n^{(2)} (\alpha)^C = \bigcup \limits_{\stackrel{i,j =1}{i \neq j}}^n S_{i,j}^{(n)} (\alpha).
\end{equation}

\begin{lemma}
	\label{BiJovrovvAb}
	Grant Assumption \ref{Cond1}. Then for each $\alpha \in (0,\alpha_0/2)$, with $\alpha_0$ the constant in Assumption \ref{Cond1}\eqref{FiLevyDistCond}, the sets $J_n^{\scriptscriptstyle (2)} (\alpha)$ defined in \eqref{Jn2aldef} satisfy
$
	\lim_{n \to \infty} \Prob\big(J^{(2)}_n(\alpha) \big) =1.
$
\end{lemma}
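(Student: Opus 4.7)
My plan is to mimic the proof of Lemma \ref{PJn1alconv} essentially verbatim, with the single modification that the role of the threshold $\alpha/4$ (used there to pin down a ``large'' jump) is taken over by $\Delta_n^{\ovv}$, and the relevant Lévy-kernel estimate is Assumption \ref{Cond1}\eqref{FiLevyDistCond} in place of \eqref{SeLevyDistCond}. Fix $\alpha \in (0, \alpha_0/2)$ throughout, so that in particular $2\alpha \le \alpha_0$.

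First I would observe that $\tibigjeial$ has jumps precisely on $F_n \cap \{|\taili|\le 2\alpha\}$, so in particular $|\Delta \tibigjeial_s| \le 2\alpha$ always, and on $Q_n$ the counting process $N^n$ has at most one jump per interval $((k-1)\Delta_n, k\Delta_n]$, whence the same is true for $\tibigjeial$. Therefore, on $\{|\Deli \tibigjeial| > \Delta_n^{\ovv}\} \cap Q_n$ there is a unique jump $x$ of $\tibigjeial$ in $((i-1)\Delta_n,i\Delta_n]$ satisfying $\Delta_n^{\ovv} < |x| \le 2\alpha \le \alpha_0$. Since $\ovr > \ovv$ by Assumption \ref{Cond1}\eqref{FiLevyDistCond}, for $n$ large we have $\Delta_n^{\ovr} < \Delta_n^{\ovv}/2$, so $|\Deli\tibigjeial - \Delj\tibigjeial|\le \Delta_n^{\ovr}$ forces the corresponding jump $z$ of $\tibigjeial$ in $((j-1)\Delta_n, j\Delta_n]$ to satisfy $\Delta_n^{\ovv}/2 < |z| \le 2\alpha \le \alpha_0$ as well. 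Repeating the argument of Lemma \ref{PJn1alconv} (discarding $\ind_{Q_n}$, using that the inner integral is $\mathcal{F}_{j\Delta_n} \otimes \mathcal{B}$-measurable so that the $\Pc'$-measurability hypothesis in Theorem II.1.8 of \cite{JacShi02} is met, and invoking Fubini) gives, for $j<i$,
\begin{align*}
\Prob\big(S_{i,j}^{(n)}(\alpha)\big) &\le n^2 \Delta_n^2 \int_{(i-1)/n}^{i/n}\!\!\int_{(j-1)/n}^{j/n} \!\!\int\!\!\int \ind_{\{|x-z|\le \Delta_n^{\ovr}\}} \ind_{\{\Delta_n^{\ovv}/2 < |x|\le \alpha_0\}} \\
&\hspace{3.5cm}\times \ind_{\{\Delta_n^{\ovv}/2 < |z|\le \alpha_0\}}\, g^{(n)}(y_1,dx)\, g^{(n)}(y_2,dz)\, dy_1\, dy_2,
\end{align*}
and the case $i<j$ is identical by symmetry.

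Finally I would apply Assumption \ref{Cond1}\eqref{FiLevyDistCond} to bound the inner double integral. Since by \eqref{gon012Ass3} the kernel $g^{(n)}$ is dominated for large $n$ by $g_0+g_1+g_2$, expanding the product $g^{(n)}\otimes g^{(n)}$ into at most nine terms of the form $m_1\otimes m_2$ with $m_1,m_2\in\{g_0,g_1,g_2\}$ and applying \eqref{FiLevyDistCond} to each gives a uniform bound $K\Delta_n^q$ on the inner integral. Hence $\Prob(S_{i,j}^{(n)}(\alpha))\le K\Delta_n^{2+q}$ uniformly in $i\ne j$, and summing over $(i,j)$ yields
\[
\Prob\big(J_n^{(2)}(\alpha)^C\big) \le K n^2 \Delta_n^{2+q} \longrightarrow 0
\]
by Assumption \ref{Cond1}\eqref{ObsSchCond3}. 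The only mildly delicate point is the reduction from $g^{(n)}$ to the fixed kernels $g_0,g_1,g_2$ appearing in \eqref{FiLevyDistCond}; everything else is a direct transcription of Lemma \ref{PJn1alconv} with $\alpha/4$ replaced by $\Delta_n^{\ovv}$.
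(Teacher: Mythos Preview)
Your proposal is correct and follows essentially the same approach as the paper's own proof, which likewise refers back to the argument for Lemma \ref{PJn1alconv} with $\alpha/4$ replaced by $\Delta_n^{\ovv}$ and invokes Assumption \ref{Cond1}\eqref{FiLevyDistCond} together with $\ovv<\ovr$ to obtain $\Prob(S_{i,j}^{(n)}(\alpha))\le K\Delta_n^{2+q}$. You are in fact slightly more explicit than the paper in spelling out the reduction from $g^{(n)}$ to the fixed kernels $g_0,g_1,g_2$ via the domination $g^{(n)}\le g_0+g_1+g_2$ for large $n$, which the paper leaves implicit.
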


\begin{proof} 
	The same considerations as for \eqref{PMRVorAbsch} and \eqref{PMengenRAbsch} yield for $n$ large enough
	\begin{align*}
	\Prob(S_{i,j}^{(n)}(\alpha)) &\leq \int_{(i-1)\Delta_n}^{i\Delta_n} \int_{(j-1)\Delta_n}^{j\Delta_n} \int \int \ind_{\lbrace |x-z| \leq \Delta_n^{\ovr} \rbrace} \ind_{\lbrace \Delta_n^{\ovv} /2 < |x| \leq \alpha_0 \rbrace} \times \nonumber \\
	&\hspace{5cm}\times\ind_{\lbrace \Delta_n^{\ovv} /2 < |z| \leq \alpha_0 \rbrace} \nu^{(n)}_{s_1}(dz) \nu^{(n)}_{s_2}(dx) ds_1 ds_2 \nonumber \\
	&\leq n^2 \Delta_n^2 \int_{(i-1)/n}^{i/n} \int_{(j-1)/n}^{j/n} \int \int \ind_{\lbrace |x-z| \leq \Delta_n^{\ovr} \rbrace} \ind_{\lbrace \Delta_n^{\ovv} /2 < |x| \leq \alpha_0 \rbrace} \times \nonumber \\
	&\hspace{48mm} \times\ind_{\lbrace \Delta_n^{\ovv} /2 < |z| \leq \alpha_0 \rbrace} g^{(n)}(y_1,dz) g^{(n)}(y_2,dx) dy_1 dy_2 \\
	&\leq K \Delta_n^{2+q}, 
	\end{align*}
 because of Assumption \ref{Cond1}\eqref{FiLevyDistCond} and $\ovv < \ovr$. Thus, we obtain
$
	\Prob\big(J^{(2)}_n(\alpha)^C\big) \leq K n^2 \Delta_n^{2+q} \rightarrow 0,
$	
	by Assumption \ref{Cond1}\eqref{ObsSchCond3}
\end{proof}

The next lemma yields bounds for the cardinality of the following random sets. For $\alpha >0$, $n \in \N$, $\dfi \in \R$ and $\omega \in \Omega$ let
\begin{align*}
\tilde A_1(\omega;\alpha,n,\dfi) &= \big\{i \in \{1,\ldots,n\} \mid | \Deli \hatbigjal (\omega) | > \alpha/4 \text{ and } \\
&\hspace{26mm}\Indit(\Deli \tilde X^{\prime n}(\omega) + \Deli \hatbigjal(\omega)) \neq \Indit(\Deli \hatbigjal(\omega)) \big\} \\
\tilde A_2(\omega;\alpha,n,\dfi) &= \big\{i \in \{1,\ldots,n\} \mid | \Deli \tibigjeial(\omega)| > \Delta_n^{\ovv} \text{ and } \\
&\hspace{26mm}\Indit(\Deli \tilde X^{\prime n}(\omega) + \Deli \tibigjeial(\omega)) \neq \Indit(\Deli \tibigjeial(\omega)) \big\}
\end{align*}

\begin{lemma}
	\label{Indikunglab}
	Grant Assumption \ref{Cond1} and let $c_n := \lceil (v_n/ \Delta_n^{\ovr}) + 1\rceil$. Then for all $\alpha >0$, $n \in \N$ and $\dfi \in \R$ we have 
	\begin{equation}
	\label{tiA1absch}
	\#\tilde A_1(\omega;\alpha,n,\dfi) \leq c_n
	\end{equation}
	for every $\omega \in J_n^{(1)}(\alpha) \cap Q_n$ as well as 
	\begin{equation}
	\label{tiA2absch}
	\#\tilde A_2(\omega;\alpha,n,\dfi) \leq c_n
	\end{equation}
	for all $\omega \in J_n^{(2)}(\alpha) \cap Q_n$, where $\# M$ denotes the cardinality of a set $M$.
\end{lemma}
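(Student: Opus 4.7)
\textbf{Proof proposal for Lemma \ref{Indikunglab}.} The plan is to give the argument for \eqref{tiA1absch} in detail; the bound \eqref{tiA2absch} will then follow by an entirely analogous reasoning with $\tibigjeial$ in place of $\hatbigjal$, threshold $\Delta_n^{\ovv}$ in place of $\alpha/4$, and $J_n^{(2)}(\alpha)$ in place of $J_n^{(1)}(\alpha)$.

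The key geometric observation is the following. Fix $\omega \in J_n^{(1)}(\alpha)\cap Q_n$, $\dfi\in\R$, and suppose $i\in\tilde A_1(\omega;\alpha,n,\dfi)$. Since the indicators $\ind_{(-\infty,\dfi]}\big(\Deli\tilde X^{\prime n}+\Deli\hatbigjal\big)$ and $\ind_{(-\infty,\dfi]}(\Deli\hatbigjal)$ disagree, $\dfi$ lies strictly between the two numbers $\Deli\hatbigjal$ and $\Deli\hatbigjal+\Deli\tilde X^{\prime n}$, so that
\[
\bigl|\dfi-\Deli\hatbigjal\bigr|\;\le\;\bigl|\Deli\tilde X^{\prime n}\bigr|\;\le\;v_n/2,
\]
where the last inequality uses $\omega\in Q_n$ and the definition of $A_i^n$ in \eqref{SmandlajumpD2}. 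Consequently, every element $i\in\tilde A_1(\omega;\alpha,n,\dfi)$ satisfies $\Deli\hatbigjal\in[\dfi-v_n/2,\dfi+v_n/2]$, an interval of length $v_n$.

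Next, I invoke the separation encoded in $J_n^{(1)}(\alpha)$. By \eqref{RijnalDefEq} and \eqref{J1ijnalDefEq}, membership in $J_n^{(1)}(\alpha)$ rules out, for all $i\neq j$, the simultaneous occurrence of $|\Deli\hatbigjal|>\alpha/4$ and $|\Deli\hatbigjal-\Delj\hatbigjal|\le\Delta_n^{\ovr}$. Thus, whenever $i\in\tilde A_1(\omega;\alpha,n,\dfi)$ (which requires $|\Deli\hatbigjal|>\alpha/4$), one has $|\Deli\hatbigjal-\Delj\hatbigjal|>\Delta_n^{\ovr}$ for every $j\neq i$, and in particular for every other element $j\in\tilde A_1(\omega;\alpha,n,\dfi)$.

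Combining the two steps, the set of real numbers $\{\Deli\hatbigjal(\omega):i\in\tilde A_1(\omega;\alpha,n,\dfi)\}$ lies inside an interval of length $v_n$ and its elements are pairwise separated by strictly more than $\Delta_n^{\ovr}$. An elementary pigeonhole argument then yields the cardinality bound: if there were $N$ such values, they would span a range of at least $(N-1)\Delta_n^{\ovr}$, forcing $(N-1)\Delta_n^{\ovr}<v_n$, hence $N\le\lfloor v_n/\Delta_n^{\ovr}\rfloor+1\le c_n$. Since the map $i\mapsto\Deli\hatbigjal$ need not be injective on $\tilde A_1$ in principle, but on $Q_n$ there is at most one jump of $\hatbigjal$ in each interval $((i-1)\Delta_n,i\Delta_n]$ (recall $\Deli N^n\le 1$ on $A_i^n$ and $\{|\taili|>\alpha/4\}\subset F_n$ for $n$ large), the map is in fact injective on $\tilde A_1$, so $\#\tilde A_1(\omega;\alpha,n,\dfi)\le c_n$. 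The main subtlety is precisely this last injectivity point: it has to be checked that the $\Deli\hatbigjal$ for $i\in\tilde A_1$ are genuinely distinct, which is ensured by the single-large-jump property built into $Q_n$. The proof of \eqref{tiA2absch} is identical, using that on $J_n^{(2)}(\alpha)\cap Q_n$ the values $\Deli\tibigjeial$ with $|\Deli\tibigjeial|>\Delta_n^{\ovv}$ are pairwise $\Delta_n^{\ovr}$-separated, while those that contribute to $\tilde A_2$ are forced by $|\Deli\tilde X^{\prime n}|\le v_n/2$ to lie in the length-$v_n$ window $[\dfi-v_n/2,\dfi+v_n/2]$.
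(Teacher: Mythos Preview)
Your proof is correct and follows the same approach as the paper: both confine the relevant increments $\Deli\hatbigjal$ to the window $[\dfi-v_n/2,\dfi+v_n/2]$ via $|\Deli\tilde X^{\prime n}|\le v_n/2$ on $Q_n$, invoke the pairwise separation $>\Delta_n^{\ovr}$ guaranteed by $J_n^{(1)}(\alpha)$, and finish with a pigeonhole count. One small correction: the injectivity of $i\mapsto\Deli\hatbigjal$ on $\tilde A_1$ does not follow from the single-jump property of $Q_n$ (two distinct intervals could in principle carry jumps of equal size), but it is already immediate from the strict separation $|\Deli\hatbigjal-\Delj\hatbigjal|>\Delta_n^{\ovr}>0$ you established just before, so the argument stands; the paper simply sidesteps this detour by bounding the number of \emph{indices} in $M_0$ landing in the window directly, rather than first counting distinct values.
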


\begin{proof}
	For $\omega \in J_n^{(1)}(\alpha) \cap Q_n$ we have 
	\[
	\big| \Deli \hatbigjal - \Delj \hatbigjal \big| > \Delta_n^{\ovr}
	\]    
	for all $i,j \in \big\{ k \in \{1,\ldots,n\} \mid | \Delk \hatbigjal | > \alpha/4 \big\} =: M_0(\omega;\alpha,n)$ with $i \neq j$ by the definition of the set $J_n^{(1)}(\alpha)$ in \eqref{J1ijnalDefEq}. Consequently, for fixed $\dfi \in \R$ 
	\[
	\Deli \hatbigjal \in [t-v_n/2,t+v_n/2]
	\]
	can only hold for at most $c_n$ indices $i \in M_0(\omega;\alpha,n)$. Thus, we conclude \eqref{tiA1absch}, because according to \eqref{Qndef2} we have $| \Deli \tilde X^{\prime n} | \leq v_n/2$ for all $i \in \{1, \ldots,n\}$ and $\omega \in Q_n$. 
	
	The assertion \eqref{tiA2absch} follows with exactly the same reasoning.
\end{proof}

In the following we gather auxiliary lemmas which have a similar proof and give bounds for crucial quantities in the proof of Theorem \ref{ConvThm} and Theorem \ref{CondConvThm}. The first one is concerned with 
\begin{equation}
\label{DiffKonv2}
V_\alpha^{(n)} := \frac{1}{\sqrt{n \Delta_n}} \sup \limits_{(\gseqi,\dfi) \in \netir} \Big| \sum \limits_{i=1}^{\ip{n\gseqi}} \left\{ \chi_\dfi^{(\alpha)} \big(\Deli X^{(n)}\big) \Truniv - \chi_\dfi^{(\alpha)}\big(\Deli L^{(n)}\big) \right\} \Big|,
\end{equation}
from \eqref{DiffKonv}, where $\alpha >0$, $\chi_\dfi^{\scriptscriptstyle (\alpha)}$ is defined in \eqref{ImpQuantDef} and $L^{\scriptscriptstyle (n)} = \big(\taili \Trunx\big) \star \mu^{\scriptscriptstyle (n)}$ is the pure jump It\=o semimartingale defined in \eqref{LnDefEq}.

\begin{lemma}
	\label{ValnBound}
	Let Assumption \ref{Cond1} be satisfied. Then for $\alpha >0$, $\omega \in Q_n$ and $n \in \N$ large enough such that $v_n \leq \alpha/4$ we have
	\[
	V_\alpha^{(n)} \leq C_n(\alpha) + D_n(\alpha),
	\]
	where
	\begin{multline*}
	C_n(\alpha) = \frac{K}{\sqrt{n \Delta_n}} \sup \limits_{t \in \R} \sum \limits_{i=1}^n \big| \Indit(\Deli \tilde X^{\prime n} + 
	\Deli \hatbigjal) - \Indit(\Deli \hatbigjal) \big|  \times \\
	\times \ind_{\lbrace | \Deli \hatbigjal | > \alpha/4 \rbrace} 
	\ind_{\lbrace | \Deli \tilde X^{\prime n} | \leq v_n/2 \rbrace},
	\end{multline*}
	for $K>0$ a bound for $\rho$ and
	\begin{multline*}
	D_n(\alpha) = \frac{1}{\sqrt{n \Delta_n}} \sum \limits_{i=1}^n \big| \rho_{\alpha} (\Deli \tilde X^{\prime n} + \Deli \hatbigjal)
	\ind_{\lbrace |  \Deli \tilde X^{\prime n} + \Deli \hatbigjal| > v_n \rbrace} - \\
	- \rho_{\alpha}(\Deli \hatbigjal) \ind_{\lbrace | \Deli \hatbigjal| > v_n \rbrace} \big| 
	\ind_{\lbrace | \Deli \tilde X^{\prime n} | \leq v_n/2 \rbrace},
	\end{multline*}
	with $\rho_\alpha$ defined prior to \eqref{ImpQuantDef} and where the particular processes are defined in \eqref{SmandlajumpD2}.
\end{lemma}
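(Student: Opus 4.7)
\textbf{Proof plan for Lemma \ref{ValnBound}.}

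The plan is to perform a careful case analysis on the set $Q_n$ in order to show that only increments containing a ``very large'' jump (of size exceeding $\alpha/4$) can contribute to the sum defining $V_\alpha^{(n)}$, and then to split the remaining difference by a standard product-decomposition trick. First, I would record the elementary facts that on $Q_n$ we have $|\Deli \tilde X^{\prime n}|\leq v_n/2$ for every $i$ and $\Deli N^n\leq 1$, i.e.\ at most one jump of size exceeding $u_n$ per increment. Since $\ell>1$ implies $u_n=v_n^\ell<v_n$ for large $n$, and since $\rho_\alpha$ vanishes outside $\{|\taili|\geq \alpha/2\}$, these two constraints will force the vanishing of most summands.

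Fix $\omega\in Q_n$ and $i\in\{1,\ldots,n\}$. I would then run through the four mutually exclusive cases for the (at most one) jump in $((i-1)\Delta_n,i\Delta_n]$:
\begin{itemize}
\item No jump of size $>u_n$: then $\Deli\tibigj=\Deli L^{(n)}=\Deli\hatbigjal=0$ and $|\Deli X^{(n)}|=|\Deli\tilde X^{\prime n}|\leq v_n/2<v_n$, so both truncated terms vanish.
\item A single jump of size in $(u_n,v_n]$: then $\Deli L^{(n)}=\Deli\hatbigjal=0$ and $|\Deli X^{(n)}|\leq 3v_n/2<\alpha/2$ for large $n$, so $\rho_\alpha$ kills both terms.
\item A single jump of size in $(v_n,\alpha/4]$: then $\Deli\hatbigjal=0$, $\Deli L^{(n)}=\Deli\tibigj$, and $|\Deli X^{(n)}|\leq\alpha/4+v_n/2<\alpha/2$ for large $n$, so again $\rho_\alpha$ kills both.
\item A single jump of size exceeding $\alpha/4$: here $\Deli\tibigj=\Deli\hatbigjal$ and $\Deli L^{(n)}=\Deli\hatbigjal$, so $\Deli X^{(n)}=\Deli\tilde X^{\prime n}+\Deli\hatbigjal$, and this is the only case that can contribute.
\end{itemize}
Consequently, on $Q_n$ the summand in the definition of $V_\alpha^{(n)}$ carries a free factor $\ind_{\{|\Deli\hatbigjal|>\alpha/4\}}\ind_{\{|\Deli\tilde X^{\prime n}|\leq v_n/2\}}$, and one may also insert $\ind_{\{|\Deli\hatbigjal|>v_n\}}=1$ in front of $\rho_\alpha(\Deli\hatbigjal)$ since $v_n\leq\alpha/4$.

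Next I would apply the identity $ab-cd=(a-c)b+c(b-d)$ with
\[
a=\rho_\alpha(\Deli\tilde X^{\prime n}+\Deli\hatbigjal)\ind_{\{|\Deli\tilde X^{\prime n}+\Deli\hatbigjal|>v_n\}},\quad c=\rho_\alpha(\Deli\hatbigjal)\ind_{\{|\Deli\hatbigjal|>v_n\}},
\]
\[
b=\Indit(\Deli\tilde X^{\prime n}+\Deli\hatbigjal),\quad d=\Indit(\Deli\hatbigjal).
\]
Using $|b|\leq 1$ and $|c|\leq K$ (where $K$ is a bound for $\rho$), the triangle inequality gives a pointwise bound of the form $|a-c|+K\,|b-d|$ on each summand. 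Replacing $\sum_{i=1}^{\ip{n\gseqi}}$ by $\sum_{i=1}^n$ inside the absolute value (which only enlarges the sum) dispenses with the supremum over $\gseqi$; the first piece has no $\dfi$-dependence so the sup over $\dfi$ is trivial there, yielding exactly $D_n(\alpha)$, while the sup over $\dfi$ in the second piece produces precisely $C_n(\alpha)$ (after dropping the indicator $\ind_{\{|\Deli\hatbigjal|>v_n\}}\leq\ind_{\{|\Deli\hatbigjal|>\alpha/4\}}$, which is valid for large $n$).

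The main obstacle is the systematic case analysis establishing that the only contributing increments are those in which $|\Deli\hatbigjal|>\alpha/4$; once this is in hand, the algebraic decomposition and the trivial bounds on $\rho_\alpha$ and $\Indit$ finish the argument in a few lines.
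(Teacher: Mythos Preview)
Your proposal is correct and follows essentially the same approach as the paper's proof: a case analysis on $Q_n$ showing that only increments with a jump larger than $\alpha/4$ contribute, followed by the product decomposition $ab-cd=(a-c)b+c(b-d)$ to split into $D_n(\alpha)$ and $C_n(\alpha)$. Your four-case split is just a finer version of the paper's three cases (your middle two cases are the paper's case (ii)), and you make the algebraic decomposition explicit where the paper leaves it implicit. One minor slip: the inequality you write in the last parenthetical is reversed---for $v_n\leq\alpha/4$ one has $\ind_{\{|\Deli\hatbigjal|>\alpha/4\}}\leq\ind_{\{|\Deli\hatbigjal|>v_n\}}$, so the product of the two indicators equals $\ind_{\{|\Deli\hatbigjal|>\alpha/4\}}$---but the intended manipulation is correct.
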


\begin{proof}
	On $Q_n$, and with $n$ large enough such that $v_n \leq \alpha/4$, one of the following mutually exclusive possibilities holds for $1 \leq i \leq n$:
	\begin{itemize}
		\item[(i)] $\Deli N^n =0$. \\
		Then we have $| \Deli X^{\scriptscriptstyle (n)} | = | \Deli \tilde X^{\prime n} | \leq v_n /2$ and there is no jump larger than 
		$u_n$ (and $v_n$) on the interval $((i-1) \Delta_n, i \Delta_n]$. Thus, $\chi_t^{\scriptscriptstyle (\alpha)} (\Deli X^{(n)}) \Truniv =0 = \chi_t^{\scriptscriptstyle (\alpha)}(\Deli L^{(n)})$ holds for all $t \in \R$ and the corresponding summand in 
		\eqref{DiffKonv2} vanishes.
		\item[(ii)] $\Deli N^n =1$ and $\Deli \tibigj = \Deli \tibigjal \neq 0$. \\
		So the only jump in $((i-1) \Delta_n, i \Delta_n]$ (of absolute size) larger than $u_n$ is in fact not larger than $\alpha /4$, and because of $v_n \leq \alpha /4$
		we have $|\Deli X^{\scriptscriptstyle (n)} | \leq \alpha /2$. Thus, as in the first case, $\chi_t^{\scriptscriptstyle (\alpha)} (\Deli X^{\scriptscriptstyle (n)}) \Truniv =0 = \chi_t^{\scriptscriptstyle (\alpha)}(\Deli L^{\scriptscriptstyle (n)})$ is true for all $t \in \R$ and the corresponding
		summand in \eqref{DiffKonv2} is equal to zero.
		\item[(iii)]$\Deli N^n =1$ and $\Deli \tibigj \neq 0$, but $\Deli \tibigjal =0$. \\
		So the only jump in $((i-1) \Delta_n, i \Delta_n]$ larger than $u_n$ is also larger than $\alpha /4$. If $\hatbigjal$ is the quantity defined in \eqref{SmandlajumpD2}, we get
		\begin{align*}
		\Deli X^{(n)} &= \Deli \tilde X^{\prime n} + \Deli \hatbigjal \quad \text{ and } \\ 
		\chi_t^{(\alpha)}(\Deli L^{(n)}) &= \rho_{\alpha}(\Deli \hatbigjal) \ind_{\lbrace | \Deli \hatbigjal | > v_n \rbrace} 
		\Indit(\Deli \hatbigjal).
		\end{align*}
	\end{itemize}
	
	Thus, we obtain an upper bound for $V_\alpha^{\scriptscriptstyle (n)}$ on $Q_n$, as soon as $v_n \leq \alpha/4$:
	\begin{align*}
	V_\alpha^{(n)}&\leq\frac{1}{\sqrt{n \Delta_n}} \sup \limits_{(\gseqi,\dfi) \in \netir} \Big| \sum \limits_{i=1}^{\ip{n\gseqi}} \Big\{ \rho_{\alpha}(\Deli X^{\scriptscriptstyle (n)}) 
	\ind_{\lbrace | \Deli X^{\scriptscriptstyle (n)}| > v_n \rbrace} \Indit(\Deli X^{\scriptscriptstyle (n)}) - \rho_{\alpha}(\Deli \hatbigjal) \times \nonumber \\
	&\hspace{18mm}\times \ind_{\lbrace | \Deli \hatbigjal | > v_n \rbrace} \Indit(\Deli \hatbigjal) \Big\} \ind_{\lbrace | \Deli \hatbigjal | > 
		\alpha/4 \rbrace} \ind_{\lbrace | \Deli \tilde X^{\prime n} | \leq v_n/2 \rbrace} \Big| \nonumber \\
	&\leq C_n(\alpha) + D_n(\alpha),
	\end{align*}
	where we can substitute $\Deli X^{\scriptscriptstyle (n)} = \Deli \tilde X^{\prime n} + \Deli \hatbigjal$ in the first line.
\end{proof}

With a similar reasoning as above we deduce a bound for
\begin{equation}
\label{DifferDefEq2}
V^{\circ(n)}_{\alpha} = \frac{1}{\sqrt{n \Delta_n}} \sup_{(\gseqi,\dfi) \in \netir} \Big| \sum \limits_{i=1}^{\ip{n\gseqi}} \Big\{ \chi_\dfi^{\circ (\alpha)} (\Deli X^{(n)}) \Truniv - 
\chi_\dfi^{\circ (\alpha)}(\Deli L^{(n)}) \Big\} \Big|,
\end{equation}
from \eqref{DifferDefEq} in the next lemma, where $\alpha >0$ and $\chi_\dfi^{\circ \scriptscriptstyle (\alpha)}$ is defined in \eqref{ImpQuantDef}.

\begin{lemma}
	\label{ValprnBou}
	Let Assumption \ref{Cond1} be satisfied. Then for $\alpha >0$, $\omega \in Q_n$ and $n \in \N$ large enough such that $v_n \leq \alpha$ we have
	\[
	V^{\circ(n)}_{\alpha} \leq C^\circ_n(\alpha) + D^\circ_n(\alpha) + E^\circ_n(\alpha),
	\]
	where
	\begin{multline*}
	C^\circ_n(\alpha) = \frac{K}{\sqrt{n \Delta_n}} \sup \limits_{t \in \R} \sum \limits_{i=1}^n \big| \Indit(\varsigma_i^n(\alpha)) - \Indit(\Deli \tibigjeial) \big|  \times \\
	\times \ind_{\lbrace | \Deli \tibigjeial | > \Delta_n^{\ovv} \rbrace} 
	\ind_{\lbrace | \Deli \tilde X^{\prime n} | \leq v_n/2 \rbrace},
	\end{multline*}
	with $K >0$ a bound for $\rho$ and
	\begin{multline*}
	D^\circ_n(\alpha) = \frac{1}{\sqrt{n \Delta_n}} \sum \limits_{i=1}^n \big| \rho^{\circ}_{\alpha} (\varsigma_i^n(\alpha)) \ind_{\lbrace | \varsigma_i^n(\alpha)| > v_n \rbrace} - \rho^{\circ}_{\alpha}(\Deli \tibigjeial) \ind_{\lbrace | \Deli \tibigjeial| > v_n \rbrace} \big| \times \\ \times
	\ind_{\lbrace | \Deli \tibigjeial | > \Delta_n^{\ovv} \rbrace} 
	\ind_{\lbrace | \Deli \tilde X^{\prime n} | \leq v_n/2 \rbrace},
	\end{multline*}
	\begin{align*}
	&E^\circ_n(\alpha) = \frac{1}{\sqrt{n \Delta_n}} \sup \limits_{\dfi \in \R}  \sum \limits_{i=1}^{n}  \big|
	\rho^{\circ}_{\alpha}(\varsigma_i^n(\alpha)) \ind_{\lbrace | \varsigma_i^n(\alpha)| > v_n \rbrace} \Indit(\varsigma_i^n(\alpha)) - \rho^{\circ}_{\alpha}(\Deli \tibigjeial) 
	\times \\ &\hspace{20mm} \times \ind_{\lbrace | \Deli \tibigjeial | > v_n \rbrace} 
	\Indit(\Deli \tibigjeial) \big| 
	\ind_{\lbrace | \Deli \tilde X^{\prime n} | \leq v_n/2 \rbrace}  
	\ind_{\lbrace | \Deli \tibigjeial | \leq \Delta_n^{\ovv} \rbrace} \ind_{Q_n},
	\end{align*}
	where $\varsigma_i^n(\alpha) = \Deli \tilde X^{\prime n} + \Deli \tibigjeial$, $\rho^{\circ}_{\alpha}$ is defined prior to \eqref{ImpQuantDef}, $\ovv >0$ is the constant from Assumption \ref{Cond1}\eqref{FiLevyDistCond} and the involved processes are defined in \eqref{SmandlajumpD2}.
\end{lemma}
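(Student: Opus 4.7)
The plan is to mimic the case-based reasoning used in the proof of Lemma~\ref{ValnBound}, but with an additional splitting based on the threshold $\Delta_n^{\ovv}$ to accommodate the small-jump nature of $\rho^\circ_\alpha$. Fix $\omega \in Q_n$ and $i \in \{1,\ldots,n\}$ and let $n$ be large enough that $v_n \leq \alpha$ and $u_n < v_n$. Since on $Q_n$ we have $\Deli N^n \leq 1$, there are essentially three disjoint situations for the $i$-th summand of $V^{\circ (n)}_\alpha$: (a) $\Deli N^n = 0$; (b) $\Deli N^n = 1$ with the unique jump $c$ satisfying $|c| > 2\alpha$; (c) $\Deli N^n = 1$ with $u_n < |c| \leq 2\alpha$, in which case $\Deli \tibigjeial = c$ and $\Deli X^{(n)} = \Deli \tilde X^{\prime n} + \Deli \tibigjeial = \varsigma_i^n(\alpha)$.

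In case (a), $\Deli L^{(n)} = 0$ and $|\Deli X^{(n)}| = |\Deli \tilde X^{\prime n}| \leq v_n/2 < v_n$, so both terms in the summand vanish (using $\rho(0) = 0$). In case (b), $\rho^\circ_\alpha$ has support in $\{|x| < \alpha\}$ while $|\varsigma_i^n(\alpha)| \geq |c| - v_n/2 \geq 3\alpha/2 > \alpha$ and $|\Deli L^{(n)}| = |c| > 2\alpha$, so both $\rho^\circ_\alpha$-values are $0$. Hence only case (c) contributes. There, a short check in the two sub-situations $|c| > v_n$ and $u_n < |c| \leq v_n$ shows that in either case
\[
\chi_\dfi^{\circ(\alpha)}(\Deli L^{(n)}) = \rho^\circ_\alpha(\Deli \tibigjeial)\,\ind_{\{|\Deli \tibigjeial| > v_n\}}\,\Indit(\Deli \tibigjeial),
\]
so we may freely insert the $\ind_{\{|\Deli \tibigjeial| > v_n\}}$ factor into the second term (and the $\ind_{\{|\Deli \tilde X^{\prime n}| \leq v_n/2\}}\ind_{Q_n}$ factor into everything).

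Now split case (c) further according to whether $|\Deli \tibigjeial| > \Delta_n^{\ovv}$ or $|\Deli \tibigjeial| \leq \Delta_n^{\ovv}$. On the first subregime, apply the standard add-and-subtract decomposition with the intermediate quantity $\rho^\circ_\alpha(\Deli \tibigjeial)\ind_{\{|\Deli \tibigjeial|>v_n\}}\Indit(\varsigma_i^n(\alpha))$: the difference splits into a ``function part''
\[
\bigl|\rho^\circ_\alpha(\varsigma_i^n(\alpha))\ind_{\{|\varsigma_i^n(\alpha)|>v_n\}} - \rho^\circ_\alpha(\Deli \tibigjeial)\ind_{\{|\Deli \tibigjeial|>v_n\}}\bigr|
\]
(independent of $\dfi$, summed over $i$ this is exactly the summand of $D^\circ_n(\alpha)$) and an ``indicator part'' $|\rho^\circ_\alpha(\Deli \tibigjeial)\ind_{\{|\Deli \tibigjeial|>v_n\}}|\cdot|\Indit(\varsigma_i^n(\alpha)) - \Indit(\Deli \tibigjeial)|$, which upon bounding $|\rho^\circ_\alpha| \leq K$ (where $K$ is the bound for $\rho$) and passing to $\sup_\dfi$ is exactly the summand of $C^\circ_n(\alpha)$. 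On the second subregime $|\Deli \tibigjeial| \leq \Delta_n^{\ovv}$, the absolute value of the summand is, by definition and inspection, bounded by the corresponding summand of $E^\circ_n(\alpha)$ (which already carries the factors $\ind_{\{|\Deli \tibigjeial| \leq \Delta_n^{\ovv}\}}\ind_{\{|\Deli \tilde X^{\prime n}| \leq v_n/2\}}\ind_{Q_n}$).

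Summing over $i = 1, \ldots, n$ (which absorbs the $\sup_{\gseqi \in [0,1]}$, since the cut $\ip{n\gseqi}$ only shortens the sum of non-negative terms) and taking $\sup_{\dfi \in \R}$ where needed yields the claimed bound $V^{\circ(n)}_\alpha \leq C^\circ_n(\alpha) + D^\circ_n(\alpha) + E^\circ_n(\alpha)$ on $Q_n$. The most delicate step is the verification that on case (c) the identity for $\chi_\dfi^{\circ(\alpha)}(\Deli L^{(n)})$ above holds in both sub-situations $|c| \leq v_n$ and $|c| > v_n$: when $|c| \leq v_n$ one has $\Deli L^{(n)} = 0$ while the right-hand side also vanishes via the $\ind_{\{|\Deli \tibigjeial|>v_n\}}$ factor, and when $|c|>v_n$ the indicator is automatically $1$ and both sides equal $\rho^\circ_\alpha(c)\Indit(c)$. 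All subsequent indicator juggling (to match the exact combination of factors in $C^\circ_n, D^\circ_n, E^\circ_n$) is routine once this identification is in place.
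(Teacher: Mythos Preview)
Your proof follows essentially the same three-case analysis as the paper and is correct in substance. There is one small notational slip in case (b): since the unique jump $c$ has $|c|>2\alpha$, it does \emph{not} contribute to $\tibigjeial$, so $\Deli \tibigjeial=0$ and hence $\varsigma_i^n(\alpha)=\Deli \tilde X^{\prime n}$, which is small rather than large. The quantity whose absolute value you bound from below by $|c|-v_n/2\ge 3\alpha/2$ is $\Deli X^{(n)}=\Deli \tilde X^{\prime n}+\Deli \tibigj$ (with $\Deli \tibigj=c$), not $\varsigma_i^n(\alpha)$; once you replace $\varsigma_i^n(\alpha)$ by $\Deli X^{(n)}$ there, the vanishing of both terms in case (b) is exactly as you describe, and the remainder of the argument (the $\Delta_n^{\ovv}$ split in case (c) with the add--subtract step) matches the paper's derivation.
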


\begin{proof}
	On the set $Q_n$, and if $v_n \leq \alpha$, we have three mutually exclusive possibilities for $1 \leq i \leq n$:
	\begin{enumerate}
		\item[(i)] $\Deli N^n =0$. \\
		Then we have $| \Deli X^{\scriptscriptstyle (n)} | = | \Deli \tilde X^{\prime n} | \leq v_n /2$ and there is no jump larger than 
		$u_n$ (and $v_n$) on the interval $((i-1) \Delta_n, i \Delta_n]$. Thus, $\chi_\dfi^{\circ \scriptscriptstyle (\alpha)} (\Deli X^{\scriptscriptstyle (n)}) \Truniv = 0 = \chi_\dfi^{\circ \scriptscriptstyle  (\alpha)}(\Deli 
		L^{\scriptscriptstyle (n)})$ holds for all $\dfi \in \R$ and the $i$-th summand in \eqref{DifferDefEq2} vanishes.
		\item[(ii)] $\Deli N^n =1$ and $\Deli \tibigj \neq 0$, but $\Deli \tibigjeial =0$. \\
		So the only jump in $((i-1) \Delta_n, i \Delta_n]$ larger than $u_n$ is also larger than $2 \alpha$. Because 
		$| \Deli \tilde X^{\prime n} | \leq v_n /2 \leq \alpha/2$ holds, we have 
		$| \Deli X^{\scriptscriptstyle (n)} | \geq \alpha$, and consequently $\chi_t^{\circ \scriptscriptstyle (\alpha)} (\Deli X^{\scriptscriptstyle (n)}) $ $\Truniv = 0 = 
		\chi_t^{\circ \scriptscriptstyle (\alpha)}(\Deli L^{\scriptscriptstyle (n)})$ using the definition of $\chi_t^{\circ \scriptscriptstyle (\alpha)}$.
		\item[(iii)] $\Deli N^n =1$ and $\Deli \tibigj = \Deli \tibigjeial \neq 0$. \\
		Here we can write 
		\[
		\Deli X^{(n)} = \Deli \tilde X^{\prime n} + \Deli \tibigjeial =: \varsigma_i^n(\alpha)
		\]
		and
		\[
		\chi_t^{\circ (\alpha)}(\Deli L^{(n)}) = \rho_{\alpha}^{\circ}(\Deli \tibigjeial) 
		\ind_{\lbrace | \Deli \tibigjeial | > v_n \rbrace} \Indit(\Deli \tibigjeial).
		\]
	\end{enumerate}
	
	Therefore, on $Q_n$ and as soon as $v_n \leq \alpha$, we have 
	\begin{align*}
	V^{\circ (n)}_{\alpha} &\leq \frac{1}{\sqrt{n \Delta_n}} \sup \limits_{(\gseqi,\dfi) \in \netir} \Big| 
	\sum \limits_{i=1}^{\ip{n\gseqi}} \Big\{ \rho^{\circ}_{\alpha}( \varsigma_i^n(\alpha)) \ind_{\lbrace | \varsigma_i^n(\alpha)| > v_n \rbrace} \Indit( \varsigma_i^n(\alpha)) -  \\
	&\hspace{15mm}-\rho^{\circ}_{\alpha}(\Deli \tibigjeial)	\ind_{\lbrace | \Deli \tibigjeial | > v_n \rbrace} \Indit(\Deli \tibigjeial) \Big\}
	\ind_{\lbrace | \Deli \tilde X^{\prime n} | \leq v_n/2 \rbrace} \Big|  \\
	&\leq C^\circ_n(\alpha) + D^\circ_n (\alpha) + E^\circ_n(\alpha).
	\end{align*}
\end{proof}

In the following lemma we obtain a bound for 
\begin{align}
\label{haValnDef2}
\hat V_\alpha^{(n)} = \frac 1{\sqrt{n\Delta_n}} \sup \limits_{(\gseqi,\dfi) \in \netir} \Big| \sum \limits_{i=1}^{\ip{n\gseqi}} \xi_i \big( \chi_\dfi^{(\alpha)}(\Deli X^{(n)}) \Truniv - \chi_\dfi^{(\alpha)}(\Deli L^{(n)}) \big) \Big|,
\end{align}
from \eqref{hatValnDef}, where $\alpha >0$, $\chi_\dfi^{\scriptscriptstyle (\alpha)}$ is defined in \eqref{ImpQuantDef}, $L^{\scriptscriptstyle (n)} = \big(\taili \Trunx\big) \star \mu^{\scriptscriptstyle (n)}$ is the pure jump It\=o semimartingale defined in \eqref{LnDefEq} and $(\xi_i)_{i\in\N}$ is a sequence of multipliers with mean zero and variance one defined on a distinct probability space than the remaining processes. Furthermore, for the claim of the lemma below recall the definition of the sets $Q_n$ and $J_n^{\scriptscriptstyle (1)}(\alpha)$ in \eqref{Qndef2} and \eqref{J1ijnalDefEq}, respectively, as well as the definition of $\rho_\alpha$ prior to \eqref{ImpQuantDef} and the quantities defined in \eqref{SmandlajumpD2}.

\begin{lemma}
	\label{haValnbou}
	Let Assumption \ref{Cond1} be satisfied. Then for $\alpha >0$, $\omega \in J_n^{(1)}(\alpha) \cap Q_n$ and $n \in \N$ large enough such that $v_n \leq \alpha/4$ we have
	\[
	\hat V_\alpha^{(n)} \leq \hat D_n(\alpha) + \hat E_n(\alpha) + \hat F_n(\alpha),
	\]
	with
	\begin{multline*}
	\hat D_n(\alpha) = \frac{1}{\sqrt{n \Delta_n}} \sum \limits_{i=1}^n |\xi_i| \big| \rho_{\alpha} (\Deli \tilde X^{\prime n} + \Deli \hatbigjal)
	\ind_{\lbrace |  \Deli \tilde X^{\prime n} + \Deli \hatbigjal| > v_n \rbrace} - \\
	- \rho_{\alpha}(\Deli \hatbigjal) \ind_{\lbrace | \Deli \hatbigjal| > v_n \rbrace} \big| 
	\ind_{\lbrace | \Deli \tilde X^{\prime n} | \leq v_n/2 \rbrace},
	\end{multline*}
	\begin{align*}
	\hat E_n(\alpha) = \sup\limits_{A \in \mathfrak S_n} \Big| \sum \limits_{i \in A} \xi_i a_i^n (\alpha) \Big|,
~~
	\hat F_n(\alpha) = \sup\limits_{A \in \mathfrak S_n} \Big| \sum \limits_{i \in A} \xi_i b_i^n (\alpha) \Big|,
	\end{align*}
where 
	\[
	a_i^n (\alpha) = \frac 1{\sqrt{n \Delta_n}} \rho_{\alpha}(\Deli \hatbigjal) \ind_{\lbrace | \Deli \hatbigjal | > 
		\alpha/4 \rbrace}, 
	\]
	\[
	b_i^n (\alpha) = \frac 1{\sqrt{n \Delta_n}} \rho_{\alpha} (\Deli \tilde X^{\prime n} + \Deli \hatbigjal) \ind_{\lbrace |  \Deli \tilde X^{\prime n} + \Deli \hatbigjal| > v_n \rbrace} \ind_{\lbrace | \Deli \hatbigjal | > 
		\alpha/4 \rbrace},
	\]
	and $\mathfrak S_n = \{ M \subset \{1, \ldots,n\} \mid \# M \leq c_n\}$ with $c_n = \lceil (v_n/ \Delta_n^{\ovr}) + 1\rceil$.
\end{lemma}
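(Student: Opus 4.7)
My plan is to parallel the proof of Lemma \ref{ValnBound} while handling the random weights $\xi_i$ correctly inside the joint supremum over $(\gseqi,\dfi)$. Fix $\omega \in J_n^{(1)}(\alpha) \cap Q_n$ and $n$ large enough that $v_n \le \alpha/4$. The same three-case classification of increments $\Deli X^{(n)}$ used there applies verbatim: in cases (i) and (ii) both $\chi_\dfi^{(\alpha)}(\Deli X^{(n)}) \Truniv$ and $\chi_\dfi^{(\alpha)}(\Deli L^{(n)})$ vanish, so only case (iii) indices---those intervals carrying a single jump of size exceeding $\alpha/4$---contribute to $\hat V_\alpha^{(n)}$. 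On these indices $\Deli X^{(n)} = \Deli \tilde X^{\prime n} + \Deli \hatbigjal$ and both truncation indicators $\ind_{\{|\Deli X^{(n)}|>v_n\}}$ and $\ind_{\{|\Deli \hatbigjal|>v_n\}}$ equal $1$ (since $\alpha/4 - v_n/2 > v_n$ for large $n$), so that after multiplying by $\sqrt{n\Delta_n}$ the relevant summand reduces to
\[
\sqrt{n\Delta_n}\bigl[b_i^n(\alpha)\Indit(\Deli X^{(n)}) - a_i^n(\alpha)\Indit(\Deli \hatbigjal)\bigr].
\]

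The key step is a three-way decomposition of this summand according to the values of the pair $\bigl(\Indit(\Deli X^{(n)}),\Indit(\Deli \hatbigjal)\bigr) \in \{(0,0),(1,0),(0,1),(1,1)\}$. Indices with pattern $(1,1)$ contribute $\xi_i(b_i^n(\alpha) - a_i^n(\alpha))$; those with $(1,0)$ contribute $\xi_i b_i^n(\alpha)$; those with $(0,1)$ contribute $-\xi_i a_i^n(\alpha)$; and $(0,0)$ contributes nothing. Crucially, the indices with the two mixed patterns are exactly those in $\tilde A_1(\omega;\alpha,n,\dfi)$, whose cardinality is at most $c_n$ on $J_n^{(1)}(\alpha) \cap Q_n$ by Lemma \ref{Indikunglab}. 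Splitting the sum accordingly, applying the triangle inequality and then passing to the supremum over $(\gseqi,\dfi)$ yields
\[
\hat V_\alpha^{(n)} \le \sum_{i=1}^n |\xi_i|\,|b_i^n(\alpha) - a_i^n(\alpha)| + \sup_{A \in \mathfrak S_n}\Big|\sum_{i \in A}\xi_i b_i^n(\alpha)\Big| + \sup_{A \in \mathfrak S_n}\Big|\sum_{i \in A}\xi_i a_i^n(\alpha)\Big|.
\]

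The last two terms are exactly $\hat F_n(\alpha)$ and $\hat E_n(\alpha)$. For the first, a direct computation shows that on $\{|\Deli \hatbigjal|>\alpha/4\} \cap Q_n$---the only indices on which $b_i^n - a_i^n \ne 0$---the identity $\sqrt{n\Delta_n}|b_i^n(\alpha) - a_i^n(\alpha)| = |\rho_\alpha(\Deli X^{(n)})\ind_{\{|\Deli X^{(n)}|>v_n\}} - \rho_\alpha(\Deli \hatbigjal)\ind_{\{|\Deli \hatbigjal|>v_n\}}|$ holds (both truncation indicators being $1$), so this expression is pointwise dominated by the corresponding summand of $\hat D_n(\alpha)$ and the first term is bounded by $\hat D_n(\alpha)$. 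The principal obstacle is the second step: because $\xi_i$ sits inside the supremum over $\dfi$, one cannot simply pull $|\xi_i|$ outside the sup as in the proof of Lemma \ref{ValnBound}, so the $\rho$-difference and indicator-difference contributions must be separated \emph{before} taking absolute values. The three-way case split is precisely what makes this possible---it isolates the indicator-difference part to the small set $\tilde A_1$, on which Lemma \ref{Indikunglab} combined with the definition of $\mathfrak S_n$ produces the suprema defining $\hat E_n(\alpha)$ and $\hat F_n(\alpha)$.
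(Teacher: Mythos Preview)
Your proof is correct and follows essentially the same approach as the paper. The paper defines random index sets $A_1, A_2, A_3$ corresponding precisely to your indicator patterns $(1,1)$, $(0,1)$, $(1,0)$, obtains the same three-term split in its display (\ref{RandSetsAbEq}), and then invokes Lemma~\ref{Indikunglab} to bound $\#A_2, \#A_3 \le c_n$, exactly as you do; the only cosmetic difference is that the paper keeps the truncation indicators inside the expressions throughout rather than first observing that they equal $1$ on case~(iii) indices.
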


\begin{proof}
	Recall the cases which have been distinguished in the proof of Lemma \ref{ValnBound}: \\
	On $Q_n$, and with $n$ large enough such that $v_n \leq \alpha/4$, one of the following mutually exclusive possibilities holds for $1 \leq i \leq n$:
	\begin{itemize}
		\item[(i)] $\Deli N^n =0$. \\
		Then we have $| \Deli X^{\scriptscriptstyle (n)} | = | \Deli \tilde X^{\prime n} | \leq v_n /2$ and there is no jump larger than 
		$u_n$ (and $v_n$) on the interval $((i-1) \Delta_n, i \Delta_n]$. Thus, $\chi_t^{\scriptscriptstyle (\alpha)} (\Deli X^{(n)}) \Truniv =0 = \chi_t^{\scriptscriptstyle (\alpha)}(\Deli L^{(n)})$ holds for all $t \in \R$ and the corresponding summand in 
		\eqref{haValnDef2} vanishes.
		\item[(ii)] $\Deli N^n =1$ and $\Deli \tibigj = \Deli \tibigjal \neq 0$. \\
		So the only jump in $((i-1) \Delta_n, i \Delta_n]$ (of absolute size) larger than $u_n$ is in fact not larger than $\alpha /4$, and because of $v_n \leq \alpha /4$
		we have $|\Deli X^{\scriptscriptstyle (n)} | \leq \alpha /2$. Thus, as in the first case, $\chi_t^{\scriptscriptstyle (\alpha)} (\Deli X^{\scriptscriptstyle (n)}) \Truniv =0 = \chi_t^{\scriptscriptstyle (\alpha)}(\Deli L^{\scriptscriptstyle (n)})$ is true for all $t \in \R$ and the corresponding
		summand in \eqref{haValnDef2} is equal to zero.
		\item[(iii)]$\Deli N^n =1$ and $\Deli \tibigj \neq 0$, but $\Deli \tibigjal =0$. \\
		So the only jump in $((i-1) \Delta_n, i \Delta_n]$ larger than $u_n$ is also larger than $\alpha /4$. If $\hatbigjal$ is the quantity defined in \eqref{SmandlajumpD2}, we get
		\begin{align*}
		\Deli X^{(n)} &= \Deli \tilde X^{\prime n} + \Deli \hatbigjal \quad \text{ and } \\ 
		\chi_t^{(\alpha)}(\Deli L^{(n)}) &= \rho_{\alpha}(\Deli \hatbigjal) \ind_{\lbrace | \Deli \hatbigjal | > v_n \rbrace} 
		\Indit(\Deli \hatbigjal).
		\end{align*}
	\end{itemize}
	
	Thus, we obtain an upper bound for $\hat V_\alpha^{\scriptscriptstyle (n)}$ on $Q_n$, as soon as $v_n \leq \alpha/4$:
	\begin{multline}
	\label{EAbschonBn}
	\hat V_\alpha^{(n)} \leq \frac{1}{\sqrt{n \Delta_n}} \sup \limits_{(\gseqi,\dfi) \in \netir}\Big| \sum \limits_{i \in A_0(\omega; \alpha,n, (\gseqi, \dfi))} \xi_i \big( \rho_{\alpha}(\Deli X^{(n)}) 
	\ind_{\lbrace | \Deli X^{(n)}| > v_n \rbrace} \Indit(\Deli X^{(n)}) - \\ - \rho_{\alpha}(\Deli \hatbigjal) \ind_{\lbrace | \Deli \hatbigjal | > v_n \rbrace} \Indit(\Deli \hatbigjal) \big) \ind_{\lbrace | \Deli \hatbigjal | > 
		\alpha/4 \rbrace}  \Big|, 
	\end{multline}
	where we can substitute $\Deli X^{(n)} = \Deli \tilde X^{\prime n} + \Deli \hatbigjal$ in the first line and with the random set
	\begin{equation*}
	A_0(\omega; \alpha,n, (\gseqi, \dfi)) = \big\{i \in \{1, \ldots,n\} \mid i \leq \ip{n\gseqi} \text{ and } \Deli N^n =1,  \Deli \tibigj \neq 0, \Deli \tibigjal =0 \big\},
	\end{equation*} 
	for $(\gseqi,\dfi) \in \netir$, $\alpha >0$, $n \in \N$ and $\omega \in \Omega$. Defining the further random sets
	\begin{align*}
	A_1(\omega;\alpha,n,(\gseqi,\dfi)) &= \big\{i \in A_0(\omega;\alpha, n,(\gseqi, \dfi)) \mid  | \Deli \hatbigjal | > \alpha/4 , \\
	&\hspace{15mm} \Indit(\Deli \tilde X^{\prime n} + \Deli \hatbigjal) = \Indit(\Deli \hatbigjal) = 1 \big\}, \\
	A_2(\omega;\alpha,n,(\gseqi,\dfi)) &= \big\{i \in A_0(\omega; \alpha,n,(\gseqi, \dfi)) \mid  | \Deli \hatbigjal | > \alpha/4, \\
	&\hspace{10mm}\Indit(\Deli \tilde X^{\prime n} + \Deli \hatbigjal) = 0 \text{ and } \Indit(\Deli \hatbigjal) = 1 \big\}, \\
	A_3(\omega;\alpha,n,(\gseqi,\dfi)) &= \big\{i \in A_0(\omega;\alpha,n, (\gseqi, \dfi)) \mid  | \Deli \hatbigjal | > \alpha/4, \\
	&\hspace{10mm} \Indit(\Deli \tilde X^{\prime n} + \Deli \hatbigjal) = 1 \text{ and } \Indit(\Deli \hatbigjal) = 0 \big\},
	\end{align*}
	for $(\gseqi,\dfi) \in \netir$, $\alpha >0$, $n \in \N$ and $\omega \in \Omega$, together with \eqref{EAbschonBn} gives for $n$ large enough and $\omega \in Q_n$
	\begin{align}
	\label{RandSetsAbEq}
	\hat V_\alpha^{(n)} 
	&\leq \frac{1}{\sqrt{n \Delta_n}} \sup \limits_{(\gseqi,\dfi) \in \netir} \sum \limits_{i \in A_1(\omega;\alpha,n,(\gseqi,\dfi))} |\xi_i| \big| \rho_{\alpha} (\Deli \tilde X^{\prime n} + \Deli \hatbigjal)
	\ind_{\lbrace |  \Deli \tilde X^{\prime n} + \Deli \hatbigjal| > v_n \rbrace} - \nonumber \\
	&\hspace{67mm}- \rho_{\alpha}(\Deli \hatbigjal) \ind_{\lbrace | \Deli \hatbigjal| > v_n \rbrace} \big| 
	\ind_{\lbrace | \Deli \tilde X^{\prime n} | \leq v_n/2 \rbrace} \nonumber \\
	&\hspace{5mm}+ \sup \limits_{(\gseqi,\dfi) \in \netir} \Big| \sum \limits_{i \in A_2(\omega;\alpha,n,(\gseqi,\dfi))} \xi_i a_i^n (\alpha) \Big| + \sup \limits_{(\gseqi,\dfi) \in \netir} \Big| \sum \limits_{i \in A_3(\omega;\alpha,n,(\gseqi,\dfi))} \xi_i b_i^n (\alpha) \Big|,
	\end{align}
	because $|\Deli \tilde X^{\prime n} | \leq v_n/2$ holds for each $i=1, \ldots,n$ on $Q_n$ by \eqref{Qndef2}. Finally, according to Lemma \ref{Indikunglab} we have $\#A_2(\omega;\alpha,n,(\gseqi,\dfi)) \leq c_n$ and $\# A_3(\omega;\alpha,n,(\gseqi,\dfi)) \leq c_n$ on $J_n^{\scriptscriptstyle (1)}(\alpha) \cap Q_n$ for all $(\gseqi,\dfi) \in \netir$. By definition $A_1(\omega;\alpha,n,(\gseqi,\dfi)) \subset \{ 1, \ldots,n\}$ and thus \eqref{RandSetsAbEq} yields the assertion.
\end{proof}

In the next lemma we use a similar reasoning as above to obtain a bound for 
\begin{align}
\label{ValprnDef2}
\hat V_\alpha^{\circ (n)} = \frac 1{\sqrt{n\Delta_n}} \sup_{(\gseqi,\dfi) \in \netir} \Big| \sum\limits_{i=1}^{\ip{n\gseqi}} \xi_i \big( \chi_\dfi^{\circ (\alpha)}(\Deli X^{(n)}) \Truniv - \chi_\dfi^{\circ (\alpha)}(\Deli L^{(n)}) \big) \Big|.
\end{align}
from \eqref{genhavnalprzz}, where $\alpha >0$, $\chi_\dfi^{\circ \scriptscriptstyle (\alpha)}$ is defined in \eqref{ImpQuantDef}, $L^{\scriptscriptstyle (n)} = \big(\taili \Trunx\big) \star \mu^{\scriptscriptstyle (n)}$ is the pure jump It\=o semimartingale defined in \eqref{LnDefEq} and $(\xi_i)_{i\in\N}$ is a sequence of multipliers with mean zero and variance one defined on a distinct probability space than the remaining processes. Furthermore for the claim of the lemma below recall the definition of the sets $Q_n$ and $J_n^{\scriptscriptstyle (2)}(\alpha)$ in \eqref{Qndef2} and \eqref{Jn2aldef}, respectively, as well as the definition of $\rho^{\circ}_\alpha$ prior to \eqref{ImpQuantDef} and the quantities defined in \eqref{SmandlajumpD2}.

\begin{lemma}
	\label{haVpralbou}
	Let Assumption \ref{Cond1} be satisfied. Then for $\alpha >0$, $\omega \in J_n^{(2)}(\alpha) \cap Q_n$ and $n \in \N$ large enough such that $v_n \leq \alpha$ we have
	\[
	\hat V_\alpha^{\circ (n)} \leq \hat C_n^\circ (\alpha) + \hat D_n^\circ (\alpha) + \hat E_n^\circ (\alpha) + \hat F_n^\circ (\alpha)
	\]
	with
	\begin{align*}
	&\hat C_n^\circ (\alpha) = \frac 1{\sqrt{n\Delta_n}} \sup\limits_{\dfi \in \R} \sum \limits_{i=1}^n |\xi_i| \big| \rho_\alpha^\circ(\varsigma_i^n(\alpha)) \ind_{\{|\varsigma_i^n(\alpha)| > v_n\}} \Indit(\varsigma_i^n(\alpha)) - \\
	&\hspace{2mm} - \rho_\alpha^\circ(\Deli \tibigjeial) 
	\ind_{\{|\Deli \tibigjeial| > v_n\}} \Indit(\Deli \tibigjeial) \big| \ind_{\{|\Deli \tibigjeial| \leq \Delta_n^{\ovv}\}} \ind_{\{|\Deli \tilde X^{\prime n}| \leq v_n/2\}},
	\end{align*}
	\begin{align*}
	\hat D_n^\circ (\alpha) &= \frac 1{\sqrt{n\Delta_n}} \sum \limits_{i=1}^n |\xi_i| \big| \rho^{\circ}_{\alpha}(\varsigma_i^n(\alpha)) \ind_{\lbrace | \varsigma_i^n(\alpha)| > v_n \rbrace}  -\rho^{\circ}_{\alpha}(\Deli \tibigjeial) \ind_{\lbrace | \Deli \tibigjeial | > v_n \rbrace} \big| \times \\
	&\hspace{75mm} \times \ind_{\{|\Deli \tibigjeial| > \Delta_n^{\ovv}\}} \ind_{\{|\Deli \tilde X^{\prime n}| \leq v_n/2\}}, 
	\end{align*}
	\begin{align*}
	\hat E_n^\circ (\alpha) = \sup\limits_{A \in \mathfrak S_n} \Big| \sum\limits_{i \in A} \xi_i \bar a_i^n(\alpha) \Big|,
~~
	\hat F_n^\circ (\alpha) = \sup\limits_{A \in \mathfrak S_n} \Big| \sum\limits_{i \in A} \xi_i \bar b_i^n(\alpha) \Big|,
	\end{align*}
	where $\ovv >0$ is the constant from Assumption \ref{Cond1}\eqref{FiLevyDistCond}, $\varsigma_i^n(\alpha) = \Deli \tilde X^{\prime n} + \Deli \tibigjeial$, $\mathfrak S_n = \{ M \subset \{1, \ldots,n\} \mid \# M \leq c_n\}$ for $c_n = \lceil (v_n/ \Delta_n^{\ovr}) + 1\rceil$ and with
	\begin{align*}
	\bar a_i^n(\alpha) &= \frac 1{\sqrt{n\Delta_n}} \rho_\alpha^\circ(\Deli \tibigjeial) \ind_{\{| \Deli \tibigjeial| > v_n \vee \Delta_n^{\ovv}\}}, \\
	\bar b_i^n(\alpha) &= \frac 1{\sqrt{n\Delta_n}} \rho_\alpha^\circ(\varsigma_i^n(\alpha)) 
	\ind_{\{|\varsigma_i^n(\alpha)| > v_n\}} \ind_{\{ | \Deli \tibigjeial| > \Delta_n^{\ovv}\}}.
	\end{align*}
\end{lemma}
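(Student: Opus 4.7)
The plan is to mimic the three-case decomposition used in the proof of Lemma~\ref{ValprnBou} on the set $Q_n$, and then overlay on top of it the multiplier-weighted bookkeeping developed in the proof of Lemma~\ref{haValnbou}. Fix $\alpha>0$, take $n$ large enough that $v_n\leq\alpha$, and restrict attention to $\omega\in J_n^{(2)}(\alpha)\cap Q_n$. The same case-by-case analysis as in the proof of Lemma~\ref{ValprnBou} shows that the $i$-th summand in the inner sum defining $\hat V_\alpha^{\circ (n)}$ vanishes unless we are in the third case, in which $\Deli X^{(n)}=\varsigma_i^n(\alpha)=\Deli\tilde X^{\prime n}+\Deli\tibigjeial$ and $\chi_\dfi^{\circ(\alpha)}(\Deli L^{(n)})=\rho_\alpha^\circ(\Deli\tibigjeial)\ind_{\{|\Deli\tibigjeial|>v_n\}}\Indit(\Deli\tibigjeial)$. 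Inserting the trivial factor $\ind_{\{|\Deli\tilde X^{\prime n}|\leq v_n/2\}}$, which equals one on $Q_n$ by \eqref{SmandlajumpD2}, I arrive at a representation of $\hat V_\alpha^{\circ (n)}$ as a supremum over $(\gseqi,\dfi)\in\netir$ of $\ip{n\gseqi}$-partial sums whose $i$-th summand is the multiplier-weighted analogue of the summand analysed in Lemma~\ref{ValprnBou}.

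Next, I split each surviving summand according to whether $|\Deli\tibigjeial|\leq\Delta_n^{\ovv}$ or $|\Deli\tibigjeial|>\Delta_n^{\ovv}$. The small-jump regime yields the term $\hat C_n^\circ(\alpha)$ after pulling the absolute value and the sup over $(\gseqi,\dfi)$ inside the sum: the sup over $\gseqi$ disappears because the summand depends on $\gseqi$ only through the index cut-off $\{i\leq\ip{n\gseqi}\}$, which is dominated by the full sum over $i\in\{1,\ldots,n\}$, while the sup over $\dfi$ remains outside.

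For the large-jump regime $|\Deli\tibigjeial|>\Delta_n^{\ovv}$, I further decompose the $i$-th summand according to the joint values of the indicators $\Indit(\varsigma_i^n(\alpha))$ and $\Indit(\Deli\tibigjeial)$, exactly as in the proof of Lemma~\ref{haValnbou} with the random sets $A_1,A_2,A_3$. The contribution from the index set on which both indicators coincide is bounded by $\hat D_n^\circ(\alpha)$ via the triangle inequality, since the common indicator then contributes a factor that is either $0$ or $1$. The two contributions from the index sets on which the indicators disagree take the form $\sum_{i\in A_\bullet(\omega;\alpha,n,(\gseqi,\dfi))}\xi_i\,\bar a_i^n(\alpha)$ and $\sum_{i\in A_\bullet(\omega;\alpha,n,(\gseqi,\dfi))}\xi_i\,\bar b_i^n(\alpha)$. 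By the cardinality estimate \eqref{tiA2absch} in Lemma~\ref{Indikunglab}, valid for every $\omega\in J_n^{(2)}(\alpha)\cap Q_n$ and uniformly in $\dfi\in\R$, each such $A_\bullet$ has at most $c_n=\lceil v_n/\Delta_n^{\ovr}+1\rceil$ elements. Consequently the outer supremum over $(\gseqi,\dfi)$ is dominated by the supremum over $\mathfrak S_n=\{M\subset\{1,\ldots,n\}\mid \#M\leq c_n\}$, yielding exactly $\hat E_n^\circ(\alpha)$ and $\hat F_n^\circ(\alpha)$.

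The main obstacle I anticipate is the precise identification of the two disagreement index sets with subsets of $\tilde A_2(\omega;\alpha,n,\dfi)$ from Lemma~\ref{Indikunglab}: this requires verifying that the constraints $|\Deli\tibigjeial|>\Delta_n^{\ovv}$ together with the indicator mismatch match verbatim the defining conditions of $\tilde A_2$, and then observing that the partial-sum restriction $i\leq\ip{n\gseqi}$ only shrinks the index set, so the bound $c_n$ persists after the supremum over $\gseqi\in[0,1]$ is taken. Once this bookkeeping is in place, summing the estimates from the small-jump regime and from the three sub-cases of the large-jump regime produces the claimed inequality $\hat V_\alpha^{\circ (n)}\leq\hat C_n^\circ(\alpha)+\hat D_n^\circ(\alpha)+\hat E_n^\circ(\alpha)+\hat F_n^\circ(\alpha)$ on $J_n^{(2)}(\alpha)\cap Q_n$.
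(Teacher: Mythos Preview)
Your proposal is correct and follows essentially the same approach as the paper's proof: the three-case decomposition from Lemma~\ref{ValprnBou} on $Q_n$, the split by $|\Deli\tibigjeial|\lessgtr\Delta_n^{\ovv}$, the further subdivision of the large-jump part via the random index sets $A_1^\circ,A_2^\circ,A_3^\circ$ according to the joint values of $\Indit(\varsigma_i^n(\alpha))$ and $\Indit(\Deli\tibigjeial)$, and the cardinality bound \eqref{tiA2absch} from Lemma~\ref{Indikunglab} to pass to the supremum over $\mathfrak S_n$. Your identification of the anticipated obstacle is accurate and its resolution is exactly what the paper does.
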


\begin{proof}
	Recall the cases which we have distinguished in the proof of Lemma \ref{ValprnBou}: \\
	On the set $Q_n$, and if $v_n \leq \alpha$, we have three mutually exclusive possibilities for $1 \leq i \leq n$:
	\begin{enumerate}
		\item[(i)] $\Deli N^n =0$. \\
		Then we have $| \Deli X^{\scriptscriptstyle (n)} | = | \Deli \tilde X^{\prime n} | \leq v_n /2$ and there is no jump larger than 
		$u_n$ (and $v_n$) on the interval $((i-1) \Delta_n, i \Delta_n]$. Thus, $\chi_\dfi^{\circ \scriptscriptstyle (\alpha)} (\Deli X^{\scriptscriptstyle (n)}) \Truniv = 0 = \chi_\dfi^{\circ \scriptscriptstyle  (\alpha)}(\Deli 
		L^{\scriptscriptstyle (n)})$ holds for all $\dfi \in \R$ and the $i$-th summand in \eqref{ValprnDef2} vanishes.
		\item[(ii)] $\Deli N^n =1$ and $\Deli \tibigj \neq 0$, but $\Deli \tibigjeial =0$. \\
		So the only jump in $((i-1) \Delta_n, i \Delta_n]$ larger than $u_n$ is also larger than $2 \alpha$. Because 
		$| \Deli \tilde X^{\prime n} | \leq v_n /2 \leq \alpha/2$ holds, we have 
		$| \Deli X^{\scriptscriptstyle (n)} | \geq \alpha$, and consequently $\chi_t^{\circ \scriptscriptstyle (\alpha)} (\Deli X^{\scriptscriptstyle (n)}) $ $\Truniv = 0 = 
		\chi_t^{\circ \scriptscriptstyle (\alpha)}(\Deli L^{\scriptscriptstyle (n)})$ using the definition of $\chi_t^{\circ \scriptscriptstyle (\alpha)}$.
		\item[(iii)] $\Deli N^n =1$ and $\Deli \tibigj = \Deli \tibigjeial \neq 0$. \\
		Here we can write 
		\[
		\Deli X^{(n)} = \Deli \tilde X^{\prime n} + \Deli \tibigjeial =: \varsigma_i^n(\alpha)
		\]
		and
		\[
		\chi_t^{\circ (\alpha)}(\Deli L^{(n)}) = \rho_{\alpha}^{\circ}(\Deli \tibigjeial) 
		\ind_{\lbrace | \Deli \tibigjeial | > v_n \rbrace} \Indit(\Deli \tibigjeial).
		\]
	\end{enumerate}
	Thus, we have for all $\alpha >0$, $(\gseqi, \dfi) \in \netir$, $\omega \in Q_n$ and $n \in \N$ large enough such that $v_n \leq \alpha$
	\begin{multline*}
	\hat V^{\circ (n)}_{\alpha}\leq \frac{1}{\sqrt{n \Delta_n}} \sup_{(\gseqi,\dfi) \in \netir} \Big| 
	\sum \limits_{i \in A_0^\circ(\omega;\alpha,n,(\gseqi,\dfi))} \xi_i \Big\{ \rho^{\circ}_{\alpha}(\varsigma_i^n(\alpha)) \ind_{\lbrace | \varsigma_i^n(\alpha)| > v_n \rbrace} \Indit(\varsigma_i^n(\alpha)) - \\ -\rho^{\circ}_{\alpha}(\Deli \tibigjeial) \ind_{\lbrace | \Deli \tibigjeial | > v_n \rbrace} \Indit(\Deli \tibigjeial) \Big\} \Big|,
	\end{multline*}
	with the random set 
	\begin{equation*}
	A_0^\circ(\omega;\alpha,n,(\gseqi,\dfi)) = \{ i \in \{1, \ldots, n\} \mid i \leq \ip{n \gseqi} \text{ and } \Deli N^n =1, \Deli \tibigj = \Deli \tibigjeial \neq 0 \},
	\end{equation*}
	for $(\gseqi,\dfi) \in \netir$, $\alpha >0$, $n \in \N$ and $\omega \in \Omega$. Furthermore, we define the random sets
	\begin{align*}
	A_1^\circ &(\omega;\alpha,n,(\gseqi, \dfi)) = \{ i \in A_0^\circ(\omega;\alpha,n,(\gseqi,\dfi)) \mid |\Deli \tibigjeial| > \Delta_n^{\ovv}, \\
	&\hspace{55mm} \Indit(\varsigma_i^n(\alpha)) = \Indit(\Deli \tibigjeial)=1\}, \\
	A_2^\circ &(\omega;\alpha,n,(\gseqi, \dfi)) = \{ i \in A_0^\circ(\omega;\alpha,n,(\gseqi,\dfi)) \mid |\Deli \tibigjeial| > \Delta_n^{\ovv}, \\
	&\hspace{45mm} \Indit(\varsigma_i^n(\alpha)) = 0 \text{ and } \Indit(\Deli \tibigjeial)=1\}, \\
	A_3^\circ &(\omega;\alpha,n,(\gseqi, \dfi)) = \{ i \in A_0^\circ(\omega;\alpha,n,(\gseqi,\dfi)) \mid |\Deli \tibigjeial| > \Delta_n^{\ovv}, \\
	&\hspace{45mm} \Indit(\varsigma_i^n(\alpha)) = 1 \text{ and } \Indit(\Deli \tibigjeial)=0\} ,
	\end{align*}
	for $(\gseqi,\dfi) \in \netir$, $\alpha >0$, $n \in \N$ and $\omega \in \Omega$, with $\ovv >0$ the constant in Assumption \ref{Cond1}. As a consequence, we obtain for $\omega \in Q_n$, $\alpha >0$ and $n$ large enough
	\begin{align}
	\label{haVnprAbsch}
	& \hat V^{\circ (n)}_{\alpha} \leq \nonumber \\
	&\leq \frac 1{\sqrt{n\Delta_n}} \sup_{(\gseqi,\dfi) \in \netir} \sum \limits_{i \in A_0^\circ (\omega;\alpha,n,(\gseqi, \dfi))} |\xi_i| \big| \rho_\alpha^\circ(\varsigma_i^n(\alpha)) \ind_{\{|\varsigma_i^n(\alpha)| > v_n\}} \Indit(\varsigma_i^n(\alpha)) \nonumber \\ 
	&\hspace{4mm}- \rho_\alpha^\circ(\Deli \tibigjeial)  \ind_{\{|\Deli \tibigjeial| > v_n\}} \Indit(\Deli \tibigjeial) \big| \ind_{\{|\Deli \tibigjeial| \leq \Delta_n^{\ovv}\}} \ind_{\{|\Deli \tilde X^{\prime n}| \leq v_n/2\}} \nonumber \\
	&\hspace{4mm}+ \frac 1{\sqrt{n\Delta_n}} \sup_{(\gseqi,\dfi) \in \netir} \sum \limits_{i \in A_1^\circ (\omega;\alpha,n,(\gseqi, \dfi))} |\xi_i| \big| \rho^{\circ}_{\alpha}(\varsigma_i^n(\alpha)) \ind_{\lbrace | \varsigma_i^n(\alpha)| > v_n \rbrace} - \nonumber \\ 
	&\hspace{37mm} -\rho^{\circ}_{\alpha}(\Deli \tibigjeial) \ind_{\lbrace | \Deli \tibigjeial | > v_n \rbrace} \big| \ind_{\{|\Deli \tibigjeial| > \Delta_n^{\ovv}\}} \ind_{\{|\Deli \tilde X^{\prime n}| \leq v_n/2\}} \nonumber \\
	&\hspace{4mm}+ \sup_{(\gseqi,\dfi) \in \netir} \Big| \sum\limits_{i \in A_2^\circ (\omega;\alpha,n,(\gseqi, \dfi))} \xi_i \bar a_i^n(\alpha) \Big| + \sup_{(\gseqi,\dfi) \in \netir} \Big| \sum\limits_{i \in A_3^\circ (\omega;\alpha,n,(\gseqi, \dfi))} \xi_i \bar b_i^n(\alpha) \Big|,
	\end{align}
	because  $|\Deli \tilde X^{\prime n}| \leq v_n/2$ holds for each $i=1, \ldots,n$ on $Q_n$ according to \eqref{Qndef2}.
	As a consequence, of Lemma \ref{Indikunglab} we have $\# A_2^\circ (\omega;\alpha,n,(\gseqi, \dfi)) \leq c_n$ as well as $\# A_3^\circ (\omega;\alpha,n,(\gseqi, \dfi)) \leq c_n$ with $c_n = \lceil (v_n/\Delta_n^{\ovr}) +1 \rceil$ for each $(\gseqi,\dfi) \in \netir$, $\alpha >0$ and $\omega \in J_n^{\scriptscriptstyle (2)}(\alpha) \cap Q_n$. Thus, \eqref{haVnprAbsch} yields the assertion.
\end{proof}

\subsection{Moments: Bounds and convergence results}

In the remaining part of Appendix \ref{appD} we gather results on moments of functionals of processes which occur several times in Section \ref{sec:weConv}.

\begin{lemma}
	\label{PRMProsevjb} 
	For $n \in \N$ let $\mu^{\scriptscriptstyle (n)}$ be a Poisson random measure with predictable compensator $\bar \mu^{\scriptscriptstyle (n)}(ds,d\taili) = \nu_s^{\scriptscriptstyle (n)}(d\taili)ds$ such that \eqref{RescAss3} is satisfied for all $n \in \N$ for $\Delta_n >0$ and transition kernels $g^{\scriptscriptstyle (n)}$ from $([0,1], $ $\Bb([0,1]))$ into $(\R, \Bb)$ with 
	\[\Big( \lambda_1 - \mathrm{ess~sup}_{y \in [0,1]}\int (1 \wedge |\taili|^{\beta})g^{(n)}(y,d\taili) \Big) \leq K\] 
	for each $n \in\N$ and some $\beta \in [0,2]$, $K>0$. Furthermore, let $c >0$, $F \subset \{ \taili \in \R \mid |\taili| >c\}$ and $N^{\scriptscriptstyle (n)} = \ind_{F} \star \mu^{\scriptscriptstyle (n)}$. Then for $0 \leq t_1 \leq t_2 \leq n \Delta_n$ the following equality in distribution holds
	\begin{align}
	\label{tilNtdisteq}
	N^{(n)}_{t_2} -  N^{(n)}_{t_1} =_d \ \mathrm{Poiss}\big(\zeta_{t_2}^{(n)} - \zeta_{t_1}^{(n)} \big),
	\end{align}
	with
	\begin{align}
	\label{lambdancaleq}
	\zeta_t^{(n)} = \int_0^t \int_{F} \nu_s^{(n)}(d\taili) ds = n\Delta_n \int_0^{t/(n\Delta_n)} \int_{F} g^{(n)}(y,d\taili) dy,
	\end{align}
	for $t \in [0,n \Delta_n]$. Moreover, for $i \in \{1, \ldots,n\}$ the sets $A_n^{(i)} := \big\{N^{\scriptscriptstyle (n)}_{i \Delta_n} - N^{\scriptscriptstyle (n)}_{(i-1) \Delta_n} \leq 1 \big\}$ satisfy
	\begin{equation*}
	\Prob\big((A_n^{(i)})^C\big) \leq K \Delta_n^2 (c \wedge 1)^{-2\beta}.
	\end{equation*}
\end{lemma}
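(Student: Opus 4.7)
The plan is to deduce the distributional identity \eqref{tilNtdisteq} directly from the fact that $\mu^{(n)}$ is a Poisson random measure whose predictable compensator $\bar\mu^{(n)}(ds,d\taili) = \nu_s^{(n)}(d\taili)\,ds$ is deterministic, this determinism coming from the rescaling \eqref{RescAss3} being expressed through the non-random transition kernel $g^{(n)}$. By the standard characterization of Poisson random measures (e.g.\ Theorem~II.4.8 in \cite{JacShi02}), for every Borel set $A \subset \R_+\times\R$ with finite compensator mass the variable $\mu^{(n)}(A)$ is Poisson-distributed with parameter $\bar\mu^{(n)}(A)$. Applied to $A = (t_1,t_2]\times F$, and combined with $N^{(n)}_{t_2} - N^{(n)}_{t_1} = \mu^{(n)}((t_1,t_2]\times F)$, this yields the claimed identity, the second representation of $\zeta^{(n)}_t$ in \eqref{lambdancaleq} following from the change of variables $y = s/(n\Delta_n)$ applied to the first.

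For the probability bound on $(A_n^{(i)})^C$, the plan is to exploit Step~1 together with a two-ingredient estimate. The first ingredient is the elementary Poisson tail bound
\[
\Prob\big(\mathrm{Poiss}(\lambda) \geq 2\big) \;=\; 1 - e^{-\lambda}(1+\lambda) \;=\; \int_0^\lambda s\,e^{-s}\,ds \;\leq\; \lambda^2/2,
\]
which reduces matters to controlling $\lambda_i := \zeta^{(n)}_{i\Delta_n} - \zeta^{(n)}_{(i-1)\Delta_n}$. The second ingredient is the pointwise inequality $\ind_{\{|\taili|>c\}} \leq (c\wedge 1)^{-\beta}(1\wedge|\taili|^\beta)$, valid for $\beta \in [0,2]$: for $c \geq 1$ both sides equal $1$ on the event $\{|\taili| > c\}$, while for $c < 1$ and $|\taili| > c$ one verifies $1\wedge|\taili|^\beta \geq c^\beta$ by splitting $|\taili| \leq 1$ versus $|\taili| > 1$. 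Inserting this into the second representation of $\zeta^{(n)}_t$ in \eqref{lambdancaleq}, using $F \subset \{|\taili|>c\}$, and applying the essential supremum hypothesis on $g^{(n)}$ give $\lambda_i \leq K\Delta_n(c\wedge 1)^{-\beta}$; substituting back into the Poisson tail bound produces the desired estimate $\Prob\big((A_n^{(i)})^C\big) \leq K\Delta_n^2(c\wedge 1)^{-2\beta}$, uniformly in $i \in \{1,\ldots,n\}$.

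There is no substantial obstacle here; the proof is essentially a short computation once the Poisson distributional identity is in place. The only point that deserves care is the identification of $\mu^{(n)}((t_1,t_2]\times F)$ as a genuine Poisson variable, which relies on the determinism of $\bar\mu^{(n)}$ — and this is precisely where the assumption that $g^{(n)}$ is a (non-random) transition kernel, combined with the explicit hypothesis that $\mu^{(n)}$ is a Poisson random measure, intervenes.
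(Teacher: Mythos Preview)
Your proof is correct and follows essentially the same route as the paper: invoke Theorem~II.4.8 in \cite{JacShi02} for the Poisson identity \eqref{tilNtdisteq}, bound $\Prob(\mathrm{Poiss}(\lambda)\geq 2)$ by $\lambda^2$ (the paper uses $e^{-\lambda}\sum_{k\geq 2}\lambda^k/k!\leq\lambda^2$, you use the equivalent integral form), and then control the Poisson parameter via the same pointwise inequality $\ind_{\{|\taili|>c\}}\leq(c\wedge 1)^{-\beta}(1\wedge|\taili|^\beta)$.
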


\begin{proof}
	\eqref{tilNtdisteq} is a consequence of Theorem II.4.8 in \cite{JacShi02}. Furthermore, according to \eqref{tilNtdisteq} we calculate as follows
	\begin{align*}
	\Prob\big((A_n^{(i)})^C\big) &= \exp\Big\{-\Big(\zeta_{i\Delta_n}^{(n)} - \zeta_{(i-1)\Delta_n}^{(n)}\Big)\Big\}\sum \limits_{k=2}^\infty \frac{\big(\zeta_{i\Delta_n}^{(n)} - \zeta_{(i-1)\Delta_n}^{(n)}\big)^k}{k!} \nonumber \\&\leq \Big(\zeta_{i\Delta_n}^{(n)} - \zeta_{(i-1)\Delta_n}^{(n)}\Big)^2 
	=\Big(n\Delta_n \int_{(i-1)/n}^{i/n} \int_{F} g^{(n)}(y,d\taili) dy \Big)^2,
	\end{align*}
	where the final equality in the above display is a consequence of \eqref{lambdancaleq}. Now, using the assumption that the mapping $(y \mapsto \int (1\wedge |\taili|^\beta) g^{\scriptscriptstyle (n)}(y,d\taili))$ is Lebesgue almost surely bounded on $[0,1]$ we obtain
	\begin{align*}
	\Prob((A_n^{(i)})^C) \leq \Big( n\Delta_n (c \wedge 1)^{-\beta} \int_{(i-1)/n}^{i/n} \int (1 \wedge |\taili|^\beta) g^{(n)}(y,d\taili) dy \Big)^2 \leq K \Delta_n^2 (c \wedge 1)^{-2\beta},
	\end{align*}
	because $(c \wedge 1)^{-\beta} (1 \wedge |\taili|^\beta) \geq 1$ holds if $|\taili| > c$.
\end{proof}

\begin{lemma}
	\label{ainalmombou}
	Let Assumption \ref{Cond1} be satisfied and let $\alpha >0$. Then for $n \in\N$ large enough  we have
	\[
	\Eb |a_i^n (\alpha)|^m \leq \Big( \frac{K(\alpha)}{\sqrt{n \Delta_n}} \Big)^m \Delta_n,
	\]
	for all $m \in \N$ and all $i=1,\ldots,n$, where for $\alpha >0$, $n \in \N$ and $i=1,\ldots,n$
	\[
	a_i^n (\alpha) = \frac 1{\sqrt{n \Delta_n}} \rho_{\alpha}(\Deli \hatbigjal) \ind_{\lbrace | \Deli \hatbigjal | > 
		\alpha/4 \rbrace}, 
	\]
	with $\hatbigjal$ defined in \eqref{SmandlajumpD2} and where $\rho_\alpha$ is defined prior to \eqref{ImpQuantDef}.
\end{lemma}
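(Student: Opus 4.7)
The plan is to exploit two simple facts about $a_i^n(\alpha)$: first, the function $\rho_\alpha = \rho \cdot \Psi_\alpha$ is bounded (by some constant $C_\rho(\alpha)$) because $\rho$ is globally bounded by Assumption \ref{Cond1}\eqref{RhoCond}; second, the indicator $\ind_{\{|\Deli \hatbigjal| > \alpha/4\}}$ forces the rare event of at least one jump of $\mu^{(n)}$ of absolute size exceeding $\alpha/4$ falling in $((i-1)\Delta_n, i\Delta_n]$, since $\hatbigjal = (z\ind_{\{|z|>\alpha/4\}})\star\mu^{(n)}$.

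First I would bound pointwise
\[
|a_i^n(\alpha)|^m \leq \frac{C_\rho(\alpha)^m}{(n\Delta_n)^{m/2}} \ind_{\{|\Deli \hatbigjal| > \alpha/4\}}.
\]
Thus the whole question reduces to showing $\Prob(|\Deli \hatbigjal| > \alpha/4) \leq K(\alpha)\,\Delta_n$ for $n$ large. Because $|\Deli \hatbigjal| > \alpha/4$ implies at least one atom of $\mu^{(n)}$ of size $>\alpha/4$ in $((i-1)\Delta_n, i\Delta_n]$, introducing $N^{(n,\alpha)} = \ind_{\{|z|>\alpha/4\}}\star\mu^{(n)}$ we can use Markov's inequality,
\[
\Prob(|\Deli \hatbigjal| > \alpha/4) \leq \Prob\big(\Deli N^{(n,\alpha)} \geq 1\big) \leq \Eb\,\Deli N^{(n,\alpha)} = n\Delta_n \int_{(i-1)/n}^{i/n}\!\!\int \ind_{\{|z|>\alpha/4\}}\, g^{(n)}(y,dz)\,dy.
\]

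The bound on the inner integral comes from Assumption \ref{Cond1}\eqref{BlGetCond}: for any $\delta>0$ and some null set $L$,
\[
\int \ind_{\{|z|>\alpha/4\}}\, g^{(n)}(y,dz) \leq (\alpha/4)^{-((\beta+\delta)\wedge 2)} \int \big(1\wedge |z|^{(\beta+\delta)\wedge 2}\big)\, g^{(n)}(y,dz) \leq C_g(\alpha), \quad y\in[0,1]\setminus L,
\]
uniformly in $n$. Plugging this in yields $\Prob(|\Deli \hatbigjal| > \alpha/4) \leq C_g(\alpha)\Delta_n$, which combined with the pointwise bound above gives
\[
\Eb |a_i^n(\alpha)|^m \leq \frac{C_\rho(\alpha)^m \, C_g(\alpha)}{(n\Delta_n)^{m/2}}\,\Delta_n.
\]
Finally, setting $K(\alpha) \defeq \max(C_\rho(\alpha), C_g(\alpha), 1)$ absorbs the extra factor $C_g(\alpha)$ into the $m$-th power (since $K(\alpha)^m \geq C_\rho(\alpha)^m C_g(\alpha)$ for all $m\ge 1$), producing the claimed bound. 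There is no real obstacle here; the only minor care is in choosing $K(\alpha)$ uniformly in $m$, which is handled by the elementary observation just made.
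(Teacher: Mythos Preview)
Your argument is correct and in fact more elementary than the paper's. The paper introduces the event $H_i^n(\alpha)=\{\Deli N^{(\alpha,n)}\le 1\}$ and splits the expectation: on $H_i^n(\alpha)^C$ it uses $\Prob(H_i^n(\alpha)^C)\le K(\alpha)\Delta_n^2$ (via Lemma~\ref{PRMProsevjb}), while on $H_i^n(\alpha)$ it identifies $\Deli\hatbigjal$ with a single jump and passes to the compensator of $\mu^{(n)}$. This split is the natural template for the companion Lemma~\ref{baainamomb}, where the relevant jumps can be arbitrarily small and one genuinely needs the ``single jump'' reduction before invoking the compensator. For the present lemma, however, your shortcut---bounding $\rho_\alpha$ uniformly and reducing everything to $\Prob(\Deli N^{(n,\alpha)}\ge 1)\le \Eb\,\Deli N^{(n,\alpha)}$---bypasses the split entirely and is both shorter and cleaner.

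One small algebraic slip: your choice $K(\alpha)=\max(C_\rho(\alpha),C_g(\alpha),1)$ does \emph{not} guarantee $K(\alpha)^m\ge C_\rho(\alpha)^m C_g(\alpha)$ for all $m\ge 1$ (take $C_\rho=2$, $C_g=3$, $m=1$). Use instead $K(\alpha)=C_\rho(\alpha)\cdot(C_g(\alpha)\vee 1)$, which does the job since then $K(\alpha)^m=C_\rho(\alpha)^m(C_g(\alpha)\vee 1)^m\ge C_\rho(\alpha)^m C_g(\alpha)$.
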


\begin{proof}
	For $n \in \N$, $i=1, \ldots, n$ and $\alpha >0$ define $N^{\scriptscriptstyle (\alpha , n)} = \ind_{\{|\taili| > \alpha/4\}} \star \mu^{\scriptscriptstyle (n)}$ and $H_i^n(\alpha) = \{N^{\scriptscriptstyle (\alpha , n)}_{i \Delta_n} - N^{\scriptscriptstyle (\alpha , n)}_{(i-1) \Delta_n} \leq 1\}$. Then Lemma \ref{PRMProsevjb} shows that 
$	
	\Prob\big(H_i^n(\alpha)^C\big) \leq K(\alpha) \Delta_n^2.
$
	Notice that on $H_i^n(\alpha)$ the quantity $\Deli \hatbigjal$ is either zero or equal to the only jump larger than $\alpha/4$ on the interval $((i-1)\Delta_n, i\Delta_n]$. Thus, we obtain 
	\begin{align*}
	\Eb |a_i^n (\alpha)|^m &\leq \Big( \frac K{\sqrt{n \Delta_n}} \Big)^m \Prob\big(H_i^n(\alpha)^C\big) + \\
	&\hspace{1cm}+ \Big( \frac 1{\sqrt{n \Delta_n}} \Big)^m \Eb \Big\{ \int_{(i-1)\Delta_n}^{i \Delta_n} \int |\rho_{\alpha}(\taili)|^m \ind_{\lbrace | \taili | > 
		\alpha/4 \rbrace} \ind_{H_i^n(\alpha)} \mu^{(n)}(\omega;du,d\taili) \Big\} \\
	&\leq \Big( \frac K{\sqrt{n \Delta_n}} \Big)^m \Big( K(\alpha) \Delta_n^2 +  \Eb \Big\{ \int_{(i-1)\Delta_n}^{i \Delta_n} \int (1 \wedge |\taili|^p)^m \ind_{\lbrace | \taili | > 
		\alpha/4 \rbrace}  \mu^{(n)}(\omega;du,d\taili) \Big\} \Big),
	\end{align*}
	where $K>0$ is chosen such that $|\rho(\taili)| \leq K(1\wedge |\taili|^p)$ for all $\taili \in \R$. Furthermore, due to $m \geq 1$ we have $(1\wedge |\taili|^p)^m \leq (1\wedge |\taili|^p)$ and consequently the definition of the predictable compensator of an optional $\Pc'$-$\sigma$-finite random measure (see Theorem II.1.8 in \cite{JacShi02}) yields
	\begin{align*}
	\Eb |a_i^n (\alpha)|^m &\leq \Big( \frac{K(\alpha)}{\sqrt{n \Delta_n}} \Big)^m \Big\{ \Delta_n^2 + n \Delta_n  \int_{(i-1)/n}^{i/n} \int (1 \wedge |\taili|^p) \ind_{\lbrace | \taili | > 
		\alpha/4 \rbrace}  g^{(n)}(y,d\taili)dy \Big\} \\
	&\leq \Big( \frac{K(\alpha)}{\sqrt{n \Delta_n}} \Big)^m \Delta_n,
	\end{align*}
	for $n \in \N$ large enough due to Assumption \ref{Cond1}\eqref{BlGetCond} and $p>\beta$. 
\end{proof}

\begin{lemma}
	\label{baainamomb}
	Let Assumption \ref{Cond1} be satisfied and let $\alpha >0$. Then for $n \in \N$ large enough we have
	\[
	\Eb |\bar a_i^n(\alpha)|^m \leq \Big( \frac K{\sqrt{n \Delta_n}} \Big)^m \Delta_n,
	\]
	for all $m \in \N$ and all $i=1,\ldots,n$, where for $\alpha >0$, $n \in \N$ and $i=1,\ldots,n$
	\[
	\bar a_i^n(\alpha) = \frac 1{\sqrt{n\Delta_n}} \rho_\alpha^\circ(\Deli \tibigjeial) \ind_{\{| \Deli \tibigjeial| > v_n \vee \Delta_n^{\ovv}\}},
	\]
	with $\tibigjeial$ defined in \eqref{SmandlajumpD2}, $\ovv >0$ is the constant from Assumption \ref{Cond1}\eqref{FiLevyDistCond} and where $\rho^\circ_\alpha$ is defined prior to \eqref{ImpQuantDef}.
\end{lemma}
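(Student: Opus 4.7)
\medskip

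The plan is to mimic the proof of Lemma \ref{ainalmombou}, exploiting two structural features that are specific to the ``smooth--truncation'' factor $\rho^\circ_\alpha(\taili)=\rho(\taili)\Psi^\circ_\alpha(\taili)$: first, $\Psi^\circ_\alpha\leq\ind_{\{|\taili|<\alpha\}}$, so that the effective support of $\rho^\circ_\alpha$ is contained in $\{|\taili|<\alpha\}$; second, by Assumption~\ref{Cond1}\eqref{RhoCond} combined with $\rho(0)=0$, the mean value theorem yields $|\rho(\taili)|\leq K|\taili|^p$ for every $\taili\in\R$. Hence
\[
|\rho^\circ_\alpha(\taili)|^m \leq K^m |\taili|^{mp}\ind_{\{|\taili|<\alpha\}} \leq K^m\alpha^{(m-1)p}|\taili|^p\ind_{\{|\taili|<\alpha\}},
\]
where the last inequality uses $p>1$ (which is part of Assumption \ref{Cond1}\eqref{RhoCond}) and $m\geq 1$. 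This crucial bound, together with $p>\beta$ and Assumption \ref{Cond1}\eqref{BlGetCond}, will keep the integral $\int|\rho^\circ_\alpha(\taili)|^m g^{(n)}(y,d\taili)$ uniformly bounded in $y$ (for $n$ large) without bringing in the mildly singular factor $v_n^{-\beta}$ that a naive truncation at $v_n\vee\Delta_n^{\ovv}$ would produce.

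Following the strategy of Lemma~\ref{ainalmombou}, I would introduce the set $H_i^n\defeq\{N^n_{i\Delta_n}-N^n_{(i-1)\Delta_n}\leq 1\}$, with $N^n=\ind_{F_n}\star\mu^{(n)}$ the process defined in \eqref{SmandlajumpD2}. By Lemma \ref{PRMProsevjb} applied with $c=u_n=\gamma^\ell\Delta_n^{\ell\ovw}$ we have $\Prob((H_i^n)^C)\leq K\Delta_n^2 u_n^{-2\beta}=K\Delta_n^{2-2\beta\ell\ovw}$, and the choice of $\ell$ in \eqref{elleqn2} ensures $\beta\ell\ovw<1/2$, so this is $O(\Delta_n^{1+\eta})$ for some $\eta>0$. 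Since $\rho^\circ_\alpha$ is globally bounded by some constant $\bar K$, the contribution of $(H_i^n)^C$ to $\Eb|\bar a_i^n(\alpha)|^m$ is controlled by
\[
\frac{\bar K^m}{(n\Delta_n)^{m/2}}\,K\Delta_n^{2-2\beta\ell\ovw}\;\leq\;\Bigl(\frac{K}{\sqrt{n\Delta_n}}\Bigr)^{\!m}\Delta_n
\]
for $n$ large.

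On $H_i^n$, the increment $\Deli\tibigjeial$ equals either $0$ or the unique jump of $\mu^{(n)}$ in $((i-1)\Delta_n,i\Delta_n]\times (F_n\cap\{|\taili|\leq 2\alpha\})$, and in the first case the indicator $\ind_{\{|\Deli\tibigjeial|>v_n\vee\Delta_n^{\ovv}\}}$ vanishes. Hence
\[
|\rho^\circ_\alpha(\Deli\tibigjeial)|^m\ind_{\{|\Deli\tibigjeial|>v_n\vee\Delta_n^{\ovv}\}}\ind_{H_i^n}\leq \int_{(i-1)\Delta_n}^{i\Delta_n}\!\!\int|\rho^\circ_\alpha(\taili)|^m\,\mu^{(n)}(du,d\taili),
\]
where the integrand on the right-hand side is $\Pc'$-measurable. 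Taking expectations, applying Theorem II.1.8 in \cite{JacShi02}, and invoking \eqref{RescAss3} converts the right-hand side into $n\Delta_n\int_{(i-1)/n}^{i/n}\!\int|\rho^\circ_\alpha(\taili)|^m g^{(n)}(y,d\taili)\,dy$. Using the two inequalities displayed in the first paragraph and Assumption \ref{Cond1}\eqref{BlGetCond} (choosing $\delta>0$ small enough so that $\beta+\delta<p$), the inner integral is bounded by $K\alpha^{(m-1)p}$ uniformly in $y$ and $n$, yielding a contribution of order $(n\Delta_n)^{-m/2}\Delta_n$. Adding the two contributions gives the claim.

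The only mildly delicate step is the combinatorial estimate that converts $|\rho^\circ_\alpha(\Deli\tibigjeial)|^m$ on $H_i^n$ into a stochastic integral with respect to $\mu^{(n)}$; once that is in place, everything reduces to routine bounds already used in Lemma \ref{ainalmombou}. The constant $K$ in the statement will necessarily depend on $\alpha$ through the factor $\alpha^{(m-1)p}$, but this is harmless since Lemma \ref{baainamomb} is invoked in Lemma \ref{NoContrfsmalL} for a fixed $\alpha>0$.
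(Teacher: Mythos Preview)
Your argument is correct and follows essentially the same scheme as the paper: split according to whether at most one ``large'' jump occurs on $((i-1)\Delta_n,i\Delta_n]$, bound the bad set via Lemma~\ref{PRMProsevjb}, and on the good set reduce $|\rho^\circ_\alpha(\Deli\tibigjeial)|^m$ to a $\mu^{(n)}$-integral and pass to the compensator. The only (cosmetic) differences are that the paper uses the slightly larger set $\bar H_i^n(\alpha)=\{\bar N^{(\alpha,n)}_{i\Delta_n}-\bar N^{(\alpha,n)}_{(i-1)\Delta_n}\le 1\}$ with $\bar N^{(\alpha,n)}=\ind_{\{u_n<|z|\le 2\alpha\}}\star\mu^{(n)}$, and that it bounds $|\rho^\circ_\alpha(z)|^m\le K^m(1\wedge|z|^p)^m\le K^m(1\wedge|z|^p)$ directly, which avoids the $\alpha^{(m-1)p}$ factor and yields a constant $K$ independent of $\alpha$; your route via the support of $\Psi^\circ_\alpha$ is equally valid but produces an $\alpha$-dependent constant (which, as you note, is harmless for the application). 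Two tiny remarks: the exponent from Lemma~\ref{PRMProsevjb} should carry a small $\delta$ (i.e.\ $2-2(\beta+\delta)\ell\ovw$) since Assumption~\ref{Cond1}\eqref{BlGetCond} only controls $\int(1\wedge|z|^{(\beta+\delta)\wedge 2})g^{(n)}(y,dz)$, and the inequality $|z|^{mp}\le\alpha^{(m-1)p}|z|^p$ on $\{|z|<\alpha\}$ needs only $m\ge 1$, not $p>1$.
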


\begin{proof}
	For $n \in \N$, $i=1,\ldots,n$ and $\alpha >0$ we define the processes $\bar N^{\scriptscriptstyle (\alpha , n)} = \ind_{\{u_n < |\taili| \leq 2\alpha\}} \star \mu^{\scriptscriptstyle (n)}$, where $u_n = v_n^\ell$ with $\ell$ the constant in \eqref{elleqn2}. If we define the sets $\bar H_i^n(\alpha) = \{\bar N^{\scriptscriptstyle (\alpha , n)}_{i\Delta_n} - \bar N^{\scriptscriptstyle (\alpha , n)}_{(i-1)\Delta_n} \leq 1\}$, Lemma \ref{PRMProsevjb} yields 
	\begin{align}
	\label{barHinalAb}
	\Prob\big(\bar H_i^n(\alpha)^C \big) 
	\leq K(\delta) \Delta_n^{2-2\ell \ovw (\beta+\delta)},
	\end{align}
	for all $\delta >0$ and $n \in \N$ large enough. Consequently, using the fact that on $\bar H_i^n (\alpha)$ the increment $\Deli \tibigjeial$ is either zero or equal to the only jump of absolute size in $(u_n, 2\alpha]$ on the interval $((i-1)\Delta_n, i\Delta_n]$ we obtain
	\begin{align*}
	\Eb |\bar a_i^n(\alpha)|^m &\leq \Big( \frac{K(\delta)}{\sqrt{n \Delta_n}} \Big)^m \Prob\big( \bar H_i^n(\alpha)^C \big) + \\
	&\hspace{7mm}+ \Big(\frac 1{\sqrt{n\Delta_n}} \Big)^m \Eb \int_{(i-1)\Delta_n}^{i\Delta_n} \int |\rho_\alpha^{\circ}(\taili)|^m \ind_{\{|\taili| > v_n \vee \Delta_n^{\ovv} \}} \ind_{\bar H_i^n (\alpha)} \mu^{(n)}(\omega; du,d\taili) \\
	&\leq \Big( \frac{K(\delta)}{\sqrt{n\Delta_n}} \Big)^m \Big\{ \Delta_n^{2-2\ell \ovw (\beta+\delta)} + \Eb \int_{(i-1)\Delta_n}^{i\Delta_n} \int (1\wedge |\taili|^p)^m  \mu^{(n)}(\omega; du,d\taili) \Big\},
	\end{align*}
	for every $\delta >0$, where $K(\delta) >0$ is chosen such that \eqref{barHinalAb} holds and $|\rho(\taili)| \leq K(\delta)(1\wedge |\taili|^p)$ (see Assumption \ref{Cond1}\eqref{RhoCond}). Thus, due to $(1\wedge |\taili|^p)^m \leq (1\wedge |\taili|^p)$ and the definition of the predictable compensator of an optional $\Pc'$-$\sigma$-finite random measure (see Theorem II.1.8 in \cite{JacShi02}) we obtain for some small $\delta >0$ and $n \in \N$ large enough
	\begin{align*}
	\Eb |\bar a_i^n(\alpha)|^m &\leq \Big( \frac{K(\delta)}{\sqrt{n\Delta_n}} \Big)^m \Big\{ \Delta_n^{2-2\ell \ovw (\beta+\delta)} + n \Delta_n \int_{(i-1)/n}^{i/n} \int (1 \wedge |\taili|^p) g^{(n)}(y,d\taili) dy \Big \} \\
	&\leq \Big( \frac{K(\delta)}{\sqrt{n\Delta_n}} \Big)^m \Delta_n,
	\end{align*}
	because of $p>\beta$ and Assumption \ref{Cond1}\eqref{BlGetCond}, as well as $\ell < 1/(2\beta \ovw)$.
\end{proof}

The proof of the following lemma requires the notion of Orlicz norms. Recall from Section 2.2 in \cite{VanWel96} that for $\Lambda: \R_+ \to \R$ a non-decreasing, convex function with $\Lambda(0)=0$ and a random variable $Z$ the Orlicz norm is defined as
\[
\| Z \|_\Lambda = \inf \big\{ C>0 \mid \Eb \Lambda\big(|Z|/C\big) \leq 1 \big\},
\]
where we set $\inf \emptyset = \infty$. It is easy to check that if $\Lambda$ equals the function $x \mapsto x^p$ for some $p \ge 1$, the corresponding Orlicz norm is the well-known $L^p$-norm
$
\|Z\|_p = \big( \Eb |Z|^p \big)^{1/p}.
$
Furthermore for $\Lambda_1(x):= e^x -1$ a straight forward calculation gives
\begin{equation}
\label{ExpOrNogr}
\|Z\|_p \leq p! \|Z\|_{\Lambda_1}, \quad \text{ for all } p \in \N,
\end{equation}
because $x^p \leq p!(e^x -1)$ for all $x \in \R_+$ by the series expansion of the exponential function.

\begin{lemma}
	\label{supsnzinlem}
	Let Assumption \ref{Cond1} be satisfied and for $n\in\N$ let $(Z_i^n)_{i=1,\ldots,n}$ be independent random variables with mean zero such that there exist constants $C_1,C_2 >0$ with
	\begin{equation}
	\label{ZinMomAss}
	\Eb |Z_i^n|^m \leq m! \Big( \frac{C_1}{\sqrt{n\Delta_n}}\Big)^{m-2} \frac{C_2}n,
	\end{equation}
	for every integer $m \geq 2$. Then we have 
	\[
	\Eb \Big\{ \sup_{A \in \mathfrak S_n} \Big| \sum_{i \in A} Z_i^n \Big| \Big\} = o(1),
	\]
	as $n\to\infty$ for $\mathfrak S_n = \{ M \subset \{1, \ldots,n\} \mid \# M \leq c_n\}$ with $c_n = \lceil (v_n/ \Delta_n^{\ovr}) + 1\rceil$.
\end{lemma}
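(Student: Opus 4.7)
The hypothesis
\[
\Eb|Z_i^n|^m \le m!\,(C_1/\sqrt{n\Delta_n})^{m-2}\,(C_2/n), \qquad m \ge 2,
\]
is precisely the classical Bernstein moment condition with variance parameter $2C_2/n$ and sub-exponential parameter $B_n := C_1/\sqrt{n\Delta_n}$. My plan is a Bernstein-type maximal inequality over the combinatorial family $\mathfrak S_n$. First I would fix $A \in \mathfrak S_n$ with $|A|\le c_n$, set $S_A := \sum_{i\in A} Z_i^n$, and apply the standard moment form of Bernstein's inequality for independent centered summands to obtain
\[
\Prob(|S_A| > t) \le 2\exp\left(-\frac{t^2}{2(V_n + B_n t)}\right), \qquad t > 0,
\]
where $V_n := 2c_n C_2/n$ serves as a uniform variance bound since $\mathrm{Var}(S_A)\le |A|\cdot 2C_2/n$.

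Next, because $\mathfrak S_n$ consists of subsets of $\{1,\ldots,n\}$ of cardinality at most $c_n$, we have $|\mathfrak S_n| \le (c_n+1)\binom{n}{c_n}\le (en)^{c_n}$, hence $\log|\mathfrak S_n| \le K c_n \log n$ for $n$ large. A union bound together with the elementary estimate $V_n + B_n t \le 2\max\{V_n,B_n t\}$ gives
\[
\Prob\Big(\sup_{A\in\mathfrak S_n} |S_A| > t\Big) \le 2|\mathfrak S_n|\Big(\exp(-t^2/(4V_n)) + \exp(-t/(4B_n))\Big).
\]
I would then choose $t_0 := 2\sqrt{V_n\log(2|\mathfrak S_n|)} + 4B_n\log(2|\mathfrak S_n|)$, so that at $t=t_0$ each of the two tail exponentials equals $(2|\mathfrak S_n|)^{-1}$. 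Using $\Eb X \le t_0 + \int_{t_0}^\infty \Prob(X>t)\,dt$ for the non-negative variable $X = \sup_A |S_A|$, the two tail integrals contribute at most orders $\sqrt{V_n}$ and $B_n$, respectively, leading to
\[
\Eb\sup_{A\in\mathfrak S_n}|S_A| \le K\Big(\sqrt{V_n\log|\mathfrak S_n|} + B_n\log|\mathfrak S_n|\Big) \le K\Big(c_n\sqrt{\log n/n} + c_n\log n/\sqrt{n\Delta_n}\Big).
\]

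The main obstacle is to verify that both terms on the right vanish, since $c_n = \lceil v_n/\Delta_n^{\ovr} + 1\rceil \sim \gamma \Delta_n^{\ovw-\ovr}$ may itself diverge. Two regimes arise. If $\ovw \ge \ovr$, then $c_n$ stays bounded and both summands vanish because $n\Delta_n\to\infty$ polynomially in $n$ under Assumption \ref{Cond1}\eqref{ObsSchCondit}, which dominates any power of $\log n$. If $\ovr > \ovw$, I would rewrite the squared bounds as
\[
c_n^2 \log n/n = \gamma^2 \log n/(n\Delta_n^{2(\ovr-\ovw)}), \qquad c_n^2 \log^2 n/(n\Delta_n) = \gamma^2 \log^2 n/(n\Delta_n^{1+2(\ovr-\ovw)});
\]
both denominators grow polynomially in $1/\Delta_n$, hence polynomially in $n$, by Assumption \ref{Cond1}\eqref{ObsSchCond7}, which ensures $n\Delta_n^{(1+2(\ovr-\ovw))\vee 1+\delta}\to\infty$ for some $\delta>0$, and this polynomial growth dominates any power of $\log n$. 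The subtle point is that $|\mathfrak S_n|$ is itself as large as $n^{c_n}$; the sharpness of Bernstein's inequality (sub-Gaussian behaviour for small $t$, sub-exponential for large $t$) is essential because it causes $\log|\mathfrak S_n|$ to enter the leading term only under a square root.
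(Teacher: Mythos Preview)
Your proposal is correct and follows essentially the same route as the paper: Bernstein's moment inequality for each $S_A$, then a maximal inequality over the finite family $\mathfrak S_n$ with $\log|\mathfrak S_n|\le K c_n\log n$, yielding the bound $K(\sqrt{b_n\log|\mathfrak S_n|}+d_n\log|\mathfrak S_n|)$ with $b_n\asymp c_n/n$ and $d_n=C_1/\sqrt{n\Delta_n}$, and finally the convergence to zero via Assumption~\ref{Cond1}\eqref{ObsSchCond7} (and \eqref{ObsSchCond4} to ensure $\Delta_n$ decays polynomially so that the $\log n$ factors are harmless). The only cosmetic difference is that the paper invokes the Orlicz-norm maximal inequality of \cite{VanWel96}, Lemmas~2.2.10--2.2.11, instead of carrying out the union bound and tail integration by hand; the resulting estimate and its verification are identical.
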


\begin{proof}
	The modified Bernstein inequality (Lemma 2.2.11 in \cite{VanWel96}) and \eqref{ZinMomAss} yields
	\begin{align*}
	\Prob\Big(\big|\sum_{i \in A} Z_i^n\big| >x\Big) \leq 2 e^{- \frac 12 \frac{x^2}{b_n+ d_nx}}
	\end{align*}
	for every $x \in \R_+$, $A \in \mathfrak S_n$ with $b_n = 2 C_2 c_n/n$ and $d_n = C_1/\sqrt{n\Delta_n}$, because each $A \in \mathfrak S_n$ consists of at most $c_n$ elements. Therefore, by Lemma 2.2.10 in the previously mentioned reference, the fact that $\# \mathfrak S_n \leq (n+1)^{c_n}$ and \eqref{ExpOrNogr} we obtain for a universal constant $C$ and $n \geq 2$
	\begin{align}
	\label{EbEnAbmitVWe}
	\Eb \Big\{ \sup_{A \in \mathfrak S_n} \Big| \sum_{i \in A} Z_i^n \Big| \Big\} &\leq \Big\| \sup_{A \in \mathfrak S_n} \Big| \sum_{i \in A} Z_i^n \Big| \Big\|_{\Lambda_1} \leq C \big( d_n \log(1+ (n+1)^{c_n}) + \sqrt{b_n \log(1+ (n+1)^{c_n})} \big) \nonumber \\
	&\leq K \frac 1{\sqrt{n\Delta_n}} \big( v_n/\Delta_n^{\ovr} +2 \big) \log(2n) + K \big( v_n/\Delta_n^{\ovr} +2 \big) \sqrt{\log(2n)/n} \nonumber \\
	&\leq K\frac{\log(2n)}{\sqrt{n\Delta_n^{(1+2\ovr - 2 \ovw) \vee 1}}} = K \frac {\Delta_n^{\delta/2} \log(2n)}{\sqrt{n\Delta_n^{((1+2\ovr - 2 \ovw) \vee 1) + \delta}}} \rightarrow 0,
	\end{align}
	with some $\delta >0$ such that Assumption \ref{Cond1}\eqref{ObsSchCond7} is satisfied. The final convergence in \eqref{EbEnAbmitVWe} holds, because by Assumption \ref{Cond1}\eqref{ObsSchCond4} we have $\Delta_n = o(n^{-u})$ for some $0<u<1$.
\end{proof}

\begin{lemma}
	\label{anabnakonv}
	Grant Assumption \ref{Cond1} we have  for all $\alpha >0$
	\begin{align*} 
	a_n(\alpha) &= \frac{1}{\sqrt{n \Delta_n}} \sum \limits_{i=1}^n \Eb \Big\{ \big| \Deli \hatbigjal \big|^p 
	\ind_{\lbrace | \Deli \hatbigjal| \leq 2 v_n \rbrace} \Big\} =o(1)  \\
	b_n(\alpha) &= \frac{v_n}{2 \sqrt{n \Delta_n}} \sum \limits_{i=1}^n \Eb \big| \Deli \hatbigjal \big|^{p-1}=o(1),
	\end{align*}
	with $\hatbigjal$ defined in \eqref{SmandlajumpD2}.
\end{lemma}

\begin{proof}
	Obviously, $|\taili|^p \ind_{\{|\taili| \leq 2v_n\}} \leq 2v_n |\taili|^{p-1}$ holds for all $\taili \in \R$. Consequently, $a_n(\alpha) \leq 4b_n(\alpha)$ and it suffices to verify $\lim_{n \to \infty} b_n(\alpha) =0$. For $\gamma \in \R_+$ set
	\[
	\widehat \delta^{(n)}_{\alpha}(\gamma) = \lambda_1 - \mathrm{ess~sup}_{y \in [0,1]} \Big(\int | \taili |^\gamma \ind_{\lbrace |\taili| > \alpha/4 \rbrace} g^{(n)}(y,dz) \Big).
	\]
	Then Assumption \ref{Cond1}\eqref{BlGetCond} and \eqref{LevMeaMomCond} yield $\{ \widehat \delta^{(n)}_{\alpha}(1) \vee \widehat \delta^{(n)}_{\alpha}(p-1) \} \leq K < \infty$ for all $\alpha >0$, $n \in \N$ and we obtain the desired result with Lemma \ref{JacProL217} and Assumption \ref{Cond1}\eqref{ObsSchCond4} as follows:
	\begin{align*}
	b_n(\alpha) 
	&\leq \frac{Kv_n}{\sqrt{n \Delta_n}} \sum \limits_{i=1}^n\Big\{ \int_{(i-1)\Delta_n}^{i\Delta_n} \int |\taili|^{p-1} \ind_{\{|\taili| > \alpha/4\}} \nu^{(n)}_s(d\taili) ds + \\
	&\hspace{35mm}+ \ind_{\{p-1 \geq 1\}} \Big( \int_{(i-1)\Delta_n}^{i\Delta_n} \int |\taili| \ind_{\{|\taili| > \alpha/4\}} \nu^{(n)}_s(d\taili) ds \Big)^{p-1} \Big\}\\
	&= \frac{Kv_n}{\sqrt{n \Delta_n}} \sum \limits_{i=1}^n\Big\{n \Delta_n \int_{(i-1)/n}^{i/n} \int |\taili|^{p-1} \ind_{\{|\taili| > \alpha/4\}} g^{(n)}(y,d\taili) dy + \\
	&\hspace{30mm}+ \ind_{\{p-1 \geq 1\}} \Big( n \Delta_n \int_{(i-1)/n}^{i/n} \int |\taili| \ind_{\{|\taili| > \alpha/4\}} g^{(n)}(y,d\taili) dy \Big)^{p-1} \Big\} \\
	&\leq \frac{K n v_n}{\sqrt{n \Delta_n}} \big\{ \Delta_n  + 
	\Delta_n^{(p-1) \vee 1} \big\} = O \bigg (\sqrt{n \Delta_n^{1+ 2 \ovw }} \bigg ) =o(1).  
	\end{align*}
\end{proof}

\begin{lemma}
	\label{cnadnakonv}
	Grant Assumption \ref{Cond1}. Then we have for all $\alpha >0$
	\begin{align*} 
	c_n(\alpha) &= \frac{1}{\sqrt{n \Delta_n}} \sum \limits_{i=1}^n \Eb \Big\{ \big| \Deli \tibigjeial \big|^p 
	\ind_{\lbrace | \Deli \tibigjeial | \leq 2 v_n \rbrace} \Big\}= o(1) \\
	d_n(\alpha) &= \frac{v_n}{2 \sqrt{n \Delta_n}} \sum \limits_{i=1}^n \Eb \big| \Deli \tibigjeial \big|^{p-1} = o(1), 
	\end{align*}
	with $\tibigjeial$ defined in \eqref{SmandlajumpD2}.
\end{lemma}

\begin{proof}
	Obviously, $|\taili|^p \ind_{\{|\taili| \leq 2v_n\}} \leq 2v_n |\taili|^{p-1}$ holds for each $\taili \in\R$. Thus, $c_n(\alpha) \leq 4 d_n(\alpha)$ and it is enough to verify $\lim_{n \to \infty} d_n(\alpha) =0$. Let $\alpha >0$ be fixed and define further for $\gamma \in \R_+$
	\[
	\widehat \delta_{n, \alpha}(\gamma) = \lambda_1 - \mathrm{ess~sup}_{y \in [0,1]} \Big(\int |\taili|^\gamma \ind_{\lbrace u_n < |\taili| \leq 2 \alpha \rbrace} g^{(n)}(y,d\taili) \Big).
	\]
	Note that due to Assumption \ref{Cond1}\eqref{BlGetCond} and $p-1 >\beta$ we have for each small $\delta >0$:
	\[
	\widehat \delta_{n,\alpha} (1) \leq K(\delta) u_n^{-(\beta+\delta-1)_+} \quad \text{ and } \quad \widehat \delta_{n,\alpha} (p-1) \leq K(\delta).
	\]
	Furthermore, Lemma \ref{JacProL217} gives
	\begin{align*}
	\Eb \big| &\Deli \tibigjeial \big|^{p-1} \leq
	\\ &\leq K \times \begin{cases}
	n \Delta_n \int_{(i-1)/n}^{i/n} \int |\taili|^{p-1} \ind_{\{u_n < |\taili| \leq 2\alpha\}} g^{(n)}(y,d\taili)dy, &\text{ if } p \leq 2, \\
	n \Delta_n \int_{(i-1)/n}^{i/n} \int |\taili|^{p-1} \ind_{\{u_n < |\taili| \leq 2\alpha\}} g^{(n)}(y,d\taili)dy + \\ \hspace{1cm}+ \Big( n \Delta_n \int_{(i-1)/n}^{i/n} \int |\taili| \ind_{\{u_n < |\taili| \leq 2\alpha\}} g^{(n)}(y,d\taili)dy \Big)^{p-1}, &\text{ if } p >2,
	\end{cases} \\
	&\leq K \times \begin{cases}
	\Delta_n \widehat \delta_{n, \alpha}(p-1), \quad &\text{ if } p \leq 2, \\
	\Delta_n \widehat \delta_{n, \alpha}(p-1) + \Delta_n^{p-1} 
	(\widehat \delta_{n, \alpha}(1))^{p-1}, \quad &\text{ if } p >2,
	\end{cases} \\
	&= K(\delta) \times \begin{cases}
	\Delta_n,  \quad &\text{ if } p \leq 2, \\
	\Delta_n + \Delta_n^{p-1} u_n^{-(p-1)(\beta+\delta -1)_+}, \quad &\text{ if } p >2,
	\end{cases}
	\end{align*}
	for each $1 \leq i \leq n$ and $\delta >0$ small enough. Thus, when $p \leq 2$, Assumption \ref{Cond1}\eqref{ObsSchCond4} yields
	\begin{align*}
	d_n(\alpha) &\leq K(\delta) \frac{1}{\sqrt{n \Delta_n}} n \Delta_n v_n = O\Big( \sqrt{n \Delta_n^{1+2\ovw}} \Big) =o(1).
	\end{align*}
	In the case $p >2$ we obtain also from Assumption \ref{Cond1}\eqref{ObsSchCond4} for $\delta$ small enough:
	\begin{align*}
	d_n(\alpha) &\leq K(\delta) \left\{ \frac{1}{\sqrt{n \Delta_n}} n \Delta_n v_n + \frac{1}{\sqrt{n \Delta_n}} n \Delta_n^{p-1} v_n 
	u_n^{-(\beta+\delta -1)_+ (p-1)} \right\} \\
	&\leq K(\delta) \left\{ \sqrt{n \Delta_n^{1+ 2 \ovw}} + \sqrt{ n \Delta_n^{2(p-1) - 2(p-1)(\beta+\delta-1)_+ \ell \ovw 
			-1 + 2 \ovw}} \right\} \\
	&\leq 
	K(\delta) \sqrt{n \Delta_n^{1+ 2 \ovw}} = o(1), 
	\end{align*}
	where the last inequality above is clear for $\beta <1$, for $\beta=1$ we have $2(p-1)  - 2(p-1)(\beta+ \delta -1) \ell \ovw -1 >1$ from $p>2$, and in the case $\beta > 1$ we calculate using $\ell < \frac 1{2\beta\ovw}$ and $p > 1+\beta$:
	\begin{align*}
	2(p-1)  - 2(p-1)(\beta+ \delta -1) \ell \ovw -1 &= 2(p-1)(1-(\beta+\delta -1) \ell \ovw) -1 \\
	&>2(p-1)\Big(1-\frac{\beta+\delta -1}{2\beta}\Big) -1 \\
	&>\beta\Big(1+ \frac 1\beta- \frac\delta \beta\Big)-1 = \beta - \delta >1.
	\end{align*}
\end{proof}

\begin{lemma}
	\label{ynaznakonv}
If Assumption \ref{Cond1} holds, then we have for all $\alpha >0$
	\begin{align*} 
	y_n^{(\alpha)} &= \frac{1}{\sqrt{n \Delta_n}} \sum \limits_{i=1}^n \Eb \Big\{ \big| \big| \Deli \tilde X^{\prime n} +\Deli \tibigjeial \big|^p - \big| \Deli \tibigjeial \big|^p  \big|  \\
	 &\times \ind_{\{|\Deli \tibigjeial | \leq \Delta_n^{\ovv} \}} 
	\ind_{\{|\Deli \tilde X^{\prime n} +\Deli \tibigjeial |  >v_n \}} \ind_{\lbrace | \Deli \tilde X^{\prime n} | \leq v_n/2 \rbrace}\ind_{Q_n} \Big\} =o(1) , \\
	z_n^{(\alpha)} &= \frac{1}{\sqrt{n \Delta_n}} \sum \limits_{i=1}^n \Eb \Big\{ \big| \Deli \tibigjeial \big|^p \ind_{\{|\Deli \tibigjeial | \leq \Delta_n^{\ovv} \}} \ind_{Q_n} 
	 \Big\}=o(1) ,
	\end{align*}
	where  $\ovv >0$ is  the constant in Assumption \ref{Cond1}\eqref{FiLevyDistCond} and the involved processes and the set $Q_n$ are defined in \eqref{SmandlajumpD2} and \eqref{Qndef2}, respectively.
\end{lemma}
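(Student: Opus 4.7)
The plan is to handle $z_n^{(\alpha)}$ and $y_n^{(\alpha)}$ separately, in both cases exploiting that on $Q_n$ the pure-jump process $\tibigjeial$ has at most one jump per interval and then bounding the resulting expectation by the integral of the integrand against the predictable compensator $\bar\mu^{(n)}$ via the defining property of the compensator applied to a non-negative $\Pc'$-measurable integrand. Throughout I keep $\delta>0$ arbitrarily small so as to invoke Assumption \ref{Cond1}\eqref{BlGetCond} in the form $\lambda_1$-$\mathrm{ess\,sup}_{y\in[0,1]} \int(1\wedge|\taili|^{\beta+\delta})\, g^{(n)}(y,d\taili) \le K(\delta)$.

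For $z_n^{(\alpha)}$, on $Q_n\cap\{|\Deli \tibigjeial|\le \Delta_n^{\ovv}\}$ the increment equals either zero or the unique jump of $\tibigjeial$ in this interval, whose size therefore lies in $(u_n,\Delta_n^{\ovv}]$ (note $u_n=v_n^{\ell}\le \Delta_n^{\ovv}$ for large $n$ since $\ell\ovw>\ovv$). Hence each summand is bounded by
\[
\Eb\int_{(i-1)\Delta_n}^{i\Delta_n}\int|\taili|^p\ind_{\{|\taili|\le\Delta_n^{\ovv}\}}\mu^{(n)}(ds,d\taili) = n\Delta_n\int_{(i-1)/n}^{i/n}\int|\taili|^p\ind_{\{|\taili|\le\Delta_n^{\ovv}\}} g^{(n)}(y,d\taili)\,dy.
\]
Using $|\taili|^p \le \Delta_n^{\ovv(p-\beta-\delta)}|\taili|^{\beta+\delta}$ on $\{|\taili|\le\Delta_n^{\ovv}\}$ and summing over $i$ then yields $z_n^{(\alpha)}\le K\sqrt{n\Delta_n^{1+2\ovv(p-\beta-\delta)}}$, which tends to zero by Assumption \ref{Cond1}\eqref{ObsSchCond5b}.

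For $y_n^{(\alpha)}$, the indicators combined with $|\Deli \tilde X^{\prime n}|\le v_n/2$ and $|\Deli \tilde X^{\prime n}+\Deli \tibigjeial|>v_n$ force $v_n/2<|\Deli \tibigjeial|\le\Delta_n^{\ovv}$; in particular $|\Deli \tilde X^{\prime n}|\le|\Deli \tibigjeial|$. Since $p>1$, the mean value inequality $||a+b|^p-|a|^p|\le p(|a|+|b|)^{p-1}|b|$ together with $|b|\le|a|$ produces
\[
\big||\Deli \tilde X^{\prime n}+\Deli \tibigjeial|^p-|\Deli \tibigjeial|^p\big|\le K|\Deli \tibigjeial|^{p-1}|\Deli \tilde X^{\prime n}|,
\]
and $|\Deli \tilde X^{\prime n}|\le v_n/2$ pulls out a factor of $v_n$. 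This reduces matters to bounding $\sum_i\Eb\{|\Deli \tibigjeial|^{p-1}\ind_{\{v_n/2<|\Deli \tibigjeial|\le\Delta_n^{\ovv}\}}\ind_{Q_n}\}$, which the same Poisson-random-measure trick controls by $Kn\Delta_n$ times the integral of $|\taili|^{p-1}$ over $(v_n/2,\Delta_n^{\ovv}]$.

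The main technical obstacle is to match the resulting exponent of $\Delta_n$ to what Assumption \ref{Cond1}\eqref{ObsSchCond5b} provides, since the sign of $p-1-\beta$ is not fixed, so I would split into two cases. If $p-1\ge\beta+\delta$, then $|\taili|^{p-1}\le\Delta_n^{\ovv(p-1-\beta-\delta)}|\taili|^{\beta+\delta}$ on $(v_n/2,\Delta_n^{\ovv}]$; if $p-1<\beta+\delta$, the negative exponent in $|\taili|^{p-1-\beta-\delta}$ is maximized at $|\taili|=v_n/2$, giving $|\taili|^{p-1}\le(v_n/2)^{p-1-\beta-\delta}|\taili|^{\beta+\delta}$. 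In both cases the final bound takes the form $K\sqrt{n\Delta_n^{1+2\ovv(p-\beta-\delta)+2(\ovw-\ovv)c}}$ with $c>0$ (namely $c=1$ in the first case and $c=p-\beta-\delta$ in the second), which is strictly better than $1+2\ovv(p-\beta-\delta)$ because $\ovw>\ovv$, so Assumption \ref{Cond1}\eqref{ObsSchCond5b} delivers $y_n^{(\alpha)}\to 0$.
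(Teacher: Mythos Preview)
Your argument is correct and follows the same overall strategy as the paper: reduce to a single jump on $Q_n$, pass to the compensator, and bound the resulting integrals. For $z_n^{(\alpha)}$ your treatment is essentially identical to the paper's.

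For $y_n^{(\alpha)}$ the initial steps (mean-value bound, pulling out $v_n$ from $|\Deli\tilde X^{\prime n}|\le v_n/2$, compensator trick) also match, but your final estimate is more circuitous than necessary. The case split on the sign of $p-1-\beta$ is superfluous: Assumption~\ref{Cond1}\eqref{RhoCond} requires $p>\beta+(\beta\vee 1)\ge\beta+1$, so $p-1>\beta$ always holds. The paper exploits this directly: for $\delta>0$ small enough one has $p-1>\beta+\delta$, hence $\int |\taili|^{p-1}\ind_{\{u_n<|\taili|\le\Delta_n^{\ovv}\}}g^{(n)}(y,d\taili)\le K(\delta)$ by \eqref{BlGetCond}, giving $y_n^{(\alpha)}=O\big(\sqrt{n\Delta_n^{1+2\ovw}}\big)$, which tends to zero by Assumption~\ref{Cond1}\eqref{ObsSchCond4} rather than \eqref{ObsSchCond5b}. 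Your route via \eqref{ObsSchCond5b} also leans on $\ovw>\ovv$, which is not part of Assumption~\ref{Cond1} (it holds under the concrete parameterisation of Proposition~\ref{easier}, but the lemma is stated under \ref{Cond1} alone); in your first case the exponent $1+2\ovw+2\ovv(p-1-\beta-\delta)\ge 1+2\ovw$ already suffices via \eqref{ObsSchCond4}, so this dependence is avoidable.
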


\begin{proof}
	First we consider $y_n^{\scriptscriptstyle (\alpha)}$. The mean value theorem yields
	\begin{multline*}
	y_n^{(\alpha)}\leq \frac{1}{\sqrt{n \Delta_n}} \sum \limits_{i=1}^n \Eb \Big\{ \big| \Deli \tilde X^{\prime n} \big| p \xi_i^{p-1} \ind_{\{|\Deli \tibigjeial | \leq \Delta_n^{\ovv} \}} \times \\
	\times \ind_{\{|\Deli \tilde X^{\prime n} +\Deli \tibigjeial |  >v_n \}} \ind_{\lbrace | \Deli \tilde X^{\prime n} | \leq v_n/2 \rbrace} \ind_{Q_n} \Big\},
	\end{multline*}
	for some $\xi_i$ between $|\Deli \tibigjeial|$ and $|\Deli \tilde X^{\prime n} + \Deli \tibigjeial|$. Next using the fact that due to the indicators $|\Deli \tilde X^{\prime n}| \leq |\Deli \tibigjeial|$ holds, we obtain
	\begin{align*}
	y_n^{(\alpha)} \leq \frac{K}{\sqrt{n \Delta_n}} \sum \limits_{i=1}^n &\Eb \Big\{ \big| \Deli \tilde X^{\prime n} \big| \big| \Deli \tibigjeial \big|^{p-1} \ind_{\{|\Deli \tibigjeial | \leq \Delta_n^{\ovv} \}} \ind_{\lbrace | \Deli \tilde X^{\prime n} | \leq v_n/2 \rbrace} \ind_{Q_n} \Big\}.
	\end{align*}
	Note that on $Q_n$ the sum $\Deli \tibigjeial$ consists of at most one jump. Therefore, we can calculate with the definition of the predictable compensator of the random measure associated with the jumps of $X^{\scriptscriptstyle (n)}$:
	\begin{align*}
	y_n^{(\alpha)} &\leq \frac{K v_n}{\sqrt{n \Delta_n}} \sum \limits_{i=1}^n \Eb \Big \{ \big(|\taili|^{p-1} \ind_{\{u_n <|\taili| \leq \Delta_n^{\ovv}\}} \ind_{\{(i-1) \Delta_n < s \leq i \Delta_n\}}\big) \star \mu^{(n)} \Big \}  \\
	&= \frac{K v_n}{\sqrt{n \Delta_n}} \sum\limits_{i=1}^n \int_{(i-1)\Delta_n}^{i\Delta_n}\int |\taili|^{p-1} \ind_{\{u_n <|\taili| \leq \Delta_n^{\ovv}\}} \nu_s^{(n)}(d\taili) ds \\
	&= \frac{K n \Delta_n v_n}{\sqrt{n \Delta_n}} \int_{0}^{1} \int |\taili|^{p-1} \ind_{\{u_n <|\taili| \leq \Delta_n^{\ovv}\}} g^{(n)}(y,d\taili) dy= o\Big(\sqrt{n \Delta_n^{1+2\ovw}} \Big) =o(1),
	\end{align*}
	by Assumption \ref{Cond1}\eqref{BlGetCond} and \eqref{ObsSchCond4}, because $p-1 >\beta$. 
	
	Now we show the claim $z_n^{\scriptscriptstyle (\alpha)} = o(1)$. This can be seen again by the definition of the predictable compensator of the random measure associated with the jumps of $X^{\scriptscriptstyle (n)}$. By the fact that on $Q_n$ $\Deli \tibigjeial$ is either $0$ or equal to the only jump of absolute size in $(u_n,2\alpha]$ on the interval $((i-1)\Delta_n, i \Delta_n]$, we have:
	\begin{align*}
	z_n^{(\alpha)} &\leq \frac{1}{\sqrt{n \Delta_n}} \sum \limits_{i=1}^n \Eb \Big\{ \big( |\taili|^p \ind_{\{u_n < |\taili| \leq \Delta_n^{\ovv}\}} \ind_{\{(i-1)\Delta_n < s \leq i \Delta_n \}} \big) \star \mu^{(n)} \Big\} \\
	&= \sqrt{n \Delta_n} \int_0^1 \int |\taili|^p \ind_{\{u_n < |\taili| \leq \Delta_n^{\ovv}\}} g^{(n)}(y,d\taili) dy = O\Big( \sqrt{n \Delta_n^{1+2 \ovv(p-\beta-\delta)}} \Big) = o(1),
	\end{align*}
	according to Assumption \ref{Cond1}\eqref{ObsSchCond5b}, for some appropriate $\delta >0$.
\end{proof}

\begin{lemma}
	\label{MomentfgO}
	Let Assumption \ref{Cond1} be satisfied and let $f : \R \to \R$ be a Borel measurable function with $|f(\taili)| \leq K(1\wedge |\taili|^p)$ for all $\taili \in \R$ and some $K>0$. Then we have 
	\begin{equation}
	\label{SecSupMomAb}
	\sup_{i \in \{1,\ldots,n\}}\Eb \big(\big| f(\Deli L^{(n)}) \big| \big) = O(\Delta_n)
	\end{equation}
\end{lemma}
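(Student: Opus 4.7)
\textbf{Proof plan for Lemma \ref{MomentfgO}.} The assumption $|f(\taili)| \leq K(1\wedge|\taili|^p)$ is the same condition satisfied by $\rho$ (up to the constant) after invoking Assumption~\ref{Cond1}\eqref{RhoCond}, and the random variable $\Deli L^{(n)}$ is exactly the increment of the pure jump process $L^{(n)} = (\taili\,\ind_{\{|\taili|>v_n\}})\star\mu^{(n)}$. The plan is therefore the same ``at most one large jump'' argument that produced the estimate \eqref{BiasAbschEqn} in Proposition \ref{BiasAbschProp}, only here we do not need to compare with $N_f(g^{(n)};\cdot,\dfi)$: we just need an $O(\Delta_n)$ bound for the first moment.

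First, introduce $\tilde N^{(n)} = \ind_{\{|\taili|>v_n\}}\star\mu^{(n)}$ and the high-probability set $\tilde A_n^{(i)} = \{\tilde N^{(n)}_{i\Delta_n} - \tilde N^{(n)}_{(i-1)\Delta_n} \leq 1\}$, and split
\[
\Eb|f(\Deli L^{(n)})| \le \Eb\big(|f(\Deli L^{(n)})|\,\ind_{\tilde A_n^{(i)}}\big) + \Eb\big(|f(\Deli L^{(n)})|\,\ind_{(\tilde A_n^{(i)})^C}\big).
\]
By Lemma \ref{PRMProsevjb} applied to the set $F = \{|\taili|>v_n\}$, the second term is bounded by $K\Prob((\tilde A_n^{(i)})^C) \leq K\Delta_n^2 v_n^{-2\beta} = K\Delta_n^{2-2\beta\ovw}$, and this is $O(\Delta_n)$ uniformly in $i$ because $2\beta\ovw < 1$ by Assumption~\ref{Cond1}\eqref{ObsSchCondit}.

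For the first term I would use that on $\tilde A_n^{(i)}$ the increment $\Deli L^{(n)}$ either vanishes or equals the single jump $\xi$ of absolute size $>v_n$ landing in $((i-1)\Delta_n,i\Delta_n]$; since $|f(\taili)|\le K|\taili|^p$ forces $f(0)=0$, in both cases
\[
|f(\Deli L^{(n)})|\,\ind_{\tilde A_n^{(i)}} \le \int_{(i-1)\Delta_n}^{i\Delta_n}\!\!\int |f(\taili)|\,\ind_{\{|\taili|>v_n\}}\,\mu^{(n)}(ds,d\taili).
\]
The integrand on the right is $\mathcal P'$-measurable, so Theorem II.1.8 in \cite{JacShi02}, the rescaling assumption \eqref{RescAss3} and the bound $|f(\taili)|\le K(1\wedge|\taili|^p)$ give
\[
\Eb\big(|f(\Deli L^{(n)})|\,\ind_{\tilde A_n^{(i)}}\big) \le K n\Delta_n\int_{(i-1)/n}^{i/n}\!\!\int (1\wedge |\taili|^p)\,g^{(n)}(y,d\taili)\,dy \le K\Delta_n,
\]
where the final step uses $p>\beta$ and the essential-supremum bound in Assumption~\ref{Cond1}\eqref{BlGetCond} (applied to the kernels $g_0,g_1,g_2$ and hence to $g^{(n)}$ via \eqref{gon012Ass3}). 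Combining both estimates yields the uniform $O(\Delta_n)$ bound. There is no real obstacle here; the only point requiring care is the simultaneous use of $f(0)=0$ and the indicator $\ind_{\{|\taili|>v_n\}}$, which together ensure that on $\tilde A_n^{(i)}$ the pointwise quantity $|f(\Deli L^{(n)})|$ is dominated by the integral against $\mu^{(n)}$, allowing the compensator-based linearization.
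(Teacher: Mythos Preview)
Your argument is correct and is essentially the same ``at most one large jump'' decomposition the paper uses; the paper simply invokes Proposition~\ref{BiasAbschProp} (applied to $1\wedge|\cdot|^p$ with $\dfi=\infty$) as a black box instead of writing out the split over $\tilde A_n^{(i)}$ directly. The only cosmetic point is that Assumption~\ref{Cond1}\eqref{BlGetCond} provides the ess-sup bound for the exponent $(\beta+\delta)\wedge 2$ rather than $\beta$ itself, so when applying Lemma~\ref{PRMProsevjb} and bounding $\int(1\wedge|\taili|^p)\,g^{(n)}(y,d\taili)$ you should carry a small $\delta>0$ as the paper does---this changes nothing, since $\ovw<1/(2\beta)$ leaves room for $2((\beta+\delta)\wedge 2)\ovw<1$.
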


\begin{proof}
	By the assumptions on $f$ we obtain for $i=1,\ldots,n$ from Proposition \ref{BiasAbschProp}
	\begin{align}
	\label{FiSupMomAb}
	\Eb &\big( \big| f(\Deli L^{(n)}) \big| \big) \leq K \Eb \big(1\wedge |\Deli L^{(n)}|^p\big) \nonumber \\
	&\leq Kn\Delta_n \int_{(i-1)/n}^{i/n} \int \big(1\wedge |\taili|^p\big) g^{(n)}(y,d\taili) dy + O\big(\Delta_n^2 v_n^{-2((\beta+\delta) \wedge 2)} + \Delta_n v_n^{p-((\beta+\delta)\wedge 2)}\big) \nonumber \\
	&\leq K \Big(n\Delta_n \int_{(i-1)/n}^{i/n} \int \big(1\wedge |\taili|^{(\beta+\delta)\wedge 2}\big) g_0(y,d\taili) dy + \sqrt{n\Delta_n} \int_{(i-1)/n}^{i/n} \int \big(1\wedge |\taili|^{(\beta+\delta)\wedge 2}\big) g_1(y,d\taili) dy \nonumber \\
	&\hspace{1cm} + n\Delta_n a_n \int_{(i-1)/n}^{i/n} \int \big(1\wedge |\taili|^{(\beta+\delta)\wedge 2}\big) g_2(y,d\taili) dy \Big) +  O\big(\Delta_n^2 v_n^{-2((\beta+\delta) \wedge 2)} + \Delta_n v_n^{p-((\beta+\delta)\wedge 2)}\big) \nonumber \\
	&= O(\Delta_n) + O\big((\Delta_n/n)^{1/2}\big) + o\big((\Delta_n/n)^{1/2}\big) +  O\big(\Delta_n^2 v_n^{-2((\beta+\delta) \wedge 2)} + \Delta_n v_n^{p-((\beta+\delta)\wedge 2)}\big),
	\end{align}
	for each $\delta >0$, because by Assumption \ref{Cond1}\eqref{BlGetCond} we have $\int(1 \wedge |\taili|^{(\beta + \delta)\wedge 2}) g_i(y,d\taili) \leq K(\delta)$ for Lebesgue almost every $y \in [0,1]$ for all $i\in\{0,1,2\}$ and some $K(\delta)>0$. Furthermore, in the display above $a_n$ denotes a sequence of non-negative real numbers with $a_n = o((n\Delta_n)^{-1/2})$ and $\Rc_n(y,B) \leq a_n g_2(y,B)$ for all $y\in[0,1]$, $B\in\Bb$, $n \in\N$  according to  Assumption \ref{Cond1}. Now, \eqref{FiSupMomAb} yields \eqref{SecSupMomAb} because of three reasons: first $(\Delta_n/n)^{1/2} \leq \Delta_n$ for large $n \in \N$, moreover, $p > \beta$ so $v_n^{p-((\beta+\delta)\wedge 2)} \leq 1$ for large $n\in\N$ and if $\beta =2$ we have $2((\beta + \delta)\wedge 2) \ovw = 4\ovw <1$ due to $\ovw < (1/2\beta)$, while in the case $\beta <2$ we obtain $2((\beta + \delta)\wedge 2) \ovw = 2(\beta +\delta) \ovw <1$ for $\delta>0$ small enough using $\ovw < (1/2\beta)$ again.
\end{proof}

\begin{lemma}
	\label{MomklAbLem}
	Grant Assumption \ref{Cond1} and let $Q_n$ be the set defined in \eqref{Qndef2}. Then we have for sufficiently large $n \in \N$
	\begin{align}
	\Eb \big( \big| \rho(\Deli X^{(n)})\big| \ind_{\{| \Deli X^{(n)} | > v_n \}} \ind_{Q_n} \big) &\leq K \Delta_n   \label{MomKlEq} \\
	\Eb \big( \big| \rho(\Deli X^{(n)})\big| \big| \rho(\Delj X^{(n)})\big| \ind_{\{| \Deli X^{(n)} | > v_n \}} \ind_{\{| \Delj X^{(n)} | > v_n \}} \ind_{Q_n} \big) &\leq K \Delta_n^2 \label{PrMomKlEq}
	\end{align}
	for $i,j = 1, \ldots, n$ with $i \neq j$, where the constant $K>0$ is independent of $n,i$ and $j$.
\end{lemma}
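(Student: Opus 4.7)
The key observation is that on $Q_n$ we have $|\Deli \tilde X^{\prime n}| \leq v_n/2$ and $\Deli N^n \leq 1$ simultaneously for every $i=1,\ldots,n$, so if additionally $|\Deli X^{(n)}| > v_n$ then necessarily $|\Deli \tibigj| > v_n/2$, and $\Deli \tibigj$ must equal the single jump of absolute size exceeding $u_n$ in $((i-1)\Delta_n, i\Delta_n]$. In particular $|\Deli X^{(n)}| = |\Deli\tilde X^{\prime n} + \Deli \tibigj| \leq 2|\Deli \tibigj|$ on this event. Combining Assumption \ref{Cond1}\eqref{RhoCond} (which via the mean value theorem gives $|\rho(\taili)| \leq K(1 \wedge |\taili|^p)$ for all $\taili \in \R$) with this inequality yields on $Q_n \cap \{|\Deli X^{(n)}| > v_n\}$ the pointwise bound $|\rho(\Deli X^{(n)})| \leq K(1 \wedge |\Deli \tibigj|^p)$, and then on $Q_n$ we further have $(1 \wedge |\Deli \tibigj|^p)\ind_{\{|\Deli \tibigj| > v_n/2\}} \leq \int_{(i-1)\Delta_n}^{i\Delta_n} \int (1 \wedge |\taili|^p) \ind_{\{|\taili|>u_n\}} \mu^{(n)}(\omega;ds,d\taili)$ because the right-hand side equals $(1 \wedge |\Deli \tibigj|^p)$ when exactly one large jump occurs and vanishes when there is no jump.

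Taking expectations and applying the defining relation of the predictable compensator (Theorem II.1.8 in \cite{JacShi02}, the integrand being $\Pc'$-measurable) together with the rescaling assumption \eqref{RescAss3} yields
\begin{equation*}
\Eb \big( \big| \rho(\Deli X^{(n)})\big| \ind_{\{| \Deli X^{(n)} | > v_n \}} \ind_{Q_n} \big) \leq K n \Delta_n \int_{(i-1)/n}^{i/n} \int (1 \wedge |\taili|^p) g^{(n)}(y,d\taili) dy.
\end{equation*}
Since $p > \beta$, the integrand $\int (1 \wedge |\taili|^p) g^{(n)}(y,d\taili)$ is Lebesgue-essentially bounded on $[0,1]$ uniformly in $n$ by Assumption \ref{Cond1}\eqref{BlGetCond} applied to each of $g_0, g_1, g_2$ in the decomposition \eqref{gon012Ass3} (with $a_n = o((n\Delta_n)^{-1/2})$ absorbed into the constant). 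This immediately yields the desired $O(\Delta_n)$ bound \eqref{MomKlEq}.

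For \eqref{PrMomKlEq} with $i \neq j$, the same reasoning gives on $Q_n$ the pointwise bound
\begin{equation*}
|\rho(\Deli X^{(n)})||\rho(\Delj X^{(n)})| \ind_{\{|\Deli X^{(n)}|>v_n\}} \ind_{\{|\Delj X^{(n)}|>v_n\}} \ind_{Q_n} \leq K \, \Xi_i \, \Xi_j,
\end{equation*}
where $\Xi_k := \int_{(k-1)\Delta_n}^{k\Delta_n} \int (1 \wedge |\taili|^p) \ind_{\{|\taili|>u_n\}} \mu^{(n)}(\omega;ds,d\taili)$. Assuming $i<j$ without loss of generality, $\Xi_i$ is $\Fc_{(j-1)\Delta_n}$-measurable and $\Xi_j$ is supported on a disjoint predictable time interval, so conditioning on $\Fc_{(j-1)\Delta_n}$ and applying the compensator identity to the conditional expectation of $\Xi_j$ gives $\Eb(\Xi_i \Xi_j) = \Eb\big(\Xi_i \cdot n\Delta_n \int_{(j-1)/n}^{j/n}\int(1\wedge|\taili|^p) g^{(n)}(y,d\taili)dy\big) \leq K \Delta_n \Eb(\Xi_i) \leq K\Delta_n^2$, again using the essential bound on $\int(1\wedge|\taili|^p)g^{(n)}(y,d\taili)$.

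The only subtle point is ensuring uniformity of $K$ over $n$, $i$, $j$; this follows because the essential-supremum bound from Assumption \ref{Cond1}\eqref{BlGetCond} is uniform in $n$ and the decomposition \eqref{gon012Ass3} contributes only $O(1)$ corrections. No independence of increments of $\tibigj$ is required; only the compensator identity for $\mu^{(n)}$ and the elementary pathwise identification of $\Deli \tibigj$ as at most one jump on $Q_n$.
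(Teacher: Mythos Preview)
Your proof is correct and in fact slightly more economical than the paper's. Both arguments start from the same pathwise observation that on $Q_n \cap \{|\Deli X^{(n)}|>v_n\}$ one has $|\Deli \tilde X^{\prime n}|\le v_n/2 < |\Deli \tibigj|$ and $\Deli \tibigj$ equals the single large jump, but they diverge in how $\rho$ is handled. The paper applies the mean value theorem to write $\rho(\Deli X^{(n)}) = \rho(\Deli \tibigj) + \rho'(\xi)\Deli \tilde X^{\prime n}$, producing two terms (and four cross terms in \eqref{PrMomKlEq}); the second term carries a factor $|\Deli \tibigj|^{p-1}$ whose control requires the moment condition of Assumption~\ref{Cond1}\eqref{LevMeaMomCond}. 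You instead use $|\Deli X^{(n)}| \le 2|\Deli \tibigj|$ together with the pointwise bound $|\rho(\taili)|\le K(1\wedge|\taili|^p)$ to get $|\rho(\Deli X^{(n)})|\le K(1\wedge|\Deli \tibigj|^p)$ in one step, so only Assumption~\ref{Cond1}\eqref{BlGetCond} is needed.

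For the product \eqref{PrMomKlEq} the paper invokes independence of the Poisson random measure on disjoint time intervals (Theorem II.4.8 in \cite{JacShi02}) to factor $\Eb(\Xi_i\Xi_j)$. Your route---viewing $\Xi_i$ as an $\Fc_{(j-1)\Delta_n}$-measurable factor and applying the compensator identity to the $\Pc'$-measurable integrand $(\omega,s,z)\mapsto \Xi_i(\omega)\ind_{((j-1)\Delta_n,j\Delta_n]}(s)(1\wedge|z|^p)\ind_{\{|z|>u_n\}}$---achieves the same factorisation because the compensator here is deterministic. This is a legitimate alternative; the advantage of the paper's explicit independence statement is only conceptual clarity, while your argument is marginally more self-contained and would extend to compensators that are merely bounded rather than deterministic.
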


\begin{proof}
	Recall the decomposition $X^{(n)}  = \tilde X^{\prime n} + \tibigj$ and the sets 
	\[
	A_i^n	= \lbrace | \Deli \tilde X^{\prime n} | \leq v_n/2 \rbrace \cap \lbrace \Deli N^n \leq 1 \rbrace
	\]
	in \eqref{SmandlajumpD2}. According to \eqref{Qndef2} we then have
	$
	Q_n = \bigcap \limits_{i=1}^n A_i^n
	$
	and in order to show \eqref{MomKlEq} we obtain
	\begin{align}
	\label{rhoDeliMomAb}
	\Eb \big( \big| \rho(\Deli X^{(n)})\big| &\ind_{\{| \Deli X^{(n)} | > v_n \}} \ind_{Q_n} \big) \leq \Eb \big( \big| \rho(\Deli \tilde X^{\prime n} + \Deli \tibigj)\big| \ind_{\{| \Deli \tibigj | > v_n/2 \}} \ind_{Q_n} \big) \nonumber \\
	&\leq \Eb \big( \big( \big| \rho(\Deli \tibigj)\big| + \big| \rho'(\xi_i^n) \Deli \tilde X^{\prime n} \big| \big)\ind_{\{| \Deli \tibigj | > v_n/2 \}} \ind_{Q_n} \big) \nonumber \\
	&\leq K \Eb \big( \big(1 \wedge \big| \Deli \tibigj \big|^p \big) \ind_{\{| \Deli \tibigj | > u_n \}} \ind_{Q_n} + v_n \big| \Deli \tibigj \big|^{p-1} \ind_{\{| \Deli \tibigj | > u_n \}} \ind_{Q_n} \big),
	\end{align}
	for some $\xi_i^n$ between $\Deli \tibigj$ and $\Deli \tilde X^{\prime n} + \Deli \tibigj$ using the mean value theorem and the fact that $| \Deli \tilde X^{\prime n} | \leq v_n/2$ on $Q_n$. Notice furthermore that due to $| \Deli \tilde X^{\prime n} | \leq v_n/2$ on $Q_n$ the condition $| \Deli X^{(n)} | > v_n$ implies $| \Deli \tibigj | > v_n/2$ and consequently $| \Deli \tibigj | > | \Deli \tilde X^{\prime n} |$. The final inequality in \eqref{rhoDeliMomAb} follows with the assumptions on $\rho$ and the definition of $u_n = (v_n)^\ell$ with $\ell >1$ in \eqref{unseqDefEq3}, such that $u_n < v_n/2$ holds for large $n \in \N$. On $Q_n$ $\Deli \tibigj$ is either zero or equal to the only jump in $((i-1)\Delta_n,i\Delta_n]$ of absolute size larger than $u_n$. Thus, the definition of the predictable compensator of an optional $\Pc'$-$\sigma$-finite random measure (Theorem II.1.8 in \cite{JacShi02}) gives
	\begin{align}
	\label{erSumAbschae}
	\Eb \big( \big(1 \wedge \big| \Deli \tibigj \big|^p \big) \ind_{\{| \Deli \tibigj | > u_n \}} \ind_{Q_n} \big) &= \Eb \big(\big(( 1 \wedge | \taili |^p) \ind_{\{|\taili| > u_n\}} \ind_{Q_n} \ind_{\{(i-1)\Delta_n < s \leq i \Delta_n\}} \big) \star \mu^{(n)} \big) \nonumber \\
	&\leq  \Eb \big( (1 \wedge | \taili |^p) \ind_{\{(i-1)\Delta_n < s \leq i \Delta_n\}} \star \mu^{(n)} \big) \nonumber \\
	&=  n \Delta_n \int_{(i-1)/n}^{i/n} \int (1 \wedge |\taili|^p) g^{(n)}(y,d\taili) dy \leq K \Delta_n,
	\end{align}
	where the last inequality above is a consequence of Assumption \ref{Cond1}\eqref{BlGetCond} and $p>\beta$. With the same reasoning we obtain for the second summand in \eqref{rhoDeliMomAb}
	\begin{align}
	\label{zwSumAbschae}
	\Eb \big(\big| \Deli \tibigj \big|^{p-1} \ind_{\{| \Deli \tibigj | > u_n \}} \ind_{Q_n} \big) &= \Eb \big( \big( |\taili|^{p-1} \ind_{\{|\taili| > u_n\}} \ind_{Q_n}\ind_{\{(i-1)\Delta_n < s \leq i \Delta_n\}} \big) \star \mu^{(n)} \big) \nonumber \\
	&\leq \Eb \big( |\taili|^{p-1} \ind_{\{(i-1)\Delta_n < s \leq i \Delta_n\}} \star \mu^{(n)} \big) \nonumber \\
	&=  n \Delta_n \int_{(i-1)/n}^{i/n} \int |\taili|^{p-1} g^{(n)}(y,d\taili)dy \leq K \Delta_n,
	\end{align}
	using Assumption \ref{Cond1}\eqref{LevMeaMomCond} for the last estimate above. \eqref{rhoDeliMomAb}, \eqref{erSumAbschae} and \eqref{zwSumAbschae} yield \eqref{MomKlEq}. In order to prove \eqref{PrMomKlEq} we use the mean value theorem, the definition of $Q_n$ and the assumptions on $\rho$ to obtain for $i \neq j$ similar to \eqref{rhoDeliMomAb}
	\begin{align}
	\label{ZwFakkgDn}
	\Eb &\big( \big| \rho(\Deli X^{(n)})\big| \big| \rho(\Delj X^{(n)})\big| \ind_{\{| \Deli X^{(n)} | > v_n \}} \ind_{\{| \Delj X^{(n)} | > v_n \}} \ind_{Q_n} \big) \nonumber \\
	&\leq \Eb \big( \big( \big| \rho(\Deli \tibigj)\big| + \big| \rho'(\xi_i^n) \Deli \tilde X^{\prime n} \big| \big)\big( \big| \rho(\Delj \tibigj)\big| + \big| \rho'(\xi_j^n) \Delj \tilde X^{\prime n} \big| \big) \times \nonumber \\
	&\hspace{85mm} \times \ind_{\{| \Deli \tibigj | > v_n/2 \}} \ind_{\{| \Delj \tibigj | > v_n/2 \}}\ind_{Q_n} \big) \nonumber\\
	&\leq K \Eb \big( \big( 1 \wedge \big|\Deli \tibigj \big|^p \big) \big( 1 \wedge \big|\Delj \tibigj \big|^p \big) \ind_{\{| \Deli \tibigj | > u_n \}} \ind_{\{| \Delj \tibigj | > u_n \}}\ind_{Q_n} \big) \nonumber\\
	&\hspace{2cm} + K v_n \Eb \big( \big( 1 \wedge \big|\Deli \tibigj \big|^p \big) \big| \Delj \tibigj \big|^{p-1} \ind_{\{| \Deli \tibigj | > u_n \}} \ind_{\{| \Delj \tibigj | > u_n \}}\ind_{Q_n} \big) \nonumber\\
	&\hspace{2cm} + K v_n \Eb \big( \big( 1 \wedge \big|\Delj \tibigj \big|^p \big) \big| \Deli \tibigj \big|^{p-1} \ind_{\{| \Deli \tibigj | > u_n \}} \ind_{\{| \Delj \tibigj | > u_n \}}\ind_{Q_n} \big) \nonumber\\
	&\hspace{2cm} + K v_n^2 \Eb \big( \big| \Deli \tibigj \big|^{p-1}  \big| \Delj \tibigj \big|^{p-1} \ind_{\{| \Deli \tibigj | > u_n \}} \ind_{\{| \Delj \tibigj | > u_n \}}\ind_{Q_n} \big),
	\end{align}
	for some $\xi_i^n$ between $\Deli \tibigj$ and $\Deli \tilde X^{\prime n} + \Deli \tibigj$ and some $\xi_j^n$ between $\Delj \tibigj$ and $\Delj \tilde X^{\prime n} + \Delj \tibigj$. $\Deli \tibigj$ is on $Q_n$ either zero or equal to the only jump of absolute size larger than $u_n$ on the interval $((i-1)\Delta_n,i\Delta_n]$. The same is true for $\Delj \tibigj$ on $((j-1)\Delta_n,j \Delta_n]$. Therefore, the definition of the predictable compensator of an optional $\Pc'$-$\sigma$-finite random measure yields for the first resulting summand in \eqref{ZwFakkgDn}
	\begin{align}
	\label{IndepAusnu}
	\Eb \big( \big( 1 &\wedge \big|\Deli \tibigj \big|^p \big) \big( 1 \wedge \big|\Delj \tibigj \big|^p \big) \ind_{\{| \Deli \tibigj | > u_n \}} \ind_{\{| \Delj \tibigj | > u_n \}}\ind_{Q_n} \big) \nonumber \\
	&= \Eb \Big( \big( (1 \wedge |\taili|^p) \ind_{\{|\taili| > u_n\}} \ind_{\{(i-1)\Delta_n < s \leq i \Delta_n \}} \star \mu^{(n)} \big) \times \nonumber\\
	&\hspace{5cm} \times \big( (1 \wedge |\taili|^p) \ind_{\{|\taili| > u_n\}} \ind_{\{(j-1)\Delta_n < s \leq j \Delta_n \}}  \star \mu^{(n)} \big) \ind_{Q_n} \Big) \nonumber\\
	&\leq \Eb \Big( \big( (1 \wedge |\taili|^p)  \ind_{\{(i-1)\Delta_n < s \leq i \Delta_n \}} \star \mu^{(n)} \big) \big( (1 \wedge |\taili|^p) \ind_{\{(j-1)\Delta_n < s \leq j \Delta_n \}}  \star \mu^{(n)} \big) \Big)\nonumber\\
	&= \Eb \big( (1 \wedge |\taili|^p)  \ind_{\{(i-1)\Delta_n < s \leq i \Delta_n \}} \star \mu^{(n)} \big) \Eb \big( (1 \wedge |\taili|^p) \ind_{\{(j-1)\Delta_n < s \leq j \Delta_n \}}  \star \mu^{(n)} \big) \leq K \Delta_n^2.
	\end{align}
	The final equality above follows using the fact that $\mu^{(n)}$ is a Poisson random measure and thus both involved factors are independent (see Theorem II.4.8 in \cite{JacShi02}). The last estimate in \eqref{IndepAusnu} is a consequence of \eqref{erSumAbschae}. The remaining summands in \eqref{ZwFakkgDn} can be treated similarly by exploiting the properties of a Poisson random measure as well as \eqref{erSumAbschae} and \eqref{zwSumAbschae}.
\end{proof}

\section{Results on the limiting process from Theorem \ref{ConvThm}}
\label{appE}
\def\theequation{D.\arabic{equation}}
\setcounter{equation}{0}

\subsection{Useful properties of the Gaussian limit and its covariance semimetric}
\label{SecappE1}

In the following we collect several lemmas which are useful to obtain bounds for the expectation of sup-functionals of the process $\Gb_f$. In particular, we apply them in the proof of Proposition \ref{GalKonvLem} in order to show asymptotical uniform $d$-equicontinuity  in probability of a sequence of processes $\Gb_{f_n}$ for some suitable semimetric $d$.

\begin{lemma}
	\label{8MomCalLem}
	Grant Assumption \ref{Cond1}, let $f: \R \to \R$ be Borel measurable with $|f(\taili)| \leq K(1\wedge |\taili|^p)$ for all $\taili \in \R$ and some $K>0$. Furthermore, let $\Gb_f$ be the tight centered Gaussian process in $\linner$ defined in Theorem \ref{ConvThm}. Then for $(\gseqi_1,\dfi_1),(\gseqi_2,\dfi_2) \in \netir$ the $L^8$-norm satisfies
	\begin{align*}
	\|\Gb_{f}(\gseqi_1,\dfi_1) - \Gb_{f}(\gseqi_2,\dfi_2) \|_8 = 105^{\frac 18} d_{f}((\gseqi_1,\dfi_1);(\gseqi_2,\dfi_2)),
	\end{align*}
	with $d_f$ the semimetric defined in \eqref{drhodef}.
\end{lemma}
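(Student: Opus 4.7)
The plan is to reduce the claim to an elementary variance computation and the well-known moment formula for a centered normal random variable. Since $\Gb_f$ is Gaussian, the difference $\Gb_f(\gseqi_1,\dfi_1) - \Gb_f(\gseqi_2,\dfi_2)$ is itself a centered normal random variable, and its $L^8$-norm is determined by its variance alone.

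First I would compute
\[
\sigma^2 := \Var\bigl(\Gb_f(\gseqi_1,\dfi_1) - \Gb_f(\gseqi_2,\dfi_2)\bigr) = H_f((\gseqi_1,\dfi_1);(\gseqi_1,\dfi_1)) - 2 H_f((\gseqi_1,\dfi_1);(\gseqi_2,\dfi_2)) + H_f((\gseqi_2,\dfi_2);(\gseqi_2,\dfi_2))
\]
using the covariance function $H_f$ from \eqref{Hrhodef}. By symmetry of both sides of the claimed identity in the pair $((\gseqi_1,\dfi_1),(\gseqi_2,\dfi_2))$ I may assume $\gseqi_1 \le \gseqi_2$. Splitting the outer integration in the two diagonal terms at $\gseqi_1$ and using $\int_{-\infty}^{\dfi_1} + \int_{-\infty}^{\dfi_2} - 2 \int_{-\infty}^{\dfi_1 \wedge \dfi_2} = \int_{\dfi_1 \wedge \dfi_2}^{\dfi_1 \vee \dfi_2}$ inside the $\taili$-integral on $[0,\gseqi_1]$, the variance collapses to
\[
\sigma^2 = \int_0^{\gseqi_1} \int_{\dfi_1 \wedge \dfi_2}^{\dfi_1 \vee \dfi_2} f^2(\taili)\, g_0(y,d\taili)\, dy + \int_{\gseqi_1}^{\gseqi_2} \int_{-\infty}^{\dfi_2} f^2(\taili)\, g_0(y,d\taili)\, dy,
\]
which is exactly $d_f((\gseqi_1,\dfi_1);(\gseqi_2,\dfi_2))^2$ by \eqref{drhodef}.

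It then remains to invoke the standard Gaussian moment identity: for $Z \sim \mathcal{N}(0,\sigma^2)$ one has $\Eb Z^8 = (8-1)!!\,\sigma^8 = 7!!\,\sigma^8 = 105\,\sigma^8$, so that $\|Z\|_8 = 105^{1/8}\sigma$. Applied to $Z = \Gb_f(\gseqi_1,\dfi_1) - \Gb_f(\gseqi_2,\dfi_2)$ this yields the claim. There is no genuine obstacle here; the only routine bookkeeping is the case distinction $\gseqi_1 \le \gseqi_2$ versus $\gseqi_2 \le \gseqi_1$ and checking that the algebraic simplification of $\sigma^2$ matches \eqref{drhodef} in either ordering, which follows from the symmetry of the covariance function and of the claimed equality.
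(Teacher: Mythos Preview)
Your proposal is correct and follows essentially the same approach as the paper's own proof: compute the variance of the Gaussian increment via the covariance function $H_f$, verify it equals $d_f^2$ by the case distinction on the ordering of $\gseqi_1,\gseqi_2$ (and the $\dfi$-integral identity you wrote), and then apply the Gaussian moment formula $\Eb Z^8 = 7!!\,\sigma^8 = 105\,\sigma^8$. If anything, your write-up is slightly more explicit than the paper's, which simply cites ``the properties of the normal distribution'' at the last step.
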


\begin{proof}
	For $(\gseqi_1,\dfi_1),(\gseqi_2,\dfi_2) \in \netir$ with $\gseqi_1 \leq \gseqi_2$ we have
	\begin{align*}
	\Eb \big( &\Gb_{f}(\gseqi_1,\dfi_1) - \Gb_{f}(\gseqi_2,\dfi_2) \big)^2  \\
	&= H_{f}((\gseqi_1,\dfi_1);(\gseqi_1,\dfi_1)) - 2 H_{f}((\gseqi_1,\dfi_1);(\gseqi_2,\dfi_2)) + H_{f}((\gseqi_2,\dfi_2);(\gseqi_2,\dfi_2)) \\
	&= \int_0^{\gseqi_1} \int_{-\infty}^{\dfi_1} f^2(\taili) g_0(y,d\taili) dy - 2 \int_0^{\gseqi_1} \int_{-\infty}^{\mindfi} f^2(\taili) g_0(y,d\taili) dy + \int_0^{\gseqi_2} \int_{-\infty}^{\dfi_2} f^2(\taili) g_0(y,d\taili) dy \\
	&= d^2_{f}((\gseqi_1,\dfi_1);(\gseqi_2,\dfi_2)),
	\end{align*}
	where the last equation follows by distinguishing the cases $\dfi_1 \leq \dfi_2$ and $\dfi_2 \leq \dfi_1$. Therefore, using the properties of the normal distribution we obtain for the $L^8$-norm
	\begin{align*}
	\|\Gb_{f}(\gseqi_1,\dfi_1) - \Gb_{f}(\gseqi_2,\dfi_2) \|_8 = 105^{\frac 18} d_{f}((\gseqi_1,\dfi_1);(\gseqi_2,\dfi_2)),
	\end{align*}
	for arbitrary $(\gseqi_1,\dfi_1),(\gseqi_2,\dfi_2) \in \netir$.
\end{proof}

\begin{lemma}
	\label{Gfseparabel}
	Grant Assumption \ref{Cond1} and let $f: \R \to \R$ be Borel measurable with $|f(\taili)| \leq K(1\wedge |\taili|^p)$ for all $\taili \in \R$ and some $K>0$. Then the tight centered Gaussian process $\Gb_f$ defined in Theorem \ref{ConvThm} is separable with respect to the semimetric $d_f$ in the sense of Theorem 2.2.4 in \cite{VanWel96}. More precisely, for every $\delta >0$
	\begin{equation*}
	\sup \limits_{d_{f}((\gseqi_1,\dfi_1);(\gseqi_2,\dfi_2)) < \delta} \big| \Gb_{f}(\gseqi_1,\dfi_1) - \Gb_{f}(\gseqi_2,\dfi_2) \big| =
	\sup \limits_{\stackrel{d_{f}((\gseqi_1,\dfi_1);(\gseqi_2,\dfi_2)) < \delta}{(\gseqi_1,\dfi_1),(\gseqi_2,\dfi_2) \in (\netir) \cap \Q^2}} \big| \Gb_{f}(\gseqi_1,\dfi_1) - \Gb_{f}(\gseqi_2,\dfi_2) \big|
	\end{equation*}
	holds almost surely.
\end{lemma}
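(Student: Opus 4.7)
\textbf{Proof proposal for Lemma \ref{Gfseparabel}.} The plan is to combine three ingredients: (i) the almost sure uniform $d_f$-continuity of the sample paths of $\Gb_f$, which is part of the conclusion of Theorem \ref{ConvThm}; (ii) total boundedness of $(\netir, d_f)$, which follows from the packing number estimate in Lemma \ref{PackNumAb} used in the proof of Proposition \ref{GalKonvLem}; and (iii) $d_f$-density of the countable set $S := (\netir) \cap \Q^2$ in $\netir$. Granting these, the desired equality is a purely topological consequence: the inequality ``$\geq$'' is trivial, and ``$\leq$'' follows because for any pair $(\gseqi_1,\dfi_1),(\gseqi_2,\dfi_2)$ with strict $d_f$-distance $<\delta$ and any $\eps>0$, uniform $d_f$-continuity together with $d_f$-density of $S$ allow us to choose $(\gseqi_i',\dfi_i')\in S$ so close to $(\gseqi_i,\dfi_i)$ in $d_f$ that, by the triangle inequality, $d_f((\gseqi_1',\dfi_1');(\gseqi_2',\dfi_2'))<\delta$, and simultaneously $|\Gb_f(\gseqi_i,\dfi_i)-\Gb_f(\gseqi_i',\dfi_i')|<\eps$ for $i=1,2$.

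The main (and only nontrivial) step is therefore (iii): verifying that $S$ is $d_f$-dense in $\netir$. I would first observe that by Assumption \ref{Cond1}\eqref{BlGetCond} applied to the exponent $2p$ (noting that $2p>p>\beta$) together with $|f(\taili)|\le K(1\wedge|\taili|^p)$, the map $y\mapsto \int f^2(\taili)\,g_0(y,d\taili)$ is bounded $\lambda_1$-essentially on $[0,1]$. Given $(\gseqi,\dfi)\in\netir$, choose rational sequences $\gseqi_n\to\gseqi$ in $[0,1]$ and $\dfi_n\to\dfi$ in $\R$. Without loss of generality assume $\dfi_n\le\dfi$ for all $n$. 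Then
\[
d_f((\gseqi_n,\dfi_n);(\gseqi,\dfi))^2=\int_0^{\gseqi_n}\!\!\int_{\dfi_n}^{\dfi}f^2(\taili)\,g_0(y,d\taili)\,dy+\int_{\gseqi_n}^{\gseqi}\!\!\int_{-\infty}^{\dfi}f^2(\taili)\,g_0(y,d\taili)\,dy.
\]
The second summand vanishes as $n\to\infty$ by absolute continuity of the Lebesgue integral, using boundedness of $y\mapsto\int f^2(\taili)g_0(y,d\taili)$. For the first summand, since by Definition \ref{rhoandgass2} each $g_0(y,\cdot)$ has a Lebesgue density and therefore no atoms, the inner integral $\int f^2(\taili)\ind_{(\dfi_n,\dfi]}(\taili)g_0(y,d\taili)$ tends to $0$ pointwise in $y$; dominated convergence (with the same bound as above) then yields the vanishing of the $dy$-integral. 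Hence $d_f((\gseqi_n,\dfi_n);(\gseqi,\dfi))\to 0$, which proves the density.

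The hard part is that one has to make (i) work simultaneously at $(\gseqi_1,\dfi_1)$ and $(\gseqi_2,\dfi_2)$ while keeping the $d_f$-distance strictly below $\delta$; this is handled by exploiting the strictness of the inequality $d_f(\cdot,\cdot)<\delta$, which provides a positive ``buffer'' $\eta:=\delta-d_f((\gseqi_1,\dfi_1);(\gseqi_2,\dfi_2))>0$ within which the rational approximants may vary without crossing the threshold. Fix $\omega$ in the almost sure event on which the sample path $\Gb_f(\omega,\cdot)$ is uniformly $d_f$-continuous (provided by Theorem \ref{ConvThm}); picking rational approximants $(\gseqi_i',\dfi_i')$ with $d_f$-distance $<\eta/2$ from $(\gseqi_i,\dfi_i)$ and small enough that the corresponding $\Gb_f$-increments are bounded by $\eps$, we obtain $|\Gb_f(\gseqi_1,\dfi_1)-\Gb_f(\gseqi_2,\dfi_2)|\le|\Gb_f(\gseqi_1',\dfi_1')-\Gb_f(\gseqi_2',\dfi_2')|+2\eps$. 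Letting $\eps\downarrow 0$ and taking the supremum over admissible pairs concludes the proof.
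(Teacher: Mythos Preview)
Your approach is essentially the paper's: show that the countable set $S=(\netir)\cap\Q^2$ is $d_f$-dense and combine this with the almost sure uniform $d_f$-continuity of the sample paths from Theorem~\ref{ConvThm}. Ingredient (ii), total boundedness, is true but plays no role here and can be dropped.

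There is, however, a small but genuine gap in your density argument (iii). The lemma is stated under Assumption~\ref{Cond1}, which does \emph{not} require $g_0(y,\cdot)$ to admit a Lebesgue density; that is a feature of Definition~\ref{rhoandgass2} and hence only available under the stronger Assumption~\ref{EasierCond}. With your choice $\dfi_n\uparrow\dfi$ the inner integral $\int f^2(\taili)\ind_{(\dfi_n,\dfi]}(\taili)\,g_0(y,d\taili)$ converges to $f^2(\dfi)\,g_0(y,\{\dfi\})$, which need not vanish without an atom-free assumption. The paper avoids this by approaching in $\dfi$ from the \emph{right}: the map $\dfi\mapsto\int_0^\gseqi\int_{-\infty}^\dfi f^2(\taili)\,g_0(y,d\taili)\,dy$ is automatically right-continuous as a distribution function of a finite measure, so taking rational $\dfi_n\downarrow\dfi$ (together with rational $\gseqi_n\to\gseqi$, using continuity in $\gseqi$) gives $d_f((\gseqi_n,\dfi_n);(\gseqi,\dfi))\to0$ under Assumption~\ref{Cond1} alone. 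With that adjustment your argument goes through and coincides with the paper's.
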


\begin{proof}
	By the assumptions on the involved quantities for each $\dfi \in \R$ the function $[0,1] \ni \gseqi \mapsto \int_0^\gseqi \int_{-\infty}^\dfi f^2(\taili) g_0(y,d\taili) dy$ is continuous and for each $\gseqi \in [0,1]$ the function $\R \ni \dfi \mapsto \int_0^\gseqi \int_{-\infty}^\dfi f^2(\taili) g_0(y,d\taili) dy$ is right-continuous. As a consequence, we can find for every  $\eps >0$ and $(\gseqi_1,\dfi_1) \in \netir$ a $(\gseqi_2,\dfi_2) \in (\netir) \cap \Q^2$ with $d_{f}((\gseqi_1,\dfi_1);(\gseqi_2,\dfi_2)) < \eps$. Thus, for every $\delta >0$
	\begin{equation*}
	\sup \limits_{d_{f}((\gseqi_1,\dfi_1);(\gseqi_2,\dfi_2)) < \delta} \big| \Gb_{f}(\gseqi_1,\dfi_1) - \Gb_{f}(\gseqi_2,\dfi_2) \big|=
	\sup \limits_{\stackrel{d_{f}((\gseqi_1,\dfi_1);(\gseqi_2,\dfi_2)) < \delta}{(\gseqi_1,\dfi_1),(\gseqi_2,\dfi_2) \in (\netir) \cap \Q^2}} \big| \Gb_{f}(\gseqi_1,\dfi_1) - \Gb_{f}(\gseqi_2,\dfi_2) \big|
	\end{equation*}
	holds almost surely, because the sample paths of $\Gb_{f}$ are almost surely uniformly $d_{f}$-continuous.
\end{proof}

\begin{lemma}
	\label{PackNumAb}
	Grant Assumption \ref{Cond1} and let $f: \R \to \R$ be Borel measurable with $|f(\taili)| \leq C(1\wedge |\taili|^p)$ for all $\taili \in \R$ and some $C>0$. Then for $d_f$ the semimetric defined in \eqref{drhodef} the semimetric space $([0,1] \times \R, d_f)$ is totally bounded and there exists a $K>0$ which depends only on $C$ such that for every $\eps >0$
	\[
	D(\eps,d_f) \leq K/\eps^4,
	\]
	where $D(\eps,d_f)$ denotes the packing number  of $[0,1] \times \R$ with respect to $d_f$ at distance $\eps >0$.
\end{lemma}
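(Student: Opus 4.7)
\textbf{Proof plan for Lemma \ref{PackNumAb}.} The central observation is that $d_f^2$ can be written as the symmetric-difference distance under a finite measure on $[0,1]\times\R$. Specifically, define the finite non-negative measure
\[
\mu(dy,d\taili) := f^2(\taili)\, g_0(y,d\taili)\, dy.
\]
A short case distinction (on whether $\dfi_1\le\dfi_2$ or $\dfi_1>\dfi_2$, assuming w.l.o.g.\ $\gseqi_1\le\gseqi_2$) shows that, for the ``lower-left rectangles'' $A_i:=[0,\gseqi_i]\times(-\infty,\dfi_i]$, one has the identity
\[
d_f^2\bigl((\gseqi_1,\dfi_1);(\gseqi_2,\dfi_2)\bigr)=\mu(A_1\triangle A_2).
\]
First I would verify finiteness of $\mu$: by hypothesis $f^2(\taili)\le C^2(1\wedge|\taili|^{2p})$, and the constraint $p>\beta+(\beta\vee1)\ge1$ ensures $2p>2\ge(\beta+\delta)\wedge 2$ for $\delta>0$ small, so $1\wedge|\taili|^{2p}\le 1\wedge|\taili|^{(\beta+\delta)\wedge 2}$. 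Assumption \ref{Cond1}\eqref{BlGetCond} then yields $V:=\mu([0,1]\times\R)\le C^2K(\delta)<\infty$, where $V$ depends only on $C$ (and on the constants in Assumption \ref{Cond1}).

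Next I would construct a finite $(\eps/2)$-net by partitioning each coordinate according to the marginals of $\mu$. Set $G_1(\gseqi):=\mu([0,\gseqi]\times\R)$ and $H(\dfi):=\mu([0,1]\times(-\infty,\dfi])$, both non-decreasing with maximum value at most $V$. Since the density $y\mapsto\int f^2(\taili) g_0(y,d\taili)$ is Lebesgue essentially bounded by $V$, the function $G_1$ is Lipschitz, so I can choose $0=\gseqi_0<\gseqi_1<\cdots<\gseqi_{N_1}=1$ with $G_1(\gseqi_j)-G_1(\gseqi_{j-1})\le\eps^2/8$ and $N_1\le\lceil 8V/\eps^2\rceil$. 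For the $\dfi$-axis, since $H$ has at most $\lceil 8V/\eps^2\rceil$ jumps of size exceeding $\eps^2/8$ (their sizes sum to at most $V$), a standard quantile-type procedure that explicitly includes each such jump point yields $-\infty=\dfi_0<\dfi_1<\cdots<\dfi_{N_2}=+\infty$ satisfying $H(\dfi_k^-)-H(\dfi_{k-1})\le\eps^2/8$ with $N_2\le K/\eps^2$.

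Now I would show that the finite family $\{(\gseqi_{j-1},\dfi_{k-1}):1\le j\le N_1,\ 1\le k\le N_2\}$ is an $(\eps/2)$-net. Given $(\gseqi,\dfi)\in\netir$, pick $j,k$ with $\gseqi_{j-1}\le\gseqi<\gseqi_j$ and $\dfi_{k-1}\le\dfi<\dfi_k$. The symmetric-difference identity together with monotonicity of $\mu$ gives
\[
d_f^2\bigl((\gseqi,\dfi);(\gseqi_{j-1},\dfi_{k-1})\bigr)\le\mu\bigl([0,1]\times(\dfi_{k-1},\dfi_k^-)\bigr)+\mu\bigl((\gseqi_{j-1},\gseqi_j]\times\R\bigr)\le\tfrac{\eps^2}{8}+\tfrac{\eps^2}{8}=\tfrac{\eps^2}{4}.
\]
Consequently the covering number satisfies $N(\eps/2,d_f)\le N_1N_2\le K/\eps^4$, and since $D(\eps,d_f)\le N(\eps/2,d_f)$ is a general fact for semimetric spaces, we conclude $D(\eps,d_f)\le K/\eps^4$. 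Total boundedness of $(\netir,d_f)$ follows from the existence of such a finite net for every $\eps>0$.

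The main technical obstacle is that $H$ may fail to be continuous if the kernel $g_0(y,\cdot)$ carries atoms, which in principle prevents a grid with arbitrarily small increments. The fix is already built into the plan: at most $O(1/\eps^2)$ jumps of $H$ exceed $\eps^2/8$ because they contribute to the total mass $V$, so inserting each such jump point explicitly into the grid still leaves an $O(1/\eps^2)$-sized partition on the $\dfi$-axis and preserves the desired $O(1/\eps^4)$ bound.
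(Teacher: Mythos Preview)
Your proof is correct and follows essentially the same route as the paper: both arguments bound the covering number $N(\eps/2,d_f)$ by partitioning each coordinate into $O(1/\eps^2)$ pieces according to the marginals of the finite measure $f^2(\taili)g_0(y,d\taili)dy$ (the paper uses the dominating measure with $C^2(1\wedge|\taili|^{2p})$ instead of $f^2$), and then invoke $D(\eps,d_f)\le N(\eps/2,d_f)$. Your symmetric-difference identity $d_f^2=\mu(A_1\triangle A_2)$ is a clean way to package the same computation, and your explicit treatment of atoms in the $\dfi$-marginal adds rigor that the paper's proof leaves implicit (though under the paper's working Assumption~\ref{EasierCond} the kernels have Lebesgue densities, so no atoms arise); just be careful that your net as written includes the improper point $t_0=-\infty$ for $k=1$, which is easily fixed by replacing it with any finite $t^*\le t_1$.
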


\begin{proof}
	By the well-known relation $D(\eps, d_{f}) \leq N(\eps/2,d_{f})$ of the packing number and the covering number $N(\eps/2,d_{f})$ of $([0,1] \times \R, d_f)$ at radius $\eps/2$ it suffices to show that there exists a $K>0$ with $N(\eps/2,d_{f}) \leq K/\eps^4$ for every $\eps >0$. 
	
	The measure $\Bb \ni A \mapsto C^2 \int_0^1 \int_A (1 \wedge |\taili|^{2p}) g_0(y,d\taili) dy$ is finite. Therefore, for each $\eps > 0$ we can find a finite partition $\{\dfi_0= - \infty < \dfi_1 < \ldots < \dfi_{m} < \dfi_{m+1} = \infty\}$ of $\overline \R$ with $m \leq K/\eps^2$ for some $K>0$ which depends only on $C$ such that $C^2\int_0^1 \int_{\dfi_j}^{\dfi_{j+1}} (1 \wedge |\taili|^{2p}) g_0(y,d \taili) dy < \eps^2/64$ for each $j = 0, \ldots,m$. For the same reason there is also a finite partition $\{\gseqi_0 =0 < \gseqi_1 < \ldots < \gseqi_\ell < \gseqi_{\ell+1} = 1\}$ of $[0,1]$ with $\ell \leq K/\eps^2$ and $C^2 \int_{\gseqi_i}^{\gseqi_{i+1}} \int (1 \wedge |\taili|^{2p}) g_0(y,d\taili) dy < \eps^2/64$ for all $i=0, \ldots,\ell$. Furthermore, consider the collection $M:=\{(\gseqi_i,\dfi_j) \mid i = 1, \ldots,\ell; j=1, \ldots,m\}$ which consists of at most $K/\eps^4$ points. Then for an arbitrary $(\gseqi,\dfi) \in \netir$ let $i_0 \in \text{argmin}\{|\gseqi_i - \gseqi| \mid i = 1, \ldots, \ell\}$, $j_0 \in \text{argmin}\{|\dfi_j - \dfi| \mid j = 1, \ldots, m\}$ and choose $i_1 \in \{0, \ldots, \ell +1\}$, $j_1 \in \{0, \ldots,m+1\}$ such that $\gseqi \in [\gseqi_{i_0} \wedge \gseqi_{i_1}, \gseqi_{i_0} \vee \gseqi_{i_1}]$ as well as $\dfi \in [\dfi_{j_0} \wedge \dfi_{j_1}, \dfi_{j_0} \vee \dfi_{j_1}]$ to obtain
	\begin{align*}
	d_{f}((\gseqi,\dfi);(\gseqi_{i_0},\dfi_{j_0})) &\leq  2  \max \Big\{ \Big( C^2 \int_0^1 \int_{\dfi_{j_0} \wedge \dfi_{j_1}}^{\dfi_{j_0} \vee \dfi_{j_1}} (1 \wedge |\taili|^{2p}) g_0(y,d\taili) dy \Big)^{\frac 12} , \nonumber\\ 
	&\hspace{46mm} \Big( C^2 \int_{\gseqi_{i_0} \wedge \gseqi_{i_1}}^{\gseqi_{i_0} \vee \gseqi_{i_1}} \int (1 \wedge |\taili|^{2p}) g_0(y,d\taili) dy \Big)^{\frac 12} \Big\} \nonumber \\
	&\leq \eps/4 < \eps/2.
	\end{align*}
	Thus, we have $N(\eps/2,d_{f}) \leq K/ \eps^4$, because by the inequality above the $d_{f}$-balls with radius $\eps/2$ around the points of $M$ cover $\netir$.
\end{proof}

\subsection{An auxiliary result on the supremum of the Gaussian limit}

A further application of the lemmas in Section \ref{SecappE1} is the proposition below which is necessary to prove Theorem \ref{CondConvThm}.

\begin{proposition}
	\label{Gbrhoapcopr}
	Grant Assumption \ref{Cond1} and for $\alpha >0$ let $\rho_\alpha^\circ$ be the function defined prior to \eqref{ImpQuantDef}. Then we have
	\begin{align*}
	\lim_{\alpha \to 0} \Eb \Big( \sup \limits_{(\gseqi,\dfi) \in \netir} |\Gb_{\rho_\alpha^\circ}(\gseqi,\dfi)| \Big) = 0.
	\end{align*}
\end{proposition}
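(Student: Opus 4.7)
The plan is to bound the expected supremum by the $L^8$-norm of the supremum and then exploit the uniform chaining bound already derived inside the proof of Proposition \ref{GalKonvLem}. The first observation is that, since
\[
H_{\rho_\alpha^\circ}((0,0);(0,0)) = \int_0^0 \int_{-\infty}^0 (\rho_\alpha^\circ)^2(\taili) g_0(y,d\taili) dy = 0,
\]
the Gaussian variable $\Gb_{\rho_\alpha^\circ}(0,0)$ has variance zero, so $\Gb_{\rho_\alpha^\circ}(0,0)=0$ almost surely. Consequently, writing $\delta_\alpha$ for the $d_{\rho_\alpha^\circ}$-diameter of $\netir$, the sample paths (almost surely uniformly $d_{\rho_\alpha^\circ}$-continuous by Theorem \ref{ConvThm}) satisfy
\[
\sup_{(\gseqi,\dfi)\in\netir} |\Gb_{\rho_\alpha^\circ}(\gseqi,\dfi)| = \sup_{(\gseqi,\dfi)\in\netir} |\Gb_{\rho_\alpha^\circ}(\gseqi,\dfi) - \Gb_{\rho_\alpha^\circ}(0,0)| \leq \sup_{d_{\rho_\alpha^\circ}((\gseqi_1,\dfi_1);(\gseqi_2,\dfi_2)) \leq \delta_\alpha} |\Gb_{\rho_\alpha^\circ}(\gseqi_1,\dfi_1) - \Gb_{\rho_\alpha^\circ}(\gseqi_2,\dfi_2)|.
\]

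Next I would invoke inequality \eqref{ChainaIntEq} from the proof of Proposition \ref{GalKonvLem} which, since $|\rho_\alpha^\circ(\taili)| \leq |\rho(\taili)| \leq K(1 \wedge |\taili|^p)$ uniformly in $\alpha$, applies to the family $\{\Gb_{\rho_\alpha^\circ}\}_{\alpha>0}$ with a constant $K$ independent of $\alpha$ (via Lemma \ref{PackNumAb} and Lemma \ref{Gfseparabel}, whose bounds are uniform for any function dominated by $K(1 \wedge |\taili|^p)$). Choosing $\delta = \delta_\alpha$ and $\zeta = \delta_\alpha^{2/3}$ in \eqref{ChainaIntEq} yields
\[
\Big\| \sup_{d_{\rho_\alpha^\circ} \leq \delta_\alpha} |\Gb_{\rho_\alpha^\circ}(\gseqi_1,\dfi_1) - \Gb_{\rho_\alpha^\circ}(\gseqi_2,\dfi_2)| \Big\|_8 \leq K \big(\delta_\alpha^{1/3} + \delta_\alpha^{1/3}\big) = 2K\delta_\alpha^{1/3},
\]
so that $\Eb \sup_{(\gseqi,\dfi)\in\netir} |\Gb_{\rho_\alpha^\circ}(\gseqi,\dfi)| \leq 2K\delta_\alpha^{1/3}$. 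The claim therefore reduces to proving that $\delta_\alpha \to 0$ as $\alpha \to 0$.

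The main (and essentially the only nontrivial) step is this diameter estimate. Since $\rho_\alpha^\circ$ is supported in $\{|\taili| \leq \alpha\}$ with $|\rho_\alpha^\circ(\taili)|^2 \leq K|\taili|^{2p}\ind_{\{|\taili| \leq \alpha\}}$, I would bound
\[
\delta_\alpha^2 \leq 2 \int_0^1 \int (\rho_\alpha^\circ)^2(\taili) g_0(y,d\taili) dy \leq K \int_0^1 \int |\taili|^{2p} \ind_{\{|\taili| \leq \alpha\}} g_0(y,d\taili) dy.
\]
For $\alpha \leq 1$ and any small $\delta'>0$ one has $|\taili|^{2p} \ind_{\{|\taili| \leq \alpha\}} \leq \alpha^{2p - \beta - \delta'} (1 \wedge |\taili|^{\beta + \delta'})$, so Assumption \ref{Cond1}\eqref{BlGetCond} gives $\delta_\alpha^2 \leq K \alpha^{2p - \beta - \delta'}$. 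Because $p > \beta$ by Assumption \ref{Cond1}\eqref{RhoCond}, one can choose $\delta'>0$ with $2p - \beta - \delta' > 0$, which yields $\delta_\alpha \to 0$ and therefore the desired conclusion. The only subtle point is making this diameter bound visibly uniform in $y$; this is exactly what the essential-supremum form of Assumption \ref{Cond1}\eqref{BlGetCond} delivers.
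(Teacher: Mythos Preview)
Your argument is correct and follows essentially the same route as the paper's proof: anchor at $\Gb_{\rho_\alpha^\circ}(0,0)=0$, control the supremum by a chaining bound with the packing-number estimate of Lemma~\ref{PackNumAb} (uniform in $\alpha$), and let the $d_{\rho_\alpha^\circ}$-diameter tend to zero. The only cosmetic differences are that the paper invokes Corollary~2.2.5 in \cite{VanWel96} directly (obtaining $K\bar d(\alpha)^{1/2}$ after integrating $\eps^{-1/2}$) rather than the two-parameter inequality \eqref{ChainaIntEq}, and it uses dominated convergence instead of your explicit rate $\delta_\alpha^2 \leq K\alpha^{2p-\beta-\delta'}$; both variants are equivalent for the present purpose.
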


\begin{proof}
	We want to use Corollary 2.2.5 in \cite{VanWel96} for the convex, non-decreasing, non-zero function $\varphi(x)= x^8$ which clearly satisfies $\varphi(0)=0$ and $\limsup_{x,y \rightarrow \infty}$ $ \varphi(x)\varphi(y)/$ $\varphi(cxy) < \infty$ for some constant $c >0$. Due to Lemma \ref{Gfseparabel} the process $\Gb_{\rho_\alpha^\circ}$ is separable in the sense of this corollary. Furthermore, Lemma \ref{8MomCalLem} shows
	\begin{align*}
	\|\Gb_{\rho_\alpha^\circ}(\gseqi_1,\dfi_1) - \Gb_{\rho_\alpha^\circ}(\gseqi_2,\dfi_2) \|_8 = 105^{\frac 18} d_{\rho_\alpha^\circ}((\gseqi_1,\dfi_1);(\gseqi_2,\dfi_2)),
	\end{align*}
	for all $(\gseqi_1,\dfi_1),(\gseqi_2,\dfi_2) \in \netir$, where 
	\begin{align*}
	d_{\rho_\alpha^\circ}((\gseqi_1,\dfi_1);(\gseqi_2,\dfi_2)) = \Big\{ \int_0^{\gseqi_1} \int_{\mindfi}^{\maxdfi}(\rho_\alpha^\circ(\taili))^2 g_0(y,d\taili) dy + \int_{\gseqi_1}^{\gseqi_2} \int_{-\infty}^{\dfi_2} (\rho_\alpha^\circ(\taili))^2 g_0(y,d\taili) dy \Big\}^{1/2}, \quad (\gseqi_1 \le \gseqi_2)
	\end{align*}
	is the semimetric for which the sample paths of $\Gb_{\rho_\alpha^\circ}$ are almost surely uniformly continuous. Thus, according to Corollary 2.2.5 in \cite{VanWel96} there exists a constant $K >0$ which does not depend on $\rho$ or $\alpha$ such that 
	\begin{align}
	\label{ChaiSupAb}
	\Big\| \sup\limits_{(\gseqi_1,\dfi_1),(\gseqi_2,\dfi_2) \in \netir} |\Gb_{\rho_\alpha^\circ}(\gseqi_1,\dfi_1) - \Gb_{\rho_\alpha^\circ}(\gseqi_2,\dfi_2) \Big\|_8 \leq K \int_0^{\bar d(\alpha)} D(\eps,d_{\rho_\alpha^\circ})^{\frac 18} d\eps,
	\end{align}
	where $D(\eps,d_{\rho_\alpha^\circ})$ denotes the packing number of $\netir$ at distance $\eps$ with respect to the semimetric $d_{\rho_\alpha^\circ}$ and where
	\begin{align}
	\label{diameter}
	\bar d(\alpha) &= \text{diam}(\netir;d_{\rho_\alpha^\circ}) = \sup\limits_{(\gseqi_1,\dfi_1),(\gseqi_2,\dfi_2) \in \netir} d_{\rho_\alpha^\circ}((\gseqi_1,\dfi_1);(\gseqi_2,\dfi_2)) \nonumber \\
	&\leq \Big\{ 2 \int_0^1 \int (\rho_\alpha^\circ(\taili))^2g_0(y,d\taili) dy \Big\}^{\frac 12} \stackrel{\alpha \to 0}\longrightarrow 0
	\end{align}
	is the diameter of $\netir$ with respect to $d_{\rho_\alpha^\circ}$. The convergence in the display above holds due to Lebesgue's dominated convergence theorem by the assumptions on $\rho_\alpha^\circ$ and on $g_0$. Moreover, Lemma \ref{PackNumAb} gives a constant $K>0$ which is independent of $\alpha$ such that $D(\eps,d_{\rho_\alpha^\circ}) \leq K/\eps^4$. Thus, with \eqref{ChaiSupAb} and \eqref{diameter} we obtain the desired result:
	\begin{align*}
	\Eb \Big( \sup \limits_{(\gseqi,\dfi) \in \netir} |\Gb_{\rho_\alpha^\circ}(\gseqi,\dfi)| \Big) &\leq \| \Gb_{\rho_\alpha^\circ}(0,0) \|_2 +  \Big\| \sup\limits_{(\gseqi_1,\dfi_1),(\gseqi_2,\dfi_2) \in \netir} |\Gb_{\rho_\alpha^\circ}(\gseqi_1,\dfi_1) - \Gb_{\rho_\alpha^\circ}(\gseqi_2,\dfi_2) | \Big\|_8 \\
	&\leq K \int_0^{\bar d(\alpha)} \eps^{-\frac 12} d\eps = K \bar d(\alpha)^{\frac 12} \stackrel{\alpha \to 0}\longrightarrow 0.
	\end{align*}
\end{proof}

\section{Further auxiliary Results}
\label{appF}
\def\theequation{E.\arabic{equation}}
\setcounter{equation}{0}

The following lemma shows that two bootstrapped random elements with values in some metric space $(\Db,d)$ which differ only by a term of order $o_\Prob(1)$ converge simultaneously weakly conditional on the data in probability. The following results can be shown by similar reasonings as given in the proof of     Lemma A.1 in  \cite{Buc11} and  Proposition A.2 in \cite{BueHofVetDet15}.

\begin{lemma}
	\label{oP1glzcwecole}
	Let $\hat G_n = \hat G_n(X_1, \ldots, X_n, \xi_1, \ldots, \xi_n)$ and $\hat H_n = \hat H_n(X_1, \ldots, X_n, \xi_1,$ $ \ldots, \xi_n)$ be two sequences of bootstrapped elements with values in some metric space $(\Db,d)$ such that $d(\hat G_n, \hat H_n) \pto 0$. Then for a tight Borel measurable process $G$ in $\Db$, we have $\hat G_n \weakP G$ if and only if $\hat H_n \weakP G$.
\end{lemma}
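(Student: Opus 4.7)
The plan is to exploit the symmetry of the statement and prove only one implication: assume $\hat G_n \weakP G$ and deduce $\hat H_n \weakP G$. I would verify the two items of Definition \ref{ConvcondDataDef} separately. For item \eqref{ConConvDia}, I would fix an arbitrary $f \in \BL_1(\Db)$ and use the Lipschitz property together with the uniform bound $\|f\|_\Db \le 1$ to write
\[
 |f(\hat H_n) - f(\hat G_n)| \le d(\hat G_n, \hat H_n) \wedge 2.
\]
By Remark \ref{rem:condweak}(i) the left-hand side is measurable, but the right-hand side need not be, so I would bound it further by the minimal measurable majorant $\bigl(d(\hat G_n, \hat H_n) \wedge 2\bigr)^{\ast}$ with respect to the joint probability space $(\Omega,\mathcal A,\Prob)$. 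Applying $\Eb_{\xi}$ and taking the supremum over $f \in \BL_1(\Db)$ yields
\[
 \sup_{f \in \BL_1(\Db)} \bigl|\Eb_{\xi} f(\hat H_n) - \Eb_{\xi} f(\hat G_n)\bigr| \le \Eb_{\xi}\bigl[(d(\hat G_n, \hat H_n) \wedge 2)^{\ast}\bigr].
\]
Combined with the triangle inequality and the assumption $\hat G_n \weakP G$, it then remains to show that the right-hand side tends to zero in probability.

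To handle this, I would use the tower property and the identity $\Eb[(\,\cdot\,)^{\ast}] = \Eb^{\ast}[\,\cdot\,]$ to obtain
\[
 \Eb\Bigl\{\Eb_{\xi}\bigl[(d(\hat G_n, \hat H_n) \wedge 2)^{\ast}\bigr]\Bigr\} = \Eb^{\ast}\bigl[d(\hat G_n, \hat H_n) \wedge 2\bigr].
\]
For any $\varepsilon \in (0,2)$ the subadditivity of outer expectation gives $\Eb^{\ast}[d(\hat G_n, \hat H_n) \wedge 2] \le \varepsilon + 2 \Prob^{\ast}(d(\hat G_n, \hat H_n) > \varepsilon)$, whose second summand vanishes as $n \to \infty$ by the hypothesis $d(\hat G_n, \hat H_n) \pto 0$; letting $\varepsilon \downarrow 0$ afterwards shows $\Eb[(d(\hat G_n, \hat H_n) \wedge 2)^{\ast}] \to 0$. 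Hence the non-negative random variable $\Eb_{\xi}[(d(\hat G_n, \hat H_n) \wedge 2)^{\ast}]$ converges to $0$ in $L^1$ and a fortiori in probability, establishing item \eqref{ConConvDia} for $\hat H_n$.

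For item \eqref{ConConvDib}, by Remark \ref{rem:condweak}(i) the random variable $f(\hat H_n)$ is itself measurable with respect to the joint probability space for every $f \in \BL_1(\Db)$, so its minimal measurable majorant and maximal measurable minorant both equal $f(\hat H_n)$, and the difference $\Eb_{\xi} f(\hat H_n)^{\ast} - \Eb_{\xi} f(\hat H_n)_{\ast}$ vanishes identically. The main (mild) obstacle is the careful bookkeeping with the measurability of $d(\hat G_n, \hat H_n)$ and the subadditivity argument for outer expectations; everything else reduces to a triangle inequality combined with a bounded convergence argument.
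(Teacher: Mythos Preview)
The paper does not supply its own proof here; it simply cites \cite{Buc11}, Lemma A.1. Your argument for item~\eqref{ConConvDia} is the standard one and is correct: the Lipschitz bound, passage to the measurable majorant $(d(\hat G_n,\hat H_n)\wedge 2)^{\ast}$, and the $L^1$/outer-expectation estimate all go through as you describe.

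For item~\eqref{ConConvDib}, however, your shortcut has a gap. Remark~\ref{rem:condweak}(i) asserts that $f(\hat Y_n)$ is measurable \emph{as a function of the random weights} for fixed data---precisely so that $\Eb_\xi f(\hat Y_n)$ in item~\eqref{ConConvDia} is well defined without a majorant. It does \emph{not} assert joint measurability on $(\Omega,\mathcal A,\Prob)$, so you cannot conclude that $f(\hat H_n)^{\ast}=f(\hat H_n)_{\ast}$ in general. The fix stays entirely within your framework: from $|f(\hat H_n)-f(\hat G_n)|\le d(\hat G_n,\hat H_n)\wedge 2$ and Lemma~1.2.2 in \cite{VanWel96} one obtains
\[
f(\hat H_n)^{\ast}\le f(\hat G_n)^{\ast}+(d(\hat G_n,\hat H_n)\wedge 2)^{\ast},\qquad f(\hat H_n)_{\ast}\ge f(\hat G_n)_{\ast}-(d(\hat G_n,\hat H_n)\wedge 2)^{\ast},
\]
whence
\[
\Eb_\xi f(\hat H_n)^{\ast}-\Eb_\xi f(\hat H_n)_{\ast}\;\le\;\bigl(\Eb_\xi f(\hat G_n)^{\ast}-\Eb_\xi f(\hat G_n)_{\ast}\bigr)+2\,\Eb_\xi\bigl[(d(\hat G_n,\hat H_n)\wedge 2)^{\ast}\bigr].
\]
The first term on the right tends to zero in probability by item~\eqref{ConConvDib} for $\hat G_n$, and the second by the very argument you gave for item~\eqref{ConConvDia}. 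This closes the gap without any appeal to joint measurability.
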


The next auxiliary result is useful in order to show consistency of the test procedures in this paper. In the assertion of this proposition $(\xi^{\scriptscriptstyle (b)})_{b=1, \ldots,B}$ for some $B \in \N$ denote independent sequences $\xi^{\scriptscriptstyle (b)} = (\xi^{\scriptscriptstyle (b)}_{i})_{i \in\N}$ of random variables satisfying Assumption \ref{MultiplAss}. Furthermore, $\hat \Tb_{\rho,\xi^{\scriptscriptstyle (b)}}^{\scriptscriptstyle (n)}$ and $\hat \Hb_{\rho,\xi^{\scriptscriptstyle (b)}}^{\scriptscriptstyle (n)}$ denote the processes defined in \eqref{TbrhohaDef} and \eqref{hatHbrhondeq}, respectively, calculated with respect to the $b$-th multiplier sequence $\xi^{\scriptscriptstyle (b)}$.

\begin{proposition}
	\label{JointConvProp}
	Let $B\in\N$. If ${\bf H}_1^{(loc)}$ is true and each of the independent multiplier sequences $(\xi^{\scriptscriptstyle (b)})_{b=1,\ldots,B}$ satisfies Assumption \ref{MultiplAss}, then we have
	\begin{align*}
	\big( \Tb_\rho^{(n)}, \hat \Tb_{\rho,\xi^{(1)}}^{(n)}, \ldots, \hat \Tb_{\rho,\xi^{(B)}}^{(n)} \big) \weak \big( \Tb_\rho + \Tb_{\rho,g_1}, \Tb_{\rho,(1)}, \ldots, \Tb_{\rho,(B)} \big)
	\end{align*}
	in $(\linner)^{B+1}$ and 
	\begin{align*}
	\big( \Hb_\rho^{(n)}, \hat \Hb_{\rho,\xi^{(1)}}^{(n)}, \ldots, \hat \Hb_{\rho,\xi^{(B)}}^{(n)} \big) \weak \big( \Hb_\rho + D_\rho^{(g_1)}, \Hb_{\rho,(1)}, \ldots, \Hb_{\rho,(B)} \big)
	\end{align*}
	in $(\linctr)^{B+1}$, where $\weak$ denotes (unconditional) weak convergence (with respect to the (joint) probability measure $\mathbb P$), furthermore $\Tb_{\rho,(1)}, \ldots, \Tb_{\rho,(B)}$ are independent copies of the Gaussian process $\Tb_\rho$ in Theorem \ref{CUSUMTrhowc} and $\Hb_{\rho,(1)}, \ldots, \Hb_{\rho,(B)}$ are independent copies of the stochastic process $\Hb_\rho$ defined in Theorem \ref{SchwKentmotv}.
\end{proposition}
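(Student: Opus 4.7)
The plan is to reduce both claims, via the continuous mapping theorem and Slutsky's lemma (Example 1.4.7 in \cite{VanWel96}), to the single joint statement
\begin{equation*}
\big(G_\rho^{(n)}, \hat G^{(n)}_{\rho,\xi^{(1)}}, \ldots, \hat G^{(n)}_{\rho,\xi^{(B)}}\big) \weak \big(\Gb_\rho, \Gb_{\rho,(1)}, \ldots, \Gb_{\rho,(B)}\big)
\end{equation*}
in $(\linner)^{B+1}$, where $\Gb_{\rho,(1)},\ldots,\Gb_{\rho,(B)}$ are mutually independent copies of $\Gb_\rho$ that are also independent of $\Gb_\rho$ itself. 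For the first claim I would apply the Lipschitz map $h_n$ from \eqref{hnFctD} coordinate-wise and then add the deterministic drift $\Tb_{\rho,g_1}$ to the first coordinate, exactly as in the proofs of Theorems \ref{CUSUMTrhowc} and \ref{BootTrhoThm}. For the second claim the Lipschitz map $\Lambda$ from \eqref{Lambdadef} plays the analogous role, with drift $D_\rho^{(g_1)}$, following Theorems \ref{SchwKentmotv} and \ref{BootHnConvThm}. Note that under ${\bf H}_1^{(loc)}$ the process $G_\rho^{(n)}$ is centered at $N_\rho(g^{(n)})$, so Theorem \ref{ConvThm} yields $G_\rho^{(n)} \weak \Gb_\rho$ directly; the drifts appear only upon re-expressing the centering in terms of $g_0$.

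The structural mechanism producing $B$ asymptotically independent bootstrap copies is that $\xi^{(1)},\ldots,\xi^{(B)}$ are defined on $(\Omega_\xi,\Fc_\xi,\Prob_\xi)$ disjoint from the data space and are mutually independent by construction, so conditional on the data the bootstrapped processes $\hat G^{(n)}_{\rho,\xi^{(b)}}$ are independent. The joint convergence then follows from combining marginal and conditional weak convergence: for a product-form bounded Lipschitz test function $f_0 \otimes f_1 \otimes \cdots \otimes f_B$ with $f_b \in \text{BL}_1(\linner)$, the tower property yields
\begin{equation*}
\Eb^*\Big[f_0(G_\rho^{(n)}) \prod_{b=1}^B f_b(\hat G^{(n)}_{\rho,\xi^{(b)}})\Big] = \Eb^*\Big[f_0(G_\rho^{(n)}) \prod_{b=1}^B \Eb_{\xi^{(b)}}\big[f_b(\hat G^{(n)}_{\rho,\xi^{(b)}})\big]\Big],
\end{equation*}
and Theorem \ref{CondConvThm} gives $\Eb_{\xi^{(b)}}[f_b(\hat G^{(n)}_{\rho,\xi^{(b)}})] \probto \Eb[f_b(\Gb_\rho)]$ for every $b$. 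Combining these with $G_\rho^{(n)} \weak \Gb_\rho$ via Slutsky's lemma produces convergence to $\Eb[f_0(\Gb_\rho)] \prod_{b=1}^B \Eb[f_b(\Gb_\rho)]$, which is exactly the expectation of $f_0 \otimes \cdots \otimes f_B$ under the asserted independent product law.

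The main obstacle is not the computation above but the extension from product-Lipschitz test functions to arbitrary elements of $\text{BL}_1((\linner)^{B+1})$, given the non-separability of $\linner$ and that weak convergence is understood in the van der Vaart--Wellner sense. A clean route is an inductive construction in which one appends one bootstrap coordinate at a time, at each step invoking the conditional convergence of Theorem \ref{CondConvThm} together with the independence of the newly added $\xi^{(b)}$ from all previously considered randomness, in the spirit of Proposition 10.7 of \cite{Kos08}. Measurability of $\Eb_{\xi^{(b)}}[f_b(\hat G^{(n)}_{\rho,\xi^{(b)}})]$ as a function of the data is guaranteed by Remark \ref{rem:condweak}(i), so beyond this bookkeeping no analytic estimates beyond those already developed for Theorems \ref{ConvThm} and \ref{CondConvThm} are required.
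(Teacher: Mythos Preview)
Your proposal is correct and follows the standard route: reduce both statements to joint unconditional convergence of $(G_\rho^{(n)},\hat G_{\rho,\xi^{(1)}}^{(n)},\ldots,\hat G_{\rho,\xi^{(B)}}^{(n)})$ via the Lipschitz maps $h_n$ and $\Lambda$ plus deterministic drifts, and then obtain the joint limit from marginal weak convergence (Theorem~\ref{ConvThm}) together with conditional weak convergence (Theorem~\ref{CondConvThm}) and the product structure of the multiplier space. The paper itself does not spell this out but defers to Proposition~A.2 in \cite{BueHofVetDet15}, which uses exactly this mechanism, so your argument is essentially the same as the one the paper invokes.
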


\end{document}